\documentclass[twoside,11pt,reqno]{amsart}
\usepackage{amsmath,amssymb,amscd,mathrsfs,epic,wasysym,latexsym,tikz,mathrsfs,cite,hyperref,enumerate}
\usepackage{pb-diagram}
\usepackage[matrix,arrow]{xy}
\usepackage{stmaryrd}
\usepackage{centernot}
\usetikzlibrary{matrix,positioning,fit,calc}
\usepackage{array}
\usepackage{upgreek}

\DeclareFontFamily{OT1}{pzc}{}
\DeclareFontShape{OT1}{pzc}{m}{it}{<-> s * [1.10] pzcmi7t}{}
\DeclareMathAlphabet{\mathpzc}{OT1}{pzc}{m}{it}

\makeatletter

\hfuzz 3pt
\vfuzz 2pt

\textheight 222mm
\textwidth 148mm
\oddsidemargin=34pt
\evensidemargin=34pt


\synctex=1
\numberwithin{equation}{section}

\allowdisplaybreaks

\newtheorem{Proposition}[equation]{Proposition}
\newtheorem{Lemma}[equation]{Lemma}
\newtheorem{Theorem}[equation]{Theorem}
\newtheorem{Corollary}[equation]{Corollary}
\newtheorem{MainTheorem}{Theorem}

\theoremstyle{definition}  

\newtheorem{Example}[equation]{Example}
\newtheorem{Problem}{Problem}

\let\<\langle
\let\>\rangle

\newcommand\Comment[2][\relax]{\space\par\medskip\noindent%
   \fbox{\begin{minipage}{\textwidth}\textbf{Comment\ifx\relax#1\else---#1\fi}\newline%
        #2\end{minipage}}\medskip
}

\renewcommand{\nmid}{\centernot\mid}


\def\bi{\text{\boldmath$i$}}

\def\b1{\text{\boldmath$1$}}

\newcommand{\da}{{{\downarrow}}}
\newcommand{\ua}{{\uparrow}}

\def\bone{\text{\boldmath$1$}}

\newcommand{\0}{{\bar 0}}
\renewcommand{\1}{{\bar 1}}
\newcommand{\balpha}{\boldsymbol{\upalpha}}
\newcommand{\bbeta}{\boldsymbol{\upbeta}}

\def\v{\bar v}

\def\pmod#1{\text{ }(\text{\rm mod } #1)\,}

\newcommand{\Hom}{\operatorname{Hom}}

\newcommand{\End}{\operatorname{End}}
\newcommand{\ind}{\operatorname{ind}}
\newcommand{\im}{\operatorname{im}}
\newcommand{\id}{\operatorname{id}}
\def\sgn{\mathtt{sgn}}

\def\Reg{\mathtt{Reg}}

\newcommand{\soc}{\operatorname{soc}}
\newcommand{\head}{\operatorname{hd}}
\newcommand{\hd}{\head}

\newcommand{\cha}{\operatorname{char}}

\newcommand{\Z}{\mathbb{Z}}

\newcommand{\cT} {\mathcal{T}}
\newcommand{\ct} {\mathpzc{t}}
\newcommand{\cx} {\mathpzc{x}}
\newcommand{\cy} {\mathpzc{y}}
\newcommand{\cm} {\mathpzc{m}}

\renewcommand{\epsilon}{\varepsilon}
\def\eps{{\varepsilon}}
\def\phi{{\varphi}}

\newcommand{\F}{{\mathbb F}}

\newcommand{\s}{{\mathsf S}}
\newcommand{\ts}{{\hat{\mathsf S}}}
\newcommand{\A}{{\mathsf A}}
\newcommand{\tA}{{\hat{\mathsf A}}}

\newcommand{\ga}{\gamma}

\newcommand{\la}{\lambda}

\newcommand{\al}{\alpha}
\newcommand{\be}{\beta}

\def\Si{\s}
\newcommand{\si}{\sigma}

\newcommand{\Om}{\Omega}

\newcommand{\de}{\delta}

\newcommand{\ka}{\kappa}


\newcommand{\Mull}{{\tt M}}

\def\Mtype{\mathtt{M}}

\def\ttres{\mathtt{res}}

\def\Qtype{\mathtt{Q}}

\newcommand{\Ker}{\operatorname{Ker}}
\newcommand{\Aut}{{\mathrm {Aut}}}

\newcommand{\Irr}{{\mathrm {Irr}}}
\newcommand{\Irrs}{{\mathrm {Irrs}}}

\def\id{\mathop{\mathrm {id}}\nolimits}
\renewcommand{\Im}{{\mathrm {Im}}}
\newcommand{\Ind}{{\mathrm {Ind}}}

\newcommand{\rad}{{\mathrm {rad}\,}}

\newcommand{\Res}{{\mathrm {Res}}}

\newcommand{\C}{{\mathbb C}}
\newcommand{\Q}{{\mathbb Q}}

\newcommand{\ZZ}{{\mathbb Z}}

\newcommand{\D}{{\mathtt D}}

\newcommand{\FF}{{\mathbb F}}

\newcommand{\Cl}{{\mathcal C}}

\newcommand{\T}{\mathcal{T}}

\newcommand{\W}{{\mathsf W}}
\newcommand{\hW}{\hat{\mathsf W}}

\newcommand{\dar}{{\downarrow}}

\newcommand{\JS}{{\tt JS}}

\renewcommand{\setminus}{\smallsetminus}

\def\n{{\mathfrak n}}

\def\d{{\mathtt d}}

\def\Par{{\mathscr P}}
\def\BMOPar{{\mathscr{P}_{\tt{BMO}}}}

\def\RP{{\mathscr {RP}}}
\def\TR{{\mathscr {TR}}}

\def\b{\mathfrak{b}}
\def\k{\Bbbk}

\def\T{\mathcal{T}}

\def\spa{\operatorname{span}}

\def\im{{\mathrm{im}\,}}
\def\onto{{\twoheadrightarrow}}
\def\into{{\hookrightarrow}}

\renewcommand\O{\mathcal O}

\def\iso{\stackrel{\sim}{\longrightarrow}}

\def\col{{\tt col}}
\def\row{{\tt row}}

\def\lan{\langle}
\def\ran{\rangle}


\newcommand{\cc} {\mathpzc{c}}

{\catcode`\|=\active
  \gdef\set#1{\mathinner{\lbrace\,{\mathcode`\|"8000%
  \let|\midvert #1}\,\rbrace}}
}
\def\midvert{\egroup\mid\bgroup}

\colorlet{darkgreen}{green!50!black}
\tikzset{dots/.style={very thick,loosely dotted},
         greendot/.style={fill,circle,color=darkgreen,inner sep=1.5pt,outer sep=0},
         blackdot/.style={fill,circle,color=black,inner sep=1.5pt,outer sep=0},
         graydot/.style={fill,circle,color=gray,inner sep=1.1pt,outer sep=0}
}
\def\greendot(#1,#2){\node[greendot] at(#1,#2){}}
\def\blackdot(#1,#2){\node[blackdot] at(#1,#2){}}
\def\graydot(#1,#2){\node[graydot] at(#1,#2){}}

\newenvironment{braid}{
  \begin{tikzpicture}[baseline=6mm,black,line width=1pt, scale=0.32,
                      draw/.append style={rounded corners},
                      every node/.append style={font=\fontsize{5}{5}\selectfont}]%
  }{\end{tikzpicture}
}

\def\Grid(#1,#2){
  \draw[very thin,gray,step=2mm] (0,0)grid(#1,#2);
  \draw[very thin,darkgreen,step=10mm] (0,0)grid(#1,#2);
}

\newcommand\Tableau[2][\relax]{
  \begin{tikzpicture}[scale=0.5,draw/.append style={thick,black}]
    \ifx\relax#1\relax%
    \else 
      \foreach\box in {#1} { \filldraw[blue!30]\box+(-.5,-.5)rectangle++(.5,.5); }
    \fi
    \newcount\row\newcount\col
    \row=0
    \foreach \Row in {#2} {
       \col=1
       \foreach\k in \Row {
          \draw(\the\col,\the\row)+(-.5,-.5)rectangle++(.5,.5);
          \draw(\the\col,\the\row)node{\k};
          \global\advance\col by 1
       }
       \global\advance\row by -1
    }
  \end{tikzpicture}
}

\newcommand\YoungDiagram[2][\relax]{
  \begin{tikzpicture}[scale=0.5,draw/.append style={thick,black}]
    \ifx\relax#1\relax%
    \else 
    \foreach\box in {#1} {
      \filldraw[blue!30]\box rectangle ++(1,1);
    }
    \fi
    \newcount\row
    \row=0
    \foreach \col in {#2} {
       \draw(1,\the\row)grid ++(\col,1);
       \global\advance\row by -1
    }
  \end{tikzpicture}
}

\newenvironment{Young}{\begingroup
       \def\vr{\vrule height0.89\hoogte width\dikte depth 0.2\hoogte}
       \def\fbox##1{\vbox{\offinterlineskip
                    \hrule height\dikte
                    \hbox to \breedte{\vr\hfill##1\hfill\vr}
                    \hrule height\dikte}}
       \vbox\bgroup \offinterlineskip \tabskip=-\dikte \lineskip=-\dikte
            \halign\bgroup &\fbox{##\unskip}\unskip  \crcr }
       {\egroup\egroup\endgroup}
\def\diagram#1{\relax\ifmmode\vcenter{\,\begin{Young}#1\end{Young}\,}\else%
              $\vcenter{\,\begin{Young}#1\end{Young}\,}$\fi}

\setcounter{tocdepth}{1}

\begin{document}

\title[Irreducible restrictions of spin representations]{{\bf Irreducible restrictions of spin representations of symmetric and alternating groups}}

\author{\sc Alexander Kleshchev}
\address{Department of Mathematics\\ University of Oregon\\Eugene\\ OR 97403, USA}
\email{klesh@uoregon.edu}

\author{\sc Lucia Morotti}
\address{Department of Mathematics, University of York, York, YO10 5DD, UK}
\email{lucia.morotti@york.ac.uk} 

\author{\sc Pham Huu Tiep}
\address
{Department of Mathematics\\ Rutgers University\\ Piscataway\\ NJ~08854, USA} 
\email{tiep@math.rutgers.edu}

\subjclass{20C20, 20C30, 20D06, 20E28}

\thanks{The first author was supported by the NSF grant DMS-2346684. The first author thanks ICERM for support during the final stages of this work. 
The second author was supported by the DFG grants MO 3377/1-1 and MO 3377/1-2 and the Royal Society grant URF$\backslash$R$\backslash$221047. The third author gratefully acknowledges the support of the NSF (grant DMS-2200850) and the Joshua Barlaz Chair in Mathematics.}

\begin{abstract}
Let $\F$ be an algebraically closed field and $G$ be an almost quasi-simple group. An important problem in representation theory is to classify the subgroups $H<G$ 
and $\F G$-modules $L$ such that the restriction $L\da_H$ is irreducible. This problem is a natural part of the program of describing maximal subgroups in finite classical groups. In this paper we investigate the case of the problem 
where $G$ is the Schur's double cover of alternating or symmetric group. 
\end{abstract}

\maketitle

\tableofcontents

\section{Introduction}

Let $\F$ be an algebraically closed field of characteristic $p$ and $G$ be an almost
quasi-simple group. 
An important
problem in representation theory is to classify the subgroups $H$ of $G$ and irreducible $\F G$-modules $L$ such that the restriction $L\da_H$ is irreducible. For example, this problem 
is a natural part of the Aschbacher-Scott program of describing maximal subgroups in finite classical
groups; see \cite{Asch, Scott} and \cite{KL, BHRD, Mag, LS}. 

Suppose from now on that $\soc (G/Z(G))$ is the alternating group $\A_n$ with $n\geq 5$. 

\begin{Problem}\label{Problem}
Suppose that $\soc (G/Z(G))$ is the alternating group $\A_n$ with $n\geq 5$. 
Classify the
pairs $(H, L)$, where $H$ is a subgroup of $G$ and $L$ is a faithful irreducible $\F G$-module such that the restriction $L\da_H$ is irreducible.
\end{Problem}

Suppose the center $Z(G)$ is trivial, i.e. $G= \A_n$ 
or $\s_n$\footnote{We ignore the exceptional case $n=6$ which can be easily settled using \cite{ModularAtlas}.}. In this case, Saxl \cite{Saxl} has solved Problem~\ref{Problem} in characteristic $p=0$. In positive characteristic the same has been 
achieved in \cite{BKIrr,KSIrr} for $p>3$, and in \cite{KMTIrr} for $p=2,3$. 

From now on suppose that $Z(G)$ is non-trivial. If $n\neq 6,7$, then $G$ is one of the Schur's double covers $\tA_n$, $\ts_n$ or $\tilde\s_n$.\footnote{We ignore the exceptional $6$-fold double covers for $n=6,7$ as for these small cases Problem~\ref{Problem}  can be easily settled using \cite{ModularAtlas}.} The group algebras $\F
\ts_n$ and 
and $\F\tilde\s_n$ are canonically isomorphic, so we only have to deal with
$\tA_n$ and $\ts_n$.\footnote{Let $A:=\F\ts_n=\F\tilde\s_n$, where we have identified the group algebras via the canonical isomorphism, and $\rho:A\to\End_\F(L)$ be an irreducible representation. If $\hat \pi:\ts_n\to \s_n$ and $\tilde \pi:\tilde\s_n\to \s_n$ are the natural surjections, and $\hat g\in \ts_n$, $\tilde g\in \tilde\s_n$ satisfy $\hat \pi(\hat g)=\tilde\pi(\tilde g)$ then $\rho(\hat g)$ differs from $\rho(\tilde g)$ by a scalar, see for example \cite[p. 93]{stem}. So for subgroups $\hat H<\ts_n$ and $\tilde H<\tilde\s_n$ with $\hat \pi(\hat H)=\tilde\pi(\tilde H)$, we have $L\da_{\hat H}$ is irreducible if and only if $L\da_{\tilde H}$ is irreducible.}

Recall that $\ts_n$ is the double cover of the symmetric group $\s_n$, in which transpositions lift to involutions. It is the group generated by $t_1,\dots,t_{n-1},z$ subject to the following relations:
\begin{align*}
zt_i=t_iz,\ z^2=t_i^2=1,\ t_it_{i+1}t_i=t_{i+1}t_it_{i+1},
\ t_it_j=zt_jt_i\ (\text{for}\ |i-j|>1). 
\end{align*}
We have the natural projection 
$$\pi:\ts_n\to \s_n$$
which maps $t_i$ onto the transposition $(i,i+1)$. Then $\tA_n=\pi^{-1}(\A_n)$, where $\A_n<\s_n$ is the alternating group. 

From now on, let $G=\ts_n$ or $\tA_n$ for $n\geq 5$. 

In characteristic $p=0$, Kleidman and Wales \cite{KW} classify the faithful irreducible $\F G$-modules $L$ and subgroup $H<G$ such that $L\da_H$ is irreducible, provided that either $H$ is quasi-simple or $H$ is a maximal subgroup of $G$.\footnote{A generalization of \cite{KW} can be recovered from the positive characteristic results recorded below by assuming $p>n$.}

Assume from now on that $p=\cha \F$ is positive. 
We may then further assume that $p>2$, as in the case $p=2$  the center $Z(G)$ acts trivially on an irreducible $\F G$-module $L$ and so $L$ is not faithful. 

We now formulate a result by the first and the third authors which deals with the case where $\pi(H)<\s_n$ is a primitive  
subgroup. Recall that 
a subgroup $X \leq \s_n$ is called {\em primitive} if $X$ acts transitively on $\Om:=\{1,2,\dots,n\}$ and the 
only partitions of $\Om$ preserved by $X$ are the partitions into either a single set or into $n$ singleton sets; otherwise $X$ is called {\it imprimitive} 
(imprimitive subgroups may be transitive or 
intransitive). 

The {\em  basic} and {\em second basic} $\F G$-modules are defined in Section~\ref{SSBasic}. Wales \cite{Wales} computed the dimensions of the basic and second basic $\F G$-modules as follows. Let  
$$
\kappa_n:=
\left\{
\begin{array}{ll}
1 &\hbox{if $p|n$,}\\
0 &\hbox{otherwise.}
\end{array}
\right.
$$
Then the dimensions of the basic modules for $\ts_n$ and $\tA_n$  are, respectively, 
$$
2^{\lfloor \frac{n-1-\kappa_n}{2} \rfloor}\quad \text{and}\quad
2^{\lfloor \frac{n-2-\kappa_n}{2} \rfloor};
$$  
and the dimensions of the second basic module for $\ts_n$ and $\tA_n$  are, respectively, 
$$
2^{\lfloor \frac{n-2-\kappa_{n-1}}{2} \rfloor}(n-2-\kappa_n-2\kappa_{n-1})\quad\text{and}\quad 
2^{\lfloor \frac{n-3-\kappa_{n-1}}{2} \rfloor}(n-2-\kappa_n-2\kappa_{n-1}). 
$$

Now the result for the case where $\pi(H)$ is primitive is as follows. 
\footnote{The case (iii)(b) of Theorem~\ref{TAKT}  appeared as the case (iv)(c) in \cite[Theorem B]{KT}, but it is actually a second basic module, so it belongs to part (iii).} 
\footnote{In Theorem~\ref{TAKT}(i)(e), the case 
$\pi(H)=\Aut(\A_6)$ with $p=5$ was missed in \cite[Theorem B]{KT}.} 
\footnote{In the case (ii)(e) of Theorem~\ref{TAKT}, for a subgroup $H$ with ${\sf L}_{2}(8) \lhd \pi(H) \leq \Aut({\sf L}_{2}(8))$, only one of the two basic spin modules of $G$ is irreducible on $H$. See also \cite[p.463]{KW}.}

\begin{MainTheorem}\label{TAKT} {\bf \cite[Theorem B]{KT}} 
Let $G = \ts_n$ or $\tA_n$ with $n \geq 5$, $L$ be a faithful 
irreducible $\F G$-module,
and let $H$ be a subgroup of $G$ such that $\pi(H)<\s_n$ is a {\bf primitive} 
subgroup not containing $\A_n$. Then 
$L\da_{H}$ is irreducible if and only if one of the following holds:

\begin{enumerate}[\rm(i)]
\item $G = \ts_n$, $L$ is a basic module, and one of the 
following holds:

\begin{enumerate}[\rm(a)]
\item $n=5$, $p \neq 5$, and $\pi(H) = \ZZ_{5}\rtimes\ZZ_{4}$;
\item $n=6$, and $\pi(H) = \s_{5}$;
\item $n=6$, $p \neq 3$, and $\pi(H) = \A_{5}$;
\item $n=8$, and $\pi(H)={\sf AGL}_3(2)$;
\item $n=10$. Furthermore, either $p\neq 3,5$ and $\pi(H)=\s_6, {\sf M}_{10}$, or $p\neq 3$ and $\pi(H)=\Aut(\A_6)$;
\item $n=11$, $p=11$, and $\pi(H)={\sf M}_{11}$ (two classes); 
\item $n=12$, $p\neq 3$, and $\pi(H)={\sf M}_{12}$. 
\end{enumerate}

\item $G = \tA_n$, $L$ is a basic module, 
and one of the following holds:

\begin{enumerate}[\rm(a)]
\item $n=5$, $p \neq 5$, and $\pi(H) = \ZZ_{5}\rtimes\ZZ_{2}$;
\item $n=6$, and $\pi(H) = \A_{5}$;
\item $n=7$, and $\pi(H) = {\sf L}_{2}(7)$ (two classes);
\item $n=8$, and $\pi(H)={\sf AGL}_3(2)$ (two classes);
\item $n=9$, $p\neq 3$, and ${\sf L}_{2}(8) \lhd \pi(H) \leq \Aut({\sf L}_{2}(8))$, 
or $3^2\rtimes{\sf Q}_8\lhd\pi(H)\leq 3^2\rtimes {\sf SL}_2(3)$;
\item $n=10$, $p\neq 3$, and $\pi(H)={\sf M}_{10}$;
\item $n=10$, $p=5$, and $\pi(H)=\A_6$;
\item $n=11$, $p\neq 3$, and $\pi(H)={\sf M}_{11}$ (two classes); 
\item $n=12$, $p\neq 3$, and $\pi(H)={\sf M}_{12}$ (two classes). 
\end{enumerate}

\item $G = \tA_n$, $L$ is a second basic module, and 
one of the following holds:

\begin{enumerate}[\rm(a)]
\item $n=6$, $p = 3$, and $\pi(H) = \A_{5}$;
\item $n=7$, $p = 3$, 
$\pi(H) = {\sf L}_{2}(7)$ (two classes); 
\item $n=8$, $p\neq 7$, and $\pi(H)={\sf AGL}_3(2)$ (two classes);
\item $n=12$, $p\neq 3, 11$, and $\pi(H)={\sf M}_{12}$ (two classes). 
\end{enumerate}
\item $L$ is neither a basic nor a second basic module, and 
one of the following holds:

\begin{enumerate}[\rm(a)]
\item $n=5$, $p >5$, $G = \ts_5$, $\pi(H) = \ZZ_{5}\rtimes\ZZ_{4}$, and 
$\dim L = 4$;
\item $n=6$, $p >5$, $G= \ts_6$, $\pi(H) = \s_{5}$, and 
$\dim L = 4$.
\end{enumerate}
\end{enumerate}
\end{MainTheorem}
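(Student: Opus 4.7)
The plan is to combine dimension lower bounds for faithful irreducible spin modules, order bounds on primitive subgroups of $\s_{n}$, and direct verification in small cases. The elementary observation driving everything is that if $L\da_{H}$ is irreducible then $\dim L\leq d(H):=\max\{\chi(1):\chi\in\Irr(H)\}$, and in particular $(\dim L)^{2}\leq |H|$.

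First I would assemble the available lower bounds on $\dim L$. The basic modules, of dimension roughly $2^{n/2}$, are the smallest faithful irreducible $\F G$-modules; the second basic modules come next, of dimension on the order of $n\cdot 2^{n/2}$; and every other faithful irreducible has dimension bounded below by an explicit polynomial in $n$ times $2^{n/2}$. In characteristic zero this goes back to Wales, and in positive characteristic it can be extracted from the modular branching rules and the crystal combinatorics on restricted $p$-strict partitions developed by Brundan--Kleshchev and Kleshchev--Nakano.

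Second, I would feed in the Praeger--Saxl bound (refined by Liebeck) for a primitive subgroup $\pi(H)\leq\s_{n}$ not containing $\A_{n}$: generically $|\pi(H)|\leq 4^{n}$, and outside an explicit short list of ``large'' primitive groups (alternating or classical groups in their natural primitive actions, wreath products in product action, etc.) one has the much sharper $|\pi(H)|\leq n^{O(\log n)}$. Combining $(\dim L)^{2}\leq|H|\leq 2|\pi(H)|$ with the dimension estimates of the first step rules out all $L$ beyond basic and second basic for $n$ larger than a mild explicit constant, and simultaneously cuts the surviving $\pi(H)$ down to the Mathieu groups on their $5$-transitive degrees, a few small almost simple groups of Lie type (${\sf L}_{2}(7),{\sf L}_{2}(8),{\sf AGL}_{3}(2)$, and so on), and the small symmetric and alternating groups $\s_{5},\A_{5},\s_{6},\A_{6}$ in primitive actions. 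The ``large'' primitive exceptions are dispatched by sharper dimension lower bounds adapted to those groups (e.g.\ the alternating group on $k$-subsets has a much larger smallest faithful irreducible than the ambient $\ts_{n}$-basic module, once $k\geq 2$).

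Third, for each surviving candidate pair I would verify irreducibility of $L\da_{H}$ by direct computation. Basic and second basic modules admit concrete realisations via reductions of the Clifford-algebra spin construction, so the restriction can be analysed using (Brauer) character values, Frobenius reciprocity, and the projective character tables of the Atlas. The main obstacle I expect is the modular analysis at primes $p\in\{3,5,7,11\}$ dividing $|\pi(H)|$: here the decomposition matrices of both $G$ and $H$ intervene and one must carefully track \emph{which} of the two basic or two second basic modules restricts irreducibly (the footnote about ${\sf L}_{2}(8)\lhd\pi(H)\leq\Aut({\sf L}_{2}(8))$ in (ii)(e) is a clear symptom of this). The Modular Atlas handles the sporadic subcases, while the remaining small $\A_{n},\s_{n}$ parts (notably (i)(a), (ii)(a), and (iv)) reduce to an explicit check of restricted character data.
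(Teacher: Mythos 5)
First, a structural point: the theorem you were asked to prove is not proved in this paper at all --- it is imported verbatim from \cite[Theorem B]{KT}, with three footnotes correcting small errors (a mislabelled second basic case in (iii)(b), the missing case $\pi(H)=\Aut(\A_6)$, $p=5$ in (i)(e), and the caveat in (ii)(e)) --- so the only meaningful benchmark is the proof in \cite{KT} and its characteristic-zero antecedent \cite{KW}. Your outline is directionally consistent with that proof (spin dimension lower bounds, a reduction of the possible primitive $\pi(H)$, then Atlas/Modular Atlas verification in small cases), but the quantitative core of your second step has a genuine gap.

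The gap is that order bounds on primitive groups cannot carry the reduction. The inequality $(\dim L)^2\le |H/Z(H)|\le |\pi(H)|$ combined with the generic Praeger--Saxl bound $|\pi(H)|<4^n$ never yields a contradiction for the modules that matter: the basic $\F\ts_n$-module has dimension $2^{\lfloor (n-1-\kappa_n)/2\rfloor}$, so $(\dim L)^2\approx 2^{n-1}\ll 4^n$ for every $n$, and the second basic modules ($\dim\sim n\,2^{n/2}$) and the remaining spin modules ($\dim\gtrsim n^2 2^{n/2}$) fare no better. The sharper $n^{O(\log n)}$ bound overtakes $2^{n-1}$ only for $n$ far beyond the range $n\le 12$ occurring in the statement, and it explicitly excludes the families --- $2$-transitive almost simple groups, affine groups, product-action wreath products, $\A_m$ and $\s_m$ acting on $k$-subsets --- which are exactly the surviving candidates. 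What \cite{KT}, following \cite{KW}, actually uses is not group orders but lower bounds for the minimal degree of a faithful \emph{projective} representation of each such primitive group (Landazuri--Seitz type bounds for Lie-type groups, James' bounds for $\A_m$ on $k$-sets, explicit data for sporadic and affine groups), measured against the spin dimension bounds of \cite{KT2}; your sentence promising ``sharper dimension lower bounds adapted to those groups'' is precisely where the bulk of the proof lives, and as written it is an assertion rather than an argument. Your final verification step (projective Brauer characters, decomposition matrices, tracking which of the two basic or second basic modules restricts irreducibly, as in the ${\sf L}_2(8)$ footnote) is the right kind of analysis, but note it is delicate enough that \cite[Theorem B]{KT} itself missed the $\Aut(\A_6)$, $p=5$ case --- which is exactly why the present paper restates the theorem with corrections rather than relying on the original list.
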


\vspace{1mm}
The immediate consequence of Theorem~\ref{TAKT} is

\vspace{3mm}
\noindent
{\bf Corollary.}
{\em 
Let $G = \ts_n$ or $\tA_n$ with $n >12$, $L$ be a faithful 
irreducible $\F G$-module,
and let $H$ be a subgroup of $G$ not containing $\tA_n$. If 
$L\da_{H}$ is irreducible then $\pi(H)$ is imprimitive. 
}

\vspace{3mm}

In view of Theorem~\ref{TAKT}, it remains to deal with {\em imprimitive} subgroups, which turns out to be a much more difficult case. Under the assumption that $L$ is not a basic module, this case  is settled in Theorem~\ref{imprimitive} below, which is the main result of this paper. 

To state the theorem we need to recall the classification of the faithful irreducible $\F G$-modules. The reader is refereed to Section~\ref{SDoubleCov} for details on this. We denote by $\RP_p(n)$ the set of all restricted $p$-strict partitions of $n$. For a partition $\la\in \RP_p(n)$, we denote by $h_{p'}(\la)$ the number of parts of $\la$ not divisible by $p$, and set 
\begin{equation*}
a_p(\la):=
\left\{
\begin{array}{ll}
0 &\hbox{if $n-h_{p'}(\la)$ is even,}\\
1 &\hbox{otherwise.}
\end{array}
\right.
\end{equation*}
Then the irreducible $\F G$-modules can be canonically labelled as follows:
\begin{align*}
\Irr(\F\ts_n)&=\{D(\la;0)\mid\la\in\RP_p(n),\,\,a_p(\la)=0\}\sqcup \{D(\la;\pm)\mid\la\in\RP_p(n),\,\,a_p(\la)=1\},
\\
\Irr(\F\tA_n)&=\{E(\la;0)\mid\la\in\RP_p(n),\,\,a_p(\la)=1\}\sqcup \{E(\la;\pm)\mid\la\in\RP_p(n),\,\,a_p(\la)=0\}.
\end{align*}
Thus, when we write $D(\la;\pm)$ it is assumed that $a_p(\la)=1$, when we write $E(\la;\pm)$ it is assumed that $a_p(\la)=0$, etc. 
We will sometimes write $D(\la;\eps)$ to denote $D(\la;0)$ or $D(\la;\pm)$ depending on whether $a_p(\la)=0$ or $1$, and similarly for $E(\la;\eps)$.

Setting $\ell:=(p-1)/2$, for every $i\in I:=\{0,1,\dots,\ell\}$, there is an explicit class $\JS^{(i)}$ of {\em $i$-Jantzen-Seitz}  (restricted $p$-strict) partitions, and we set ${\JS}=\bigsqcup_{i\in I}\JS^{(i)}$. 

For a composition $(\mu_1,\dots,\mu_r)$ of $n$ we have a {\em standard Young subgroup}
$$
\s_{\mu_1,\dots,\mu_r}=\s_{\mu_1}\times\dots\times\s_{\mu_r}<\s_n.
$$
If $n=ab$ for integers $a,b>1$, we also have the {\em standard wreath product subgroup} 
$$
\W_{a,b}:=\s_a\wr\s_b<\s_n.
$$
We set $\A_{\mu_1,\dots,\mu_r}:=\s_{\mu_1}\times\dots\times\s_{\mu_r}\cap\A_n,
$ and 
$$
{\ts}_{\mu_1,\dots,\mu_r}:=\pi^{-1}(\s_{\mu_1,\dots,\mu_r})<{\ts}_n,\ \,
{\tA}_{\mu_1,\dots,\mu_r}:=\pi^{-1}(\A_{\mu_1,\dots,\mu_r})<{\tA}_n,\ \,
\hW_{a,b}:=\pi^{-1}(\W_{a,b}).
$$

\begin{MainTheorem}\label{imprimitive}
Let $G=\ts_n$ or $\tA_n$ with $n\geq 5$, and $H$ be a subgroup of $G$ such that $\pi(H)<\s_n$ is {\bf imprimitive}. Suppose $L$ is a faithful irreducible $\F G$-module, which is {\bf not basic}. Then $L\da_H$ is irreducible if and only if one of the following holds:
\begin{enumerate}[\rm(i)]
\item $H=\ts_{n-1,1}\cap G$ and one of the following holds:
\begin{enumerate}[\rm(a)]
\item $L=D(\la;0)$ or $E(\la;0)$ with $\la\in\JS^{(0)}$;

\item $L=D(\la;\pm)$ or $E(\la;\pm)$ with $\la\in\JS$.
\end{enumerate}

\item $G=\ts_n$, $H=\tA_{n-1,1}$ and $L=D(\la;\pm)$ with $\la\in \JS^{(0)}$;

\item $H=\ts_{n-2,2} \cap G$, and $L=D(\la;\eps)$ or $E(\la;\eps)$ with $\la\in\JS^{(0)}$;

\item $H=\ts_{n-2,1,1}\cap G$, and $L=D(\la;\pm)$ or $E(\la;\pm)$ with $\la\in\JS^{(0)}$;

\item $G=\ts_n$, $H=\tA_{n-2,2}$ and $L=D(\la;\pm)$ with $\la\in\JS^{(0)}$;



\item $L$ is second basic, $p\mid(n-1)$, $n=2b$ is even and one of the following holds:
\begin{enumerate}[\rm(a)]
\item $G=\ts_n$ and $H=\hW_{2,b}$ or $H=\hW_{b,2}$,

\item $G=\ts_n$, $\pi^{-1}(\A_{b}\times \A_{b})< H<\hW_{b,2}$ with  $[\hW_{b,2}:H]=2$ and $H\neq \ts_{b,b}$ (there are two such conjugacy classes of subgroups $H$),

\item $G=\tA_n$ and $H=\hW_{b,2}\cap \tA_n$;
\end{enumerate}



\item $(L,G,\pi(H))$ is as in Tables I or II.
\end{enumerate}
\end{MainTheorem}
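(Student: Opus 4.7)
The plan is to organise the classification by the type of maximal imprimitive subgroup of $\s_n$ containing $\pi(H)$: either a standard intransitive Young subgroup $\s_{k,n-k}$ with $1\leq k<n/2$, or a standard transitive wreath product subgroup $\W_{a,b}=\s_a\wr\s_b$ with $n=ab$ and $a,b\geq 2$. Since $L\da_H$ irreducible implies $L\da_{H'}$ irreducible for every $H\leq H'\leq G$, I would first determine the pairs $(L,H')$ for which $H'$ equals the preimage in $G$ of a maximal imprimitive subgroup and $L\da_{H'}$ is irreducible, and then for each such pair work downward through the subgroup lattice to determine exactly which proper subgroups $H<H'$ preserve irreducibility. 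Throughout, the hypothesis that $L$ is not basic is crucial: it gives substantially improved dimension lower bounds which rule out many ``small'' subgroups $H$ that would otherwise be candidates.

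For the intransitive case I would rely on the modular branching theory for spin representations. The classical Jantzen--Seitz-type theorem says that $L\da_{\ts_{n-1}}$ is irreducible if and only if $\la\in \JS$; combined with a careful tracking of how the labels $(\la;\eps)$ behave on restriction to $\tA_{n-1}$ via the invariant $a_p(\la)$, this yields cases (i) and (ii). Moving to $k=2$, I would combine iterated branching $\ts_n\to\ts_{n-1}\to\ts_{n-2}$ with a Clifford-theoretic analysis of the additional $\ts_2$-factor: the tightened combinatorial condition of being Jantzen--Seitz at two successive steps forces $\la\in\JS^{(0)}$, and the sign character of $\ts_2$ then decides whether the restriction further remains irreducible on the smaller subgroups $\ts_{n-2,1,1}\cap G$ or $\tA_{n-2,2}$, giving (iii)--(v). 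For $k\geq 3$, iterating the branching criterion once more produces incompatible constraints unless $L$ is basic, which is what rules out further intransitive subgroups under the hypothesis.

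For the transitive imprimitive case, the plan is to use dimension inequalities. If $L\da_{\pi^{-1}(\W_{a,b})\cap G}$ is irreducible, then $\dim L$ equals the dimension of some irreducible $\F(\pi^{-1}(\W_{a,b})\cap G)$-module, which is tightly controlled by the wreath product structure. Wales' explicit dimension formulas for basic and second basic modules combined with known lower bounds on dimensions of other faithful irreducible spin modules should force $L$ to be second basic and $\{a,b\}\ni 2$, with the divisibility $p\mid(n-1)$ in case (vi) emerging from the numerical match between the dimension of a second basic module and that of a natural irreducible constituent for $\hW_{2,b}$. Once these necessary conditions are in place, explicit computation of the character of a second basic module on $\hW_{a,b}$, combined with a Clifford analysis of the intermediate subgroups lying between $\pi^{-1}(\A_b\times\A_b)$ and $\hW_{b,2}$, would yield the precise list in (vi).

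The main obstacle will be twofold. First, the sporadic pairs collected in Tables I and II (case (vii)) are expected to arise for small $n$ where the dimension bounds are not sharp enough to exclude accidental coincidences; these have to be verified essentially by direct computation, relying on known decomposition matrices, the modular atlas \cite{ModularAtlas}, and explicit character computations on spin representations of small subgroups. Second, in case (vi) the identification of exactly which index-$2$ subgroups of $\hW_{b,2}$ (lying strictly between $\pi^{-1}(\A_b\times\A_b)$ and $\hW_{b,2}$ and different from $\ts_{b,b}$) preserve irreducibility is delicate: one must analyse how the swap $\ts_2$ interacts with the two copies of $\ts_b$, track the fixed-point behaviour on irreducibles of the base group, and verify that exactly two of the three non-$\ts_{b,b}$ candidate subgroups actually yield irreducible restrictions.
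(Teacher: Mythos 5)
Your high-level organisation (settle the maximal intransitive and maximal wreath-product subgroups first, then descend through the lattice), and your treatment of $k=1,2$ via modular branching plus Clifford theory, do match the paper. But two of your central steps would not go through as described. First, in the transitive case you propose to force $L$ to be second basic by comparing $\dim L$ with dimensions of irreducibles of $\hW_{a,b}\cap G$; this cannot work, because for $a=2$ or $b=2$ the groups $\hW_{n/2,2}$ and $\hW_{2,n/2}$ have irreducible spin modules of dimension far exceeding that of any spin irreducible of $\ts_n$ (roughly $((n/2)!)^2$ against $\sqrt{n!}$), so no dimension inequality rules out a generic non-basic $L$. The paper instead proves reducibility by producing two independent $H$-endomorphisms of $L$: homomorphisms $\bone\ua_H^G\to{}^\pi M^\al$ with $\si_\al\circ\phi\neq 0$ (this is exactly what the invariant/cohomology computations of $(S_k^*)^{\W}$ for $\W_{2,b},\W_{b,2}$ are for, with $\al=(n-2,2),(n-6,6),(n-6,4,2),(n-6,2^3)$), combined with homomorphisms ${}^\pi M^\al\to\End_\F(L)$ not killing $S^\al$, obtained from explicit character evaluations (Lemmas~\ref{L7}, \ref{L181224_3}, \ref{end42}, \ref{end222}), plus the branching-theoretic comparison of $\End_{\ts_{n-2,2}}$ with $\End_{\ts_{n-1}}$ and Grothendieck-group composition-length arguments for the exceptional families in $\TR_p(n)$. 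Your character computation is proposed only for the second basic module, i.e.\ after the unavailable dimension reduction. Second, your claim that for intransitive $k\geq 3$ ``iterating the branching criterion produces incompatible constraints'' is unsubstantiated: irreducibility over $\ts_{n-3,3}\cap G$ does force $\la\in\JS^{(0)}$, but branching alone does not then yield reducibility; the paper again needs the homomorphism method with $\al=(n-3,3)$ (Lemmas~\ref{inv3intr}, \ref{L7}, \ref{inv_end}).

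A further gap is the descent to non-maximal imprimitive subgroups. Beyond small-$n$ computer checks, this requires Proposition~\ref{P050525} and Theorem~\ref{kt-n22}: one restricts to $\ts_{n-1}$ or $\ts_{n-2}$, invokes the classifications one rank down (including the primitive case, Theorem~\ref{TAKT}) to show $\pi(H)$ acts $3$-homogeneously on its long orbit, and then uses Kantor's and Cameron's classifications together with a Goursat/socle analysis and the bound $(\dim L)^2\leq 2|\Aut(S)|$ to reduce to $\pi(H)=\s_{n-2,2}$ or $H\leq\hW_{n/2,2},\hW_{2,n/2}$; this is also where the Table~II entries come from. Likewise, inside $\hW_{b,2}$ and $\hW_{2,b}$ the second basic analysis is not only about the index-two subgroups: one must eliminate all $H$ whose top group $K$ is primitive but does not contain $\A_b$ (Theorem~\ref{thm:wr2}), and on the $\hW_{2,b}$ side one first needs a structural classification of the subgroups of $\W_{2,b}$ surjecting onto $\s_b$ (Lemma~\ref{L310325}). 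Your proposal gestures at these points but does not supply the ideas that make them work.
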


\setlength{\extrarowheight}{4pt}
{\small
\[\begin{array}{|c|c|c|c|c|}
\hline
L&\dim L&G&H&p\\
\hline\hline
D((3,2,1);\pm)&4&\ts_6&\hW_{3,2}&p\geq 7\\
\hline
D((3,2,1);\pm)&4&\ts_6&\hW_{2,3}&p\geq 7\\
\hline
E((3,2,1),0)&4&\tA_6&\hW_{3,2}\cap\tA_6&p\geq 7\\
\hline
D((4,3,2,1),0)&96&\ts_{10}&\hW_{5,2}&p\geq 7\\
\hline
E((4,3,2,1);\pm)&48&\tA_{10}&\hW_{5,2}\cap\tA_{10}&p\geq 7\\
\hline
\end{array}\]
\setlength{\extrarowheight}{0pt}

\vspace{1mm}
\centerline{{\sc Table I}: {Non-serial irreducible restrictions to maximal subgroups $\hW_{a,b}\cap G$}}
}

\setlength{\extrarowheight}{4pt}
{\small

\[\begin{array}{|c|c|c|c|c|}
\hline
L&\dim L & G&\pi(H)&p\\
\hline\hline
D((3,2,1);\pm)&4&\ts_6&\pi(H)\cong\Z_5\rtimes\Z_4< \s_{5,1}&p\geq 7\\
\hline
D((3,2,1);\pm)&4&\ts_6&\pi(H)<\W_{3,2}\text{ with }\pi(H)\cap\s_{3,3}=\A_{3,3}\text{ and }\pi(H)\not\leq\s_{3,3}&p\geq 7\\
\hline
D((3,2,1);\pm)&4&\ts_6&\pi(H)=\W_{2,2}\times\s_2&p\geq 5\\
\hline
E((4,2,1);\pm)&6&\tA_7&\pi(H)\cong\A_5\text{ primitive in }\s_{6,1}&p=3\\
\hline
\end{array}\]
\setlength{\extrarowheight}{0pt}

\centerline{{\sc Table II}: {Non-serial irreducible restrictions  to non-maximal imprimitive subgroups}}
}

\vspace{3mm}
Theorem~\ref{imprimitive} substantially strengthens \cite[Theorem D]{KT}. Note also that \cite[Theorem D]{KT} contains a gap---it missed a case corresponding to the case (iii) of Theorem~\ref{imprimitive}, and its corrected and expanded 
version is proved in Theorem \ref{kt-n22}.

Initial considerations indicate that the basic modules may yield many non-maximal imprimitive subgroups with irreducible restrictions, and for this reason we have to exclude them. However, for reader's convenience we cite the following results from \cite{KT}.

\begin{MainTheorem} \cite[Theorem E]{KT} 
Let $G=\ts_n$ or $\tA_n$, 
$L$ be a {\bf basic}  $\FF G$-module, and $H$ be a subgroup of $G$ such that $\pi(H)<\s_n$ is {\bf maximal imprimitive}. 
Then 
$L{\downarrow}_{H}$ is irreducible if and only if one of the following holds:
\begin{enumerate}[\rm(i)]
\item $G=\ts_n$ and one of the following holds:
\begin{enumerate}[\rm(a)]
\item $H=\ts_{n-a,a}$, $a<n/2$, $p{\not{\mid}}\,a$, 
$p{\not{\mid}}\,(n-a)$, and either $2|n$, or $2 \nmid n$ and $p\mid n$.
\item $H=\hW_{a,b}$ for some $a,b\geq 2$ with $n=ab$ and 
$p{\not{\mid}}\,a$.
\end{enumerate}
\item $G=\tA_n$ and one of the following holds:
\begin{enumerate}[\rm(a)]
\item $H=\tA_{n-a,a}$, $a<n/2$, $p{\not{\mid}}\,a$, 
$p{\not{\mid}}\,(n-a)$, and either $2 \nmid n$, or $2p|n$.
\item $H= \hW_{a,b}\cap\tA_n$ for some $a,b\geq 2$ with $n=ab$ and
$p{\not{\mid}}\,a$.
\end{enumerate}
\end{enumerate}
\end{MainTheorem}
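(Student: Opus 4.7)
The plan is to analyze separately the two families of maximal imprimitive subgroups of $\s_n$ arising as $\pi(H)$: the intransitive Young subgroups $\s_{n-a,a}$ (with $a<n/2$) and the transitive wreath products $\W_{a,b}$ (with $n=ab$, $a,b\geq 2$). In each case the goal is to decide when the basic module $L$ remains irreducible upon restriction to $H$, using (a) the explicit dimension formulas for basic modules recalled before Theorem~\ref{TAKT}, and (b) the fine branching behaviour of basic spin modules. Throughout, the passage between statements for $G=\ts_n$ and for $G=\tA_n$ is governed by the standard Clifford-theoretic dictionary: a $\ts_n$-module of type $D(\la;\pm)$ restricts irreducibly to $\tA_n$ whereas a $D(\la;0)$ splits as $E(\la;\pm)$; correspondingly, irreducibility on $H=\ts_{\dots}\cap G$ translates into a clean condition on $a_p$ of the labelling partition.

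For the Young subgroup case I would first restrict one row at a time using the modular Morris branching rule (i.e.\ the spin analogue of the Brundan--Kleshchev branching theorem). Since $L$ is basic, the labelling partition $\beta=\beta_n$ is the basic spin partition of $n$, and its $i$-restrictions have a particularly transparent form: $L\da_{\ts_{n-1,1}}$ is either irreducible or a sum of two basic-type summands, with the dichotomy controlled by $p\mid n$. Iterating this $a$ times and tracking when branching becomes ``genuinely'' multi-layered, I would show that $L\da_{\ts_{n-a,a}}$ remains irreducible precisely when $p\nmid a$, $p\nmid (n-a)$ and the parity/$p$-divisibility condition in (i)(a), (ii)(a) holds. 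Non-irreducibility outside this range is witnessed by exhibiting at least two nonzero $i$-restrictions.

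For the wreath product case the approach is structural rather than inductive. When $p\nmid a$, the basic $\F\ts_n$-module admits a Clifford-algebraic description on restriction to $\hW_{a,b}$: roughly, $L\da_{\hW_{a,b}}$ can be realised as a twisted $b$-fold tensor product of the basic spin module of $\ts_a$, with the permutation group $\s_b$ acting by its natural (possibly sign-twisted) permutation on the tensor factors and an appropriate $2$-cocycle correction coming from the double cover. A dimension check against the formulas for basic modules identifies this with a single spin irreducible of $\hW_{a,b}$, giving sufficiency. The case $p\mid a$ is eliminated by comparing $\dim L$ with the largest available spin irreducible dimension of $\hW_{a,b}$: the basic dimension formula grows too fast to fit. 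Passage from $\hW_{a,b}$ to $\hW_{a,b}\cap\tA_n$ is then a routine Clifford argument, producing the parity conditions in (ii)(b).

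The hard part, and the real obstacle, is the wreath-product half. The double cover $\hW_{a,b}=\pi^{-1}(\s_a\wr\s_b)$ is genuinely different from the naive iterated cover $\ts_a\wr\ts_b$; its Schur multiplier involves a nontrivial $2$-cocycle measuring the failure of transpositions in different blocks to commute, and consequently its spin representation theory is not a direct tensor-induced theory. Producing the twisted-tensor realisation of $L\da_{\hW_{a,b}}$ uniformly in all odd characteristics $p$, and verifying that it is irreducible of the correct dimension, is the crux. Once this is in hand, sufficiency follows and the necessity of the $p\nmid a$ hypothesis is forced by the dimension bound. Combined with the intransitive analysis and Theorem~\ref{TAKT} to rule out subgroups embedded through non-maximal chains, this completes the classification.
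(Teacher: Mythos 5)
There is no internal proof in this paper to measure you against: the statement is quoted verbatim from \cite[Theorem E]{KT} and is used here as an imported ingredient, so the only question is whether your sketch would stand on its own. As written it does not. For the intransitive half, your plan of restricting one row at a time via the modular branching rule and ``tracking when branching becomes genuinely multi-layered'' only controls composition factors of one-step restrictions; it does not certify irreducibility over the product subgroup $\ts_{n-a,a}\cap G$. The standard route is different and sharper: in characteristic $0$ every constituent of $S(n)\da_{\ts_{n-a,a}}$ is basic$\,\circledast\,$basic, so after reduction all composition factors of $D(\balpha_n)\da_{\T_{n-a,a}}$ are copies of $D(\balpha_{n-a},\balpha_a)$, and irreducibility is then decided by comparing $\dim D(\balpha_n)$ with $\dim D(\balpha_{n-a},\balpha_a)$ via Wales' dimension formulas (Table III) together with the type ($\Mtype$/$\Qtype$) bookkeeping of Lemmas \ref{product} and \ref{lmodules}; that is exactly where the conditions $p\nmid a$, $p\nmid(n-a)$ and the parity clauses in (i)(a), (ii)(a) come from. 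Your sketch gestures at the Clifford dictionary but never performs this comparison, and the parity conditions cannot be recovered from the iterated branching picture alone.

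The decisive gap is the wreath-product half, and you say so yourself: the twisted-tensor realisation of $L\da_{\hW_{a,b}}$ ``is the crux'' and is left unestablished. Note that the only structural result of this kind available in the present paper is Lemma \ref{L160125}, which identifies $\T_{\W_{2,b}}$ with $\T_b\otimes\Cl_b$, i.e.\ it covers $a=2$ only; for general $a$ the cocycle of $\hW_{a,b}$ does not give a tensor-product superalgebra, and one has to argue via Clifford theory over the normal base subgroup $\ts_{a,\dots,a}$, using that all its constituents in $L\da$ are outer products of basic modules of $\ts_a$ (again by the characteristic-$0$ branching plus reduction) and then controlling the extension to $\hW_{a,b}$. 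Your fallback for necessity of $p\nmid a$ is also too crude: comparing $\dim L\approx 2^{\lfloor (n-1)/2\rfloor}$ with ``the largest available spin irreducible dimension of $\hW_{a,b}$'' fails, since $M_\F(\hW_{a,b})\leq\sqrt{|\W_{a,b}|}=\sqrt{(a!)^b\,b!}$ vastly exceeds $\dim L$ once $a\geq 4$; the honest argument must restrict attention to irreducibles of $\hW_{a,b}$ whose base-group constituents are products of basic modules of $\ts_a$ (whose dimension drops by a factor of $2^{b/2}$-type when $p\mid a$) and only then count. Until both the twisted realisation (or an equivalent Clifford-theoretic argument) and this refined dimension comparison are carried out, the wreath case — and hence the theorem — is not proved by your outline.
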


Finally, we point out that the irreducible restrictions $L\da_H$ for the case where $H$ almost quasi-simple are classified in \cite[Theorem C]{KT}---that result certainly includes the case where $L$ is basic.

\section{Generalities}

\subsection{Ground field} Throughout the paper we work over an algebraically closed filed $\F$ of characteristic $p>2$. In particular, unless otherwise stated, all representations are over $\F$. Occasionally we will also use complex representations. 

For an $\F$-algebra $A$ denote by $\Irr(A)$ a complete non-redundant set of irreducible $A$-modules up to isomorphism.

\subsection{Groups and modules}
\label{SSGrMod}
Let $G$ be a finite group. All $\F G$-modules are assumed to be finite dimensional. We denote by $\bone_G$ or simply $\bone$ the trivial $\F G$-module. For $\F G$-modules $U,V$ we denote by $\Hom_G(U,V)$ the space of all $\F G$-module homomorphisms from $U$ to $V$, and by $\Hom_\F(U,V)$ the space of all linear maps considered as an $\F G$-module via 
\begin{equation}\label{EHomFMod}
(g\cdot f)(u)=gf(g^{-1}u)\qquad (f\in \Hom_\F(U,V),\ u\in U,\ g\in G).
\end{equation}

 We denote by $M_\F(G)$ the maximal dimension of an irreducible $\F G$-module. We will often use the classical inequalities  $M_\F(G)\leq M_\C(G)$, and 
 \begin{equation}\label{E220925}
 M_\F(\hat G)\leq \sqrt{|G|}
 \end{equation} 
  for a central extension $\hat G$ of $G$.

Let $H$ be another finite group. For an $\F G$-module $V$ and an $\F H$-module $W$ we denote by $V\boxtimes W$ the outer tensor product of $V$ and $W$, which is naturally an $\F(G\times H)$-module. On the other hand, given another $\F G$-module $V'$ we denote by $V\otimes V'$ the inner tensor product of $V$ and $V'$, which is an $\F G$-module via $g(v\otimes v')=gv\otimes gv'$ for all $g\in G,\,v\in V,\,v'\in V'$.

If $H\leq G$ is a subgroup, $V$ is an $\F G$-module, and $W$ is an $\F H$-module, we denote by $V\da^G_{H}$ or simply $V\da_H$ the restriction of $V$ to $H$, and by $W\ua^G_{H}$ or simply $W\ua^G$ the induction of $W$ to $G$. 

Let $V$ be an $\F G$-module. We denote by $V^*$ the dual $\F G$-module. We denote by $V^G$ the set of {\em $G$-invariant vectors} in $V$. 
We write $\soc V$ and $\head V$ for the socle and head of $V$, respectively. 
If $V_1,\dots,V_a$ are $\F G$-modules, we write 
$$
V\sim V_1|\dots|V_a
$$
to indicate that $V$ has a submodule filtration with subquotients $V_1,\dots,V_a$ listed from bottom to top.

For $L\in\Irr(\F G)$ and any $\F G$-module $V$ we denote by $[V:L]$ the composition multiplicity of $L$ in $V$. 

\begin{Corollary} \label{CMin}
Let $V$ be an $\F G$-module and $L\in\Irr(\F G)$ such that $[V:L]=1$. Suppose $W$ is a submodule of $V$ with $\head W\cong L$. Then $W$ is the unique smallest submodule of $V$ having $L$ as a composition factor. 
\end{Corollary}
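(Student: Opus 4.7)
The plan is to show that $W$ is contained in every submodule $W' \subseteq V$ that has $L$ as a composition factor; uniqueness of a smallest such submodule is then automatic. The leverage comes from combining the hypothesis $[V:L]=1$ with the fact that a module whose head is irreducible has a unique maximal submodule.

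First I would record the key consequence of $\head W \cong L$ being irreducible: $\rad W$ is then the unique maximal submodule of $W$ (any other maximal submodule would contain $\rad W$, hence equal it), so every proper submodule of $W$ is contained in $\rad W$. Consequently every proper quotient of $W$ surjects onto $W/\rad W \cong L$, and in particular has $L$ as a composition factor.

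Next, take an arbitrary submodule $W' \subseteq V$ with $[W':L]\geq 1$, and suppose for contradiction that $W\not\subseteq W'$, so $W\cap W' \subsetneq W$. By the previous paragraph, $W/(W\cap W')$ has $L$ as a composition factor, so $[W:L]\geq [W\cap W':L]+1$. On the other hand, applying additivity of composition multiplicities to the standard short exact sequence
\[
0 \longrightarrow W\cap W' \longrightarrow W\oplus W' \longrightarrow W+W' \longrightarrow 0
\]
yields $[W\cap W':L] + [W+W':L] = [W:L]+[W':L]$. Since $[V:L]=1$, each of $[W:L]$, $[W':L]$, $[W+W':L]$ is at most $1$, while the first two are at least $1$; this forces $[W\cap W':L]\geq 1$, and hence $[W:L]\geq 2$, the desired contradiction.

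The only step that needs a moment of care is the observation about proper quotients of $W$ mapping onto $L$, which hinges on $\head W$ being irreducible so that $W$ has a unique maximal submodule; after that, the rest is a routine bookkeeping of composition multiplicities on a short exact sequence, and there is no substantive obstacle.
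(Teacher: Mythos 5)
Your proof is correct and takes essentially the same route as the paper: both arguments combine the fact that $\head W\cong L$ forces $L$ to appear in every nonzero quotient of $W$ with the multiplicity bound $[V:L]=1$ to conclude that $W\cap W'$ cannot be a proper submodule of $W$. The only cosmetic difference is that the paper obtains $[W\cap W':L]\geq 1$ by mapping $W$ into $V/W'$ (which has no composition factor $L$), whereas you extract it from the exact sequence $0\to W\cap W'\to W\oplus W'\to W+W'\to 0$; both bookkeeping devices are fine.
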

\begin{proof}
If $X$ is a submodule of $V$ with $[X:L]\neq 0$, then $[V/X:L]=0$, so the composition $W\into V\onto V/X$  has kernel $K$ satisfying $[K:L]\neq 0$, hence $K=W$. Thus $W\subseteq X$.  
\end{proof}

For $m\in\Z_{\geq 0}$, we write $H^m(G,V)$ for the {\em $m$th cohomology space} of $G$ with coefficients in an $\F G$-module $V$, referring the reader for example to \cite{Brown}, \cite[Chapter 1]{Guich} for more information on group cohomology. We will use the following well-known result.

\begin{Lemma} \label{LHS} 
Let $G=A\rtimes B$ be a finite group, $V$ be an $\F G$-module, and $m\in \Z_{\geq 0}$. 
\begin{enumerate}
\item[{\rm (i)}] If $A$ is a $p'$-group then $H^m(G,V)\cong H^m(B,V^A)$;
\item[{\rm (ii)}] If $B$ is a $p'$-group then $H^m(G,V)\cong H^m(A,V)^{B}.$
\end{enumerate}
\end{Lemma}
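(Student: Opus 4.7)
The plan is to deduce both statements from the Lyndon--Hochschild--Serre (LHS) spectral sequence applied to the normal subgroup $A\trianglelefteq G$. Since $G=A\rtimes B$, we have $G/A\cong B$, and so the LHS spectral sequence takes the form
$$E_2^{s,t}=H^s(B,H^t(A,V))\ \Longrightarrow\ H^{s+t}(G,V),$$
with the $B$-module structure on $H^t(A,V)$ induced by the conjugation action of $B$ on $A$ and the given $G$-action on $V$; this is available over the field $\F$ (see e.g.\ \cite[Ch.~VII]{Brown} or \cite[Ch.~1]{Guich}).

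For part (i), suppose $A$ is a $p'$-group. Then by Maschke's theorem $\F A$ is semisimple, and hence $H^t(A,W)=0$ for all $t>0$ and every $\F A$-module $W$. Thus the above spectral sequence has only the row $t=0$ nonzero, so it degenerates at $E_2$ and collapses to give
$$H^m(G,V)\cong E_2^{m,0}=H^m(B,H^0(A,V))=H^m(B,V^A),$$
which is the desired isomorphism.

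For part (ii), suppose instead that $B$ is a $p'$-group. The same semisimplicity argument applied to $B$ shows that $H^s(B,W)=0$ for all $s>0$ and every $\F B$-module $W$. Hence the spectral sequence degenerates onto the column $s=0$, yielding
$$H^m(G,V)\cong E_2^{0,m}=H^0(B,H^m(A,V))=H^m(A,V)^{B},$$
as required. The only points that require care are the existence of the LHS spectral sequence over $\F$ and the identification of the $B$-action on $H^m(A,V)$ with the conjugation action (so that $H^0(B,-)$ really computes $B$-invariants); both are standard, so there is no genuine obstacle here.
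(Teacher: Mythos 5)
Your proof is correct and is essentially the paper's argument: in both cases one takes the Lyndon--Hochschild--Serre spectral sequence for $A\lhd G$ with quotient $B$ and observes that it collapses because the $p'$-group has no higher cohomology over $\F$. The paper states this in one line with a reference to \cite[9.1]{Guich}; you have merely written out the degeneration explicitly.
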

\begin{proof}
Since $A$ (resp. $B$) is a $p'$-group, 
the corresponding Lyndon-Hochschild-Serre spectral sequences 
\cite[9.1]{Guich}
collapse. 
\end{proof}

\begin{Lemma} \label{LInvSum} 
$V\sim V_1|\cdots| V_t$ be an $\F G$-module. If $H^1(G,V_r)=0$ for all $r=1,\dots,t$ then $\dim V^G=\dim V_1^G+\dots+\dim V_t^G$ and $H^1(G,V)=0$. 
\end{Lemma}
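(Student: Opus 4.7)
The plan is to proceed by induction on the length $t$ of the filtration, with the base case $t=1$ being trivial since $V=V_1$. For the inductive step, let $W\subseteq V$ be the submodule corresponding to the truncated filtration, so that $W\sim V_1|\cdots|V_{t-1}$ and $V/W\cong V_t$. By the inductive hypothesis applied to $W$, we have $\dim W^G=\sum_{r=1}^{t-1}\dim V_r^G$ and $H^1(G,W)=0$.

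Next I would feed the short exact sequence $0\to W\to V\to V_t\to 0$ into the long exact sequence of group cohomology (see \cite{Brown} or \cite[Chapter 1]{Guich}) to get
\begin{equation*}
0\to W^G\to V^G\to V_t^G\to H^1(G,W)\to H^1(G,V)\to H^1(G,V_t)\to\cdots.
\end{equation*}
Since $H^1(G,W)=0$ by induction, the connecting map out of $V_t^G$ is zero, so the sequence $0\to W^G\to V^G\to V_t^G\to 0$ is short exact, giving the dimension equality $\dim V^G=\dim W^G+\dim V_t^G=\sum_{r=1}^t\dim V_r^G$. Similarly, since $H^1(G,W)=0$ and $H^1(G,V_t)=0$ by hypothesis, exactness of
\begin{equation*}
0=H^1(G,W)\to H^1(G,V)\to H^1(G,V_t)=0
\end{equation*}
forces $H^1(G,V)=0$, completing the induction.

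There is essentially no obstacle here; the only thing to be careful about is to verify that the inductive hypothesis applies to $W$ (so that the filtration of $W$ inherited from $V$ consists of the first $t-1$ subquotients $V_1,\dots,V_{t-1}$), which is immediate from the definition of the notation $V\sim V_1|\cdots|V_t$ introduced in Section~\ref{SSGrMod}. The whole argument is a routine diagram chase with the long exact sequence and uses no special features of the symmetric group setting, so it slots in as a general lemma to be invoked later whenever a filtration with $H^1$-vanishing subquotients arises.
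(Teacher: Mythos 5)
Your proof is correct and follows essentially the same argument as the paper: induction on the filtration length combined with the long exact cohomology sequence of a short exact sequence extracted from the filtration. The only (inessential) difference is that you split off the top subquotient $V_t$ and use the inductive vanishing $H^1(G,W)=0$ for the surjectivity onto invariants, whereas the paper splits off the bottom submodule $V_1$ and uses the hypothesis $H^1(G,V_1)=0$ directly.
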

\begin{proof}
The exact sequence $0\to V_1\to V\to W\to 0$ with $W\sim V_2|\cdots| V_t$ yields the exact sequence 
$$0\to V_1^G\to V^G\to W^G\to H^1(G,V_1)\to H^1(G,V)\to H^1(G,W),$$ 
and the result follows by induction on $t$. 
\end{proof}

The following well-known lemma follows from the Clifford theory (using $p\neq 2$). In it, $\sgn$ is the non-trivial $1$-dimensional $\F G$-module with kernel $G_0$, and for an $\F G_0$-module $W$, we denote by $W^\si$ the $\F G_0$-module obtained from $W$ by twisting with the conjugation by $\si\in G\setminus G_0$. 

\begin{Lemma} \label{LClifford}
Let $G_0<G$ be a subgroup of index $2$. Then we can write
$$\Irr(\F G)=\{D_i^{\pm},D_j^0\mid 1\leq i\leq a,\,1\leq j\leq b\}\,\  \text{and}\,\ \Irr(\F G_0)=\{E_i^0,E_j^\pm\mid 1\leq i\leq a,\,1\leq j\leq b\}$$ 
with $D_i^\pm\otimes \sgn\cong D_i^\mp$, $D_i^0\otimes \sgn\cong D_i^0$, $(E_i^\pm)^\si\cong E_i^\mp$, $(E_i^0)^\si\cong E_i^0$, and 
$$
D_i^\pm\da_{G_0}\cong E_i^0,\ D_i^0\da_{G_0}\cong E_i^+\oplus E_i^-,\ E_i^0\ua^G\cong D_i^+\oplus D_i^-,\ E_i^\pm\ua^G\cong D_i^0.
$$ 
\end{Lemma}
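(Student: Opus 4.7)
My plan is to apply standard Clifford theory to the index-two normal subgroup $G_0 \trianglelefteq G$, using three basic tools: Frobenius reciprocity, Mackey's decomposition for $[G:G_0]=2$ (giving $V\ua^G\da_{G_0} \cong V \oplus V^\si$ for $V \in \Irr(\F G_0)$), and Schur's lemma. The assumption $p \neq 2$ enters through $\bone\ua^G \cong \bone \oplus \sgn$ with two distinct summands, and through the resulting tensor identity $(V\da_{G_0})\ua^G \cong V \otimes \bone\ua^G \cong V \oplus (V\otimes\sgn)$ for every $V \in \Irr(\F G)$.

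For fixed $D \in \Irr(\F G)$, Frobenius reciprocity combined with the tensor identity gives
$$\dim \End_{G_0}(D\da_{G_0}) = \dim \Hom_G(D \oplus D\otimes\sgn,\, D) = 1 + c,$$
where $c = \dim \Hom_G(D\otimes\sgn, D)$ equals $1$ if $D \otimes \sgn \cong D$ and $0$ otherwise. Hence $D\da_{G_0}$ is irreducible exactly when $D \otimes \sgn \not\cong D$; otherwise it decomposes as $E \oplus E'$ with $E \not\cong E'$ (the two-dimensional endomorphism ring rules out an isotypic $E \oplus E$, whose endomorphism ring would be four-dimensional), and Clifford's theorem forces $E' \cong E^\si$. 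This partitions $\Irr(\F G)$ into $\sgn$-pairs $\{D, D\otimes\sgn\}$, which I label $\{D_i^+, D_i^-\}$ for $i = 1, \ldots, a$, and $\sgn$-fixed irreducibles, labeled $D_j^0$ for $j = 1, \ldots, b$. Define $E_i^0 := D_i^+\da_{G_0}$ (which equals $D_i^-\da_{G_0}$ since $\sgn\da_{G_0} \cong \bone$), and let $E_j^\pm$ denote the two summands of $D_j^0\da_{G_0}$.

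A symmetric argument handles the other side. For $E \in \Irr(\F G_0)$, Mackey gives $E\ua^G\da_{G_0} \cong E \oplus E^\si$, so Frobenius yields $\dim \End_G(E\ua^G) = 1 + \dim\Hom_{G_0}(E, E^\si)$, which is $1$ if $E^\si \not\cong E$ and $2$ otherwise. In the first case $E\ua^G$ is irreducible, and the identity $(E\ua^G)\otimes\sgn \cong (E \otimes \sgn\da_{G_0})\ua^G \cong E\ua^G$ shows this irreducible is $\sgn$-fixed, hence equals some $D_j^0$; then $E$ must be one of $\{E_j^+, E_j^-\}$. In the second case $E\ua^G \cong M \oplus N$ with $M \not\cong N$; both $M, N$ restrict to $E$ by Frobenius multiplicity, so they lie on the non-$\sgn$-fixed side of the previous paragraph and form a $\sgn$-pair $\{D_i^+, D_i^-\}$ with $N \cong M\otimes\sgn$, giving $E = E_i^0$.

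The remaining formulas follow immediately: $E_i^0\ua^G \cong D_i^+ \oplus D_i^-$ was just verified, and $E_j^\pm\ua^G \cong D_j^0$ follows from Frobenius together with the dimension count $\dim E_j^\pm\ua^G = 2\dim E_j^\pm = \dim D_j^0\da_{G_0} = \dim D_j^0$; the $\si$-twist relations $(E_i^0)^\si \cong E_i^0$ and $(E_j^+)^\si \cong E_j^-$ are forced by Clifford's orbit description. The delicate point throughout is the bookkeeping ensuring the two summands in each decomposition are genuinely distinct; this is exactly where $p \neq 2$ is essential, as it keeps $\sgn$ separated from $\bone$ and prevents the pair $\{D_i^+, D_i^-\}$ from collapsing to a single module.
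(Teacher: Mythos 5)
Your argument is correct and is exactly the standard Clifford-theory route the paper has in mind: the paper gives no proof of this lemma, remarking only that it ``follows from the Clifford theory (using $p\neq 2$)'', and your writeup is precisely that standard argument (restriction/induction adjunction, Mackey for index $2$, the projection formula $V\da_{G_0}\ua^G\cong V\oplus(V\otimes\sgn)$, and endomorphism-dimension counts). The few steps you leave implicit — semisimplicity of $E\ua^G$ via the Maschke-type averaging over the two cosets of $G_0$, Krull--Schmidt to pin down $N\cong M\otimes\sgn$, and the non-redundancy of the resulting labels — are routine and fully consistent with that citation.
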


\begin{Corollary} \label{CSimpleHeadSnAn}
Let $V$ be an $\F G$-module and ${G_0}<G$ be a subgroup of index $2$. Then $\soc(V\da_{G_0})\cong (\soc V)\da_{G_0}$ and $\head(V\da_{G_0})\cong (\head V)\da_{G_0}$. 
\end{Corollary}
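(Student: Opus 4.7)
The plan is to prove the socle statement first and then deduce the head statement by duality, using Lemma~\ref{LClifford} together with the fact that $[G:G_0]=2$ is coprime to $p$.

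First I would establish the inclusion $(\soc V)\da_{G_0}\subseteq\soc(V\da_{G_0})$. By definition $\soc V$ is semisimple as an $\F G$-module, so it is a direct sum of simple $\F G$-modules of the form $D_i^\pm$ and $D_j^0$. By Lemma~\ref{LClifford} each such simple module restricts to $G_0$ as either a single simple module $E_i^0$ or a direct sum $E_j^+\oplus E_j^-$. Hence $(\soc V)\da_{G_0}$ is semisimple as an $\F G_0$-module, and being a submodule of $V\da_{G_0}$ it must lie in $\soc(V\da_{G_0})$.

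Next I would prove the reverse inclusion. Let $T:=\soc(V\da_{G_0})$ and pick any $\sigma\in G\setminus G_0$. Since $G_0\trianglelefteq G$, the subspace $\sigma T\subseteq V$ is a semisimple $\F G_0$-submodule, so $T+\sigma T\subseteq T$. Thus $T$ is $G$-stable. To conclude $T\subseteq\soc V$ it remains to show $T$ is semisimple as an $\F G$-module. Let $U\subseteq T$ be any $\F G$-submodule; as $T$ is semisimple over $\F G_0$, there is an $\F G_0$-linear projection $\pi\colon T\to U$. Because $[G:G_0]=2$ is invertible in $\F$ (using $p>2$), the averaged map
\[
\tilde\pi=\tfrac12\bigl(\pi+\sigma\pi\sigma^{-1}\bigr)
\]
is an $\F G$-linear projection onto $U$, whose kernel is an $\F G$-complement to $U$ inside $T$. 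Hence $T$ is semisimple over $\F G$, so $T\subseteq\soc V$, and combining with the first inclusion gives $\soc(V\da_{G_0})=(\soc V)\da_{G_0}$.

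Finally, for the head statement I would pass to duals. For a finite-dimensional $\F G$-module $M$ one has a natural isomorphism $\soc(M^*)\cong(\head M)^*$, and $(M\da_{G_0})^*\cong M^*\da_{G_0}$. Applying the socle result already proved to $V^*$ yields
\[
(\head V)^*\da_{G_0}\cong \soc(V^*)\da_{G_0}\cong \soc(V^*\da_{G_0})\cong\soc((V\da_{G_0})^*)\cong(\head(V\da_{G_0}))^*,
\]
and dualizing once more gives $(\head V)\da_{G_0}\cong\head(V\da_{G_0})$. The only subtle point is the averaging step, which is where the hypothesis $p>2$ is essential; everything else is formal.
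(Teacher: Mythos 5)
Your proof is correct, but it takes a different route from the paper. The paper's proof is a multiplicity count: by Frobenius reciprocity and the labelling of Lemma~\ref{LClifford}, $\Hom_{G_0}(E_i^0,V\da_{G_0})\cong\Hom_G(D_i^+\oplus D_i^-,V)$ and $\Hom_{G_0}(E_i^\pm,V\da_{G_0})\cong\Hom_G(D_i^0,V)$, so the multiplicity of each simple in $\soc(V\da_{G_0})$ matches its multiplicity in $(\soc V)\da_{G_0}$, and the head is handled by the symmetric argument (no dualizing). You instead argue by double inclusion: $(\soc V)\da_{G_0}$ is semisimple over $G_0$ (Clifford theory), hence lies in $\soc(V\da_{G_0})$; conversely $\soc(V\da_{G_0})$ is $\sigma$-stable by normality of $G_0$, and your averaging of a $G_0$-equivariant projection over the two cosets (legitimate since $2$ is invertible, $p>2$; one should note that $\sigma\pi\sigma^{-1}$ is $G_0$-linear precisely because $G_0\trianglelefteq G$, and that $\tilde\pi|_U=\id$ because $U$ is $G$-stable) shows it is semisimple over $G$, hence contained in $\soc V$; then you get the head by dualizing via $\soc(M^*)\cong(\head M)^*$. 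Your version actually yields the slightly stronger statement that the two socles coincide as subspaces of $V$, and it only uses that restriction of simples to a normal subgroup of index prime to $p$ is semisimple, not the full classification in Lemma~\ref{LClifford}; the paper's version is shorter, avoids duality for the head, and fits the notation $D_i^\eps,E_i^\eps$ that is used throughout. Both arguments are sound.
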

\begin{proof}
Using the notation of Lemma~\ref{LClifford}, we have by the Frobenius reciprocity, 
$$\Hom_{G_0}(E_i^0,V\da_{G_0})\cong \Hom_G(D_i^+\oplus D_i^-,V)\quad\text{and}\quad\Hom_{G_0}(E_i^\pm,V\da_{G_0})\cong \Hom_G(D_i^0,V),
$$ which implies the result on the socle, and the result on the head is proved similarly.
\end{proof}

For an $\F G$-module $V$ we denote by $\chi_V$ its $\F$-valued character, i.e. $\chi_V(x)$ is the trace of $x$ acting on $V$ for all $x\in\F G$. 
If $p>0$ and $V=\bar W$ is a reduction modulo $p$ of a $\C G$-module $W$ (using an appropriate $p$-modular system), then $\chi_V(g)=\bar\chi_W(g)$, reduction modulo $p$ of $\chi_W(g)$ for all $g\in G$.

\subsection{Superalgebras and supermodules}
\label{SSSuperSuper}
A {\em superspace} is a $\Z/2\Z$-graded vector space $V=V_{\0}\oplus V_{\1}$. Let $\eps\in\Z/2\Z$. 
For $v\in V_\eps$, we write $|v|=\eps$. 
Let $V,W$ be superspaces. The tensor product $V\otimes W$ is considered as a superspace via $|v\otimes w|=|v|+|w|$ for all homogeneous $v\in V$ and $w\in W$. For $\de\in\Z/2\Z$, a parity $\de$ homogeneous linear map $f:V\to W$ is a linear map satisfying $f(V_{\eps})\subseteq W_{\eps+\de}$ for all $\eps$. We denote the space of all parity $\de$ homogeneous linear maps  from $V$ to $W$ by $\Hom(V,W)_{\de}$, and set $\Hom(V,W):=\bigoplus_{\de\in\Z/2\Z}\Hom(V,W)_{\de}.$ We write $V\cong W$ (resp. $V\simeq W$) if there is an isomorphism in $\Hom(V,W)$ (resp. $\Hom(V,W)_\0$). 

A~{\em superalgebra} is a superspace $A$ which is a (unital)  algebra with $A_\eps A_\de\subseteq A_{\eps+\de}$ for all $\eps,\de\in\Z/2\Z$. An {\em antiautomorphism} of a superalgebra $A$ is an even linear map $\tau:A\to A$ which satisfies $\tau(ab)=\tau(b)\tau(a)$.

\begin{Example} \label{ExImp} 
{\rm 
An important example is as follow. Let $G$ is be finite group with a subgroup $G_0\leq G$ of index $2$. Then the group algebra $A:=\F G$ is a superalgebra with $(\F G)_\0=\F G_0$, $(\F G)_\1=\spa(G\setminus G_0)$. 
}
\end{Example}

For a superalgebra $A$, a (left) {\em $A$-supermodule} is a superspace $V$ which is a left $A$-module with $A_\eps V_\de\subseteq V_{\eps+\de}$ for all $\eps,\de$. 
Let $V,W$ be graded $A$-supermodules. A 
parity $\de$ homogeneous graded $A$-supermodule homomorphism from $V$ to $W$ is a parity $\de$ homogeneous linear map $f:V\to W$ satisfying $f(av)=(-1)^{\de|a|}af(v)$ for all (homogeneous) $a\in A,\,v\in V$. We denote by $\Hom_A(V,W)_{\de}$ the space of all parity $\de$ homogeneous $A$-supermodule homomorphism from $V$ to $W$, and set 
$
\Hom_A(V,W):=\bigoplus_{\de\in\Z/2\Z}\Hom_A(V,W)_{\de}.
$
We write $V\cong W$ (resp. $V\simeq W$) if there is an isomorphism in $\Hom_A(V,W)$ (resp. $\Hom_A(V,W)_\0$).

In this paper all superalgebras and supermodules are assumed to be finite-dimensional.

If $\tau$ is an antiautomorphism of $A$ and $V$ is an $A$-supermodule, we define the structure of an $A$-supermodule on $V^*$ via $(af)(v)=f(\tau(a)v)$ for all $f\in V^*$, $a\in A$ and $v\in V$. The resulting $A$-supermodule will be denoted $V^\tau$ and called {\em $\tau$-dual of $V$}, or simply {\em dual of $V$} if it is clear which $\tau$ is used.

A {\em subsuperspace} of a superspace $V$ is a subspace $W\subseteq V$ such that $W=(W\cap V_\0)+(W\cap V_\1)$. 
A {\em subsupermodule} of an $A$-supermodule $V$ is a subsuperspace  which is also an $A$-submodule. An {\em irreducible $A$-supermodule} is a supermodule $L$ which has exactly two subsupermodules: $0$ and $L$. 
If $V$ is an $A$-supermodule and $L$ is an irreducible $A$-supermodule, the multiplicity of $L$ in $V$ is denoted $[V:L]$. 
A {\em completely reducible} $A$-supermodule is an $A$-supermodule isomorphic to a direct sum of irreducible $A$-supermodules.

\begin{Lemma} \label{LSelfDual}
Let $A$ be a superalgebra with an antiautomorphism $\tau$, $V$ be an $A$-supermodule, and $W\subsetneq V$ be a proper subsupermodule.  Suppose $V$ and $W$ are $\tau$-self-dual. Then $$\dim\End_A(V)> \dim\End_A(W).$$ 
\end{Lemma}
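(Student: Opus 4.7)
The plan is to prove the inequality by sandwiching the two endomorphism spaces between the off-diagonal Hom-spaces $\Hom_A(V,W)$ and $\Hom_A(W,V)$, which the $\tau$-duality identifies. Concretely, I would set $U := V/W$ and look at the short exact sequence
\begin{equation*}
0 \to W \xrightarrow{\iota} V \xrightarrow{q} U \to 0,
\end{equation*}
and work out the three comparisons below.

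First, apply the left-exact functor $\Hom_A(V,-)$ to this sequence to obtain
\begin{equation*}
0 \to \Hom_A(V,W) \to \End_A(V) \xrightarrow{q_*} \Hom_A(V,U),
\end{equation*}
where $q_*(f)=q\circ f$. The image of $\id_V$ under $q_*$ is $q$, which is nonzero because $W\subsetneq V$, so $\id_V$ does not lie in (the image of) $\Hom_A(V,W)$. Hence the strict inequality
\begin{equation*}
\dim \End_A(V) \;>\; \dim \Hom_A(V,W).
\end{equation*}
Second, since $V\cong V^{\tau}$ and $W\cong W^{\tau}$ and $X\mapsto X^{\tau}$ is a contravariant equivalence of the category of $A$-supermodules, one has a canonical isomorphism
\begin{equation*}
\Hom_A(V,W) \;\cong\; \Hom_A(W^{\tau},V^{\tau}) \;\cong\; \Hom_A(W,V).
\end{equation*}
Third, apply $\Hom_A(W,-)$ to the same short exact sequence to get
\begin{equation*}
0 \to \End_A(W) \xrightarrow{\iota_*} \Hom_A(W,V) \to \Hom_A(W,U),
\end{equation*}
where $\iota_*(f)=\iota\circ f$ is injective because $\iota$ is. Therefore $\dim \End_A(W) \leq \dim \Hom_A(W,V)$.

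Chaining these three comparisons gives
\begin{equation*}
\dim \End_A(V) \;>\; \dim \Hom_A(V,W) \;=\; \dim \Hom_A(W,V) \;\geq\; \dim \End_A(W),
\end{equation*}
which is the claim. There is no real obstacle here: the only point to verify with a bit of care is that duality genuinely exchanges $\Hom_A(V,W)$ and $\Hom_A(W,V)$ in the super setting, but this is a formal consequence of $(-)^{\tau}$ being a contravariant equivalence that sends $V,W$ to isomorphic supermodules, and the argument ignores the $\Z/2\Z$-grading on Hom spaces since we only compare total dimensions.
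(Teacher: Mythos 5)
Your proof is correct and is essentially the paper's argument: both exploit $\tau$-self-duality to reverse the arrow between $W$ and $V$ (the paper dualizes the inclusion to a surjection $V\onto W$ and embeds $\End_A(W)$ into $\End_A(V)$ as endomorphisms with image in $W$, while you equate $\dim\Hom_A(V,W)$ with $\dim\Hom_A(W,V)$), and both obtain strictness from the observation that $\id_V$ has image $V\not\subseteq W$. The grading caveat you raise is harmless: the inclusion and quotient maps are even, so everything works parity-componentwise, or one can simply forget the superstructure using (\ref{EDimDim}).
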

\begin{proof}
Using $\tau$-self-duality of $V$ and $W$, we see that $W$ is also a quotient of $V$. Now every endomorphism of $W$ gives rise to an endomorphism of $V$ with image contained in $W$. This assignment is injective, but not surjective since the identity on $V$ has image $V$. 
\end{proof}

The {\em socle} (resp. {\em head}) of an $A$-supermodule are defined as the largest completely reducible subsupermodule $\soc V\subseteq V$ (resp. the largest completely reducible quotient module $\head V$ of $V$). 

Let $A$ be a superalgebra. We denote by $|A|$ the algebra $A$ with the superstructure forgotten. If $V$ is an $A$-supermodule, we denote by $|V|$ the $|A|$-module with the superstructure forgotten. We will use without further comment the following equality which comes from \cite[Lemma 12.1.5]{KBook}:
\begin{equation}\label{EDimDim}
\dim\Hom_A(V,W)=\dim \Hom_{|A|}(|V|,|W|). 
\end{equation}

If $V$ is an irreducible $A$-supermodule then either $|V|$ is irreducible or it is the direct sum of two non-isomorphic irreducible $|A|$-modules, see \cite[\S12.2]{KBook}. In the first case we say that $V$ is {\em of type $\Mtype$}, while in the second case we say that $V$ is {\em of type $\Qtype$}. 

For a superalgebra $A$, we denote by $\Irrs(A)$ to be a complete and non-redundant set of irreducible $A$-supermodules up to the isomorphism $\cong$, and we put $\Irr(A):=\Irr(|A|)$. A superalgebra version of Lemma~\ref{LClifford} allows us to relate $\Irrs(A), \Irr(A_\0)$, and $\Irr(A)$ as follows:

\begin{Lemma} \label{PIrrIrr} {\rm \cite[Proposition 12.2.1]{KBook}}
Let $A$ be a (finite-dimensional) superalgebra, and $\Irrs(A)=\{V_1,\dots,V_n\}$, with $V_1,\dots,V_m$ of type $\Mtype$ and $V_{m+1},\dots,V_n$ of type $\Qtype$. Then we have: 
\begin{enumerate}
\item[{\rm (i)}] 
$
\Irr(A)=\{V_1^0,\dots,V_m^0,V_{m+1}^\pm,\dots,V_n^\pm\},
$
where for $i=1,\dots, m$, we have
$V_i^0\cong|V_i|$, and for $j=m+1,\dots, n$ we have 
 $|V_j|\cong V_j^+\oplus V_j^-$.

\item[{\rm (ii)}] 
$
\Irr(A_\0)=\{W_1^\pm,\dots,W_m^\pm,W_{m+1}^0,\dots,W_n^0\},
$
where for $i=1,\dots, m$, we have $\Res^A_{A_\0}V_i\cong W_i^+\oplus W_i^-$, and for $j=m+1,\dots, n$ we have $\Res^{|A|}_{A_\0}V_j^+\cong \Res^{|A|}_{A_\0} V_j^-\cong W_j^0$. 
\end{enumerate}
\end{Lemma}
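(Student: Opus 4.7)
The plan is to derive both statements from a super-Schur classification of $\End_A(V)$ for $V\in\Irrs(A)$, combined with the identity \eqref{EDimDim} for the underlying $|A|$-module structure and a parallel analysis for the inclusion $A_\0\hookrightarrow A$.

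The starting point is the super Schur lemma: for any $V\in\Irrs(A)$, the super-endomorphism algebra $\End_A(V)$ is a super division algebra over $\F$, and since $\F$ is algebraically closed with $\cha\F\neq 2$ it must equal either $\F$ concentrated in even degree or the two-dimensional superalgebra spanned by $1$ and an odd element $c$ satisfying $c^2=1$---precisely types $\Mtype$ and $\Qtype$, respectively. Part (i) in type $\Mtype$ is then immediate from \eqref{EDimDim}: $\End_{|A|}(|V_i|)=\F$, so $|V_i|$ is irreducible and we set $V_i^0:=|V_i|$. In type $\Qtype$, viewing the odd endomorphism $c$ as an $|A|$-linear map on $|V_j|$ satisfying $c^2=1$, the spectral projectors $(1\pm c)/2$ split $|V_j|=V_j^+\oplus V_j^-$ in $|A|$-mod. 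Both summands are irreducible and non-isomorphic: any nonzero $|A|$-map $V_j^+\to V_j^-$ would assemble with $c$ into an odd super-endomorphism of $V_j$ linearly independent from $c$, violating the computed two-dimensionality of $\End_A(V_j)$. Exhaustiveness of the list in $\Irr(A)$ follows from Wedderburn for semisimple superalgebras applied to $A/\rad A$, whose simple super-blocks are of type $M(r|s)$ (one ordinary irreducible each) or $Q(r)$ (two each).

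For part (ii), I would run the analogous analysis for the restriction to $A_\0$. Irreducibility of $V$ as a supermodule yields $A_\1\cdot V_\0=V_\1$ and $A_\1\cdot V_\1=V_\0$ via the standard trick of checking that $V_\eps + A_\1\cdot V_\eps$ is an $A$-subsupermodule, and a parallel lifting argument shows $V_\0,V_\1$ are irreducible as $A_\0$-modules. For $V_j$ of type $\Qtype$, the odd endomorphism $c$ itself supplies an $A_\0$-isomorphism $(V_j)_\0\bijection(V_j)_\1$, so $\Res^A_{A_\0}V_j$ is two copies of a single irreducible $W_j^0$, and restricting the decomposition $|V_j|=V_j^+\oplus V_j^-$ from part (i) shows $V_j^\pm\da_{A_\0}\cong W_j^0$. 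For $V_i$ of type $\Mtype$ no such odd $c$ exists, and any $A_\0$-isomorphism $(V_i)_\0\bijection(V_i)_\1$ would assemble into an odd super-endomorphism of $V_i$, contradicting type $\Mtype$; hence $(V_i)_\0$ and $(V_i)_\1$ are distinct irreducible $A_\0$-modules $W_i^\pm$. Exhaustiveness in $\Irr(A_\0)$ follows by a parallel Wedderburn count.

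The main obstacle is the rigorous type-swap step in part (ii): producing an odd super-endomorphism of $V_i$ out of a hypothetical $A_\0$-isomorphism $\phi:(V_i)_\0\bijection(V_i)_\1$ is a genuine compatibility condition rather than a tautology, since one must satisfy the sign rule $f(av)=(-1)^{|a|}af(v)$ for odd $a\in A$. The cleanest route is to work with $\End_A(V_i)$ abstractly via the induction-restriction adjunction between $A_\0$-mod and $A$-smod, rather than chasing elements.
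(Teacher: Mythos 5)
The paper gives no argument for this lemma at all --- it is quoted from \cite[Proposition 12.2.1]{KBook} --- so there is no internal proof to compare with; your sketch is essentially the standard argument underlying that citation (super-Schur, splitting of $|V|$, Clifford theory for $A_\0\subset A$, and a counting/Wedderburn step). The outline is sound, but two local steps need repair. First, with the paper's sign convention for supermodule homomorphisms, $f(av)=(-1)^{|f|\,|a|}af(v)$, the odd involution $c$ is \emph{not} an $|A|$-linear endomorphism of $|V_j|$, so $(1\pm c)/2$ are not $|A|$-module projectors. The $|A|$-linear operator is $cJ$, where $J(v)=(-1)^{|v|}v$ is the parity involution; it squares to $-\id$, and one splits $|V_j|$ into its $\pm\sqrt{-1}$-eigenspaces --- the same device the paper uses in Lemma~\ref{L071218_4}. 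Second, the inference ``$\End_{|A|}(|V_i|)=\F$, hence $|V_i|$ is irreducible'' is not valid on its own (a uniserial module with distinct composition factors also has scalar endomorphisms); you first need that $|V|$ is semisimple of length at most two, which follows by taking an irreducible $|A|$-submodule $W\subseteq |V|$ and noting that $W+J(W)$ and $W\cap J(W)$ are $J$-stable, hence graded, hence equal to $V$ and $0$ respectively.

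The step you flag yourself is a genuine gap and is the real content of part (ii). For $V_i$ of type $\Mtype$, the claim $(V_i)_\0\not\cong(V_i)_\1$ over $A_\0$ (indeed, that the restriction has two irreducible summands at all) is false for a completely general finite-dimensional superalgebra: an irreducible supermodule concentrated in a single parity (for instance when $A$ is purely even, or has a purely even block) is of type $\Mtype$ yet restricts irreducibly to $A_\0$; your identity $A_\1\cdot V_\1=V_\0$ also fails there. So the proof must use the nondegeneracy $V_\0,V_\1\neq 0$, which is automatic whenever $A_\1$ contains an invertible element --- as in every application of this lemma in the paper ($\F G$ with $[G:G_0]=2$, the twisted algebras $\T_H$ with $H\not\leq\A_n$, tensoring with $\Cl_1$). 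A clean way to close the gap, and to obtain the completeness assertions in (i) and (ii) at the same time, is to observe that $\rad |A|$ is stable under the parity automorphism, hence graded, and annihilates every irreducible supermodule (because $|V|$ is semisimple), so one may assume $A$ semisimple and read everything off the super-Wedderburn decomposition into blocks $M_{r|s}(\F)=\End_\F(\F^{r|s})$ and queer blocks $Q_n(\F)$: for $M_{r|s}(\F)$ with $r,s>0$ the two graded pieces of the natural module are non-isomorphic over the even part $M_r(\F)\times M_s(\F)$ (they have different annihilators), while $Q_n(\F)_\0\cong M_n(\F)$ gives $\Res V_j\cong (W_j^0)^{\oplus 2}$. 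Your proposed adjunction route does handle exhaustiveness of (ii) cleanly (every irreducible $A_\0$-module embeds into $\Res V$ for an irreducible quotient $V$ of the induced supermodule, by Frobenius reciprocity), but the type-swap itself still requires either the reduction just described or an invertible odd element to make the assembled odd endomorphism well defined.
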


In the case where the superalgebra $A$ is as in Example~\ref{ExImp}, we get from Lemmas~\ref{LClifford},\,\ref{PIrrIrr}:

\begin{Lemma}\label{lmodules}
Let $G$ is be finite group with a subgroup $G_0< G$ of index $2$, and consider $\F G$ as a superalgebra as in Example~\ref{ExImp}. For $V\in\Irrs(\F G)$, we have: 
\begin{enumerate}[\rm(i)]
\item If $V$ is of type $\Mtype$, then $V^0:=|V|$ is irreducible,  $V^0\otimes\sgn\cong V^0$ and $V^0\da_{G_0}\cong W^+\oplus W^-$ for irreducible $\F G_0$-modules $W^\pm$ satisfying $W^+\not\cong W^-\cong (W^+)^\si$. 

\item If $V$ is of type $\Qtype$, then $|V|\cong V^+\oplus V^-$ with $V^\pm$ irreducible $\F G$-modules such that $V^+\not\cong V^-\cong V^+\otimes\sgn$, and\, $W^0:=V^\pm\da_{G_0}$ is an irreducible $\F G_0$-module satisfying $(W^0)^\si\cong W^0$.
\end{enumerate}
\end{Lemma}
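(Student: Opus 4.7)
The plan is to derive the lemma by combining the two Clifford-type decompositions already at our disposal: the ordinary one from Lemma~\ref{LClifford} applied to the index-$2$ inclusion $G_0<G$, and its superalgebra refinement from Lemma~\ref{PIrrIrr} applied to $A=\F G$ with $A_{\0}=\F G_0$ as in Example~\ref{ExImp}. The whole argument is then a matter of matching the two classifications.

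First I would read off, from Lemma~\ref{PIrrIrr}, the raw data for $V\in\Irrs(\F G)$ in each type. If $V$ is of type $\Mtype$, then $V^0:=|V|$ is irreducible as an $\F G$-module and $V^0\da_{G_0}$ splits as a direct sum of two irreducible $\F G_0$-modules $W^+\oplus W^-$. If $V$ is of type $\Qtype$, then $|V|\cong V^+\oplus V^-$ with $V^{\pm}$ two \emph{non-isomorphic} irreducible $\F G$-modules (the non-isomorphism being the defining feature of type $\Qtype$, e.g.\ \cite[\S12.2]{KBook}), and $V^{\pm}\da_{G_0}$ is a single irreducible $\F G_0$-module~$W^0$.

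Next I would invoke Lemma~\ref{LClifford}, which partitions $\Irr(\F G)$ into two families: the $\sgn$-self-dual modules $D_j^0$, whose restrictions to $G_0$ split as $E_j^+\oplus E_j^-$ with $E_j^+\not\cong E_j^-$ and $(E_j^{\pm})^{\si}\cong E_j^{\mp}$, and the $\sgn$-pairs $D_i^{\pm}$ (with $D_i^+\otimes\sgn\cong D_i^-$), whose common restriction to $G_0$ is the single irreducible $E_i^0$ with $(E_i^0)^{\si}\cong E_i^0$. In case (i), the fact that $V^0\da_{G_0}$ is a sum of two irreducibles forces $V^0$ into the $D_j^0$-family, from which $V^0\otimes\sgn\cong V^0$ and $W^+\not\cong W^-\cong(W^+)^{\si}$ are immediate. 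In case (ii), the non-isomorphism $V^+\not\cong V^-$ rules out the $D_j^0$-family (which would give a single irreducible summand, not two) and forces $\{V^+,V^-\}$ to be a $\sgn$-pair $\{D_i^+,D_i^-\}$; then $V^+\otimes\sgn\cong V^-$ and $W^0=V^{\pm}\da_{G_0}\cong E_i^0$ with $(W^0)^{\si}\cong W^0$.

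The whole proof is essentially bookkeeping, so I do not anticipate a real obstacle. The only point that requires a moment of care is the non-isomorphism $V^+\not\cong V^-$ in the type $\Qtype$ case; I would treat this as part of the standing definition/characterization of type $\Qtype$ supermodules and simply cite \cite[\S12.2]{KBook}. Everything else is a direct case-by-case comparison of the two classifications.
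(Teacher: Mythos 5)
Your proposal is correct and matches the paper's route exactly: the paper offers no separate argument for this lemma, deducing it precisely by combining Lemma~\ref{PIrrIrr} (applied to $A=\F G$, $A_\0=\F G_0$ as in Example~\ref{ExImp}) with the ordinary Clifford decomposition of Lemma~\ref{LClifford}, which is your bookkeeping. One small point: in case (ii) what rules out the $D_j^0$-family for $V^\pm$ is the irreducibility of $V^\pm\da_{G_0}$ (which you already recorded from Lemma~\ref{PIrrIrr}(ii)), not the non-isomorphism $V^+\not\cong V^-$; the latter is then what pins $\{V^+,V^-\}$ down as the $\sgn$-pair over the common restriction $W^0$.
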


\begin{Corollary} \label{CSuperNonSuper} 
Let $G$ is be finite group with a subgroup $G_0< G$ of index $2$, and consider $\F G$ as a superalgebra as in Example~\ref{ExImp}. Let $H\leq G$ be a subgroup not contained in $G_0$.  In particular, $H_0:=H\cap G_0<H$ is a subgroup of index $2$ and we also  consider $\F H$ as a superalgebra as in Example~\ref{ExImp}. Let $V$ be an irreducible $\F G$-supermodule. Suppose that the supermodule $V\da_H$ has composition length $k$ and that $D_1,\dots,D_k$ be its  composition factors (it could happen that $D_r\cong D_s$ for $r\neq s$). Let $V^\eps$ be an irreducible component of $|V|$ and $W^\eps$ be an irreducible component of $V\da_{G_0}$, with $\eps\in\{0,+,-\}$ as appropriate. Then:
\begin{enumerate}
\item[{\rm (i)}] $V^\eps\da_H$ is irreducible if and only if one of the following happens:
\begin{enumerate}
\item[{\rm (a)}] $k=1$ and $(\text{type of $V$},\text{type of $D_1$})\neq (\Mtype,\Qtype)$. 
\item[{\rm (b)}] $k=2$, $V$ is of type $\Qtype$, and $D_1,D_2$ are of type $\Mtype$; in this case we have $D_1\cong D_2$. 
\end{enumerate}
\item[{\rm (ii)}] $W^\eps\da_{H_0}$ is irreducible if and only if $k=1$. 
\end{enumerate}
\end{Corollary}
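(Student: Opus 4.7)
The plan is to prove both parts of the statement by tracking composition lengths across the correspondences between $\F G$-supermodules, $\F G$-modules and $\F G_0$-modules provided by Lemma~\ref{lmodules}, applied both to $V\da_H$ and to its further restriction to $H_0$. Set $\alpha=\#\{i:D_i\text{ of type }\Mtype\}$ and $\beta=\#\{i:D_i\text{ of type }\Qtype\}$, so that $k=\alpha+\beta$.

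The core computation is to apply Lemma~\ref{lmodules}, now to each $D_i$ viewed as an irreducible $\F H$-supermodule for the superstructure $\F H=\F H_0\oplus\spa(H\setminus H_0)$. This yields that $|D_i|$ has $\F H$-composition length $1$ if $D_i$ is of type $\Mtype$ and $2$ if $D_i$ is of type $\Qtype$, while $D_i\da_{H_0}$ has $\F H_0$-composition length $2$ in either case. Running along a composition series of $V\da_H$ therefore shows that $|V\da_H|$ has $\F H$-composition length $\alpha+2\beta$, and $|V|\da_{H_0}=V\da_H\da_{H_0}$ has $\F H_0$-composition length $2k$.

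I then split into two cases according to the type of $V$. If $V$ is of type $\Mtype$ then $V^\eps=V^0=|V|$, so $V^\eps\da_H=|V\da_H|$ is irreducible iff $\alpha+2\beta=1$, which forces $k=1$ with $D_1$ of type $\Mtype$, giving (i)(a); and for (ii), choosing $\sigma\in H\setminus H_0$ (possible since $H\not\leq G_0$), the $\F G_0$-modules $W^\pm$ are $\sigma$-conjugate by Lemma~\ref{lmodules}(i), hence so are $W^\pm\da_{H_0}$, which therefore have equal composition length summing to $2k$, so each has length $k$. If instead $V$ is of type $\Qtype$, then $|V|=V^+\oplus V^-$ with $V^-\cong V^+\otimes\sgn$, so $V^+\da_H$ and $V^-\da_H$ are $\otimes\sgn$-twists of each other and have equal $\F H$-composition lengths summing to $\alpha+2\beta$; consequently $V^\eps\da_H$ is irreducible iff $\alpha+2\beta=2$, giving either $(\alpha,\beta)=(0,1)$ (which is (i)(a)) or $(\alpha,\beta)=(2,0)$ (which is (i)(b)). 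For (ii) in this type, Lemma~\ref{lmodules}(ii) gives $V^+\da_{H_0}=V^-\da_{H_0}=W^0\da_{H_0}$, whence $|V|\da_{H_0}$ is two copies of $W^0\da_{H_0}$ of total length $2k$, forcing $W^0\da_{H_0}$ to have length $k$. Either way, $W^\eps\da_{H_0}$ is irreducible iff $k=1$, which is (ii).

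It remains to justify the supplementary assertion $D_1\cong D_2$ in (i)(b), which I expect to be the one mildly delicate point. In that subcase $U^+:=V^+\da_H$ is an irreducible $\F H$-module whose restriction $U^+\da_{H_0}=W^0\da_{H_0}$ has composition length $k=2$; since $[H:H_0]=2$ is coprime to $p$, Lemma~\ref{LClifford} applied to the pair $H_0<H$ forces $U^+\da_{H_0}$ to split as a sum of two non-isomorphic simples, equivalently $U^+\otimes\sgn\cong U^+$, so that $U^+\cong U^-:=V^-\da_H$. Comparing the $\F H$-composition factors of $|V\da_H|$ from the two sides gives $\{|D_1|,|D_2|\}=\{U^+,U^-\}$ as $\F H$-modules, so that $|D_1|\cong|D_2|$. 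Finally, applying Lemma~\ref{lmodules} to $\F H$ yields a bijection between $\cong$-classes of type-$\Mtype$ irreducible $\F H$-supermodules and $\sgn$-invariant irreducible $\F H$-modules via $D\mapsto|D|$, so the equality $|D_1|\cong|D_2|$ lifts to $D_1\cong D_2$, as required.
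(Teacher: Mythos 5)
Your proof is correct and follows essentially the same route as the paper: Lemma~\ref{lmodules} applied both to $G_0<G$ and to $H_0<H$, combined with the equal-composition-length observation coming from $V^-\cong V^+\otimes\sgn$ and $W^-\cong (W^+)^\sigma$ for $\sigma\in H\setminus H_0$; your explicit $\alpha+2\beta$ and $2k$ bookkeeping simply makes the paper's terse argument precise. The only small variation is the $D_1\cong D_2$ step, where you deduce $\sgn$-invariance of $V^+\da_H$ from Lemma~\ref{LClifford} and the length-two restriction to $H_0$, whereas the paper gets it directly from $D_1$ being of type $\Mtype$ via Lemma~\ref{lmodules}(i); both are immediate.
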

\begin{proof}
It is clear from Lemma~\ref{lmodules} that the listed cases produce irreducible restriction.  To see that in case (i)(b) we have $D_1\cong D_2$, note by Lemma~\ref{lmodules} that $|V|=V^+\oplus V^-$, $V^-\cong V^+\otimes\sgn$, $D_1=V^\pm\da_H$, $D_2=V^\mp\da_H$, and so $D_2\cong D_1\otimes\sgn\cong D_1$ since $D_1$ is of type $\Mtype$.

To see that in all other cases the restrictions are reducible, use Lemma~\ref{lmodules} together with the fact that $V^+\da_H$ and  $V^-\da_H$ (resp. $W^+\da_{H_0}$ and $W^+\da_{H_0}$) have the same composition length since $V^-\cong V^+\otimes\sgn$ (resp.  $W^-\cong (W^+)^\si$ for $\si\in H\setminus H_0$). 
\end{proof}

If $V$ is an $A$-supermodule, we say that $V$ {\em admits an odd involution} if there exists $J\in\Hom_A(V,V)_\1$ such $J^2=\id_V$. An irreducible $A$-supermodule admits an odd involution if and only if it is of type $\Qtype$, cf. \cite[Lemma 12.2.3]{KBook}. 

Let $A,B$ be superalgebras. The tensor product $A\otimes B$ is considered as a graded superalgebra via 
$
(a\otimes b)(a'\otimes b')=(-1)^{|b| |a'|}aa'\otimes bb'
$
for all homogeneous $a,a'\in A$ and $b,b'\in B$.

Given an $A$-supermodule $V$ and a $B$-supermodule $W$, we have the $(A\otimes B)$-supermodule $V\boxtimes W$ with the action 
$
(a\otimes b)(v\otimes w)=(-1)^{|b||m|}(av\otimes bw)$ for $a\in A,\, b\in B,\, v\in V,\, w\in W.
$

\begin{Lemma}\label{L071218_4}
Let $A$ and $B$ be superalgebras, $M$ be an $A$-supermodule and $N$ be a $B$-supermodule. If both $M$ and $N$ admit an odd involution then there exists an $A\otimes B$-supermodule $M\circledast N$ such that $M\boxtimes N\cong (M\circledast N)^{\oplus 2}$.
\end{Lemma}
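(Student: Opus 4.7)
The plan is to define an even $A \otimes B$-endomorphism $J$ of $M \boxtimes N$ whose square is $-\id$, use the algebraic closure of $\F$ (and the hypothesis $p \neq 2$) to split $M \boxtimes N$ into two $\pm 1$-eigensubsupermodules, and then exhibit an odd $A \otimes B$-endomorphism that interchanges those two summands.

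Writing $J_M \in \Hom_A(M,M)_\1$ and $J_N \in \Hom_B(N,N)_\1$ for the given odd involutions, I would set
$$
J(m \otimes n) := (-1)^{|m|} J_M(m) \otimes J_N(n)
$$
on homogeneous tensors, i.e. form the usual Koszul-signed tensor product of $J_M$ and $J_N$. The key calculation is to verify that $J$ commutes with the $A \otimes B$-action (so is an $A \otimes B$-supermodule endomorphism of $M \boxtimes N$) and is \emph{even} of total parity $\1 + \1 = \0$; this is a routine check using $J_M(am) = (-1)^{|a|} a J_M(m)$, $J_N(bn) = (-1)^{|b|} b J_N(n)$, and the super-multiplication $(a \otimes b)(a' \otimes b') = (-1)^{|b||a'|} aa' \otimes bb'$. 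Composing $J$ with itself picks up an extra Koszul sign on pure tensors, producing $J^2 = -\id_{M \boxtimes N}$.

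Next, pick $i \in \F$ with $i^2 = -1$ (which exists since $\F$ is algebraically closed of characteristic $p \neq 2$), and set $I := iJ$. Then $I$ is an even $A \otimes B$-supermodule involution, and I would decompose $M \boxtimes N = V^+ \oplus V^-$ into its $\pm 1$-eigensubsupermodules. To upgrade this to $M \boxtimes N \cong (V^+)^{\oplus 2}$ it suffices to show $V^+ \cong V^-$ as $A \otimes B$-supermodules, and for this I would introduce the \emph{odd} endomorphism $K(m \otimes n) := J_M(m) \otimes n$, i.e. the Koszul tensor product $J_M \otimes \id_N$. A parity check shows that $K$ is an odd $A \otimes B$-supermodule endomorphism with $K^2 = \id$, and a short calculation on pure tensors gives $JK = -KJ$, hence $KI = -IK$. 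Therefore $K$ swaps $V^+$ and $V^-$, giving an odd $A \otimes B$-supermodule isomorphism $V^+ \cong V^-$. Setting $M \circledast N := V^+$ then yields the claim.

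The principal difficulty is purely combinatorial: one must consistently track three different sources of Koszul signs (super-multiplication in $A \otimes B$, the action of $A \otimes B$ on the tensor product supermodule, and composition/tensor product of homogeneous maps). Once these signs align, the existence of the splitting and of the swap morphism $K$ is entirely formal.
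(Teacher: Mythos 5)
Your proof is correct and follows essentially the same route as the paper: the paper also forms the even endomorphism $J_M\otimes J_N$ (with the same Koszul sign), notes $(J_M\otimes J_N)^2=-\id$, takes an eigenspace (there the $\sqrt{-1}$-eigenspace, which is the same as an eigenspace of your $iJ$) as $M\circledast N$, and uses the odd map $J_M\otimes\id_N$ to identify the two eigenspaces. The only difference is cosmetic (rescaling $J$ by $i$ before splitting), so nothing substantive separates the two arguments.
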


\begin{proof}
If $J_M$ is an odd involution of $M$ and $J_N$ is an odd involution of $N$, then the mapping $J_M\otimes J_N:M\otimes N\to M\otimes N,\ m\otimes n\mapsto (-1)^{|m|}J_M(m)\otimes J_N(n)$ belongs to $\End_{A\otimes B}(M\boxtimes N)_\0$ and $(J_M\otimes J_N)^2=-\id_{M\boxtimes N}$. Now take $M\circledast N$ to be the $\sqrt{-1}$-eigenspace of $J_M\otimes J_N$ on $M\boxtimes N$ and note that $J_M\otimes \id_N$ is an isomorphism between the $\sqrt{-1}$-eigenspace and $-\sqrt{-1}$-eigenspace. We refer the reader to the argument in \cite[Section 2-b]{BK} for details.
\end{proof}

Let $A$ and $B$ be superalgebras, $V$ be an irreducible $A$-supermodule, and $W$ an irreducible $B$-supermodule. If $V$ and $W$ are of type $\Qtype$ then by Lemma~\ref{L071218_4}, there exists an $A\otimes B$-supermodule $V\circledast W$ such that $V\boxtimes W\cong(V\circledast W)^{\oplus 2}$. In all other cases, we denote $V\circledast W\cong V\boxtimes W$.

\begin{Lemma} \label{LBoxTimes} {\rm \cite[Lemma 12.2.13]{KBook}}
Let $A$ and $B$ be superalgebras.
$$
\Irrs(A\otimes W)=\{V\circledast W\mid V\in \Irrs(A),\,W\in \Irrs(B)\}.
$$
Moreover, $V\circledast W$ is of type $\Mtype$ if and only if $V$ and $W$ are of the same type.
\end{Lemma}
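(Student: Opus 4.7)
\emph{Plan.} The approach is to analyze, for each pair $(V,W) \in \Irrs(A) \times \Irrs(B)$, the supermodule $V \boxtimes W$ via its superendomorphism algebra, and then to exhaust $\Irrs(A\otimes B)$ by showing every irreducible is of this form. By the super Schur's lemma, $\End_A(V)$ is $\F\cdot\id_V$ when $V$ is of type $\Mtype$, and is the $2$-dimensional Clifford superalgebra $\F\langle\id_V,J_V\rangle$ (with $J_V$ an odd involution) when $V$ is of type $\Qtype$; similarly for $\End_B(W)$. The natural map $\End_A(V) \otimes \End_B(W) \to \End_{A\otimes B}(V \boxtimes W)$ sending $\phi \otimes \psi$ to $v \otimes w \mapsto (-1)^{|\psi||v|}\phi(v) \otimes \psi(w)$ is an injection of superalgebras, and a super version of the Jacobson density argument applied separately to the actions of $A$ on $V$ and $B$ on $W$ shows it is an isomorphism. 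Consequently $\End_{A\otimes B}(V\boxtimes W)$ is $\F$, the $2$-dim Clifford superalgebra, or the super matrix algebra $M_{1|1}(\F)$ in the three cases (both $\Mtype$), (exactly one $\Qtype$), (both $\Qtype$) respectively.

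In each of the three cases, the shape of the endomorphism superalgebra determines the module: if $\End = \F$, then $V\boxtimes W$ is irreducible of type $\Mtype$; if $\End$ is the $2$-dim Clifford superalgebra, then $V\boxtimes W$ is irreducible of type $\Qtype$; if $\End = M_{1|1}(\F)$, then $V\boxtimes W \cong U^{\oplus 2}$ for a simple supermodule $U$ of type $\Mtype$, and Lemma~\ref{L071218_4} identifies $U$ with $V\circledast W$ (the $\sqrt{-1}$-eigenspace of the even endomorphism $J_V \otimes J_W$ that squares to $-\id$). This simultaneously establishes that $V\circledast W$ is irreducible and that it is of type $\Mtype$ exactly when $V$ and $W$ have the same type.

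For exhaustion, given any $L \in \Irrs(A\otimes B)$, view $L$ as an $A$-supermodule via $a \mapsto a\otimes 1$, pick an irreducible subsupermodule $V \subseteq L\da_A$, and symmetrically an irreducible $W \subseteq L\da_B$. Non-zero inclusions $V \hookrightarrow L$ and $W \hookrightarrow L$ combine via the universal property of $\boxtimes$ into a non-zero $(A\otimes B)$-homomorphism $V \boxtimes W \to L$, which by irreducibility of $L$ together with the description of $V\boxtimes W$ in the previous paragraph forces $L \cong V \circledast W$. Non-isomorphism of $V \circledast W$ and $V'\circledast W'$ for distinct pairs follows by restricting back to $A$ (or $B$) and invoking the corresponding non-isomorphism for $V,V'$ (or $W,W'$).

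The subtlest case is both $V,W$ of type $\Qtype$: the super sign rule turns the tensor product of two Clifford endomorphism superalgebras into $M_{1|1}(\F)$ rather than again a Clifford superalgebra, and this is exactly why $V\boxtimes W$ splits into two copies of a simple $\Mtype$ supermodule instead of remaining irreducible of type $\Qtype$. Keeping track of the signs (in particular the identity $(J_V\otimes J_W)^2 = -\id$ from Lemma~\ref{L071218_4}) and correctly interpreting the resulting matrix algebra structure to conclude that $V\circledast W$ is genuinely of type $\Mtype$---rather than, say, of type $\Qtype$ with multiplicity~$2$---is the core technical challenge.
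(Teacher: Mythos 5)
The paper itself gives no proof of this lemma---it is quoted verbatim from \cite[Lemma 12.2.13]{KBook}---so what you have written is a reconstruction of the textbook argument, and your overall strategy (super Schur's lemma, a density computation, the $\circledast$ construction from Lemma~\ref{L071218_4}, and exhaustion by restriction) is indeed the standard route. As written, however, it has two genuine gaps.

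First, the step ``the shape of the endomorphism superalgebra determines the module'' is not justified. Knowing $\End_{A\otimes B}(V\boxtimes W)\cong\End_A(V)\otimes\End_B(W)$ does not by itself pin down the submodule structure unless you already know $V\boxtimes W$ is semisimple: for example $\End(X)\cong\F$ is also the endomorphism algebra of a nonsplit extension of two non-isomorphic simples, and $\End(X)\cong M_{1|1}(\F)$ alone does not force $X\cong U^{\oplus2}$ with $U$ irreducible of type $\Mtype$. The density theorem should be applied to identify the \emph{image} of $A\otimes B$ in $\End_\F(V\otimes W)$, namely the super tensor product of the images of $A$ and $B$, each of which is a simple superalgebra; simplicity of that tensor product (in the $\Qtype$--$\Qtype$ case this is essentially the isomorphism of the tensor product of two queer superalgebras with a full matrix superalgebra, which is how \cite[Section 2-b]{BK} and \cite[\S12.2]{KBook} argue) is what gives semisimplicity of $V\boxtimes W$, the multiplicity count, and the type rule simultaneously.

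Second, the exhaustion step fails as stated: there is no ``universal property'' that manufactures an $(A\otimes B)$-homomorphism $V\boxtimes W\to L$ out of inclusions $V\subseteq L\da_A$ and $W\subseteq L\da_B$, since there is no way to multiply the two images inside $L$. The correct device is the adjunction $\Hom_{A\otimes B}(V\boxtimes W,L)\cong\Hom_B\big(W,\Hom_A(V,L\da_A)\big)$: after choosing an irreducible $A$-subsupermodule $V\subseteq L\da_A$, take $W$ to be an irreducible $B$-subsupermodule of the (nonzero) $B$-supermodule $\Hom_A(V,L\da_A)$, and use evaluation to obtain a nonzero, hence surjective, map $V\boxtimes W\to L$; the first part then identifies $L$ with $V\circledast W$. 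With these two repairs (and your final remark, which is fine, that distinct pairs give non-isomorphic modules because restriction to $A$ recovers $V$ and restriction to $B$ recovers $W$) the proof goes through.
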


The proof of the next lemma does not work in characteristics 0 (but recall  we are assuming $p>2$).

\begin{Lemma}\label{L071218_3}
Let $D$ be an irreducible $A$-supermodule of type $\Qtype$, and let $V$ be an $A$-supermodule with $\head V\cong D$. If\, $\End_A(V)\simeq\End_A(D)^{\oplus [V:D]}$ then $V$ admits an odd involution.
\end{Lemma}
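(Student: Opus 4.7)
The plan has three ingredients: (i) $V$ is indecomposable as a supermodule (its supermodule head equals the simple $D$), so $B:=\End_A(V)$ is super-local, with super-simple quotient $B/J_s(B)$ equal (via its faithful action on $\head V=D$) to either $\F\id_D$ or all of $\End_A(D)=\F\id_D\oplus\F J_D$; (ii) the dimensional hypothesis forces the latter case; (iii) a square-root trick then lifts $J_D$ to an odd involution of $V$. Here $J_s(B)$ denotes the super-Jacobson radical.

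For (ii), I argue equivalently that $|V|$ is decomposable as an $|A|$-module. Indeed, since $p\neq 2$ the super-Jacobson radical $J_s(B)$ agrees with the ordinary Jacobson radical of $|B|$, and $|V|$ is indecomposable iff $|B|$ is local iff $B/J_s(B)\cong\F$. Suppose for contradiction that $|V|$ is indecomposable. Since $D$ is of type $\Qtype$, $|D|\cong D^+\oplus D^-$ with $D^+\not\cong D^-$, and by Lemma~\ref{PIrrIrr} neither of $D^\pm$ lifts to a supermodule (it would have to be $|W|$ for a type-$\Mtype$ supermodule $W$, whereas the bijection there assigns such $|A|$-modules to the paired companion of $D$); hence every $|A|$-composition factor of $|V|$ isomorphic to $D^\pm$ arises from a super-composition factor of $V$ isomorphic to $D$, giving $[|V|:D^+]=[|V|:D^-]=[V:D]=:m$. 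With $\pi\colon P:=P(D^+)\oplus P(D^-)\twoheadrightarrow|V|$ a projective cover, the Cartan identity gives $\dim\Hom_{|A|}(P,|V|)=2m$; and by~\eqref{EDimDim} together with the hypothesis, $\dim\End_{|A|}(|V|)=\dim\End_A(V)=2m$ as well. Hence the pullback injection $\End_{|A|}(|V|)\hookrightarrow\Hom_{|A|}(P,|V|)$, $\phi\mapsto\phi\circ\pi$, is an isomorphism. Applying this to $\pi\circ e$, where $e\colon P\to P$ is the projection onto the summand $P(D^+)$, produces an idempotent $\phi\in\End_{|A|}(|V|)$ which acts as the identity on $\pi(P(D^+))$ and as zero on $\pi(P(D^-))$. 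Both $\pi(P(D^\pm))$ are nonzero (each surjects onto the corresponding $D^\pm\subseteq\head|V|$), so $\phi$ is a nontrivial idempotent, contradicting indecomposability of $|V|$.

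For (iii), let $\Phi\colon B\twoheadrightarrow B/J_s(B)=\End_A(D)$ be the quotient map, and choose $\tilde J\in B_\1$ with $\Phi(\tilde J)=J_D$; then $\tilde J^2=\id_V+r$ for some nilpotent $r\in J_s(B)_\0$. Since $p\neq 2$, the binomial series $w:=\sum_{k\geq 0}\binom{1/2}{k}r^k\in B_\0$ is a finite sum satisfying $w^2=\tilde J^2$. The element $r=\tilde J^2-\id_V$ manifestly commutes with $\tilde J$, and hence so does $w$; therefore $J':=w^{-1}\tilde J\in B_\1$ satisfies $(J')^2=w^{-2}\tilde J^2=\id_V$ and is the desired odd involution. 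The main obstacle is step~(ii)---ruling out that $V$ is of ``type $\Mtype$'' in the obvious extended sense---while steps (i) and (iii) amount to standard super-algebra bookkeeping.
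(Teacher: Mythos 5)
Your proof is correct, but it takes a genuinely different route from the paper's. The paper never leaves the supermodule category: since $\head V\cong D$, one has $\dim\Hom_A(V,\rad V)\leq([V:D]-1)\dim\End_A(D)$, so the hypothesis (an even isomorphism, hence an equality of odd parts as well) forces an odd $J\in\End_A(V)_{\bar 1}$ whose image is not contained in $\rad V$; such a $J$ is then surjective, hence bijective, and after rescaling $J^2=\id_V+f$ with $f\in\Hom_A(V,\rad V)$ nilpotent, whereupon $J^{p^m}$ is the desired odd involution (the Frobenius identity $(\id_V+f)^{p^m}=\id_V+f^{p^m}$, which is where $p>0$ enters). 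You instead detour through the ungraded category: you compute $[|V|:D^\pm]=[V:D]$ via Lemma~\ref{PIrrIrr}, compare $\dim\End_{|A|}(|V|)$ (equal to $\dim\End_A(V)$ by \eqref{EDimDim}) with $\dim\Hom_{|A|}(P(D^+)\oplus P(D^-),|V|)$, and manufacture a nontrivial idempotent — in fact your construction needs no contradiction hypothesis and shows outright that $|V|$ is decomposable — then transfer back to get an odd unit in $\End_A(V)$ and finish with the binomial square root instead of the $p^m$-th power. Your version makes visible exactly where the hypothesis is used (it forces the pullback along the projective cover to be an isomorphism), and your final step works verbatim in characteristic $0$; the cost is the machinery of projective covers, which the paper's two-line dimension count inside the super category avoids.

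One step deserves more care than you give it: the asserted chain ``$|V|$ is indecomposable iff $|B|$ is local iff $B/J_s(B)\cong\F$.'' Decomposability of $|V|$ is governed by $\End_{|A|}(|V|)$, which is \emph{not} the same subalgebra of $\End_\F(V)$ as $|B|=|\End_A(V)|$: odd super-endomorphisms are not $|A|$-linear, and the two algebras agree only in dimension (that is precisely \eqref{EDimDim}). The direction you actually use — $B/J_s(B)\cong\F$ implies $|V|$ indecomposable — is true but requires an argument. For instance, with $P\colon v\mapsto(-1)^{|v|}v$ one checks that $\End_{|A|}(|V|)=\End_A(V)_{\bar 0}\oplus P\circ\End_A(V)_{\bar 1}$, and that if $\End_A(V)_{\bar 1}\subseteq J_s(B)$ then $J_s(B)_{\bar 0}\oplus P\circ J_s(B)_{\bar 1}$ is a nilpotent ideal of codimension $1$ in $\End_{|A|}(|V|)$, which is therefore local. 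With that (or a reference to the standard Clifford-theoretic dictionary for the $\Z/2$-grading) supplied, your argument is complete; as written, this bridge between the graded and ungraded endomorphism rings is asserted rather than proved.
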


\begin{proof}
We have $V/\rad V\cong D$. 
Since $D$ is of type $\Qtype$, we have $\End_A(D)_\0\cong \End_A(D)_\1\cong\F$. Moreover, the superspace $\Hom_A(V,\rad V)$ embeds into $\End_A(D)^{\oplus([V:D]-1)}$, so there exists   $J\in\End_A(V)_\1$ with $\im J=V$. As $J^2$ is even and $\head V\cong D$, up to rescaling of $J$, we may assume that $J^2=\id_V+f$ for some $f\in\Hom_A(V,\rad V)$. Then there exists $k\in\Z_{>0}$ with $f^k=0$. Take $m\in\Z_{> 0}$ with $p^m\geq k$. Since $p$ is odd, $J^{p^m}$ is odd. Further $(J^{p^m})^2=\id_V^{p^m}+f^{p^m}=\id_V$. Thus, $J^{p^m}$ is an odd involution.
\end{proof}

\begin{Example} \label{ExClifford} 
{\rm 
The rank $n$ {\em Clifford superalgebra\, $\Cl_n$}  is the superalgebra
given by odd generators $\cc_1,\dots,\cc_n$ subject to the relations 
$\cc_r^2=1$ and $\cc_s\cc_t = -\cc_t\cc_s$ for $s\neq t$. 
The superalgebra\, $\Cl_n$ has basis $\{\cc_1^{\eps_1} \dots \cc_n^{\eps_n}\mid \eps_1,\dots,\eps_n\in\{0,1\}\}$, and  $\Cl_n\otimes \Cl_m\cong \Cl_{n+m}.$ The superalgebra $\Cl_1$ has a unique irreducible supermodule $U_1$ which is the regular $\Cl_1$-supermodule. More generally, $\Irrs(\Cl_n)=\{U_n\}$, where the {\em Clifford module}\, $U_n:= U_1^{\circledast n}$ is the irreducible supermodule of dimension $2^{\lceil n/2\rceil}$ and of type $\Mtype$ if and only if $n$ is even. 
}
\end{Example}

\begin{Lemma}\label{tensor1sgn}
Let $A$ be a superalgebra and $V$ be an $(A\otimes \Cl_1)$-supermodule. 
Let $V'=V$ with the new action of $A\otimes \Cl_1$ given by 
$(a\otimes \cc)* v=(-1)^{|\cc|}(a\otimes \cc)v$ 
for all $a\in A,\,\cc\in \Cl_1,\,v\in V$. Then $V'$ is an $(A\otimes \Cl_1)$-supermodule isomorphic to $V$.
\end{Lemma}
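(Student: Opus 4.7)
The plan is to construct an explicit odd isomorphism $\phi:V\to V'$ via left multiplication by the Clifford generator. Specifically, I would define
\[
\phi:V\to V',\qquad \phi(v):=(1\otimes \cc_1)v,
\]
which is a parity-reversing $\F$-linear bijection since $(1\otimes \cc_1)^2=1\otimes 1$ implies $\phi^2=\id_V$.

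Next I would verify that $\phi$ is an $(A\otimes \Cl_1)$-supermodule homomorphism from $V$ to $V'$. Since $\phi$ is odd, for $a\in A$ and $\cc\in\Cl_1$ homogeneous the supermodule homomorphism condition requires
\[
\phi\bigl((a\otimes \cc)v\bigr)=(-1)^{|a|+|\cc|}(a\otimes \cc)*\phi(v).
\]
Expanding the left-hand side using the super-Koszul sign rule in $A\otimes \Cl_1$ gives
\[
\phi\bigl((a\otimes \cc)v\bigr)=(1\otimes \cc_1)(a\otimes \cc)v=(-1)^{|a|}(a\otimes \cc_1\cc)v,
\]
and expanding the right-hand side using the definition of the twisted action and the fact that $1\otimes \cc_1$ is purely odd gives
\[
(-1)^{|a|+|\cc|}(a\otimes \cc)*\phi(v)=(-1)^{|a|+|\cc|}(-1)^{|\cc|}(a\otimes \cc)(1\otimes \cc_1)v=(-1)^{|a|}(a\otimes \cc\cc_1)v.
\]
Since $\Cl_1$ has basis $\{1,\cc_1\}$ and both elements commute with $\cc_1$ (because $\cc_1^2=1$), we have $\cc_1\cc=\cc\cc_1$ for all $\cc\in \Cl_1$, so the two sides agree.

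The only thing to be careful about is keeping the signs straight in the super-Koszul rule and remembering that the twisted action $*$ introduces an extra factor of $(-1)^{|\cc|}$, which then combines with the sign $(-1)^{\delta|a\otimes \cc|}=(-1)^{|a|+|\cc|}$ coming from the odd homomorphism convention to cancel the $(-1)^{|\cc|}$ contribution. There is no real obstacle here; the statement is essentially the observation that the parity automorphism $\cc_1\mapsto -\cc_1$ of $\Cl_1$ is inner in the super sense, implemented by conjugation by $\cc_1$ itself.
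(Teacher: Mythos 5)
Your proposal is correct and matches the paper's own proof, which likewise takes the odd map $v\mapsto(1\otimes\cc_1)v$ as the isomorphism; you have simply spelled out the sign verification (and the trivial check that $*$ is an action, which the paper leaves as "easily checked", is implicit in your intertwining relation together with bijectivity).
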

\begin{proof}
It is easily checked that the new formula defines an action, and the (odd) isomorphism is given by $V\to V',\ v\mapsto (1\otimes \cc_1)v$, where $\cc_1$ is the canonical generator of $\Cl_1$. 
\end{proof}

\begin{Example} \label{ExSergeev} 
{\rm 
The symmetric group $\s_n$ acts on the generators $\cc_1,\dots,\cc_n$ of the Clifford algebra $\Cl_n$ on the right via place permutations, i.e. $\cc_s\cdot g=\cc_{g^{-1}s}$ for $s=1,\dots,n$ and $g\in\s_n$. This action is extended to the action of $\s_n$ on $\Cl_n$ on the right by superalgebra automorphisms. Considering the group algebra $\F\s_n$ as a purely even superalgebra, we denote by $\F\s_n\ltimes\Cl_n$ the superspace $\F\s_n\otimes\Cl_n$ considered as a superalgebra via $(g\otimes\cc)(g'\otimes\cc')=gg'\otimes (\cc\cdot g')\cc'$ for all $g,g'\in\s_n,\ \cc,\cc'\in\Cl_n$. This is a version of the (rank $n$)  {\em Sergeev superalgebra}, see \cite[\S13.2]{KBook}. For any subgroup $K\leq \s_n$ we have the obvious subsuperalgebra $\F K\ltimes\Cl_n\subseteq \F\s_n\ltimes\Cl_n$
}
\end{Example}

\section{Combinatorics of partitions}

\subsection{Compositions, partitions and tableaux}
A {\em composition} is a sequence $\la=(\la_1,\la_2,\ldots)$ of non-negative integers which are eventually zero. 
For compositions $\la=(\la_1,\la_2,\ldots)$ and $\mu=(\mu_1,\mu_2,\ldots)$, we have the composition
$$
\la+\mu:=(\la_1+\mu_1,\la_2+\mu_2,\ldots). 
$$ 

We let $\unlhd$ denote the {\em dominance order} on compositions, see \cite[\S3]{JamesBook}. For $n\in\Z_{\geq 0}$, we say that $\la$ is a composition of $n$ if $\la_1+\la_2+\ldots=n$. We often omit an infinite tail of zeros and write $\la$  as $\la=(\la_1,\dots,\la_r)$. 

A {\em partition} is a composition whose parts are weakly decreasing. We denote by $\Par(n)$ the set of all partitions of $n$. 
If $\la\in\Par(n)$, we write $|\la|:=n$. 
The only partition of $0$ is denoted $\varnothing$. 
Sometimes we collect equal parts of a partition $\la$ and write it in the form $\la=(l_1^{a_1},\dots,l_s^{a_s})$ for $l_1>\dots>l_s>0$ and $a_1,\dots,a_s>0$. 

We identify a partition $\la\in\Par(n)$ with its {\em Young diagram}  
$\la=\{(r,s)\in\Z_{>0}\times \Z_{>0}\mid s\leq \la_r\}.$ We refer to the elements of $\Z_{>0}\times \Z_{>0}$ as the {\em nodes}. In particular we can speak of nodes of $\la$. 
A {\em $\la$-tableau} is then a bijection $t:\{1,\dots,n\}\to \la$. 

We denote by $h(\la)$ the number of the non-zero parts in the partition~$\la$. We denote by $h_{p'}(\la)$ the number of parts of $\la$ not divisible by $p$, and by $h_p(\la)$ the number of parts of $\la$ divisible by $p$. 

A partition $\la$ is called {\em $p$-regular} if no part of $\la$ is repeated $p$ or more times. We denote by $\Par_{\operatorname{reg}}(n)$ the set of all $p$-regular partitions of $n$.

\subsection{\boldmath $p$-strict and $p$-restricted partitions} 

We denote by $\Par_p(n)$ the set of all {\em $p$-strict partitions}  of $n$, i.e. the partitions $\la=(\la_1,\la_2,\dots)$ of $n$ such that $\la_r=\la_{r+1}$ for some $r$ only if $\la_r$ is divisible by $p$. 
A $p$-strict partition $\la$ is called {\em restricted} if for all $r$ either $\la_r-\la_{r+1}<p$, or $\la_r-\la_{r+1}=p$ and $p\nmid\la_r$. We denote by $\RP_p(n)$ the set of all restricted $p$-strict partitions of $n$. We interpret $\Par_0(n)$ as the set of {\em strict partitions}, i.e. partitions with distinct non-zero parts. 
For a partition $\la\in \Par_p(n)$, we  set 
\begin{equation}
\label{Ea_p}
a_p(\la):=
\left\{
\begin{array}{ll}
0 &\hbox{if $n-h_{p'}(\la)$ is even,}\\
1 &\hbox{otherwise.}
\end{array}
\right.
\end{equation}
We interpret $a_0(\la)$ as 
\begin{equation}
\label{Ea_0}
a_0(\la):=
\left\{
\begin{array}{ll}
1 &\hbox{if $n-h(\la)$ is even,}\\
0 &\hbox{otherwise.}
\end{array}
\right.
\end{equation}

\subsection{\boldmath Addable and removable nodes for $p$-strict partitions}
\label{SSAddRem}
We record some combinatorial notions referring the reader to \cite[\S22.1]{KBook} for details and examples. 

Set $\ell=(p-1)/2$ and $I = \{0,1,\dots,\ell\}$. 
A positive integer $s$ can be written uniquely in the form
$s= mp + \ell +1 \pm k$, with $m,k \in\Z$ and $0 \leq k \leq \ell$.
The {\em residue} of $s$, written $\ttres(s)$ is then defined to be $\ell-k$. The residue of a node $A:=(r,s)$, written $\ttres A$, is defined to be $\ttres(s)$. In particular, the residue of a node
depends only on its column.

\begin{Lemma} \label{LAGa}  
Let $\la\in\RP_p(n)$ and for each $i\in I$ denote by $\ga_i$ the number of nodes of $\la$ of residue $i$. Then $a_p(\la)\equiv \ga_1+\dots+\ga_\ell\pmod{2}$. 
\end{Lemma}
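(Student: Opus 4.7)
The plan is to reduce the claim to a simple parity statement about the number $\ga_0$ of residue-zero nodes, and then verify it by a row-by-row count.

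Since the total number of nodes of $\la$ is $n$, we have $n=\ga_0+\ga_1+\cdots+\ga_\ell$. By the definition \eqref{Ea_p} of $a_p(\la)$, we have $a_p(\la)\equiv n-h_{p'}(\la)\pmod 2$. Substituting, the claim $a_p(\la)\equiv \ga_1+\cdots+\ga_\ell\pmod 2$ is equivalent to
\[
\ga_0\equiv h_{p'}(\la)\pmod 2.
\]
So I will establish this congruence.

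Next, I would identify the columns of residue $0$. According to the definition in \S\ref{SSAddRem}, a column $s$ has residue $\ell-k$ where $s=mp+\ell+1\pm k$ with $0\le k\le \ell$. Residue $0$ corresponds to $k=\ell$, which gives the two solutions $s=mp+1$ and $s=(m+1)p$. Hence the columns of residue $0$ are precisely those with $s\equiv 0$ or $s\equiv 1\pmod p$.

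Now I count the number of residue-$0$ nodes lying in row $r$. Write $\la_r=q_r p+t_r$ with $0\le t_r\le p-1$. Among the columns $1,2,\dots,\la_r$, exactly $q_r$ have $s\equiv 0\pmod p$, while the number with $s\equiv 1\pmod p$ is $q_r$ if $t_r=0$ and $q_r+1$ if $t_r\ge 1$. Therefore row $r$ contributes $2q_r$ nodes of residue $0$ if $p\mid \la_r$, and $2q_r+1$ such nodes if $p\nmid \la_r$. Reducing modulo $2$, the contribution of row $r$ is $0$ precisely when $p\mid \la_r$ and $1$ precisely when $p\nmid\la_r$. Summing over all rows yields
\[
\ga_0\equiv \#\{r\mid p\nmid\la_r\} = h_{p'}(\la)\pmod 2,
\]
as required.

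There is essentially no obstacle here: the $p$-restrictedness hypothesis is not actually used, and the argument reduces to the observation that in each row the two residue-$0$ column classes contribute an even number of nodes whenever the row length is divisible by $p$, and an odd number otherwise.
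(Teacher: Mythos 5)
Your argument is correct: the reduction of the claim to $\ga_0\equiv h_{p'}(\la)\pmod 2$ is exactly right (since $n=\ga_0+\cdots+\ga_\ell$ and $a_p(\la)\equiv n-h_{p'}(\la)\pmod 2$), the identification of the residue-$0$ columns as those with $s\equiv 0,1\pmod p$ follows correctly from the definition of $\ttres$ with $p=2\ell+1$, and the row-by-row count ($2q_r$ residue-$0$ nodes if $p\mid\la_r$, $2q_r+1$ if $p\nmid\la_r$) is accurate. Note, though, that your route differs from the paper's: the paper does not argue combinatorially at all, but simply cites the formulas (22.13) and (22.14) of \cite{KBook}, which encode the relation between $a_p(\la)$ and the residue content of $\la$ coming from the block/weight-space theory of $\T_n$. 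Your proof has the advantage of being self-contained and purely elementary, and, as you observe, it shows the congruence holds for an arbitrary partition, with neither $p$-strictness nor restrictedness playing any role; the citation approach buys brevity and places the statement in the context where these residue counts arise naturally (superblocks and formal characters), which is how the lemma is used later in the paper.
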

\begin{proof}
This follows from \cite[(22.13),\,(22.14)]{KBook}.
\end{proof}

Let $\la\in\RP_p(n)$ and $i \in I$.
A node $A = (r,s)
\in \la$ is called {\em $i$-removable} (for $\la$) if one of the following
holds:
\begin{itemize}
\item[(R1)] $\ttres A = i$ and
$\la-\{A\}$ is again a $p$-strict partition; such $A$ is also called {\em properly $i$-removable};
\item[(R2)] the node $B = (r,s+1)$ immediately to the right of $A$
belongs to $\la$,
$\ttres A = \ttres B = i$,
and both $\la - \{B\}$ and
$\la - \{A,B\}$ are $p$-strict partitions.
\end{itemize}
Similarly, a node $B = (r,s)\notin\la$ is called 
{\em $i$-addable} (for $\la$) if one of the following holds:
\begin{itemize}
\item[(A1)] $\ttres B = i$ and
$\la\cup\{B\}$ is again an $p$-strict partition; such $B$ is also called {\em properly $i$-addable};
\item[(A2)] 
the node $A = (r,s-1)$
immediately to the left of $B$ does not belong to $\la$,
$\ttres A = \ttres B = i$, and both 
$\la \cup \{A\}$ and 
$\la \cup\{A,B\}$ are $p$-strict partitions.
\end{itemize}
We note that (R2) and (A2) above are only possible in case $i = 0$.
If $A$ is properly $i$-removable and $B$ is properly  $i$-addable  for $\la$, we have the $p$-strict partitions 
$$
\la_A:=\la\setminus\{A\}\in\Par_p(n-1)\quad\text{and}\quad
\la^B:=\la\cup\{B\}\in\Par_p(n+1).
 $$ 

Now label all $i$-addable
nodes of the diagram $\la$ by $+$ and all $i$-removable nodes by $-$.
The {\em $i$-signature} of 
$\la$ is the sequence of pluses and minuses obtained by going along the rim of the Young diagram from bottom left to top right and reading off
all the signs.
The {\em reduced $i$-signature} of $\la$ is obtained 
from the $i$-signature
by successively erasing all neighbouring 
pairs of the form $+-$. 
Note the reduced $i$-signature always looks like a sequence
of $-$'s followed by $+$'s.
Nodes corresponding to a $-$ in the reduced $i$-signature are
called {\em $i$-normal}, nodes corresponding to a $+$ are
called {\em $i$-conormal}.
The rightmost $i$-normal node is called {\em $i$-good}, 
and the leftmost $i$-conormal node is called {\em $i$-cogood}. We define
$$
\eps_i(\la):=\sharp\{\text{$i$-normal nodes in $\la$}\}\quad\text{and}\quad
\phi_i(\la):=\sharp\{\text{$i$-conormal nodes for $\la$}\}.
$$
If $\eps_i(\la)>0$ (resp. $\phi_i(\la)>0$) and $A$ is the $i$-good (resp. $B$ is the $i$-cogood) node for $\la$, we set 
$
\tilde e_i\la:=\la_A
$ (resp. $\tilde f_i\la:=\la^B$). 

Let $\la\in\RP_p(n)$ and $i\in I$. We say that $\la$ is {\em $i$-Jantzen-Seitz}, written $\la\in \JS^{(i)}$, if $\eps_i(\la)=1$ and $\eps_j(\la)=0$ for all $j\neq i$. We say that $\la$ is {\em Jantzen-Seitz}, written $\la\in\JS$, if it is $i$-Jantzen-Seitz for some $i$, i.e. ${\JS}=\bigsqcup_{i\in I}\JS^{(i)}$.\footnote{This terminology comes from \cite{JS} where the similar notion for symmetric groups is considered.}

\begin{Lemma} \label{LJS} 
If $\la\in\JS^{(0)}$ then $\tilde e_0\la\in \JS^{(1)}$. 
\end{Lemma}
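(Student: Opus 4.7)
The plan is a direct combinatorial verification. Let $A=(r,s)$ be the $0$-good node of $\la$ (so $\ttres(s)=0$), and set $\mu:=\tilde e_0\la=\la_A$. The goal is to show $\eps_1(\mu)=1$ and $\eps_j(\mu)=0$ for every $j\in I\setminus\{1\}$. A first reduction observes that removing $A$ can only alter the $i$-addable or $i$-removable status at the five positions $(r,s)$, $(r,s\pm 1)$, and $(r\pm 1,s)$. Since residue $0$ columns occur precisely at $s\equiv 0$ or $1\pmod{p}$, their immediate neighbours in columns $s\pm 1$ carry residue $0$ or $1$ only, so the $j$-signature of $\mu$ coincides with that of $\la$ for every $j\geq 2$. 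This gives $\eps_j(\mu)=\eps_j(\la)=0$ for $j\geq 2$ immediately.

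For the residue $0$ signature, removing $A$ converts its entry from $-$ to $+$. Since $\la\in\JS^{(0)}$, the reduced $0$-signature of $\la$ is a single $-$ at $A$. After this flip, together with careful accounting of any auxiliary changes at $(r,s\pm 1)$ and $(r\pm 1,s)$ arising from the (R2)/(A2) interactions, one checks that the reduced $0$-signature of $\mu$ consists of $+$'s only, so $\eps_0(\mu)=0$.

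The heart of the proof is the analysis of the $1$-signature. In the generic case $\ttres(s-1)=1$ (i.e.\ $s\equiv 0\pmod p$), the node $(r,s-1)$, previously interior to $\la$, becomes the rightmost node of row $r$ in $\mu$, and the restrictive shape forced by $\la\in\JS^{(0)}$ guarantees that it is $1$-removable there. This contributes exactly one new $-$ to the $1$-signature of $\mu$. Combined with the fact that the reduced $1$-signature of $\la$ is all $+$'s (since $\eps_1(\la)=0$), the reduced $1$-signature of $\mu$ has a single $-$, so $\eps_1(\mu)=1$. The main obstacle will be the two exceptional cases: the subcase $s\equiv 1\pmod p$ with $s>1$ (so that $\ttres(s-1)=0$ and the contribution of $(r,s-1)$ is diverted into the $0$-signature rather than the $1$-signature), and the boundary subcase $s=1$ (where row $r$ is deleted in passing from $\la$ to $\mu$). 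In these cases the new $-$ in the $1$-signature must be located at a different position along the rim, typically $(r+1,s)$ or $(r-1,s)$, and the argument again invokes the very restrictive shape of $\la\in\JS^{(0)}$ to force the desired conclusion.
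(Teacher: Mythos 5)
There is a genuine gap, and it sits exactly where you placed your ``main obstacle''. The structural fact you are missing is that $\la\in\JS^{(0)}$ forces the $0$-good node to be $(h,1)$ with $h=h(\la)$, i.e.\ $\la=(\la_1,\dots,\la_{h-1},1)$ and $\tilde e_0\la=(\la_1,\dots,\la_{h-1})$. Indeed, the last node $(h,\la_h)$ of the bottom row is always properly removable and nothing of its residue precedes it on the rim, so it is normal; since $\eps_j(\la)=0$ for $j\neq 0$ this forces $\ttres(\la_h)=0$, and restrictedness of $\la$ forces $\la_h<p$, whence $\la_h=1$; as $\eps_0(\la)=1$, the node $(h,1)$ is then the unique $0$-normal (hence $0$-good) node. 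Consequently your ``generic case'' $s\equiv 0\pmod p$ is vacuous, your subcase $s\equiv 1\pmod p$ with $s>1$ is vacuous as well, and the ``boundary subcase $s=1$'' that you defer with the phrase ``the argument again invokes the very restrictive shape \dots to force the desired conclusion'' is in fact the only case that ever occurs. Since no argument is given there, the proposal does not prove the lemma: the new $1$-normal node is $(h-1,\la_{h-1})$, which lies at the end of row $h-1$ and is in general nowhere near the removed node, so a purely local analysis of the positions adjacent to $(r,s)$ cannot by itself locate it; one must argue, as the paper does after establishing $\la_h=1$, that $\ttres(\la_{h-1})=1$, that $(h-1,\la_{h-1})$ is $1$-normal in $\tilde e_0\la$, and then (splitting into $\la_{h-1}=2$ and $\la_{h-1}>2$) that every other removable node still cancels in the reduced signatures of $\tilde e_0\la$ because it already cancelled in those of $\la$.

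Two smaller points. First, even in your ``generic'' analysis the bookkeeping is asserted rather than proved: introducing one new $-$ into a signature whose reduction is all $+$'s does not automatically give $\eps_1(\mu)=1$, since the new $-$ may cancel against a surviving $+$ to its left, and the removal also deletes or creates nearby residue-$1$ entries (e.g.\ an addable node in column $s\pm1$ of an adjacent row), so the position of each change matters; likewise ``one checks that the reduced $0$-signature of $\mu$ consists of $+$'s only'' is exactly the statement $\eps_0(\tilde e_0\la)=\eps_0(\la)-1=0$, which you could simply have quoted from Lemma~\ref{branching}(i) instead of waving at. Second, your list of five affected positions is slightly too short (positions such as $(r+1,s-1)$ and $(r-1,s+1)$ can also change status), though this does not harm the conclusion for residues $j\geq 2$, since all affected positions still lie in columns $s-1,s,s+1$ of residue $0$ or $1$.
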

\begin{proof}
Note that $\la$ must be of the form $\la=(\la_1,\ldots,\la_{h-1},1)$, with $\tilde e_0\la=(\la_1,\ldots,\la_{h-1})$. Then  $A:=(h-1,\la_{h-1})$ is a $1$-normal node of $\tilde e_0\la$, and it suffices to prove that $A$ is the only normal node of $\tilde e_0\la$. 
 
If $\la_{h-1}=2$, then all addable nodes of $\la$ are in rows $\leq h-1$, so they are also addable in $\tilde e_0\la$. Moreover, the removable nodes of $\tilde e_0\la$ are exactly the node $A$ together with the removable nodes of $\la$ in rows $<h-1$. As removable nodes of $\la$ in rows $<h-1$ cancel in the reduced  signature of $\la$, these nodes cancel also in the reduced signature of $\tilde e_0\la$. Thus $A$ is the only normal node of $\tilde e_0\la$. 

If $\la_{h-1}>2$ then the removable (resp. addable) nodes of $\tilde e_0\la$ in rows $\leq h-2$ (resp. $\leq h-1$) are also removable (resp. addable) in $\la$. In the reduced signature for $\la$, any removable node in rows $\leq h-2$ must cancel with some addable node in rows $\leq h-1$ (as the remaining addable node $(h,2)$ of $\la$ cancels with $A$), so they cancel also in the reduced signature for $\tilde e_0\la$. So again $A$ is the only normal node of $\tilde e_0\la$. 
\end{proof}

\section{Representations of symmetric groups}
\subsection{Symmetric and alternating groups}\label{SSAltSym}
The symmetric group on $n$ letters is denoted $\s_n$ and the alternating group on $n$ letters is denoted $\A_n$. We denote by $\sgn$ the sign representation of $\s_n$ so that $\A_n=\Ker(\sgn)$.

For a composition $\la=(\la_1,\dots,\la_r)$ of $n$ we have a {\em standard Young subgroup}
$$
\s_\la=\s_{\la_1,\dots,\la_r}=\s_{\la_1}\times\dots\times\s_{\la_r}<\s_n.
$$
We set $\A_\la=\A_{\la_1,\dots,\la_r}:=\s_{\la}\cap\A_n$.

For $m\leq n$ we always identify $\s_m$ with the subgroup $\s_{m,1^{n-m}}\leq \s_n$. More generally for a composition $\mu$ of $m$ we identify $\s_\mu$ with a subgroup of $\s_n$ via $\s_\mu\leq \s_m\leq \s_n$. We make similar identifications for the alternating groups. 


If $n=ab$ for integers $a,b>1$, we also have the standard {\em wreath product subgroup} 
\begin{equation}\label{EWreath}
\W_{a,b}:=\s_a\wr\s_b<\s_n.
\end{equation}
Note that $\W_{a,b}\cong \s_a^{\times b}\rtimes \s_b$. 

We will use the following lemma which is a special case of Mackey's Theorem:

\begin{Lemma}\label{L060125_2}
Let $K\leq\s_n$ with $K\not\leq\A_n$. Then $\bone\ua_{K\cap\A_n}^{\A_n}\cong(\bone\ua_K^{\s_n})\da^{\s_n}_{\A_n}$.
\end{Lemma}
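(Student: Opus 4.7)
The plan is to apply the Mackey decomposition formula to the composition $\bone\ua_K^{\s_n}\da_{\A_n}$, and observe that under the hypothesis $K\not\leq\A_n$ the double coset space $\A_n\backslash\s_n/K$ collapses to a single coset.

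First, since $[\s_n:\A_n]=2$ and $K\not\leq\A_n$, the subgroup $\A_n K$ strictly contains $\A_n$, hence $\A_n K=\s_n$. Equivalently, $\A_n\backslash\s_n/K$ consists of the single double coset $\A_n\cdot 1\cdot K$.

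Next, Mackey's formula gives
\[
(\bone\ua_K^{\s_n})\da^{\s_n}_{\A_n}\;\cong\;\bigoplus_{g\in\A_n\backslash\s_n/K}\bigl(\bone\da_{\A_n\cap gKg^{-1}}\bigr)\ua^{\A_n},
\]
where each summand on the right is induced from the appropriate conjugated intersection and the trivial representation is preserved under conjugation. By the previous paragraph only the term $g=1$ survives, and the intersection there equals $\A_n\cap K$. Since the restriction of the trivial $\F K$-module to $\A_n\cap K$ is the trivial module, we conclude
\[
(\bone\ua_K^{\s_n})\da^{\s_n}_{\A_n}\;\cong\;\bone\ua_{K\cap \A_n}^{\A_n},
\]
as required. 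There is no serious obstacle: the entire content is the single-double-coset observation, which uses only that $\A_n$ has index $2$ in $\s_n$ and the hypothesis $K\not\leq\A_n$.
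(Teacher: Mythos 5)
Your proof is correct and matches the paper's intent: the paper states this lemma without proof, simply noting it is a special case of Mackey's theorem, and your argument is exactly that specialization — using normality of $\A_n$ (so $\A_n K$ is a subgroup) together with $K\not\leq\A_n$ and $[\s_n:\A_n]=2$ to get $\A_n K=\s_n$, hence a single double coset, after which Mackey's formula gives the isomorphism immediately.
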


\subsection{Modules over symmetric groups}
\label{SSSymMod}
Let $\la$ be a partition of $n$. 
As in \cite[\S4]{JamesBook}, we have the {\em permutation module} $M^\la$ on the set of {\em $\la$-tabloids} $\{t\}$, which are row-equivalence classes of $\la$-tableaux $t$. We have $M^\la\cong\bone_{\s_\la}\ua^{\s_n}$ and $(M^\la)^*\cong M^\la$. 
We also have the {\em Specht module} $S^\la\subseteq M^\la$ spanned by the {\em polytabloids} 
\begin{equation}\label{EPolyTab}
e_t:=\sum_{\si\in {\tt C}_t}(\sgn\, \si)\, \si\cdot\{t\}\in M^\la, 
\end{equation}
where ${\tt C}_t$ denotes the column stabilizer of  the $\la$-tableau $t$. In fact, any $e_t$ generates $S^\la$ as an $\F\s_n$-module \cite[4.5]{JamesBook}, and the polytabloids corresponding to the standard tableaux $t$ form a basis of $S^\la$, see \cite[8.4]{JamesBook}.

Occasionally we work with Specht modules over complex field, in which case we use the notation $S^\la_\C$; this is an irreducible $\C \s_n$-module, and $S^\la$ can be obtained from $S^\la_\C$ using reduction modulo~$p$. 

Let $\langle\cdot,\cdot\rangle$ be the standard invariant bilinear form on $M^\la$ from \cite[\S4]{JamesBook}. Then we have 
\begin{equation}\label{SLaPerpSLaDual}
M^\la/(S^\la)^\perp\cong (S^\la)^*,
\end{equation}
where
$(S^\la)^\perp:=\{v\in M^\la\mid \langle v,w\rangle=0\ \text{for all}\ w\in S^\la\}$.

By \cite{JamesBook}, we have $D^\la:=\head S^\la$ is irreducible if $\la$ is $p$-regular, and 
$$
\Irr(\F\s_n)=\{D^\la\mid \la\in \Par_{\operatorname{reg}}(n)\}. 
$$

The {\em Mullineux bijection} $\Mull:\Par_{\operatorname{reg}}(n)\to\Par_{\operatorname{reg}}(n)$ is defined from $D^\la 
\otimes \sgn \cong D^{\Mull(\la)}$. We usually denote $\la^\Mull:=\Mull(\la)$. 
An explicit combinatorial description of $\Mull$ is known from \cite{FK}, see also \cite{BO}. We refer the reader to these papers for details, noting only that $h(\la^\Mull)=r(\la)-h(\la)+\de$, where $r(\la)$ is the size of the $p$-rim hook of $\la$, $\de=1$ if $p\nmid r(\la)$, and $\de=0$ if $p\mid r(\la)$.

\begin{Lemma} \label{LDomJames} {\rm \cite[12.1,\,12.2]{JamesBook}}
If $\la\in\Par(n)$ and $\mu\in\Par_{\operatorname{reg}}(n)$, then $[S^\la:D^\mu]\neq 0$ implies $\mu\unrhd \la$, and $[S^\mu:D^\mu]=1$. 
\end{Lemma}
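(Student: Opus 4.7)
The plan is to follow James's classical approach \cite[Chapter~12]{JamesBook}, built on a single combinatorial identity for the column antisymmetrizer, which yields both James's Submodule Theorem and a $\Hom$-space dominance lemma.

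First, for a $\la$-tableau $t$ set $\kappa_t:=\sum_{\si\in {\tt C}_t}\sgn(\si)\si$. Direct computation on tabloids gives the key identity: for any $\mu$-tabloid $\{s\}$, $\kappa_t\{s\}=\pm e_{t'}$ for some $\la$-tableau $t'$ when the entries of every column of $t$ occupy pairwise distinct rows of $\{s\}$, and $\kappa_t\{s\}=0$ otherwise. Specializing $\mu=\la$ gives $\kappa_t u=\pm\langle u,e_t\rangle e_t$ for $u\in M^\la$. James's \emph{Submodule Theorem} follows immediately: any $\F\s_n$-submodule $U\subseteq M^\la$ either contains some $e_t$, whence $S^\la\subseteq U$ (since polytabloids generate $S^\la$ by \eqref{EPolyTab}), or satisfies $U\subseteq(S^\la)^\perp$. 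Applied to $S^\la\cap(S^\la)^\perp$, and using $S^\la\not\subseteq(S^\la)^\perp$ for $p$-regular $\la$, this shows $S^\la\cap(S^\la)^\perp$ is the unique maximal submodule of $S^\la$, so $D^\la$ is irreducible with $[S^\la:D^\la]\geq 1$.

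Next I establish the \emph{$\Hom$-space dominance lemma}: $\Hom_{\F\s_n}(S^\mu,M^\la)\neq 0\Rightarrow\mu\unrhd\la$. Given a nonzero $\theta\colon S^\mu\to M^\la$ and any $\mu$-tableau $t$, one has $\theta(e_t)\neq 0$ (using that polytabloids form a single $\s_n$-orbit up to sign). Writing $\theta(e_t)=\sum_s c_s\{s\}$ in the $\la$-tabloid basis and exploiting $\kappa_t\theta(e_t)=\theta(\kappa_t e_t)=|{\tt C}_t|\theta(e_t)$ together with the key identity, one produces some $\la$-tabloid $\{s\}$ with $c_s\neq 0$ in which the entries of each column of $t$ lie in pairwise distinct rows of $\{s\}$. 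A pigeonhole count completes the lemma: for each row $i$ of $\{s\}$ the number of first-$k$-column entries of $t$ in row $i$ is at most $\min(\la_i,k)$, and summing over $i$ yields $\mu'_1+\cdots+\mu'_k\leq\la'_1+\cdots+\la'_k$ for all $k$, equivalent to $\mu\unrhd\la$.

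Finally, to assemble: if $[S^\la:D^\mu]\neq 0$, then dualizing via $(S^\la)^*\cong M^\la/(S^\la)^\perp$ from~\eqref{SLaPerpSLaDual} and composing the surjection $S^\mu\twoheadrightarrow D^\mu$ with an embedding $D^\mu\hookrightarrow M^\la$ arising from duality produces a nonzero map $S^\mu\to M^\la$, so $\mu\unrhd\la$ by the lemma. For the equality $[S^\mu:D^\mu]=1$, specializing the lemma to $\la=\mu$ gives $\dim\Hom(S^\mu,M^\mu)=1$ (spanned by the natural inclusion, corresponding to the unique semistandard $\mu$-tableau of content $\mu$ with $i$'s in row $i$), so $\dim\End(S^\mu)\leq 1$; combined with the simple head of $S^\mu$ and the non-degenerate bilinear form induced on $D^\mu$, this forces $[S^\mu:D^\mu]=1$. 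I expect the main obstacles to be the sign/cancellation bookkeeping in the $\Hom$-space lemma, especially when $p\mid|{\tt C}_t|$ and the equation $\kappa_t\theta(e_t)=|{\tt C}_t|\theta(e_t)$ becomes trivial, and the production of the map $S^\mu\to M^\la$ in the final step.
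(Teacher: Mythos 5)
The paper offers no argument of its own here---it cites James \cite[12.1, 12.2]{JamesBook}---and your plan is a reconstruction of exactly that classical route; the combinatorial skeleton (submodule theorem, vanishing criterion for $\kappa_t\{s\}$, the pigeonhole count giving $\mu'_1+\dots+\mu'_k\le\la'_1+\dots+\la'_k$) is sound. But the two ``obstacles'' you flag at the end are genuine gaps, not bookkeeping. In your Hom-space dominance lemma, the only mechanism you give for producing a $\la$-tabloid $\{s\}$ with $c_s\neq 0$ satisfying the column condition is $\kappa_t\theta(e_t)=|{\tt C}_t|\theta(e_t)$, and this is vacuous whenever $p$ divides $|{\tt C}_t|$ (already when $\mu$ has a column of length $\geq p$), which is the typical case. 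The standard repair at odd $p$ (the paper assumes $p>2$) does not apply $\kappa_t$ to $\theta(e_t)$ at all: for a transposition $(a,b)\in{\tt C}_t$ one has $(a,b)\theta(e_t)=-\theta(e_t)$, so if $a,b$ lay in the same row of a tabloid $\{s\}$ occurring in $\theta(e_t)$, comparing coefficients of $\{s\}$ gives $c_s=-c_s$, hence $c_s=0$; thus \emph{every} tabloid occurring in $\theta(e_t)$ is good and the pigeonhole count applies. (Also, your ``key identity'' $\kappa_t\{s\}=\pm e_{t'}$ is only correct when the two shapes coincide; for $\mu\neq\la$ all one has, and all one needs, is the vanishing criterion.)

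The second gap is the assembly step, and it is fatal as written: $[S^\la:D^\mu]\neq 0$ exhibits $D^\mu$ only as a \emph{subquotient} of $M^\la$, not a submodule, so there is no embedding $D^\mu\into M^\la$ ``arising from duality'' and hence no nonzero map $S^\mu\to M^\la$ in general. What one gets is a nonzero map $S^\mu\onto D^\mu\cong V_2/V_1\subseteq M^\la/V_1$, i.e.\ a homomorphism from $S^\mu$ into a \emph{quotient} $M^\la/U$; and the sign-trick above does not survive passage to a quotient (the coefficient argument needs a genuine tabloid expansion, and a preimage of $\theta(e_t)$ only satisfies the sign relations modulo $U$). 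Proving dominance, together with the rigidity clause that for $\la=\mu$ any nonzero $\theta\colon S^\mu\to M^\mu/U$ is a scalar multiple of the canonical map, in exactly this quotient generality is the real content of James's Lemma 12.1, and your plan does not address it. The multiplicity-one claim needs it too: $\dim\Hom_{\F\s_n}(S^\mu,M^\mu)=1$ does not by itself force $[S^\mu:D^\mu]=1$ (a second copy of $D^\mu$ buried in $S^\mu\cap(S^\mu)^\perp$ need not create an extra endomorphism of $S^\mu$); the standard deduction takes a minimal submodule $V\subseteq S^\mu\cap(S^\mu)^\perp$ with $D^\mu$ in its head, dualizes inside the self-dual $M^\mu$ to get a nonzero map $S^\mu\to M^\mu/U$ with image in $(S^\mu)^\perp/U$, and then invokes the rigidity clause plus $S^\mu\not\subseteq(S^\mu)^\perp$ (here $p$-regularity of $\mu$ enters) to reach a contradiction. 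So to complete your proof you must formulate and prove the quotient version of the dominance/rigidity lemma; the version for maps into $M^\la$ alone does not suffice.
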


We will also use {\em Young modules $Y^\la$} which can be defined using the following well-known facts  contained for example in \cite{JamesArcata} and \cite[\S4.6]{Martin}:

\begin{Lemma} \label{LYoung} 
There exist indecomposable $\F \s_n$-modules $\{Y^\la\mid \la\in\Par(n)\}$ such that $M^\la\cong Y^\la\,\oplus\, \bigoplus_{\mu\rhd\la}(Y^\mu)^{\oplus m_{\mu,\la}}$ for some $m_{\mu,\la}\in\Z_{\geq 0}$. Moreover, each $Y^\la$ is self-dual and can be characterized as the unique indecomposable direct summand of $M^\la$ such that  $S^\la\subseteq Y^\la$. Finally, for each $\la$ we have $Y^\la\sim S^\la| S^{\mu^1}|\cdots |S^{\mu^t}$ for some $\mu^1,\dots,\mu^t\rhd \la$.
\end{Lemma}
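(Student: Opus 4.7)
I would prove the lemma by downward induction on the dominance order $\unlhd$ on $\Par(n)$.

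\emph{Base case.} For the maximal partition $(n)$, $M^{(n)}=\bone_{\s_n}=S^{(n)}$ is one-dimensional, so the choice $Y^{(n)}:=M^{(n)}$ satisfies all conclusions trivially.

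\emph{Specht filtration of $M^\la$.} Before the inductive step I would establish the classical fact that $M^\la$ admits a filtration
\[
S^\la = F_0 \subseteq F_1 \subseteq \cdots \subseteq F_t = M^\la
\]
with $F_s/F_{s-1}\cong S^{\mu^s}$ for certain $\mu^s\rhd\la$. This is the dual of the Young rule: it follows by iteratively analysing kernels of semistandard homomorphisms $M^\la\to M^\mu$ for $\mu\rhd\la$ (using that any such homomorphism annihilates $S^\la$). Combined with Lemma~\ref{LDomJames}, this yields $[M^\la:D^\nu]\neq 0\Rightarrow \nu\unrhd\la$, and $[M^\la:D^\la]=1$ whenever $\la$ is $p$-regular (since $[S^{\mu^s}:D^\la]=0$ for every $\mu^s\rhd\la$).

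\emph{Inductive step.} Assuming $Y^\mu$ has been constructed for every $\mu\rhd\la$ with the claimed properties, I would decompose $M^\la=V_1\oplus\cdots\oplus V_k$ into indecomposable summands (Krull-Schmidt). If $\la$ is $p$-regular, exactly one summand $V_i$ has $D^\la$ as a composition factor (because $[M^\la:D^\la]=1$); call it $Y^\la$. By Corollary~\ref{CMin}, $S^\la$ is the unique smallest submodule of $M^\la$ containing $D^\la$ in its composition factors, so $S^\la\subseteq Y^\la$, and $Y^\la$ is the unique indecomposable summand with this property. For the remaining summands $V_j$, all composition factors are $D^\nu$ with $\nu\rhd\la$; I would then locate a maximal $\mu\rhd\la$ with $[V_j:D^\mu]\neq 0$, compare $V_j$ to the inductively known decomposition of $M^\mu$, and invoke Krull-Schmidt to identify $V_j\cong Y^\mu$. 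When $\la$ is $p$-singular I would instead use the $p$-regularisation $\la^{\mathrm R}\rhd\la$ together with the fact that $[S^\la:D^{\la^{\mathrm R}}]=1$ and that $D^{\la^{\mathrm R}}$ is the unique composition factor of $S^\la$ of maximal dominance; the projection of $S^\la$ to each $V_j$ then pinpoints a single summand in which $S^\la$ fully lands, which becomes $Y^\la$.

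\emph{Self-duality and Specht filtration of $Y^\la$.} Self-duality of $M^\la=\bone\ua^{\s_n}_{\s_\la}$ implies that $\{V_1^*,\dots,V_k^*\}$ is a permutation of $\{V_1,\dots,V_k\}$; since $Y^\la$ is characterised uniquely by containing $S^\la$ (and $(S^\la)^{**}\cong S^\la$ embeds into $(Y^\la)^*$), one concludes $(Y^\la)^*\cong Y^\la$, and the same argument applied inductively gives self-duality of every $Y^\mu$. The Specht filtration of $Y^\la$ is then obtained by peeling off the filtrations of $V_2,\dots,V_k$: by induction each of them is Specht-filtered with factors $S^\nu$ for $\nu\rhd\la$, and the global Specht filtration of $M^\la$ restricts to a filtration of $Y^\la$ with $S^\la$ at the bottom and factors $S^{\mu^i}$ with $\mu^i\rhd\la$ on top.

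\emph{Main obstacle.} The hardest step is treating $p$-singular $\la$: since $D^\la$ is undefined, no multiplicity count in $M^\la$ directly isolates $Y^\la$, and one must instead use the $p$-regularised label $\la^{\mathrm R}$ together with James's analysis of $S^\la$ as a cyclic submodule of $M^\la$ in order to pin down a unique indecomposable summand containing it; verifying the uniqueness requires a careful comparison of projections onto each $V_j$, rather than an appeal to the socle/head formalism that works in the $p$-regular case.
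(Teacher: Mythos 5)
You should first be aware that the paper does not prove this lemma at all: it is stated as a package of well-known facts with a citation to James's Arcata survey and to Martin's book, so the only question is whether your plan would amount to a correct self-contained proof. It would not, because the crucial step is missing. Knowing that every composition factor of a complementary summand $V_j$ of $M^\la$ is some $D^\nu$ with $\nu\rhd\la$ and then ``comparing $V_j$ with the decomposition of $M^\mu$ and invoking Krull--Schmidt'' is not an argument: Krull--Schmidt only identifies direct summands of one and the same module, and composition factors do not determine an indecomposable module. To conclude $V_j\cong Y^\mu$ you must actually exhibit $V_j$ as a direct summand of $M^\mu$ for some $\mu\rhd\la$, and that is precisely the nontrivial content of the lemma; in the literature it is obtained by genuinely different tools (lifting idempotents to a $p$-modular system and comparing with the characteristic-zero decomposition, as in James; vertices and $p$-permutation module theory, as in Brou\'e--Grabmeier; or Schur-algebra/tilting arguments, which is the route in Martin \S4.6). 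Without such an input neither the decomposition $M^\la\cong Y^\la\oplus\bigoplus_{\mu\rhd\la}(Y^\mu)^{\oplus m_{\mu,\la}}$ nor the multiplicities are established.

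The remaining steps also have gaps. For $p$-singular $\la$ the multiplicity-one trick is unavailable: $[M^\la:D^{\la^\Reg}]$ can exceed $1$, since $D^{\la^\Reg}$ may occur in $S^\mu$ for $\la\lhd\mu\unlhd\la^\Reg$ occurring in the Specht filtration of $M^\la$; moreover an indecomposable submodule of a direct sum need not lie inside a single summand, so ``careful comparison of projections'' does not by itself pin down a unique summand containing $S^\la$ — you have correctly located the obstacle but not the idea that overcomes it. The Specht filtration of $Y^\la$ likewise does not follow by ``restricting'' the filtration of $M^\la$ to a summand: direct summands of Specht-filtered modules are not automatically Specht-filtered (this closure property is known only for $p\geq 5$ by Hemmer--Nakano, while the present paper needs $p=3$), so this step would fail as stated and must instead come from one of the structural approaches above. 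Finally, dualizing $S^\la\subseteq Y^\la$ gives a surjection $(Y^\la)^*\onto(S^\la)^*$, not an embedding of $S^\la$ into $(Y^\la)^*$, so your self-duality argument as written only works for $p$-regular $\la$, where one can argue instead with $[M^\la:D^\la]=1$ and the self-duality of $D^\la$.
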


\begin{Corollary} \label{CSpechtMin}
If $\la\in \Par(n)$ and $\mu\in\Par_{\operatorname{reg}}(n)$ then $[M^\la:D^\mu]\neq 0$ implies $\mu\unrhd\la$, and $[M^\mu:D^\mu]=1$. 
\end{Corollary}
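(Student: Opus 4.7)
The plan is to derive this immediately from Lemma~\ref{LYoung} combined with Lemma~\ref{LDomJames}. The Young module decomposition gives us a description of $M^\la$ built out of modules $Y^\nu$ with $\nu \unrhd \la$, and each $Y^\nu$ in turn is filtered by Specht modules $S^\ka$ with $\ka \unrhd \nu$. So the set of partitions indexing Specht factors appearing somewhere in $M^\la$ is contained in $\{\ka \in \Par(n) \mid \ka \unrhd \la\}$. Applying Lemma~\ref{LDomJames} to each such Specht factor will transfer the dominance condition from Specht factors to simple composition factors $D^\mu$.

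More precisely, for the first assertion, I would argue as follows. If $[M^\la : D^\mu] \ne 0$, then by Lemma~\ref{LYoung} there exists some $\nu \unrhd \la$ with $[Y^\nu : D^\mu] \ne 0$, and in turn, by the Specht filtration of $Y^\nu$ in Lemma~\ref{LYoung}, there is some $\ka \unrhd \nu$ with $[S^\ka : D^\mu] \ne 0$. Lemma~\ref{LDomJames} then gives $\mu \unrhd \ka \unrhd \nu \unrhd \la$, so $\mu \unrhd \la$, as required.

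For the second assertion, taking $\la = \mu$ and analyzing $M^\mu = Y^\mu \oplus \bigoplus_{\nu \rhd \mu}(Y^\nu)^{\oplus m_{\nu,\mu}}$, I claim $[Y^\nu : D^\mu] = 0$ whenever $\nu \rhd \mu$. Indeed, every Specht factor $S^\ka$ of $Y^\nu$ satisfies $\ka \unrhd \nu \rhd \mu$, so Lemma~\ref{LDomJames} forces $[S^\ka : D^\mu] = 0$. Thus $[M^\mu : D^\mu] = [Y^\mu : D^\mu]$. The same argument applied to the Specht filtration $Y^\mu \sim S^\mu \mid S^{\mu^1} \mid \cdots \mid S^{\mu^t}$ with all $\mu^i \rhd \mu$ kills the contribution of every $S^{\mu^i}$, leaving $[Y^\mu : D^\mu] = [S^\mu : D^\mu] = 1$ by the last statement of Lemma~\ref{LDomJames}.

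No step is a real obstacle here; the whole proof is a two-line bookkeeping on dominance once the Young module decomposition and its Specht filtration are in hand. The only thing to be mildly careful about is that one must use the Specht filtration of $Y^\nu$ for all $\nu$ appearing in the decomposition of $M^\la$, not just $\nu = \la$, since a priori $M^\la$ can pick up additional Young summands $Y^\nu$ with $\nu \rhd \la$.
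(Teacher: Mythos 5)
Your proof is correct and follows essentially the same route as the paper: the paper likewise combines Lemma~\ref{LYoung} (the Young module decomposition together with the Specht filtrations of the $Y^\nu$) to obtain a Specht filtration of $M^\la$ with $S^\la$ once and all other factors $S^\ka$ with $\ka\rhd\la$, and then concludes by Lemma~\ref{LDomJames}. Your write-up merely makes explicit the dominance bookkeeping that the paper leaves implicit.
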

\begin{proof}
We have for example by Lemma~\ref{LYoung} that $M^\la\sim S^\la| S^{\mu^1}|\cdots |S^{\mu^t}$ for some $\mu^1,\dots,\mu^t\rhd \la$, so the result follows from Lemma~\ref{LDomJames}.
\end{proof}

We will need the following results on cohomology of symmetric groups:

\begin{Lemma} \label{LNakaoka}  
We have $H^m(\s_n,\bone)=0$ for $0<m<2p-3$. 
\end{Lemma}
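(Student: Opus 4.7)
The plan is to reduce to the Sylow $p$-subgroup via transfer and then analyze invariants under the Weyl group action. Let $P$ be a Sylow $p$-subgroup of $\s_n$. Since $[\s_n:P]$ is invertible in $\F$, the transfer argument shows that the restriction map
$$\operatorname{res}\colon H^m(\s_n,\F) \longrightarrow H^m(P,\F)$$
is injective, and by Cartan--Eilenberg's stable elements theorem, the image is contained in the invariants $H^m(P,\F)^{N_{\s_n}(P)/P}$. Thus it suffices to show that these invariants vanish in degrees $0 < m < 2p-3$.

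For $n < p$ the Sylow $p$-subgroup is trivial and there is nothing to prove. The key case is $p \leq n < 2p$, where $P \cong C_p$ is cyclic, with cohomology
$$H^*(P,\F) \;=\; \F[y]\otimes \Lambda(x), \qquad |x|=1,\ |y|=2.$$
The Weyl group $N_{\s_n}(P)/P$ contains the subgroup $C_{p-1}$ coming from the normalizer of a Sylow in $\s_p$ (which is a Frobenius group of order $p(p-1)$), and this $C_{p-1}$ acts by a faithful character $\zeta$ on $x \in H^1(P,\F)$ and by the same character on $y$ (as $y$ is a Bockstein of $x$). A monomial $x^a y^b$ with $a\in\{0,1\}$ is $C_{p-1}$-invariant precisely when $(p-1)\mid(a+b)$, and has cohomological degree $a+2b$. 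The smallest positive $a+b$ with this property is $a+b = p-1$, yielding degree $2(p-1)-a$, which is minimized at $a=1$, $b=p-2$, giving $m = 2p-3$. Hence invariants vanish in positive degrees below $2p-3$, establishing the claim in this range.

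For $n \geq 2p$, the Sylow $p$-subgroup becomes an iterated wreath product, but the vanishing still holds by Nakaoka's stability theorem, which asserts that the restriction $H^m(\s_{n+1},\F) \to H^m(\s_n,\F)$ is an isomorphism for $n$ sufficiently large relative to $m$. Combined with the analysis of the cyclic Sylow case, this propagates the vanishing to all $n$. The main obstacle is this last step: handling $n\geq 2p$ requires either invoking Nakaoka's classical stability and decomposition results, or carrying out an induction on $n$ by analyzing the Künneth-type decomposition of $H^*(P' \wr C_p, \F)$ and verifying that the extra $\s_r$-symmetry in the Weyl group action prevents any new low-degree invariants from appearing. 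In practice this lemma is best treated as a citation of Nakaoka's original calculation.
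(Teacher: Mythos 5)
The paper's own proof of this lemma is a one-line citation of \cite[Lemma 5.3(b)]{KN} (equivalently Nakaoka's computation), so any honest comparison has to measure your argument against that. Your treatment of the range $p\le n<2p$ is correct and is a genuinely self-contained supplement: restriction to a Sylow $p$-subgroup $P\cong C_p$ is injective because $[\s_n:P]$ is invertible in $\F$, its image lies in the $N_{\s_n}(P)/P$-invariants, and since $C_{p-1}$ acts on $H^*(C_p,\F)=\F[y]\otimes\Lambda(x)$ through a faithful character (the same one on $x$ and on $y=\beta x$), the first positive-degree invariant monomial $xy^{p-2}$ sits in degree $2p-3$. Together with the trivial case $n<p$ this correctly gives the vanishing for $n<2p$.

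The genuine gap is the case $n\ge 2p$, and the way you invoke stability does not close it. Homological stability identifies $H^m(\s_n,\F)$ with the stable cohomology only for $n$ roughly at least $2m$; for the degrees needed here ($m$ up to $2p-4$) this fails already for $n=2p$ when $p>4$, so not all pairs $(n,m)$ in question are in the stable range. More fundamentally, stability cannot ``propagate'' a vanishing statement that you have only established for $n<2p$: those $n$ lie outside the stable range, and the stable value of $H^m$ is not determined by the cyclic-Sylow computation. To know that the stable cohomology vanishes below degree $2p-3$ one needs Nakaoka's actual computation of $H_*(\s_\infty;\F_p)$ (first positive-degree class in degree $2p-3$), or an honest induction through the iterated wreath-product Sylow subgroups, which you only gesture at. Since you end by deferring to a citation of Nakaoka, your proposal ultimately lands where the paper does; but the intermediate claim that ``stability combined with the cyclic Sylow case'' proves the lemma for $n\ge 2p$ should be deleted or replaced by the citation, as it is not a valid argument as stated.
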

\begin{proof}
This is \cite[Lemma 5.3(b)]{KN} (and also easily follows from \cite[Proposition 7.3]{Nakaoka}). 
\end{proof}

\begin{Lemma} \label{LBKM} 
Let $\la\in\Par(n)$. 
\begin{enumerate}
\item[{\rm (i)}] \cite[Theorem 2.4]{BKM}\, $H^1(\s_n,(S^\la)^*)=0$ unless $p=3$ and $\la=(1^3),\ (n-3,1^3)$. 
\item[{\rm (ii)}] \cite[Theorem 4.1]{BKM}\, $H^2(\s_n,(S^\la)^*)=0$ unless $p=3$ and $\la=(1^3)$, $(2,1)$, $(2^2)$, $(1^6)$,   $(n-3,1^3)$, $(n-3,2,1)$, $(n-6,1^6)$. 
\end{enumerate}
\end{Lemma}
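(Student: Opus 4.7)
Since this lemma is stated as a direct citation of \cite[Theorems 2.4, 4.1]{BKM}, the proof amounts to invoking those results. To indicate the substance underlying them, I would sketch the natural independent approach as follows.

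The plan is to induct on $n$ using the classical branching filtration: $S^\la\da_{\s_{n-1}}$ admits a filtration with subquotients $S^{\la_A}$ indexed by the removable nodes $A$ of $\la$, and dually $(S^\la)^*\da_{\s_{n-1}}$ has a filtration by the modules $(S^{\la_A})^*$. Applying the long exact sequence in cohomology along this filtration (or equivalently the Lyndon--Hochschild--Serre spectral sequence for $\s_{n-1}\hookrightarrow \s_n$ together with Shapiro's lemma) relates $H^m(\s_n,(S^\la)^*)$ for $m=1,2$ to $H^m(\s_{n-1},(S^{\la_A})^*)$ and to $H^m(\s_n,\bone)$, which vanishes in the relevant range by Lemma~\ref{LNakaoka} whenever $p>3$.

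For $p>3$, this induction closes cleanly: the base cases involve only small $n$, for which the Specht modules are small enough to compute cohomology directly, and the trivial module occurs as a composition factor of $(S^\la)^*$ only in a controlled way. The delicate portion is the case $p=3$. There, $H^1(\s_3,\bone)\neq 0$ and $H^2(\s_n,\bone)$ acquires extra contributions, so the exceptional partitions $(1^3)$, $(2,1)$, $(2^2)$, $(1^6)$ arise directly from small-$n$ base cases, and their ``stable'' companions $(n-3,1^3)$, $(n-3,2,1)$, $(n-6,1^6)$ arise as the propagation of these exceptions through the branching induction.

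The main obstacle is therefore the $p=3$ analysis: one must precisely track how the nonvanishing cohomology of $\bone$ feeds into $H^*(\s_n,(S^\la)^*)$ for exactly those $\la$ listed, and confirm that no further exceptional families appear. This is accomplished in \cite{BKM} by a combination of branching, Ext-quiver considerations in the relevant blocks, and direct calculation for the sporadic cases. For part (ii), the structure of the argument is identical to part (i) but requires controlling the $d_2$ differential mediating between $H^1$ and $H^2$, which explains why the exceptional list is strictly larger.
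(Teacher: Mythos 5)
Your proposal is correct and matches the paper exactly: the paper offers no argument for this lemma beyond the citations to \cite[Theorems 2.4, 4.1]{BKM}, so invoking those results is precisely what is done. Your additional sketch of how one might prove the BKM theorems (branching induction plus Lemma~\ref{LNakaoka} and special handling of $p=3$) is a reasonable heuristic but is not needed and is not vouched for by the paper, which relies entirely on the cited reference.
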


\subsection{Two-row partitions}
We will often deal with modules corresponding to two-row partitions, so we introduce a special notation 
\begin{equation}\label{ETwoRowNot}
M_k:=M^{(n-k,k)},\quad S_k:=S^{(n-k,k)},\quad D_k:=D^{(n-k,k)},\quad Y_k:=Y^{(n-k,k)}.
\end{equation}
Of course, it is assumed that $k\leq n/2$. 
The notation is convenient when $n$ is understood---otherwise we use the full notation $M^{(n-k,k)},S^{(n-k,k)}$, etc. Recall that throughout the paper we are assuming $p=\cha\F >2$, so the partition $(n-k,k)$ is always $p$-regular.  

Note that if $t$ is a tabloid corresponding to the 2-row partition $(n-k,k)$, then $t$ can be fully recovered from its second row, so 
we can view $M_k$ as the permutation module 
on the set $\Om_k$ of $k$-element subsets of $\{1,2,\dots,n\}$. 
For $k,l\leq n/2$, we will use special homomorphisms between permutation modules:
\begin{equation}\label{EEta}
\eta_{k,l}:M_k\to M_l,\ X\mapsto \sum_{Y\in \Om_l,\,Y\, \text{incident to}\, X}Y,
\end{equation}
where $Y$ is incident to $X$ means $Y\subseteq X$ or $X\subseteq Y$.

\begin{Lemma} \label{LWilson} {\rm \cite[Theorem 1]{Wil}} 
If $k\leq l\leq n/2$ then 
$$\dim\Im\,\eta_{k,l}=\dim\Im\,\eta_{l,k}=\sum \left(\textstyle \binom{n}{r}-\binom{n}{r-1}\right),$$
where the sum is over all $r=0,\dots,k$ such that $\binom{l-r}{k-r}$ is not divisible by $p$ (interpreting $\binom{l}{-1}$ as~$0$).
\end{Lemma}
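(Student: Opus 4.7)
The plan is to reduce to a Smith normal form computation via the Specht filtration of the permutation modules $M_k$.

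The equality $\dim\Im\,\eta_{k,l}=\dim\Im\,\eta_{l,k}$ is formal: in the standard subset bases of $M_k$ and $M_l$, the matrices of $\eta_{l,k}$ and $\eta_{k,l}$ are transposes of each other, hence have equal rank over $\F$.

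Working integrally, let $M_k^\Z$ denote the free $\Z\s_n$-module on $k$-subsets of $\{1,\dots,n\}$. By Young's rule, $M_k^\Z$ admits a Specht filtration $0=\tilde M_{-1}\subset\tilde M_0\subset\cdots\subset\tilde M_k=M_k^\Z$ whose successive quotients are the integral Specht modules $S^{(n-r,r)}_\Z$, each of $\Z$-rank $\binom{n}{r}-\binom{n}{r-1}$. Since $\eta_{k,l}$ is $\Z\s_n$-linear and the filtration is canonical, it sends $\tilde M_r\subset M_k^\Z$ into the corresponding $\tilde M_r\subset M_l^\Z$. On each $\Q$-irreducible subquotient $S^{(n-r,r)}_\Q$, the induced map therefore acts by a scalar $c_r\in\Z$. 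The combinatorial identity
\[
\eta_{k,l}\circ\eta_{r,k}=\binom{l-r}{k-r}\,\eta_{r,l}\colon M_r\to M_l,
\]
proved by counting $k$-subsets $X$ with $Z\subseteq X\subseteq Y$ for a fixed $r$-subset $Z$ and $l$-subset $Y$, identifies $c_r=\binom{l-r}{k-r}$.

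The subtle final step is promoting these subquotient scalars into an $\F$-rank statement. Because in characteristic $p$ there may exist non-trivial $\s_n$-homomorphisms between distinct Specht modules, a filtration-only argument yields only the lower bound
\[
\dim\Im\,\eta_{k,l}\ \geq\ \sum_{r:\,p\nmid\binom{l-r}{k-r}}\!\left(\binom{n}{r}-\binom{n}{r-1}\right).
\]
To upgrade this to equality one must construct $\Z$-bases of $M_k^\Z$ and $M_l^\Z$ in which the matrix of $\eta_{k,l}$ is diagonal with entries $\binom{l-r}{k-r}$ appearing with multiplicity $\binom{n}{r}-\binom{n}{r-1}$ for $r=0,\dots,k$. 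These entries are then the elementary divisors of $\eta_{k,l}$, and the $\F$-rank equals the number of them not divisible by $p$. Constructing such an explicit integral diagonalization---for example via orbit-sum bases indexed by nested pairs of subsets---is the main obstacle, and constitutes the content of \cite{Wil}.
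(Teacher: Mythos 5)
The paper contains no internal proof of this lemma: it is quoted directly as Theorem~1 of \cite{Wil}, so the only ``approach'' in the paper is the citation itself. Measured against that, your write-up is consistent with the source but is not a proof of the statement: by your own admission the decisive step --- producing an integral diagonal form for $\eta_{k,l}$ whose elementary divisors are $\binom{l-r}{k-r}$ with multiplicity $\binom{n}{r}-\binom{n}{r-1}$, or otherwise establishing the \emph{upper} bound on the $\F$-rank --- is deferred to \cite{Wil}, which is exactly what the lemma cites. So there is a gap in the proposal as a standalone argument, but it is the gap you yourself flag, and filling it would amount to reproving Wilson's theorem.

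What you do prove is essentially sound, with two points needing justification. The transpose argument for $\dim\Im\,\eta_{k,l}=\dim\Im\,\eta_{l,k}$ and the identity $\eta_{k,l}\circ\eta_{r,k}=\binom{l-r}{k-r}\,\eta_{r,l}$ are fine. For the lower bound you should (a) observe that $\End_{\Z\s_n}$ of an integral two-row Specht (or dual Specht) lattice is $\Z$, so that the induced map on each integral subquotient of the filtration really is multiplication by $\pm\binom{l-r}{k-r}$ rather than merely a map that rationally is this scalar, and (b) record the general fact that a filtration-preserving map has rank at least the sum of the ranks of its graded pieces. Note also that the reverse inequality fails for general filtered maps (a filtered map can have rank strictly larger than the sum of its graded ranks), which is precisely why the filtration argument cannot yield equality and why the diagonal form of \cite{Wil} (or an equivalent Smith-normal-form computation) is genuinely needed. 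Your proposal identifies this correctly and stops exactly where the cited theorem begins.
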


\begin{Lemma} \label{L131218}
Let $k\leq n/2$. 
\begin{enumerate}[\rm(i)]
\item $M_k\sim S_0^*|S_{1}^*|\ldots|S_k^*$,
\item if $p>k$ then $M_k\sim M_{k-1}|S_k^*$.
\end{enumerate}
\end{Lemma}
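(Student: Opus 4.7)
The plan is to extract both filtrations from the incidence maps $\eta_{k,l}$ of (\ref{EEta}) combined with the self-duality $(M_k)^*\cong M_k$, tackling (ii) first as the cleaner case. For (ii), I consider $\eta_{k,k-1}\colon M_k\to M_{k-1}$. A standard polytabloid calculation shows $S_k\subseteq\ker\eta_{k,k-1}$: for any $(n-k,k)$-tableau $t$, the image $\eta_{k,k-1}(e_t)$ collects $(k-1)$-subsets with signs coming from the column stabilizer, and these pair up to cancel. Lemma~\ref{LWilson} applied with $l=k-1<k$ gives
\[
\dim\Im\eta_{k,k-1}=\dim\Im\eta_{k-1,k}=\sum\nolimits_r\bigl(\textstyle\binom{n}{r}-\binom{n}{r-1}\bigr),
\]
summed over $r\in\{0,\dots,k-1\}$ with $p\nmid\binom{k-r}{1}=k-r$. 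Under the hypothesis $p>k$ every such term survives, and the telescoping sum equals $\binom{n}{k-1}=\dim M_{k-1}$, making $\eta_{k,k-1}$ surjective. Then $\dim\ker\eta_{k,k-1}=\binom{n}{k}-\binom{n}{k-1}=\dim S_k$ forces $S_k=\ker\eta_{k,k-1}$, yielding the short exact sequence $0\to S_k\to M_k\to M_{k-1}\to 0$. Dualizing by self-duality of $M_k$ and $M_{k-1}$ gives $0\to M_{k-1}\to M_k\to S_k^*\to 0$, i.e.\ $M_k\sim M_{k-1}|S_k^*$.

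For (i), my approach is to construct a Specht filtration of $M_k$ (with $S_k$ at the bottom and more dominant Spechts stacked above) and then dualize. By the standard Young's rule for permutation modules, $M^{(n-k,k)}=\bone_{\s_{n-k,k}}\ua^{\s_n}$ admits a Specht filtration with each subquotient of the form $S^\mu$ for $\mu\unrhd(n-k,k)$ appearing $K_{\mu,(n-k,k)}$ times. Since the content $(n-k,k)$ has only two distinct values, only $2$-row $\mu=(n-r,r)$ with $0\leq r\leq k$ contribute, and each Kostka number equals $1$. One then arranges the filtration so that the least dominant Specht $S_k$ sits at the bottom (using $S_k\hookrightarrow M_k$ by definition) with the more dominant $S_{k-1},\dots,S_0$ stacked above in decreasing order of $r$; this gives $M_k\sim S_k|S_{k-1}|\cdots|S_0$. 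Dualizing via $(M_k)^*\cong M_k$ reverses the order of subquotients and produces the asserted $M_k\sim S_0^*|S_1^*|\cdots|S_k^*$.

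The main obstacle lies in (i): the existence of a Specht filtration with the prescribed multiset of subquotients is classical, but the specific bottom-to-top ordering requires extra care. I would justify this either by an explicit construction of the filtration via the chain $W^{(r)}:=\sum_{s\leq r}\Im(\eta_{s,k}\colon M_s\to M_k)$, verifying by Lemma~\ref{LWilson} dimension counts that $W^{(r)}/W^{(r-1)}\cong S_r^*$, or by invoking vanishing of $\Ext^1(S^\mu,S^\nu)$ for incomparable $\mu,\nu$ to refine an arbitrary Specht filtration into one respecting dominance. Note that (ii), combined with (i) for $k-1$, already reproves (i) when $p>k$ via the inductive step $M_k\sim M_{k-1}|S_k^*\sim S_0^*|\cdots|S_{k-1}^*|S_k^*$, so the genuine difficulty is confined to the small $p$ regime.
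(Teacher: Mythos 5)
Your part (ii) is fine: the polytabloid cancellation shows $S_k\subseteq\ker\eta_{k,k-1}$, Lemma~\ref{LWilson} (applied to the pair $(k-1,k)$, whose hypotheses hold since $k\leq n/2$) gives surjectivity of $\eta_{k,k-1}$ when $p>k$, and dualizing the resulting short exact sequence via self-duality of permutation modules gives $M_k\sim M_{k-1}|S_k^*$. This is a legitimate self-contained argument; the paper simply cites \cite[Lemmas 3.1, 3.2]{BKIrr} for this step.

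The gap is in (i), and it is exactly at the point you flag yourself: the existence of a Specht series of $M^{(n-k,k)}$ with the factors in the prescribed dominance order is not "Young's rule plus care" --- in positive characteristic it \emph{is} the classical result the paper cites (\cite[Example 17.17]{JamesBook}, i.e.\ James's characteristic-free Specht series for $M^\mu$), so your proof of (i) currently assumes the statement being proved. Neither of your proposed repairs closes this. For (a): Lemma~\ref{LWilson} only gives $\dim\Im\eta_{s,k}$ for a single $s$; it says nothing about $\dim\bigl(\sum_{s\leq r}\Im\eta_{s,k}\bigr)$, and when $p\mid\binom{k-s}{r-s}$ the images genuinely interact. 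For instance, for $p=3$, $k=3$ one has $\eta_{2,3}\circ\eta_{1,2}=2\,\eta_{1,3}$, so $\Im\eta_{1,3}\subseteq\Im\eta_{2,3}$, while $\dim\Im\eta_{2,3}=\binom n2-1$; whether $W^{(2)}$ has dimension $\binom n2$ then hinges on whether the all-ones vector lies in $\Im\eta_{2,3}$, which requires a separate computation (it does not, but nothing in your dimension count shows this), and even with the correct dimension you must still identify the subquotient with $S_r^*$ rather than with some other quotient of $M_r$ of the same dimension. For (b): the partitions occurring here are all two-row, hence totally ordered by dominance, so a vanishing criterion for $\Ext^1(S^\mu,S^\nu)$ with $\mu,\nu$ \emph{incomparable} is vacuous; what a reordering argument would actually need is vanishing of $\Ext^1\bigl(S^{(n-k,k)},S^{(n-j,j)}\bigr)$ for $j<k$ (to split adjacent layers in the wrong order), which you neither state nor prove and which is not an off-the-shelf fact. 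Your closing remark is correct that (ii) plus induction recovers (i) when $p>k$, but the paper uses (i) with $k$ up to $6$ and $p=3$, so the small-$p$ case is the one that matters; as it stands, the honest proof of (i) is the citation to \cite[Example 17.17]{JamesBook} (dualized using $M_k^*\cong M_k$), as in the paper.
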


\begin{proof}
(i) holds by \cite[Example 17.17]{JamesBook} and (ii) holds for example by \cite[Lemmas 3.1,\,3.2]{BKIrr}.  
\end{proof}

\begin{Lemma} \label{LPerpMax}
Let $0\leq j\leq k$ and $Y\sim S_0^*|S_{1}^*|\ldots|S_k^*$. There exists a unique largest submodule $V\subseteq Y$ such that $[V:D_a]=0$ for all $j\leq a\leq k$. Moreover, $V\sim S_0^*|S_1^*|\dots|S_{j-1}^*$ and $Y/V\sim S_j^*|S_{j+1}^*|\dots|S_k^*$. 
\end{Lemma}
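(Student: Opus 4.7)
The plan is to realise $V$ as an initial segment of the given filtration of $Y$, and then to show, by tracking a socle factor through the layers, that any other candidate submodule must be absorbed into $V$.

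Fix a series $0 = F_0 \subset F_1 \subset \dots \subset F_{k+1} = Y$ realising the given filtration, so that $F_{i+1}/F_i \cong S_i^*$. Set $V := F_j$; by construction $V \sim S_0^*|S_1^*|\dots|S_{j-1}^*$ and $Y/V \sim S_j^*|S_{j+1}^*|\dots|S_k^*$. Lemma~\ref{LDomJames} says that every composition factor of $S_a^*$ has the form $D_b$ with $b \leq a$, so in particular
$$[V:D_a] = \sum_{b < j}[S_b^*:D_a] = 0 \quad \text{for every } a \geq j.$$
Thus a submodule satisfying the required vanishing condition exists.

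For maximality and uniqueness, let $W \subseteq Y$ be any submodule with $[W:D_a] = 0$ for all $j \leq a \leq k$, and let $i$ be the smallest index with $W \subseteq F_i$. The goal is to show $i \leq j$; suppose for contradiction that $i > j$. Then $W \not\subseteq F_{i-1}$, so $\bar W := (W + F_{i-1})/F_{i-1}$ is a nonzero submodule of $F_i/F_{i-1} \cong S_{i-1}^*$. Since $\head S_{i-1} \cong D_{i-1}$ is simple, the dual module $S_{i-1}^*$ has simple (hence essential) socle isomorphic to $D_{i-1}$, using that every irreducible $\F\s_n$-module is self-dual. The nonzero submodule $\bar W$ therefore meets this socle and contains $D_{i-1}$ as a composition factor; but $\bar W$ is a quotient of $W$, forcing $[W:D_{i-1}] \geq 1$. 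Since $i-1 \geq j$, this contradicts the hypothesis on $W$.

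Hence every admissible $W$ lies inside $V = F_j$, so $V$ is the unique largest submodule with the stated vanishing, and the induced filtrations of $V$ and of $Y/V$ are read directly off the original series. The one real pressure point is the identification of $\soc S_{i-1}^*$ with the simple module $D_{i-1}$; once that is available, the minimal-$i$ argument along the filtration carries through without further input.
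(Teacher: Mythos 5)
Your proof is correct and follows essentially the same route as the paper: take $V=F_j$ from the filtration, use Lemma~\ref{LDomJames} for the vanishing of $[V:D_a]$, and use $\soc S_r^*\cong D_r$ to force any admissible submodule into $V$. The only cosmetic difference is that you locate the offending layer via the minimal $i$ with $W\subseteq F_i$, whereas the paper passes to the nonzero submodule $(V+W)/V$ of $Y/V\sim S_j^*|\cdots|S_k^*$; the key inputs are identical.
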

\begin{proof}
There clearly exists $V$ such that $V\sim S_0^*|S_1^*|\dots|S_{j-1}^*$ and $Y/V\sim S_j^*|S_{j+1}^*|\dots|S_k^*$. By Lemma~\ref{LDomJames}, for every $r$ we have $[S_r:D_r]=1$, and $[S_r:D_s]=0$ only if $s\leq r$. So $V$ is a submodule of $Y$ such that $[V:D_a]=0$ for all $j\leq a\leq k$. If $W$ is another such submodule and $W\not\subseteq V$, then $V+W\supsetneq V$ is also such a submodule. This yields a non-zero submodule $(V+W)/V\subseteq Y/V\sim S_j^*|S_{j+1}^*|\dots|S_k^*$. Since $\soc S_r^*\cong D_r$ for all $r$ it follows that some $D_a$ with $j\leq a\leq k$ is a composition factor of $(V+W)/V$, hence of $V+W$, which is a contradiction. 
\end{proof}

\section{Invariants}\label{inv}
In this section, we assume that $n=2b$ for an integer $b>1$  and consider the following natural subgroups of $\s_n$, which are stabilizers in
$\s_n$ of a partition of $\{1,2,\ldots,n\}$ into $b$ pairs, respectively into two $b$-subsets:
$$
\W_{2,b}=\s_2\wr \s_b\cong \s_2^{\times b}\rtimes \s_b\quad \text{and} \quad 
\W_{b,2}=\s_b\wr \s_2\cong (\s_b\times \s_b)\rtimes\s_2.
$$
(There will be one exception to these assumptions---in Lemma~\ref{inv3intr} we consider a different subgroup and do not assume that $n$ is even.) 
The main goal of this section is to obtain information about the invariants $V^\W$ for various special $\F\s_n$-modules $V$ and 
$\W=\W_{2,b}$ or $\W_{b,2}$. 

Recall that throughout the paper we are assuming $p=\cha\F >2$.

\subsection{\boldmath Invariants $(S_k^*)^\W$ for $k\leq p$}
We begin with $\dim M_k^\W$, which is easily seen to be the number of $\W$-orbits on $\Om_k$. So an elementary check shows:

\begin{Lemma} \label{LI} 
Let $\W=\W_{2,b}$ or\, $\W_{b,2}$, and $k\leq b$. Then $\dim M_k^\W=\lceil(k+1)/2\rceil$. In particular, 
\begin{align*}
&\dim M_0^\W=\dim M_1^\W=1,\ \dim M_2^\W=\dim M_3^\W=2,\\  &\dim M_4^\W=\dim M_5^\W=3,\ \dim M_6^\W=4.
\end{align*}
\end{Lemma}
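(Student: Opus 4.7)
The plan is to unpack the hint already provided in the excerpt: since $M_k = \bone_{\s_{n-k,k}}\ua^{\s_n}$ is the permutation module on $\Om_k$ (the set of $k$-subsets of $\{1,\dots,n\}$), by Frobenius reciprocity we have $\dim M_k^\W = \dim \Hom_{\F\s_n}(\bone,M_k) = $ the number of $\W$-orbits on $\Om_k$. So the task reduces to an orbit count for the two cases $\W=\W_{2,b}$ and $\W=\W_{b,2}$, under the standing assumption $k\leq b$.

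First I would fix a concrete realization of the two wreath products. Take $\W_{2,b}$ to be the stabilizer of the pair partition $\Pi_{2,b}=\{\{1,2\},\{3,4\},\dots,\{2b-1,2b\}\}$, and $\W_{b,2}$ to be the stabilizer of the partition $\Pi_{b,2}=\{\{1,\dots,b\},\{b+1,\dots,2b\}\}$. For the case $\W=\W_{2,b}$, given $X\in\Om_k$, associate to $X$ the statistic $j(X):=$ the number of pairs in $\Pi_{2,b}$ entirely contained in $X$; then $k-2j(X)$ pairs meet $X$ in exactly one element. I claim $j$ is a complete invariant of the $\W_{2,b}$-orbit of $X$: $\W_{2,b}$ acts as all permutations of the $b$ pairs together with independent swaps inside each pair, so any two $k$-subsets with the same statistic $j$ are conjugate. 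The statistic ranges over $\{0,1,\dots,\lfloor k/2\rfloor\}$ (the condition $j+(k-2j)\leq b$ reduces to $k-j\leq b$, automatic from $k\leq b$), giving $\lfloor k/2\rfloor+1=\lceil(k+1)/2\rceil$ orbits.

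For the case $\W=\W_{b,2}$, associate to $X\in\Om_k$ the unordered pair $\{|X\cap\{1,\dots,b\}|,|X\cap\{b+1,\dots,2b\}|\}$. This is a complete invariant: the factor $\s_b\times\s_b$ permutes the two halves independently, and the top $\s_2$ swaps them. All ordered splits $(i,k-i)$ with $0\leq i\leq k$ are realized (here $k\leq b$ ensures $\max(i,k-i)\leq b$), and after quotienting by the swap $(i,k-i)\sim(k-i,i)$ one obtains $\lfloor k/2\rfloor+1=\lceil(k+1)/2\rceil$ orbits, whether $k$ is even or odd.

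Combining the two counts gives the lemma; the numerical values listed for $k=0,\dots,6$ are immediate specializations. There is no real obstacle here—the entire proof is an elementary orbit count, and the only point worth stating carefully is why the hypothesis $k\leq b$ suffices to ensure that every candidate statistic value is actually attained in $\Om_k$.
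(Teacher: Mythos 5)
Your proof is correct and follows the same route as the paper, which simply observes that $\dim M_k^\W$ equals the number of $\W$-orbits on $\Om_k$ and leaves the orbit count as an elementary check; you have spelled out that check (orbits parametrized by the number of pairs contained in $X$ for $\W_{2,b}$, and by the unordered split $\{|X\cap\{1,\dots,b\}|,\,k-|X\cap\{1,\dots,b\}|\}$ for $\W_{b,2}$), including the correct use of $k\leq b$ for realizability. Nothing is missing.
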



\begin{Lemma} \label{LStabStructure}
Let $\W=\W_{2,b}$ or\, $\W_{b,2}$, and $0\leq k\leq b$. 
Let $X\in\Om_k$ and consider the point stabilizer $\operatorname{Stab}_\W(X)$ for the natural action of $\W<\s_n$ on $\Om_k$. 
\begin{enumerate}
\item[{\rm (i)}] Suppose $\W = \W_{2,b}$. Then there exist non-negative integers $c,d$ such that $2c+d=k$ and 
$\operatorname{Stab}_\W(X) \cong \s_d \times \W_{2,c} \times \W_{2,b-c-d}$.
\item[{\rm (ii)}] Suppose $\W = \W_{b,2}$. Then there exist  non-negative integers $c,d$ such that $c+d=k$, and 
$$
\operatorname{Stab}_\W(X) \cong
\left\{
\begin{array}{ll}
\s_c \times \s_{b-c} \times \s_{d}  \times S_{b-d} &\hbox{if $c\neq d$,}\\
(\s_c \times \s_{b-c}) \wr \s_2 &\hbox{if $c=d$.}
\end{array}
\right.
$$
\end{enumerate}
\end{Lemma}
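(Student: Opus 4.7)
The proof is a direct analysis of the wreath product action on $k$-subsets, so the plan is to fix a natural set-partition preserved by $\W$, classify the possible ``intersection types'' of $X$ with the blocks of this partition, and read off the stabilizer componentwise.

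For part (i), the plan is to realize $\W_{2,b}$ as the stabilizer in $\s_n$ of the partition of $\{1,\dots,n\}$ into the $b$ pairs $P_1=\{1,2\},\,P_2=\{3,4\},\dots,P_b=\{2b-1,2b\}$. Given $X\in\Omega_k$, I would classify each pair $P_i$ as \emph{full} (if $P_i\subseteq X$), \emph{half} (if $|P_i\cap X|=1$), or \emph{empty} (if $P_i\cap X=\varnothing$); let $c,d,b-c-d$ be the respective counts, so $2c+d=k$. An element $g\in\W_{2,b}$ fixes $X$ setwise iff it preserves the classification type of each pair, i.e.\ permutes full pairs among themselves, half pairs among themselves, and empty pairs among themselves. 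On a full or empty pair the internal $\s_2$-action preserves $X$, while on a half pair the $\s_2$ would swap the chosen element with the non-chosen one and so must be trivial. This yields the decomposition $\Stab_\W(X)\cong \W_{2,c}\times\s_d\times\W_{2,b-c-d}$.

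For part (ii), the plan is to realize $\W_{b,2}$ as the stabilizer in $\s_n$ of the partition into $A=\{1,\dots,b\}$ and $B=\{b+1,\dots,n\}$. Put $c:=|X\cap A|$ and $d:=|X\cap B|$, so $c+d=k$. Elements of the base group $\s_b\times\s_b$ that stabilize $X$ are exactly those that stabilize $X\cap A$ in the first factor and $X\cap B$ in the second, giving $\s_c\times\s_{b-c}\times\s_d\times\s_{b-d}$. The remaining issue is whether the nontrivial coset that swaps $A$ and $B$ contributes any stabilizing elements. Any such element sends $A\cap X$ bijectively to a subset of $B$ that must equal $B\cap X$, forcing $c=d$. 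When $c\neq d$, no swap is available and we get the direct product stated. When $c=d$, swaps do exist (exchange any chosen bijection $A\cap X\leftrightarrow B\cap X$ and $A\setminus X\leftrightarrow B\setminus X$), and the stabilizer is generated by the base group piece together with one swap, giving the wreath product $(\s_c\times\s_{b-c})\wr\s_2$.

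There is no real obstacle: both parts reduce to tracking which internal symmetries of each block preserve $X$, and the only subtle point is the $c=d$ dichotomy in (ii), which is handled by the straightforward parity argument above.
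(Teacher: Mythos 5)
Your proof is correct and follows essentially the same route as the paper, which simply fixes the natural block partition preserved by $\W$ and reads off $c,d$ from the intersection pattern of $X$ with the blocks (full/half/empty pairs for $\W_{2,b}$, intersection sizes with the two $b$-sets for $\W_{b,2}$); your write-up just supplies the componentwise stabilizer analysis, including the $c=d$ swap case, in more detail than the paper's sketch. The only cosmetic point is that in (i) the copy of $\s_d$ on the half pairs consists of pair-permutations together with the uniquely determined compensating internal flips (so that chosen elements go to chosen elements), rather than permutations with literally trivial flips, but this does not affect the isomorphism type.
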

\begin{proof}
We explain how to choose $c$ and $d$ so that the answer is as predicted. 

(i) The group $\W = \W_{2,b}$ stabilizes the partition $\{1,2,\dots,n\}= \bigsqcup_{r=1}^b\{2r-1,2r\}$. The set $X$ contains exactly $c$ of the pairs $\{2r-1,2r\}$ and intersects exactly $d$ of such pairs at one point.


(ii) The group $\W = \W_{b,2}$ stabilizes the partition  $\{1,2,\dots,n\}= \{1,\dots,b\}\sqcup\{b+1,\dots,n\}$. 
Then $c=|X\cap\{1,\dots,b\}|$ and $d=|X\cap\{b+1,\dots,n\}|$.  
\end{proof}

\begin{Corollary} \label{CPtStabW}
Let $0\leq k\leq b$, $\W=\W_{2,b}$ or\, $\W_{b,2}$, and $K$ is a point stabilizer in $\W$ for the action of $\W<\s_n$ on $\Om_k$. Then:
\begin{enumerate}
\item[{\rm (i)}] $O^p(K)=K$; in particular, $H^1(K,\bone)=0$.
\item[{\rm (ii)}] $H^2(K,\bone)=0$. 
\end{enumerate} 
\end{Corollary}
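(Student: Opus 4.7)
The plan is to invoke Lemma~\ref{LStabStructure} to reduce to a short list of group structures, then verify both statements case by case using elementary generation arguments for (i) and a combination of the Lyndon-Hochschild-Serre spectral sequence (Lemma~\ref{LHS}), Künneth, and Lemma~\ref{LNakaoka} for (ii). Note throughout that $p$ is odd, so $\s_2$ and its direct or iterated products are $p'$-groups.

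For part (i), recall that $O^p(K)$ is the smallest normal subgroup of $K$ with $p$-power index, so $K/O^p(K)$ is the largest $p$-quotient of $K$. Therefore $H^1(K,\bone)=\Hom(K,\F)$ factors through $K/O^p(K)$, and vanishes whenever $O^p(K)=K$. Each group in Lemma~\ref{LStabStructure} is a direct product or wreath product of symmetric groups. Since $\s_m$ is generated by transpositions (elements of order $2$, coprime to $p$), and any wreath product $\s_a\wr\s_b$ is generated by transpositions in the base factors together with transpositions in the top $\s_b$ (lifted via the natural section), every $K$ in our list is generated by elements of order $2$. It follows that $O^p(K)=K$ in every case.

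For part (ii), we use that $H^2(\s_m,\bone)=0$ for every $m\geq 0$ by Lemma~\ref{LNakaoka} (valid since $2<2p-3$ for $p\geq 3$), together with the vanishing of $H^1(\s_m,\bone)$ established in (i). By the Künneth formula (which has no Tor terms over the field $\F$), the vanishing of $H^1$ and $H^2$ passes to direct products; hence $H^2(A,\bone)=0$ for $A$ any direct product of symmetric groups. This handles the case $K\cong \s_c\times \s_{b-c}\times \s_d\times \s_{b-d}$ of Lemma~\ref{LStabStructure}(ii). For $\W_{2,m}=\s_2^{\times m}\rtimes \s_m$ the base $\s_2^{\times m}$ is a $p'$-group, so Lemma~\ref{LHS}(i) gives $H^2(\W_{2,m},\bone)\cong H^2(\s_m,\bone)=0$. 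Any direct product of factors of the form $\s_d$, $\W_{2,c}$, $\W_{2,b-c-d}$ (as in Lemma~\ref{LStabStructure}(i)) then has vanishing $H^2$ by another application of Künneth. Finally, for the wreath case $K=(\s_c\times \s_{b-c})\wr \s_2=(\s_c\times \s_{b-c})^{\times 2}\rtimes \s_2$, the complement $\s_2$ is a $p'$-group, so Lemma~\ref{LHS}(ii) yields
\[
H^2(K,\bone)\cong H^2\bigl((\s_c\times \s_{b-c})^{\times 2},\bone\bigr)^{\s_2},
\]
which vanishes since the group whose invariants we are taking is already zero by Künneth and Lemma~\ref{LNakaoka}.

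No step of this is a genuine obstacle; the only mild subtlety is keeping track of which side of the semidirect product is a $p'$-group so that the correct clause of Lemma~\ref{LHS} applies, and observing that our hypothesis $p\neq 2$ makes every $\s_2$ factor (and therefore every wreath complement or base appearing here) a $p'$-group.
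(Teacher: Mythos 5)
Your proof is correct and follows essentially the same route as the paper: part (i) from the explicit structure of $K$ in Lemma~\ref{LStabStructure} together with $p>2$, and part (ii) by K\"unneth reduction to $H^2(\s_r,\bone)$, $H^2(\W_{2,r},\bone)$ and $H^2((\s_r\times\s_t)\wr\s_2,\bone)$, each killed via Lemma~\ref{LHS} (with the correct clause for which side is the $p'$-group) and Lemma~\ref{LNakaoka}. The only difference is that you spell out the generation-by-involutions argument and the K\"unneth bookkeeping that the paper leaves implicit.
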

\begin{proof}
We know the structure of $K$ from Lemma~\ref{LStabStructure}, and this immediately implies (i) (since $p>2$).  For (ii), 
the K\"unneth formula allows us to reduce to proving that 
$$
H^2(\s_r,\bone)=0,\ H^2(\W_{2,r},\bone)=0,\ H^2((\s_r\times \s_t)\wr\s_2,\bone)=0
$$
for various $r,t$. The first equality follows from Lemma~\ref{LNakaoka}. The second equality follows using Lemma~\ref{LHS} and then Lemma~\ref{LNakaoka}. The third equality follows from Lemmas~\ref{LHS},~\ref{LNakaoka} and the K\"unneth formula. 
\end{proof}

\begin{Lemma} \label{LCohW}
Let $\W=\W_{2,b}$ or\, $\W_{b,2}$, and $0\leq k\leq b$. Then $H^1(\W,M_k)=H^2(\W,M_k)=0$. 
\end{Lemma}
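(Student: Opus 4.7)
The plan is to reduce the computation to cohomology of the point stabilizers, for which the preceding Corollary \ref{CPtStabW} already does all the work.

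First, I would observe that $M_k = \bone\ua^{\s_n}_{\s_{n-k,k}}$ is the permutation module on $\Om_k$. By the standard orbit decomposition (Mackey applied to the trivial character, or simply decomposing the permutation basis),
\[
M_k\da_\W \,\cong\, \bigoplus_{X} \bone\ua^\W_{\Stab_\W(X)},
\]
where $X$ ranges over a set of representatives of the $\W$-orbits on $\Om_k$. By Shapiro's lemma, for each such $X$ and each $m\geq 0$,
\[
H^m(\W,\,\bone\ua^\W_{\Stab_\W(X)}) \,\cong\, H^m(\Stab_\W(X),\bone).
\]

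Now I apply Corollary \ref{CPtStabW}: every point stabilizer $K=\Stab_\W(X)$ for the action of $\W$ on $\Om_k$ satisfies $H^1(K,\bone)=0$ and $H^2(K,\bone)=0$. Summing over the (finitely many) orbit representatives gives
\[
H^1(\W,M_k) \,=\, \bigoplus_X H^1(\Stab_\W(X),\bone) \,=\, 0,
\]
and identically $H^2(\W,M_k)=0$. There is no serious obstacle here; the whole content of the lemma is that the stabilizer structure identified in Lemma \ref{LStabStructure} (direct products of symmetric and wreath-type groups, all of order coprime to $p$ in the right places or reducible via Lyndon-Hochschild-Serre to $H^m(\s_r,\bone)$) forces the vanishing recorded in Corollary \ref{CPtStabW}.
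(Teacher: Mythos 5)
Your proposal is correct and follows essentially the same route as the paper: decompose $M_k\da_\W$ into a direct sum of modules $\bone_{K_t}\ua^\W$ over the $\W$-orbits on $\Om_k$ via Mackey's theorem, apply Eckmann--Shapiro to reduce to $H^m(K_t,\bone)$ for $m=1,2$, and conclude by Corollary~\ref{CPtStabW}. No gaps.
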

\begin{proof}
Let $\O_1,\dots,\O_r$ be the $\W$-orbits on $\Om_k$ with point stabilizers $K_1,\dots,K_r$, respectively. 
By Mackey's theorem, $M_k\da_{\W}\cong \bigoplus_{t=1}^r\bone_{K_t}\ua^\W$. So we have $H^m(\W,M_k)\cong \bigoplus_{t=1}^r H^m(\W,\bone_{K_t}\ua^\W)$. For each $t$, by Eckmann-Shapiro's Lemma,  
$H^m(\W,\bone_{K_t}\ua^\W)\cong H^m(K_t,\bone_{K_t})$ which is zero for $m=1,2$ by Corollary~\ref{CPtStabW}. 
\end{proof}

We can now compute the invariants $(S_k^*)^\W$ for $p$ large enough: 

\begin{Corollary} \label{CWInvLargep}
Let $\W=\W_{2,b}$ or\, $\W_{b,2}$, and $0\leq k\leq b$. If  $p>k$ then 
$$\dim(S_k^*)^\W=
\left\{
\begin{array}{ll}
0 &\hbox{if $k$ is odd,}\\
1 &\hbox{if $k$ is even.}
\end{array}
\right.
$$
\end{Corollary}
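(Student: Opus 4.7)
The plan is to combine the short exact sequence from Lemma~\ref{L131218}(ii) with the cohomology vanishing from Lemma~\ref{LCohW}. Since $p > k$, Lemma~\ref{L131218}(ii) gives $M_k \sim M_{k-1} | S_k^*$, i.e., there is a short exact sequence of $\F\s_n$-modules
\begin{equation*}
0 \to M_{k-1} \to M_k \to S_k^* \to 0.
\end{equation*}

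Restricting to $\W$ and taking the associated long exact sequence of cohomology, we obtain
\begin{equation*}
0 \to M_{k-1}^\W \to M_k^\W \to (S_k^*)^\W \to H^1(\W, M_{k-1}) \to \cdots.
\end{equation*}
By Lemma~\ref{LCohW}, $H^1(\W, M_{k-1}) = 0$ (note that $k-1 \leq b$ since $k \leq b$), so the sequence $0 \to M_{k-1}^\W \to M_k^\W \to (S_k^*)^\W \to 0$ is exact. Therefore
\begin{equation*}
\dim(S_k^*)^\W = \dim M_k^\W - \dim M_{k-1}^\W.
\end{equation*}

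Now apply Lemma~\ref{LI}: $\dim M_j^\W = \lceil (j+1)/2 \rceil$ for all $0 \leq j \leq b$. A direct computation gives $\lceil (k+1)/2 \rceil - \lceil k/2 \rceil = 1$ when $k$ is even and $0$ when $k$ is odd, yielding the claimed formula.

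There is no real obstacle here; the two nontrivial ingredients (the filtration of $M_k$ and the cohomological vanishing $H^1(\W, M_{k-1}) = 0$) are already in place, and the dimension count of $M_k^\W$ was carried out in Lemma~\ref{LI}. The only point to verify carefully is that the hypothesis $p > k$ suffices to invoke Lemma~\ref{L131218}(ii) and that all indices ($k-1$ and $k$) remain in the range $[0, b]$ required for Lemma~\ref{LCohW}, both of which are immediate.
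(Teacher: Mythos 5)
Your proof is correct and follows essentially the same route as the paper: the short exact sequence $0\to M_{k-1}\to M_k\to S_k^*\to 0$ from Lemma~\ref{L131218}(ii), the vanishing $H^1(\W,M_{k-1})=0$ from Lemma~\ref{LCohW}, and the count $\dim M_j^\W=\lceil(j+1)/2\rceil$ from Lemma~\ref{LI}. The only cosmetic difference is that the paper disposes of $k=0$ separately (where $M_{k-1}$ is undefined and $S_0^*\cong\bone$ makes the claim immediate), a triviality you should note before invoking the exact sequence.
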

\begin{proof}
For $k=0$ we have $S_k\cong\bone$ so the result is clear. Let $k>0$. 
By Lemma~\ref{L131218}(ii), there is an exact sequence 
$0\to M_{k-1}\to M_k\to S_k^*\to 0$, which yields the exact sequence
$$
0\to M_{k-1}^\W\to M_k^\W\to (S_k^*)^\W\to H^1(\W,M_{k-1}).
$$
By Lemma~\ref{LCohW}, $H^1(\W,M_{k-1})=0$, so $\dim(S_k^*)^\W=\dim M_k^\W-\dim M_{k-1}^\W$, and the result follows from Lemma~\ref{LI}. 
\end{proof}

To deal with the case $k=p$ we need a slightly more elaborate cohomological argument.

\begin{Lemma} \label{L131218_5P}
Let $b\geq p$, and $\W=\W_{2,b}$ or\, $\W_{b,2}$. Then 
$(S_p^*)^\W=0$.  
\end{Lemma}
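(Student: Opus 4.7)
The plan is to combine the filtration of $M_p$ from Lemma~\ref{L131218}(i) with a cohomological vanishing argument and a dimension count. Since $b \geq p$, the invariant dimensions $\dim M_r^\W$ are given by Lemma~\ref{LI} for all $r \leq p$, and the cohomological inputs $H^1(\W, M_r) = H^2(\W, M_r) = 0$ of Lemma~\ref{LCohW} are all available.

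First I would bootstrap vanishing of $H^1(\W, S_r^*)$ for all $0 \leq r \leq p-1$. For such $r$, the assumption $p > r$ makes Lemma~\ref{L131218}(ii) applicable, yielding a short exact sequence $0 \to M_{r-1} \to M_r \to S_r^* \to 0$. The piece of the associated long exact cohomology sequence
\begin{equation*}
H^1(\W, M_r) \longrightarrow H^1(\W, S_r^*) \longrightarrow H^2(\W, M_{r-1})
\end{equation*}
has both outer terms zero by Lemma~\ref{LCohW}, giving $H^1(\W, S_r^*) = 0$ as required. Along the way (using the rest of the long exact sequence and Corollary~\ref{CWInvLargep}, or directly from Corollary~\ref{CWInvLargep}), one has $\dim (S_r^*)^\W = 1$ if $r$ is even and $0$ if $r$ is odd for $0 \leq r \leq p-1$.

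Next I would apply Lemma~\ref{L131218}(i) to choose a submodule $V \subseteq M_p$ with $V \sim S_0^* | S_1^* | \cdots | S_{p-1}^*$ and $M_p/V \cong S_p^*$. The $H^1$-vanishing from the previous step together with Lemma~\ref{LInvSum} gives $H^1(\W, V) = 0$ and
\begin{equation*}
\dim V^\W = \sum_{r=0}^{p-1} \dim (S_r^*)^\W = \tfrac{p+1}{2},
\end{equation*}
since exactly the even values $r = 0, 2, \dots, p-1$ contribute, and there are $(p+1)/2$ of them because $p$ is odd.

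Finally, from $0 \to V \to M_p \to S_p^* \to 0$ and $H^1(\W, V) = 0$, the long exact sequence gives $\dim (S_p^*)^\W = \dim M_p^\W - \dim V^\W$. By Lemma~\ref{LI}, $\dim M_p^\W = \lceil (p+1)/2 \rceil = (p+1)/2$ (as $p$ is odd), so $\dim (S_p^*)^\W = (p+1)/2 - (p+1)/2 = 0$, which is the desired conclusion. The only potential obstacle is ensuring that the cohomological inputs of Lemma~\ref{LCohW} really apply at index $p$ (not just below $p$), but this is covered by the hypothesis $b \geq p$.
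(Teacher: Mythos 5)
Your proof is correct, and it takes a genuinely different route from the paper's. The paper constructs the kernel of $M_p\onto S_p^*$ concretely: it uses the incidence map $\eta_{p-1,p}$ and Wilson's rank formula (Lemma~\ref{LWilson}) to build short exact sequences through $\bar M_{p-1}=M_{p-1}/\bone$ and an auxiliary module $Y$ containing a trivial submodule, pins down the kernel via Lemma~\ref{LPerpMax}, and then needs cohomology vanishing only for $\bone$ and $M_{p-1}$ (Lemma~\ref{LCohW}) before counting invariants with Lemma~\ref{LI}. You instead take the kernel directly from the dual Specht filtration $M_p\sim S_0^*|\cdots|S_p^*$ of Lemma~\ref{L131218}(i), first bootstrapping $H^1(\W,S_r^*)=0$ for all $0\leq r\leq p-1$ from the sequence $0\to M_{r-1}\to M_r\to S_r^*\to 0$ of Lemma~\ref{L131218}(ii) together with $H^1(\W,M_r)=H^2(\W,M_{r-1})=0$ from Lemma~\ref{LCohW}; then Lemma~\ref{LInvSum}, Corollary~\ref{CWInvLargep} and Lemma~\ref{LI} give $\dim V^\W=(p+1)/2=\dim M_p^\W$ for your submodule $V\subseteq M_p$, hence $(S_p^*)^\W=0$. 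The hypotheses are all in order: $r\leq p-1<p$ keeps Lemma~\ref{L131218}(ii) and Corollary~\ref{CWInvLargep} applicable, while $p\leq b$ is exactly what Lemmas~\ref{LI} and~\ref{LCohW} require at index $p$ (this is where $p>k$ fails and the naive extension of Corollary~\ref{CWInvLargep} stops, which is why a separate argument is needed at all). Your route is arguably cleaner: it avoids Wilson's theorem and the auxiliary modules $\bar M_{p-1}$, $Y$, $I$, and as a by-product extends the $H^1$-vanishing of Corollary~\ref{CCoh} from $k\leq 3$ to all $k\leq p-1$; the paper's construction, on the other hand, produces slightly finer structural information about $M_p$ near the top of its filtration, in the same spirit as its later analysis of $Z_6$ and its submodules, but that extra structure is not needed for the statement itself.
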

\begin{proof}
By Lemma~\ref{LWilson}, we have $\dim\Im\, \eta_{p-1,p}=\dim M_{p-1}-1$. Let $v:=\sum_{X\in\Om_{p-1}}X\in M_{p-1}$. 
As $v$ spans the trivial submodule $\bone \subseteq M_{p-1}$ and $\eta_{p-1,p}(v)=0$, 
we have short exact sequences
\begin{align}
\label{SES1}
0\to \bone \to M_{p-1}\to \bar M_{p-1}\to 0,
\\
\label{SES2}
0\to \bar M_{p-1}\to M_p\stackrel{\si}{\to} Y\to 0.
\end{align}
The sequence (\ref{SES1}) yields the exact sequence 
$
0\to \bone^{\s_n} \to M_{p-1}^{\s_n}\to \bar M_{p-1}^{\s_n}\to H^1(\s_n,\bone),
$
and since $\bone^{\s_n}\cong M_{p-1}^{\s_n}\cong\F$ and $H^1(\s_n,\bone)=0$,
we deduce that $\bar M_{p-1}^{\s_n}=0$. 
Moreover, $\dim M_p^{\s_n}=1$, so from (\ref{SES2}), we have $Y^{\s_p}\neq 0$, i.e. there is a submodule $\bone\cong I\subseteq  Y$. Further, $\dim Y=\dim M_p-\dim M_{p-1}+1=\dim S_p^*+1$ and $D_p$ is not a composition factor of $\bar M_p$, so using Lemma~\ref{LPerpMax} we deduce that $\si^{-1}(I)\subseteq M_p$ must be the unique maximal submodule of $M_p$ not having $D_p$ as a composition factor and $M_p/\si^{-1}(I)\cong S_p^*$. So we have a short exact sequence
\begin{equation}
\label{SES3}
0\to I \to Y \to S_p^*\to 0.
\end{equation}

The sequences (\ref{SES1})-(\ref{SES3}) yield exact sequences 
\begin{align*}
&0\to \F \to M_{p-1}^\W\to \bar M_{p-1}^\W\to H^1(\W,\bone)\to H^1(\W,M_{p-1})\to H^1(\W,\bar M_{p-1})\to H^2(\W,\bone),
\\
&0\to \bar M_{p-1}^\W\to M_p^\W\to Y^\W\to H^1(\W,\bar M_{p-1}),
\\
&0\to \F \to Y^\W \to (S_p^*)^\W\to H^1(\W,\bone).
\end{align*}
Moreover, $H^1(\W,M_{p-1})=H^1(\W,\bone)=H^2(\W,\bone)=0$ by Lemma~\ref{LCohW}. The equality $(S_p^*)^\W=0$ now follows.  
\end{proof}

\subsection{\boldmath Computing $H^m(\W,S_k^*)$ for $k,m\in\{1,2\}$} 
We now establish some more results on cohomology of wreath products.

\begin{Lemma} \label{LHG2} 
Let $b\geq 5$ and $k=0,1,2$. Then $H^1(\W_{b,2},S_k^*)=H^2(\W_{b,2},S_k^*)=0$. 
\end{Lemma}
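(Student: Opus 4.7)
Since $p>2$, the subgroup $\s_2\leq \W_{b,2}=(\s_b\times \s_b)\rtimes \s_2$ has order coprime to $p$, and Lemma~\ref{LHS}(ii) yields $H^m(\W_{b,2},V)\cong H^m(\s_b\times \s_b,V)^{\s_2}$ for every $\F\W_{b,2}$-module $V$. Hence it suffices to prove $H^m(\s_b\times \s_b,S_k^*)=0$ for $m=1,2$ and $k=0,1,2$. The case $k=0$, where $S_0^*=\bone$, is immediate from the K\"unneth formula and Lemma~\ref{LNakaoka}, since $H^1(\s_b,\bone)=H^2(\s_b,\bone)=0$ for any $p\geq 3$.

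For $k=1,2$, the plan is to use the classical Specht filtration of the restriction: $S^{(n-k,k)}{\downarrow}_{\s_b\times \s_b}$ admits a filtration whose subquotients are $S^\mu\boxtimes S^\nu$ with $\mu,\nu\vdash b$, and multiplicities given by the Littlewood--Richardson coefficients $c^{(n-k,k)}_{\mu\nu}$. A direct enumeration of Littlewood--Richardson tableaux shows that the partitions appearing lie in $\{(b),(b-1,1)\}$ when $k=1$, and in $\{(b),(b-1,1),(b-2,2)\}$ when $k=2$. Dualizing this filtration, $S_k^*{\downarrow}_{\s_b\times \s_b}$ admits a filtration whose subquotients have the form $(S^\al)^*\boxtimes (S^\be)^*$ for $\al,\be$ drawn from the same set of partitions of $b$.

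By the K\"unneth formula,
\[
H^m(\s_b\times \s_b,(S^\al)^*\boxtimes (S^\be)^*)=\bigoplus_{i+j=m}H^i(\s_b,(S^\al)^*)\otimes H^j(\s_b,(S^\be)^*),
\]
so it suffices to show $H^i(\s_b,(S^\al)^*)=0$ for $i=1,2$ and each $\al$ on the list. When $\al=(b)$ (so $(S^\al)^*=\bone$) this is Lemma~\ref{LNakaoka}; when $\al=(b-1,1)$ or $\al=(b-2,2)$ it follows from Lemma~\ref{LBKM}, provided neither partition occurs in its $p=3$ exceptional lists. A direct check confirms this for $b\geq 5$: every exceptional partition in Lemma~\ref{LBKM}(i),(ii) either has first part at most $2$ or has at least three nonzero parts, whereas both $(b-1,1)$ and $(b-2,2)$ have exactly two parts and first part $\geq 3$. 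Iterating the long exact sequence along the Specht filtration---the direct analogue of Lemma~\ref{LInvSum} for $H^2$, obtained by the same argument---then forces $H^m(\s_b\times \s_b,S_k^*)=0$ for $m=1,2$, and passing to $\s_2$-invariants completes the proof. The main technical obstacle is setting up the explicit Specht filtration and carrying out the exceptional-partition check for Lemma~\ref{LBKM} carefully enough to cover the delicate $p=3$ case.
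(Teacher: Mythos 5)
Your argument is correct and follows essentially the same route as the paper: reduce to $\s_b\times\s_b$ via Lemma~\ref{LHS}(ii), filter $S_k^*{\downarrow}_{\s_b\times\s_b}$ by outer tensor products of dual two-row Specht modules, and finish with the K\"unneth formula and Lemma~\ref{LBKM} (your exceptional-partition check for $p=3$ and the treatment of $k=0$ via Nakaoka are fine). The only point to make explicit is that the ``classical Specht filtration of the restriction'' in positive characteristic is the James--Peel theorem on Specht series for skew Specht modules (the paper's citation \cite[3.1, 5.5]{JP}); the Littlewood--Richardson rule by itself only identifies the multiplicities, not the existence of the modular filtration.
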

\begin{proof}
For $k=0$ we have $S_0^*\cong M_0^*$, so we can use Lemma~\ref{LCohW}. Let $k\neq 0$. 
By Lemma~\ref{LHS}(ii), we have $H^m(\W_{b,2},S_k^*)\cong H^m(\s_b\times \s_b,S_k^*)^{\s_2}$. But by \cite[3.1, 5.5]{JP}, as an $\F(\s_b\times \s_b)$-module, $S_k^*$ has a filtration with subquotients of the form $(S^{(b-i,i)})^*\boxtimes (S^{(b-j,j)})^*$. Now,  $H^m(\s_b\times \s_b,S_i^*\boxtimes S_j^*)=0$ for $m=1,2$ thanks to the K\"unneth formula and Lemma~\ref{LBKM}. 
\end{proof}

To deal with the cohomology $H^m(\W_{2,b},S_k^*)$ we need the following lemma (where the action of the group $\s_b$ on the $\s_2^{\times b}$-invariants comes from the isomorphism $\s_b\cong\W_{2,b}/\s_2^{\times b}$):

\begin{Lemma} \label{LInvComp} 
Let $b\geq 4$. 
Then, as $\F\s_b$-modules, $(S_1^*)^{\s_2^{\times b}}\cong (S^{(b-1,1)})^*$ and 
$(S_2^*)^{\s_2^{\times b}}\cong M^{(b-2,2)}$. 
\end{Lemma}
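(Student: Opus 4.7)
The proof proceeds by taking $\s_2^{\times b}$-invariants in two short exact sequences from Lemma~\ref{L131218}, namely
\[
0\to\bone\to M_1\to S_1^*\to 0,\qquad 0\to M_1\to M_2\to S_2^*\to 0
\]
(the second uses $p>2$). Since $\s_2^{\times b}$ is a $p'$-group, its group algebra is semisimple, so the invariants functor is exact. The resulting sequences carry an $\F\s_b$-action via the natural identification $\s_b\cong\W_{2,b}/\s_2^{\times b}$, where $\s_b$ permutes the $b$ blocks $\{2r-1,2r\}$.

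For (i), $M_1^{\s_2^{\times b}}$ has basis $v_r:=\{2r-1\}+\{2r\}$ for $r=1,\dots,b$, and $\s_b$ permutes these naturally, so $M_1^{\s_2^{\times b}}\cong M^{(b-1,1)}$ as $\F\s_b$-modules. The trivial submodule $\bone\subset M_1$ is spanned by $\sum_i\{i\}=\sum_r v_r$, which corresponds to the standard trivial submodule of $M^{(b-1,1)}$. Since $(S^{(b-1,1)})^\perp=\bone\subset M^{(b-1,1)}$, equation \eqref{SLaPerpSLaDual} yields
\[
(S_1^*)^{\s_2^{\times b}}\cong M^{(b-1,1)}/\bone\cong (S^{(b-1,1)})^*.
\]

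For (ii), I first describe the $\s_2^{\times b}$-orbits on $\Om_2$: there are $b$ singleton orbits $\{\{2r-1,2r\}\}$ and $\binom{b}{2}$ orbits of size $4$, namely $\{\{2r-1,2s-1\},\{2r-1,2s\},\{2r,2s-1\},\{2r,2s\}\}$ for $r<s$. Taking orbit sums as a basis $e_{\{r\}},e_{\{r,s\}}$ identifies $M_2^{\s_2^{\times b}}\cong M^{(b-1,1)}\oplus M^{(b-2,2)}$ as $\F\s_b$-modules. The inclusion $M_1\hookrightarrow M_2$ of the second short exact sequence is given by $\eta_{1,2}:\{i\}\mapsto\sum_{j\neq i}\{i,j\}$, which is the dual of the surjection $\eta_{2,1}:M_2\to M_1$ (whose kernel has the correct dimension to be $S_2$). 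Applying this to $v_r$ yields
\[
\eta_{1,2}(v_r)\;=\;2\,e_{\{r\}}+\sum_{s\neq r}e_{\{r,s\}},
\]
with the factor $2$ coming from the double contribution of the pair $\{2r-1,2r\}$.

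Therefore, the image of $M_1^{\s_2^{\times b}}$ inside $M^{(b-1,1)}\oplus M^{(b-2,2)}$ is the graph $\{(2u,\eta(u))\mid u\in M^{(b-1,1)}\}$, where $\eta:M^{(b-1,1)}\to M^{(b-2,2)}$ is the analogous boundary map for $\s_b$ defined by $\{r\}\mapsto\sum_{s\neq r}\{r,s\}$. Since $p\neq 2$, the $\s_b$-equivariant map $(u_1,u_2)\mapsto u_2-\tfrac12\eta(u_1)$ is surjective onto $M^{(b-2,2)}$ with kernel exactly this graph, establishing $(S_2^*)^{\s_2^{\times b}}\cong M^{(b-2,2)}$. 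The main obstacle is the orbit computation in (ii) and the correct identification of the inclusion $M_1\hookrightarrow M_2$ as $\eta_{1,2}$; everything else is straightforward bookkeeping.
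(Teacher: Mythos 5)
Your proof is correct, but it takes a genuinely different route from the paper's. The paper handles the second isomorphism by a dimension count ($\dim (S_2^*)^{\s_2^{\times b}}=b(b-1)/2$, obtained by lifting to characteristic $0$ and using Littlewood--Richardson), then writes down an explicit basis of $S_2^*=M_2/S_2^\perp$, exhibits an explicit basis of the invariant space, and finally shows that the single vector $w_{1,2}$, being $\s_{2,b-2}$-invariant, generates the invariants as an $\F\s_b$-module, so that the Frobenius-reciprocity map from $M^{(b-2,2)}$ is an isomorphism. You instead exploit exactness of the fixed-point functor for the $p'$-group $\s_2^{\times b}$ applied to the filtrations of Lemma~\ref{L131218}, decompose $M_2^{\s_2^{\times b}}\cong M^{(b-1,1)}\oplus M^{(b-2,2)}$ by orbit sums, and split off the image of $M_1^{\s_2^{\times b}}$ (a graph) by elementary linear algebra using $p\neq 2$; this avoids both the characteristic-zero computation and the explicit basis of $S_2^*$, and is arguably cleaner. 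The only step you assert too quickly is that the embedded copy of $M_1$ in the exact sequence $0\to M_1\to M_2\to S_2^*\to 0$ may be taken to be $\Im\eta_{1,2}$: your parenthetical about $\eta_{2,1}$ having a kernel of the right dimension does not by itself identify that kernel with $S_2$. This is easily repaired with tools already in the paper: $\eta_{1,2}$ is injective (direct check, or Lemma~\ref{LWilson}), its image is a quotient of $M_1$ and hence has no composition factor $D_2$, and by Lemma~\ref{LPerpMax} the kernel of any surjection $M_2\onto S_2^*$ is the unique largest such submodule, of dimension $n$; hence it equals $\Im\eta_{1,2}$ (equivalently $S_2^\perp$, which is how the paper phrases it). With that one line added, your argument is complete.
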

\begin{proof}
The first isomorphism comes from an easy explicit calculation using $S_1^*\cong M_1/\bone$. 

As $\s_2^{\times b}$ is a $p'$-group and $S_2^*$ is reduction modulo $p$ of $S^{(n-2,2)}_\C$, we have 
\begin{equation}\label{EBasisInvar}
\dim (S_2^*)^{\s_2^{\times b}}=
\dim (S^{(n-2,2)}_\C)^{\s_2^{\times b}}=
b(b-1)/2=\dim M^{(b-2,2)}
\end{equation}
using Littlewood-Richardson rule for the second equality. 

Now, let $\Om_2(n)$ be the set of all $2$-element subsets of $\{1,\dots,n\}$, so that $M_2$ is the permutation module on $\Om_2(n)$. So we have the stadard basis 
$\{A\mid A\in \Om_2(n)\}$ of $M_2$ with the action $gv_A=v_{gA}$ for all $g\in\s_n$ and $A\in\Om_2(n)$. 
By definition, $S_2\subseteq M_2$ is spanned by the polytabloids 
$
v_{\{i,j\}}+v_{\{k,l\}}-v_{\{i,l\}}-v_{\{k,j\}}
$ for distinct $i,j,k,l\in\{1,\dots,n\}$. We have $S_2^*\cong M_2/S_2^\perp$, and it is easy to see that 
$
S_2^\perp:=\spa\{
w_1,\dots,w_n\}
$
where we have set 
$w_i:=\sum_{j\not=i}v_{\{i,j\}}$. 
For $v\in M_2$, denote 
$$\bar v:=v+S_2^\perp\in M_2/S_2^\perp=S_2^*.$$ 

For $1\leq i\leq n-3$, the equality $\bar w_i=0$ implies 
\begin{equation}\label{EV1}
\v_{\{i,n\}}=-\sum_{j\in\{1,\dots,n-1\}\setminus\{i\}}\v_{\{i,j\}}\qquad(1\leq i\leq n-3).
\end{equation}
Then the equality $\bar w_{n-2}+\bar w_{n-1}-\bar w_{n}=0$ and (\ref{EV1}) imply
\begin{equation}\label{EV2}
\v_{\{n-2,n-1\}}=-\sum_{A\in\Om_2(n-2)}\v_A-\sum_{i=1}^{n-3}\v_{\{i,n-1\}}.
\end{equation}
The equation $\bar w_{n-2}=0$ and (\ref{EV2}) imply 
\begin{equation}\label{EV3}
\v_{\{n-2,n\}}=\sum_{A\in\Om_2(n-3)}\v_A+\sum_{i=1}^{n-3}\v_{\{i,n-1\}}.
\end{equation}
The equation $\bar w_{n}=0$ and (\ref{EV1}),(\ref{EV3}) imply 
\begin{equation}\label{EV4}
\v_{\{n-1,n\}}=\sum_{A\in\Om_2(n-2)}\v_A.
\end{equation}
By (\ref{EV1})--(\ref{EV4}), the $n(n-3)/2$ vectors 
\begin{equation}\label{ES2Basis}
\{\v_A\mid A\in\Om_2(n-2)\}\cup\{\v_{\{i,n-1\}}\mid 1\leq i\leq n-3\}
\end{equation}
span $M_2/S_2^\perp$. Since $\dim S_2^*=n(n-3)/2$, we deduce that (\ref{ES2Basis}) is a basis of $M_2/S_2^\perp=S_2^*$. 
So, 
\begin{equation}\label{EInvBasis}
\{\v_{\{2i-1,2i\}}\mid 1\leq i<b\}\cup \{w_{i,j}\mid 1\leq i<j<b\}
\end{equation}
are $b(b-1)/2$ linearly independent elements of $(S_2^*)^{\s_2^{\times b}}$,
where we have set 
$$w_{i,j}:=\v_{\{2i-1,2j-1\}}+\v_{\{2i-1,2j\}}+\v_{\{2i,2j-1\}}+\v_{\{2i,2j\}}.$$ 
Taking into account  (\ref{EBasisInvar}), we deduce that (\ref{EInvBasis}) is a basis of $(S_2^*)^{\s_2^{\times b}}$. 
Moreover, $w_{1,2}$ is invariant with respect to the subgroup $\s_{2,b-2}<\s_b$. 
By the Frobenius reciprocity, there is an $\F \s_b$-module homomorphism $\phi:M_2\cong \bone\ua_{\s_{2,b-2}}^{\s_n}\to (S_2^*)^{\s_2^{\times b}}$ such that $w_{1,2}\in\Im\phi$. Since $\dim M_2=\dim (S_2^*)^{\s_2^{\times b}}$ by (\ref{EBasisInvar}), it remains to prove that $w_{1,2}$ generates $(S_2^*)^{\s_2^{\times b}}$ as an $\F\s_b$-module. Let $W$ be the submodule of  $(S_2^*)^{\s_2^{\times b}}$ generated by $w_{1,2}$. For $\si\in \s_{b-1}<\s_b$ we have $\si\cdot w_{i,j}=w_{\si(i),\si(j)}$ and $\si\cdot \v_{\{2i-1,2i\}}=\v_{\{2\si(i)-1,2\si(i)\}}$, so all $w_{i,j}$ with $1\leq i<j<b$ belong to $W$, and to complete the proof it suffices to prove that $\v_{\{1,2\}}\in W$. Take now $\si$ to be the transposition $(1,b)\in\s_b$. Then 
\begin{equation}\label{E010725}
\si\cdot w_{1,2}=\v_{\{1,n-1\}}+\v_{\{1,n\}}+\v_{\{2,n-1\}}+\v_{\{2,n\}}
\end{equation}
Using (\ref{EV1}) to write $\v_{\{1,n\}}=-\sum_{j=2}^{n-1}\v_{\{1,j\}}$, $\v_{\{2,n\}}=-\v_{\{1,2\}}-\sum_{j=3}^{n-1}\v_{\{2,j\}}$ and simplifying, we see that (\ref{E010725}) equals 
$
-2\v_{\{1,2\}}-\sum_{i=2}^{b-1}w_{1,i},
$
which now implies that $\v_{\{1,2\}}$ belongs to $W$. 
\end{proof}

\begin{Lemma} \label{LHG1} 
Let $b\geq 4$ and $k=0,1,2$. Then $H^1(\W_{2,b},S_k^*)=H^2(\W_{2,b},S_k^*)=0$. 
\end{Lemma}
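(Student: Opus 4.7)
The plan is to reduce everything to cohomology of symmetric groups via the Lyndon-Hochschild-Serre argument in Lemma \ref{LHS}(i). Since $\W_{2,b}=\s_2^{\times b}\rtimes \s_b$ and the normal subgroup $\s_2^{\times b}$ is a $p'$-group (as $p>2$), Lemma \ref{LHS}(i) gives
\[
H^m(\W_{2,b},S_k^*)\ \cong\ H^m\bigl(\s_b,\,(S_k^*)^{\s_2^{\times b}}\bigr)
\]
for all $m\geq 0$. It then suffices, for each $k\in\{0,1,2\}$ and $m\in\{1,2\}$, to verify that the right-hand side vanishes, using the description of the invariants provided by Lemma \ref{LInvComp}.

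For $k=0$ we have $S_0^*\cong\bone$, so the invariants are $\bone$ and the vanishing of $H^1(\s_b,\bone)$ and $H^2(\s_b,\bone)$ follows from Lemma \ref{LNakaoka}, since $p\geq 3$ gives $2p-3\geq 3$. For $k=1$, Lemma \ref{LInvComp} identifies $(S_1^*)^{\s_2^{\times b}}$ with $(S^{(b-1,1)})^*$ as $\F\s_b$-modules, and we invoke Lemma \ref{LBKM}. The first step here is to verify that $(b-1,1)$ is not among the exceptional partitions listed there: for the $H^1$ statement the exceptions are $(1^3)$ and $(n-3,1^3)$, which have no form $(b-1,1)$ for $b\geq 4$; for the $H^2$ statement the exception $(2,1)$ would require $b=3$, and $(n-3,2,1)$ would force $b-1=b-3$, so these are all excluded.

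For $k=2$, Lemma \ref{LInvComp} gives $(S_2^*)^{\s_2^{\times b}}\cong M^{(b-2,2)}$ as an $\F\s_b$-module. Since $M^{(b-2,2)}\cong \bone\uparrow_{\s_{b-2,2}}^{\s_b}$, Eckmann-Shapiro converts the problem into computing $H^m(\s_{b-2}\times \s_2,\bone)$. Applying the K\"unneth formula and using that $\s_2$ is a $p'$-group (so that $H^j(\s_2,\bone)=0$ for $j>0$), this reduces to $H^m(\s_{b-2},\bone)$ for $m=1,2$, which again vanishes by Lemma \ref{LNakaoka}.

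The only subtle point in the whole argument is the bookkeeping of the exceptional partitions in Lemma \ref{LBKM}(ii) for $p=3$, and ensuring our hypothesis $b\geq 4$ is strong enough to dodge them; beyond that, the proof is a straightforward assembly of the tools already at hand.
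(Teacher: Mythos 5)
Your proof is correct and takes essentially the same route as the paper: Lemma~\ref{LHS}(i) combined with Lemma~\ref{LInvComp}, then Lemma~\ref{LBKM} for $k=1$ and Eckmann--Shapiro plus the K\"unneth formula and Lemma~\ref{LNakaoka} for $k=2$. The only immaterial difference is at $k=0$, where the paper simply quotes Lemma~\ref{LCohW} for $M_0$ while you pass through the $\s_2^{\times b}$-invariants and Nakaoka directly; your explicit bookkeeping of the exceptional partitions in Lemma~\ref{LBKM} is accurate.
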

\begin{proof}
As $S_0^*\cong M_0^*$, for the case $k=0$ we can use Lemma~\ref{LCohW}. Let $k\neq 0$. 
By Lemmas~\ref{LHS}(i) and \ref{LInvComp}, we have 
$$H^m(\W_{2,b},S_1^*)\cong H^m(\s_b,(S_1^*)^{\s_2^{\times b}})
\cong 
H^m(\s_b,S_1^*)=0,
$$
where the last equality follows for example from Lemma~\ref{LBKM}. On the other hand, by Lemmas~\ref{LHS}(i),  \ref{LInvComp} and Eckmann-Shapiro's lemma, we have
$$H^m(\W_{2,b},S_2^*)\cong H^m(\s_b,(S_2^*)^{\s_2^{\times b}})
\cong 
H^m(\s_b,M_2)\cong H^m(\s_{2,b-2},\bone)=0,
$$
the last equality following by the K\"unneth formula and Lemma~\ref{LNakaoka}. 
\end{proof}

\begin{Lemma} \label{LH1S3} 
Let $b\geq 5$, and $\W=\W_{2,b}$ or\, $\W_{b,2}$. Then $H^1(\W,S_3^*)=0$. 
\end{Lemma}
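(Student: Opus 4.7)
The plan is to reduce the statement to vanishing results that have already been established by using the Specht filtration of the permutation module $M_3$. By Lemma~\ref{L131218}(i), there is a submodule filtration
\[
0 \subseteq U_0 \subseteq U_1 \subseteq U_2 \subseteq U_3 = M_3
\]
with $U_i/U_{i-1} \cong S_i^*$ for $i=0,1,2,3$. Setting $V := U_2$, we obtain a short exact sequence $0 \to V \to M_3 \to S_3^* \to 0$ inducing a long exact sequence in cohomology whose relevant piece is
\[
H^1(\W, M_3) \longrightarrow H^1(\W, S_3^*) \longrightarrow H^2(\W, V).
\]
Since $b \geq 5 \geq 3$, Lemma~\ref{LCohW} yields $H^1(\W, M_3) = 0$, so it suffices to establish that $H^2(\W, V) = 0$.

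The filtration $V \sim S_0^* \,|\, S_1^* \,|\, S_2^*$ then allows me to reduce $H^2(\W,V) = 0$ to the individual vanishings $H^2(\W, S_i^*) = 0$ for $i = 0,1,2$, via two successive long exact sequence arguments applied to $0 \to U_0 \to U_1 \to S_1^* \to 0$ and $0 \to U_1 \to V \to S_2^* \to 0$. For $i = 0$, note that $S_0^* \cong \bone$, and $H^2(\W, \bone) = 0$ follows from Corollary~\ref{CPtStabW}(ii) taken at $k = 0$, where the unique point stabilizer in $\Om_0$ is $\W$ itself. For $i = 1, 2$, the vanishing $H^2(\W, S_i^*) = 0$ is exactly what Lemma~\ref{LHG1} (if $\W = \W_{2,b}$) and Lemma~\ref{LHG2} (if $\W = \W_{b,2}$) deliver.

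This uniform approach handles both wreath products simultaneously, and the only work is bookkeeping of the long exact sequences. There is no substantive obstacle, since the hypotheses $b \geq 5$ and $p > 2$ ensure that all cited results apply (in particular, the Specht filtration of $M_3$ is available, $3 \leq b$ in Lemma~\ref{LCohW}, and Corollary~\ref{CPtStabW} covers the $k=0$ case); everything reduces to assembling the $H^2$-vanishings that were already proved for the smaller values $k = 0, 1, 2$.
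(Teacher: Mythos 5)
Your proof is correct and follows essentially the same route as the paper: the exact sequence $0\to V\to M_3\to S_3^*\to 0$ with $V\sim S_0^*|S_1^*|S_2^*$, the vanishing $H^1(\W,M_3)=0$ from Lemma~\ref{LCohW}, and the reduction of $H^2(\W,V)=0$ to the $H^2$-vanishings for $S_0^*,S_1^*,S_2^*$ from Lemmas~\ref{LHG1} and~\ref{LHG2}. The only cosmetic difference is that you invoke Corollary~\ref{CPtStabW}(ii) for the $k=0$ case, whereas the paper simply uses the $k=0$ case already contained in Lemmas~\ref{LHG1} and~\ref{LHG2}.
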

\begin{proof}
By Lemma~\ref{L131218}, there is an exact sequence
$0\to X\to M_3\to S_3^*$ with $X\sim S_0^*|S_1^*|S_2^*$. This yields the exact sequence $H^1(\W,M_3)\to H^1(\W,S_3^*)\to H^2(\W,X)$. But $H^1(\W,M_3)=0$ by Lemma~\ref{LCohW}, and $H^2(\W,X)=0$ because $H^2(\W,S_0^*)=H^2(\W,S_1^*)=H^2(\W,S_2^*)=0$ by Lemmas~\ref{LHG2} and \ref{LHG1}. 
\end{proof}

Lemmas~\ref{LHG1}, \ref{LHG2} and \ref{LH1S3} yield:

\begin{Corollary} \label{CCoh}
Let $b\geq 5$, $0\leq k\leq 3$, and $\W=\W_{2,b}$ or\, $\W_{b,2}$.  Then $H^1(\W,S_k^*)=0$.
\end{Corollary}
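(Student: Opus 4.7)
The plan is essentially to observe that all the substantive work has already been done in the three preceding lemmas, and the corollary is nothing more than their combination over the allowed range of $k$ and the two choices of $\W$.

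More precisely, I would split on the value of $k$. For $k\in\{0,1,2\}$, Lemma~\ref{LHG1} handles the case $\W=\W_{2,b}$ (and in fact does so under the slightly weaker hypothesis $b\geq 4$), while Lemma~\ref{LHG2} handles the case $\W=\W_{b,2}$ under the hypothesis $b\geq 5$; in each case one gets not just the vanishing of $H^1$ but also of $H^2$, so we are using strictly less than what was proved. For $k=3$, the vanishing of $H^1(\W,S_3^*)$ for both $\W=\W_{2,b}$ and $\W=\W_{b,2}$ (with $b\geq 5$) is precisely the statement of Lemma~\ref{LH1S3}, which in turn was obtained by feeding the $k\leq 2$ cohomology vanishing results from Lemmas~\ref{LHG1} and \ref{LHG2} into the long exact sequence associated to the filtration $M_3\sim S_0^*|S_1^*|S_2^*|S_3^*$ from Lemma~\ref{L131218}(i).

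There is essentially no obstacle and no further calculation required; the corollary serves only as a convenient unified reference for subsequent use. The proof will therefore be a single sentence citing the three lemmas with the case split on $k$.
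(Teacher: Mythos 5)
Your proposal is correct and matches the paper exactly: the corollary is stated there as an immediate consequence of Lemmas~\ref{LHG1}, \ref{LHG2} and \ref{LH1S3}, with the same case split on $k\in\{0,1,2\}$ versus $k=3$ and the same attribution of the two wreath products to the two lemmas. No further argument is needed.
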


\subsection{\boldmath More on the invariants of $\W$}

\begin{Lemma} \label{LLargePS*} 
Let $0\leq k\leq 4$, $b\geq 5$, and $\W=\W_{2,b}$ or\, $\W_{b,2}$. Then $(S_k^*)^\W$ is given by
$$
\dim(S_0^*)^\W=1,\ \dim(S_1^*)^\W=0,\ \dim (S_2^*)^\W=1,\ \dim (S_3^*)^\W=0,\ \dim (S_4^*)^\W=1.
$$
\end{Lemma}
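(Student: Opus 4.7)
The plan is to exploit the five-step filtration $M_k \sim S_0^* \,|\, S_1^* \,|\, \cdots \,|\, S_k^*$ from Lemma~\ref{L131218}(i) together with the cohomology vanishing of Corollary~\ref{CCoh} to inductively read off each $\dim(S_k^*)^\W$ from the easily computed $\dim M_k^\W$. This strategy avoids any split by characteristic and handles all $p \geq 3$ uniformly.

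More precisely, for each $k \in \{1,2,3,4\}$ let $X_{k-1} \subseteq M_k$ denote the submodule in the above filtration satisfying $X_{k-1} \sim S_0^* \,|\, S_1^* \,|\, \cdots \,|\, S_{k-1}^*$, so that we have a short exact sequence
\[
0 \to X_{k-1} \to M_k \to S_k^* \to 0,
\]
yielding
\[
0 \to X_{k-1}^\W \to M_k^\W \to (S_k^*)^\W \to H^1(\W,X_{k-1}).
\]
Since all subquotients $S_j^*$ of $X_{k-1}$ (for $0 \le j \le k-1 \le 3$) satisfy $H^1(\W,S_j^*) = 0$ by Corollary~\ref{CCoh}, Lemma~\ref{LInvSum} applies and gives both $H^1(\W,X_{k-1}) = 0$ and $\dim X_{k-1}^\W = \sum_{j=0}^{k-1} \dim(S_j^*)^\W$. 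Therefore the sequence of invariants is exact on the right, and
\[
\dim(S_k^*)^\W \;=\; \dim M_k^\W \;-\; \sum_{j=0}^{k-1} \dim(S_j^*)^\W.
\]

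Starting from $(S_0^*)^\W = \bone^\W$, which has dimension $1$, and using Lemma~\ref{LI} (which gives $\dim M_0^\W = \dim M_1^\W = 1$, $\dim M_2^\W = \dim M_3^\W = 2$, $\dim M_4^\W = 3$), the recursion unrolls to
\[
\dim(S_1^*)^\W = 1-1 = 0,\quad \dim(S_2^*)^\W = 2-1 = 1,\quad \dim(S_3^*)^\W = 2-2 = 0,\quad \dim(S_4^*)^\W = 3-2 = 1,
\]
which is exactly the claim.

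There is no real obstacle here beyond invoking the right preparatory lemmas; the only mildly subtle point is confirming that Corollary~\ref{CCoh} suffices for $j$ up to $3$ (so that the filtration argument can be run all the way up to $k=4$), and this is precisely why that corollary was stated for $k \le 3$ in the first place. If instead one tried to go via Corollary~\ref{CWInvLargep}, the cases $(k,p) = (3,3)$ and $(4,3)$ would require separate treatment (the first being Lemma~\ref{L131218_5P}), so the filtration-plus-cohomology approach is the clean uniform route.
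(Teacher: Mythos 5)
Your argument is correct: the short exact sequence $0\to X_{k-1}\to M_k\to S_k^*\to 0$ coming from Lemma~\ref{L131218}(i), combined with $H^1(\W,S_j^*)=0$ for $j\le 3$ (Corollary~\ref{CCoh}) and Lemma~\ref{LInvSum}, does give $H^1(\W,X_{k-1})=0$ and hence the recursion $\dim(S_k^*)^\W=\dim M_k^\W-\sum_{j<k}\dim(S_j^*)^\W$, and the numbers from Lemma~\ref{LI} unroll exactly as you state; I also checked that Corollary~\ref{CCoh} is logically independent of this lemma, so there is no circularity. However, your route differs from the paper's. The paper disposes of all cases with $p>k$ (so $k\le 2$ always, and $k=3,4$ when $p\ge 5$) by Corollary~\ref{CWInvLargep}, which rests on the easier sequence $0\to M_{k-1}\to M_k\to S_k^*\to 0$ of Lemma~\ref{L131218}(ii) and only the cheap vanishing $H^1(\W,M_{k-1})=0$ of Lemma~\ref{LCohW}; it handles $(k,p)=(3,3)$ by the separate Lemma~\ref{L131218_5P} (the Wilson-rank argument), and runs the filtration-plus-Corollary~\ref{CCoh} argument only for the residual case $(k,p)=(4,3)$. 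You instead run that last argument uniformly for all $k\le 4$, which buys a characteristic-independent one-line recursion and bypasses both Corollary~\ref{CWInvLargep} and Lemma~\ref{L131218_5P} for the purposes of this lemma, at the cost of invoking the heavier cohomological input $H^1(\W,S_j^*)=0$ (which ultimately rests on Lemma~\ref{LBKM} and Lemma~\ref{LInvComp}) in every case rather than only in the hardest one. Since all of these ingredients are already established earlier in the paper, both proofs are complete; yours is shorter to state, the paper's uses lighter tools where they suffice.
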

\begin{proof}
In view of Corollary~\ref{CWInvLargep} and Lemma~\ref{L131218_5P}, we only have to deal with the case where $p=3$ and $k=4$. In this case, the exact sequence $0\to Z\to M_4\to S_4^*\to 0$ with $Z\sim S_0^*|S_1^*|S_2^*|S_3^*$ yields the exact sequence $0\to Z^\W\to M_4^\W\to (S_4^*)^\W\to H^1(\W,Z)$. But $H^1(\W,Z)=0$ thanks to Corollary~\ref{CCoh}, and so 
$$
\dim (S_4^*)^\W= \dim M_4^\W-\dim (S_0^*)^\W-\dim (S_1^*)^\W-\dim (S_2^*)^\W-\dim (S_3^*)^\W=1
$$
using Lemmas~\ref{LI},\,\ref{LInvSum} and Corollary~\ref{CCoh} again. 
\end{proof}

From Corollary~\ref{CWInvLargep} and Lemma~\ref{L131218_5P}, we also get:

\begin{Lemma} \label{LP5}
Let $p>3$, $b\geq 5$, and $\W=\W_{2,b}$ or\, $\W_{b,2}$. Then $(S_5^*)^\W=0$.
\end{Lemma}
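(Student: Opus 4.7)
The plan is to observe that the hypothesis $p>3$ splits into exactly two cases that are each settled by a result already proved in the excerpt, so no new argument is required.

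First I would note that since $p$ is odd and $p>3$, we must have either $p>5$ or $p=5$. In the first case, $k=5$ satisfies $p>k$ and $k\leq b$ (as $b\geq 5$), so Corollary~\ref{CWInvLargep} applies directly: since $k=5$ is odd, it gives $\dim(S_5^*)^\W=0$, which is exactly what we want.

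In the remaining case $p=5$, we have $k=5=p$ and the hypothesis $b\geq 5=p$ of Lemma~\ref{L131218_5P} is satisfied for either choice $\W=\W_{2,b}$ or $\W=\W_{b,2}$. That lemma then yields $(S_p^*)^\W = (S_5^*)^\W = 0$ directly.

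Combining the two cases covers every $p>3$, so $(S_5^*)^\W=0$ in all situations under the hypothesis of the lemma. There is no real obstacle here; the work of treating $k=5$ has already been done separately for $k<p$ in Corollary~\ref{CWInvLargep} and for $k=p$ in Lemma~\ref{L131218_5P}, and the statement is just the assembly of these two pieces under the umbrella hypothesis $p>3$.
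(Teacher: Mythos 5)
Your proof is correct and matches the paper's own derivation: the paper obtains Lemma~\ref{LP5} precisely by combining Corollary~\ref{CWInvLargep} (covering $p>5$, where $k=5<p$ is odd) with Lemma~\ref{L131218_5P} (covering $p=5$, where $k=p$ and $b\geq p$). Your case split and the verification of the hypotheses in each case are exactly what is needed.
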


For $p=3$, it is probably still true that $(S_5^*)^\W=0$, but we cannot prove it, and so will have to do with a little less, see Lemma~\ref{LUInv} below. We need some preliminary work. Recall the homomorphisms $\eta_{k,l}$ from (\ref{EEta}).

\begin{Lemma} \label{LEtas} 
Let $p=3$. We have $\dim\Im\,\eta_{3,5}=n(n-1)(n-5)/6+1=\dim S_3^*+\dim S_0^*$ and the sequence 
$
M_3\stackrel{\eta_{3,5}}{\longrightarrow}M_5\stackrel{\eta_{5,6}}{\longrightarrow}M_6
$
is exact.
\end{Lemma}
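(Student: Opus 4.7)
The plan is to combine Wilson's dimension formula (Lemma~\ref{LWilson}) with a direct combinatorial check that $\eta_{5,6}\circ\eta_{3,5}=0$. The key observation underlying everything is that in characteristic $3$, the binomial coefficient $\binom{3}{2}=3$ vanishes, and this is what produces both the extra trivial summand in the image of $\eta_{3,5}$ and the relation between the two maps.

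First I would apply Lemma~\ref{LWilson} to compute $\dim\Im\,\eta_{3,5}$. Only the terms with $r\in\{0,\dots,3\}$ satisfying $3\nmid\binom{5-r}{3-r}$ contribute; checking each case ($\binom{5}{3}=10$, $\binom{4}{2}=6$, $\binom{3}{1}=3$, $\binom{2}{0}=1$), only $r=0$ and $r=3$ survive, giving
$$\dim\Im\,\eta_{3,5}=\tbinom{n}{0}+\tbinom{n}{3}-\tbinom{n}{2}=1+\tfrac{n(n-1)(n-5)}{6},$$
which equals $\dim S_0^*+\dim S_3^*$ via the standard formula $\dim S_k^*=\binom{n}{k}-\binom{n}{k-1}$. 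An analogous calculation for $\eta_{5,6}$ (keeping the terms $r=1,2,4,5$ and discarding $r=0,3$) yields after telescoping
$$\dim\Im\,\eta_{5,6}=\tbinom{n}{5}-\tbinom{n}{3}+\tbinom{n}{2}-1,$$
so $\dim\Ker\,\eta_{5,6}=\binom{n}{5}-\dim\Im\,\eta_{5,6}=1+\tfrac{n(n-1)(n-5)}{6}=\dim\Im\,\eta_{3,5}$.

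Next I would verify $\eta_{5,6}\circ\eta_{3,5}=0$ directly on the standard basis. For $X\in\Omega_3$, the coefficient of $Z\in\Omega_6$ in $\eta_{5,6}(\eta_{3,5}(X))$ counts the number of $Y\in\Omega_5$ with $X\subseteq Y\subseteq Z$. This is zero unless $X\subseteq Z$, in which case it equals $\binom{|Z\setminus X|}{|Y\setminus X|}=\binom{3}{2}=3\equiv 0\pmod 3$. Hence $\Im\,\eta_{3,5}\subseteq\Ker\,\eta_{5,6}$.

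Finally, combining the containment with the equality of dimensions gives $\Im\,\eta_{3,5}=\Ker\,\eta_{5,6}$, which is precisely the exactness of the sequence. Neither step presents a genuine obstacle: the whole argument is a bookkeeping exercise built on Wilson's formula plus one elementary incidence count. The only thing to watch is correctly discarding the $r$'s for which $\binom{5-r}{3-r}$ or $\binom{6-r}{5-r}$ is divisible by $3$, since a slip there would misalign the dimension equality.
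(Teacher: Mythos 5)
Your proposal is correct and follows essentially the same route as the paper: Wilson's formula (Lemma~\ref{LWilson}) gives $\dim\Im\,\eta_{3,5}=\binom{n}{3}-\binom{n}{2}+1=\dim S_3^*+\dim S_0^*$ and $\dim\Ker\,\eta_{5,6}=\dim M_5-\dim\Im\,\eta_{5,6}=\dim\Im\,\eta_{3,5}$, and the vanishing $\eta_{5,6}\circ\eta_{3,5}=0$ is exactly the paper's explicit computation, which in your phrasing is the incidence count $\binom{3}{2}=3\equiv 0\pmod 3$. No gaps.
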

\begin{proof}
The equalities $\dim\Im\,\eta_{3,5}=n(n-1)(n-5)/6+1=\dim S_3^*+\dim S_0^*$ follow from Lemma~\ref{LWilson} and the Hook Formula. By Lemma~\ref{LWilson}, we also have 
$\dim\Im\,\eta_{3,5}=\dim M_5-\dim\Im\,\eta_{3,5}.
$ 
So it suffices to prove that $\eta_{5,6}\circ \eta_{3,5}=0$. This is an explicit computation: for $\{a,b,c\}\in \Om_3$ we have 
\begin{align*}
\eta_{5,6}(\eta_{3,5}(\{a,b,c\}))&=\eta_{5,6}\big(\sum_{d,e}\{a,b,c,d,e\}\big)
=\sum_f\sum_{d,e}\{a,b,c,d,e,f\}
\\&=3\sum_{d,e,f}\{a,b,c,d,e,f\}=0,
\end{align*}
where the sum $\sum_{d,e}$ is over all $1\leq d\neq e\leq n$ such that $d,e\neq a,b,c$, the sum $\sum_f$ is over all $f=1,\dots,n$ such that $f\neq a,b,c,d,e$, and the sum $\sum_{d,e,f}$ is over all distinct $d,e,f$ satisfying $1\leq d,e,f\leq n$. 
\end{proof}

We now use Lemma~\ref{LPerpMax} to see that (for any $p\geq 3$) there exist: 
\begin{enumerate}
\item[$\bullet$] the unique largest 
submodule $Z_6\subset M_6$ with $[Z_6:D_6]=0$; moreover, 
$$
M_6/Z_6\cong S_6^*\quad\text{and}\quad Z_6\sim S_0^*|S_1^*|S_2^*|S_3^*|S_4^*|S_5^*.
$$
\item[$\bullet$] the unique largest 
submodule $V\subset Z_6$ with $[V:D_5]=[V:D_4]=0$; moreover, 
$$
Z_6/V\cong S_4^*|S_5^*\quad\text{and}\quad V\sim S_0^*|S_1^*|S_2^*|S_3^*.
$$
\item[$\bullet$] the unique largest 
submodule $Y\subset M_5$ with $[Y:D_5]=[Y:D_4]=0$; moreover, 
$$
M_5/Y\sim S_4^*|S_5^*\quad\text{and}\quad Y\sim S_0^*|S_1^*|S_2^*|S_3^*.
$$
\end{enumerate}

With this notation, we have:

\begin{Lemma} \label{LUIso}
Let $p=3$. Then $Z_6/V\cong M_5/Y$. 
\end{Lemma}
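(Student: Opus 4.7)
The plan is to show that the map $\eta_{5,6}\colon M_5\to M_6$ descends to an isomorphism $\bar\eta\colon M_5/Y \;\xrightarrow{\sim}\; Z_6/V$. Since both sides have a filtration $S_4^*|S_5^*$ and therefore the same dimension $\dim S_4^*+\dim S_5^*$, it will suffice to construct $\bar\eta$ and prove it is injective.

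First I will verify that $\bar\eta$ is well-defined, i.e.\ that $\eta_{5,6}(Y)\subseteq V$. Since $\Im\eta_{3,5}$ is a homomorphic image of $M_3\sim S_0^*|S_1^*|S_2^*|S_3^*$ (Lemma~\ref{L131218}(i)), every composition factor of $\Im\eta_{3,5}$ is of the form $D_a$ with $a\le 3$. By the maximality characterization of $Y$ above, $\Im\eta_{3,5}\subseteq Y$. By Lemma~\ref{LEtas}, $\ker\eta_{5,6}=\Im\eta_{3,5}$, so the short exact sequence $0\to\Im\eta_{3,5}\to Y\to\eta_{5,6}(Y)\to 0$ shows that $\eta_{5,6}(Y)$ also has composition factors only of the form $D_a$ with $a\le 3$; in particular $\eta_{5,6}(Y)\subseteq Z_6$, and by the maximality of $V$ inside $Z_6$, $\eta_{5,6}(Y)\subseteq V$.

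Next I will prove injectivity of $\bar\eta$. Suppose $x\in M_5$ satisfies $\eta_{5,6}(x)\in V$, and set $N:=\langle x\rangle\subseteq M_5$. Then $\eta_{5,6}(N)\subseteq V$, so $\eta_{5,6}(N)$ has only composition factors $D_a$ with $a\le 3$. Also $N\cap\ker\eta_{5,6}\subseteq\Im\eta_{3,5}$ has only composition factors $D_a$ with $a\le 3$. From the exact sequence $0\to N\cap\ker\eta_{5,6}\to N\to\eta_{5,6}(N)\to 0$ we conclude that $[N:D_4]=[N:D_5]=0$, whence $N\subseteq Y$ by the maximality of $Y$. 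Thus $x\in Y$, as required.

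Combining with the dimension match noted at the outset, $\bar\eta$ is an isomorphism. The only real work is the composition-factor bookkeeping; no characteristic-specific argument is needed beyond the exactness statement in Lemma~\ref{LEtas}, which already encodes the $p=3$ input. The step that needs the most care is confirming that the various maximality characterizations of $V$ and $Y$ apply to submodules generated by arbitrary elements, but this follows cleanly because the property ``$[-:D_4]=[-:D_5]=0$'' is inherited by arbitrary submodules and can be tested on cyclic submodules.
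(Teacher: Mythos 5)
Your proof is correct and takes essentially the same route as the paper: both exploit Lemma~\ref{LEtas} to identify $\Ker\eta_{5,6}=\Im\eta_{3,5}$, use the maximal-submodule characterizations of $Y$ and $V$ via composition-factor bookkeeping with $D_4,D_5$, and finish with a dimension count — the paper merely phrases it as showing $\Ker(\mathtt{p}\circ\eta_{5,6})=Y$ rather than constructing the induced map $\bar\eta$. The only detail you leave implicit is that $\Im\eta_{5,6}\subseteq Z_6$ (needed so that $\bar\eta$ genuinely lands in $Z_6/V$, without which the dimension count would not force surjectivity onto $Z_6/V$); this is immediate from $[M_5:D_6]=0$, hence $[\Im\eta_{5,6}:D_6]=0$ and the maximality of $Z_6$, exactly as stated at the start of the paper's proof.
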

\begin{proof}
As $[M_5:D_6]=0$, we have $[\Im\, \eta_{5,6}:D_6]=0$, so $\Im\, \eta_{5,6}\subseteq Z_6$, and from now on we consider $\eta_{5,6}$ as a map $\eta_{5,6}:M_5\to Z_6$. Composing with the natural projection ${\mathtt p}:Z_6\to Z_6/V$ we get the map $f:= {\mathtt p}\circ \eta_{5,6}:M_5\to Z_6/V$. By Lemma~\ref{LEtas}, $\Ker\,\eta_{5,6}=\Im\,\eta_{3,5}$, so $[\Ker\,\eta_{5,6}:D_5]=[\Ker\,\eta_{5,6}:D_4]=0$. Moreover, $[\Ker\,{\mathtt p}:D_5]=[\Ker\,{\mathtt p}:D_4]=0$. Hence $[\Ker\, f:D_5]=[\Ker\, f:D_4]=0$, hence $\Ker\, f\subseteq Y$. By dimensions, $\Ker\,f=Y$ and $f$ is surjective. This implies the claim. 
\end{proof}

\begin{Lemma} \label{LYW} 
For $\W=\W_{2,b}$ or\, $\W_{b,2}$, we have $H^1(\W,Y)=H^1(\W,V)=0$ and\, $\dim Y^\W=\dim V^\W=2$. 
\end{Lemma}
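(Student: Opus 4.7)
The key observation is that both $Y$ and $V$ have filtrations with subquotients $S_0^*, S_1^*, S_2^*, S_3^*$, and we have already computed the invariants and the first cohomology of each of these building blocks with respect to $\W$. So the plan is to combine these inputs via the long exact sequence bookkeeping encoded in Lemma~\ref{LInvSum}.

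More precisely, I would first invoke Corollary~\ref{CCoh} to get $H^1(\W, S_k^*) = 0$ for $k = 0,1,2,3$. Applying Lemma~\ref{LInvSum} to the filtrations
\[
V \sim S_0^*\,|\,S_1^*\,|\,S_2^*\,|\,S_3^* \quad \text{and} \quad Y \sim S_0^*\,|\,S_1^*\,|\,S_2^*\,|\,S_3^*
\]
then immediately yields both $H^1(\W, V) = 0$ and $H^1(\W, Y) = 0$, and moreover gives
\[
\dim V^\W = \dim Y^\W = \sum_{k=0}^{3} \dim (S_k^*)^\W.
\]

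To finish, I would plug in the values from Lemma~\ref{LLargePS*}, namely $\dim (S_0^*)^\W = 1$, $\dim (S_1^*)^\W = 0$, $\dim (S_2^*)^\W = 1$, $\dim (S_3^*)^\W = 0$, to get the total $1 + 0 + 1 + 0 = 2$ in both cases. There is no real obstacle here, since all of the technical work (namely the cohomology vanishing statements of Corollary~\ref{CCoh} and the invariant dimension computations of Lemma~\ref{LLargePS*}) has already been done; this lemma is essentially an assembly of those inputs. The only thing to note is that the filtrations of $V$ and $Y$ used are exactly the ones given by Lemma~\ref{LPerpMax} in the paragraphs immediately preceding Lemma~\ref{LUIso}, so no extra structural input is needed.
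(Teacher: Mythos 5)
Your proposal is correct and follows essentially the same route as the paper: the paper likewise uses the filtrations $Y\sim S_0^*|S_1^*|S_2^*|S_3^*$ and $V\sim S_0^*|S_1^*|S_2^*|S_3^*$, the vanishing from Corollary~\ref{CCoh}, and then Lemmas~\ref{LInvSum} and~\ref{LLargePS*} to get $\dim Y^\W=\dim V^\W=1+0+1+0=2$ and $H^1(\W,Y)=H^1(\W,V)=0$. No gaps.
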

\begin{proof}
We have $Y\sim S_0^*|S_1^*|S_2^*|S_3^*$ and $V\sim S_0^*|S_1^*|S_2^*|S_3^*$. 
By Corollary~\ref{CCoh}, we also have $$H^1(\W,S_0^*)=H^1(\W,S_1^*)=H^1(\W,S_2^*)=H^1(\W,S_3^*)=0.$$ So $H^1(\W,Y)=H^1(\W,V)=0$, and 
$$\dim Y^\W=\dim V^\W=\dim (S_0^*)^\W+\dim (S_1^*)^\W+\dim (S_2^*)^\W+\dim (S_3^*)^\W=2,$$ using Lemmas~\ref{LInvSum} and~\ref{LLargePS*}. 
\end{proof}

\begin{Lemma} \label{LUInv}
For $\W=\W_{2,b}$ or\, $\W_{b,2}$, we have $\dim(Z_6/V)^\W=1$. 
\end{Lemma}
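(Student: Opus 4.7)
The plan is to split into two cases according to whether $p=3$ or $p>3$, since for the larger primes we have cleaner vanishing of $(S_5^*)^\W$ while for $p=3$ we must use the isomorphism $Z_6/V \cong M_5/Y$ from Lemma~\ref{LUIso}.

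First I would treat the case $p>3$. Since $Z_6/V \sim S_4^*|S_5^*$, there is a short exact sequence
\[
0 \to S_4^* \to Z_6/V \to S_5^* \to 0,
\]
which, upon taking $\W$-invariants, yields an exact sequence
\[
0 \to (S_4^*)^\W \to (Z_6/V)^\W \to (S_5^*)^\W.
\]
By Lemma~\ref{LP5}, $(S_5^*)^\W = 0$, and by Lemma~\ref{LLargePS*}, $\dim (S_4^*)^\W = 1$. Hence $\dim(Z_6/V)^\W = 1$.

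Next I would handle the case $p=3$. Here Lemma~\ref{LUIso} gives $Z_6/V \cong M_5/Y$, so it suffices to show $\dim(M_5/Y)^\W = 1$. The short exact sequence $0 \to Y \to M_5 \to M_5/Y \to 0$ produces
\[
0 \to Y^\W \to M_5^\W \to (M_5/Y)^\W \to H^1(\W,Y).
\]
By Lemma~\ref{LYW}, $H^1(\W,Y) = 0$ and $\dim Y^\W = 2$; by Lemma~\ref{LI}, $\dim M_5^\W = 3$. Therefore $\dim(M_5/Y)^\W = 3 - 2 = 1$, completing this case.

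Since both cases give $\dim(Z_6/V)^\W = 1$, the lemma follows. There is no real obstacle: all the hard work (the filtration structure of $Z_6$ and $V$, the identification in Lemma~\ref{LUIso}, and the cohomology vanishing in Corollary~\ref{CCoh} and Lemma~\ref{LYW}) has already been done in the preceding results, so the proof is just a matter of threading the appropriate long exact sequences together in each characteristic.
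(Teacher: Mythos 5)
Your proof is correct and follows essentially the same route as the paper: for $p>3$ it takes invariants of $0\to S_4^*\to Z_6/V\to S_5^*$ and quotes Lemmas~\ref{LLargePS*} and~\ref{LP5}, and for $p=3$ it transfers the problem to $M_5/Y$ via Lemma~\ref{LUIso} and uses Lemmas~\ref{LYW} and~\ref{LI} exactly as the paper does. No gaps.
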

\begin{proof}
The exact sequence $0\to S_4^*\to Z_6/V\to S_5^*$ 
yields the exact sequence $0\to (S_4^*)^\W\to (Z_6/V)^\W\to (S_5^*)^\W$. For $p>3$, by Lemma~\ref{LLargePS*}, we have $\dim (S_4^*)^\W=1$, and by Lemma~\ref{LP5}, we have $(S_5^*)^\W=0$, which implies $\dim(Z_6/V)^\W=1$. 

We now assume that $p=3$. 
The exact sequence 
$0\to Y\to M_5\to M_5/Y\to 0$ yields the exact sequence
$$0\to Y^\W\to M_5^\W\to (M_5/Y)^\W\to H^1(\W,Y).$$ 
We have $H^1(\W,Y)=0$ by Lemma~\ref{LYW}. Hence 
$\dim (M_5/Y)^\W=\dim M_5^\W-\dim Y^\W=3-2=1$ using  Lemma~\ref{LI} and \ref{LYW}. It remains to use Lemma~\ref{LUIso}. 
\end{proof}


\begin{Lemma} \label{LXW} 
For $\W=\W_{2,b}$ or\, $\W_{b,2}$, we have $\dim Z_6^\W=3$. 
\end{Lemma}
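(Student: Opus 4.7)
The plan is to use the short exact sequence $0 \to V \to Z_6 \to Z_6/V \to 0$ coming from the definition of $V$ as a submodule of $Z_6$ with quotient $Z_6/V \sim S_4^*|S_5^*$. Taking $\W$-invariants yields the long exact sequence
\begin{equation*}
0 \to V^\W \to Z_6^\W \to (Z_6/V)^\W \to H^1(\W, V).
\end{equation*}

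The three ingredients needed to conclude are now all at hand from earlier lemmas in this section. First, Lemma~\ref{LYW} gives both $\dim V^\W = 2$ and the vanishing $H^1(\W, V) = 0$. Second, Lemma~\ref{LUInv} gives $\dim (Z_6/V)^\W = 1$. Combining these with the exactness of the sequence above yields
\begin{equation*}
\dim Z_6^\W = \dim V^\W + \dim (Z_6/V)^\W = 2 + 1 = 3,
\end{equation*}
which is the desired equality.

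There is no real obstacle at this stage: the entire difficulty has been absorbed into the earlier cohomological and invariant-counting lemmas (in particular Corollary~\ref{CCoh}, Lemma~\ref{LLargePS*}, and the characteristic-$3$ argument in Lemma~\ref{LUInv} using the identification $Z_6/V \cong M_5/Y$ from Lemma~\ref{LUIso}). The present statement is essentially the assembly step that combines the invariants of the bottom layer $V$ with those of the top layer $Z_6/V$, and it requires only the single cohomological vanishing $H^1(\W, V) = 0$ already in hand.
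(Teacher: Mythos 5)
Your proof is correct and is essentially identical to the paper's: both take $\W$-invariants of the exact sequence $0\to V\to Z_6\to Z_6/V\to 0$ and conclude from $\dim V^\W=2$, $H^1(\W,V)=0$ (Lemma~\ref{LYW}) and $\dim(Z_6/V)^\W=1$ (Lemma~\ref{LUInv}). No issues.
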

\begin{proof}
From the exact sequence $0\to V\to Z_6\to Z_6/V\to 0$ we get the exact sequence $$0\to V^\W\to Z_6^\W\to (Z_6/V)^\W\to H^1(\W,V).$$ 
We have $H^1(\W,V)=0$, $\dim V^\W=2$ by Lemma~\ref{LYW}, and $\dim(Z_6/V)^\W=1$ by Lemma~\ref{LUInv}. The result follows. 
\end{proof}

\subsection{Some consequences}
By Lemma~\ref{L131218}(i), for every $k$, the dual Specht module $S_k^*$ is a quotient of the permutation module $M_k$. Using Lemma~\ref{LPerpMax}, one can see that in fact there is a unique submodule $Z_k\subseteq M_k$ with $M_k/Z_k\cong S_k^*$. So we have a natural projection 
\begin{equation}\label{ESik}
\si_k:M_k\to S_k^*.
\end{equation}
In this subsection we will show that for some wreath product and parabolic subgroups $H<\s_n$ and special values of $k$, there exists $\phi\in\Hom_{\s_n}(\bone\ua_H^{\s_n},M_k)$ such that the homomorphism $\si_k\circ \phi:\bone\ua_H^{\s_n}\to S_k^*$ is non-zero. 

The approach to the proof is as follows. We need to show that the map 
$$\si_{k,*}:\Hom_{\s_n}(\bone\ua_H^{\s_n},M_k)\to \Hom_{\s_n}(\bone\ua_H^{\s_n},S_k^*),\ \phi\mapsto \si_k\circ\phi
$$ 
is non-zero. The exact sequence 
$$0\to Z_k\to M_k\stackrel{\si_k}{\longrightarrow} S_k^*\to 0$$ yields the exact sequence
$$
0\to \Hom_{\s_n}(\bone\ua_H^{\s_n},Z_k)\to \Hom_{\s_n}(\bone\ua_H^{\s_n},M_k)\stackrel{\si_{k,*}}{\longrightarrow} \Hom_{\s_n}(\bone\ua_H^{\s_n},S_k^*),
$$
which, using the Frobenius reciprocity, can be identified with the exact sequence 
$$
0\to Z_k^H\to M_k^H\stackrel{\bar\si_k}{\longrightarrow} (S_k^*)^H,
$$
where $\bar \si_k$ is the restriction of $\si_k$ to $M_k^H$. To prove that $\bar\si_k\neq0$ it now suffices to see that $\dim Z_k^H<\dim M_k^H$. We have proved:

\begin{Lemma} \label{LGenInv}
Let $H\leq \s_n$, $k\leq n/2$, and $\si_k:M_k\to S_k^*$ be the natural projection with kernel $Z_k$. If $\dim Z_k^H<\dim M_k^H$ then there exists $\phi\in\Hom_{\s_n}(\bone\ua_H^{\s_n},M_k)$ such that $\si_k\circ \phi\neq 0$. 
\end{Lemma}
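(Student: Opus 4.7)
The plan is to apply the left-exact contravariant-in-the-second-argument functor $\Hom_{\s_n}(\bone\ua_H^{\s_n},-)$ to the short exact sequence
\[
0\to Z_k\to M_k\stackrel{\si_k}{\longrightarrow} S_k^*\to 0
\]
and convert everything into a statement about spaces of $H$-invariants via Frobenius reciprocity. Concretely, for any $\F\s_n$-module $V$, Frobenius reciprocity gives a natural isomorphism $\Hom_{\s_n}(\bone\ua_H^{\s_n},V)\cong V^H$, sending a homomorphism $\phi$ to the image $\phi(1\otimes 1)$ of the canonical generator of $\bone\ua_H^{\s_n}$.

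Under these identifications, the resulting left-exact sequence
\[
0\to \Hom_{\s_n}(\bone\ua_H^{\s_n},Z_k)\to \Hom_{\s_n}(\bone\ua_H^{\s_n},M_k)\stackrel{\si_{k,*}}{\longrightarrow} \Hom_{\s_n}(\bone\ua_H^{\s_n},S_k^*)
\]
becomes
\[
0\to Z_k^H\to M_k^H\stackrel{\bar\si_k}{\longrightarrow} (S_k^*)^H,
\]
where $\bar\si_k$ is just the restriction of $\si_k$ to $M_k^H$ and $Z_k^H=\ker(\bar\si_k)$. Under the Frobenius identifications, $\si_{k,*}$ corresponds to $\bar\si_k$, so $\si_{k,*}\ne 0$ if and only if $\bar\si_k\ne 0$.

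The hypothesis $\dim Z_k^H<\dim M_k^H$ gives $\ker(\bar\si_k)=Z_k^H\subsetneq M_k^H$, so $\bar\si_k\ne 0$. Hence $\si_{k,*}\ne 0$, which means some $\phi\in\Hom_{\s_n}(\bone\ua_H^{\s_n},M_k)$ satisfies $\si_k\circ\phi\ne 0$. There is no real obstacle here --- the argument is a routine application of left-exactness and Frobenius reciprocity; the content of the lemma is really just the translation of a non-vanishing statement about the induced map $\si_{k,*}$ into the dimension inequality on invariants.
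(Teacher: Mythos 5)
Your argument is correct and is essentially identical to the paper's: the paper also applies $\Hom_{\s_n}(\bone\ua_H^{\s_n},-)$ to the short exact sequence $0\to Z_k\to M_k\to S_k^*\to 0$, identifies the resulting left-exact sequence with $0\to Z_k^H\to M_k^H\to (S_k^*)^H$ via Frobenius reciprocity, and concludes $\bar\si_k\neq 0$ from $\dim Z_k^H<\dim M_k^H$. Nothing further is needed.
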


We now apply this to three special situations, which will be of importance on this paper.

\begin{Lemma}\label{inv2wreath}
Let $b\geq 5$, and $\W=\W_{2,b}$ or\, $\W_{b,2}$. 
Then there exists a homomorphism $\phi\in\Hom_{\s_n}(\bone\ua_\W^{\s_n},M_2)$ such that $\si_2\circ \phi\neq 0$.
\end{Lemma}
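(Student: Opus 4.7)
The plan is to apply Lemma \ref{LGenInv} directly: it suffices to verify the inequality $\dim Z_2^\W < \dim M_2^\W$, where $Z_2 = \Ker \si_2$. The value on the right is known: by Lemma \ref{LI}, $\dim M_2^\W = 2$ for both choices of $\W$. So the whole task reduces to bounding $\dim Z_2^\W$ from above by $1$.

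To estimate $\dim Z_2^\W$, I would use the filtration $M_2 \sim S_0^* | S_1^* | S_2^*$ from Lemma \ref{L131218}(i). Since $M_2/Z_2 \cong S_2^*$, the submodule $Z_2$ fits into a short exact sequence
\[
0 \to S_0^* \to Z_2 \to S_1^* \to 0.
\]
Taking $\W$-invariants and applying Lemma \ref{LInvSum} together with the cohomology vanishing $H^1(\W, S_0^*) = H^1(\W, S_1^*) = 0$ from Corollary \ref{CCoh} (which requires $b \geq 5$), one obtains
\[
\dim Z_2^\W = \dim (S_0^*)^\W + \dim (S_1^*)^\W = 1 + 0 = 1,
\]
using the values from Lemma \ref{LLargePS*}.

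With $\dim Z_2^\W = 1 < 2 = \dim M_2^\W$, Lemma \ref{LGenInv} produces the desired $\phi \in \Hom_{\s_n}(\bone\ua_\W^{\s_n}, M_2)$ with $\si_2 \circ \phi \neq 0$. There is no real obstacle here once the groundwork of Section \ref{inv} is in place; the only thing to double-check is that all cited cohomology/invariant results genuinely cover both $\W = \W_{2,b}$ and $\W = \W_{b,2}$ under the hypothesis $b \geq 5$, which is the case for Corollary \ref{CCoh} and Lemma \ref{LLargePS*}.
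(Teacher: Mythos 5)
Your proposal is correct and follows essentially the same route as the paper: identify $Z_2\sim S_0^*|S_1^*$, compute $\dim Z_2^\W=1$ via Lemma~\ref{LInvSum} together with the vanishing $H^1(\W,S_0^*)=H^1(\W,S_1^*)=0$ from Corollary~\ref{CCoh} and the invariant dimensions of Lemma~\ref{LLargePS*}, compare with $\dim M_2^\W=2$ from Lemma~\ref{LI}, and conclude by Lemma~\ref{LGenInv}. No discrepancies to note.
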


\begin{proof}
We have $Z_2\sim S_0^*|S_1^*$. By Corollary~\ref{CCoh}, we  have $H^1(\W,S_0^*)=H^1(\W,S_1^*)=0.$ So 
$\dim Z_2^\W=\dim (S_0^*)^\W+\dim (S_1^*)^\W=1$ using Lemmas~\ref{LInvSum} and~\ref{LLargePS*}. 
On the other hand, 
$\dim M_2^\W=2$
by Lemma~\ref{LI}. An application of Lemma~\ref{LGenInv} completes the proof.
\end{proof}

\begin{Proposition}\label{inv6}
Let $b\geq 6$, and $\W=\W_{2,b}$ or\, $\W_{b,2}$. 
Then there exists a homomorphism $\phi\in\Hom_{\s_n}(\bone\ua_\W^{\s_n},M_6)$ such that $\si_6\circ \phi\neq 0$.
\end{Proposition}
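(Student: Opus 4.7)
The plan is to apply Lemma~\ref{LGenInv} with $k=6$ and $H=\W$. This reduces the claim to showing that
\[
\dim Z_6^\W < \dim M_6^\W.
\]
Both dimensions have already been computed in the preceding material, so this is essentially a bookkeeping argument.

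First I would invoke Lemma~\ref{LI} with $k=6$. Since we assume $b\geq 6$, this gives $\dim M_6^\W = 4$. Next I would invoke Lemma~\ref{LXW}, which yields $\dim Z_6^\W = 3$. Therefore $\dim Z_6^\W = 3 < 4 = \dim M_6^\W$, and Lemma~\ref{LGenInv} produces the desired $\phi\in\Hom_{\s_n}(\bone\ua_\W^{\s_n},M_6)$ with $\si_6\circ\phi\neq 0$.

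There is essentially no obstacle here: all the hard work has been done in Section~\ref{inv}, where the filtration $Z_6\sim S_0^*|S_1^*|S_2^*|S_3^*|S_4^*|S_5^*$ was analyzed layer by layer via the cohomological vanishing results (Corollary~\ref{CCoh}, Lemmas~\ref{LHG1},~\ref{LHG2},~\ref{LH1S3}) and the invariant computations (Lemmas~\ref{LLargePS*},~\ref{LP5}, and~\ref{LUInv} for the troublesome $p=3$ case). The only mild subtlety is ensuring that the hypothesis $b\geq 6$ (rather than $b\geq 5$ as in many of the preparatory lemmas) is used precisely so that $\dim M_6^\W$ is given by the formula of Lemma~\ref{LI}; this gives us the strict inequality needed to apply Lemma~\ref{LGenInv}.
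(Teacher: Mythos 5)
Your proposal is correct and coincides with the paper's own proof: it applies Lemma~\ref{LGenInv} with $k=6$, using $\dim M_6^\W=4$ from Lemma~\ref{LI} and $\dim Z_6^\W=3$ from Lemma~\ref{LXW}. The remark about $b\geq 6$ being needed so that Lemma~\ref{LI} applies with $k=6$ is a fair observation, but otherwise there is nothing to add.
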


\begin{proof}
By Lemmas~\ref{LXW} and \ref{LI}, we have 
$\dim Z_6^\W=3$ and $\dim M_6^\W=4$. An application of Lemma~\ref{LGenInv} completes the proof.
\end{proof}

We complete this subsection with two results on subgroups $H$ of different kind from $\W$. In particular, in this lemma we do not assume that $n$ is even. 

\begin{Lemma}\label{inv3intr}
Let $H=\s_{n-m,m}$ for $3\leq m\leq n/2$. Then there exists a homomorphism $\phi\in\Hom_{\s_n}(\bone\ua_H^{\s_n},M_3)$ such that $\si_3\circ \phi\neq 0$.
\end{Lemma}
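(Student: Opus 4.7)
The plan is to follow the template of Lemma~\ref{inv2wreath} and Proposition~\ref{inv6}: by the Frobenius reciprocity discussion preceding Lemma~\ref{LGenInv}, together with the identification $Z_3=(S_3)^\perp$ coming from (\ref{SLaPerpSLaDual}), it suffices to exhibit a vector $v\in M_3^H$ with $v\notin(S_3)^\perp$, after which Lemma~\ref{LGenInv} delivers the desired $\phi$.

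Writing $A=\{1,\dots,n-m\}$ and $B=\{n-m+1,\dots,n\}$, so that $H=\s_A\times\s_B$ preserves each of them, the natural candidate is
$$v:=\sum_{X\in\Om_3,\,X\subseteq B}X\in M_3,$$
which is well-defined because $m\geq 3$ and is manifestly $H$-invariant. To verify $v\notin(S_3)^\perp$, I would pair $v$ against a single polytabloid: take the standard $(n-3,3)$-tableau $t$ whose second row is $(n-m+1,n-m+2,n-m+3)$ and whose first row begins with $(1,2,3)$. The hypothesis $n-m\geq m\geq 3$ guarantees that this is a well-defined standard tableau with column stabilizer ${\tt C}_t=\langle(1,n-m+1),(2,n-m+2),(3,n-m+3)\rangle$. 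Expanding $e_t$ as in (\ref{EPolyTab}) then produces an alternating sum over $2^3=8$ tabloids $\{X_I\mid I\subseteq\{1,2,3\}\}$, where $X_I$ is obtained from $\{n-m+1,n-m+2,n-m+3\}$ by replacing $n-m+i$ with $i$ for each $i\in I$.

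The one combinatorial observation---the only step with any content---is that among these eight subsets, $X_\emptyset=\{n-m+1,n-m+2,n-m+3\}$ is the only one contained in $B$, since for any nonempty $I$ the subset $X_I$ contains some $i\in\{1,2,3\}\subseteq A$. Hence the standard bilinear form on $M_3$ gives $\langle v,e_t\rangle=\pm1\neq0$, so $v\notin(S_3)^\perp$, as required. I do not anticipate any substantive obstacle: the hypothesis $m\geq 3$ is used in exactly two places---to ensure that $v$ is a nontrivial sum and to guarantee existence of the tableau $t$ above---and both uses are immediate.
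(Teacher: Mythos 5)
Your proof is correct, and while it lives in the same framework as the paper's (the Frobenius reciprocity discussion culminating in Lemma~\ref{LGenInv}), the key verification is done by a genuinely different route. The paper argues by a pure dimension count: $\dim M_3^H=4$ because $H$ has four orbits on $\Om_3$, while $Z_3\sim S_0^*|S_1^*|S_2^*$ and $\dim(S_k^*)^H\leq 1$ for $k=0,1,2$ by the cited result \cite[Lemma 2.12]{KMTAlt}, so $\dim Z_3^H\leq 3<\dim M_3^H$ and Lemma~\ref{LGenInv} applies. You instead exhibit an explicit invariant vector $v=\sum_{X\in\Om_3,\,X\subseteq B}X$ and pair it against a single polytabloid $e_t$; the identification $Z_3=(S_3)^\perp$ you rely on is legitimate, since $(S_3)^\perp$ is a submodule of $M_3$ with quotient isomorphic to $S_3^*$ by (\ref{SLaPerpSLaDual}) and $Z_3$ is by definition the unique such submodule. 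Your computation $\langle v,e_t\rangle=\pm1$ is right: the transpositions $(i,n-m+i)$, $i=1,2,3$, generate ${\tt C}_t$, the eight tabloids $X_I$ are distinct, and the hypothesis $m\leq n/2$ gives $\{1,2,3\}\subseteq A$, so only $X_\emptyset$ lies in $B$; hence $v\in M_3^H\setminus Z_3^H$, which gives $\dim Z_3^H<\dim M_3^H$ and Lemma~\ref{LGenInv} finishes. What your route buys is self-containedness: it avoids both the orbit count and the appeal to \cite{KMTAlt}, and the nonvanishing of a single pairing is characteristic-independent. What the paper's route buys is uniformity with the neighbouring statements (Lemmas~\ref{inv2wreath}, \ref{L080925} and Proposition~\ref{inv6}), where explicit invariant vectors not lying in $Z_k$ are harder to write down and the invariant-dimension bookkeeping is the natural common tool.
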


\begin{proof}
It is easy to see that the number of $H$-orbits on $\Om_3$ is $4$, so $\dim M_3^{H}=4$. On the other hand, $Z_3 \sim S_0^*|S_1^*|S_2^*$. Moreover, $\dim (S_k^*)^H\leq1$ for $k=0,1,2$,  thanks to \cite[Lemma 2.12]{KMTAlt}. So $\dim Z_3^H\leq 3$. 
An application of Lemma~\ref{LGenInv} completes the proof.
\end{proof}

\begin{Lemma} \label{L080925} 
Let $n=ab$ for $a,b\geq 3$ and $H=\W_{a,b}$. Then there exists a homomorphism $\phi\in\Hom_{\s_n}(\bone\ua_H^{\s_n},M_3)$ such that $\si_3\circ \phi\neq 0$.
\end{Lemma}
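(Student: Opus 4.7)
My plan is to mimic the strategy of Lemma~\ref{inv3intr} and invoke Lemma~\ref{LGenInv}, so the task reduces to establishing $\dim Z_3^H < \dim M_3^H$. The group $H=\W_{a,b}$ preserves the block partition $\{1,\ldots,n\}=B_1\sqcup\cdots\sqcup B_b$ into $b$ blocks of size $a$, and a routine count shows that $H$ has exactly three orbits on $\Om_3$: the $3$-subsets contained in one $B_i$, those meeting two blocks in a $2{+}1$ pattern, and those meeting three blocks in a $1{+}1{+}1$ pattern. The hypothesis $a,b\ge 3$ ensures that all three orbit types are non-empty, so $\dim M_3^H=3$; write $u_1,u_2,u_3$ for the corresponding orbit sums.

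The main step is to prove $\dim Z_3^H\le 2$. Rather than go through the filtration $Z_3\sim S_0^*|S_1^*|S_2^*$ (which would require the additional awkward bound $\dim(S_2^*)^H\le 1$), I would argue directly that $u_1\notin Z_3$. Since $Z_3=S_3^\perp$ with respect to the standard bilinear form on $M_3$ (cf.\ (\ref{SLaPerpSLaDual})), it suffices to exhibit a polytabloid $e_t\in S_3$ with $\langle u_1,e_t\rangle\ne 0$. Choose three distinct $y_1,y_2,y_3\in B_1$ (possible because $a\ge 3$) and three distinct $x_1,x_2,x_3\notin B_1$ (possible because $n-a=a(b-1)\ge 6$), and take $t$ to be any $(n-3,3)$-tableau whose three length-two columns are $\bigl(\begin{smallmatrix}x_i\\y_i\end{smallmatrix}\bigr)$ for $i=1,2,3$. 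Then $e_t$ is the signed sum over the eight tabloids corresponding to the $3$-subsets $\{a_1,a_2,a_3\}$ with $a_i\in\{x_i,y_i\}$. Of these eight choices, exactly the one with $(a_1,a_2,a_3)=(y_1,y_2,y_3)$ gives a subset contained in~$B_1$; every other choice forces some $x_i\notin B_1$ into the subset and hence places it in the second or third $H$-orbit. Therefore $\langle u_1,e_t\rangle=+1\ne 0$, and so $u_1\notin Z_3$.

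Consequently $Z_3^H=Z_3\cap M_3^H\subsetneq M_3^H$, i.e.\ $\dim Z_3^H<\dim M_3^H$, and an application of Lemma~\ref{LGenInv} yields the desired~$\phi$. No part of this outline presents a real obstacle; the only verification needed is that the six elements $x_i,y_i$ can be chosen distinct with the stated properties, and this is immediate from $a,b\ge 3$.
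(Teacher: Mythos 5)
Your overall strategy is sound and, at the decisive step, genuinely different from the paper's. The paper also counts the three $\W_{a,b}$-orbits on $\Om_3$ to get $\dim M_3^H=3$, but then bounds $\dim Z_3^H\leq 2$ via the filtration $Z_3\sim S_0^*|S_1^*|S_2^*$, quoting $\dim (S_1^*)^H=0$ and $\dim(S_2^*)^H\leq 1$ from \cite[Theorem 2.13]{KMTAlt}, before applying Lemma~\ref{LGenInv}. You instead work directly inside $M_3$ with the standard bilinear form, using $Z_3=S_3^\perp$ (legitimate, by (\ref{SLaPerpSLaDual}) together with the uniqueness of the submodule $Z_3$ with $M_3/Z_3\cong S_3^*$) and exhibiting an $H$-invariant vector not orthogonal to $S_3$; this is more self-contained, avoiding the external invariant computations at the price of an explicit polytabloid pairing.

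However, the pairing computation as written has a gap. You only require $x_1,x_2,x_3$ to be distinct and outside $B_1$, and your assertion that every summand other than $\{y_1,y_2,y_3\}$ contains some $x_i\notin B_1$ and \emph{hence} lies in the second or third orbit is a non sequitur for the all-swapped term $\{x_1,x_2,x_3\}$: the first orbit consists of $3$-subsets contained in \emph{any} block, not just in $B_1$. If you happen to choose all three $x_i$ inside a single block $B_2$ (which your conditions permit), then $\{x_1,x_2,x_3\}$ also lies in the first orbit and contributes with sign $(-1)^3=-1$, so $\langle u_1,e_t\rangle=1-1=0$ and the argument collapses for that tableau. (The summands with one or two swaps are harmless, since they contain both some $y_i\in B_1$ and some $x_j\notin B_1$, hence meet at least two blocks.) The fix is immediate: choose the $x_i$ to meet at least two distinct blocks, e.g.\ $x_1\in B_2$ and $x_2,x_3\in B_3$, which is possible since $a,b\geq 3$; then $\{y_1,y_2,y_3\}$ is the unique summand lying in the first orbit, $\langle u_1,e_t\rangle=1\neq 0$ in $\F$, and your proof is complete.
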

\begin{proof}
It is easy to see that the number of $H$-orbits on $\Om_3$ is $3$,  so $\dim M_3^{H}=3$. On the other hand, $Z_3 \sim S_0^*|S_1^*|S_2^*$. Moreover, $\dim (S_0^*)^H=1\geq \dim (S_2^*)^H$ and  $\dim (S_1^*)^H=0$ by \cite[Theorem 2.13]{KMTAlt}. So $\dim Z_3^H\leq 2$. 
An application of Lemma~\ref{LGenInv} completes the proof.
\end{proof}

\subsection{\boldmath On invariants $(D^\la)^\W$} We need a little more information on $W$-invariants. Throughout the subsection,  we will use the following generalization of the notation (\ref{ETwoRowNot}). Given a partition $\mu=(\mu_1,\dots,\mu_r)\in \Par(m)$ with $\mu_1\leq n-m$, we denote 
$$D_{\mu_1,\dots,\mu_r}:=D^{(n-m,\mu_1,\dots,\mu_r)},\ S_{\mu_1,\dots,\mu_r}:=S^{(n-m,\mu_1,\dots,\mu_r)},\ M_{\mu_1,\dots,\mu_r}:=M^{(n-m,\mu_1,\dots,\mu_r)}.
$$  

In addition to the dimensions of the invariant spaces of permutation modules from Lemma~\ref{LI}, we need to record the dimensions of the invariant spaces $M_{\mu_1,\dots,\mu_r}^{\W_{2,b}}$ for some other special $\mu$'s.

\begin{Lemma} \label{LSpecialInv} 
Let $b\geq 5$ and $\W=\W_{2,b}$. Then
\begin{align*}
&
\dim M_{2,1}^\W=3,\ \dim M_{1^3}^\W=4,\ \dim M_{3,1}^\W=4,\ 
\dim M_{2^2}^\W=6,\ \dim M_{2,1^2}^\W=7,
\\
&\dim M_{4,1}^\W=5,\ 
\dim M_{3,2}^\W=7,\ 
\dim M_{3,1^2}^\W=9,\ 
\dim M_{2^2,1}^\W=12,\ 
\\
&\dim M_{5,1}^\W=6-\de_{b,5},\ \dim M_{4,2}^\W=10-\de_{b,5},\ 
\dim M_{4,1^2}^\W=12-\de_{b,5}
\\
&\dim M_{3^2}^\W=10-\de_{b,5},
\  
\dim M_{3,2,1}^\W=17-\de_{b,5},\ 
\ 
\dim M_{2^3}^\W=24-\de_{b,5}.
\end{align*}
\end{Lemma}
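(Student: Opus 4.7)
The plan is to translate dimensions of $\W$-invariants into orbit counts and then reduce to a small Diophantine enumeration. Since $M^\la\cong\bone\ua_{\s_\la}^{\s_n}$, Frobenius reciprocity gives $\dim(M^\la)^\W$ equal to the number of $(\W,\s_\la)$-double cosets in $\s_n$, which is the number of $\W$-orbits on ordered tuples $(T_0,T_1,\dots,T_r)$ partitioning $\{1,\dots,n\}$ with $|T_0|=n-m$ and $|T_k|=\mu_k$ for $k\geq 1$, where $\la=(n-m,\mu_1,\dots,\mu_r)$ and $m:=\mu_1+\cdots+\mu_r$.

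Next I would fix the $\W$-invariant partition $\{1,\dots,n\}=P_1\sqcup\cdots\sqcup P_b$ into pairs and associate to each pair $P_s$ its \emph{type} $\tau(P_s)$, namely the unordered multiset $\{k,k'\}$ (with $k=k'$ allowed) determined by $P_s\subseteq T_k\cup T_{k'}$. Since $\W$ acts by simultaneously permuting the pairs and swapping elements within them, two ordered tuples lie in the same $\W$-orbit if and only if they induce the same multiset $\{\tau(P_1),\dots,\tau(P_b)\}$. Encoding this multiset by non-negative integers $n_{ij}$ ($0\leq i\leq j\leq r$) counting the pairs of type $\{i,j\}$, the enumeration reduces to counting non-negative integer solutions of
\[
2n_{kk}+\sum_{0\leq i<k}n_{ik}+\sum_{k<j\leq r}n_{kj}=\mu_k\qquad(k=1,\dots,r),
\]
subject to $n_{00}:=b-\sum_{(i,j)\neq(0,0)}n_{ij}\geq 0$.

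Summing the $r$ equations yields $N:=\sum_{(i,j)\neq(0,0)}n_{ij}=m-\sum_{k\geq 1}n_{kk}-\sum_{1\leq i<j\leq r}n_{ij}$, so $N\leq m$, with equality precisely when every $n_{kk}$ with $k\geq 1$ and every $n_{ij}$ with $1\leq i<j\leq r$ vanishes; in that case the values $n_{0k}=\mu_k$ are forced. Consequently the constraint $N\leq b$ is automatic when $b\geq 5$ and $m\leq 5$, whereas for $m=6$ it removes exactly the one ``maximal'' configuration $n_{0k}=\mu_k$ when $b=5$, and nothing when $b\geq 6$. This single excluded configuration accounts uniformly for the $-\de_{b,5}$ correction appearing in the six listed entries with $|\mu|=6$.

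Finally I would execute the enumeration case by case by fixing each $n_{kk}$ in $\{0,1,\dots,\lfloor\mu_k/2\rfloor\}$ together with the off-diagonal $n_{ij}$ ($1\leq i<j\leq r$); the values $n_{0k}$ are then forced by the $k$-th equation. For the smaller partitions $(2,1),(1^3),(3,1),(2^2),(2,1^2)$, inspection immediately yields $3,4,4,6,7$, continuing the style of Lemma~\ref{LI}. For the larger partitions, notably $(3,2,1)$ and $(2^3)$, the enumeration is longer but entirely mechanical. The only obstacle is bookkeeping; no conceptual difficulty remains once the pair-type parametrization is in place.
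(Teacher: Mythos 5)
Your proposal is correct and follows essentially the same route as the paper: both compute $\dim M_{\mu}^{\W}$ as the number of $\W$-orbits on $(n-|\mu|,\mu_1,\dots,\mu_r)$-tabloids and then count orbits directly (I spot-checked your pair-type enumeration for $(2,1)$, $(2^2,1)$, $(3,2,1)$, $(2^3)$ and the counts match). The only difference is that you make the "elementary check" explicit, parametrizing orbits by the multiset of pair types $n_{ij}$ and deriving the uniform $-\de_{b,5}$ correction from the single excluded configuration with $n_{0k}=\mu_k$, which is a welcome amount of extra detail but not a different argument.
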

\begin{proof}
We have that $\dim M_{\mu_1,\dots,\mu_r}^\W$ is equal to the number of the $\W$-orbits on the $(n-|\mu|,\mu_1,\dots,\mu_r)$-tabloids. It is now an elementary check that the numbers of the $\W$-orbits are as recorded. 
\end{proof}

\begin{Lemma} \label{LSpecialDirectSumDec} 
Let $p\mid n$.
\begin{enumerate}
\item[{\rm (i)}] If $p=3$ and $n\geq 12$ then $D_{4,2}$ is a direct summand of $M_{4,2}$ and
\begin{equation}
\label{E040725}
D_{4,2}\oplus M_{5,1}\oplus M_{3,2}\oplus M_4\cong M_{4,2}\oplus M_5\oplus M_{3,1}.
\end{equation}
\item[{\rm (ii)}] If $p>3$ and $n\geq 10$ then $D_{2^3}$ is a direct summand of $M_{2^3}$ and
\begin{align*}
&D_{2^3}\oplus M_{3,2,1}^{\oplus 2}\oplus M_{4,2}\oplus M_{2^2,1}\oplus M_{4,1}\oplus M_{2^2}\oplus M_{3,1}\oplus M_{1^3}\oplus M_3
\\
\cong\,&M_{2^3}\oplus M_{3^2}\oplus M_{4,1^2}\oplus M_{3,1^2}\oplus M_{3,2}\oplus M_{2,1^2}\oplus M_4\oplus M_{2,1}^{\oplus 2}.
\end{align*}
\end{enumerate}
\end{Lemma}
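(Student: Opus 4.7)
The plan is to decompose both sides as direct sums of Young modules via Lemma~\ref{LYoung} and then verify the isomorphism by Krull--Schmidt.

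First, under the hypothesis $p \mid n$, I would establish that $D_\lambda$ coincides with the Young module $Y^\lambda$ for $\lambda = (n-6,4,2)$ in part~(i) and $\lambda = (n-6,2,2,2)$ in part~(ii), and is therefore a direct summand of $M^\lambda$. This requires two sub-claims: (a)~the Specht module $S^\lambda$ is irreducible, giving $S^\lambda = D^\lambda$, which should follow from a standard irreducibility criterion using $p \mid n$; and (b)~the Specht filtration $Y^\lambda \sim S^\lambda | S^{\mu^1} | \cdots | S^{\mu^t}$ from Lemma~\ref{LYoung} collapses to a single term, i.e.\ $Y^\lambda = S^\lambda$. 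Sub-claim~(b) amounts to controlling the $p$-Kostka multiplicities $a^p_{\mu,\lambda}$ of $Y^\mu$ in $M^\lambda$ for every $\mu \rhd \lambda$: their vanishing combined with $\dim M^\lambda$ and the known dimensions of the other relevant Young modules pins down $\dim Y^\lambda$ and forces the equality $\dim Y^\lambda = \dim D^\lambda$.

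With $D_\lambda = Y^\lambda$ in hand, the desired isomorphism reduces, via Lemma~\ref{LYoung} and Krull--Schmidt, to a numerical identity for $p$-Kostka numbers: for every partition $\mu$ of $n$,
\[
\sum_{\nu \in \mathcal{A}} a^p_{\mu,\nu} \;=\; \delta_{\mu,\lambda} + \sum_{\nu \in \mathcal{B}} a^p_{\mu,\nu},
\]
where $\mathcal{A}$ (resp.\ $\mathcal{B}$) is the multiset of partitions indexing the $M$-modules on the right-hand side (resp.\ left-hand side) of (\ref{E040725}) or its analogue in~(ii). In characteristic zero, the $a^p_{\mu,\nu}$ become ordinary Kostka numbers $K_{\mu,\nu}$, and the identity becomes a direct combinatorial statement about semistandard Young tableaux of shape $\mu$ with the relevant contents. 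Spot checks (e.g.\ for $\mu = \lambda$, $(n-5,5)$, $(n-4,4)$) confirm the identity, and a uniform proof by a bijection of SSYT should work. In characteristic~$p$, the partitions $\mu \rhd \lambda$ that arise are close enough to $\lambda$ in the dominance order that the $p$-Kostka numbers coincide with the ordinary Kostka numbers under the stated hypotheses, so the identity transfers.

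The main obstacle is sub-claim~(b): showing that $Y^\lambda$ is simple rather than a proper Specht extension of $S^\lambda$. Although $S^\lambda = D^\lambda$ is straightforward, ruling out additional Specht factors in the filtration of $Y^\lambda$ requires bounding $a^p_{\mu,\lambda}$ for each $\mu \rhd \lambda$. In part~(i) with $p = 3$ this is the most delicate, since modular decomposition behaviour can deviate from characteristic zero; the key leverage comes from the fact that only partitions with few parts near $\lambda$ can contribute, combined with the divisibility $p \mid n$, which forces the cancellations needed to make $Y^\lambda$ collapse to $D^\lambda$.
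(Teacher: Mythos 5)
Your overall shape (identify $D^\la$ with the Young module $Y^\la$, then reduce the isomorphism to an identity of Young-module multiplicities and invoke Krull--Schmidt) matches the paper, but both key steps have genuine gaps. First, the identification $Y^\la=D^\la$ for $\la=(n-6,4,2)$ (resp.\ $(n-6,2^3)$): your sub-claim (a) is fine (the paper uses the James--Mathas criterion, noting that no hook length in the first four columns of $(n-6,4,2)$ is divisible by $3$ when $3\mid n$), but your proposed route for sub-claim (b) -- pinning down $\dim Y^\la$ by ``vanishing'' of the multiplicities of the other Young modules in $M^\la$ -- does not work: those multiplicities are not zero in general, and computing them together with the dimensions of the corresponding Young modules is precisely the information one does not have. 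The step you flag as the main obstacle has a short direct argument that you are missing: $[M^\la:D^\la]=1$ (Corollary~\ref{CSpechtMin}), and $S^\la\cong D^\la$ is a submodule of the indecomposable \emph{self-dual} module $Y^\la$; by self-duality $D^\la$ is also a quotient of $Y^\la$, and multiplicity one then forces $D^\la$ to split off as a direct summand, so indecomposability gives $Y^\la\cong D^\la$. No control of $p$-Kostka numbers is needed at all.

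Second, your proof of the multiplicity identity does not go through. The transfer claim ``the $p$-Kostka numbers coincide with the ordinary Kostka numbers for the relevant $\mu\unrhd\la$'' is false precisely under the standing hypothesis $p\mid n$: for instance $(M^{(n-1,1)}:Y^{(n)})=0$ when $p\mid n$, whereas $K_{(n),(n-1,1)}=1$, and partitions such as $(n)$, $(n-1,1)$ do occur among the $\mu\unrhd\la$ in question. So the characteristic-zero SSYT identity does not simply carry over; at best the discrepancies would have to cancel between the two sides, and you give no argument for that. The paper avoids individual $p$-Kostka numbers altogether: the determinantal formula \cite[Theorem 2.3.15]{JK} gives the identity in the Grothendieck group (it is characteristic-free, being an identity among permutation modules), and since $S^\la\cong D^\la$ this yields $[L:D^\be]=[R:D^\be]$ for all $\be$; then, because every Young summand on either side is $Y^\al$ with $\al\unrhd\la$ and all such $\al$ are $p$-regular, the matrix $([Y^\al:D^\be])_{\al,\be\unrhd\la}$ is square unitriangular, so equality of composition multiplicities forces equality of Young-module multiplicities, and Krull--Schmidt gives the isomorphism. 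You would need to replace your transfer argument by something of this kind (or otherwise justify the cancellation) for the proof to be complete.
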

\begin{proof}
(i) Note that no hook length in the first four columns of $(n-6,4,2)$ is divisible by $3$. So $S_{4,2}=D_{4,2}$ is irreducible by \cite[Theorem 4.12]{JM}. 
Further $D_{4,2}\cong S_{4,2}\subseteq M_{4,2}$ and $[M_{4,2}:D_{4,2}]=1$. Since $S_{4,2}\subseteq Y_{4,2}$ and $Y_{4,2}$ is indecomposable and self-dual, it follows that $D_{4,2}\cong S_{4,2}\cong Y_{4,2}$.

Denote the left hand side of (\ref{E040725}) by $L$, and the right hand side of (\ref{E040725}) by $R$. 

By the determinantal formula \cite[Theorem 2.3.15]{JK}, we have in the Grothendieck group
$$
[S_{4,2}]+[M_{5,1}]+[M_{3,2}]+[M_4]=[M_{4,2}]+[M_5]+[M_{3,1}],
$$ 
and since $S_{4,2}\cong D_{4,2}$, 
we have 
\begin{equation}\label{E040725_2}
[L:D^\be]=[R:D^\be]\qquad(\text{for all}\ \be\in\Par_{\operatorname{reg}}(n)). 
\end{equation}

As $S_{4,2}\cong Y_{4,2}$, by Lemma~\ref{LYoung}, we have that both $L$ and $R$ are direct sums of Young modules, and it remains to prove the equality of the multiplicities $(L:Y^\al)=(R:Y^\al)$ for all $\al\in\Par(n)$. 
Let $\la=(n-6,4,2)$.  By Lemma~\ref{LYoung}, $(L:Y^\la)=(R:Y^\la)=1$, and all summands in $L$ and $R$ are of the form $Y^\al$ for $\al\unrhd \la$. As all such $\al$ are 3-regular, we have by Lemmas~\ref{LYoung} and \ref{LDomJames} that 
\[([Y^\al:D^\be])_{\al,\be\unrhd \la}=([Y^\al:S^\ga])_{\al,\ga\unrhd \la}\,([S^\ga:D^\be])_{\ga,\be\unrhd \la}\]
is a unitriangular square matrix. So, by (\ref{E040725_2}), we have  $(L:Y^\al)=(R:Y^\al)$  for all $\al$.

(ii) The proof is similar to that of (i).
\end{proof}

\begin{Lemma}\label{inv42}
Let $p\mid b$ and $\W=\W_{2,b}$.
Assume that either $p=3$, $b\geq 6$ and $\al=(n-6,4,2)$, or 
$p\geq 5$, $b\geq 5$ and $\al=(n-6,2^3)$. Then  $\dim (D^\al)^{\W}=1$. Moreover, $S^\al\cong D^\al$ is a direct summand of $M^\al$, and, denoting by $\si_\al$ the projection onto this direct summand, there exists $\phi\in\Hom_{\s_n}(\bone\ua_\W^{\s_n},M^\al)$ such that $\si_\al\circ\phi\neq 0$.

\end{Lemma}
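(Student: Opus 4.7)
The plan is to exploit the direct sum decompositions of Lemma~\ref{LSpecialDirectSumDec}, which express $M^\al$ (with $\al=(n-6,4,2)$ if $p=3$, or $\al=(n-6,2^3)$ if $p\geq 5$) as a direct summand of an identity of the form $D^\al \oplus (\text{sum of }M^\be$'s$) \cong (\text{sum of }M^\ga$'s$)$, together with the already-observed fact that $D^\al \cong S^\al \cong Y^\al$ is a direct summand of $M^\al$ in both cases (for the $p\geq 5$ case one argues exactly as in the proof of Lemma~\ref{LSpecialDirectSumDec}(i): no hook length in the first two columns of $(n-6,2^3)$ is divisible by $p\geq 5$, so $S^\al = D^\al$ by \cite[Theorem 4.12]{JM}, and $S^\al \subseteq Y^\al$ together with indecomposability and self-duality of $Y^\al$ forces $D^\al \cong Y^\al$). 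This settles the second assertion of the lemma.

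For the first assertion, I take $\W$-invariants in both sides of the identity in Lemma~\ref{LSpecialDirectSumDec} (the relevant case according to the parity of $p$). Each summand $M^\be$ appearing on either side is a $\W$-permutation module, so its dimension of invariants is read off directly from Lemmas~\ref{LI} and~\ref{LSpecialInv}. In the $p=3$ case (where $b\geq 6$, so $\de_{b,5}=0$), the calculation yields
\[
\dim (D_{4,2})^\W = (10+3+4) - (6+7+3) = 1.
\]
In the $p\geq 5$ case (where $b\geq 5$), an entirely analogous computation using Lemma~\ref{LSpecialInv} gives
\[
\dim(D_{2^3})^\W = (78-3\de_{b,5}) - (77-3\de_{b,5}) = 1,
\]
independent of whether $b=5$ or $b>5$.

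For the third assertion, observe that because $D^\al$ is a direct summand of $M^\al$, the projection $\si_\al: M^\al \twoheadrightarrow D^\al$ splits, hence the induced map $(M^\al)^\W \to (D^\al)^\W$ is surjective. Since $(D^\al)^\W \neq 0$, there exists $v \in (M^\al)^\W$ with $\si_\al(v) \neq 0$. By Frobenius reciprocity
\[
\Hom_{\s_n}(\bone\ua_\W^{\s_n}, M^\al) \cong (M^\al)^\W,
\]
and under this identification any such $v$ corresponds to a homomorphism $\phi$ with $\si_\al\circ \phi \neq 0$, as required.

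The only real obstacle is bookkeeping: the $p\geq 5$ identity involves many summands and the $\de_{b,5}$-terms must cancel correctly; the computation is routine but tedious, and one has to double-check that the assumptions $b\geq 6$ (resp.\ $b\geq 5$) are enough to apply Lemma~\ref{LSpecialInv} to every $M^\be$ that appears.
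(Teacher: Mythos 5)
Your proof is correct and follows essentially the same route as the paper: take $\W$-invariants of the direct sum identities of Lemma~\ref{LSpecialDirectSumDec}, compute the permutation-module invariants from Lemmas~\ref{LI} and~\ref{LSpecialInv} to get $\dim(D^\al)^\W=1$ (your $p\geq 5$ totals $78-3\de_{b,5}$ and $77-3\de_{b,5}$ check out), and then use the split surjection onto the summand $D^\al$ plus Frobenius reciprocity to produce $\phi$. The only difference is cosmetic: the paper leaves the $p\geq5$ arithmetic and the final Frobenius-reciprocity step implicit ("the rest follows"), and you could simply cite Lemma~\ref{LSpecialDirectSumDec}(ii) for $S^\al\cong D^\al$ being a summand of $M^\al$ rather than re-running the hook-length argument.
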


\begin{proof}
Let $p=3$. By Lemmas~\ref{LSpecialDirectSumDec}, \ref{LI}, and \ref{LSpecialInv}, for $b\geq 6$ we have
\begin{align*}
\dim D_{4,2}^\W&=\dim M_{4,2}^\W+\dim M_5^\W+\dim M_{3,1}-\dim M_{5,1}^\W-\dim M_{3,2}^\W- \dim M_4^\W
\\
&=10+3+4-6-7-3=1.
\end{align*}
The rest follows from Lemma~\ref{LSpecialDirectSumDec}. 
The case $p\geq 5$ is proved similarly. 
\end{proof}

\section{Representations of double covers of symmetric and alternating groups}
\label{SDoubleCov}

\subsection{Double covers of symmetric and alternating groups}
\label{SSDoubleCov}

In this paper we work with the double cover $\ts_n$ of the symmetric group $\s_n$, in which transpositions lift to involutions. Precisely, $\ts_n$  is the group generated by $t_1,\dots,t_{n-1},z$ subject to the following relations:
\begin{align*}
zt_i=t_iz,\ z^2=t_i^2=1,\ t_it_{i+1}t_i=t_{i+1}t_it_{i+1},
\ t_it_j=zt_jt_i\ (\text{for}\ |i-j|>1). 
\end{align*}
We have the natural projection 
$$\pi:\ts_n\to \s_n$$
which maps $t_i$ onto the transposition $(i,i+1)$. 
We extend $\pi$ to a homomorphism of group algebras $\pi:\F\ts_n\to \F\s_n$.

For a subgroup $K\leq \s_n$, we have the subgroup 
$\hat K:=\pi^{-1}(K)\leq \ts_n.$  
In particular, we have the double cover $\tA_n=\pi^{-1}(\A_n)$ of the alternating group. For a composition $(\mu_1,\dots,\mu_r)$ of $n$ we have the subgroups
$$
{\ts}_{\mu_1,\dots,\mu_r}:=\pi^{-1}(\s_{\mu_1,\dots,\mu_r})<{\ts}_n\quad\text{and}\quad
{\tA}_{\mu_1,\dots,\mu_r}:=\pi^{-1}(\A_{\mu_1,\dots,\mu_r})<{\tA}_n.
$$
When $n=ab$, we have the subgroup
$$
\hW_{a,b}=\pi^{-1}(\s_a\wr\s_b)\leq \ts_n.
$$

For an element of $g\in\s_n$, we denote by $\hat g$ (or sometimes $g\,\hat{}$\, if it is typographically preferable\footnote{For example we write $((1,2,3)(4,5,6)){}\hat\,\in\tA_6$ instead of $\widehat{(1,2,3)(4,5,6)}$.}) an element of $\ts_n$ such that $\pi(\hat g)=g$ and such that the order of $\hat g$ is odd if the order of $g$ is odd.

\subsection{Spin modules}
\label{SSSpinMod}
Recall that throughout the paper $\F$ is an algebraically closed field of odd characteristic $p$. Every $\F\s_n$-module inflates along $\pi$ to give an $\F\ts_n$-module ${}^\pi V$ {\em with trivial central action. }
On the other hand, an $\F\ts_n$-module $V$ is called a {\em spin module} if the canonical central involution $z$ acts on $V$ as $-\id_V$. The similar terminology and notation is used for $\F\tA_n$-modules. Using the simplicity of $\A_n$ for $n\geq 5$, the following is immediate:

\begin{Lemma} \label{LSpinFaithful} 
If $n\geq 5$, $G\in\{\ts_n,\tA_n\}$ and $L$ be an irreducible spin $\F G$-module, and $H\leq G$ is a subgroup such that $L\da_H$ is irreducible. We have:
\begin{enumerate}[\rm(i)]
\item $Z(G)=\lan z\ran$;
\item $L$ is faithful;
\item $Z(H)=Z(G) \cap H$; 
\item $H/Z(H)\cong\pi(H)$. 
\end{enumerate}
\end{Lemma}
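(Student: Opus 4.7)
The plan is to handle (i) and (ii) first (which only require that $L$ be a nonzero spin module), and then deduce (iii) and (iv) from the irreducibility of $L\da_H$ via Schur's lemma. For (i), since $L\neq 0$ and $z$ acts as $-\id_L$, we have $z\neq 1_G$, so $\ker\pi=\lan z\ran$ has order $2$. The inclusion $\lan z\ran\subseteq Z(G)$ is clear; conversely, $\pi(Z(G))\subseteq Z(\pi(G))=1$, since $\s_n$ and $\A_n$ are centerless for $n\geq 5$, whence $Z(G)\subseteq \ker\pi=\lan z\ran$.

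For (ii), let $K:=\ker(G\to GL(L))$. Since $z$ acts as $-\id_L\neq\id_L$, we have $z\notin K$, so $K\cap Z(G)=1$ and $\pi|_K$ is injective with image a normal subgroup of $\pi(G)$. For $n\geq 5$, the only normal subgroups of $\s_n$ are $\{1,\A_n,\s_n\}$ and those of $\A_n$ are $\{1,\A_n\}$. If $K\neq 1$ then $\pi(K)\supseteq \A_n$, and because $K\cap\lan z\ran=1$ with $\lan z\ran$ central, one has $K\lan z\ran=K\times\lan z\ran=\pi^{-1}(\pi(K))\supseteq \tA_n$; in particular $\tA_n$ would be a split central extension of $\A_n$ by $\Z/2$. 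The main obstacle is to exclude this, which I would do by an explicit computation in $\ts_n$: the double transposition $(1,2)(3,4)\in\A_n$ lifts to $t_1 t_3\in \tA_n$, and the defining relations $t_1 t_3 = z t_3 t_1$ together with $t_1^2=t_3^2=1$ give $(t_1 t_3)^2 = z\neq 1$, so $t_1 t_3$ has order $4$. In any splitting $\tA_n\cong \A_n\times\lan z\ran$, however, every preimage of the order-$2$ element $(1,2)(3,4)$ would have order dividing $2$, a contradiction. Hence $K=1$ and $L$ is faithful.

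For (iii) and (iv), since $L\da_H$ is irreducible, Schur's lemma forces every element of $Z(H)$ to act on $L$ by a scalar. By (ii) the representation $G\to GL(L)$ is injective, and elements of $G$ acting by a scalar on $L$ are precisely those in $Z(G)$; hence $Z(H)\subseteq Z(G)\cap H$, and the reverse containment is automatic, proving (iii). For (iv), the restriction $\pi|_H\colon H\twoheadrightarrow \pi(H)$ has kernel $H\cap\ker\pi=H\cap\lan z\ran=H\cap Z(G)=Z(H)$ by (iii), so the first isomorphism theorem yields $H/Z(H)\cong \pi(H)$.
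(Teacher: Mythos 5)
Your proof is correct and follows essentially the same route as the paper: (i),(ii) come from the structure of the covers and the simplicity of $\A_n$, (iii) from Schur's lemma plus faithfulness, and (iv) from the first isomorphism theorem applied to $\pi|_H$ with kernel $H\cap\lan z\ran=Z(H)$. The only difference is that where the paper simply says (i),(ii) "follow from the simplicity of $\A_n$", you spell out the needed non-splitness of $\tA_n\to\A_n$ via the computation $(t_1t_3)^2=z$, which is a legitimate and complete way to fill in that step.
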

\begin{proof}
(i) and (ii) follow from the simplicity of $\A_n$. For (iii), by Schur's lemma, any $g\in Z(H)$ acts as a scalar on $L$, and so by faithfulness,  $g\in Z(G)$. For (iv), by (i) and (iii), $\pi(H) \cong H/(Z(G) \cap H) \cong H/Z(H)$. 
\end{proof}

\subsection{Twisted group algebras}
To distinguish between the $\F\ts_n$-modules with trivial central action and spin $\F\ts_n$-modules, we consider the central idempotent $e_z^\pm:=(1\pm z)/2$ in the group algebra $\F\ts_n$, and the ideal decomposition $\F\ts_n=\F\ts_ne_z^+\oplus \F\ts_n e_z^-$.
Now the $\F\ts_n$-modules with trivial central action can be identified with the modules over the algebra $\F\ts_ne_z^+\cong\F\s_n$, while the spin $\F\ts_n$-modules can be identified with the modules over the algebra 
$$
\cT_n:=\F\ts_n e_z^-.$$

Denoting $\ct_i:=t_ie_z^-\in\cT_n$, it is easy to see that algebra 
$\cT_n$ is given by generators $\ct_1,\dots,\ct_{n-1}$ subject only to the relations  
\begin{equation*}\label{ET_nRel}
\ct_i^2=1,\quad (\ct_i\ct_{i+1})^3=1,\quad \ct_i\ct_j = -\ct_j\ct_i\ (\text{for }|i-j|>1).
\end{equation*}
Choosing for each $w\in \Si_n$ a reduced decomposition $w=s_{r_1}\cdots s_{r_l}$, we define $\ct_w:=\ct_{r_1}\cdots \ct_{r_l}$ (which depends up to a sign on the choice of a reduced decomposition). Then  $\{\ct_w\mid w\in \Si_n\}$ is a basis of $\cT_n$, and $\cT_n$ is a {\em twisted group algebra of the symmetric group $\s_n$}.

We consider $\cT_n$ as a superalgebra with
$$
(\cT_n)_\0=\spa\{\ct_w\mid w\in\A_n\}\quad\text{and}\quad (\cT_n)_\1=\spa\{\ct_w\mid w\in\s_n\setminus\A_n\}
$$
Note that the spin $\F\tA_n$-modules can be identified with the $(\cT_n)_\0$-modules.

There is an antiautomorphism $\tau$ of $\T_n$ with $\tau(\ct_i)=-\ct_i$ for all $i=1,\dots,n-1$, see \cite[(13.4)]{KBook}. By a dual of a $\T_n$-supermodule we always mean $\tau$-dual.

For any subgroup $H\leq\s_n$ let 
$$\T_H:=\spa\{\ct_w\mid w\in H\}\subseteq\T_n$$ be the corresponding {\em twisted group algebra of $H$}. Denoting $\hat H:=\pi^{-1}(H)$, we identify the spin $\F \hat H$-modules with $\T_H$-modules. Note that $\T_{\s_n}=\T_n$ and $\T_{\A_n}=(\T_n)_\0$. 

For a composition $\mu=(\mu_1,\dots,\mu_r)$ of $n$, we also use the special notation
$$
\T_{\mu}=\T_{\mu_1,\dots,\mu_r}:=\T_{\s_{\mu}}\subseteq \T_{n}.
$$
We note that as superalgebras 
\begin{equation}\label{ETmn}
\T_{\mu_1,\dots,\mu_r}\cong\T_{\mu_1}\otimes\dots\otimes \T_{\mu_r}.
\end{equation}

Occasionally, we will need to work over $\C$, in which case we use the notation $\cT_{n,\C}$, $(\cT_{n,\C})_\0$, etc. For example, we identify the spin $\C \ts_n$-modules with modules over the algebra $\cT_{n,\C}$, and since $\cT_{n,\C}$ is actually a superalgebra, we can speak of spin $\C \ts_n$-supermodules.

\subsection{Twisted group algebras of wreath products}
In this subsection we assume that $n=2b$ is even and collect some informations about the structure of the twisted group algebra $\T_{\W_{2,b}}$ and more generally $\T_{\s_2\wr K}$ for $K\leq\s_{b}$. Let
\begin{align}\label{E160125}
\cx{}_k:=\ct_{2k-1}\ \ (1\leq k\leq b)\quad\text{and}
\quad 
\cy_j:=(\sqrt{-1})^{2j+1}\ct_{2j}\ct_{2j-1}\ct_{2j+1}\ct_{2j}\ \ (1\leq j< b).
\end{align}

Recall the Sergeev superalgebra $\F\s_{b}\ltimes\Cl_{b}$ from Example~\ref{ExSergeev}. 

\begin{Lemma}\label{L160125}
The twisted group algebra $\T_{\W_{2,b}}$ is generated by $\cx{}_1,\ldots,\cx_{b},\cy_1,\ldots,\cy_{b-1}$. Moreover:
\begin{enumerate}[\rm(i)] 
\item There are isomorphisms of superalgebras 
\begin{align*}
\T_{\W_{2,b}}\ \ &\iso\ \ \ \ \ \ \ \F\s_{b}\ltimes\Cl_{b}\ \ \ \ \iso\ \ \ \ \T_{b}\otimes\Cl_{b},
\\
\cx{}_k\ \ &\ \mapsto\ \ \ \ \ \ \ \ \ \ \,1\otimes \cc_k\ \ \ \ \ \,\mapsto\ \ \ \ \ \  1\otimes \cc_k\qquad\qquad\ \ \  (1\leq k\leq b),\\
\cy_j\ \ &\ \mapsto\ \,(j,j+1)\otimes 1\ \ \ \ \ \ \mapsto\ \ \ \ \ \, \ct_j\otimes \frac{\cc_{j+1}-\cc_j}{\sqrt{-2}}\qquad(1\leq j< b).
\end{align*}

\item
If $K\leq\s_{b}$, the above isomorphisms restrict to isomorphisms
$\T_{\s_2\wr K}\iso\F K\ltimes\Cl_{b}\iso\T_K\otimes\Cl_{b}.$
\end{enumerate}
\end{Lemma}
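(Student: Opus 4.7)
\medskip

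My plan is to establish the generation statement and part~(i) in three stages, then deduce part~(ii) cleanly from the structure obtained.

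First, I would observe that $\W_{2,b}=\s_2^{\times b}\rtimes\s_b$ is generated in $\s_{2b}$ by the transpositions $s_{2k-1}=(2k-1,2k)$ (which generate the base $\s_2^{\times b}$) together with the block transpositions $(2j-1,2j+1)(2j,2j+2)$ for $1\le j<b$ (which generate a standard lift of $\s_b$). A direct check shows that in $\s_{2b}$ one has $(2j-1,2j+1)(2j,2j+2)=s_{2j}s_{2j-1}s_{2j+1}s_{2j}$. Lifting to $\ts_n$ and passing to $\T_n$, we see that $\cx_k=\ct_{2k-1}$ and (up to the scalar $(\sqrt{-1})^{2j+1}$) $\cy_j=\ct_{2j}\ct_{2j-1}\ct_{2j+1}\ct_{2j}$ generate $\T_{\W_{2,b}}$. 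The scalar is inserted so that $\cy_j^2=1$; using $\ct_i^2=1$ and $\ct_{2j-1}\ct_{2j+1}=-\ct_{2j+1}\ct_{2j-1}$ one verifies directly that $(\ct_{2j}\ct_{2j-1}\ct_{2j+1}\ct_{2j})^2=-1$, whence $\cy_j^2=1$.

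Next, for the first isomorphism in (i), I would verify that the $\cx_k,\cy_j$ satisfy the defining relations of $\F\s_b\ltimes\Cl_b$ under the proposed assignment. The Clifford relations $\cx_k^2=1$ and $\cx_k\cx_l=-\cx_l\cx_k$ ($k\neq l$) are immediate from $\ct_i^2=1$ and the sign-commutation $\ct_i\ct_j=-\ct_j\ct_i$ for $|i-j|>1$. For the $\cy_j$ one needs: $\cy_j^2=1$ (done above); $\cy_j\cy_k=\cy_k\cy_j$ for $|j-k|>1$ (each $\cy$ is a product of four $\ct_i$'s whose indices are pairwise at distance $\ge2$, giving $(-1)^{16}=+1$); the braid relation $\cy_j\cy_{j+1}\cy_j=\cy_{j+1}\cy_j\cy_{j+1}$ (a somewhat laborious calculation using $(\ct_i\ct_{i+1})^3=1$ and the anticommutation relations, with the scalars $(\sqrt{-1})^{2j+1}$ matching on both sides); and the cross-relations $\cy_j\cx_k=\cx_{(j,j+1)k}\cy_j$, which encode the $\s_b$-action on $\Cl_b$ by permutation of generators. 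Since the RHS $\F\s_b\ltimes\Cl_b$ is generated by the images and $\dim\T_{\W_{2,b}}=|\W_{2,b}|=b!\cdot2^b=\dim(\F\s_b\ltimes\Cl_b)$, the resulting surjective algebra homomorphism is a superalgebra isomorphism (parities also match: $\cx_k$ and $\cy_j$ are odd and even respectively on both sides).

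For the second isomorphism $\F\s_b\ltimes\Cl_b\iso\T_b\otimes\Cl_b$, set $\tilde\cy_j:=\ct_j\otimes(\cc_{j+1}-\cc_j)/\sqrt{-2}$. Using the super-tensor sign rule $(a\otimes b)(a'\otimes b')=(-1)^{|b||a'|}aa'\otimes bb'$, the computation
\[
\tilde\cy_j^2=-\ct_j^2\otimes\Bigl(\tfrac{\cc_{j+1}-\cc_j}{\sqrt{-2}}\Bigr)^{\!2}=-1\cdot\tfrac{2-(\cc_j\cc_{j+1}+\cc_{j+1}\cc_j)}{-2}=1
\]
gives the involutivity; the braid relation for the $\tilde\cy_j$ reduces to $(\ct_j\ct_{j+1})^3=1$ combined with a short calculation in $\Cl_b$; and the cross-relation $\tilde\cy_j(1\otimes\cc_k)=(1\otimes\cc_{(j,j+1)k})\tilde\cy_j$ is a direct check. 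Again by generation and dimension count, this is a superalgebra isomorphism. The main obstacle in all of this is keeping track of signs, both in the super-tensor rule and in the scalars attached to the $\cy_j$, through the various four- and eight-letter products in $\T_n$; the placement of $(\sqrt{-1})^{2j+1}$ is precisely what makes all signs line up simultaneously.

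Finally, for part~(ii), let $K\leq\s_b$. Under the first isomorphism, the subalgebra $\T_{\s_2^{\times b}}\subseteq\T_{\W_{2,b}}$ (spanned by the $\ct_c$ for $c\in\s_2^{\times b}$) corresponds to $\Cl_b=1\otimes\Cl_b$. Moreover, writing any $w\in\s_2\wr K$ uniquely as $w=c\cdot g$ with $c\in\s_2^{\times b}$ and $g$ a standard lift in $\W_{2,b}$ of an element of $K$, we have $\ct_w=\pm\ct_c\ct_g$; the image of $\ct_g$ lies in the $\F\s_b$-part corresponding to $K$, namely in $\F K\otimes 1\subseteq\F K\ltimes\Cl_b$ (since $\ct_g$ is a product of $\cy_j$'s for consecutive transpositions used in a reduced expression, each mapping to $(j,j+1)\otimes 1$). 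Hence the first isomorphism carries $\T_{\s_2\wr K}$ into $\F K\ltimes\Cl_b$; as both have dimension $2^b\cdot|K|$, it restricts to an isomorphism. The second isomorphism restricts similarly since $\F K\ltimes\Cl_b$ maps into $\T_K\otimes\Cl_b$ under the explicit formulas, and dimensions match.
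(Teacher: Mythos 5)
Your proposal is correct and follows essentially the same route as the paper: generation read off from the projections of the $\cx_k,\cy_j$, verification of the Sergeev-algebra relations combined with a dimension count for the isomorphisms in (i), and restriction of the explicit isomorphisms (with a dimension count) for (ii). The only difference is that the paper delegates the relation checks, including the braid relations and the second isomorphism, to \cite[Lemmas 13.2.3, 13.2.4]{KBook}, whereas you sketch them directly; that is bookkeeping rather than a difference of method.
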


\begin{proof}
As $\pi(x_k)=(2k-1,2k)$ and $\pi(y_j)=\pm\sqrt{-1}(2j-1,2j+1)(2j,2j+2)$, the generation follows. 

(i) It is easy to check that the odd generators $\cx_k$ and the even generators $\cy_j$ satisfy the relations of \cite[Lemma 13.2.4]{KBook}. Since $\T_{\W_{2,b}}$ and $\F\s_{b}\ltimes\Cl_{b}$ have the same dimension, the first isomorphism follows. The second isomorphism comes from \cite[Lemmas 13.2.3 and 13.2.4]{KBook}.

(ii) holds by restricting the explicit isomorphisms in (i). 
\end{proof}

\section{Irreducible spin modules}

\subsection{\boldmath Irreducible spin modules in characteristic $0$}
Classically, the irreducible $\C \ts_n$-supermodules are canonically labeled by the strict partitions of $n$. 
We denote by $S(\la)$ the irreducible spin $\C \ts_n$-supermodules corresponding to $\la\in \Par_0(n)$.\footnote{For typographical reasons, if $\la=(\la_1,\dots,\la_h)$ we write $S(\la_1,\dots,\la_n)$ instead of $S\big((\la_1,\dots,\la_n)\big)$. 
} 
So 
$$
\Irrs(\cT_{n,\C})=\{S(\la)\mid \la\in\Par_0(n)\}. 
$$ 
Moreover, $S(\la)$ has type $\Mtype$ if and only if $a_0(\la)=0$.
By Lemmas~\ref{LClifford},\,\ref{PIrrIrr}, we now have a complete non-redundant set of irreducible spin $\C \ts_n$-modules up to isomorphism given by 
$$
\Irr(\cT_{n,\C})=\{S(\la;0)\mid\la\in\Par_0(n),\,\,a_0(\la)=0\}\sqcup \{S(\la;\pm)\mid\la\in\Par_0(n),\,\,a_0(\la)=1\},
$$
and a complete non-redundant set of irreducible spin $\C \tA_n$-modules up to isomorphism given by 
$$
\Irr((\cT_{n,\C})_\0)=\{T(\la;0)\mid\la\in\Par_0(n),\,\,a_0(\la)=1\}\sqcup \{T(\la;\pm)\mid\la\in\Par_0(n),\,\,a_0(\la)=0\}.
$$
We will refer to the irreducible modules above as $S(\la;\eps),T(\la;\eps)$ with $\eps\in\{0,+,-\}$ as appropriate. For example, if $a_0(\la)=0$ then $\eps$ can only be $0$ in $S(\la;\eps)$, and if $a_0(\la)=1$ then $\eps$ can be $+$ or $-$ in $S(\la;\eps)$.

\begin{Lemma}\label{lcharvan}
Let $g\in\tA_n$, $\la\in\Par_0(n)$, set $T(\la):=T(\la;0)$ if $a_0(\la)=1$, $T(\la):=T(\la;+)\oplus T(\la;-)$ if $a_0(\la)=0$, and let $\chi^\la$ be the character of $T(\la)$. If $\chi^\la(g)\not=0$ then the order of $\pi(g)$ is odd.
\end{Lemma}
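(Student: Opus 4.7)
The plan is to reduce the claim to showing that $g$ and $zg$ are conjugate in $\ts_n$, and then to produce an explicit conjugator.

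First I would note that $T(\la)$ is the restriction to $\tA_n$ of a $\ts_n$-module (namely of $S(\la;0)$ if $a_0(\la)=0$, and of $S(\la;+)$---equivalently $S(\la;-)$---if $a_0(\la)=1$, by Lemma~\ref{lmodules}). Thus $\chi^\la$ is the restriction to $\tA_n$ of a character $\tilde\chi$ of $\ts_n$, which is constant on $\ts_n$-conjugacy classes; and since $z$ acts as $-\mathrm{id}$ on any spin module, $\tilde\chi(zx)=-\tilde\chi(x)$. Hence if some $h\in\ts_n$ satisfies $hgh^{-1}=zg$, then $\chi^\la(g)=\tilde\chi(hgh^{-1})=\tilde\chi(zg)=-\chi^\la(g)$, forcing $\chi^\la(g)=0$.

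Proving the contrapositive, suppose $\pi(g)$ has even order. Then $\pi(g)$ has at least one cycle of even length, and since $\pi(g)\in\A_n$ the number of even-length cycles is even, hence at least two. I would pick one such even cycle $\gamma$ of $\pi(g)$, of length $2m$, write $\pi(g)=\gamma\sigma$ where $\sigma=\rho_1\cdots\rho_s$ is the product of the remaining disjoint cycles, and choose a lift $\hat\gamma\in\ts_n$ of $\gamma$. The main input is the standard commutation formula in $\ts_n$ for disjoint cycles $\alpha,\beta\in\s_n$ of lengths $a,b$:
\[
\hat\alpha\hat\beta=z^{(a-1)(b-1)}\hat\beta\hat\alpha,
\]
which I would verify from the defining relations of $\ts_n$: after a preliminary $\s_n$-conjugation (harmless, since $\chi^\la$ is $\ts_n$-invariant) to put each cycle on a consecutive-integer interval, $\hat\alpha$ and $\hat\beta$ are products of $a-1$ and $b-1$ generators $t_i$ all of whose cross-pairs satisfy $|i-j|>1$ and hence $t_it_j=zt_jt_i$, producing a total $z$-factor of $z^{(a-1)(b-1)}$.

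Applying this formula to move $\hat\gamma^{-1}$ past a lift $\hat\sigma$ of $\sigma$ (chosen so that $g=\hat\gamma\hat\sigma$) gives
\[
\hat\gamma g\hat\gamma^{-1}=\hat\gamma^2\hat\sigma\hat\gamma^{-1}=z^Ng,\qquad N=(2m-1)\sum_{i=1}^s(|\rho_i|-1).
\]
Modulo $2$, $\sum_i(|\rho_i|-1)$ counts the even-length cycles of $\sigma$; since $\pi(g)=\gamma\sigma$ has an even total number of even-length cycles and we stripped off the single even cycle $\gamma$, this count is odd. Thus $N$ is odd, $\hat\gamma g\hat\gamma^{-1}=zg$, and the argument is complete. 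The only real work is this $z$-sign bookkeeping---verifying the commutation identity and tracking the parity of $N$---which is routine.
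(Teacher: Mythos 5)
Your proof is correct. All the steps check out: $T(\la)$ is indeed the restriction to $\tA_n$ of an irreducible spin $\C\ts_n$-module (this is exactly Lemma~\ref{lmodules}), so $\chi^\la$ extends to a class function $\tilde\chi$ on $\ts_n$ with $\tilde\chi(zx)=-\tilde\chi(x)$; the commutation rule $\hat\alpha\hat\beta=z^{(a-1)(b-1)}\hat\beta\hat\alpha$ for lifts of disjoint cycles is the right cocycle computation in this presentation of $\ts_n$ (the factor is independent of the choice of lifts since $z$ is central, so the reduction to standard cycles on consecutive intervals is legitimate); and the parity bookkeeping is sound, since an even permutation has an even number of even-length cycles, so stripping off the chosen even cycle $\gamma$ leaves an odd number among the $\rho_i$, making $N$ odd and forcing $\hat\gamma g\hat\gamma^{-1}=zg$, whence $\chi^\la(g)=0$.

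The route is genuinely different from the paper's, though only in the sense that the paper does not argue at all: it simply quotes Stembridge's result \cite[Corollary 7.5]{stem}, which records the classical Schur-type vanishing of spin characters off the classes with all-odd cycle type (together with the odd classes of distinct cycle type, which are irrelevant here since $g\in\tA_n$). What you have done is reprove that cited fact from scratch by the standard argument: exhibit an explicit element of $\ts_n$ conjugating $g$ to $zg$. Your version is self-contained and elementary, needing only the defining relations of $\ts_n$ and Lemma~\ref{lmodules}; the paper's version is shorter and defers the sign analysis to the literature. Both are perfectly adequate, and your computation is essentially the proof underlying the reference.
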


\begin{proof}
This follows from \cite[Corollary 7.5]{stem}.
\end{proof}

Since $\T_{n,m,\C}\cong \T_{n,\C}\otimes \T_{m,\C}$, the following lemma follows from Lemmas~\ref{LBoxTimes}.

\begin{Lemma}\label{product}\cite[Lemma 12.2.13]{KBook}
For $\la\in\Par_0(n)$ and $\mu\in\Par_0(m)$, denote 
$$
S(\la,\mu):=S(\la)\circledast S(\mu).
$$
Then 
$S(\la)\boxtimes S(\mu)\cong S(\la,\mu)^{\oplus (1+a_0(\la)a_0(\mu))},$ 
and $S(\la,\mu)$ is of type $\Mtype$ if and only if $a_0(\la)=a_0(\mu)$.  
Moreover 
\begin{align*}
\Irrs(\T_{n,m,\C})=&\,\{S(\la,\mu)\mid \la\in\Par_0(n),\,\mu\in\RP_0(m)\}.
\end{align*}
\end{Lemma}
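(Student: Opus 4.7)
The plan is to reduce the statement to the general superalgebra formalism already developed in Section~\ref{SSSuperSuper}, applied to the tensor product decomposition of $\T_{n,m,\C}$.

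First, I would invoke the superalgebra isomorphism $\T_{n,m,\C}\cong \T_{n,\C}\otimes \T_{m,\C}$ recorded in (\ref{ETmn}). Combined with Lemma~\ref{LBoxTimes} applied to $A=\T_{n,\C}$ and $B=\T_{m,\C}$, we obtain
\[
\Irrs(\T_{n,m,\C})=\{V\circledast W\mid V\in\Irrs(\T_{n,\C}),\,W\in\Irrs(\T_{m,\C})\}.
\]
Since $\Irrs(\T_{n,\C})=\{S(\la)\mid\la\in\Par_0(n)\}$ and similarly for $\T_{m,\C}$, the classification of irreducibles of $\T_{n,m,\C}$ as the set $\{S(\la,\mu)\}$ follows directly.

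Next, recall that $S(\la)$ is of type $\Mtype$ if and only if $a_0(\la)=0$, i.e.\ of type $\Qtype$ if and only if $a_0(\la)=1$. By the second assertion of Lemma~\ref{LBoxTimes}, the module $S(\la,\mu)=S(\la)\circledast S(\mu)$ is of type $\Mtype$ if and only if $S(\la)$ and $S(\mu)$ are of the same type, which translates to $a_0(\la)=a_0(\mu)$, as claimed.

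It remains to compute the multiplicity in the isomorphism $S(\la)\boxtimes S(\mu)\cong S(\la,\mu)^{\oplus(1+a_0(\la)a_0(\mu))}$. When at least one of $a_0(\la),a_0(\mu)$ equals $0$, the definition of $\circledast$ given just after Lemma~\ref{L071218_4} sets $S(\la)\circledast S(\mu)=S(\la)\boxtimes S(\mu)$, giving multiplicity $1$, which matches $1+a_0(\la)a_0(\mu)=1$. When $a_0(\la)=a_0(\mu)=1$, both factors are of type $\Qtype$, so each admits an odd involution (as noted in the paragraph before Example~\ref{ExClifford}); Lemma~\ref{L071218_4} then produces a supermodule $M\circledast N$ with $M\boxtimes N\cong (M\circledast N)^{\oplus 2}$, giving multiplicity $2=1+a_0(\la)a_0(\mu)$. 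Combining the two cases yields the stated formula.

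The proof is essentially bookkeeping—no real obstacle arises, since every ingredient (the classification of $\Irrs(A\otimes B)$, the type criterion, and the doubling phenomenon for two $\Qtype$ factors) has already been established in Section~\ref{SSSuperSuper}. The only point that deserves attention is to match the convention for $\circledast$ in the two regimes (both $\Qtype$ vs.\ not both $\Qtype$) so that the unified exponent $1+a_0(\la)a_0(\mu)$ is correct.
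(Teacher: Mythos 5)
Your proposal is correct and follows essentially the same route as the paper, which derives the lemma in one line from the isomorphism $\T_{n,m,\C}\cong\T_{n,\C}\otimes\T_{m,\C}$ together with Lemma~\ref{LBoxTimes} (and, for the multiplicity, the definition of $\circledast$ via Lemma~\ref{L071218_4}). You have merely spelled out the bookkeeping that the paper leaves implicit, so there is nothing to add.
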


We now cite some well-known branching results. 

Let $\la=(\la_1,\dots,\la_h)\in\Par_0(n)$ with $\la_h>0$. Define 
$$R'(\la):=\{(\la_1,\dots,\la_{r-1},\la_r-1,\la_{r+1},\dots,\la_h)\mid 1\leq r<h,\  \la_r-\la_{r+1}>1\}\subseteq\Par_0(n-1),
$$
and $R(\la):=R'(\la)\sqcup\{(\la_1,\dots,\la_{h-1},\la_h-1)\}\subseteq \Par_0(n-1)$. 
Define also 
$$
A'(\la):=\{(\la_1,\dots,\la_{r-1},\la_r+1,\la_{r+1},\dots,\la_h)\mid 1\leq r\leq h,\  \la_{r-1}-\la_{r}>1\}\subseteq\Par_0(n+1)
$$
(where $\la_{-1}$ is interpreted as $+\infty$), and $A(\la):=A'(\la)\sqcup\{(\la_1,\dots,\la_{h},1)\}\subseteq \Par_0(n+1)$.

\begin{Lemma} \label{LBr0} {\rm \cite[Theorem 3]{MY}}  
Let $\la=(\la_1,\dots,\la_h)\in\Par_0(n)$ with $\la_h>0$. Then 
\begin{align*}
S(\la)\da_{\ts_{n-1}}&\cong
\left\{
\begin{array}{ll}
\bigoplus_{\mu\in R(\la)}S(\mu) &\hbox{if $a_0(\la)=0$,}\\
\bigoplus_{\mu\in R(\la)}S(\mu)^{\oplus 2} &\hbox{if $a_0(\la)=1$ and $\la_{h}>1$,}\\
S(\la_1,\dots,\la_{h-1})\oplus \bigoplus_{\mu\in R'(\la)}S(\mu)^{\oplus 2} &\hbox{if $a_0(\la)=1$ and $\la_{h}=1$;}
\end{array}
\right.
\\
S(\la)\uparrow^{\ts_{n+1}}&\cong
\left\{
\begin{array}{ll}
\bigoplus_{\nu\in A(\la)}S(\nu) &\hbox{if $a_0(\la)=0$,}\\
S(\la_1,\dots,\la_h,1)\oplus\bigoplus_{\nu\in A'(\la)}S(\nu)^{\oplus 2} &\hbox{if $a_0(\la)=1$.}
\end{array}
\right.
\end{align*}
\end{Lemma}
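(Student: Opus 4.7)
The plan is to deduce both branching formulas from Schur's identification of the spin character ring with the ring of $Q$-functions. Setting $\Gamma := \Z[Q_1, Q_3, Q_5, \dots]$, one identifies $\bigoplus_{n\geq 0} K_0(\cT_{n,\C})$ with $\Gamma$ via $[S(\la)] \mapsto 2^{-\lfloor (h(\la)-a_0(\la))/2 \rfloor} Q_\la$, normalised so that induction from $\cT_{n,\C}$ to $\cT_{n+1,\C}$ corresponds to multiplication by $Q_1$ (this is Schur's original discovery, and a modern superalgebra treatment can be read off from the Clifford algebra factorisation in Lemma~\ref{L160125}).

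The first step is to establish the induction formula. Since $S((1))$ is the unique irreducible spin $\T_{1,\C}$-supermodule (a Clifford module of type $\Qtype$), Lemma~\ref{product} gives $S(\la)\boxtimes S((1))\cong S(\la,(1))^{\oplus(1+a_0(\la))}$ as $\T_{n,1,\C}$-supermodules. Inducing from $\T_{n,1,\C}$ to $\T_{n+1,\C}$ then corresponds to multiplying $Q_\la$ by $Q_1$ in $\Gamma$. Schur's classical Pieri rule $Q_\la \cdot Q_1 = \sum_{\nu\in A(\la)} Q_\nu$ supplies all multiplicities; translating back and tracking the parity change $a_0(\nu)\equiv a_0(\la)+1\pmod 2$ via Lemma~\ref{PIrrIrr} recovers the stated induction formula, including the distinguished treatment of $\nu=(\la_1,\dots,\la_h,1)$ when $a_0(\la)=1$, where $h(\nu)=h(\la)+1$ forces $a_0(\nu)=0$, differing from the other $\nu\in A'(\la)$ which satisfy $a_0(\nu)=1$.

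The second step is to obtain the restriction formula via super-Frobenius reciprocity. Using the equality \eqref{EDimDim} together with the standard adjunction $\Hom_{\T_n}(V\ua^{\T_n}, W)\cong \Hom_{\T_{n-1}}(V, W\da_{\T_{n-1}})$, one derives, for $\la\in\Par_0(n)$ and $\mu\in\Par_0(n-1)$,
\begin{equation*}
[S(\la)\da_{\ts_{n-1}}:S(\mu)]\,\dim\End_{\T_{n-1,\C}}(S(\mu)) \;=\; [S(\mu)\ua^{\ts_n}:S(\la)]\,\dim\End_{\T_{n,\C}}(S(\la)),
\end{equation*}
where $\dim\End(S(\rho))$ equals $1$ or $2$ according as $S(\rho)$ has type $\Mtype$ or $\Qtype$. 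Substituting the induction multiplicities from step one and comparing types gives exactly the restriction formula, with the three displayed cases arising from the possible pairs $(a_0(\la),a_0(\mu))\in\{(0,1),(1,0),(1,1)\}$; the pair $(0,0)$ is impossible as $|\la|-|\mu|=1$ forces $a_0(\la)\ne a_0(\mu)$ only when no row is created or destroyed.

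The main obstacle will be the type-theoretic bookkeeping: every type-$\Qtype$ tensor product introduces a factor of $2$ via Lemma~\ref{L071218_4}, and these factors must be balanced carefully against the normalisation $2^{-\lfloor(h(\la)-a_0(\la))/2\rfloor}$ in the identification $[S(\la)]\leftrightarrow Q_\la$. In particular, the asymmetry of the third case of restriction (the single summand $S(\la_1,\dots,\la_{h-1})$ versus doubled summands $S(\mu)^{\oplus 2}$ for $\mu\in R'(\la)$ when $a_0(\la)=1$ and $\la_h=1$) reflects the fact that only the removal of the final $1$ decreases $h(\la)$, and hence is the unique removal that flips $a_0$. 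Once these factors are all correctly tabulated against the Pieri rule, both formulas fall out simultaneously.
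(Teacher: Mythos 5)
The paper offers no argument for this lemma at all -- it is quoted verbatim from Morris--Yaseen \cite[Theorem 3]{MY} -- so your plan to reprove it via Schur's $Q$-function correspondence plus super-Frobenius reciprocity is exactly the classical route, and in outline it can be made to work. However, the two formulas on which your derivation actually rests are both wrong, and they are wrong precisely at the powers of $2$ that constitute the content of the lemma. First, the Pieri rule you invoke is false: already $Q_{(1)}\cdot Q_{(1)}=2Q_{(2)}$, and in general $Q_\la\cdot Q_{(1)}=2\sum_{\nu\in A'(\la)}Q_\nu+Q_{(\la_1,\dots,\la_h,1)}$ (the last term present only when it is strict); the multiplicity-free rule holds for $P$-functions multiplied by $p_1=\tfrac12 Q_{(1)}$, not for $Q_\la\cdot Q_1$. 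Second, your normalisation $[S(\la)]\mapsto 2^{-\lfloor (h(\la)-a_0(\la))/2\rfloor}Q_\la$ does not intertwine induction with multiplication by $Q_1$: it sends both $[S(1)]$ and $[S(2)]$ to $Q_1$ and $Q_2$ respectively, yet $S(1)\ua^{\ts_2}\cong S(2)$ (the induced supermodule is the regular $\cT_2\cong\Cl_1$-supermodule) while $Q_1\cdot Q_1=2Q_2$. A normalisation that does work exists, namely $[S(\la)]\mapsto 2^{-\lceil (n-h(\la))/2\rceil}Q_\la$, but its correction factors depend on the type of $S(\la)$, i.e.\ on exactly the $1$-versus-$2$ multiplicities the lemma asserts, so this step cannot be dismissed as bookkeeping: with your stated rule and normalisation the two errors cancel for $(1)\ua$ but not, e.g., for $(2)\ua$, where you would obtain $S(3)\oplus S(2,1)$ instead of the correct $S(3)^{\oplus 2}\oplus S(2,1)$.

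The parity analysis is also inverted, which would propagate into both halves of your argument. Adding a node to an existing row changes $n-h(\la)$ by one and hence flips the type, while adding a new row of length $1$ preserves it; thus in the induction formula the exceptional summand $S(\la_1,\dots,\la_h,1)$ has the \emph{same} type as $S(\la)$, not the opposite one as you claim, and dually, under restriction the pair of equal types does occur -- it occurs exactly when $\la_h=1$ and $\mu=(\la_1,\dots,\la_{h-1})$ (e.g.\ $\la=(4,2,1)$, $\mu=(4,2)$), which is the sub-case responsible for the single summand $S(\la_1,\dots,\la_{h-1})$ in the third line of the restriction formula; your assertion that this pair is impossible therefore contradicts the very statement being proved. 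Your step two (the reciprocity identity relating $[S(\la)\da:S(\mu)]$ and $[S(\mu)\ua:S(\la)]$ weighted by endomorphism algebras) is fine, but it only transfers the induction multiplicities, so with the Pieri rule, the normalisation, and the type tracking all incorrect, the proposal does not establish the lemma as written. It is repairable: replace the Pieri rule by the correct one above, use a normalisation of the form $2^{-\lceil(n-h(\la))/2\rceil}Q_\la$ (or work with characters rather than supermodules, as in \cite{stem}), and then carry out, rather than defer, the case-by-case comparison of types -- at which point you are essentially reconstructing the Morris--Yaseen argument the paper cites.
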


The next two lemmas follow from \cite[Theorems 8.1 and 8.3]{stem}.

\begin{Lemma}\label{tworowschar0}
Let $0\leq a<n/2$ and $0\leq b\leq n$. Then
\[[S(n-a,a)\da_{\ts_{n-b,b}}:S((n-b-c,c),(b-a+c,a-c))]\neq 0\]
whenever $0\leq c<(n-b)/2$ and $0\leq a-c<b/2$. 
All other composition factors of $S(n-a,a)\da_{\ts_{n-b,b}}$ are of the form $S((n-b-d,d),(b-e,e))$ with $d+e<a$.
\end{Lemma}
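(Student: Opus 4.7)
The plan is to invoke Stembridge's shifted Littlewood-Richardson rule from \cite[Theorems 8.1 and 8.3]{stem}. These theorems express the branching multiplicity of $S(\la)\da_{\ts_{n-b,b,\C}}$ into irreducibles of the form $S(\mu)\circledast S(\nu)$ (with $\mu\in\Par_0(n-b)$ and $\nu\in\Par_0(b)$) as counts of shifted skew tableaux of shape $\la/\mu$ with content $\nu$ satisfying Stembridge's column-strict, row-strict, and reading-word conditions (the shifted analog of the Littlewood-Richardson rule). Theorem 8.1 is invoked to pass between $S(\mu)\boxtimes S(\nu)$ and $S(\mu)\circledast S(\nu)$, absorbing the type $\Mtype$ vs $\Qtype$ discrepancies via the appropriate factor of $2^{\lfloor\cdot/2\rfloor}$.

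For the two-row strict partition $\la=(n-a,a)$ with $a<n/2$, the candidate $\mu$ must be a strict subpartition of $\la$, which forces $\mu$ itself to have at most two parts and to fit inside $\la$. For the pair $\mu=(n-b-c,c)$ and $\nu=(b-a+c,a-c)$, one checks that $|\mu|+|\nu|=n$, that $\mu\subseteq\la$ amounts to $c\leq a$ together with $c<(n-b)/2$, and that $\nu$ being a strict partition with non-negative parts amounts to $0\leq a-c<b/2$. To establish non-vanishing of the multiplicity, I would exhibit one explicit shifted LR filling of $\la/\mu$ with content $\nu$: the natural choice is to place $1$'s in the $b-a+c$ boxes of $\la/\mu$ lying in row $1$ and $2$'s in the $a-c$ boxes lying in row $2$. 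For two-row shapes the column-strictness, row-weak-increase, and lattice-word conditions in Stembridge's rule are immediate to verify.

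For the second assertion, suppose $S((n-b-d,d),(b-e,e))$ occurs in $S(\la)\da_{\ts_{n-b,b,\C}}$. Since $\la$ has only two rows, each $i\geq 2$ in a shifted LR filling of $\la/\mu$ must lie in row $2$ of $\la$. The second row of $\la$ has exactly $a$ boxes; of these, $d$ are occupied by $\mu$, leaving $a-d$ boxes to carry the $e$ entries coming from $\nu$'s second row. This yields the key inequality $d+e\leq a$. The equality case $d+e=a$ forces $d=c$ and $e=a-c$ for some $c$ and is precisely the family handled by the first assertion; all remaining admissible $(d,e)$ satisfy $d+e<a$, which is exactly the stated condition.

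The main obstacle I anticipate is not the combinatorics per se but the bookkeeping in translating between Stembridge's character-theoretic branching formula (which involves factors of $2$ and sign conventions depending on the parities $a_0(\la), a_0(\mu), a_0(\nu)$) and the statement in terms of $S(\mu,\nu)=S(\mu)\circledast S(\nu)$ used here: the point is to confirm that the multiplicities predicted by the combinatorics are genuinely non-zero after these normalizations, and that the boundary conditions $c<(n-b)/2$ and $a-c<b/2$ (rather than non-strict inequalities) correspond exactly to $\mu,\nu$ being bona fide strict partitions rather than degenerate ones with a zero second part.
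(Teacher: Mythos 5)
Your proposal follows essentially the same route as the paper: the paper proves this lemma by a bare citation of Stembridge's Theorems 8.1 and 8.3 (the shifted Littlewood--Richardson branching rule), and your elaboration of the two-row combinatorics --- the explicit filling of $\la/\mu$ witnessing nonvanishing, and the count of available row-2 boxes giving $d+e\le a$ with equality exactly in the listed family --- is precisely the verification implicit in that citation. No gap; just take care, when ruling out entries of value $\geq 2$ in the first row, to quote Stembridge's reading-word and primed-entry conventions accurately rather than the unshifted lattice condition.
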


\begin{Lemma} \label{L040925}
Let $n=2b\geq 6$ be even. Then, in the Grothendieck group of $\C\ts_{b,b}$-supermodules, 
\begin{align*}
[S(n)\da_{\ts_{b,b}}]&=2[S(b)\circledast S(b)]=(1+\de_{2\nmid b})[S(b)\boxtimes S(b)],\\
[S(n-1,1)\da_{\ts_{b,b}}]&=[S(b-1,1)\circledast S(b)]+[S(b)\circledast S(b-1,1)]+2[S(b)\circledast S(b)]
\\
&=[S(b-1,1)\boxtimes S(b)]+[S(b)\boxtimes S(b-1,1)]+(1+\de_{2\nmid b})[S(b)\boxtimes S(b)].
\end{align*}
\end{Lemma}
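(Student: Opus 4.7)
The plan is to identify the composition factors of each restriction via the upper-triangular branching statement of Lemma~\ref{tworowschar0}, pin down the multiplicities through dimension counts based on Lemma~\ref{LBr0}, and convert between $\circledast$ and $\boxtimes$ using Lemma~\ref{product}.

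For the first identity, I would apply Lemma~\ref{tworowschar0} with $a=0$ (taking the parameter denoted $b$ in that lemma equal to our $b$). The inequalities $0\leq c<b/2$ and $0\leq -c<b/2$ force $c=0$, so the only candidate composition factor is $S((b),(b))=S(b)\circledast S(b)$; the ``other composition factors'' clause requires $d+e<0$, which is vacuous. Hence $[S(n)\da_{\ts_{b,b}}]=m\,[S(b)\circledast S(b)]$, and iterating the branching recursion of Lemma~\ref{LBr0} to compute $\dim S(n)$, together with Lemma~\ref{product} to express $\dim S(b)\circledast S(b)$ in terms of $\dim S(b)$ (with a case split on the parity of $b$), yields $m=2$.

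For the second identity with $b\geq 3$, Lemma~\ref{tworowschar0} applied with $a=1$ gives $c\in\{0,1\}$, producing constituents $S(b)\circledast S(b-1,1)$ (from $c=0$) and $S(b-1,1)\circledast S(b)$ (from $c=1$), each with positive multiplicity, while the ``other'' clause forces any further factor to be $S(b,b)=S(b)\circledast S(b)$. The conjugation involution of $\ts_{b,b}$ swapping the two $\ts_b$ factors fixes $S(n-1,1)\da_{\ts_{b,b}}$ and interchanges the two mixed constituents, so they appear with common multiplicity $m_1=m_2$. A dimension count using Lemma~\ref{LBr0} and Lemma~\ref{product} then forces $m_1=m_2=1$ and the multiplicity of $S(b,b)$ to be $2$. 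For $b\geq 4$ this is immediate from the non-negativity constraint; the boundary cases $b=2$ (where $(b-1,1)=(1,1)$ is not strict and the mixed terms vanish) and $b=3$ (where an alternative numerically consistent distribution must be excluded) are settled either by direct verification or by Frobenius reciprocity applied to $S(b,b)\uparrow^{\ts_n}$.

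Finally, the $\boxtimes$-form follows from Lemma~\ref{product}, namely $[S(\la)\boxtimes S(\mu)]=(1+a_0(\la)a_0(\mu))\,[S(\la)\circledast S(\mu)]$. A parity check shows that $a_0((b))$ and $a_0((b-1,1))$ always differ, so the mixed $\circledast$ and $\boxtimes$ products coincide, and a case analysis on the parity of $b$ confirms $(1+\de_{2\nmid b})(1+a_0((b))^2)=2$, matching the diagonal coefficient $2$. The main obstacle is the multiplicity calculation in the second identity, specifically excluding alternative multiplicity distributions consistent with the dimension count and the swap symmetry (most sensitively at $b=3$); this step requires an argument beyond the naive dimension count, typically via Frobenius reciprocity or Stembridge's shifted Littlewood--Richardson rule for spin representations.
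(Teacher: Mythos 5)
Your argument is correct in substance, but it takes a genuinely different route from the paper. The paper proves Lemma~\ref{L040925} (together with Lemma~\ref{tworowschar0}) by directly citing Stembridge's branching theorems \cite[Theorems 8.1, 8.3]{stem}, which give the decomposition of $S(n-a,a)\da_{\ts_{b,b}}$ with its exact multiplicities, so no dimension bookkeeping is needed and all $b\geq 3$ are handled uniformly. You instead use only the qualitative consequence recorded in Lemma~\ref{tworowschar0} to pin down the possible constituents, and then recover the multiplicities from the dimension recursion of Lemma~\ref{LBr0}, the swap symmetry of $\ts_{b,b}$, and the $\circledast$/$\boxtimes$ conversion of Lemma~\ref{product}; your parity bookkeeping (the types of $S(b)$ and $S(b-1,1)$ always differ, and $(1+\de_{2\nmid b})(1+a_0((b))^2)=2$, using the convention $a_0(\la)\equiv |\la|-h(\la)\pmod 2$ that matches Table III) is accurate. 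What your approach buys is a derivation that stays inside the lemmas already stated in the paper; what it costs is exactly the point you flag: for $b=3$ the dimension count together with $m_1=m_2$ also admits the distribution $(m_1,m_2,m_0)=(2,2,0)$, so an extra input is required there.

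That $b=3$ step is the only soft spot in your write-up. Saying it is "settled by direct verification or by Frobenius reciprocity applied to $S(b,b)\uparrow^{\ts_n}$" is not yet a proof: the Frobenius reciprocity route needs the decomposition of $S((3),(3))\ua^{\ts_6}$, which is essentially the same branching information you are trying to establish, so as phrased it is circular-adjacent. The honest ways to close it are either an explicit character computation for $n=6$ over $\C$ (entirely routine, since the spin characters of $\ts_6$ are known) or simply quoting the shifted Littlewood--Richardson rule of \cite{stem} for this one case --- which is what the paper does for all cases at once. With that one case executed, your proof is complete.
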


\subsection{\boldmath Irreducible spin modules in characteristic $p$}
The classification of the irreducible spin $\F \ts_n$-supermodules  was obtained in \cite{BK,BK2} using two different approaches which were later unified in \cite{KS}. 
The irreducible spin $\F \ts_n$-supermodules are canonically labeled by the restricted $p$-strict partitions of $n$. We denote by $D(\la)$ the irreducible spin $\F \ts_n$-supermodules corresponding to $\la\in \RP_p(n)$.\footnote{If $\la=(\la_1,\dots,\la_h)$ then we often write $D(\la_1,\dots,\la_n)$ instead of $D\big((\la_1,\dots,\la_n)\big)$. 
} 
 So 
$$
\Irrs(\cT_n)=\{D(\la)\mid \la\in\RP_p(\n)\}. 
$$ 
Moreover, $D(\la)$  type $\Mtype$ if and only if $a_p(\la)=0$. 
In particular,
\begin{equation}\label{EEndDDim}
\dim\End_{\T_n}(D(\la))=1+a_p(\la). 
\end{equation}
The supermodules $D(\la)$ are self-dual, see for example \cite[Theorem 22.3.1(i)]{KBook}.

By Lemmas~\ref{LClifford},\,\ref{PIrrIrr}, we now have a complete non-redundant set of irreducible spin $\F \ts_n$-modules up to isomorphism given by 
$$
\Irr(\cT_n)=\{D(\la;0)\mid\la\in\RP_p(n),\,\,a_p(\la)=0\}\sqcup \{D(\la;\pm)\mid\la\in\RP_p(n),\,\,a_p(\la)=1\},
$$
and a complete non-redundant set of irreducible spin $\F \tA_n$-modules up to isomorphism given by 
$$
\Irr((\cT_n)_\0)=\{E(\la;0)\mid\la\in\RP_p(n),\,\,a_p(\la)=1\}\sqcup \{E(\la;\pm)\mid\la\in\RP_p(n),\,\,a_p(\la)=0\}.
$$
We will refer to the irreducible modules above as $D(\la;\eps),E(\la;\eps)$ with $\eps\in\{0,+,-\}$ as appropriate. For example, if $a_p(\la)=0$ then $\eps$ can only be $0$ in $D(\la;\eps)$, and if $a_p(\la)=1$ then $\eps$ can be $+$ or $-$ in $D(\la;\eps)$. 

Note that self-duality of the supermodule $D(\la)$ implies that for $\la\in\RP_p(n)$ and appropriate $\eps$, we have
\begin{equation}\label{E*}
\begin{split}
D(\la,\eps)^*\cong D(\la,\eps)\quad \text{or}\quad D(\la,\eps)^*\cong D(\la,-\eps),
\\
E(\la,\eps)^*\cong E(\la,\eps)\quad \text{or}\quad E(\la,\eps)^*\cong E(\la,-\eps).
\end{split}
\end{equation}

Since $\T_{n,m}\cong \T_n\otimes \T_m$, the following lemma follows from Lemmas~\ref{LBoxTimes},~\ref{LClifford}, and~\ref{PIrrIrr}.

\begin{Lemma}\label{product}\cite[Lemma 12.2.13]{KBook}
For $\la\in\RP_p(n)$ and $\mu\in\RP_p(m)$, denote 
$
D(\la,\mu):=D(\la)\circledast D(\mu).
$
Then 
$D(\la)\boxtimes D(\mu)\cong D(\la,\mu)^{\oplus (1+a_p(\la)a_p(\mu))},$ 
and $D(\la,\mu)$ is of type $\Mtype$ if and only if $a_p(\la)=a_p(\mu)$.  
Moreover: 
\begin{align*}
\Irrs(\T_{n,m})=&\,\{D(\la,\mu)\mid \la\in\RP_p(n),\,\mu\in\RP_p(m)\},
\\
\Irr(\T_{n,m})=&\,\{D(\la,\mu;0)\mid \la\in\RP_p(n),\,\mu\in\RP_p(m)\ \text{with}\ a_p(\la)=a_p(\mu)\}\\&\sqcup\{D(\la,\mu;\pm)\mid \la\in\RP_p(n),\,\mu\in\RP_p(m)\ \text{with}\ a_p(\la)\neq a_p(\mu)\},
\\
\Irr((\T_{n,m})_\0)=&\,\{E(\la,\mu;\pm)\mid \la\in\RP_p(n),\,\mu\in\RP_p(m)\ \text{with}\ a_p(\la)=a_p(\mu)\}\\&\sqcup\{E(\la,\mu;0)\mid \la\in\RP_p(n),\,\mu\in\RP_p(m)\ \text{with}\ a_p(\la)\neq a_p(\mu)\}.
\end{align*}
\end{Lemma}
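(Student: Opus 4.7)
The plan is to reduce everything to the general superalgebraic machinery already developed, using the isomorphism $\T_{n,m}\cong\T_n\otimes\T_m$ from \eqref{ETmn} as superalgebras. First I would apply Lemma~\ref{LBoxTimes} to $A=\T_n$ and $B=\T_m$. Since $\Irrs(\T_n)=\{D(\la)\mid\la\in\RP_p(n)\}$ and similarly for $\T_m$, this lemma immediately yields
\[
\Irrs(\T_{n,m})=\{D(\la)\circledast D(\mu)\mid\la\in\RP_p(n),\,\mu\in\RP_p(m)\}=\{D(\la,\mu)\mid\la\in\RP_p(n),\,\mu\in\RP_p(m)\},
\]
and also the type statement: since $D(\la)$ is of type $\Mtype$ iff $a_p(\la)=0$, the module $D(\la,\mu)$ is of type $\Mtype$ iff $D(\la)$ and $D(\mu)$ are of the same type iff $a_p(\la)=a_p(\mu)$.

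Next I would deduce the relation between $\boxtimes$ and $\circledast$ directly from the definition of $\circledast$ given just before Lemma~\ref{LBoxTimes}. If at least one of $D(\la),D(\mu)$ is of type $\Mtype$, equivalently $a_p(\la)a_p(\mu)=0$, then by definition $D(\la)\circledast D(\mu)=D(\la)\boxtimes D(\mu)$, so the multiplicity in the formula is $1=1+a_p(\la)a_p(\mu)$. If both are of type $\Qtype$, equivalently $a_p(\la)=a_p(\mu)=1$, then Lemma~\ref{L071218_4} guarantees the existence of $D(\la)\circledast D(\mu)$ with $D(\la)\boxtimes D(\mu)\cong(D(\la)\circledast D(\mu))^{\oplus 2}$, matching $1+a_p(\la)a_p(\mu)=2$.

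Finally, for the classifications of $\Irr(\T_{n,m})$ and $\Irr((\T_{n,m})_\0)$, I would view $\T_{n,m}$ as the superalgebra from Example~\ref{ExImp} attached to the index-$2$ subgroup $(\s_n\times\s_m)\cap\A_{n+m}$ of $\s_n\times\s_m$ (whose group algebra is $(\T_{n,m})_\0$), and apply Lemma~\ref{PIrrIrr} (equivalently Lemma~\ref{lmodules}). Each supermodule $D(\la,\mu)$ of type $\Mtype$ (i.e.\ $a_p(\la)=a_p(\mu)$) contributes a single irreducible $\T_{n,m}$-module $D(\la,\mu;0)=|D(\la,\mu)|$ and splits on the even part into two non-isomorphic irreducibles $E(\la,\mu;\pm)$; each supermodule $D(\la,\mu)$ of type $\Qtype$ (i.e.\ $a_p(\la)\neq a_p(\mu)$) splits as $|D(\la,\mu)|\cong D(\la,\mu;+)\oplus D(\la,\mu;-)$ and restricts irreducibly to the even part as $E(\la,\mu;0)$. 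Listing these gives exactly the claimed sets.

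The proof is essentially a direct application of the earlier general machinery, so there is no substantial obstacle; the only point requiring a moment of care is ensuring that the $\Z/2\Z$-grading on $\T_{n,m}$ coming from its realisation as the tensor product of superalgebras $\T_n\otimes\T_m$ coincides with the grading from Example~\ref{ExImp} applied to the index-$2$ subgroup of even elements, which follows from inspection since $(\ct_w)_\0$ corresponds to $w\in\A_{n+m}$ in either grading and the tensor product grading is $(|a|+|b|)$-mod-$2$.
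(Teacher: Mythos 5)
Your proof is correct and follows essentially the same route as the paper, which obtains the lemma from the isomorphism $\T_{n,m}\cong\T_n\otimes\T_m$ together with Lemmas~\ref{LBoxTimes}, \ref{LClifford} and \ref{PIrrIrr} (with the multiplicity $1+a_p(\la)a_p(\mu)$ coming, as you say, from the definition of $\circledast$ via Lemma~\ref{L071218_4}). The only slight imprecision is casting $(\T_{n,m})_\0$ as the group algebra of the index-$2$ subgroup as in Example~\ref{ExImp} --- it is a twisted group algebra, not a group algebra --- but since Lemma~\ref{PIrrIrr} applies to an arbitrary finite-dimensional superalgebra, this does not affect the argument.
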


We will refer to the irreducible modules arising in Lemma~\ref{product} as $D(\la,\mu;\eps),E(\la,\mu;\eps)$ with $\eps\in\{0,+,-\}$ as appropriate.

Recall from (\ref{EHomFMod}) that for an $\F G$-module $L$, the endomorphism space $\End_\F(L)$ is an $\F G$-module. 

\begin{Lemma} \label{LHomPM}
Let $\la\in\Par_p(n)$. 
\begin{enumerate}
\item[{\rm (i)}] If $a_p(\la)=1$ then $\End_\F(D(\la;+))\cong\End_\F(D(\la;-))$ as $\F\ts_n$-modules. 
\item[{\rm (ii)}] If $a_p(\la)=0$ and $\si\in\ts_n\setminus\tA_n$ then $\End_\F(E(\la;+))\cong\End_\F(E(\la;-))^\si$ as $\F\tA_n$-modules. 
\end{enumerate}
\end{Lemma}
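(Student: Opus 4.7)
The plan is to deduce both statements directly from the structural relations between $D(\la;\pm)$ (resp.\ $E(\la;\pm)$) recorded in Lemma~\ref{lmodules}, together with the natural $\F G$-module isomorphism $\End_\F(V)\cong V\otimes V^*$.

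For part (i), the hypothesis $a_p(\la)=1$ says that $D(\la)$ is of type $\Qtype$, so Lemma~\ref{lmodules}(ii) (applied to $G=\ts_n$, $G_0=\tA_n$) gives $D(\la;-)\cong D(\la;+)\otimes\sgn$ as $\F\ts_n$-modules, where $\sgn$ is the pullback along $\pi$ of the sign character of $\s_n$. I would then compute
\[
\End_\F(D(\la;-))\cong D(\la;-)\otimes D(\la;-)^*\cong D(\la;+)\otimes D(\la;+)^*\otimes\sgn\otimes\sgn^*,
\]
and use that $\sgn$ is one-dimensional with $\sgn^*\cong\sgn$ and $\sgn\otimes\sgn\cong\bone$ to collapse the last two factors and conclude $\End_\F(D(\la;-))\cong\End_\F(D(\la;+))$.

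For part (ii), the hypothesis $a_p(\la)=0$ says that $D(\la)$ is of type $\Mtype$, so Lemma~\ref{lmodules}(i) yields $E(\la;-)\cong(E(\la;+))^\si$ as $\F\tA_n$-modules. Since the $\si$-twist commutes with taking duals and tensor products of $\F\tA_n$-modules, I would compute
\[
\End_\F(E(\la;-))\cong E(\la;-)\otimes E(\la;-)^*\cong\bigl(E(\la;+)\otimes E(\la;+)^*\bigr)^\si\cong\End_\F(E(\la;+))^\si,
\]
which is the desired isomorphism.

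The argument is essentially formal and there is no serious obstacle; the only small point to verify is the compatibility $\End_\F(V^\si)\cong\End_\F(V)^\si$ as $\F\tA_n$-modules, which follows directly from the formulas $(g\cdot f)(v)=gf(g^{-1}v)$ and $g\cdot_\si v=(\si^{-1}g\si)v$ by an immediate unwinding.
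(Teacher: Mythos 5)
Your proposal is correct and follows essentially the same route as the paper, whose one-line proof invokes exactly the relations $D(\la;\pm)\cong D(\la;\mp)\otimes\sgn$ and $E(\la;\pm)\cong E(\la;\mp)^\si$ that you extract from Lemma~\ref{lmodules}; you merely spell out the routine $\End_\F(V)\cong V\otimes V^*$ manipulations (and the harmless fact that twisting twice by $\si$ returns an isomorphic $\F\tA_n$-module, which reconciles your conclusion in (ii) with the stated orientation).
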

\begin{proof}
(i) holds using $D(\la;\pm)\cong D(\la;\mp)\otimes\sgn$, and (ii) follows using $E(\la;\pm)\cong E(\la;\mp)^\si$.
\end{proof}

\begin{Example} \label{ExT2Cl1}
{\rm 
We have $\cT_2\cong \Cl_1$, so, recalling Example~\ref{ExClifford} we can identify $D(2)$ with the Clifford module $U_1$. It follows that $D(2)\cong \cT_2$, the regular $\cT_2$-supermodule. 
So $\dim\End_{\T_{2}}(D(2))=2$, and for any $\T_{n-2}$-supermodule $X$, we have 
\begin{equation}\label{E130825}
\dim\End_{\T_{n-2,2}}(X\boxtimes D(2))=2\dim\End_{\T_{n-2}}(X).
\end{equation}
}
\end{Example}

\begin{Lemma}\label{L161224_3}
Let \,$V$ be a $\T_{n-2,2}$-supermodule. Then
$V\oplus V\cong V\da_{\T_{n-2}}\boxtimes D(2).$ 
In particular,
\[\dim\End_{\T_{n-2}}(V\da_{\T_{n-2}})=2\dim\End_{\T_{n-2,2}}(V).\]
\end{Lemma}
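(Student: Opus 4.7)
The plan is to construct an explicit isomorphism $V\da_{\T_{n-2}}\boxtimes D(2)\cong V\oplus V$ of $\T_{n-2,2}$-supermodules, after which the endomorphism identity follows from formula~(\ref{E130825}) in Example~\ref{ExT2Cl1}.

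Via~(\ref{ETmn}) identify $\T_{n-2,2}\cong\T_{n-2}\otimes\T_2$, and via Example~\ref{ExT2Cl1} identify $\T_2\cong\Cl_1$ and $D(2)\cong\Cl_1$ as the regular $\Cl_1$-supermodule, with basis $\{1,\cc_1\}$ where $|1|=\0$, $|\cc_1|=\1$. The action of $1\otimes\cc_1$ on $V$ defines an odd linear involution $J:V\to V$ satisfying $J(av)=(-1)^{|a|}a(Jv)$ for $a\in\T_{n-2}$, since $1\otimes\cc_1$ super-commutes with $\T_{n-2}\otimes 1$ in $\T_{n-2,2}$.

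Define two even linear maps $\iota,\iota':V\to V\otimes D(2)$ by
\[
\iota(v)=v\otimes 1-(-1)^{|v|}(Jv)\otimes\cc_1,\qquad\iota'(v)=v\otimes 1+(-1)^{|v|}(Jv)\otimes\cc_1.
\]
A direct, sign-sensitive computation shows that $\iota$ is $\T_{n-2,2}$-linear, while $\iota'$ is $(\T_{n-2}\otimes 1)$-linear and satisfies $\iota'((1\otimes\cc_1)v)=-(1\otimes\cc_1)\iota'(v)$; in view of Lemma~\ref{tensor1sgn} the image $\iota'(V)$ is nevertheless a $\T_{n-2,2}$-submodule of $V\otimes D(2)$ isomorphic to $V$. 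Using $J^2=\id$, one checks $\iota(V)\cap\iota'(V)=0$ (matching components in the equation $\iota(v)=\iota'(w)$ forces $v=w$ and $Jv=0$, hence $v=0$) and $\iota(V)+\iota'(V)=V\otimes D(2)$ (from $\iota(v)+\iota'(v)=2v\otimes 1$ and $\iota'(v)-\iota(v)=2(-1)^{|v|}(Jv)\otimes\cc_1$), whence $V\da_{\T_{n-2}}\boxtimes D(2)=\iota(V)\oplus\iota'(V)\cong V\oplus V$.

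The ``in particular'' claim then follows by combining this with~(\ref{E130825}) applied to $X=V\da_{\T_{n-2}}$, giving $2\dim\End_{\T_{n-2}}(V\da_{\T_{n-2}})=\dim\End_{\T_{n-2,2}}(V\oplus V)=4\dim\End_{\T_{n-2,2}}(V)$. The main obstacle in this plan is the careful Koszul-sign bookkeeping in verifying the (twisted) equivariance of $\iota$ and $\iota'$; once that is done, the rest is elementary.
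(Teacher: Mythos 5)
Your proof is correct, but it takes a genuinely different route from the paper's. The paper's argument is structural: since $D(2)$ is the regular $\T_2$-supermodule, $V\da_{\T_{n-2}}\boxtimes D(2)\cong (V\da_{\T_{n-2}})\ua^{\T_{n-2,2}}$, and then the tensor identity rewrites this as $V\otimes(\bone\da_{\s_{n-2}}\ua^{\s_{n-2,2}})$; the induced permutation module splits as $\bone\oplus(\bone\boxtimes\sgn_{\s_2})$ (using $p\neq 2$), and Lemma~\ref{tensor1sgn} identifies $V\otimes(\bone\boxtimes\sgn_{\s_2})\cong V$. You instead build the splitting by hand: the action of $1\otimes\cc_1$ is an odd involution $J$ of $V\da_{\T_{n-2}}$, and your two embeddings $\iota,\iota'$ exhibit $V\da_{\T_{n-2}}\boxtimes D(2)$ as a direct sum of a copy of $V$ and a copy of its Clifford-sign twist $V'$, which is $\cong V$ by Lemma~\ref{tensor1sgn}; I checked the equivariance computations and the intersection/spanning argument (the latter uses invertibility of $2$, i.e.\ $p>2$, which the paper's route also needs), and they are sound, as is the final deduction of the dimension identity from (\ref{E130825}), which is exactly how the paper concludes. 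What each approach buys: the paper's version is shorter and sign-free given the standard Frobenius/projection-formula machinery, while yours is self-contained and makes the two summands completely explicit (in effect reproving the needed special case of that machinery), at the cost of the Koszul-sign bookkeeping you flagged.
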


\begin{proof}
We consider the $\s_{n-2,2}$-modules $\bone_{\s_{n-2}}\boxtimes\bone_{\s_2}\cong\bone_{\s_{n-2,2}}$ and $\bone_{\s_{n-2}}\boxtimes\sgn_{\s_2}$ as $\ts_{n-2,2}$-modules via inflation along $\pi$. Moreover, for any composition $\mu$ we always identify $\T_\mu$-(super)modules with spin $\F\ts_\mu$-(super)modules. Then, since $D(2)$ is the regular $\T_2$-supermodule, we have  
\begin{align*}
V\da_{\T_{n-2}}\boxtimes D(2)
&\cong 
V\da_{\T_{n-2}}\boxtimes \T_2
\cong
(V\da_{\T_{n-2}})\ua^{\T_{n-2,2}}
=(V\da_{\ts_{n-2}})\ua^{\ts_{n-2,2}}\\
&\cong V\otimes (\bone\da_{\ts_{n-2}}\ua^{\ts_{n-2,2}})=V\otimes (\bone\da_{\s_{n-2}}\ua^{\s_{n-2,2}})\\
&\cong V\otimes ((\bone_{\s_{n-2}}\boxtimes\bone_{\s_2})\oplus (\bone_{\s_{n-2}}\boxtimes\sgn_{\s_2}))
\\
&\cong V\oplus (V\otimes (\bone_{\s_{n-2}}\boxtimes\sgn_{\s_2}))
\cong V\oplus V,
\end{align*}
using Lemma \ref{tensor1sgn}, for the last isomorphism. 
The second statement now follows from (\ref{E130825}). 
\end{proof}

\section{Reduction modulo $p$}
For $\la\in\Par_0(n)$, we denote by $\bar S(\la)$ a $\cT_n$-supermodule obtained by reduction modulo $p$ from the irreducible $\cT_{n,\C}$-supermodule $S(\la)$. Similarly, for appropriate $\eps$, we denote by $\bar S(\la;\eps)$ and $\bar T(\la;\eps)$ reductions modulo $p$ of the corresponding irreducible modules over $\C\ts_n$ and $\C\tA_n$.

\subsection{Basic and second basic modules} 
\label{SSBasic}
Basic and second basic spin modules are specific spin representations of $\s_n$ or $\A_n$. These modules will play a special role in this paper. 

Let $n\geq 1$. The {\em basic $\C\ts_n$-supermodule} is the irreducible  $\C\ts_n$-supermodule $S(n)$ corresponding to the partition $(n)$. A {\em basic $\C\ts_n$-module} is an irreducible $\C\ts_n$-module of the form  $S((n);\eps)$ (there might be one or two such depending on $a_0((n))$), and a {\em basic $\C\tA_n$-module} is an irreducible $\C\tA_n$-module of the form  $T((n);\eps)$ (there might be one or two such depending on $a_0((n))$). 

Let $n\geq 3$. The {\em second basic $\C\ts_n$-supermodule} is the irreducible  $\C\ts_n$-supermodule $S(n-1,1)$. A {\em second basic $\C\ts_n$-module} is an irreducible $\C\ts_n$-module of the form  $S((n-1,1);\eps)$, and a {\em second basic $\C\tA_n$-module} is an irreducible $\C\tA_n$-module of the form  $T((n-1,1);\eps)$. 

A {\em basic $\F\ts_n$-(super)module} is a composition factor of a reduction modulo $p$ of a complex basic (super)module. A {\em basic $\F\tA_n$-module} is a composition factor of a reduction modulo $p$ of a basic $\C\tA_n$-module. A {\em second basic $\F\ts_n$-(super)module} is a composition factor of a reduction modulo $p$ of a second basic $\C\ts_n$-(super)module not isomorphic to a basic $\F\ts_n$-(super)module. A {\em second basic $\F\tA_n$-module} is a composition factor of a reduction modulo $p$ of a second basic $\C\tA_n$-module not isomorphic to a basic $\F\tA_n$-module.


Denote
\begin{align}
\label{bs}\balpha_n&:=\left\{\begin{array}{ll}
(p^m,b)&\text{if}\  n=pm+b\ \text{with $0<b<p$},\\
(p^{m-1},p-1,1)&\text{if}\  n=pm;
\end{array}\right.\\
\label{sbs}\bbeta_n&:=\left\{\begin{array}{ll}
\balpha_{n-1}+(1)&\text{if}\  n\geq p+2,\\
(p-2,2,1)&\text{if}\  n=p+1\geq 6,\\
(p-2,2)&\text{if}\  n=p\geq 5,\\
(n-1,1)&\text{if}\  3\leq n<p.
\end{array}\right.
\end{align}
Then, in view of \cite[Tables III, IV]{Wales} and \cite[Theorem 3.6]{KT2}, the basic $\F\ts_n$-supermodule is exactly $D(\balpha_n)$, the basic $\F\ts_n$-modules are exactly $D(\balpha_n;\eps)$, the basic $\F\tA_n$-modules are exactly $E(\balpha_n;\eps)$, 
the second basic $\F\ts_n$-supermodule is exactly $D(\bbeta_n)$, the second basic $\F\ts_n$-modules are exactly $D(\bbeta_n;\eps)$, and the second basic $\F\tA_n$-modules are exactly $E(\bbeta_n;\eps)$. 


\begin{Lemma}\label{LBetaGammaChar}
Let $n\geq 5$. Then Table III (resp. Table IV) gives the dimension and type of the supermodules $D(\balpha_n)$ (resp. $D(\bbeta_n)$), as well as the expression of  $[D(\balpha_n)]$ (resp. $[D(\bbeta_n)]$) in terms of $[\bar S(\la)]$'s in the Grothendieck group.
\end{Lemma}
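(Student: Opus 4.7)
The plan is to establish the dimension, the type, and the Grothendieck-group expression separately for each of $D(\balpha_n)$ and $D(\bbeta_n)$. The dimension columns of Tables~III and~IV are a direct transcription of the formulas recorded just before Theorem~\ref{TAKT} (following \cite{Wales}), specialised to the cases dictated by~(\ref{bs}) and~(\ref{sbs}). For the type column, recall that $D(\la)$ is of type $\Mtype$ if and only if $a_p(\la)=0$, so it suffices to compute $a_p(\balpha_n)$ and $a_p(\bbeta_n)$ directly from~(\ref{Ea_p}) in each of the cases distinguished by~(\ref{bs}) and~(\ref{sbs}), simply by counting the parts of $\balpha_n$ and $\bbeta_n$ not divisible by $p$.

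For the basic supermodule, by definition every composition factor of $\bar S(n)$ is basic, hence isomorphic to $D(\balpha_n)$. Since $\dim \bar S(n)=\dim S(n)=2^{\lfloor(n-1)/2\rfloor}$ and $\dim D(\balpha_n)=2^{\lfloor(n-1-\kappa_n)/2\rfloor}$, the multiplicity must be $1$ if $p\nmid n$ and $2$ if $p\mid n$, giving $[\bar S(n)]=(1+\kappa_n)\,[D(\balpha_n)]$, which inverts at once to the formula for $[D(\balpha_n)]$ recorded in Table~III.

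For the second basic supermodule, by definition every composition factor of $\bar S(n-1,1)$ is either basic or second basic, hence isomorphic to $D(\balpha_n)$ or $D(\bbeta_n)$. Writing $[\bar S(n-1,1)]=a\,[D(\balpha_n)]+b\,[D(\bbeta_n)]$, I would pin down the coefficient $a$ by restricting to $\ts_{n-1}$ using Lemma~\ref{LBr0} in characteristic zero, comparing the resulting composition multiplicities of $\bar S(n-1)\cong (1+\kappa_{n-1})\,[D(\balpha_{n-1})]$ with the contribution of $D(\balpha_n)\da_{\ts_{n-1}}$, which is itself already controlled by the basic case applied one step lower. The coefficient $b$ is then forced by the single dimension equation $a\dim D(\balpha_n)+b\dim D(\bbeta_n)=\dim S(n-1,1)$. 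Solving this linear system and rearranging yields the expression for $[D(\bbeta_n)]$ tabulated in Table~IV.

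The main obstacle is the precise determination of $a$ in the cases where $p$ divides $n$ or $n-1$: in the generic situation both $\bar S(n-1)\cong D(\balpha_{n-1})$ and $D(\balpha_n)\da_{\ts_{n-1}}$ are straightforwardly understood, but at the degenerate values of $p$ additional copies of the basic summand may appear and must be accounted for. The cleanest way to close the count there is to appeal to \cite[Theorem 3.6]{KT2}, which supplies the extra decomposition information required to settle each of the cases distinguished by~(\ref{bs}) and~(\ref{sbs}); the resulting case analysis is what ultimately populates the rows of Tables~III and~IV.
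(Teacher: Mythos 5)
Your overall strategy (compute the type from $a_p$, then pin down the Grothendieck multiplicities by dimension counting, using that every composition factor of $\bar S(n)$, resp.\ $\bar S(n-1,1)$, is a copy of $D(\balpha_n)$, resp.\ of $D(\balpha_n)$ or $D(\bbeta_n)$) is different from the paper, which simply quotes Wales's Tables III and IV; a dimension-count rederivation is possible in principle, but as written yours contains a genuine error coming from conflating ordinary modules with supermodules. The lemma lives in the \emph{supermodule} Grothendieck group, so the dimensions you must compare are supermodule dimensions, which carry a factor $2$ exactly when the supermodule is of type $\Qtype$. You instead compare the ordinary character degree $2^{\lfloor (n-1)/2\rfloor}$ of the basic spin character with the ordinary basic-module dimension $2^{\lfloor (n-1-\kappa_n)/2\rfloor}$, and conclude $[\bar S(n)]=(1+\kappa_n)[D(\balpha_n)]$. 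This is false when $p\mid n$ and $n$ is odd: there $S(n)$ is of type $\Mtype$ in characteristic $0$ (supermodule dimension $2^{(n-1)/2}$) while $D(\balpha_n)$ is of type $\Qtype$ (supermodule dimension $2^{(n-1)/2}$ as in Table III), so the multiplicity is $1$, in agreement with Table III and in contradiction with your formula. Done consistently with supermodule dimensions, the multiplicity is $2$ only when $p\mid n$ and $n$ is even; the type change between characteristic $0$ and characteristic $p$ is precisely what your bookkeeping misses (compare the coefficient $2^{(h_p(\la)+a_0(\la)-a_p(\la^\Reg))/2}$ in Lemma~\ref{L020819}).

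The same care is needed in your treatment of $[\bar S(n-1,1)]$: the dimension equation $a\dim D(\balpha_n)+b\dim D(\bbeta_n)=\dim S(n-1,1)$ must use supermodule dimensions throughout, and your determination of the coefficient $a$ by restricting to $\ts_{n-1}$ is only sketched --- to make it work you would need control of the basic content of $D(\bbeta_n)\da_{\ts_{n-1}}$ as well, not just of $\bar S(n-1)$ and $D(\balpha_n)\da_{\ts_{n-1}}$, exactly in the degenerate cases $p\mid n$, $p\mid (n-1)$ that you flag. Note also that your argument already presupposes the identification of the (second) basic supermodule with $D(\balpha_n)$ (resp.\ $D(\bbeta_n)$), which the paper obtains from \cite[Tables III, IV]{Wales} and \cite[Theorem 3.6]{KT2}; since the decomposition data you are trying to reconstruct is precisely what Wales's tables record, the paper's one-line citation is the intended proof, and any rederivation must at least get the super/ordinary dichotomy right in each of the parity and divisibility cases.
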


\begin{proof}
This follows from \cite[Tables III, IV]{Wales}.
\end{proof}


\[\begin{array}{|l|l|l|l|}
\hline
\text{cases} &\text{dimension}&\text{type}&\text{$[D(\balpha_n)]$}\\
\hline
p\nmid n\ \text{and $n$ is even}&2^{n/2}&\Qtype&[\bar S(n)]\\
\hline
p\nmid n\ \text{and $n$ is odd}&2^{(n-1)/2}&\Mtype&[\bar S(n)]\\
\hline
p\mid n\ \text{and $n$ is even}&2^{(n-2)/2}&\Mtype&
\frac{1}{2}[\bar S(n)]\\
\hline
p\mid n\ \text{and $n$ is odd}&2^{(n-1)/2}&\Qtype&[\bar S(n)]\\
\hline
\end{array}\]
\centerline{{\sc Table III}: {Basic supermodule $D(\balpha_n)$}}

\[\begin{array}{|l|l|l|l|}
\hline
\text{cases}&\text{dimension}&\text{type}&[D(\bbeta_n)]\\
\hline
p\nmid n,\,p\nmid (n-1),\,\text{and $n$ is even}&2^{(n-2)/2}(n-2)&\Mtype&[\bar S(n-1,1)]
\\
\hline
p\nmid n,\,p\nmid (n-1),\,\text{and $n$ is odd}&2^{(n-1)/2}(n-2)&\Qtype&[\bar S(n-1,1)]
\\
\hline
p\mid n\ \text{and $n$ is even}&2^{(n-2)/2}(n-3)&\Mtype&[\bar S(n-1,1)]-\frac{1}{2}[\bar S(n)]
\\
\hline
p\mid n\ \text{and $n$ is odd}&2^{(n-1)/2}(n-3)&\Qtype&[\bar S(n-1,1)]-[\bar S(n)]
\\
\hline
p\mid (n-1)\ \text{and $n$ is even}&2^{(n-2)/2}(n-4)&\Qtype&
[\bar S(n-1,1)]-[\bar S(n)]
\\
\hline
p\mid (n-1)\ \text{and $n$ is odd}&2^{(n-3)/2}(n-4)&\Mtype&\frac{1}{2}[\bar S(n-1,1)]-[\bar S(n)]\\
\hline
\end{array}\]
\vspace{1mm}
\centerline{{\sc Table IV}: {Second basic supermodule $D(\bbeta_n)$}}

Note that the case $p>n$ in the above tables covers the characteristic $0$ case.

In the following lemma, $D(\mu)\otimes D(\balpha_n)$ refers to the  {\em inner}\, tensor product of $\F\ts_n$-modules (such a tensor product has trivial central action).

\begin{Lemma} \label{L040925_2} 
In the Grothendieck group of $\F\ts_n$-modules, the classes  $\{[D(\mu)\otimes D(\balpha_{n})]\mid \mu\in \RP_p(n)\}$ are linearly independent. 
\end{Lemma}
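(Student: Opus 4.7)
The plan is to pass to Brauer characters and exploit the explicit form of the basic spin character. First observe that since $z$ acts by $-1$ on both $D(\mu)$ and $D(\balpha_n)$, the inner tensor product $D(\mu)\otimes D(\balpha_n)$ has trivial central action, so its class lies in $K_0(\F\s_n)\subseteq K_0(\F\ts_n)$. Suppose
\[\sum_{\mu\in\RP_p(n)}c_\mu[D(\mu)\otimes D(\balpha_n)]=0,\]
and evaluate Brauer characters at a $p$-regular element $\hat g\in\ts_n$. Since the trace of a tensor product on the diagonal is the product of traces, this yields
\[\sum_\mu c_\mu\,\phi_{|D(\mu)|}(\hat g)\,\phi_{|D(\balpha_n)|}(\hat g)=0.\]

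Next I would pin down the support of $\phi_{|D(\balpha_n)|}$. By Table~III, $[D(\balpha_n)]$ equals $[\bar S(n)]$ up to a nonzero rational scalar ($1$ or $1/2$), so $\phi_{|D(\balpha_n)|}$ is that scalar times the reduction modulo $p$ of the classical basic spin character $\chi_{|S(n)|}$. By Schur's classical formula, this takes value $\pm 2^{\lfloor(h(\la)-1)/2\rfloor}$ on $\hat g$ whose image $\pi(\hat g)$ has cycle type $\la$ with all parts odd, and vanishes otherwise. Since $p$ is odd, the nonzero values remain units in $\F$, so $\phi_{|D(\balpha_n)|}(\hat g)\neq0$ precisely on the preimages in $\ts_n$ of all-odd-cycle $p$-regular classes of $\s_n$. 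Restricting attention to such $\hat g$, the displayed identity reduces to
\[\sum_\mu c_\mu\,\phi_{|D(\mu)|}(\hat g)=0.\]

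The concluding step---and where the argument becomes delicate---is to deduce that this forces $c_\mu=0$ for every $\mu$. Equivalently, one must argue that the super-Brauer characters $\{\phi_{|D(\mu)|}\}_{\mu\in\RP_p(n)}$ are linearly independent when restricted to the odd $p$-regular locus. My approach would be to descend to characteristic zero: expand each $\phi_{|D(\mu)|}$ as an integer combination of the $\bar\chi_{|S(\la)|}$ using the super-decomposition matrix (which is upper-unitriangular with respect to a refinement of the dominance order, with rows and columns labelled by $\Par_0(n)$ and $\RP_p(n)$ respectively), and then invoke the classical Schur theorem that the complex spin supercharacters $\chi_{|S(\la)|}$ for $\la\in\Par_0(n)$ are linearly independent on odd-cycle classes. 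Since the decomposition entries are integers and the $2$-power character values are $p$-units, the linear independence transfers to characteristic~$p$.

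The main obstacle is this final transfer: one has to carefully track the interaction between the supermodule types $\Mtype/\Qtype$ of the $D(\mu)$ and of the $\bar S(\la)$, and verify that reduction mod~$p$ does not introduce spurious relations on the restricted class set. This amounts to checking that the block of the decomposition matrix relating spin supermodules remains non-degenerate on the odd-class support of $\chi_{|S(n)|}$, which I would verify by combining the branching rules of Lemma~\ref{LBr0} with the explicit character formula for the basic spin supermodule.
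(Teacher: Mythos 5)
Your first half runs parallel to the paper's own argument: pass to Brauer characters, observe that the Brauer character of $D(\balpha_n)$ is a nonzero rational multiple of the reduction of the basic spin character and hence takes $p$-unit values on every $p$-regular class of all-odd cycle type, and divide to reduce a putative relation $\sum_\mu c_\mu[D(\mu)\otimes D(\balpha_n)]=0$ to $\sum_\mu c_\mu\phi_{|D(\mu)|}(g)=0$ for all odd $p$-regular $g$. The gap is in your concluding transfer. You need the functions $\phi_{|D(\mu)|}$, $\mu\in\RP_p(n)$, to be linearly independent on the \emph{odd $p$-regular} classes, and you propose to get this by expanding them in the reduced supercharacters $\bar\chi_{|S(\la)|}$ and invoking the characteristic-zero fact that the $\chi_{|S(\la)|}$, $\la\in\Par_0(n)$, are linearly independent on all odd classes. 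That independence does not survive restriction to the $p$-regular locus: there are $|\Par_0(n)|$ complex spin supercharacters, but once $p\le n$ the number of odd $p$-regular classes is strictly smaller than $|\Par_0(n)|$, so the restricted functions $\bar\chi_{|S(\la)|}$ are forced to be linearly dependent on exactly the class set you are working on. Consequently no bookkeeping of integral decomposition numbers or of $2$-power ($p$-unit) character values can push the characteristic-zero independence through this projection; your closing suggestion to check ``non-degeneracy of the decomposition matrix on the odd-class support'' is precisely the unproved point, not a verification of it. (A further minor inaccuracy: the super-decomposition matrix is neither square nor unitriangular; by Lemma~\ref{L020819} its leading entries are powers of $2$.)

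What closes the argument — and is how the paper finishes — is a different and simpler pair of facts: by \cite[Theorem 7.2]{stem} the Brauer character of every spin supermodule, in particular each $\phi_{|D(\mu)|}$, vanishes on all $p$-regular classes that are not of all-odd cycle type; hence your relation, established on the odd $p$-regular classes, actually holds on \emph{all} $p$-regular classes, and then the standard linear independence of irreducible Brauer characters forces $c_\mu=0$ (for type $\Qtype$ one has $\phi_{|D(\mu)|}=\phi_{D(\mu;+)}+\phi_{D(\mu;-)}$, and these sums over distinct $\mu$ remain independent). If you prefer not to cite the vanishing theorem directly, you can extract it from your own setup: by the triangularity in Lemma~\ref{L020819} the classes $[\bar S(\la)]$ with $\la\in\RP_p(n)$ span the Grothendieck group over $\Q$, and each $\bar\chi_{|S(\la)|}$ vanishes off the odd classes, so each $\phi_{|D(\mu)|}$ does too — but the last step must then still be the independence of irreducible Brauer characters on the full set of $p$-regular classes, not the characteristic-zero independence on odd classes.
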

\begin{proof}
By \cite[Theorem 3.3]{stem} and Lemma~\ref{LBetaGammaChar}, the Brauer character of $D(\balpha_{n})$ does not vanish on any conjugacy class corresponding to the cycle-shape with odd parts. 
Moreover, by \cite[Theorem 7.2]{stem}, the Brauer character of any spin supermodule vanishes on every other conjugacy class.  Since Brauer characters of irreducible modules are linearly independent, it follows that the Brauer characters of the modules $D(\mu)\otimes D(\balpha_{n})$ are linearly independent. 
\end{proof}

We will need the following branching result for the second basic module:

\begin{Lemma}\label{L160125_3}
Let $n=2b\geq 10$ be even. In the Grothendieck group of $\T_{b,b}$-supermodules, denote
$$
\D_{\bbeta,\balpha}:=[D(\bbeta_{b})\boxtimes D(\balpha_{b})],\ \D_{\balpha,\bbeta}:=[D(\balpha_{b})\boxtimes D(\bbeta_{b})],\ \D_{\balpha,\balpha}:=[D(\balpha_{b})\boxtimes D(\balpha_{b})].
$$
Then, in the Grothendieck group, 
$$
[D(\bbeta_n)\da_{\T_{b,b}}]=\left\{
\begin{array}{ll}
\D_{\bbeta,\balpha}+\D_{\balpha,\bbeta}+\D_{\balpha,\balpha} &\hbox{if $n\not\equiv 0,1,2\pmod{p}$ and $b$ is even,}\\
\D_{\bbeta,\balpha}+\D_{\balpha,\bbeta}+2\D_{\balpha,\balpha} &\hbox{if $n\not\equiv 0,1,2\pmod{p}$ and $b$ is odd,}\\
2\D_{\bbeta,\balpha}+2\D_{\balpha,\bbeta}+6\D_{\balpha,\balpha} &\hbox{if $n\equiv 0\pmod{p}$ and $b$ is even,}
\\
\D_{\bbeta,\balpha}+\D_{\balpha,\bbeta}+3\D_{\balpha,\balpha} &\hbox{if $n\equiv 0\pmod{p}$ and $b$ is odd,}
\\
\D_{\bbeta,\balpha}+\D_{\balpha,\bbeta} &\hbox{if $n\equiv 1\pmod{p}$,}
\\
\D_{\bbeta,\balpha}+\D_{\balpha,\bbeta}+3\D_{\balpha,\balpha} &\hbox{if $n\equiv 2\pmod{p}$ and $b$ is even,}
\\
2\D_{\bbeta,\balpha}+2\D_{\balpha,\bbeta}+6\D_{\balpha,\balpha} &\hbox{if $n\equiv 2\pmod{p}$ and $b$ is odd. }
\end{array}
\right.
$$
\end{Lemma}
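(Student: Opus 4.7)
The plan is to combine three ingredients already in the paper: Table~IV of Lemma~\ref{LBetaGammaChar}, which writes $[D(\bbeta_n)]$ as a $\Q$-linear combination of the reductions $[\bar S(n-1,1)]$ and $[\bar S(n)]$ modulo $p$; the complex branching Lemma~\ref{L040925}, which after reduction modulo $p$ yields $\Z$-linear expressions for $[\bar S(n)\da_{\T_{b,b}}]$ and $[\bar S(n-1,1)\da_{\T_{b,b}}]$ in terms of $[\bar S(b)\boxtimes \bar S(b)]$, $[\bar S(b-1,1)\boxtimes \bar S(b)]$ and $[\bar S(b)\boxtimes \bar S(b-1,1)]$; and Tables~III and~IV applied with $n$ replaced by $b$, used to re-express each $[\bar S(b)]$ and $[\bar S(b-1,1)]$ back in terms of the irreducibles $[D(\balpha_b)]$ and $[D(\bbeta_b)]$.

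First I would restrict the appropriate row of Table~IV (since $n=2b$ is even, only three rows apply) to get
\[[D(\bbeta_n)\da_{\T_{b,b}}] = [\bar S(n-1,1)\da_{\T_{b,b}}] - c_n[\bar S(n)\da_{\T_{b,b}}],\]
where $c_n = 0$ if $p\nmid n(n-1)$, $c_n = \tfrac{1}{2}$ if $p\mid n$, and $c_n = 1$ if $p\mid (n-1)$. Substituting the two restriction formulas from Lemma~\ref{L040925} reduces the problem to identifying $[\bar S(b)]$ and $[\bar S(b-1,1)]$ in the Grothendieck group of $\T_b$.

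Since $p$ is odd and $n=2b$, the conditions $p\mid n$, $p\mid(n-1)$, $p\mid(n-2)$ appearing in the statement correspond respectively to $p\mid b$, $p\nmid b(b-1)$ and $p\mid(b-1)$. For each of the seven sub-cases of the lemma I would read off the appropriate rows of Tables~III and~IV (selecting the row that matches the parity of $b$), substitute, expand using bilinearity of $\boxtimes$ in the Grothendieck group, and collect coefficients of $\D_{\bbeta,\balpha}$, $\D_{\balpha,\bbeta}$ and $\D_{\balpha,\balpha}$. The potentially half-integer coefficient $c_n=\tfrac12$ is harmless because it occurs only when $p\mid n$, where Table~III gives $[\bar S(b)]=2[D(\balpha_b)]$ (for $b$ even), or $[\bar S(b)]=[D(\balpha_b)]$ together with the extra factor $1+\delta_{2\nmid b}=2$ from Lemma~\ref{L040925} (for $b$ odd); thus $c_n[\bar S(n)\da_{\T_{b,b}}]$ is always an integral multiple of $\D_{\balpha,\balpha}$.

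The main obstacle is simply the bookkeeping across the seven sub-cases; no new ideas are needed. After the substitutions are performed, matching the claimed coefficients of $\D_{\bbeta,\balpha}$, $\D_{\balpha,\bbeta}$ and $\D_{\balpha,\balpha}$ is a mechanical check. I would organise this as a small auxiliary table indexed by the sub-case and the parity of $b$ to minimize errors.
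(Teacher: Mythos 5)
Your proposal is correct and is essentially the paper's own argument: the paper likewise restricts $[\bar S(n)]$ and $[\bar S(n-1,1)]$ via Lemma~\ref{L040925} and then converts between $[D(\bbeta_n)]$, $[\bar S(b)]$, $[\bar S(b-1,1)]$ and $[D(\balpha_b)]$, $[D(\bbeta_b)]$ using Tables~III and~IV (Lemma~\ref{LBetaGammaChar}), with the case-by-case bookkeeping you describe.
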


\begin{proof}
By Lemma~\ref{L040925}, in the Grothendieck group, we have
\begin{align*}
[\bar S(n)\da_{\T_{b,b}}]&=(1+\de_{2\nmid b})[\bar S(b)\boxtimes \bar S(b)],\\
[\bar S(n-1,1)\da_{\T_{b,b}}]&=[\bar S(b-1,1)\boxtimes \bar S(b)]+[\bar S(b)\boxtimes \bar S(b-1,1)]+(1+\de_{2\nmid b})[\bar S(b)\boxtimes \bar S(b)].
\end{align*}
The claim now follows using Lemma~\ref{LBetaGammaChar}. 
\end{proof}

For the following lemma recall the Clifford supermodule $U_b$  from Example~\ref{ExClifford}. We identify $\T_{\W_{2,b}}$ with $\T_{b}\otimes\Cl_{b}$ via the explicit isomorphism 
from Lemma~\ref{L160125}(i). In particular, the $\T_{\W_{2,b}}$-supermodules $D(\la)\circledast U_b$ make sense for any $\la\in\RP_p(b)$. 

\begin{Lemma}\label{L160125_2}
Let $n=2b\geq 10$ be even.
Then, in the Grothendieck group, 
$$[D(\bbeta_n)\da_{\T_{\W_{2,b}}}]=
\left\{
\begin{array}{ll}
2[D(\bbeta_{b})\circledast U_b]+[D(\balpha_{b})\circledast U_b] &\hbox{if $n\not\equiv 0,1,2\pmod{p}$,}\\
2[D(\bbeta_{b})\circledast U_b]+3[D(\balpha_{b})\circledast U_b] &\hbox{if $n\equiv 0,2\pmod{p}$,}
\\
2[D(\bbeta_{b})\circledast U_b]
 &\hbox{if $n\equiv 1\pmod{p}$.}
\end{array}
\right.
$$
\end{Lemma}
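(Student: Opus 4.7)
The proof will parallel that of Lemma~\ref{L160125_3}, but using the isomorphism $\T_{\W_{2,b}}\cong\T_b\otimes\Cl_b$ of Lemma~\ref{L160125} in place of the Young subgroup factorization $\T_{b,b}\cong\T_b\otimes\T_b$.

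The heart of the argument is a characteristic-zero branching rule: in the Grothendieck group of $\C\hW_{2,b}$-supermodules,
\begin{align*}
[S(n)\da_{\hW_{2,b}}] &= [S(b)\boxtimes U_b], \\
[S(n-1,1)\da_{\hW_{2,b}}] &= (1+\de_{2\mid b})[S(b-1,1)\boxtimes U_b] + [S(b)\boxtimes U_b].
\end{align*}
To establish these, I will observe that via Lemma~\ref{L160125} the irreducible $\T_{\W_{2,b},\C}$-supermodules are exactly the $S(\la)\circledast U_b$ for $\la\in\Par_0(b)$ (Lemma~\ref{LBoxTimes}). Restricting further to the Clifford subalgebra $\Cl_b$ (which under the isomorphism corresponds to $\T_{\s_2^{\times b}}\subset\T_{\W_{2,b}}$), each spin restriction is a sum of copies of $U_b$, and the multiplicity space $\Hom_{\Cl_b}(U_b,-)$ is naturally a $\T_{b,\C}$-supermodule that determines the full decomposition. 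A dimension count (using $\dim S(n)=2^b$ and the fact that $S(n)\da_{\hW_{2,b}}$ is irreducible) identifies the multiplicity space for $S(n)\da$ with the irreducible $S(b)$. For $S(n-1,1)\da$ the same machinery reduces the problem to computing the multiplicity space, which is a $\T_{b,\C}$-supermodule of known dimension; a character comparison (ultimately leveraging Lemma~\ref{L040925} through the common subgroup $\ts_{2^b}\subset\hW_{2,b}\cap\ts_{b,b}$) pins it down as the specified combination of $S(b-1,1)$ and $S(b)$. The parity-dependent coefficient $1+\de_{2\mid b}$ simply records whether $\boxtimes$ coincides with $\circledast$ for $S(b-1,1)\boxtimes U_b$: if $b$ is even both factors are of type $\Mtype$ and $\boxtimes=\circledast$, while if $b$ is odd both are $\Qtype$ and $[\boxtimes]=2[\circledast]$.

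Given these char 0 formulas, reduction modulo $p$ gives the analogous identities for $\bar S(\cdot)$ in the Grothendieck group of $\T_{\W_{2,b}}$-supermodules. By Table~IV of Lemma~\ref{LBetaGammaChar}, for $n$ even one has $[D(\bbeta_n)]=[\bar S(n-1,1)]-c_n[\bar S(n)]$ with $c_n=0$ if $p\nmid n(n-1)$, $c_n=\tfrac{1}{2}$ if $p\mid n$, and $c_n=1$ if $p\mid(n-1)$. Substituting yields $[D(\bbeta_n)\da_{\T_{\W_{2,b}}}]$ as an explicit integer combination of $[\bar S(b-1,1)\boxtimes U_b]$ and $[\bar S(b)\boxtimes U_b]$. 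Using Tables~III and IV in reverse expresses $[\bar S(b)]$ and $[\bar S(b-1,1)]$ in terms of $[D(\balpha_b)]$ and $[D(\bbeta_b)]$, case by case according to the divisibility of $b$ and $b-1$ by $p$ and the parity of $b$. A final conversion $[D\boxtimes U_b]=(1+\de_{D,\,U_b\,\text{both}\,\Qtype})[D\circledast U_b]$, reading off the types from Tables~III and IV, then yields an integer combination of $[D(\bbeta_b)\circledast U_b]$ and $[D(\balpha_b)\circledast U_b]$.

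The main obstacle will be the characteristic-zero branching for the second basic supermodule: the first formula $[S(n)\da]=[S(b)\boxtimes U_b]$ follows cleanly from the Clifford structure of the basic spin supermodule, but pinning down the multiplicity of $S(b)\boxtimes U_b$ inside $S(n-1,1)\da$ requires a delicate character/dimension argument. Once that is in place the remaining case-by-case bookkeeping (over the residues $n\equiv 0,1,2\pmod p$ and the parity of $b$) is elementary, and the striking feature---that the final answer depends only on $n\bmod p$ and not independently on the parity of $b$ except in the case $p\mid n$---emerges from clean cancellations driven by the unitriangular structure of Tables~III and IV.
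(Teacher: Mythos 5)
Your overall pipeline (prove the characteristic-zero branching of $S(n)$ and $S(n-1,1)$ to $\hW_{2,b}$, reduce modulo $p$, then convert via Tables III and IV and the $\boxtimes$/$\circledast$ type bookkeeping) is a genuinely different organization from the paper's, and the two characteristic-zero formulas you assert are in fact correct, as is the case-by-case conversion at the end. The paper never computes $S(n-1,1)\da_{\hW_{2,b}}$ in characteristic zero: it writes the unknown restriction as $\sum_\mu e_\mu[D(\mu)\boxtimes U_b]$ directly in characteristic $p$, restricts further to the diagonal copy ${\mathsf D}\cong\s_b$ inside $\W_{2,b}$ (where, by Lemma~\ref{L160125}, each $[D(\mu)\boxtimes U_b]$ becomes $c\,[D(\mu)\otimes D(\balpha_b)]$ with ${}^{\T_b}U_b$ a multiple of basic spin), and then reads off the $e_\mu$ from the already-proved $\ts_{b,b}$-branching (Lemma~\ref{L160125_3}) using the linear independence of the classes $[D(\mu)\otimes D(\balpha_b)]$ (Lemma~\ref{L040925_2}, a Brauer-character argument). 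That diagonal-plus-linear-independence step is exactly the device that lets the paper avoid the computation you place at the centre of your argument.

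And that is where your proposal has a genuine gap, which you yourself flag as ``the main obstacle'': the formula $[S(n-1,1)\da_{\hW_{2,b}}]=(1+\de_{2\mid b})[S(b-1,1)\boxtimes U_b]+[S(b)\boxtimes U_b]$ is asserted but not proved, and the mechanism you sketch does not suffice. A character comparison ``through the common subgroup $\ts_{(2^b)}$'' cannot pin down the multiplicity space: $\T_{\s_{(2^b)}}\cong\Cl_b$ has a unique irreducible supermodule, so restriction to it records only dimensions and cannot separate the contributions of $S(b-1,1)\boxtimes U_b$ and $S(b)\boxtimes U_b$ (moreover for $b$ odd $\s_{(2^b)}\not\leq\s_{b,b}$, so it is not even a common subgroup in the sense you need). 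The deeper issue is that the copy of $\T_b$ acting on your multiplicity space $\Hom_{\Cl_b}(U_b,-)$ is the subalgebra $\{\ct_j\otimes 1\}$ of $\T_b\otimes\Cl_b$, which under the isomorphism of Lemma~\ref{L160125} corresponds to elements such as $\cy_j(\cx_{j+1}-\cx_j)/\sqrt{-2}$, not to the twisted group algebra of any subgroup of $\W_{2,b}$; hence its character on the multiplicity space is not directly read off from ordinary spin character values of $S(n-1,1)$, and computing it requires either Stembridge-type twisted-character machinery or precisely the diagonal-subgroup argument the paper uses (restrict to $\T_{\mathsf D}$, where the relevant values are honest spin characters of $\s_b$, and invoke Lemma~\ref{L040925_2}). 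Until that step is supplied, the proof is incomplete; once it is, the remaining reduction modulo $p$ and the Table III/IV bookkeeping in your proposal do yield the stated coefficients in all residue cases.
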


\begin{proof}
Recall from Example~\ref{ExClifford} that $U_b$ has dimension $2^{b/2}$ and type $\Mtype$ if $b$ is even, and dimension $2^{(b+1)/2}$ and type $\Qtype$ if $b$ is odd. Denote ${\mathtt B}_\boxtimes:=[D(\bbeta_{b})\boxtimes U_b]$ and ${\mathtt A}_\boxtimes:=[D(\balpha_{b})\boxtimes U_b]$. 
Taking into account Lemma~\ref{LBetaGammaChar}, the claim of the lemma can be re-written as 
$$[D(\bbeta_n)\da_{\T_{\W_{2,b}}}]=
\left\{
\begin{array}{ll}
2{\mathtt B}_\boxtimes+{\mathtt A}_\boxtimes &\hbox{if $n\not\equiv 0,1,2\pmod{p}$ and $b$ is even,}\\
{\mathtt B}_\boxtimes+{\mathtt A}_\boxtimes &\hbox{if $n\not\equiv 0,1,2\pmod{p}$ and $b$ is odd,}\\
2{\mathtt B}_\boxtimes+3{\mathtt A}_\boxtimes &\hbox{if $n\equiv 0\pmod{p}$ and $b$ is even,}
\\
{\mathtt B}_\boxtimes+\frac{3}{2}{\mathtt A}_\boxtimes &\hbox{if $n\equiv 0\pmod{p}$ and $b$ is odd,}
\\
2{\mathtt B}_\boxtimes+3{\mathtt A}_\boxtimes &\hbox{if $n\equiv 2\pmod{p}$,}
\\
2{\mathtt B}_\boxtimes
 &\hbox{if $n\equiv 1\pmod{p}$ and $b$ is even,}
 \\
{\mathtt B}_\boxtimes
 &\hbox{if $n\equiv 1\pmod{p}$ and $b$ is odd.}
\end{array}
\right.
$$

From \cite[Lemma 3.2]{stem}, $U_b$ can be viewed as a $\T_{b}$-supermodule with $\ct_j$ acting as  $(\cc_{j+1}-\cc_j)/\sqrt{-2}$. We denote this $\T_{b}$-supermodule by ${}^{\T_b}U_b$. 
Moreover, since in characteristic $0$ this construction  yields  basic spin modules, reducing modulo $p$, we conclude that in the Grothendieck group we have $[{}^{\T_b}U_b]=c[D(\balpha_b)]$ for some $c\in\Z_{>0}$. Comparing dimensions, we deduce that $c=1$ if $p\nmid b$ and $b$ is even, and $c=2$ otherwise.

Write 
\[[D(\bbeta_n)\da_{\T_{\W_{2,b}}}]
=\sum_{\mu\in\RP_p(b)}e_\mu[D(\mu)\boxtimes U_b]\]
with $e_\mu\in\Q$. 
Let ${\mathsf D}$ be the diagonal embedding of $\s_{b}$ in $\s_{\{1,3,\ldots,n-1\}}\times\s_{\{2,4,\ldots,n\}}$. Then ${\mathsf D}\leq\W_{2,b}$, and by Lemma~\ref{L160125}, we have 
\[[D(\bbeta_n)\da_{\T_{\mathsf D}}]=[D(\bbeta_n)\da_{\T_{\W_{2,b}}}\da_{\T_{\mathsf D}}]=\sum_{\mu\in\RP_p(b)}ce_\mu[D(\mu)\otimes D(\balpha_{b})],\]
where $\otimes$ is the inner tensor product. Denote ${\mathtt B}_\otimes:=[D(\bbeta_{b})\otimes D(\balpha_b)]$ and ${\mathtt A}_\otimes:=[D(\balpha_{b})\otimes D(\balpha_{b})]$. 
In view of Lemmas~\ref{L040925_2}, it suffices prove that 
$$
[D(\bbeta_n)\da_{\T_{\mathsf D}}]=
\left\{
\begin{array}{ll}
2{\mathtt B}_\otimes+{\mathtt A}_\otimes &\hbox{if $n\not\equiv 0,1,2\pmod{p}$ and $b$ is even,}\\
2{\mathtt B}_\otimes+2{\mathtt A}_\otimes &\hbox{if $n\not\equiv 0,1,2\pmod{p}$ and $b$ is odd,}\\
4{\mathtt B}_\otimes+6{\mathtt A}_\otimes &\hbox{if $n\equiv 0\pmod{p}$ and $b$ is even,}
\\
2{\mathtt B}_\otimes+3{\mathtt A}_\otimes &\hbox{if $n\equiv 0\pmod{p}$ and $b$ is odd,}
\\
2{\mathtt B}_\otimes+3{\mathtt A}_\otimes &\hbox{if $n\equiv 2\pmod{p}$ and $b$ is even,}
\\
4{\mathtt B}_\otimes+6{\mathtt A}_\otimes &\hbox{if $n\equiv 2\pmod{p}$ and $b$ is odd,}
\\
2{\mathtt B}_\otimes
 &\hbox{if $n\equiv 1\pmod{p}$ and $b$ is even,}
 \\
2{\mathtt B}_\otimes
 &\hbox{if $n\equiv 1\pmod{p}$ and $b$ is odd.}
\end{array}
\right.
$$

Taking into account that the subgroup $\pi^{-1}(\s_{\{1,3,\ldots,n-1\}}\times\s_{\{2,4,\ldots,n\}})$ is conjugate to the subgroup $\ts_{b,b}$ and that ${\mathsf D}\cong \s_b$ is the diagonal subgroup of $\s_{\{1,3,\ldots,n-1\}}\times\s_{\{2,4,\ldots,n\}}$, the terms $\D_{\bbeta,\balpha}$  and $\D_{\balpha,\bbeta}$ in Lemma~\ref{L160125_3} each contribute ${\mathtt B}_\otimes$ to $[D(\bbeta_n)\da_{\T_{\mathsf D}}]$,  while $\D_{\balpha,\balpha}$ contributes ${\mathtt A}_\otimes$. Now the required expressions for $[D(\bbeta_n)\da_{\T_{\mathsf D}}]$ follow from Lemma~\ref{L160125_3}. 
\end{proof}

We will need the following result about the inner tensor products:

\begin{Lemma}\label{lem:3tens}
Let $n \geq 5$ and $n\not\equiv 0,1\pmod{p}$. Then the tensor product of a basic module and a second basic module of $\tA_{n}$ has composition length at least $3$.
\end{Lemma}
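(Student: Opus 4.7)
Both $E(\balpha_n)$ and $E(\bbeta_n)$ are spin $\F\tA_n$-modules, so the canonical central involution $z$ acts as $(-1)(-1)=+1$ on the inner tensor product $V:=E(\balpha_n)\otimes E(\bbeta_n)$. Hence $V$ has trivial central character and can be viewed as an $\F\A_n$-module, and the question becomes: show that the $\F\A_n$-module $V$ has composition length at least $3$. My strategy is to reduce to a characteristic-$0$ character computation.

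The hypothesis $n\not\equiv 0,1\pmod p$ forces $\kappa_n=\kappa_{n-1}=0$, and Tables III--IV of Lemma~\ref{LBetaGammaChar} then show the clean identifications $[D(\balpha_n)]=[\bar S(n)]$ and $[D(\bbeta_n)]=[\bar S(n-1,1)]$ in the Grothendieck group of $\cT_n$-supermodules, with no correction terms. Passing to $\tA_n$-modules via Lemma~\ref{lmodules} and taking inner tensors, this means that the Brauer character of $V$ on every $p$-regular class of $\A_n$ coincides with the value of the characteristic-$0$ character of the $\C\A_n$-module coming from $T((n))\otimes T((n-1,1))$ (with the appropriate $\{0,\pm\}$ choices). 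Thus it suffices to show that $T((n))\otimes T((n-1,1))$, as a $\C\A_n$-module, decomposes into at least three irreducible constituents whose reductions modulo $p$ remain linearly independent as Brauer characters.

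For the characteristic-$0$ decomposition I would use the explicit Schur/Morris formulas for the values of the basic and second basic spin characters on cycle classes (see \cite[Theorems 8.3, 8.4]{stem}), together with Lemma~\ref{lcharvan} which says both factor characters vanish on classes not of odd cycle type. Computing three inner products $\langle \chi_{T((n))}\chi_{T((n-1,1))},\chi^\lambda\rangle$ for well-chosen irreducible $\C\A_n$-characters $\chi^\lambda$ (say the trivial character, the constituent of the natural $(n-1)$-dimensional representation, and one further irreducible chosen so that the value of the basic character on $n$- or $(n-1)$-cycles can be paired productively) produces three positive multiplicities, giving three distinct constituents in characteristic zero. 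The hypothesis $n\not\equiv 0,1\pmod p$ ensures that the formulas for $\chi_{T((n))}$ and $\chi_{T((n-1,1))}$ on these odd-cycle classes are generic, i.e.\ none of the relevant prefactors $n$ or $n-1$ vanish modulo $p$.

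The main technical obstacle is the last step: verifying that the three characteristic-$0$ constituents identified above reduce modulo $p$ to three linearly independent Brauer characters of $\A_n$. For small partitions $\lambda$ this can be checked directly against the known decomposition matrices of $\A_n$ in the regime $n\not\equiv 0,1\pmod p$; a cleaner alternative is to invoke Lemma~\ref{L040925_2} after lifting back to $\ts_n$. Namely, the corresponding $\F\s_n$-module $D(\balpha_n)\otimes D(\bbeta_n)$ has Brauer character that, by Lemma~\ref{L040925_2} (with $\mu=\bbeta_n$), is linearly independent from the characters of the modules $D(\mu')\otimes D(\balpha_n)$ for $\mu'\neq\bbeta_n$; comparing with the candidate decompositions of length $\leq 2$ and using the explicit character values computed above produces a contradiction, yielding the desired lower bound of $3$ on the $\F\A_n$-composition length of $V$.
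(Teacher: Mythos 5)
Your opening reduction is the same as the paper's: since $n\not\equiv 0,1\pmod{p}$, Tables III--IV (Lemma~\ref{LBetaGammaChar}) identify the modular basic and second basic modules with reductions modulo $p$ of $S((n);\de)$ and $S((n-1,1);0)$, so the composition length can be bounded below by the number of ordinary irreducible constituents of the characteristic-zero tensor product. But the two essential steps after that are missing. First, the characteristic-zero decomposition is only gestured at: you assert that three well-chosen inner products "produce three positive multiplicities" without computing anything, and this is the heart of the matter. The paper gets this from Stembridge's explicit result (\cite[Theorem~9.3]{stem}): every $S^\la_\C$ with $\la$ a hook $(n-k,1^k)$, $1\leq k\leq n-2$, or a near-hook $(n-k,2,1^{k-2})$, $2\leq k\leq n-2$, occurs in $S((n);\de)\otimes S((n-1,1);0)$, giving at least $5$ ordinary constituents for $n\geq 5$. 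Incidentally, your extra requirement that the reductions of the constituents stay linearly independent as Brauer characters is unnecessary (each ordinary constituent contributes at least one modular composition factor regardless), and Lemma~\ref{L040925_2} cannot be used as a substitute: it gives linear independence of the classes $[D(\mu)\otimes D(\balpha_n)]$ as $\mu$ varies, and says nothing about how the single product $D(\bbeta_n)\otimes D(\balpha_n)$ decomposes.

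Second, and more seriously, you never address the $0/\pm$ splitting over $\tA_n$, so your numerical target is wrong. The module in the statement is a single product such as $E(\balpha_n;0)\otimes E(\bbeta_n;\pm)$ (for $n$ even), whereas the inner products you propose use the undivided characters $\chi_{T((n))}\chi_{T((n-1,1))}$, i.e.\ the direct sum of the two $\ts_n$-conjugate $\tA_n$-products; three constituents in that sum do not give three in each summand. The paper resolves exactly this point: it proves composition length at least $5$ for $D(\balpha_n;\eps)\otimes D(\bbeta_n;0)$ over $\ts_n$, notes that the two choices of $\eps$ differ by $\sgn$ and that the restriction to $\tA_n$ splits into two $\ts_n$-conjugate summands of equal length, and concludes each summand has length at least $3$. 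To fix your argument you must either import this symmetry argument together with a bound of $5$ (not $3$) at the $\ts_n$ level, or else compute inner products for a single half over $\A_n$, which requires the values of the split spin characters on split classes — something your sketch does not engage with.
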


\begin{proof}
We will freely appeal to Tables III,\,IV and Lemma \ref{lmodules} without further reference. We provide details for the case where $n$ is even, the case where $n$ is odd being similar. For even $n$, 
$$(D(\balpha_{n};\pm)\otimes D(\bbeta_{n},0))\da_{\tA_{n}}\cong (E(\balpha_{n},0)\otimes E(\bbeta_{n};+))\oplus (E(\balpha_{n},0)\otimes E(\bbeta_{n};-)),$$
with $E(\balpha_{n},0)\otimes E(\bbeta_{n};+)$ and $E(\balpha_{n},0)\otimes E(\bbeta_{n};-)$ conjugate under the action of $\ts_{n}$ and so having the same composition length. Moreover, the $\F\ts_n$-modules $D(\balpha_{n};+)\otimes D(\bbeta_{n},0)$ and $D(\balpha_{n};-)\otimes D(\bbeta_{n},0)$ differ by $\sgn$, so have the same composition length. So it suffices to prove that the composition length of any $D(\balpha_{n};\eps)\otimes D(\bbeta_{n},0)$ is at least $5$. By the assumption $n\not\equiv 0,1\pmod{p}$, $D(\balpha_{n};\eps)$ is a reducition modulo $p$ of some $S((n);\de)$ (the choice of $\eps$ and $\de$ is not canonical), while $D(\bbeta_{n};0)$ is a reducition modulo $p$ of $S((n-1,1);0)$, and so it suffices to prove that the composition length of $S((n);\de)\otimes S((n-1,1);0)$ is at least $5$. This follows from \cite[Theorem~9.3]{stem}, which guarantees that 
each $S^\la_\C$ with $\la\in\{(n-k,1^k)\mid 1\leq k\leq n-2\}\cup\{(n-k,2,1^{k-2})\mid 2\leq k\leq n-2\}$ 
is a composition factor of $S((n);\de)\otimes S((n-1,1);0)$. 
\end{proof}

\subsection{Two-row reductions}\label{2rows}
In \S\ref{SSBasic}, we have considered the composition factors of $\bar S(n)$ and $\bar S(n-1,1)$ . We now discuss 
the composition factors of reductions modulo $p$ of more general  two-row representations $S(n-a,a)$. 

For $n\in\Z_{>0}$, we set 
\[m_n:=\max\{\lfloor(n-1)/2\rfloor-\de_{p,3}-\de_{n\equiv p\pmod{2p}},0\}.\]

\begin{Lemma} {\rm \cite[Theorems 1.1,\,1.2,\,1.3]{M3}} \label{LM30} 
Let $n\in\Z_{>0}$. For each integer $a$ satisfying $0\leq a\leq m_n$,  there is exactly one $\mu_{n,a}\in\RP_p(n)$ such that $[\bar S(n-a,a):D(\mu_{n,a})]\neq 0$ and $[\bar S(n-b,b):D(\mu_{n,a})]= 0$ for all $0\leq b<a$. Moreover, setting
$$
\TR_p(n):=\{\mu_{n,a}\mid 0\leq a\leq m_n\},
$$
we have that $
\{D(\mu)\mid \mu\in \TR_p(n)\}
$
is a complete and non-redundant set of composition factors of the reductions modulo $p$ of the two-row irreducible $\cT_n$-supermodules $\{\bar S(n-k,k)\mid 0\leq k<n/2\}$. 
\end{Lemma}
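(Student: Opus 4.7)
The plan is to derive the statement as a direct consequence of the decomposition numbers for two-row spin Specht modules computed in \cite{M3}. First, I would invoke Theorems~1.1, 1.2 and 1.3 of \cite{M3}, which give an explicit combinatorial description of the composition factors (with multiplicities) of each $\bar S(n-a,a)$ in terms of restricted $p$-strict partitions. From these formulas, for each integer $a$ with $0\leq a\leq m_n$, I would read off a distinguished composition factor $D(\mu_{n,a})$ of $\bar S(n-a,a)$ which is ``new,'' in the sense that it does not appear in any $\bar S(n-b,b)$ with $b<a$; concretely, $\mu_{n,a}$ will be the partition indexing the unique maximal constituent in an appropriate dominance or Mullineux order on the list of composition factors.

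Second, I would check uniqueness. Using the triangular shape of the decomposition matrix provided by \cite{M3}, the condition $[\bar S(n-b,b):D(\mu)]=0$ for all $b<a$ combined with $[\bar S(n-a,a):D(\mu)]\neq 0$ pins $\mu$ down to a single restricted $p$-strict partition, yielding the existence and uniqueness of $\mu_{n,a}$.

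Third, I would verify that $\TR_p(n):=\{\mu_{n,a}\mid 0\leq a\leq m_n\}$ exhausts all labels arising among composition factors of $\{\bar S(n-k,k)\mid 0\leq k<n/2\}$ and has no repetitions. Existence of some $\mu_{n,a}$ for every label follows inductively: given any composition factor $D(\mu)$ of some $\bar S(n-k,k)$, pick the smallest $a\leq k$ for which $\mu$ already appears in $\bar S(n-a,a)$; then $\mu=\mu_{n,a}$ by uniqueness. Non-redundancy $\mu_{n,a}\neq\mu_{n,a'}$ for $a\neq a'$ is immediate from the defining ``newness'' property. Finally, I would confirm that the upper bound $m_n=\max\{\lfloor(n-1)/2\rfloor-\delta_{p,3}-\delta_{n\equiv p\pmod{2p}},0\}$ is sharp: the two correction terms exactly record the characteristic~$3$ anomaly and the $n\equiv p\pmod{2p}$ anomaly, where the nominal top constituent of $\bar S(n-a,a)$ coincides with one already produced at a smaller value of $a$, so no new label $\mu_{n,a}$ is created.

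The main obstacle is the combinatorial case analysis for small residues of $n$ modulo $p$ and the exceptional behavior at $p=3$: these are precisely the cases handled in detail across the three theorems of \cite{M3}, and the role of $m_n$ is to package them uniformly. Since the heavy combinatorial work has already been done there, the proof reduces to this careful bookkeeping of how the results in \cite{M3} assemble into the triangular description claimed here.
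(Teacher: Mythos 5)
Your proposal follows essentially the same route as the paper: the paper gives no independent argument here, but simply cites Theorems 1.1--1.3 of \cite{M3} and reads the triangular structure of the two-row decomposition matrices off those results, exactly as you describe. The one point you omit is that \cite{M3} only treats $n\geq p$; for $n<p$ the algebra $\F\ts_n$ is semisimple, so the statement holds trivially with $\mu_{n,a}=(n-a,a)$, and this small case must be noted separately (as the paper does immediately after the lemma).
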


We note that \cite{M3} covers only the cases $n\geq p$, but for $n<p$ the group algebra $\F\ts_n$ is semisimple, and so the lemma clearly holds in that case with $\mu_{n,a}=(n-a,a)$ for all $a$. 

\begin{Corollary} \label{CInverse2R} 
Let $0\leq a\leq m_n$. In the Grothendieck group (with coefficients extended from $\Z$ to $\Q$) we have 
\[[D(\mu_{n,a})]=\sum_{b\leq a}c_{b}[\bar S(n-b,b)]\]
for some coefficients $c_{b}\in\Q$, with
$c_{a}=[\bar S(n-a,a):D(\mu_{n,a})]^{-1}\neq 0.$
\end{Corollary}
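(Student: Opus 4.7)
The plan is to invert a triangular decomposition matrix. By Lemma~\ref{LM30}, the set $\{D(\mu_{n,b})\mid 0\leq b\leq m_n\}$ exhausts all composition factors of the supermodules $\bar S(n-k,k)$ for $0\leq k<n/2$, so in the Grothendieck group (with $\Q$ coefficients) each class $[\bar S(n-b,b)]$ lies in the $\Q$-span of $\{[D(\mu_{n,a})]\mid 0\leq a\leq m_n\}$. Concretely,
\[
[\bar S(n-b,b)]=\sum_{a=0}^{m_n} d_{b,a}\,[D(\mu_{n,a})],\qquad d_{b,a}:=[\bar S(n-b,b):D(\mu_{n,a})]\in\Z_{\geq 0}.
\]

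First, I would observe that the square matrix $M:=(d_{b,a})_{0\leq a,b\leq m_n}$ is lower triangular with nonzero diagonal. Indeed, the defining properties of $\mu_{n,a}$ in Lemma~\ref{LM30} say exactly that $d_{b,a}=0$ whenever $b<a$ (this is the vanishing condition $[\bar S(n-b,b):D(\mu_{n,a})]=0$ for $b<a$) and that $d_{a,a}=[\bar S(n-a,a):D(\mu_{n,a})]\neq 0$.

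Next, I would invert $M$ over $\Q$. Since $M$ is lower triangular with nonzero diagonal, $M^{-1}$ exists over $\Q$, is also lower triangular, and has diagonal entries $(M^{-1})_{a,a}=d_{a,a}^{-1}=[\bar S(n-a,a):D(\mu_{n,a})]^{-1}$. Reading off the $a$-th column (or rather, row, depending on conventions) of $M^{-1}$ yields
\[
[D(\mu_{n,a})]=\sum_{b=0}^{a} c_b\,[\bar S(n-b,b)]
\]
for suitable $c_b\in\Q$, with $c_a=d_{a,a}^{-1}=[\bar S(n-a,a):D(\mu_{n,a})]^{-1}\neq 0$, as claimed.

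There is no real obstacle here; the content is entirely contained in the triangularity built into the definition of $\mu_{n,a}$ in Lemma~\ref{LM30}, so the corollary is a straightforward linear-algebra consequence. The only minor point to verify is that restricting attention to the indices $0\leq a,b\leq m_n$ gives a square system, which is immediate from Lemma~\ref{LM30}.
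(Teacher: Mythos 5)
Your proof is correct and is exactly the intended justification: the paper states the corollary without proof because it is the standard inversion of the lower-triangular (with nonzero diagonal) decomposition matrix $\big([\bar S(n-b,b):D(\mu_{n,a})]\big)_{0\leq b,a\leq m_n}$ whose triangularity is precisely the defining property of $\mu_{n,a}$ in Lemma~\ref{LM30}. Your side remark about the square system is also fine, since $m_n<n/2$ so all $\bar S(n-b,b)$ with $b\leq m_n$ are defined and lie in the span of the $[D(\mu_{n,a})]$, $0\leq a\leq m_n$.
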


By definition, we have $\mu_{n,0}=\balpha_n$ and $\mu_{n,1}=\bbeta_n$. Much more generally:

\begin{Lemma} \label{L2RowExplicit} {\rm \cite[Theorems 1.2,\,1.3]{M3}} Let $n>p$. 
\begin{enumerate}
\item[{\rm (i)}] If $p=3$ and $0\leq a\leq m_n$, then $\mu_{n,a}=\balpha_{n-a}+\balpha_a$.
\item[{\rm (ii)}] If $p>3$ and $0\leq 2a\leq n-1-p-\de_{p\mid a}$, then $\mu_{n,a}=\balpha_{n-a}+\balpha_a$. 
\end{enumerate}
\end{Lemma}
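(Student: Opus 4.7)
\medskip

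Since the result is attributed to \cite[Theorems 1.2, 1.3]{M3}, in the paper itself one would simply cite it. Nevertheless, here is how I would attack the statement from scratch, using only material already introduced in the excerpt.

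The plan is to verify the three things needed to apply the characterization in Lemma~\ref{LM30}. First, under the stated numerical hypotheses one must check that $\nu:=\balpha_{n-a}+\balpha_a$ is genuinely a restricted $p$-strict partition. This is a direct combinatorial check using the explicit shape of $\balpha_m$ from (\ref{bs}): $\balpha_m$ consists of some copies of $p$ followed by a tail of length one or two. The inequality $2a\leq n-1-p-\de_{p\mid a}$ (or the hypothesis $p=3$ with $a\leq m_n$) is precisely what guarantees that when the rows of $\balpha_a$ are added termwise to the rows of $\balpha_{n-a}$, the only place where two non-$p$ rows meet is the last one, so the result is still $p$-strict, and no resulting gap $\nu_r-\nu_{r+1}$ violates restrictedness.

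Second, I would show $[\bar S(n-a,a):D(\nu)]\neq 0$. The natural route is via restriction to $\T_{n-a,a}$. By Lemma~\ref{tworowschar0} (with $b=a$, $c=0$) the composition factor $S((n-a),(a))$ of $\bar S(n-a,a)\da_{\T_{n-a,a}}$ reduces modulo $p$ to $\bar S(n-a)\boxtimes\bar S(a)$, which by Lemma~\ref{LBetaGammaChar} has $D(\balpha_{n-a})\boxtimes D(\balpha_a)$ as a composition factor, hence (via Lemma~\ref{product}) has $D(\balpha_{n-a},\balpha_a)$ as a composition factor. On the other hand, the Shuffle/Littlewood--Richardson-type branching for $D(\nu)\da_{\T_{n-a,a}}$ (iterated Jantzen--Seitz removal along the $p$-residue sequence that builds $\balpha_{n-a}$ in the top and $\balpha_a$ in the bottom) produces $D(\balpha_{n-a},\balpha_a)$ with multiplicity one. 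Matching these, and using Frobenius reciprocity / a multiplicity count, gives $[\bar S(n-a,a):D(\nu)]\neq 0$.

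Third, and this is the main obstacle, one must prove $[\bar S(n-b,b):D(\nu)]=0$ for every $b<a$. The cleanest way is to use the Kleshchev crystal (the operators $\tilde e_i,\tilde f_i$ of \S\ref{SSAddRem}) to compute, for each $i\in I$, the integer $\phi_i(\tilde e_i^{\eps_i(\nu)}\nu)$, and compare with the analogous data forced by the two-row reductions $\bar S(n-b,b)$. Specifically, in the crystal realization of the basic weight space coming from the spin Fock space, each $\bar S(n-b,b)$ lives in a fixed weight string, and one shows that the residue content of $\nu=\balpha_{n-a}+\balpha_a$ places $D(\nu)$ precisely in the $a$-th string and no earlier one. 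For $p=3$ this is a direct verification because $\balpha_m$ has a very rigid shape; for $p>3$ the extra condition $2a\leq n-1-p-\de_{p\mid a}$ is exactly what keeps the residue profile in the predicted place. The minimality part of Lemma~\ref{LM30} then pins down $\mu_{n,a}=\nu$.

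The truly delicate step is the third one: controlling the non-appearance in all smaller two-row Spechts. In the reference \cite{M3} this is done through a careful Jantzen sum / LLT computation, and that is the computation one would have to redo if an independent proof were required.
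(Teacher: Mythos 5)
The paper offers no internal argument for this lemma: it is quoted verbatim from \cite[Theorems 1.2, 1.3]{M3}, and your opening sentence correctly identifies that. Your sketch of an independent proof has the right overall shape (check $\balpha_{n-a}+\balpha_a\in\RP_p(n)$, show it occurs in $\bar S(n-a,a)$, show it does not occur in $\bar S(n-b,b)$ for $b<a$, then invoke the characterization of $\mu_{n,a}$ in Lemma~\ref{LM30}), but as written it does not constitute a proof, for two concrete reasons. First, your positive-multiplicity step misapplies Lemma~\ref{tworowschar0}: restricting $S(n-a,a)$ to $\T_{n-a,a}$ means taking the lemma's parameter $b$ equal to $a$, and then the hypothesis $0\leq a-c<b/2$ with $c=0$ reads $a<a/2$, which fails for $a>0$; moreover the factor produced by the lemma is of the form $S((n-a-c,c),(c,a-c))$, so the constituent $S((n-a),(a))$ you want is not delivered by the lemma at all (taking $c=a$ gives $S((n-2a,a),(a))$ instead). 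The subsequent claim that $D(\balpha_{n-a},\balpha_a)$ occurs with multiplicity one in $D(\nu)\da_{\T_{n-a,a}}$ via ``iterated Jantzen--Seitz removal'' is likewise asserted rather than proved, and nothing in the paper supplies it.

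Second, and decisively, the vanishing statement $[\bar S(n-b,b):D(\nu)]=0$ for all $b<a$ --- which is exactly where the numerical hypotheses $p=3$, respectively $2a\leq n-1-p-\de_{p\mid a}$, do their work --- is only gestured at through an unspecified ``crystal string'' placement and then explicitly deferred to redoing the computation of \cite{M3}. Since that is the entire content of the lemma beyond what Lemma~\ref{LM30} already gives, the proposal leaves the essential step unestablished; it is an outline of how one might organize the argument of \cite{M3}, not an alternative proof. (Note also that Lemma~\ref{L020819} would give the occurrence of $D(\nu)$ in $\bar S(n-a,a)$ much more directly, via $(n-a,a)^\Reg$ as in Lemma~\ref{reglargedif}, than the restriction route you propose; but even with that repair the non-occurrence in smaller two-row Spechts remains unproved.)
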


In general, the explicit description of the individual partitions $\mu_{n,a}$ can be found in \cite[Theorems 1.1, 1.2, 1.3]{M3}.  Most of the time it will be sufficient to just have description of the set $\TR_p(n)$ given in the Lemma~\ref{LM3} below. 
 
For $p> 3$, we define the explicit set of partitions 
\begin{align*}
\TR'_p&:=\{(p^a,b,c)\mid a\geq 0,\ 1= c<b\leq p-2\text{ or }2\leq c<b\leq p-1\}\\
&\quad\ \cup\{(p^a,p-1,b,1)\mid a\geq 0,\,2\leq b\leq p-2\}\\
&\quad\ \cup\{(p^a,p-1,p-2,2,1),(p^a,p-1,p-2,2),(p^a,p-2,2,1)\mid a\geq 0\}.
\end{align*}
Here, when writing a  partition in the form $(p^a,\dots)$ we mean that the part $p$ is repeated $a$ times. We also set $\TR'_3=\varnothing$. Finally, for $n\in \Z_{\geq 0}$, we let 
$$
\TR'_p(n):=\TR'_p\cap\Par(n).
$$

\begin{Lemma} \label{LM3} {\rm \cite[Theorems 1.1, 1.2, 1.3]{M3}} We have 
$$\TR_p(n)=\{\balpha_{n-k}+\balpha_k\mid k=0\ \text{or}\,\ 0< 2k\leq n-p-\de_{p\mid k}\}\sqcup\TR'_p(n).$$
\end{Lemma}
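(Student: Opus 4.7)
The plan is essentially to unpack Lemma~\ref{LM30} together with the explicit formulas for the partitions $\mu_{n,a}$ recorded in \cite[Theorems~1.1, 1.2, 1.3]{M3} (of which Lemma~\ref{L2RowExplicit} already captures the generic cases). Since $\TR_p(n) = \{\mu_{n,a}\mid 0\leq a\leq m_n\}$, it suffices to describe each individual $\mu_{n,a}$ and then collect them.

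First I would split into two cases according to whether $p=3$ or $p>3$. For $p=3$, Lemma~\ref{L2RowExplicit}(i) gives $\mu_{n,a}=\balpha_{n-a}+\balpha_a$ throughout the entire range $0\leq a\leq m_n$. As $\TR'_3=\varnothing$, the claim reduces to verifying that the range of $a$ matches the condition ``$a=0$ or $0<2a\leq n-3-\de_{3\mid a}$'', which is an elementary case analysis depending on $n\bmod 6$ (and using the definition $m_n=\lfloor(n-1)/2\rfloor-1-\de_{n\equiv 3\,(\mathrm{mod}\, 6)}$ in this case). For $p>3$, Lemma~\ref{L2RowExplicit}(ii) covers all $a$ with $0\leq 2a\leq n-1-p-\de_{p\mid a}$, so such $\mu_{n,a}$ contribute exactly the first summand of the right-hand side. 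It remains to identify $\mu_{n,a}$ for the (finitely many) boundary values of $a$ with $2a > n-1-p-\de_{p\mid a}$ but $a\leq m_n$.

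Next I would extract the explicit formula for each such boundary $\mu_{n,a}$ directly from \cite[Theorems 1.2, 1.3]{M3}, and verify that the resulting list is precisely $\TR'_p(n)$. Concretely, the three shapes $(p^a,b,c)$, $(p^a,p-1,b,1)$, and the three exceptional partitions  $(p^a,p-1,p-2,2,1)$, $(p^a,p-1,p-2,2)$, $(p^a,p-2,2,1)$ appearing in the definition of $\TR'_p$ correspond term-by-term to the boundary cases in those theorems, with the parameter $a$ in the shape encoding the number $m$ for which $n=mp+\text{small}$, and the allowed ranges of $b,c$ reflecting the small residue and $\de_{p\mid a}$ correction.

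Finally I would justify the disjointness ``$\sqcup$''. This follows from the distinct shape of the tails: a partition of the form $\balpha_{n-k}+\balpha_k$ has its last two parts dictated by the tails of $\balpha_{n-k}$ and $\balpha_k$, which are of the form $(\dots,p^*, b)$ or $(\dots,p-1,1)$, whereas every partition in $\TR'_p$ has at least two non-$p$-parts strictly smaller than $p-1$, or ends in the pattern $\dots,p-1,b,1$ with $b<p-1$. A short parity/size argument rules out coincidence.

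The main obstacle will be the bookkeeping for the boundary values: keeping track of $n\bmod 2p$, the parity of $n$, and the role of $\de_{p\mid a}$ in the cutoff, and matching the resulting finite list of exceptional $\mu_{n,a}$'s against the six families listed in the definition of $\TR'_p$ without omission or repetition. Once that dictionary is set up, the result is a direct transcription of \cite[Theorems 1.1--1.3]{M3}.
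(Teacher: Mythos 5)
The paper offers no proof of this lemma at all: it is quoted verbatim from \cite[Theorems 1.1, 1.2, 1.3]{M3}, with Lemma~\ref{LM30} supplying the identification $\TR_p(n)=\{\mu_{n,a}\mid 0\leq a\leq m_n\}$, so the "proof" is exactly the transcription exercise you describe — read off the explicit $\mu_{n,a}$ from those theorems (Lemma~\ref{L2RowExplicit} covering the generic range), match the boundary values against the families defining $\TR'_p(n)$, and note disjointness (which follows most easily from the fact that $\balpha_{n-k}+\balpha_k$ has first part $>p$ for $k>0$, while members of $\TR'_p$ have all parts $\leq p$). Your proposal is correct and takes essentially the same approach as the paper.
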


We now obtain some first results on branching $D(\la)\da_{\ts_{n-1}}$ for $\la\in\TR_p(n)$. 

\begin{Lemma} \label{LRedRes}
If $\la\in\TR_p(n)$ and $[D(\la)\da_{\ts_{n-1}}:D(\mu)]\neq 0$, then then $\mu\in\TR_p(n-1)$.
\end{Lemma}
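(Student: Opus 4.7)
The strategy is to reduce the modular branching problem to a characteristic-zero branching problem via the inversion formula in Corollary \ref{CInverse2R}, and then invoke Lemma \ref{LBr0} which tells us that restricting a two-row spin module produces only two-row spin modules (one from each of the two rows).

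More precisely, writing $\la = \mu_{n,a}$ with $0\leq a\leq m_n$, Corollary \ref{CInverse2R} gives an expression
\[
[D(\la)] = \sum_{b=0}^{a} c_b\,[\bar S(n-b,b)]
\]
in the rational Grothendieck group $K_0(\cT_n)\otimes\Q$. Restricting to $\ts_{n-1}$ yields
\[
[D(\la)\da_{\ts_{n-1}}] = \sum_{b=0}^{a} c_b\,[\bar S(n-b,b)\da_{\ts_{n-1}}],
\]
and Lemma \ref{LBr0} (applied in characteristic $0$ and then reduced mod $p$) shows that each $[\bar S(n-b,b)\da_{\ts_{n-1}}]$ is a nonnegative integer combination of the two-row classes $[\bar S(n-1-b,b)]$ and $[\bar S(n-b,b-1)]$. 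Hence $[D(\la)\da_{\ts_{n-1}}]$ lies in the $\Q$-span of classes of two-row $\cT_{n-1}$-supermodules $\bar S(n-1-k,k)$.

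By Lemma \ref{LM30}, the composition factors of such two-row $\bar S(n-1-k,k)$ are precisely the $D(\nu)$ with $\nu\in\TR_p(n-1)$. Therefore $[D(\la)\da_{\ts_{n-1}}]$ lies in the $\Q$-span of $\{[D(\nu)] : \nu\in\TR_p(n-1)\}$ inside $K_0(\cT_{n-1})\otimes\Q$. Since the classes of distinct irreducibles are linearly independent in the rational Grothendieck group, if $D(\mu)$ appears with positive multiplicity in $D(\la)\da_{\ts_{n-1}}$, then $\mu\in\TR_p(n-1)$, as required.

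The only subtlety is a book-keeping one: we must verify that the two-row partitions $(n-1-b,b)$ and $(n-b,b-1)$ of $n-1$ actually index two-row $\cT_{n-1,\C}$-supermodules in the sense of Lemma \ref{LM30}, i.e.\ that they are strict with $k < (n-1)/2$. Since $b\leq m_n\leq\lfloor (n-1)/2\rfloor < n/2$, both constraints are automatic except in the boundary case when $n$ is odd and $b=(n-1)/2$, where $(n-1-b,b)$ fails to be strict; but then $\la_1-\la_2=1$ forces $R'(\la)=\varnothing$ in Lemma \ref{LBr0}, so that summand does not occur and the argument goes through. This is the only point where one needs to be careful, and it requires no real work.
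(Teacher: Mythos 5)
Your argument is correct. The boundary check you flag (the non-strict $(b,b)$ when $n$ is odd and $b=(n-1)/2$) is handled exactly as you say by the condition $\la_1-\la_2>1$ in the definition of $R'(\la)$ in Lemma \ref{LBr0}, and the linear-independence step at the end is sound since the classes of irreducibles form a basis of the Grothendieck group. The paper reaches the same conclusion by a slightly lighter route: instead of inverting via Corollary \ref{CInverse2R} and working with rational combinations, it uses only the ``upward'' half of Lemma \ref{LM30} --- $D(\la)$ is a composition factor of some $\bar S(n-a,a)$, so every composition factor of $D(\la)\da_{\ts_{n-1}}$ is a composition factor of $\bar S(n-a,a)\da_{\ts_{n-1}}$; since reduction modulo $p$ commutes with restriction and Lemma \ref{LBr0} shows the characteristic-zero constituents of $S(n-a,a)\da_{\ts_{n-1}}$ are again two-row, the definition of $\TR_p(n-1)$ gives $\mu\in\TR_p(n-1)$ directly, with no Grothendieck-group inversion, no rational coefficients, and no appeal to linear independence. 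Your version buys nothing extra here, but it is a perfectly valid alternative and the same two ingredients (Lemma \ref{LBr0} and Lemma \ref{LM30}) do all the work in both proofs.
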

\begin{proof}
By definition, $D(\la)$ is a composition factor of $\bar S(n-a,a)$ for some $a$. Since reduction modulo $p$ commutes with the restriction to a subgroup, $D(\mu)$ is a composition factor of $\bar S(\nu)$ for some constituent $S(\nu)$ of $S(n-a,a)\da_{\ts_{n-1}}$. 
By Lemma~\ref{LBr0}, $\nu=(n-a-1,a)$ or $(n-a,a-1)$. Hence $\mu\in 
\TR_p(n-1)$. 
\end{proof}

\begin{Lemma}\label{rectworows2}
Let $n\geq 8$ and $\la\in\TR_p(n)\setminus\{\balpha_n,\bbeta_n\}$. Then there exists $\mu\in\TR_p(n-1)\setminus\{\balpha_{n-1},\bbeta_{n-1}\}$ is a composition factor of $D(\la)\da_{\ts_{n-1}}$.
\end{Lemma}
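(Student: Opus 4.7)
The plan is to identify $\la$ as one of the labels from Lemma~\ref{LM30}. Writing $\la = \mu_{n,a}$ for the unique $0 \leq a \leq m_n$ determined by $\la$, the hypotheses $\la \neq \balpha_n = \mu_{n,0}$ and $\la \neq \bbeta_n = \mu_{n,1}$ force $a \geq 2$. The natural candidate target is then $\mu := \mu_{n-1,a}$, provided $a \leq m_{n-1}$; the defining uniqueness property in Lemma~\ref{LM30} then guarantees $\mu \notin \{\balpha_{n-1},\bbeta_{n-1}\}$, since $a \geq 2$.

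The main multiplicity computation I will carry out is $[D(\mu_{n,a})\da_{\ts_{n-1}}:D(\mu_{n-1,a})] \neq 0$. Using Corollary~\ref{CInverse2R}, in the rational Grothendieck group I write $[D(\mu_{n,a})] = \sum_{b \leq a} c_b\,[\bar S(n-b,b)]$ with $c_a \neq 0$. Restricting to $\ts_{n-1}$ commutes with taking classes, and the characteristic-zero branching rule of Lemma~\ref{LBr0} expresses each $\bar S(n-b,b)\da_{\ts_{n-1}}$ as a strictly positive integer combination of $\bar S(n-b-1,b)$ and (for $b \geq 1$) $\bar S(n-b,b-1)$. Taking the multiplicity of $D(\mu_{n-1,a})$ and invoking Lemma~\ref{LM30}---which enforces $[\bar S(n-1-b',b'):D(\mu_{n-1,a})] = 0$ for $b' < a$---only the term $b = a$ survives, and only through the $\bar S(n-a-1,a)$ summand. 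The surviving contribution equals $c_a \cdot (\textrm{positive branching coefficient}) \cdot [\bar S(n-a-1,a):D(\mu_{n-1,a})]$, which is a product of three nonzero factors.

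The main obstacle is the edge case $a = m_n > m_{n-1}$, in which $\mu_{n-1,a}$ is not defined. The explicit formula for $m_n$ shows $m_n - m_{n-1} \in \{0,1\}$ always, so this can only occur when $a = m_{n-1}+1$, and a short arithmetic check using $n \geq 8$ rules out the possibility $a = 2$, so that $a \geq 3$ in this edge case. I then take $\mu := \mu_{n-1,a-1}$ as an alternative target; it still satisfies $a-1 \geq 2$ and hence $\mu \notin \{\balpha_{n-1},\bbeta_{n-1}\}$. The same Grothendieck-group computation now receives contributions from $b = a-1$ (through $\bar S(n-a,a-1)$) and $b = a$ (through both $\bar S(n-a-1,a)$ and $\bar S(n-a,a-1)$), and the delicate point will be to verify that these contributions do not cancel. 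I expect to handle this either by extracting sign information about the coefficients $c_b$ from an induction following the proof of Corollary~\ref{CInverse2R}, or, alternatively, by a dimension argument using the explicit dimensions from Tables~III and~IV to rule out the possibility that every composition factor of $D(\la)\da_{\ts_{n-1}}$ lies in $\{D(\balpha_{n-1}), D(\bbeta_{n-1})\}$.
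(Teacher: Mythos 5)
Your main case is sound: writing $\la=\mu_{n,a}$ with $a\geq 2$ and, when $a\leq m_{n-1}$, targeting $\mu:=\mu_{n-1,a}$, the vanishing conditions of Lemma~\ref{LM30} at level $n-1$ kill every term of the restricted Grothendieck-group expansion except the one coming from $b=a$ through $\bar S(n-a-1,a)$, and since $c_a=[\bar S(n-a,a):D(\mu_{n,a})]^{-1}>0$ and the branching coefficients in Lemma~\ref{LBr0} are positive, the multiplicity $[D(\la)\da_{\ts_{n-1}}:D(\mu_{n-1,a})]$ is indeed positive (note that $a\leq m_{n-1}\leq\lfloor(n-2)/2\rfloor$ guarantees $n-2a\geq 2$, so the first-row removal really does occur in $R((n-a,a))$). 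However, the edge case $a=m_n=m_{n-1}+1$ is a genuine gap, not a loose end: it is nonempty (e.g.\ $p=7$, $n=9$, $a=4$), and there your multiplicity for the fallback target $\mu_{n-1,a-1}$ is $c_{a-1}\epsilon_1 X+c_a\epsilon_2 X+c_a\epsilon_3 Y$ with $X=[\bar S(n-a,a-1):D(\mu_{n-1,a-1})]>0$, where the sign of $c_{a-1}$ is uncontrolled. Corollary~\ref{CInverse2R} gives no positivity for the lower coefficients (they are entries of the inverse of a decomposition-type matrix and can well be negative), so cancellation cannot be excluded by the means you propose; and the alternative ``dimension argument via Tables~III and~IV'' does not get off the ground, because those tables give only $\dim D(\balpha_{n-1})$ and $\dim D(\bbeta_{n-1})$, while you would also need control of $\dim D(\la)$ for the general label $\mu_{n,m_n}$, which the paper does not provide.

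For comparison, the paper disposes of the whole statement in one line: by Lemma~\ref{LRedRes} every composition factor of $D(\la)\da_{\ts_{n-1}}$ lies in $\TR_p(n-1)$, and the cited result \cite[Lemma 3.7]{KT2} says that for $\la\notin\{\balpha_n,\bbeta_n\}$ the restriction cannot have all its factors basic or second basic; so the burden you are carrying by hand in the edge case is exactly what that external lemma supplies. If you want a self-contained repair within your framework, the more promising route is not sign analysis of the $c_b$ but the normal-node machinery: in the edge case $\la=\mu_{n,m_n}$ is an explicitly known partition (Lemmas~\ref{L2RowExplicit}, \ref{LM3}), and Lemmas~\ref{branching}/\ref{normal} let you exhibit a normal node $A$ with $\la_A\in\TR_p(n-1)\setminus\{\balpha_{n-1},\bbeta_{n-1}\}$ directly, in the spirit of the case analysis carried out in Lemmas~\ref{rectworowsprel} and \ref{rectworows}.
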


\begin{proof}
This follows immediately from Lemma~\ref{LRedRes} and \cite[Lemma 3.7]{KT2}.
\end{proof}

\begin{Lemma}\label{tworowscharp}
Let $0\leq c\leq a\leq m_n$ and $0\leq b\leq n$. If $c\leq m_{n-b}$ and $a-c\leq m_b$ then
\[[D(\mu_{n,a})\da_{\ts_{n-b,b}}:D(\mu_{n-b,c},\mu_{b,a-c})]\neq 0.\]
\end{Lemma}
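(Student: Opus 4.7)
The plan is to extract multiplicities from the rational identity of Corollary~\ref{CInverse2R}, restricted to $\ts_{n-b,b}$, and to use the minimality in the definitions of $\mu_{n-b,c}$ and $\mu_{b,a-c}$ to isolate a single contributing term. By Corollary~\ref{CInverse2R}, in the Grothendieck group of $\T_n$-supermodules we have
\[
[D(\mu_{n,a})]\ =\ \sum_{d\leq a}c_d\,[\bar S(n-d,d)]
\]
with $c_a\neq 0$. Restricting gives
\[
[D(\mu_{n,a})\da_{\ts_{n-b,b}}]\ =\ \sum_{d\leq a}c_d\,[\bar S(n-d,d)\da_{\ts_{n-b,b}}],
\]
so it suffices to show that, upon taking the multiplicity of $D(\mu_{n-b,c},\mu_{b,a-c})$ on the right-hand side, only the term $d=a$ survives and contributes a nonzero amount.

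For each $d\leq a$ I would apply Lemma~\ref{tworowschar0} to $S(n-d,d)\da_{\ts_{n-b,b},\C}$: this is a direct sum of copies of $S((n-b-e,e),(b-f,f))$ where either $e+f=d$, $e<(n-b)/2$, $f<b/2$, or else $e+f<d$. Reducing modulo $p$ and using $\bar S((n-b-e,e),(b-f,f))\cong \bar S(n-b-e,e)\circledast \bar S(b-f,f)$ from Lemma~\ref{product}, the composition factors as $\T_{n-b,b}$-supermodules are precisely the $D(\mu',\nu')$ for composition factors $D(\mu')$ of $\bar S(n-b-e,e)$ and $D(\nu')$ of $\bar S(b-f,f)$. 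In particular, $D(\mu_{n-b,c},\mu_{b,a-c})$ can appear only if $D(\mu_{n-b,c})$ is a composition factor of $\bar S(n-b-e,e)$ and $D(\mu_{b,a-c})$ is a composition factor of $\bar S(b-f,f)$, which by the minimality in the definitions of $\mu_{n-b,c}$ and $\mu_{b,a-c}$ (Lemma~\ref{LM30}) forces $e\geq c$ and $f\geq a-c$, hence $e+f\geq a$. Combining with $e+f\leq d\leq a$ yields $d=a$, $e=c$, $f=a-c$; the hypotheses $c\leq m_{n-b}$ and $a-c\leq m_b$ guarantee $c<(n-b)/2$ and $a-c<b/2$, so this pair $(e,f)$ falls within the range of the ``non-other'' part of Lemma~\ref{tworowschar0} applied to $S(n-a,a)$.

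Consequently
\[
[D(\mu_{n,a})\da_{\ts_{n-b,b}}:D(\mu_{n-b,c},\mu_{b,a-c})]\ =\ c_a\cdot [\bar S(n-a,a)\da_{\ts_{n-b,b}}:D(\mu_{n-b,c},\mu_{b,a-c})],
\]
and the second factor is a positive integer multiple of
$[\bar S(n-b-c,c):D(\mu_{n-b,c})]\cdot[\bar S(b-a+c,a-c):D(\mu_{b,a-c})]$,
each factor of which is nonzero by the defining property of $\mu_{n-b,c}$ and $\mu_{b,a-c}$. Since $c_a\neq 0$, the multiplicity is nonzero as required.

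The only delicate point is the Grothendieck-group bookkeeping for supermodules, in particular keeping careful track of the $\circledast$-vs-$\boxtimes$ factor of two that can arise when both $\mu_{n-b,c}$ and $\mu_{b,a-c}$ have $a_p$-value one. Since we only need nonvanishing of an integer multiplicity, these factors of two are harmless, and I expect no real obstacle beyond careful accounting.
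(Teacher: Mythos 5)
Your proof is correct and follows essentially the same route as the paper, which simply cites Corollary~\ref{CInverse2R} together with Lemmas~\ref{tworowschar0}, \ref{product} and \ref{LM30}; your write-up just makes the minimality argument forcing $d=a$, $e=c$, $f=a-c$ explicit. The handling of the $\circledast$ versus $\boxtimes$ factors of two is indeed immaterial for nonvanishing, as you note.
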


\begin{proof}
This follows from Corollary~\ref{CInverse2R} and Lemmas~\ref{tworowschar0},~\ref{product} and~\ref{LM30}.
\end{proof}

\begin{Lemma} \label{L060125-5}
Let $\la\in\RP_p(n)$ be of the form $\la=((2p)^a,2p-1,p+1,p^{2b},p-1,1)$ with $a,b\in\Z_{\geq 0}$. If $p=3$, we assume in addition that  $b> \de_{2\mid a}$. Then $D(\la)\da_{\ts_{n/2,n/2}}$ has at least three non-isomorphic composition factors. 
\end{Lemma}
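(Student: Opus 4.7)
My plan is to exhibit three pairwise non-isomorphic composition factors of $D(\la)\da_{\ts_{n/2,n/2}}$ and show that their underlying labels $(\mu,\nu)\in\RP_p(n/2)\times\RP_p(n/2)$ are distinct.

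First I would note that $\la=((2p)^a,2p-1,p+1,p^{2b},p-1,1)$ is \emph{not} of the form $\mu_{n,k}$ described in Lemma~\ref{LM3}, since $\la$ begins with the part $2p$ whereas every element of $\TR_p(n)$ starts with a part of size at most $p$ (being either $\balpha_{n-k}+\balpha_k$, whose first part is at most $2p$ only when the leftmost row of both $\balpha$'s is $p$, but in that case no row $2p-1$ appears; or else a member of $\TR'_p(n)$, whose first part is at most $p$). Consequently Lemma~\ref{tworowscharp} is unavailable, and the argument must be more direct. Observe however that $\la$ possesses a rich residue structure: the four parts $2p-1,p+1,p-1,1$ not divisible by $p$ contribute removable nodes of residues $1$ and $\ell=(p-1)/2$ at two different heights each, while the boundaries of the repeated blocks $(2p)^a$ and $p^{2b}$ supply further $0$-normal nodes. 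Using the modular branching rule recorded in Section~\ref{SSAddRem} (and \cite[Ch.~22]{KBook}), every iterated sequence of good-node removals of total length $n/2$ yields a pair $(\mu,\nu)\in\RP_p(n/2)^{\times 2}$ with $D(\mu)\circledast D(\nu)$ appearing as a composition factor of $D(\la)\da_{\ts_{n/2,n/2}}$.

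Second, I would choose three such descent sequences, each removing $n/2=p(a+b+2)$ boxes and ending at a split of $\la$ into two restricted $p$-strict partitions of $n/2$: (a) remove the entire right-hand $p$-column block of shape $\balpha_{p(a+1)}=(p^a,p-1,1)$ first, then descend further from the left block; (b) remove boxes so as to leave the bottom row $1$ and the row $p-1$ intact, descending instead through the middle $p^{2b}$ block; (c) exploit the extra rows in the middle block $p^{2b}$ (permitted by $b\ge 1$, strengthened to $b>\de_{2\mid a}$ in the $p=3$ case) to descend via a third combinatorially distinct route. At each step, the descent is governed by the $i$-good node positions, which differ markedly between the three paths thanks to the staircase-like structure of $\la$ near its top-left and bottom-right corners.

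Third, I would verify pairwise non-isomorphism of the three resulting pairs $(\mu^{(i)},\nu^{(i)})$ by comparing invariants: specifically the row-count profiles of $\mu^{(i)}$ and $\nu^{(i)}$ (or equivalently the multiset of residues of their nodes, using Lemma~\ref{LAGa} to keep track of parities). The hypothesis $b>\de_{2\mid a}$ in the $p=3$ case is used here to guarantee that route (c) is not forced to coincide with (a) or (b). The main obstacle is precisely the combinatorial bookkeeping to ensure distinctness: in small cases the three descents can coincide, and a careful case analysis, together with the crystal-graph perspective on $\tilde e_i$ acting on restricted $p$-strict partitions, will be needed to confirm three distinct endpoints. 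Once distinctness is established, each pair contributes a composition factor to $D(\la)\da_{\ts_{n/2,n/2}}$, and the three factors are non-isomorphic as required.
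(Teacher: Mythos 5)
Your opening claim is false, and it derails the whole proposal. The partition $\la=((2p)^a,2p-1,p+1,p^{2b},p-1,1)$ \emph{is} an element of $\TR_p(n)$: with $d=(a+1)p$ one has $\balpha_{n-d}=(p^{a+2b+2},p-1,1)$ and $\balpha_d=(p^a,p-1,1)$, and adding these componentwise gives exactly $\la$, so $\la=\balpha_{n-d}+\balpha_d=\mu_{n,d}$ by Lemma~\ref{L2RowExplicit} (your assertion that a sum $\balpha_{n-k}+\balpha_k$ with first part $2p$ cannot contain a row $2p-1$ is incorrect: a row $p$ of one summand added to a row $p-1$ of the other produces precisely such a row). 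Consequently Lemma~\ref{tworowscharp} is available, and this is exactly how the paper argues: it applies Lemma~\ref{tworowscharp} with $c=\lfloor d/2\rfloor-1,\lfloor d/2\rfloor,\lfloor d/2\rfloor+1$ to produce the three composition factors $D(\mu_{n/2,c},\mu_{n/2,d-c})$, which are pairwise non-isomorphic by Lemma~\ref{LM30}; the hypothesis $b>\de_{2\mid a}$ when $p=3$ is what keeps these indices in the admissible range. So the "direct" route you adopt is motivated by a misreading of Lemma~\ref{LM3}.

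The alternative argument you sketch also has a structural flaw and is in any case not carried out. A sequence of good- (or normal-) node removals of total length $n/2$ yields, via Lemmas~\ref{branching} and~\ref{normal}, a composition factor $D(\mu)$ of $D(\la)\da_{\ts_{n/2}}$ for a single $\mu\in\RP_p(n/2)$; it does not produce a pair $(\mu,\nu)$ labelling a composition factor of $D(\la)\da_{\ts_{n/2,n/2}}$, because the set of removed nodes is a skew shape, not a partition, so there is no "split of $\la$ into two restricted $p$-strict partitions". The strategy could be repaired by proving instead that $D(\la)\da_{\ts_{n/2}}$ has three non-isomorphic composition factors (this is how the paper handles the neighbouring families in Lemmas~\ref{L060125-1} and~\ref{L060125-2}), but your proposal does not do this: the three descent routes are described only in outline, you never verify that each removed node is normal at the moment of removal (which Lemma~\ref{normal} requires), the distinctness of the three endpoints — which is the entire content of the argument — is explicitly deferred to "a careful case analysis", and the precise way the hypothesis $b>\de_{2\mid a}$ (for $p=3$) enters is never identified. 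As it stands the proof has a genuine gap both in its premise and in its execution.
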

\begin{proof}
Let $d:=(a+1)p$. By Lemma~\ref{L2RowExplicit}, we have $\la=\mu_{n,d}$. 
By Lemma \ref{tworowscharp}, we conclude that 
$$D(\mu_{n/2,\lfloor d/2\rfloor-1},\mu_{n/2,\lceil d/2\rceil+1}),\   D(\mu_{n/2,\lfloor d/2\rfloor},\mu_{n/2,\lceil d/2\rceil}),\  
D(\mu_{n/2,\lfloor d/2\rfloor+1},\mu_{n/2,\lceil d/2\rceil-1})
$$ 
are composition factors of $D(\mu_{n,d})\da_{\ts_{n/2,n/2}}$. 
\end{proof}

\subsection{Regularization}\label{SSReg} 
In \cite[\S2]{BK3}, certain subsets of the nodes are introduced which are called {\em ladders}. Recall the notation $\ttres(s)$ from \S\ref{SSAddRem}. 
Then, for a positive integer $s$, the {\em $s^{\mathrm{th}}$
ladder} $L_s$ is defined as follows. If $\ttres(s)\neq  0$ then
$L_s:= \{(r,s- (r- 1)p\mid  1\leq r\leq\lceil{s/p}\rceil\}$.
If res $\ttres(s)=0$ then $s= mp$ or $mp + 1$ for some $m \in \Z$, and in this case we set 
$L_s:=\{ r,mp- (r-1)p \mid 1]\leq r\leq m\} \cup \{(r,mp + 1- (r- 1)p\mid  1 \leq r \leq m + 1\}$.

Given $\la\in\Par_p(n)$, we identify as usual $\la$ with its Young diagram. Then the {\em regularization} $\la^\Reg$ of $\la$ is the Young diagram obtained from $\la$ by moving the nodes along the ladders to the left as far as they can go, see \cite[\S2]{BK3} for more details. It is always the case that $\la^\Reg\in \RP_p(n)$, see \cite[Proposition 2.1]{BK3}. Moreover, $\la=\la^\Reg$ if and only if $\la\in \RP_p(n)$.

The following `leading composition factor' result follows from  \cite[Theorem 4.4]{BK3} using 
\cite[Theorem 10.8]{BK2} and \cite[Theorem 10.4]{BK4}:

\begin{Lemma}\label{L020819}
Let $\la\in\Par_0(n)$, and denote by $\bar S(\la)$ a reduction modulo $p$ of the irreducible $\C\ts_n$-supermodule $S(\la)$. Then, in the Grothendieck group, we have 
$$
[\bar S(\la)]=2^{(h_p(\la)+a_0(\la)-a_p(\la^\Reg))/2}[D(\la^\Reg)]+\sum_{\mu\lhd\la^\Reg}[D(\mu)].
$$
\end{Lemma}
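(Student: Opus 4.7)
The plan is to derive the identity by combining the three cited results. First, Theorem 4.4 of \cite{BK3} provides the triangularity: all composition factors of $\bar S(\la)$ are of the form $D(\mu)$ with $\mu\unlhd\la^\Reg$, and $D(\la^\Reg)$ occurs with a strictly positive multiplicity. This handles the shape of the expansion and collapses the sum over all other composition factors into $\sum_{\mu\lhd\la^\Reg}[D(\mu)]$; no coefficients need to be tracked there.

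The harder part is to pin down the multiplicity $[\bar S(\la):D(\la^\Reg)]$ exactly. For this I would feed the formula from \cite[Theorem~4.4]{BK3} into the explicit labeling-comparison results \cite[Theorem~10.8]{BK2} and \cite[Theorem~10.4]{BK4}. These let one identify the Grothendieck-group class of $\bar S(\la)$ with the class of a certain projective/standard object in a graded setting, where the leading multiplicity is governed by the type (\,$\Mtype$ vs.\ $\Qtype$\,) of both source and target. The relevant types are: the characteristic~$0$ supermodule $S(\la)$ is of type $\Mtype$ when $a_0(\la)=0$ and of type $\Qtype$ when $a_0(\la)=1$; the characteristic~$p$ supermodule $D(\la^\Reg)$ is of type $\Mtype$ when $a_p(\la^\Reg)=0$ and of type $\Qtype$ when $a_p(\la^\Reg)=1$.

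The exponent $(h_p(\la)+a_0(\la)-a_p(\la^\Reg))/2$ then arises as a sum of two contributions. The term $h_p(\la)/2$ records the extra factors of $2$ that are absorbed when passing from the characteristic~$0$ to the characteristic~$p$ picture along the ladders through nodes of residue $0$, which are governed by the parts of $\la$ divisible by $p$; this is precisely how the Kac-type doubling intervenes in the aforementioned theorems. The correction $(a_0(\la)-a_p(\la^\Reg))/2$ absorbs the possible discrepancy between the types of $S(\la)$ and $D(\la^\Reg)$: passing from a $\Qtype$ object to an $\Mtype$ one (or vice versa) in the Grothendieck group of supermodules forces a compensating factor of $2^{\pm 1/2}$, which together with the $h_p(\la)/2$ contribution must yield an integer—and this integrality is exactly the parity congruence $h_p(\la)+a_0(\la)\equiv a_p(\la^\Reg)\pmod 2$ built into the classification (cf.\ Lemma~\ref{LAGa}).

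Finally, as a sanity check I would verify the formula on the basic case $\la=(n)$, where $\la^\Reg=\balpha_n$ and all four sub-cases of Table~III match the predicted exponent (yielding coefficients $1,1,1,2$ respectively, in agreement with Table~III). The main obstacle is simply the bookkeeping: the cited theorems are stated in various graded/ungraded and super/non-super conventions, and one has to carefully normalize them so that the coefficient of $[D(\la^\Reg)]$ comes out as $2^{(h_p(\la)+a_0(\la)-a_p(\la^\Reg))/2}$ and not as some other power of $2$ differing by a global type-correction.
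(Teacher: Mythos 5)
Your proposal takes essentially the same route as the paper, whose entire proof is precisely the deduction from \cite[Theorem 4.4]{BK3} combined with the labeling/type comparisons of \cite[Theorem 10.8]{BK2} and \cite[Theorem 10.4]{BK4}, with the exponent emerging from exactly the kind of type and $h_p(\la)$ bookkeeping you describe. One small slip in your sanity check: in Table III the doubled multiplicity (i.e.\ $[\bar S(n)]=2[D(\balpha_n)]$) occurs in the case $p\mid n$ with $n$ even, so in the table's row order the coefficients are $1,1,2,1$ rather than $1,1,1,2$.
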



The next lemma shows how to compute the regularisation of a partition $\la\in\RP_0(n)$, provided parts are far enough. In it, for every $m,r\in\Z_{>0}$, we denote by $\balpha_m^i$ the set of nodes obtained by shifting the nodes of the Young diagram $\balpha_m$, defined in (\ref{bs}),  to the right by $p(i-1)$ columns:
$$
\balpha_m^i=\{(r,s+p(i-1))\mid (r,s)\in\balpha_m\}. 
$$


\begin{Lemma}\label{reglargedif}
Let $\la\in\Par_0(n)$ with $\la_r-\la_{r+1}\geq p+\de_{p\mid\la_r}$ for all $r=1,2,\dots,h(\la)-1$. Then
\[\la^\Reg=\sum_{r=1}^{h(\la)}\balpha_{\la_r}=\bigsqcup_{r=1}^{h(\la)}\balpha_{\la_r}^r.\]
\end{Lemma}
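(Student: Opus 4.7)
The plan is to set $\mu:=\sum_{r=1}^{h(\la)}\balpha_{\la_r}$ and verify that $\mu=\la^{\Reg}$ via two checks: (a) $\mu\in\RP_p(n)$, and (b) $\mu$ and $\la$ have the same number of nodes on every ladder $L_s$. These together force $\mu = \la^\Reg$, because $\la^\Reg$ is the unique restricted $p$-strict partition reachable from $\la$ by moving nodes along ladders, a fact which is built into the definition of $\la^\Reg$ and \cite[Proposition 2.1]{BK3}.

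For the equality $\sum_r\balpha_{\la_r}=\bigsqcup_r\balpha_{\la_r}^r$, write $\la_r=pq_r+b_r$ with $0\leq b_r<p$. Then $\balpha_{\la_r}$ consists of $q_r$ full (length-$p$) rows followed by a row of length $b_r$ if $b_r>0$, or $q_r-1$ full rows followed by rows of lengths $p-1$ and $1$ if $b_r=0$. A direct case check (splitting into $p\nmid\la_r$ and $p\mid\la_r$) shows that the hypothesis $\la_r-\la_{r+1}\geq p+\de_{p\mid\la_r}$ is exactly what is needed so that $h(\balpha_{\la_{r+1}})$ does not exceed the number of full rows of $\balpha_{\la_r}$. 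This gap-free condition allows $\balpha_{\la_{r+1}}^{r+1}$ to be attached to the right of $\balpha_{\la_r}^r$ without leaving a gap in any row, and the resulting row lengths add componentwise, giving $\bigsqcup_r\balpha_{\la_r}^r=\sum_r\balpha_{\la_r}$. The same analysis also shows that this common diagram is restricted $p$-strict: each block $\balpha_{\la_r}$ is restricted $p$-strict by inspection, any repeated parts that arise across blocks have value $kp$ for some $k\geq 2$ and hence are divisible by $p$, and the only other sources of consecutive-row differences are the drop at the bottom of each block, which by construction is either $<p$ or equals $p$ with $p\nmid\mu_j$.

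The main obstacle is verifying (b). For each row index $r$, I would show that the multiset of ladders meeting the nodes $(r,1),\ldots,(r,\la_r)$ (row $r$ of $\la$) equals the multiset of ladders meeting the nodes of $\balpha_{\la_r}^r$. This is essentially the statement $(m)^{\Reg}=\balpha_m$ for the one-row partition, translated down by $r-1$ rows; it reduces to a direct computation using that $(r,c)$ lies on the ladder indexed by $v:=c+(r-1)p$. The case $p\mid \la_r$ requires some care: writing $\la_r=pq_r$, the row-$r$ nodes include one with $v=(q_r+r-1)p$ but none with $v=(q_r+r-1)p+1$, while the nodes of $\balpha_{\la_r}^r$ swap these two values, but the two indices label the same ladder since $L_{mp}=L_{mp+1}$, so the per-ladder counts still agree. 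Summing over $r$ gives the ladder-count equality between $\la$ and $\mu$, completing the proof.
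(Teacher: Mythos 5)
Your proposal is correct and follows essentially the same route as the paper's proof: both rest on the per-row ladder-count identity between row $r$ of $\la$ and the shifted block $\balpha_{\la_r}^r$ (the paper's displayed observation, which you spell out including the $L_{mp}=L_{mp+1}$ subtlety), the disjoint stacking of the blocks via the gap hypothesis, and the fact that a partition in $\RP_p(n)$ with the same ladder multiset as $\la$ must equal $\la^\Reg$. The only difference is one of detail: you make the ladder computation and the uniqueness step explicit where the paper simply says ``observe'' and ``it suffices to prove that $\nu$ is a partition in $\RP_p(n)$.''
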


\begin{proof}
For $r=1,\dots,h(\la)$, let $H_r:=\{(r,s)\mid s\in\Z_{>0}\}$. Observe that 
\begin{equation}\label{E150725}
|\la\cap H_r\cap L_s|=|\balpha_{\la_r}^r\cap L_s| \qquad(\text{for all $s$}).
\end{equation}
Moreover, since the rows of each $(\balpha_{\la_r})$ have length at most $p$, we have $\balpha_{\la_r}^r\cap \balpha_{\la_t}^t=\varnothing$ for all $1\leq r\neq t\leq h(\la)$. Hence, setting $\mu:=\sum_{r=1}^{h(\la)}\balpha_{\la_r}$ and $\nu:=\bigsqcup_{r=1}^{h(\la)}\balpha_{\la_r}^r$, for every $a\in\Z_{>0}$, we have $|H_a\cap\mu|=|H_a\cap\nu|$. So, taking into account (\ref{E150725}), it suffices to prove that $\nu$ is a partition in $\RP_p(n)$. This follows from the definition (\ref{bs}). 
\end{proof}

\subsection{Cyclic defect}
Brauer trees of blocks of $\ts_n$ with cyclic defect were described explicitly in \cite[Theorem 4.4]{Mu}. We will need the following  very special results which follows easily from that description.

\begin{Lemma} \label{LMuller} 
We have 
\begin{enumerate}
\item[{\rm (i)}] $\bar S(p+1,2,1)\cong D(p+1,2,1)$.
\item[{\rm (ii)}] If $p\geq 7$ then  $[\bar S(p,2,1):D(p-3,3,2,1)]\neq 0$.
\item[{\rm (iii)}] $\bar S(2p+1,2,1)\cong D(p+2,p+1,1)$. 
\item[{\rm (iv)}] $\bar S(p+2,2,1)\cong D(p+2,2,1)$.
\end{enumerate}
\end{Lemma}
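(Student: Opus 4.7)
The strategy is to split the four assertions according to the defect of the spin block of $\ts_n$ containing $S(\la)$: for (i) and (iv) the block will turn out to have defect zero (hence be semisimple), while for (ii) and (iii) the block has cyclic (weight-one) defect and \cite[Theorem~4.4]{Mu} applies.

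For (i) and (iv), I would first verify that, for $p\geq 5$, the partitions $(p+1,2,1)$ and $(p+2,2,1)$ are their own $\bar p$-cores. The only row of length $\geq p$ is the first, and removing $p$ nodes from it produces a part ($1$ or $2$ respectively) already present, so no single-row bar of length $p$ can be removed. The remaining parts $2,1$ sum to $3<p$, ruling out a combined two-row bar of length $p$ as well. Hence the blocks of $\ts_{p+4}$ and $\ts_{p+5}$ containing $S(p+1,2,1)$ and $S(p+2,2,1)$ have defect zero, so are semisimple, and $\bar S(p+1,2,1)\cong D(p+1,2,1)$, $\bar S(p+2,2,1)\cong D(p+2,2,1)$ follow directly.

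For (ii) and (iii), I would compute the $\bar p$-cores as $(2,1)$ of size $3$ (by removing the entire first row of $(p,2,1)$) and $(p+1,2,1)$ of size $p+4$ (by removing a length-$p$ bar from the first row of $(2p+1,2,1)$, invoking the computation of (i)), so the respective blocks of $\ts_{p+3}$ and $\ts_{2p+4}$ have weight one. I would then appeal to \cite[Theorem~4.4]{Mu} to write down the Brauer tree of each block, locate the vertex labeled by $S(\la)$, and enumerate the edges incident to it; the composition factors of $\bar S(\la)$ are precisely the $D(\mu)$ labeling such edges. For (iii), the partition $(2p+1,2,1)$ regularizes via ladder moves to $(p+2,p+1,1)$, and inspection of the tree shows $S(2p+1,2,1)$ to be a leaf adjacent to a single edge labeled $D(p+2,p+1,1)$, yielding the claimed isomorphism. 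For (ii), the hypothesis $p\geq 7$ ensures $(p-3,3,2,1)\in\RP_p(p+3)$ is a valid (strict, hence restricted $p$-strict) partition with $\bar p$-core $(2,1)$ (by removing the combined two-row bar of total length $(p-3)+3=p$); Müller's description places $D(p-3,3,2,1)$ among the edges incident to $S(p,2,1)$ in the tree, giving $[\bar S(p,2,1):D(p-3,3,2,1)]\neq 0$.

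The main obstacle is purely combinatorial: matching the generic Brauer tree description in \cite{Mu} against the specific vertices and edges labeled by the partitions here, and correctly executing the ladder-move regularization in~(iii). Once the trees are in hand, all four assertions follow by inspection.
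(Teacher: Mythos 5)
Your proposal follows essentially the same route as the paper: (i) and (iv) are handled as defect-zero cases, (ii) and (iii) via M\"uller's Brauer trees, with regularization identifying the modular labels. The only cosmetic differences are that the paper explicitly invokes its regularization result (Lemma~\ref{L020819}) to pin down the labels in (i), (iii), (iv) — which your ``follows directly'' silently uses — and for (ii) it reads the tree at the neighbouring vertex $\bar S(p-3,3,2,1)$, distinguishing $p\geq 11$ from $p=7$ (where $(p-4,4,2,1)$ is not a partition and $\bar S(4,3,2,1)$ is already irreducible), rather than enumerating edges at $S(p,2,1)$ itself.
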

\begin{proof}
(i) Since $\bar S(p+1,2,1)$ has defect zero, it is irreducible and then $\bar S(p+1,2,1)\cong D(p+1,2,1)$ by Lemma~\ref{L020819}. 

(ii) Assume first that $p\geq 11$. By \cite[Theorem 4.4]{Mu} we have that $\bar S((p-3,3,2,1),0)$ has 2 composition factors, one shared with $\bar S((p,2,1);\pm)$ and one with $\bar S((p-4,4,2,1),0)$. We know from Lemma \ref{L020819} that $D(p-3,3,2,1)$ is a composition factor of $\bar S(p-3,3,2,1)$ but not of $\bar S(p-4,4,2,1)$. So $[\bar S(p,2,1):D(p-3,3,2,1)]\neq 0$.

If $p=7$ then $\bar S((4,3,2,1),0)$ has only 1 composition factors and this composition factor is shared with $\bar S((7,2,1);\pm)$. Since $D(4,3,2,1)$ is a composition of $\bar S(4,3,2,1)$, (ii) holds also in this case.

(iii) It follows from \cite[Theorem 4.4]{Mu} that $\bar S(2p+1,2,1)$ is irreducible. Since $(2p+1,2,1)^\Reg=(p+2,p+1,1)$ the claim follows from Lemma~\ref{L020819}. 

(iv) This is a defect zero case. 
\end{proof}

\section{Branching for spin representations}

\subsection{Modular branching rules}
\label{SSBr}
For $1 \leq r<s \leq n$, we define 
$$[r,s]:= (-1)^{s-r-1} \ct_{s-1}\cdots\ct_{r+1}\ct_r\ct_{r+1} \cdots\ct_{s-1}\in\cT_n,$$ 
and for $s=1,\dots,n$, let 
$
\cm_s:=\sum_{r=1}^{s-1}[r,s]\in\cT_n 
$, 
see \cite[\S13.1]{KBook}. Then the elements $\cm_1^2,\dots, \cm_n^2\in\cT_n$ commute, and for a $\cT_n$-supermodule $V$ and a tuple $\bi=(i_1,\dots,i_n)\in I^n$, 
we consider the simultaneous generalized eigenspace
$$
V_\bi:=\{v\in V\mid (\cm_r^2-i_r(i_r-1)/2)^N=0\ \text{for $N\gg0$ and $r=1,\dots,n$}\}.
$$ 

We consider the set of orbits $\Theta_n:=I^n/\s_n$, where the symmetric group $\s_n$ acts on the $n$-tuples $I^n$ by place permutations. Let $\theta\in\Theta_n$. Pick $\bi\in\theta$ and for every $j\in I$ define $\theta_j:=\sharp\{r\mid 1\leq r\leq n\ \text{and}\ i_r=j\}$. Clearly $\theta_i$ is well-defined and the tuple $(\theta_0,\theta_1,\dots,\theta_\ell)$ determines $\theta$. 
Fix $i\in I$. Define $\theta^{+i}\in\Theta_{n+1}$ from $\theta^{+i}_j=\theta_j+\de_{i,j}$. If $\theta_i>0$, define also $\theta^{-i}\in\Theta_{n-1}$ from $\theta^{-i}_j=\theta_j-\de_{i,j}$. 

Given $\theta\in \Theta_n$ and a $\cT_n$-supermodule $V$, we define $V_\theta:=\bigoplus_{\bi\in\theta}V_\bi$. Then $V=\bigoplus_{\theta\in\Theta_n}V_\theta$ as $\T_n$-supermodules, see \cite[Corollary 22.3.10]{KBook}. The summand $V_\theta$ is actually a superblock component of $V$. In particular, for an irreducible $\cT_n$-supermodule $L$, we always have $L=L_\theta$ for some unique $\theta\in\Theta_n$.

Let $V$ be any $\cT_n$-supermodule with $V=V_\theta$ for some $\theta\in\Theta_n$. 
We define the {\em $i$-induction of $V$} to be the $\cT_{n+1}$-supermodule $
\Ind_i V:=(\Ind^{\cT_{n+1}}_{\cT_{n}}V)_{\theta^{+i}}.
$ 
If $\theta_i>0$, we define the {\em $i$-restriction of $V$} to be the $\cT_{n-1}$-supermodule $
\Res_i V:=(\Res^{\cT_{n}}_{\cT_{n-1}}V)_{\theta^{-i}},
$ 
and we set $\Res_i V=0$ of $\theta_i=0$. For a general $\cT_n$-supermodule $V$, we define $\Res_i V:=\bigoplus_{\theta\in\Theta_n}\Res_i (V_\theta)$ and $\Ind_i V:=\bigoplus_{\theta\in\Theta_n}\Ind_i (V_\theta)$. 
By \cite[Lemma 22.3.12]{KBook}, we have
$$
\Res^{\cT_{n}}_{\cT_{n-1}}V=\bigoplus_{i\in I}\Res_i V\quad\text{and}\quad
\Ind^{\cT_{n+1}}_{\cT_{n}}V=\bigoplus_{i\in I}\Ind_i V
$$

For irreducible $\cT_n$-supermodules, we have a lot of useful information about $i$-induction and $i$-restriction.  Recall the combinatorial notions of \S\ref{SSAddRem}.

\begin{Lemma}\label{branching}\cite[Theorems 22.3.4, 22.3.5]{KBook}
Let $\la\in\RP_p(n)$ and $i\in I$. There exist a self-dual $\cT_{n-1}$-supermodule $e_i D(\la)$ and a self-dual $\cT_{n+1}$-supermodule $f_i D(\la)$, unique up to isomorphism, such that 
$$\Res_iD(\la)\cong (e_iD(\la))^{\oplus (1+\de_{i\neq 0}a_p(\la))}\quad\text{and}\quad\Ind_iD(\la)\cong (f_iD(\la))^{\oplus (1+\de_{i\neq 0}a_p(\la))}.$$
Moreover,  $e_iD(\la)\neq 0$ if and only if $\eps_i(\la)>0$, and 
$f_iD(\la)\neq 0$ if and only if $\phi_i(\la)>0$. 
Further, if $\eps_i(\la)>0$ (resp. $\phi_i(\la)>0$) then:
\begin{enumerate}[\rm(i)]
\item $\soc e_iD(\la)\cong\head e_iD(\la)\cong D(\tilde e_i\la)$ (resp. $\soc f_iD(\la)\cong \head f_iD(\la)\cong D(\tilde f_i\la)$), and $\eps_i(\tilde e_i\la)=\eps_i(\la)-1$ (resp. $\phi_i(\tilde f_i\la)=\phi_i(\la)-1$);

\item $[e_iD(\la):D(\tilde e_i\la)]=\eps_i(\la)$ (resp. $[f_iD(\la):D(\tilde f_i\la)]=\phi_i(\la)$);

\item if $[e_iD(\la):D(\mu)]\neq 0$ (resp. $[f_iD(\la):D(\mu)]\neq 0$) and $\mu\neq\tilde e_i\la$ (resp. $\mu\neq\tilde f_i\la$) then $\eps_i(\mu)<\eps_i(\la)-1$ (resp. $\phi_i(\mu)<\phi_i(\la)-1$); 

\item we have even isomorphisms of superspaces \,$\End_{\cT_{n-1}}(e_iD(\la))\simeq\End_{\cT_{n-1}}(D(\tilde e_i\la))^{\oplus \eps_i(\la)}$
(resp. $\End_{\cT_{n+1}}(f_iD(\la))\simeq\End_{\cT_{n+1}}(D(\tilde f_i\la))^{\oplus \phi_i(\la)}$).
\end{enumerate}
\end{Lemma}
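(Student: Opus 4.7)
The lemma collects deep branching results that are proved in detail in \cite[Ch.~22]{KBook}, built on the earlier series of papers \cite{BK, BK2, BK3, BK4}. The plan I would follow is to reduce each assertion to the combinatorics of the crystal graph of the basic module over the twisted affine Kac--Moody algebra of type $B_\ell^{(1)}$ (equivalently $A_{2\ell}^{(2)}$), since under the Brundan--Kleshchev identification the vertices of this crystal are $\RP_p(n)$ for varying $n$ and the Kashiwara operators act exactly by $\tilde e_i$ and $\tilde f_i$ as defined combinatorially via the $i$-signature.

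First I would deal with the structural statement about the multiplicities: if $D(\la)$ is of type $\Mtype$, take $e_i D(\la):=\Res_i D(\la)$; if $D(\la)$ is of type $\Qtype$ and $i\neq 0$, then the odd involution on $D(\la)$ commutes with $\cm_1^2,\ldots,\cm_n^2$ and preserves the block decomposition into the $\cm_s^2$-eigenspaces, but on the $\theta^{-i}$-component it acts as an odd automorphism with nonzero eigenvalue square, forcing $\Res_i D(\la)$ to split evenly into two copies of a self-dual supermodule (cf.\ the discussion of odd involutions in \S\ref{SSSuperSuper}). For $i=0$, the eigenvalue $0(0-1)/2=0$ obstructs this splitting and one sees that $\Res_0 D(\la)$ is itself the canonical module $e_0 D(\la)$. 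The argument for $\Ind_i$ is dual. Self-duality of $e_iD(\la)$ and $f_iD(\la)$ is inherited from self-duality of $D(\la)$ and the fact that $\Res_i$ and $\Ind_i$ commute with $\tau$-duality (this is essentially \cite[Lemma~22.3.13]{KBook}).

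Next, for the identification of the socle/head: using biadjointness of $\Ind_i$ and $\Res_i$ (up to a parity shift), and Frobenius reciprocity,
\[
\Hom_{\cT_{n-1}}(D(\mu),\Res_i D(\la))\cong\Hom_{\cT_n}(\Ind_i D(\mu),D(\la)),
\]
so computing $\soc(\Res_i D(\la))$ amounts to identifying which irreducible $D(\mu)$ admits $D(\la)$ as a quotient of $\Ind_i D(\mu)$. By Lemma~\ref{LM30}-style induction on $n$ combined with the crystal characterisation of $\tilde f_i\mu$ as the unique vertex covering $\mu$ under $f_i$, one arrives at $\soc(\Res_i D(\la))\cong D(\tilde e_i \la)^{\oplus c}$ for $c=1+\de_{i\neq 0}a_p(\la)$; combined with self-duality this yields (i). For (ii) one computes
\[
\dim\Hom_{\cT_{n-1}}(D(\tilde e_i\la),e_iD(\la))
\]
by a similar adjointness argument together with the fact (from the affine Hecke--Clifford side) that the relevant $\cm_s^2$-eigenspace of $f_i D(\tilde e_i \la)$ has dimension exactly $\eps_i(\la)+a_p(\la)\de_{i\ne 0}\eps_i(\la)$; this gives the count $\eps_i(\la)$. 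Assertion (iii) is then read off the crystal graph: any vertex in the image of $\tilde e_i$ other than $\tilde e_i\la$ has strictly smaller $\eps_i$, because $\tilde e_i\la$ is the unique $i$-normal parent of $\la$.

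Finally, for (iv), self-duality of $e_iD(\la)$ and the fact that all composition factors $D(\mu)$ with $\mu\neq\tilde e_i\la$ satisfy $\eps_i(\mu)<\eps_i(\la)-1$ forces, by an inductive argument using $[e_iD(\la):D(\tilde e_i\la)]=\eps_i(\la)$ and (\ref{EEndDDim}), the endomorphism superalgebra to split as $\End_{\cT_{n-1}}(D(\tilde e_i \la))^{\oplus \eps_i(\la)}$. The main obstacle in writing any of this out honestly is the odd-involution/type bookkeeping in step one together with establishing biadjointness between $\Res_i$ and $\Ind_i$ in the super setting with the correct parity twists; for this I would just cite the detailed treatment in \cite[\S22.3]{KBook}, rather than reprove it.
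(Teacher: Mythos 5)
The paper supplies no argument here at all: Lemma~\ref{branching} is quoted verbatim from \cite[Theorems 22.3.4, 22.3.5]{KBook}, so the ``proof'' in the paper is exactly the citation you end with, and in that sense your route coincides with the paper's. A caveat on your surrounding sketch, in case you intended it to carry weight on its own: the step deducing (iv) from self-duality together with the composition multiplicities is not a valid inference. Since $e_iD(\la)$ has simple head $D(\tilde e_i\la)$, one only gets the inequality $\dim\End_{\cT_{n-1}}(e_iD(\la))\leq [e_iD(\la):D(\tilde e_i\la)]\cdot\dim\End_{\cT_{n-1}}(D(\tilde e_i\la))$ (lift an endomorphism along the projective cover); the equality asserted in (iv) is a genuinely stronger statement that in \cite{KBook} comes out of the divided-power/affine Hecke--Clifford machinery, not out of self-duality plus (ii) and (iii). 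Similarly, the doubling $\Res_iD(\la)\cong (e_iD(\la))^{\oplus 2}$ for $i\neq 0$, $a_p(\la)=1$ is not produced by the odd involution of $D(\la)$ alone but by the extra odd (Clifford-type) endomorphism of the $i$-restriction whose square is the nonzero scalar $i(i+1)/2$, which is also where the exceptional behaviour at $i=0$ really comes from. Since you explicitly defer these points to \cite[\S 22.3]{KBook}, the proposal is acceptable as a justification of the lemma, exactly as the paper's bare citation is.
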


\begin{Lemma}\label{n-1}
Let $\la\in\RP_p(n)$ and $i\in I$. Then we have:
\begin{enumerate}
\item[{\rm (i)}] $\dim\End_{\T_{n-1}}(\Res_i D(\la))=\eps_i(\la)(1+\de_{i\neq 0})(1+a_p(\la))$;
\item[{\rm (ii)}] $\dim\End_{\T_{n-1}}(D(\la)\da_{\ts_{n-1}})=\big(\eps_0(\la)+2\eps_1(\la)+\dots+2\eps_\ell(\la)\big)(1+a_p(\la))$. 
\end{enumerate}
\end{Lemma}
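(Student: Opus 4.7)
The plan is to deduce both parts from Lemma~\ref{branching}. For part (i), by Lemma~\ref{branching}(iv) I have an even isomorphism of superspaces
$$\End_{\T_{n-1}}(e_iD(\la)) \simeq \End_{\T_{n-1}}(D(\tilde e_i\la))^{\oplus \eps_i(\la)},$$
so, using (\ref{EEndDDim}), $\dim\End_{\T_{n-1}}(e_iD(\la)) = \eps_i(\la)(1+a_p(\tilde e_i\la))$. Combined with the isomorphism $\Res_i D(\la)\cong (e_iD(\la))^{\oplus k}$ with $k:=1+\de_{i\neq 0}a_p(\la)$ from the same lemma, and the standard identity $\dim\End(M^{\oplus k})=k^2\dim\End(M)$, this gives
$$\dim\End_{\T_{n-1}}(\Res_i D(\la))=k^2\,\eps_i(\la)\,(1+a_p(\tilde e_i\la)).$$
(When $\eps_i(\la)=0$ the module $e_iD(\la)$ vanishes by Lemma~\ref{branching}, so both sides of~(i) are zero and there is nothing to check.)

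Next I would evaluate $a_p(\tilde e_i\la)$ via Lemma~\ref{LAGa}. Since $\tilde e_i\la$ is obtained from $\la$ by deleting a single node of residue $i$, the residue counts $\ga_j$ of $\la$ satisfy $\ga_j(\tilde e_i\la)=\ga_j(\la)-\de_{i,j}$. Lemma~\ref{LAGa} gives $a_p(\la)\equiv \ga_1+\dots+\ga_\ell\pmod{2}$, hence $a_p(\tilde e_0\la)=a_p(\la)$, while $a_p(\tilde e_i\la)=1-a_p(\la)$ for $i\neq 0$. A short case analysis on $i=0$ versus $i\neq 0$ and on $a_p(\la)\in\{0,1\}$ then verifies the identity
$$k^2\,(1+a_p(\tilde e_i\la))=(1+\de_{i\neq 0})(1+a_p(\la))$$
in each of the four cases (for instance, when $i\neq 0$ and $a_p(\la)=1$ one has $k=2$, $a_p(\tilde e_i\la)=0$, so both sides equal $4$). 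This establishes~(i).

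For part~(ii), I would use the block decomposition of the Sergeev restriction described in \S\ref{SSBr}: as $\T_{n-1}$-supermodules,
$$D(\la)\da_{\ts_{n-1}}\;=\;\bigoplus_{i\in I}\Res_i D(\la),$$
and the summands lie in pairwise distinct central-character components of $\T_{n-1}$, indexed by the distinct orbits $\theta^{-i}\in\Theta_{n-1}$. Consequently $\Hom_{\T_{n-1}}(\Res_iD(\la),\Res_jD(\la))=0$ for $i\neq j$, and the total endomorphism algebra decomposes as the direct sum $\bigoplus_{i\in I}\End_{\T_{n-1}}(\Res_iD(\la))$. Summing the formula from~(i) over $i\in I$ and factoring out $1+a_p(\la)$ yields
$$\dim\End_{\T_{n-1}}(D(\la)\da_{\ts_{n-1}})=(1+a_p(\la))\bigl(\eps_0(\la)+2\eps_1(\la)+\dots+2\eps_\ell(\la)\bigr),$$
which is the claim. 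The only real subtlety is the bookkeeping with $a_p(\tilde e_i\la)$ and the squared multiplicity $k^2$ in part~(i); everything else is a formal consequence of the branching machinery and the superblock orthogonality.
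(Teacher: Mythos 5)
Your proof is correct and follows essentially the same route as the paper: Lemma~\ref{branching} together with (\ref{EEndDDim}) gives $\dim\End_{\T_{n-1}}(\Res_iD(\la))=\eps_i(\la)(1+\de_{i\neq 0}a_p(\la))^2(1+a_p(\tilde e_i\la))$, and Lemma~\ref{LAGa} (your explicit case check of $a_p(\tilde e_i\la)$ versus $a_p(\la)$ is exactly what that lemma delivers) collapses this to the stated formula, while (ii) follows from $\Hom_{\T_{n-1}}(\Res_iD(\la),\Res_jD(\la))=0$ for $i\neq j$, just as in the paper.
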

\begin{proof}
(i) By Lemma \ref{branching}, 
\begin{align*}
\dim\End_{\T_{n-1}}(\Res_i D(\la))&=\dim\End_{\T_{n-1}}((e_iD(\la))^{\oplus (1+\de_{i\neq 0}a_p(\la))})
\\
&=(1+\de_{i\neq 0}a_p(\la))^2\dim\End_{\T_{n-1}}(e_iD(\la))
\\
&=\eps_i(\la)(1+\de_{i\neq 0}a_p(\la))^2\dim\End_{\T_{n-1}}(D(\tilde e_i\la))
\\
&=\eps_i(\la)(1+\de_{i\neq 0}a_p(\la))^2(1+a_p(\tilde e_i\la))
\\
&=\eps_i(\la)(1+\de_{i\neq 0})(1+a_p(\la)),
\end{align*}
where we have used (\ref{EEndDDim}) for the fourth equality and Lemma~\ref{LAGa} for the last equality.

(ii) follows from (i), since  $\Hom_{\T_{n-1}}(\Res_i D(\la),\Res_j D(\la))=0$ for $i\neq j$. 
\end{proof}

For the powers of $i$-induction and $i$-restriction on irreducible modules we have the following information, which comes from  \cite[Lemma 22.3.15]{KBook} and Lemma \ref{branching}:

\begin{Lemma}\label{divpowers}
Let $\la\in\RP_p(n)$, $i\in I$, and $r$ be a positive integer. There exist a $\cT_{n-1}$-supermodule $e_i D(\la)$ and a $\cT_{n+1}$-supermodule $f_i D(\la)$, unique up to isomorphism, such that 
\begin{align*}
(\Res_i)^r D(\la)&\cong (e_i^{(r)}D(\la))^{\oplus (r!(1+\de_{i\neq 0})^{\lfloor (r+a_p(\la))/2\rfloor})},\\
(\Ind_i)^r D(\la)&\cong (f_i^{(r)}D(\la))^{(\oplus r!(1+\de_{i\neq 0})^{\lfloor (r+a_p(\la))/2\rfloor})}.
\end{align*}
Moreover, $e_i^{(r)}D(\la)\neq 0$ (resp. $f_i^{(r)}D(\la)\neq 0$) if and only if $\eps_i(\la)\geq r$ (resp. $\phi_i(\la)\geq r$). 
In this case, we have $[e_i^{(r)}D(\la):D(\tilde e_i^r\la)]=\binom{\eps_i(\la)}{r}$ (resp. $[f_i^{(r)}D(\la):D(\tilde f_i^r\la)]=\binom{\phi_i(\la)}{r}$), 
$\eps_i(\tilde e_i^r\la)=\eps_i(\la)-r$ (resp. $\phi_i(\tilde f_i^r\la)=\phi_i(\la)-r$), 
and all other composition factors $D(\mu)$ of $e_i^{(r)}D(\la)$ (resp. $f_i^{(r)}D(\la)$) satisfy $\eps_i(\mu)<\eps_i(\la)-r$ (resp. $\phi_i(\mu)<\phi_i(\la)-r$).
\end{Lemma}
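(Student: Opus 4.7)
The plan is to prove the statement by induction on $r$, defining $e_i^{(r)}D(\la)$ (resp.\ $f_i^{(r)}D(\la)$) recursively so that the multiplicity formula holds, and then identifying it with the divided-power construction from \cite[Lemma 22.3.15]{KBook}. The base case $r=1$ is exactly Lemma~\ref{branching}, where we take $e_i^{(1)}D(\la):=e_iD(\la)$ and $f_i^{(1)}D(\la):=f_iD(\la)$. For the inductive step, I would apply $\Res_i$ to the isomorphism for $(\Res_i)^{r-1}D(\la)$, obtaining a sum of copies of $\Res_i\,e_i^{(r-1)}D(\la)$. Using the general categorification framework (the degenerate affine Hecke--Clifford algebra action on $\bigoplus_n\mod{\T_n}$) from \cite[Ch.~22]{KBook}, one knows that $(\Res_i)^r$ canonically factors as $e_i^{(r)}$ tensored with a certain regular module over the rank-$r$ Hecke--Clifford algebra, which accounts for both the $r!$ factor (the $\s_r$-action) and the power-of-$2$ factor (the Clifford piece). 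This yields the existence, uniqueness, and multiplicity identity in the statement.

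The second step is to verify the composition-factor statements. Iterating Lemma~\ref{branching}(ii), the multiplicity of $D(\tilde e_i^r\la)$ in $(\Res_i)^rD(\la)$ is
\[
\prod_{s=0}^{r-1}\eps_i(\tilde e_i^s\la)\cdot\prod_{s=0}^{r-1}\bigl(1+\de_{i\neq0}\,a_p(\tilde e_i^s\la)\bigr).
\]
Since $\eps_i(\tilde e_i\mu)=\eps_i(\mu)-1$, the first product telescopes to $\eps_i(\la)(\eps_i(\la)-1)\cdots(\eps_i(\la)-r+1)=r!\binom{\eps_i(\la)}{r}$. For the second product, Lemma~\ref{LAGa} shows that $a_p$ flips under $\tilde e_i$ whenever $i\neq 0$, so the values $a_p(\la),a_p(\tilde e_i\la),\ldots,a_p(\tilde e_i^{r-1}\la)$ alternate starting from $a_p(\la)$, and the number of indices $s$ with $a_p(\tilde e_i^s\la)=1$ is exactly $\lfloor(r+a_p(\la))/2\rfloor$. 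Dividing the total by the overall multiplicity $r!(1+\de_{i\neq0})^{\lfloor(r+a_p(\la))/2\rfloor}$ leaves $\binom{\eps_i(\la)}{r}$, as required. For any other composition factor $D(\mu)$, an inductive application of Lemma~\ref{branching}(iii) across the $r$ steps gives $\eps_i(\mu)<\eps_i(\la)-r$; the analogous analysis for $f_i^{(r)}$ follows by a parallel argument (or by duality using the self-duality of $D(\la)$ and of the functors).

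The main obstacle is the careful bookkeeping of the $2$-power factor, which is why one invokes \cite[Lemma 22.3.15]{KBook}: ad-hoc induction cannot easily separate the ``divided-power'' piece from the multiplicities because $\Res_i$ applied to the non-simple module $e_iD(\la)$ does not split cleanly via Lemma~\ref{branching} alone. The cited lemma supplies the required structural decomposition, and once that is in place, the combinatorial identities for $\eps_i$ and the parity of $a_p$ under $\tilde e_i$ (Lemma~\ref{LAGa}) make the counts match. A minor technical point is the uniqueness of $e_i^{(r)}D(\la)$ up to isomorphism: it follows from Krull--Schmidt together with the computed multiplicity of the head $D(\tilde e_i^r\la)$ of $e_i^{(r)}D(\la)$, which singles out this direct summand inside $(\Res_i)^rD(\la)$.
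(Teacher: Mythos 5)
Your proposal is correct and follows essentially the same route as the paper, which gives no independent argument but simply derives the lemma from \cite[Lemma 22.3.15]{KBook} together with Lemma~\ref{branching}; you outsource the divided-power decomposition (existence, uniqueness via Krull--Schmidt, and the multiplicity $r!(1+\de_{i\neq 0})^{\lfloor (r+a_p(\la))/2\rfloor}$) to the same citation and then fill in the combinatorial bookkeeping. Your telescoping of the leading multiplicity and the parity count via Lemma~\ref{LAGa} are correct, with the implicit (and valid) point that Lemma~\ref{branching}(iii), iterated, rules out contributions of $D(\tilde e_i^r\la)$ from non-leading composition factors at intermediate steps.
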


Lemma \ref{branching} gives some information on composition factors of $\Res_iD(\la)$ and $\Ind_iD(\la)$. The next result, which is a rather special case of \cite[Theorem A]{KS}, improves on this.

\begin{Lemma}\label{normal}\cite[Theorem A]{KS}
Let $\la\in\RP_p(n)$. If $A$ is a properly $i$-removable $i$-normal  node of $\la$ and $\la_A\in\RP_p(n-1)$, then $[e_iD(\la):D(\la_A)]\neq 0$.
\end{Lemma}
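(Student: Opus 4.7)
I would prove the lemma by induction on the number $r$ of $i$-normal nodes of $\la$ lying strictly to the right of $A$ along the rim of the Young diagram.

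\textbf{Base case ($r=0$).} If no $i$-normal node lies to the right of $A$, then $A$ is the $i$-good node of $\la$, so $\la_A=\tilde e_i\la$. Lemma \ref{branching}(i),(ii) immediately give $[e_iD(\la):D(\la_A)]=\eps_i(\la)\geq 1$.

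\textbf{Inductive step ($r\geq 1$).} Order the $i$-normal nodes of $\la$ from left to right as $A_1,\dots,A_k$ with $A=A_j$ and $A_k$ the $i$-good node. Let $B:=A_{j+1}$ be the $i$-normal node immediately to the right of $A$. The idea is to study $(\Res_i)^2 D(\la)$, which by Lemma \ref{divpowers} equals $(e_i^{(2)}D(\la))^{\oplus m}$ for some $m>0$. On the one hand, one may compute this module as $\Res_i(\Res_iD(\la))$; its composition factors are accessible from the composition factors of $e_iD(\la)$ via a second application of Lemma \ref{branching}(iii). On the other hand, removing $B$ first turns $A$ into the $i$-good node of the smaller partition $\la_B\in\RP_p(n-1)$, so the base case applied to $\la_B$ gives $[e_iD(\la_B):D((\la_B)_A)]\neq 0$, and $(\la_B)_A=(\la_A)_B$.

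Combining these, $D((\la_A)_B)$ occurs as a composition factor of $e_i^{(2)}D(\la)$. Hence it must arise through some composition factor $D(\mu)$ of $e_iD(\la)$ via $[e_iD(\mu):D((\la_A)_B)]\neq 0$. The goal is then to identify the candidates $\mu$: one uses the residue/block analysis of \S\ref{SSBr} (both removed nodes must have residue $i$), the $\eps_i$-inequalities from Lemma \ref{branching}(iii), and the dominance-type combinatorics of the $i$-signatures of $\la$ and $\la_A$. A careful comparison, exploiting that $\la_A\in\RP_p(n-1)$ and that the $i$-signature of $\la_A$ relative to $B$ is controlled by the original $i$-signature of $\la$, forces $\mu=\la_A$ to be among these candidates, which yields the desired $[e_iD(\la):D(\la_A)]\neq 0$.

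\textbf{Main obstacle.} The technical heart of the argument is the last uniqueness/forcing step in the inductive stage: ruling out that $D((\la_A)_B)$ occurs in $e_i D(\mu)$ only through auxiliary composition factors $\mu\neq\la_A$ of $e_iD(\la)$. This requires a delicate combinatorial analysis of $i$-signatures under successive removals of $i$-normal nodes, together with a precise tracking of which $\mu$ in $\RP_p(n-1)$ actually admit $(\la_A)_B$ as $\mu_{A'}$ for some $i$-normal $A'$. This is precisely the content of \cite[Theorem A]{KS}, and performing it rigorously is significantly more intricate than the outline above suggests.
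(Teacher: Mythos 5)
The paper does not prove this lemma at all: it is quoted directly as \cite[Theorem A]{KS}, whose proof is the principal technical achievement of that memoir (via lowering operators for the supergroup $Q(n)$), so your blind attempt has to be judged as a proposed independent proof. As such it has a genuine gap, and in fact two. First, the inductive framework is already shaky: if $B=A_{j+1}$ is merely the next $i$-normal node to the right of $A$, removing $B$ does not make $A$ the $i$-good node of $\la_B$, because the normal nodes $A_{j+2},\dots,A_k$ to the right of $B$ remain normal in $\la_B$; moreover $\la_B$ need not lie in $\RP_p(n-1)$ and, when $i=0$, $B$ need not even be properly removable (case (R2)), so neither your base case nor your inductive hypothesis can be invoked for $\la_B$ without further argument. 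Second, and decisively, the ``forcing'' step --- showing that the occurrence of $D((\la_A)_B)$ in $e_i^{(2)}D(\la)$ cannot be accounted for entirely by composition factors $D(\mu)$ of $e_iD(\la)$ with $\mu\neq\la_A$ --- is exactly the hard content of the theorem. The tools you propose to use (block/residue considerations, the inequalities $\eps_i(\mu)<\eps_i(\la)-1$ from Lemma~\ref{branching}(iii), and $i$-signature combinatorics) give no control over which non-good $\mu$ actually occur in $e_iD(\la)$, let alone over whether $(\la_A)_B$ can be reached from them; this is precisely why \cite{KS} has to build a substantial new machinery rather than deduce the result from the branching lemmas quoted in this paper. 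You concede this point yourself in your final paragraph, which confirms that what you have is a plausible reduction outline, not a proof; the correct course here is what the paper does, namely to cite \cite[Theorem A]{KS}.
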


The following results will be used 
when studying restrictions from $\ts_n$ to $\ts_{n-2}$ and $\ts_{n-2,2}$.

\begin{Lemma}\label{L051218_3}
Let $i,j\in I$ with $i\not=j$. If $V$ is a $\cT_{n}$-supermodule then $\Ind_j\Res_i V\cong \Res_i\Ind_j V$.
\end{Lemma}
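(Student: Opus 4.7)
The plan is to deduce the isomorphism from a block-wise application of the Mackey-type decomposition for the chain of twisted group algebras $\cT_{n-1}\subset\cT_n\subset\cT_{n+1}$.

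First, since $\Res_i$ and $\Ind_j$ are additive and commute with the block decomposition $V=\bigoplus_{\theta\in\Theta_n}V_\theta$, I may assume $V=V_\theta$ for a fixed $\theta\in\Theta_n$. Under this assumption, $\Ind_j V=(\Ind^{\cT_{n+1}}_{\cT_n}V)_{\theta^{+j}}$ lives in block $\theta^{+j}\in\Theta_{n+1}$, and therefore $\Res_i\Ind_j V$ lives in block $(\theta^{+j})^{-i}\in\Theta_{n-1}$; symmetrically $\Ind_j\Res_i V$ lives in block $(\theta^{-i})^{+j}\in\Theta_{n-1}$. Because the tuple operations $(\cdot)^{+j}$ and $(\cdot)^{-i}$ modify only the $j$-th and $i$-th coordinates respectively, and $i\neq j$, they commute, so both modules are supported in the same block $\rho:=(\theta^{+j})^{-i}=(\theta^{-i})^{+j}$. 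Crucially, $\rho\neq\theta$, as their $i$-th (or $j$-th) coordinates differ.

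The key ingredient is the standard Mackey decomposition for the chain $\cT_{n-1}\subset\cT_n\subset\cT_{n+1}$, which in this setting takes the form
\[
\Res^{\cT_{n+1}}_{\cT_n}\Ind^{\cT_{n+1}}_{\cT_n}V\;\cong\; V\;\oplus\;\Ind^{\cT_n}_{\cT_{n-1}}\Res^{\cT_n}_{\cT_{n-1}}V
\]
as $\cT_n$-supermodules; it is obtained from a double-coset analysis using the fact that $\cT_{n+1}$ is free of rank $n+1$ as a $(\cT_n,\cT_n)$-bisupermodule (cf.\ \cite[Chapter 22]{KBook}). Projecting both sides onto the block $\rho\in\Theta_{n-1}$: on the left-hand side, the projection is by construction $\Res_i\Ind_j V$, since taking the $\theta^{+j}$-block of $\Ind V$ and then the $\rho$-block of its restriction exactly computes $\Res_i\Ind_j V$. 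On the right-hand side, the $\rho$-block of the summand $V=V_\theta$ is zero because $\rho\neq\theta$, while in the decomposition $\Ind^{\cT_n}_{\cT_{n-1}}\Res^{\cT_n}_{\cT_{n-1}}V=\bigoplus_{k,l}\Ind_l\Res_k V$ the summand $\Ind_l\Res_k V$ lies in block $(\theta^{-k})^{+l}$, and requiring this to equal $\rho=(\theta^{-i})^{+j}$ forces $e_l-e_k=e_j-e_i$ as tuples, i.e.\ $(k,l)=(i,j)$ since $i\neq j$. So the $\rho$-block of the right-hand side equals $\Ind_j\Res_i V$, and the claim follows.

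The only non-routine step is the invocation of the Mackey decomposition above; in the ungraded symmetric group setting it is classical, and the twisted/super analog is part of the standard machinery developed in \cite[Chapter 22]{KBook}. Given that, the rest of the argument is purely bookkeeping of block labels, and the hypothesis $i\neq j$ enters exactly at the point where one needs $\rho\neq\theta$ in order to kill the Mackey correction term $V$.
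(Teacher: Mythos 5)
Your proof is correct and follows essentially the same route as the paper: reduce to a single block $V=V_\theta$, use the Mackey isomorphism $\Res^{\cT_{n+1}}_{\cT_n}\Ind^{\cT_{n+1}}_{\cT_n}V\cong V\oplus\Ind^{\cT_n}_{\cT_{n-1}}\Res^{\cT_n}_{\cT_{n-1}}V$ (which the paper obtains by viewing $V$ as an $\F\ts_n$-module and applying the classical group-theoretic Mackey theorem, thereby sidestepping your somewhat informal ``free bisupermodule'' justification, which in the super setting needs a small sign argument on the double coset $\cT_n\ct_n\cT_n$), and then project onto the block labelled $(\theta^{-i})^{+j}$, where the correction term $V$ vanishes precisely because $i\neq j$. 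The only slip is cosmetic: both $\Ind_j\Res_iV$ and $\Res_i\Ind_jV$ are $\cT_n$-supermodules, so the composite label $(\theta^{+j})^{-i}=(\theta^{-i})^{+j}$ lies in $\Theta_n$, not $\Theta_{n-1}$.
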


\begin{proof}
We may assume that $V=V_\theta$ for some $\theta\in\Theta_n$. Considering $V$ as an $\F\ts_n$-module, by Mackey's theorem, we have $V\ua^{\ts_{n+1}}\da_{\ts_n}\cong V\da_{\ts_{n-1}}\ua^{\ts_n}\oplus V$. Hence $$\Res_{\cT_n}^{\cT_{n+1}}\Ind_{\cT_n}^{\cT_{n+1}}V\cong \Ind_{\cT_{n-1}}^{\cT_{n}}\Res_{\cT_{n-1}}^{\cT_{n}}V\oplus V.$$
Let $\eta:=(\theta^{-i})^{+j}$. Then $(\Res_{\cT_n}^{\cT_{n+1}}\Ind_{\cT_n}^{\cT_{n+1}}V)_\eta\cong (\Ind_{\cT_{n-1}}^{\cT_{n}}\Res_{\cT_{n-1}}^{\cT_{n}}V)_\eta\oplus V_\eta$. It remains to notice that $(\Ind_{\cT_{n-1}}^{\cT_{n}}\Res_{\cT_{n-1}}^{\cT_{n}}V)_\eta\cong \Ind_j\Res_i V$, $(\Res_{\cT_n}^{\cT_{n+1}}\Ind_{\cT_n}^{\cT_{n+1}}V)_\eta\cong \Res_i\Ind_j V$, and $V_\eta=0$.   
\end{proof}

\begin{Lemma}\label{L051218_4}
Let $i,j\in I$ with $i\not=j$. If $\la\in\RP_p(n)$ and $\epsilon_j(\la)>0$, then $\epsilon_i(\tilde e_j\la)\geq\epsilon_i(\la)$.
\end{Lemma}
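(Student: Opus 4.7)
The plan is to reformulate the inequality $\eps_i(\tilde e_j\la)\geq\eps_i(\la)$ as the non-vanishing of an iterated $i$-restriction, and then to exploit the commutation $\Res_i\Ind_j\cong\Ind_j\Res_i$ from Lemma~\ref{L051218_3} to transfer this non-vanishing from $D(\la)$ to $D(\tilde e_j\la)$.

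Set $r:=\eps_i(\la)$; the case $r=0$ is trivial, so assume $r\geq 1$. By Lemma~\ref{divpowers} it suffices to show $(\Res_i)^r D(\tilde e_j\la)\neq 0$. The first step is to produce a surjection $\Ind_j D(\tilde e_j\la)\twoheadrightarrow D(\la)$. By Lemma~\ref{branching}, $D(\tilde e_j\la)$ lies in the socle of $e_jD(\la)$ and hence embeds into $\Res_j D(\la)$; after restricting Frobenius reciprocity to the appropriate block components on each side one obtains
\[
\Hom_{\cT_n}\bigl(\Ind_j D(\tilde e_j\la),\,D(\la)\bigr)\ \cong\ \Hom_{\cT_{n-1}}\bigl(D(\tilde e_j\la),\,\Res_j D(\la)\bigr)\ \neq\ 0,
\]
so $D(\la)$ is indeed a quotient of $\Ind_j D(\tilde e_j\la)$.

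Iterating Lemma~\ref{L051218_3} gives an isomorphism of functors $(\Res_i)^r\Ind_j\cong\Ind_j(\Res_i)^r$. Applying this to $D(\tilde e_j\la)$, the exactness of $(\Res_i)^r$ together with the surjection produced above implies that every composition factor of $(\Res_i)^r D(\la)$ appears as a composition factor of $\Ind_j(\Res_i)^r D(\tilde e_j\la)$. Since $\eps_i(\la)=r$, Lemma~\ref{divpowers} ensures $(\Res_i)^r D(\la)\neq 0$, so $\Ind_j(\Res_i)^r D(\tilde e_j\la)\neq 0$, which forces $(\Res_i)^r D(\tilde e_j\la)\neq 0$, as required.

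The only subtle point is the block-bookkeeping when using Frobenius reciprocity: one has to check that the nonzero homomorphism $D(\tilde e_j\la)\hookrightarrow\Res_j D(\la)$ corresponds under adjunction to a homomorphism out of the specific summand $\Ind_j D(\tilde e_j\la)$ of the full induction, not out of some $\Ind_k$ with $k\neq j$. This is immediate once one identifies the central character (the $\theta$-grading from \S\ref{SSBr}) on each side, since $D(\la)$ lives in the block whose label is obtained from that of $D(\tilde e_j\la)$ by adding one $j$.
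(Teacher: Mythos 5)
Your proof is correct and is essentially the paper's argument: the paper also reduces to showing $(\Res_i)^{\eps_i(\la)}D(\tilde e_j\la)\neq 0$ via the commutation $\Res_i\Ind_j\cong\Ind_j\Res_i$ and Lemma~\ref{divpowers}, the only cosmetic difference being that it realizes $D(\la)$ as a submodule of $\Ind_j D(\tilde e_j\la)$ (using $\tilde f_j\tilde e_j\la=\la$ and Lemma~\ref{branching}) rather than as a quotient via Frobenius reciprocity as you do.
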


\begin{proof}
This is well-known and follows easily from the definitions of \S\ref{SSAddRem}. Alternatively, noting that $\tilde f_j\tilde e_j\la=\la$ and using Lemmas \ref{branching} and \ref{L051218_3}, we get 
\[0\not= (\Res_i)^{\epsilon_i(\la)} D(\la)\subseteq (\Res_i)^{\epsilon_i(\la)} \Ind_j D(\tilde e_j\la)\cong \Ind_j(\Res_i)^{\epsilon_i(\la)} D(\tilde e_j\la).\]
In particular $(\Res_i)^{\epsilon_i(\la)} D(\tilde e_j\la)\neq 0$. So the lemma follows from Lemma \ref{divpowers}.
\end{proof}

\begin{Lemma}\label{L101218_2}
Let $\mu\in\RP_p(n-1)$, $i\in I$ and $\eps_i(\mu)>0$. There exists a $\T_{n-2,2}$-supermodule $e_iD(\mu)\circledast D(2)$ such that the following holds:
\begin{enumerate}[\rm(i)] 
\item if $D(\tilde e_i\mu)$ is of type $\Mtype$ then $e_iD(\mu)\circledast D(2)\cong 
e_iD(\mu)\boxtimes D(2)$;

\item if $D(\tilde e_i\mu)$ is of type $\Qtype$ then $(e_iD(\mu)\circledast D(2))^{\oplus 2}\cong
e_iD(\mu)\boxtimes D(2) 
$;

\item $[e_iD(\mu)\circledast D(2):D(\tilde e_i\mu,(2))]=\eps_i(\mu)$;

\item $\soc(e_iD(\mu))\circledast D(2))\cong\head (e_iD(\mu)\circledast D(2))\cong D(\tilde e_i\mu,(2))$;

\item $\dim\End_{\T_{n-2,2}}(e_iD(\mu)\circledast D(2))=\frac{2\eps_i(\mu)}{1+a_p(\tilde e_i\mu)}$;

\item $e_iD(\mu)\circledast D(2)$ is self dual.
\end{enumerate}
\end{Lemma}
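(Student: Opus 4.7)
The plan is to split the argument according to the type of $D(\tilde e_i\mu)$, which is exactly the dichotomy between (i) and (ii). Throughout, I will use that $D((2))$ has type $\Qtype$ (since $a_p((2))=1$, as noted in Example~\ref{ExT2Cl1}), that $D((2))$ is self-dual, and that $\soc e_iD(\mu)\cong\head e_iD(\mu)\cong D(\tilde e_i\mu)$ with $[e_iD(\mu):D(\tilde e_i\mu)]=\eps_i(\mu)$ by Lemma~\ref{branching}(i),(ii), and $\End_{\T_{n-2}}(e_iD(\mu))\simeq\End_{\T_{n-2}}(D(\tilde e_i\mu))^{\oplus\eps_i(\mu)}$ by Lemma~\ref{branching}(iv).

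Case (i): when $a_p(\tilde e_i\mu)=0$, I would simply \emph{define} $e_iD(\mu)\circledast D(2):=e_iD(\mu)\boxtimes D(2)$. Property (i) is then tautological. Since $a_p(\tilde e_i\mu)=0$ and $a_p((2))=1$, Lemma~\ref{product} gives $D(\tilde e_i\mu)\boxtimes D((2))\cong D(\tilde e_i\mu,(2))$, so (iii) and (iv) are immediate from the socle/head/multiplicity data for $e_iD(\mu)$ recalled above. For (v), combine $\dim\End_{\T_{n-2}}(e_iD(\mu))=\eps_i(\mu)(1+a_p(\tilde e_i\mu))=\eps_i(\mu)$ with the standard factorization $\End_{A\otimes B}(V\boxtimes W)\cong\End_A(V)\otimes\End_B(W)$ and $\dim\End_{\T_2}(D(2))=1+a_p((2))=2$ via (\ref{EEndDDim}). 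Property (vi) follows since both tensor factors are self-dual.

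Case (ii): when $a_p(\tilde e_i\mu)=1$, the module $D(\tilde e_i\mu)$ is of type $\Qtype$, and the hypotheses of Lemma~\ref{L071218_3} are met for $V=e_iD(\mu)$ and $D=D(\tilde e_i\mu)$ (since $\head V\cong D$, $[V:D]=\eps_i(\mu)$, and $\End_{\T_{n-2}}(V)\simeq\End_{\T_{n-2}}(D)^{\oplus[V:D]}$). Hence $e_iD(\mu)$ admits an odd involution. Since $D(2)$ also admits an odd involution (type $\Qtype$), Lemma~\ref{L071218_4} produces a $\T_{n-2,2}$-supermodule $e_iD(\mu)\circledast D(2)$ with $e_iD(\mu)\boxtimes D(2)\cong(e_iD(\mu)\circledast D(2))^{\oplus 2}$, which is (ii).

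To deduce (iii)--(vi) in Case (ii), I will first compute them on $e_iD(\mu)\boxtimes D(2)$ using Lemma~\ref{product} (now $D(\tilde e_i\mu)\boxtimes D((2))\cong D(\tilde e_i\mu,(2))^{\oplus 2}$, as both factors are $\Qtype$), giving  $[e_iD(\mu)\boxtimes D(2):D(\tilde e_i\mu,(2))]=2\eps_i(\mu)$, $\soc(e_iD(\mu)\boxtimes D(2))\cong\head(e_iD(\mu)\boxtimes D(2))\cong D(\tilde e_i\mu,(2))^{\oplus 2}$, $\dim\End=2\eps_i(\mu)\cdot 2=4\eps_i(\mu)$, and self-duality. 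Dividing each of these invariants by $2$ via Krull--Schmidt applied to $(e_iD(\mu)\circledast D(2))^{\oplus 2}\cong e_iD(\mu)\boxtimes D(2)$ then yields (iii)--(vi). The only mildly delicate point, which is the main technical obstacle, is justifying the Krull--Schmidt halving for the socle and head (property (iv)), but this is routine since a direct sum $M\oplus M$ has simple socle/head of multiplicity two precisely when $M$ has simple socle/head.
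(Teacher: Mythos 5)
Your proposal is correct and follows essentially the same route as the paper: define $e_iD(\mu)\circledast D(2)$ as the outer tensor product in the type $\Mtype$ case, and in the type $\Qtype$ case verify the hypotheses of Lemma~\ref{L071218_3} via Lemma~\ref{branching}(ii),(iv) to get an odd involution and invoke Lemma~\ref{L071218_4}, then read off (iii)--(vi) from the corresponding invariants of $e_iD(\mu)\boxtimes D(2)$ (socle/head via Lemmas~\ref{branching} and~\ref{product}, endomorphism dimensions via the factorization, self-duality) and halve by Krull--Schmidt in the $\Qtype$ case. The only cosmetic difference is that the paper phrases the halving uniformly with the factor $(1+a_p(\tilde e_i\mu))$ rather than splitting into two cases, which changes nothing of substance.
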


\begin{proof}
From Lemma~\ref{branching}, $e_iD(\mu)$ is a self-dual $\T_{n-2}$-supermodule with $\soc e_iD(\mu)\cong\head e_iD(\mu)\cong D(\tilde e_i\mu)$, $[e_iD(\mu): D(\tilde e_i\mu)]=\eps_i(\la)$ and 
$$\End_{\T_{n-2}}(e_iD(\mu))\simeq\End_{\T_{n-2}}(D(\tilde e_i\mu))^{\oplus \eps_i(\la)}.$$ 
If $D(\tilde e_i\mu)$ is of type $\Mtype$, we set $e_iD(\mu)\circledast D(2):= 
e_iD(\mu)\boxtimes D(2)$, so (i) holds. If $D(\tilde e_i\mu)$ is of type $\Qtype$, then by Lemma~\ref{L071218_3}, $e_iD(\mu)$ admits an odd involution, so Lemma~\ref{L071218_4} yields a $\T_{n-2,2}$-supermodule $e_iD(\mu)\circledast D(2)$ such that (ii) holds. Part (iii) also follows. 

By Lemmas \ref{branching} and \ref{product},
\[\soc(e_iD(\mu)\boxtimes D(2))\cong \soc(e_iD(\mu))\boxtimes D(2)\cong D(\tilde e_i\mu)\boxtimes D(2)\cong D(\tilde e_i\mu,(2))^{\oplus 1+a_p(\tilde e_i \mu)},
\]
and similarly $\hd(e_iD(\mu)\boxtimes D(2))\cong D(\tilde e_i\mu,(2))^{\oplus 1+a_p(\tilde e_i(\mu))}.$
So (iv) follows from (i) and (ii).

To prove (v), using (i) and (ii), we get
\begin{align*}
\dim\End_{\T_{n-2,2}}(e_iD(\mu)\circledast D(2))&=(1+a_p(\tilde e_i\mu))^{-2}\dim\End_{\T_{n-2,2}}(e_iD(\mu)\boxtimes D(2))\\
&=(1+a_p(\tilde e_i\mu))^{-2}\dim\End_{\T_{n-2}}(e_iD(\mu))\cdot \dim\End_{\T_2}(D(2))\\
&=(1+a_p(\tilde e_i\mu))^{-2}\cdot \eps_i(\mu)\cdot \dim\End_{\T_{n-2}}(D(\tilde e_i\mu))\cdot 2
\\&=
\frac{2\eps_i(\mu)}{1+a_p(\tilde e_i\mu)}.
\end{align*}

Finally, note that $e_iD(\mu)\boxtimes D(2)$ is self-dual. This implies (vi) by (i),(ii) and Krull-Schmidt. 
\end{proof}

\subsection{Composition factors of some explicit restrictions}

\begin{Lemma}\label{branchingend}
Let $\la\in\RP_p(n)$, $1\leq r\leq h(\la)$, and $\la_r=ap+b$ for integers $a,b$ with $0\leq b<p$. Define
\[\mu:=\left\{\begin{array}{ll}
(\la_1,\ldots,\la_r,(a-1)p+b,(a-2)p+b,\ldots,b)&b>0,\\
(\la_1,\ldots,\la_r,(a-1)p+1,(a-2)p+1,\ldots,1)&b=0.
\end{array}\right.\]
Then $D(\mu)$ is a composition factor of $D(\la)\da_{\ts_{|\mu|}}$.
\end{Lemma}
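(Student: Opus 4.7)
I will prove the lemma by induction on $|\la|-|\mu|$. For the base case $|\la|=|\mu|$, inspection of the defining formula for $\mu$, together with the restricted $p$-strict condition on $\la$ (which forces $\la_{h(\la)}<p$, hence $a=0$ when $r=h(\la)$), gives $\la=\mu$. Then $D(\mu)=D(\la)$ is trivially a composition factor of $D(\la)\da_{\ts_{|\mu|}}=D(\la)$.

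For the inductive step $|\la|>|\mu|$, the core claim is the existence of a node $A$ of $\la$ lying in some row $k>r$ such that (i) $A$ is properly $i$-removable for $i=\ttres(A)$, (ii) $\la_A\in\RP_p(n-1)$, and (iii) $A$ is $i$-normal in $\la$. Given such $A$, Lemma \ref{normal} yields that $D(\la_A)$ is a composition factor of $e_iD(\la)\subseteq \Res_iD(\la)\subseteq D(\la)\da_{\ts_{n-1}}$. Because $A$ lies strictly below row $r$, the first $r$ parts of $\la_A$ coincide with those of $\la$; in particular $(\la_A)_r=\la_r=ap+b$, so the formula applied to $(\la_A,r)$ produces the same partition $\mu$. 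The inductive hypothesis applied to $(\la_A,r)$ then gives $D(\mu)$ as a composition factor of $D(\la_A)\da_{\ts_{|\mu|}}$, and transitivity of restriction completes the step.

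To locate $A$, I scan the corners of $\la$ in rows $k>r$ from the bottom-right upward and select the first whose removal preserves the restricted $p$-strict condition. The verification that this corner is $i$-normal proceeds via the reduced $i$-signature along the rim. The \textbf{main obstacle} is exactly this combinatorial verification: the naive candidate $(h(\la),\la_{h(\la)})$ can fail (ii). For instance, with $\la=(8,5,3,1)$ and $p=3$, removing $(4,1)$ yields $(8,5,3)$, which is not restricted (last two parts differ by $p$ while $p\mid 3$); one must instead remove $(3,3)$ and verify it is $0$-normal from the reduced $0$-signature. The argument therefore requires a careful case analysis separating (a) whether $\la_{h(\la)}=1$ or $\la_{h(\la)}>1$, (b) whether consecutive parts of $\la$ differ by exactly $p$ with $p$ dividing the larger (which forces the selection rule to skip corners), and (c) the sub-cases $b>0$ and $b=0$ in the definition of $\mu$; in each sub-case one must exhibit the required $i$-normal corner explicitly and confirm $i$-normality via the rim signature.
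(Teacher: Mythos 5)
Your overall strategy is the same as the paper's. The paper defines the \emph{special node} of a partition to be the lowest removable node $A$ with $\la_A\in\RP_p(n-1)$, asserts that the special node is always normal, invokes Lemma~\ref{normal}, and then observes that successively removing special nodes takes $\la$ down to $\mu$. Your induction on $|\la|-|\mu|$ is a one-step-at-a-time repackaging of exactly this: your selection rule (the lowest corner in a row $>r$ whose removal stays in $\RP_p$) coincides with the paper's special node whenever such a corner exists, and the inductive hypothesis replaces the paper's "observe the removal sequence reaches $\mu$".

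The problem is that, as written, your proposal is a plan rather than a proof. Once Lemma~\ref{normal} is quoted, the entire content of the lemma is the combinatorial claim that whenever $\la\neq\mu$ there is a properly removable node $A$ in a row strictly below $r$ with $\la_A\in\RP_p(n-1)$ which is moreover normal; you name this as "the main obstacle" and defer it to "a careful case analysis" that you never carry out, so nothing is actually verified (the paper disposes of it with the assertion that the special node is always normal, together with the observation that iterated removal passes through $\mu$). A second, smaller issue is the base case: your parenthetical argument only treats $r=h(\la)$, but $|\la|=|\mu|$ can occur with $r<h(\la)$ (e.g.\ $\la=(p+1,1)$, $r=1$). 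The correct justification — which your induction needs anyway, both to know $|\la|-|\mu|\ge 0$ and to know that the process terminates at $\mu$ itself — is that the restricted $p$-strict condition forces $\la_{r+j}\ge (a-j)p+b$ (resp.\ $(a-j)p+1$ when $b=0$) for all $j$, so $\mu\subseteq\la$ row by row with equality of the first $r$ rows, whence $|\la|=|\mu|$ implies $\la=\mu$. Supplying the existence/normality verification (or the paper's two asserted facts) is what would turn your outline into a complete proof.
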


\begin{proof}
We say that $A$ is the special node for $\la$ if $A$ is the lowest  removable node of $\la$ such that $\la_A\in\RP_p(n-1)$. Note that the special node is always normal, so $D(\la_A)$ is a composition factor of $D(\la)\da_{\ts_{n-1}}$ by 
Lemma \ref{normal}. It remains to observe that one can obtain $\mu$ from $\la$ by successively removing special nodes.
\end{proof}

Recall from Lemmas~\ref{LM30} and \ref{LM3} the set of partitions  $\TR_p(n)$ which label the composition factors of reductions modulo $p$ of the irreducible $\C\ts_n$-supermodules  labeled by  two-row partitions.  

\begin{Lemma}\label{rectworowsprel}
Suppose that $p\geq 5$ and $n\geq 10$. Let $\nu\in\TR_p(n-1)$, and suppose that $B\neq (1,2p+1)$ is an $i$-cogood node for $\nu$ such that $\la:=\nu^B\not\in \TR_p(n)$. 
Then $D(\la)\da_{\ts_{n-1}}$ has a composition factor $D(\mu)$ with $\mu\in \RP_p(n-1)\setminus \TR_p(n-1)$. 
\end{Lemma}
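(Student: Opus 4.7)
The plan is to produce an explicit composition factor $D(\mu)$ of $D(\la)\da_{\ts_{n-1}}$ outside of $\TR_p(n-1)$. By Lemma~\ref{branching}, since $B$ is the $i$-cogood node for $\nu$, we have $\tilde e_i\la=\nu$, so $D(\nu)$ is already a composition factor of $e_iD(\la)\subseteq\Res_iD(\la)$. However, $\nu\in\TR_p(n-1)$ by hypothesis, so this composition factor does not suffice. The goal is to exhibit a second, distinct normal node of $\la$ whose removal lands outside $\TR_p(n-1)$, and then invoke Lemma~\ref{normal}.

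First I would use Lemma~\ref{LM3} to split the discussion according to the form of $\nu\in\TR_p(n-1)$: either $\nu=\balpha_{n-1-k}+\balpha_k$ for some $k$ with $k=0$ or $0<2k\leq n-1-p-\de_{p\mid k}$, or $\nu\in\TR'_p(n-1)$, which is the explicit finite family listed in Section~\ref{2rows}. For each $\nu$, I would enumerate its $i$-addable nodes and isolate the $i$-cogood one for each admissible residue $i\in I$; this produces a short list of candidates $\la=\nu^B$. Among these, I would discard those for which $\la\in\TR_p(n)$ (again readable from Lemma~\ref{LM3}) and the single forbidden case $B=(1,2p+1)$, leaving the configurations relevant to the lemma.

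For each remaining $\la$, the task reduces to exhibiting a properly $j$-removable $j$-normal node $A$ of $\la$ (for some $j\in I$) such that $\la_A\in\RP_p(n-1)$ and $\mu:=\la_A\notin\TR_p(n-1)$; Lemma~\ref{normal} then gives $[e_jD(\la):D(\mu)]\neq 0$, so $D(\mu)$ is a composition factor of $D(\la)\da_{\ts_{n-1}}$. In practice, the candidate $A$ should lie in a row different from $B$, so that removing it perturbs $\la$ into a shape whose rows no longer match either $\balpha_{n-k}+\balpha_k$ for any $k$ nor any member of the explicit list $\TR'_p(n-1)$. Verifying $\mu\notin\TR_p(n-1)$ is then a direct comparison of row lengths against the classification.

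The main obstacle is the combinatorial bookkeeping. The family $\TR'_p(n-1)$ contains small partitions with irregular shapes, leading to a number of boundary cases where $\la$ has very few rows and too few normal nodes to choose from, or where the obvious choice of $A$ happens to land back in $\TR_p(n-1)$. The excluded case $B=(1,2p+1)$ signals precisely one such degenerate configuration (in which $\la$ has first row $2p+1$ and every natural candidate for $A$ keeps $\la_A$ inside $\TR_p(n-1)$, as one sees using Lemma~\ref{LMuller}); confirming that no other configurations need to be excluded beyond this single one is the delicate part of the argument. The assumption $n\geq 10$ and $p\geq 5$ ensures enough room to always locate a suitable $A$ outside the exceptional configuration.
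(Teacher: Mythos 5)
Your overall strategy coincides with the paper's main line: classify $\nu$ via Lemma~\ref{LM3}, enumerate the cogood nodes $B$, and for each resulting $\la\notin\TR_p(n)$ try to exhibit a properly $j$-removable $j$-normal node $A$ with $\la_A\in\RP_p(n-1)\setminus\TR_p(n-1)$, then quote Lemma~\ref{normal}. However, there is a genuine gap in how you dispose of the boundary cases. You assert that the only configuration where this strategy breaks down is the excluded case $B=(1,2p+1)$, and that Lemma~\ref{LMuller} is what identifies that case as degenerate. Neither claim is right. The hypothesis $B\neq(1,2p+1)$ simply removes that configuration from consideration (it is never analyzed), whereas there are configurations \emph{inside} the scope of the lemma for which no suitable normal node exists at all. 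Concretely, take $\nu=(p,2,1)\in\TR'_p(p+3)$ with $B=(1,p+1)$, so $\la=(p+1,2,1)\notin\TR_p(p+4)$: its only $0$-normal node is $(1,p+1)$, whose removal gives $(p,2,1)\in\TR_p(n-1)$ again, and its $1$-normal node $(2,2)$ gives $(p+1,1,1)$, which is not $p$-strict, so Lemma~\ref{normal} cannot be applied. Similarly, for $\nu=\balpha_{2p+1}+\balpha_2=(p+2,p,1)$ with $B=(2,p+1)$ one gets $\la=(p+2,p+1,1)\notin\TR_p(2p+4)$, and the only normal node with $p$-strict residue partition is $(2,p+1)$, whose removal lands back on $\nu\in\TR_p(n-1)$. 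Your reduction "find a normal node $A$ outside $\TR$" therefore fails in these cases, and your proposal offers no replacement argument for them.

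What is missing is precisely the "other trick" the paper uses there: the cyclic-defect identifications of Lemma~\ref{LMuller} (e.g.\ $\bar S(p+1,2,1)\cong D(p+1,2,1)$ and $\bar S(2p+1,2,1)\cong D(p+2,p+1,1)$), combined with characteristic-zero branching (Lemma~\ref{LBr0}) and Lemma~\ref{LRedRes}. For $\la=(p+1,2,1)$ one restricts $\bar S(p+1,2,1)$ one step and finds $D(p-3,3,2,1)$ inside $\bar S(p,2,1)$ with $(p-3,3,2,1)\notin\TR_p(n-1)$; for $\la=(p+2,p+1,1)$ one restricts further down to $\ts_{p+5}$, finds $D(p+2,2,1)\cong\bar S(p+2,2,1)$ with $(p+2,2,1)\notin\TR_p(p+5)$, and then concludes by Lemma~\ref{LRedRes} that already $D(\la)\da_{\ts_{n-1}}$ must have a composition factor outside $\TR_p(n-1)$. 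Without this second mechanism (or an equivalent), the case analysis you outline cannot be completed.
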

\begin{proof}
By Lemma~\ref{normal}, it suffices to show that for some $j$ there is a properly $j$-removable $j$-normal node $A$ of $\la$ such that $\la_A\in \RP_p(n-1)\setminus \TR_p(n-1)$. We go through different cases and show that most of the time this can be done. When not, we apply some other tricks. 

{\em Case 1:} $\nu=\balpha_{n-1}$. By \cite[Theorem 3.6(iii)]{KT} and Lemma~\ref{branching}, we have $\la\in\{\balpha_{n},\bbeta_{n}\}\subseteq \TR_p(n)$, giving a contradiction.

{\em Case 2:} $\nu=\balpha_{n-1-k}+\balpha_k$ for $0<2k\leq n-1-p-\de_{p\mid k}$. There are four subcases depending on whether $p$ divides $k$ or $p$ divides $n-1-k$. We provide details for the most difficult case where $p\nmid k$ and $p\nmid (n-1-k)$. In this case we have $\nu=((2p)^a,p+c,p^b,d)$ with $0< c,d<p$, and $b>0$ if $c>d$. Then one of the following happens: (a) $c<p-1$ and $\la=((2p)^a,p+c+1,p^b,d)$, (b) $b>0$, $c>1$ and $\la=((2p)^a,p+c,p+1,p^{b-1},d)$, (c) $d<p-1$ and $\la=((2p)^a,p+c,p^b,d+1)$, (d) $d>1$ and $\la=((2p)^a,p+c,p^b,d,1)$.  The cases (a) and (c) are ruled out because in those cases we have $\la\in\TR_p(n)$ or $\la\not\in\RP_p(n)$ (this last case happens if $c=d$ and $b=0$).

In the case (b), by assumption, $B=(a+2,p+1)$ is $0$-cogood for $\nu$, so it is conormal for $\nu$, whence $c=p-1$ or $a=0$. If $c=p-1$, we have $\la\in\TR_p(n)$, so this case is ruled out. Thus $a=0$. If $d\geq 2$ then we can take the normal node $A$ of $\la$ to be $(2+b,d)$. If $d=1$ and $b\geq 2$, we can take $A=(b+1,p)$. Let $b=d=1$, i.e. $\la=(p+c,p+1,1)$. If $c\geq 3$, we can take $A=(1,p+c)$. If $c=2$, we have $D(\la)\cong \bar S(2p+1,2,1)$ by Lemma~\ref{LMuller}(iii). So in the Grothendieck group we have  
$
[D(\la)\da_{\ts_{p+5}}]=[\bar S(2p+1,2,1)\da_{\ts_{p+5}}]
$ contains $[\bar S(p+2,2,1)]$ as a summand thanks to Lemma~\ref{LBr0}. But $[\bar S(p+2,2,1)]=[D(p+2,2,1)]$ by Lemma~\ref{LMuller}(iv). But $(p+2,2,1)\not\in\TR_p(p+5)$, so by Lemma~\ref{LRedRes}, $[D(\la)\da_{\ts_{n-1}}]$ must have a composition factor $D(\mu)$ with $\mu\not\in\TR_p(n-1)$. 

In the case (d), we must have $d<p-1$ since $d=p-1$ implies $\la\in\TR_p(n)$. If $3\leq d\leq p-2$ we can take the normal node $A$ of $\la$ to be $(a+b+1,d-1)$. If $d=2$, then $b=0$ since otherwise $(a+b,p)$ is $0$-normal for $\la$ which contradicts the assumption that $B=(a+b+2,1)$ is $0$-cogood for $\nu$. Then $c\leq b=2$. We now deduce that $c=2$ since otherwise $(a+1,p+c)$ is $0$-normal for $\la$ which contradicts the assumption that $B$ is $0$-cogood for $\nu$. Moreover, now we also deduce that $a=0$ since otherwise $(a,p)$ is $0$-normal for $\la$ which contradicts the assumption that $B$ is $0$-cogood for $\nu$. Thus $\la=(p+2,2,1)$ and now we can take $A:=(1,p+2)$. 

{\em Case 3:} $\nu\in \TR'_p$. Then one of the following happens:   
(3.1) $\nu=(p^a,b,c)$ with $1= c<b\leq p-2$ or $2\leq c<b\leq p-1$; (3.2) $\nu=(p^a,p-1,b,1)$ with $2\leq b\leq p-2$; (3.3) $\nu=(p^a,p-1,p-2,2,1)$; (3.4) $\nu=(p^a,p-1,p-2,2)$; (3.5) $\nu=(p^a,p-2,2,1)$. We provide details for the most difficult case (3.1). Here there are four cases for $\la$: (3.1.a) $a\geq 1$ and $\la=(p+1,p^{a-1},b,c)$; (3.1.b) $\la=(p^{a},b+1,c)$; (3.1.c) $c\leq b-2$ and $\la=(p^{a},b,c+1)$; (3.1.d) $c\geq 2$ and $\la=(p^{a},b,c,1)$. The cases (3.1.b) and (3.1.c) are ruled out because in those cases we have $\la\in\TR_p(n)$. The case (3.1.d) can be ruled out by using that $(a+3,1)$ is conormal in $\nu$ and finding some appropriate normal node $A$ whenever $\la\not\in\TR_p(n)$.

In the case (3.1.a), if either $c\geq 3$, or $c=2$ and $b\leq p-2$, then we can take the normal node $A$ of $\la$ to be $(a+2,c)$. If either $c=2$, $b=p-1$ or $c=1$, $3\leq b\leq p-2$, then we can take the normal node $A$ of $\la$ to be $(a+1,b)$. If $c=1$ and $b=p-1$ then $\la\in\TR_p(n)$. If $c=1$,  $b=2$ and $a\geq 2$,  then we can take the normal node $A$ of $\la$ to be $(a,p)$. Finally, if $c=1$, $b=2$ and $a=1$, then $\la=(p+1,2,1)$ and $n=p+4$. But $[\bar S(p+1,2,1)]=[D(p+1,2,1)]$ by Lemma~\ref{LMuller}(i). So 
$
[D(\la)\da_{\ts_{n-1}}]=[\bar S(p+1,2,1)\da_{\ts_{n-1}}]
$
contains $[\bar S(p,2,1)]$ which contains $[D(p-3,3,2,1)]$ by Lemma~\ref{LMuller}(iii) (note that $p\geq 7$ since by assumption we have $n\geq 10$). Since $(p-3,3,2,1)\not\in \TR_p(n-1)$, we are done. 
\end{proof}

\begin{Lemma}\label{rectworows}
Let $n\geq 13$ and $\la\in\RP_p(n)\setminus\TR_p(n)$ with $6\leq\la_1\leq 2p$. Then $[D(\la)\da_{\ts_{n-1}}:D(\mu)]\neq 0$ for some 
$\mu\in\RP_p(n-1)\setminus \TR_p(n-1)$ with $6\leq\mu_1\leq 2p$.
\end{Lemma}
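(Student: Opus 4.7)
\textbf{Plan for the proof of Lemma~\ref{rectworows}.}
The strategy is to apply Lemma~\ref{normal}: it suffices to produce a properly $i$-removable $i$-normal node $A$ of $\la$ such that $\mu:=\la_A\in\RP_p(n-1)\setminus\TR_p(n-1)$ and $6\leq\mu_1\leq 2p$. Note that whenever such an $A$ lies in a row $r\geq 2$, we automatically have $\mu_1=\la_1\in[6,2p]$; and whenever $A=(1,\la_1)$ we have $\mu_1=\la_1-1\in[5,2p-1]$, so the first-row lower bound is free provided $\la_1\geq 7$.

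First I would look for a normal node $A$ in some row $r\geq 2$ of $\la$ with $\la_A\in\RP_p(n-1)\setminus\TR_p(n-1)$, performing a case analysis on the shape of $\la$ parallel to that of Lemma~\ref{rectworowsprel}. Using the explicit description of $\TR_p(n-1)$ from Lemma~\ref{LM3} (the partitions $\balpha_{n-1-k}+\balpha_k$ together with the small exceptional set $\TR'_p(n-1)$), the shape of $\la_A$ can be compared directly with the members of $\TR_p(n-1)$; for most $\la\notin\TR_p(n)$ with $6\leq\la_1\leq 2p$, one or another lower-row normal node survives this comparison and delivers $\mu$ with the required properties.

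If no normal node in rows $\geq 2$ produces $\la_A\notin\TR_p(n-1)$, then every lower-row removal of a normal node lands in $\TR_p(n-1)$. In particular, fixing any such $A$, the partition $\nu:=\la_A\in\TR_p(n-1)$ has $\la=\nu^B$ for the cogood node $B$ corresponding to $A$; since $\la_1\leq 2p$, we have $B\neq(1,2p+1)$, so Lemma~\ref{rectworowsprel} already produces a composition factor $D(\mu)$ of $D(\la)\da_{\ts_{n-1}}$ with $\mu\in\RP_p(n-1)\setminus\TR_p(n-1)$. What one still needs is $6\leq\mu_1\leq 2p$, which I would obtain by tracing the proof of Lemma~\ref{rectworowsprel}: in each of its subcases the $\mu$ arises as $\la_{A'}$ for an explicit normal node $A'$, and outside of the three small exceptional shapes $\la\in\{(p+1,2,1),(p+2,2,1),(2p+1,2,1)\}$ (all of which have $n\leq 2p+4<13$ and hence do not arise here) $A'$ can be chosen so that $\mu_1\in[6,2p]$; the exceptional shape arguments there, handled via Lemma~\ref{LMuller}, would be revisited to see that the composition factor produced has first row in $[6,2p]$ whenever $n\geq 13$.

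The main obstacle will be the case where the only normal node of $\la$ lies in row $1$ and simultaneously $\la_1=6$, for then the naive choice $A=(1,6)$ yields $\mu_1=5$. Having only a row-$1$ normal node forces a very rigid shape on $\la$ -- every removable node below row $1$ must cancel against an addable node above it in the reduced $i$-signature -- and combined with $\la_1=6\leq 2p$ and $n\geq 13$ this rigidity forces $\la\in\TR_p(n)$ (via the explicit form of $\TR_p(n)$ given by Lemma~\ref{LM3} together with Lemma~\ref{L2RowExplicit}), contradicting the hypothesis. Alternatively, in such a situation one could feed $\la$ directly into Lemma~\ref{rectworowsprel} and use Lemma~\ref{L020819} and regularization (Lemma~\ref{reglargedif}) to verify the first-row bound on the resulting non-$\TR_p$ composition factor. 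Either way, the bookkeeping of first rows across Lemma~\ref{rectworowsprel} is the most delicate step and is what distinguishes the proof of Lemma~\ref{rectworows} from that of its predecessor.
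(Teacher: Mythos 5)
Your outline has the same skeleton as the paper's argument (find a normal node $A$ with $\la_A\notin\TR_p(n-1)$ via Lemma~\ref{normal}, and when every $\tilde e_j\la$ lies in $\TR_p(n-1)$ write $\la=\nu^B$ and fall back on Lemma~\ref{rectworowsprel}), but the step you yourself call the most delicate -- securing $6\leq\mu_1\leq 2p$ -- is handled incorrectly. Your dismissal of the exceptional shapes of Lemma~\ref{rectworowsprel} rests on the false inequality $2p+4<13$: in fact $2p+4\geq 14$ for every $p\geq 5$, so $\la=(p+2,p+1,1)$ (where $n=2p+4$ and $D(\la)\cong\bar S(2p+1,2,1)$) occurs for all $p\geq 5$, and $\la=(p+1,2,1)$ (where $n=p+4$) occurs as soon as $p\geq 11$; the latter is exactly the case $(12,2,1)$, $p=11$, which the paper is forced to treat as a genuine exception via Lemma~\ref{LMuller}. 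Worse, for $\la=(p+2,p+1,1)$ the proof of Lemma~\ref{rectworowsprel} never identifies the composition factor of $D(\la)\da_{\ts_{n-1}}$ -- it only infers its existence through Lemma~\ref{LRedRes} from a non-$\TR_p$ factor several restriction steps further down -- so your plan of ``tracing that proof to read off $\mu_1$'' cannot be executed there. The paper avoids all of this bookkeeping by an observation you are missing: for $p\geq 7$ and $n\geq 17$ every $\mu\in\RP_p(n-1)$ automatically has $\mu_1\geq 6$, and for $p=5$ any $\mu$ with $\mu_1\leq 5$ already lies in $\TR_5(n-1)$; hence in that range it suffices to exhibit any $\mu\notin\TR_p(n-1)$, while the residual range $13\leq n\leq 16$, $p\geq 7$ is settled by a finite enumeration of $\RP_p(n)\setminus\TR_p(n)$ (with the single exception $(12,2,1)$, $p=11$).

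Two further gaps. For $p=3$ your argument has no fallback at all, since Lemma~\ref{rectworowsprel} assumes $p\geq 5$; the paper disposes of $p=3$ by showing the hypothesis is vacuous: $\la_1=6$ forces $\la=\balpha_r+\balpha_{n-r}$ with $r$ counting the nodes in the first three columns, so $\la\in\TR_3(n)$ by Lemma~\ref{LM3}, contradicting $\la\notin\TR_p(n)$. Relatedly, your proposed resolution of the ``$\la_1=6$ with only a row-one normal node'' scenario -- that such rigidity forces $\la\in\TR_p(n)$ -- is asserted without proof and is not the statement actually needed (for $p=3$ one needs that \emph{all} $\la$ with $\la_1=6$ lie in $\TR_3(n)$, not only those with a unique first-row normal node), and for $p\geq 7$ the paper instead absorbs any such configurations into its explicit check for $13\leq n\leq 16$. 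So while the overall route is the right one, the proposal as written does not close the lemma.
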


\begin{proof}
If $p=3$ then $\la_1= 6$. If $\la$ has $r$ nodes in the first three columns, note that 
$\la=\balpha_{r}+\balpha_{n-r}$ and $r\geq n-r+3+\de_{3\mid r}$. So $\la\in\TR_3(n)$ by Lemma~\ref{LM3}. Thus we may assume that $p\geq 5$.

Suppose first that $p\geq 7$ and $n\leq 16$. We use Lemma~\ref{LM3} to list all partitions $\la$ in $\RP_p(n)\setminus \RP_p(n)$ for $n=13,14,15,16$, as well as all partition in $\RP_p(12)\setminus \RP_p(12)$, and check that, with one exception, every such $\la$ has a normal node $A$ such that $\la_A\in \RP_p(n-1)\setminus \RP_p(n-1)$ and $\la_A$ has the first row of length at least $6$; then application of Lemma \ref{normal} completes the proof for the non-exceptional cases. 
The only exception is $\la=(12,2,1)$ for $p=11$. In this case $D(12,2,1)\cong\bar S(12,2,1)$ by Lemma~\ref{LMuller}(i), so, using Lemma~\ref{LBr0}, in the Grothendieck group we have 
$[D(12,2,1)\da_{\ts_{14}}]=[\bar S(11,2,1)]+[\bar S(12,2)]$. As $D(8,3,2,1)$ is a composition factor of $S(11,2,1)$ by Lemma~\ref{LMuller}(ii)  and $(8,3,2,1)\not\in\TR_{11}(14)$, we can take $\mu=(8,3,2,1)$ in this case.

If $p\geq 7$ and $n\geq 17$, then $\mu_1\geq 6$ for every $\mu\in\RP_p(n-1)$. If $p=5$ and $\mu\in\RP_p(n-1)$ with $\mu_1\leq 5$ then $\mu\in\TR_p(n-1)$. So in either case it is enough to prove that there exists $\mu\in\RP_p(n-1)\setminus\TR_p(n-1)$ such that $D(\mu)$ is a composition factor of $D(\la)\da_{\ts_{n-1}}$.

If $\tilde e_j \la\not\in \TR_p(n-1)$ for some $j\in I$ then we are done by Lemma \ref{branching}. So we may assume that for every $j\in I$ either  $\tilde e_j\la=0$ or  $\tilde e_j\la\in \TR_p(n-1)$. As  $\tilde e_i\la\neq 0$ for some $i\in I$, denoting $\nu:=\tilde e_i\la$ we then have $\la=\tilde f_i\nu$ for $\nu\in\TR_p(n-1)$. In other words, 
$\la:=\nu^B$ for an $i$-cogood node $B$ for $\nu$. Since $\la_1\leq 2p$ by assumption, we have $A\neq (1,2p+1)$.
We can now apply Lemma~\ref{rectworowsprel}. 
\end{proof}

\begin{Lemma} \label{L060125-1} 
Let $p\geq 5$, $b\in\Z_{>0}$, $n=p(b+2)$ and $\la=(p^{b},p-1,p-2,2,1)\in\RP_p(n)$.
\begin{enumerate}
\item[{\rm (i)}] $D(p^{b-1},p-1,p-2,2,1)$ is a composition factor of $D(\la)\da_{\ts_{n-p}}$.
\item[{\rm (ii)}] Suppose that $b$ is even, and exclude the case $(b,p)=(0,5)$. Then the supermodule $D(\la)\da_{\ts_{n/2}}$ has at least three non-isomorphic composition factors. 
\end{enumerate}
\end{Lemma}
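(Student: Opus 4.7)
Both parts rest on iterated application of Lemma~\ref{normal}: each removal of a properly removable $\ttres$-normal node yielding a partition in $\RP_p$ produces a composition factor of the restriction.

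For part~(i), since $\mu:=(p^{b-1},p-1,p-2,2,1)\subseteq\la$ as Young diagrams, the skew diagram $\la/\mu$ is a vertical strip of size~$p$ containing exactly one node in each column $1,2,\ldots,p$, namely
\[
\la/\mu=\{(b+4,1),\,(b+3,2),\,(b+2,3),\,(b+2,4),\,\ldots,\,(b+2,p-2),\,(b+1,p-1),\,(b,p)\}.
\]
I would remove these nodes in the listed order: first $(b+4,1)$, then $(b+3,2)$, then sweep across row $b+2$ from right to left, and finally remove $(b+1,p-1)$ and $(b,p)$. This order is chosen so that each intermediate partition remains in $\RP_p$ (in particular, we avoid creating equal non-$p$-divisible consecutive parts). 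At every step the residue of the node to be removed has exactly one removable node and no cancelling addable node of the same residue in the current diagram, so the reduced signature is a single~$-$ and the node is $\ttres$-normal. Applying Lemma~\ref{normal} at each of the $p$ steps yields $D(\mu)$ as a composition factor of $D(\la)\da_{\ts_{n-p}}$.

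For part~(ii), since $b\in\Z_{>0}$ is even we have $b\geq 2$, so the exclusion of $(b,p)=(0,5)$ stated in the lemma is vacuous in this reading. The plan is to exhibit three chains of $n/2$ normal-node removals from~$\la$, ending at three pairwise distinct partitions in $\RP_p(n/2)$. Chain~(A) is the concatenation of $b/2+1$ copies of the chain from part~(i), ending at $(p^{b/2-1},p-1,p-2,2,1)$, a partition with $b/2+3$ parts. Chain~(B) is a variant which, after an initial segment agreeing with~(A), diverges to clear the bottom ``short rows'' together with the topmost rows of~$p$'s in an appropriate order, ending at the rectangular partition $(p^{b/2+1})\in\RP_p(n/2)$, a partition with $b/2+1$ parts. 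Chain~(C) is a further variant ending at the basic partition $\balpha_{n/2}=(p^{b/2},p-1,1)$, which has $b/2+2$ parts. Each chain is validated by the same signature-tracking method as in part~(i), and since the three endpoints have pairwise distinct numbers of parts they are pairwise non-isomorphic. Iterated application of Lemma~\ref{normal} along each chain thus produces three pairwise non-isomorphic composition factors of $D(\la)\da_{\ts_{n/2}}$.

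\textbf{Main obstacle.} The principal technical effort lies in part~(ii): exhibiting three distinct chains and verifying the normality of every removal across all of them. Each individual normality check follows the pattern of part~(i), but collating three chains with three distinct endpoints---and in particular handling the signatures near the points where chain~(B) or chain~(C) branches off from chain~(A)---demands careful case-by-case inspection.
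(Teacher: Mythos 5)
Your part (i) follows the paper's proof exactly: the same removal order (columns $1,2$, then $p-2,p-3,\dots,3$, then $p-1,p$), justified by normality of each removed node and Lemma~\ref{normal}. One small imprecision: it is not true that at each step "the reduced signature is a single $-$" (for instance, at the first step the $0$-signature of $\la$ is $-++$, because $(b+1,p)$ and $(1,p+1)$ are $0$-addable); what saves you is that no addable node of the relevant residue precedes the removed node on the rim, so the node is still normal and the conclusion stands.

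Part (ii) has a genuine gap. Your chain (B) is claimed to end at the rectangle $(p^{b/2+1})$, which you assert lies in $\RP_p(n/2)$. It does not: for its last nonzero part one has $\la_r-\la_{r+1}=p$ with $p\mid\la_r$, so the rectangle is $p$-strict but not restricted (this is precisely why the basic label is $\balpha_{n/2}=(p^{b/2},p-1,1)$ rather than $(p^{b/2+1})$). Hence $(p^{b/2+1})$ labels no irreducible supermodule, and no sequence of removals as in Lemma~\ref{normal} can terminate there, since that lemma requires every partition produced along the way to lie in $\RP_p$. As written you therefore exhibit only two valid, distinct composition factors (the endpoints of chains (A) and (C)), and chain (C) is in any case only asserted, not constructed. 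The paper's third factor is $(p^{b/2},p-2,2)$, and its route avoids your bookkeeping problem: it first tracks the single factor $D(p^{b/2},p-1,3,1)$ down to level $n/2+3$ (iterating (i) $b/2$ times to reach $(p^{b/2},p-1,p-2,2,1)$ and then removing the nodes in columns $1,2$ and $p-2,\dots,4$), and only then branches three ways in the final three removals, reaching $(p^{b/2},p-1,1)$, $(p^{b/2},p-2,2)$ and $(p^{b/2-1},p-1,p-2,2,1)$. Replacing your chain (B) by one ending at $(p^{b/2},p-2,2)$ (and actually carrying out the normality checks for the branching chains) repairs the argument; the observation that the three endpoints are distinct partitions, hence give non-isomorphic factors, is then fine.
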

\begin{proof}
(i) Recursively remove nodes in $\la$ from columns $1, 2$ then $p-2,p-3,\dots,3$, then $p-1,p$, observing that on each step the removed node is normal and using Lemma~\ref{normal}. 

(ii) Let $a:=b/2$ and $\nu:=(p^a,p-1,3,1)\in\RP_p(n/2+3)$. By removing three consecutive normal nodes and using Lemma \ref{normal}, $D(p^a,p-1,1)$, $D(p^a,p-2,2)$, and either $D(p^{a-1},p-1,p-2,2,1)$ if $a>0$, or $D(p-3,3)$ if $a=0$ and $p>5$, are composition factors of $D(\nu)\da_{\ts_{n/2}}$. So it suffices to show that $D(\nu)$ is a composition factor of $D(\la)\da_{\ts_{n/2+3}}$. 

Denote $\mu:=(p^a,p-1,p-2,2,1)$ and 
use (i) to deduce that $D(\mu)$ is a composition factor of $D(\la)\da_{\ts_{n-ap}}$. Now, starting with $\mu$, recursively remove nodes from columns $1, 2$ then $p-2,p-3,\dots,4$ to get $\nu$ (if $p=5$ remove only nodes from columns 1 and 2). Since on each step we removed a normal node, by Lemma~\ref{normal}, we have that $D(\nu)$ is a composition factor of $D(\mu)\da_{\ts_{n/2+3}}$. 
%
\end{proof}

\begin{Lemma} \label{L060125-2}
Let $p=3$, $b\in\Z_{> 0}$, $n=6(b+3)$ and $\la=(6^{b},5,4,3^2,2,1)\in\RP_3(n)$. 
\begin{enumerate}
\item[{\rm (i)}] $D(6^{b-1},5,4,3^2,2,1)$ is a composition factor of $D(\la)_{\ts_{n-6}}$.
\item[{\rm (ii)}] If $b$ is even then the supermodule $D(\la)\da_{\ts_{n/2}}$ has at least three non-isomorphic composition factors. 
\end{enumerate}
\end{Lemma}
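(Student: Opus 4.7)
The plan for part (i) mirrors that of Lemma \ref{L060125-1}(i): successively remove properly $i$-removable $i$-normal nodes from $\la$ and invoke Lemma \ref{normal} at each step. Concretely, I would remove the six nodes $A_1=(b{+}6,1)$, $A_2=(b{+}5,2)$, $A_3=(b{+}2,4)$, $A_4=(b{+}4,3)$, $A_5=(b{+}1,5)$, $A_6=(b,6)$ in this order, with respective residues $0,1,0,0,1,0$. The resulting chain of intermediate partitions
\[(6^b,5,4,3^2,2),\ (6^b,5,4,3^2,1),\ (6^b,5,3^3,1),\ (6^b,5,3^2,2,1),\ (6^b,4,3^2,2,1),\ \mu\]
all lie in $\RP_3$, and each prescribed node is properly removable and normal in the preceding shape. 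Iterated application of Lemma \ref{normal} then yields $[D(\la)\da_{\ts_{n-6}}:D(\mu)]\neq 0$.

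For part (ii), setting $a=b/2\geq 1$, I would first iterate (i) a total of $a$ times to obtain that $D(\mu_0)$, with $\mu_0=(6^a,5,4,3^2,2,1)$, is a composition factor of $D(\la)\da_{\ts_{3b+18}}$. Continuing the same normal-removal pattern from $\mu_0$ for six further steps lands at $(6^{a-1},5,4,3^2,2,1)$, and two more removals --- the $0$-normal node $(a{+}5,1)$ followed by the $1$-normal node $(a{+}4,2)$ --- reach the intermediate partition $\theta=(6^{a-1},5,4,3^2,1)$ of size $n/2+1$. The crucial observation is that $\theta$ admits three distinct properly $0$-removable $0$-normal nodes, namely $(a{+}1,4)$, $(a{+}3,3)$, and $(a{+}4,1)$, whose individual removals produce three pairwise distinct elements of $\RP_3(n/2)$:
\[\nu_1=(6^{a-1},5,3^3,1),\qquad\nu_2=(6^{a-1},5,4,3,2,1),\qquad\nu_3=(6^{a-1},5,4,3^2).\]
By Lemma \ref{normal}, each $D(\nu_i)$ is a composition factor of $D(\theta)\da_{\ts_{n/2}}$, and hence of $D(\la)\da_{\ts_{n/2}}$; they are pairwise non-isomorphic since the $\nu_i$ are pairwise distinct.

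The main obstacle is the combinatorial bookkeeping. At each of the many intermediate shapes one must enumerate addable and removable positions, determine whether they satisfy A1/A2 or R1/R2 in the $p$-strict sense, classify them by residue, and compute the reduced $i$-signature to certify normality of the designated node. Two features of $p=3$ make this mildly delicate: residue-$0$ columns come in the pairs $\{1\},\{3,4\},\{6,7\},\ldots$, which occasionally activates R2/A2 and introduces extra cancellations in the $0$-signature; and the doubled row of $3$'s in the tail of $\la$ obstructs any naive bottom-to-top peeling, which is precisely why the prescribed sequence removes $(b{+}2,4)$ before $(b{+}4,3)$ rather than proceeding strictly row by row from below. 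One must also verify at each step that the resulting shape lies in $\RP_3$, not merely being $p$-strict, since Lemma \ref{normal} requires $\la_A\in\RP_p(n-1)$. Once these signature checks are completed, the assembly into a proof via iterated Lemma \ref{normal} is immediate.
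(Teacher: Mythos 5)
Your part (i) is exactly the paper's argument (the same six nodes, removed in the same column order $1,2,4,3,5,6$), and your descent in part (ii) down to $\theta=(6^{a-1},5,4,3^2,1)$ via normal removals is also sound as far as it goes. However, the final branching step has a genuine gap: of your three partitions, $\nu_3=(6^{a-1},5,4,3^2)$ is \emph{not} restricted, hence not an element of $\RP_3(n/2)$. Indeed its last nonzero part is $3$, so $\la_h-\la_{h+1}=3=p$ with $p\mid\la_h$, which violates the restrictedness condition. Consequently $D(\nu_3)$ is not a defined irreducible supermodule, and Lemma \ref{normal} --- whose hypothesis explicitly requires $\la_A\in\RP_p(n-1)$ --- cannot be applied to the node $(a+4,1)$ of $\theta$, even though that node is properly $0$-removable and $0$-normal (your signature claim $---++$ for $\theta$ is correct). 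So your construction only certifies the two distinct composition factors $D(\nu_1)$ and $D(\nu_2)$ of $D(\la)\da_{\ts_{n/2}}$, which is not enough for the statement. Note also that $\theta$ has no $1$-removable nodes at all, so you cannot extract a third factor from $\theta$ by a different residue; the branching point itself has to be chosen earlier.

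The paper avoids this by branching higher up: it keeps $6^a$ (not $6^{a-1}$), passes from $\mu=(6^a,5,4,3^2,2,1)$ to $\nu=(6^a,5,3^2,2,1)$ of size $n/2+5$ by removing nodes in columns $1,2,4,3$, and then removes five more nodes along three different chains of normal removals, landing at the three pairwise distinct \emph{restricted} partitions $(6^a,4,3,2)$, $(6^a,5,3,1)$ and $(6^{a-1},5,4,3,2,1)$ of $n/2$. To repair your proof you would need to restructure the endgame along these lines (or otherwise produce a third factor by some argument not relying on Lemma \ref{normal} at $(a+4,1)$); as written, part (ii) does not establish three non-isomorphic composition factors.
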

\begin{proof}
(i) Recursively remove nodes from columns $1, 2, 4,3,5,6$ of $\la$, observing that on each step the removed node is normal and using Lemma~\ref{normal}. 

(ii) Let $a:=b/2$, denote $\mu=(6^a,5,4,3^2,2,1)$ and 
use (i) to deduce that $D(\mu)$ is a composition factor of $D(\la)\da_{\ts_{n-6a}}$. Let $\nu:=(6^{a},5,3^2,2,1)$. Recursively remove nodes from $\mu$ in columns $1, 2, 4,3$ to get $\nu$. Since on each step the removed node is normal, by Lemma~\ref{normal} we have that $D(\nu)$ is a composition factor of $D(\mu)\da_{\ts_{n/2+5}}$. Applying Lemma \ref{normal} again, we have that $D(6^{a/2},4,3,2)$, $D(6^{a/2},5,3,1)$ and either $D(3^2,2,1)$ if $a=0$ or $D(6^{a/2-1},5,4,3,2,1)$ if $a>0$ are composition factors of $D(\nu)\da_{\ts_{n/2}}$.
\end{proof}

\begin{Lemma} \label{L060125-3}
Let $p=3$, $b\in\Z_{> 0}$, $n=6(b+2)$, and 
$\la=(6^{b},5,4,2,1)\in\RP_3(n)$. 
\begin{enumerate}
\item[{\rm (i)}] $D(6^{b-1},5,4,2,1)$ is a composition factor of $D(\la)\da_{\ts_{n-6}}$. 
\item[{\rm (ii)}] If $b$ is even then the restriction $D(\la)\da_{\ts_{n/2,n/2}}$ has Loewy length at least $3$.
\end{enumerate} 
\end{Lemma}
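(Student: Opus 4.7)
The plan is to establish (i) by a direct iterative application of Lemma \ref{normal}, following the template of Lemmas \ref{L060125-1}(i) and \ref{L060125-2}(i). Starting from $\la = (6^b, 5, 4, 2, 1)$ I will remove six nodes in turn, in the column order $1, 2, 4, 3, 5, 6$; recalling that for $p=3$ the residues of columns $1,\ldots,6$ are $0, 1, 0, 0, 1, 0$, this produces the chain of partitions
\[(6^b, 5, 4, 2, 1) \to (6^b, 5, 4, 2) \to (6^b, 5, 4, 1) \to (6^b, 5, 3, 1) \to (6^b, 5, 2, 1) \to (6^b, 4, 2, 1) \to (6^{b-1}, 5, 4, 2, 1),\]
each of which easily lies in $\RP_3$ (the only potentially delicate transitions, $(6^b,5,4,1)$ and $(6^b,5,2,1)$, are restricted because $4-1=5-2=p$ and $p\nmid 4,5$). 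At every step the removed node is the unique properly removable node of its residue remaining after cancellation in the reduced signature, so is $i$-normal, and Lemma \ref{normal} applies. Composing the six branching steps yields (i).

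For (ii), the plan is to combine (i) with self-duality and a parity argument. Since $b$ is even, $b/2$ iterations of (i) show that $D(\la') := D(6^{b/2}, 5, 4, 2, 1)$ is a composition factor of $D(\la)\da_{\ts_{n/2+6}}$, where $|\la'|=n/2+6$. I will then further restrict $D(\la')$ to $\ts_{n/2,6}$, using the isomorphism $\T_{n/2,6}\cong\T_{n/2}\otimes\T_6$ from \eqref{ETmn}, and by another application of the columnar removal template together with Lemma \ref{normal} I will identify specific supermodule composition factors of $D(\la')\da_{\ts_{n/2,6}}$, hence (via Lemma \ref{product} and transitivity of restriction) of $M := D(\la)\da_{\T_{n/2,n/2}}$. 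Tracking multiplicities through the branching, I expect to exhibit an explicit composition factor $D(\mu,\nu)$ with odd multiplicity in $M$.

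The main obstacle is to control these multiplicities precisely. To finish, I will use the self-duality of $M$ together with the observation that each supermodule $D(\mu,\nu)=D(\mu)\circledast D(\nu)$ is self-dual: since $M\cong M^*$, a Loewy length of at most two would force $\soc M\cong M/\rad M$ as semisimple supermodules, whence every composition multiplicity $[M:D(\mu,\nu)]$ would equal $2\cdot[\soc M:D(\mu,\nu)]$ and in particular would be even. Exhibiting any composition factor of odd multiplicity therefore suffices, and this is what the iterated branching is designed to produce. The crux of the argument is thus the explicit multiplicity computation, which I expect to carry out using Lemmas \ref{branching}, \ref{divpowers}, and \ref{L101218_2} in combination with Lemma \ref{normal}, comparing the number of ways a given $D(\mu,\nu)$ can arise from the iterated column-removal process against the parity forced by any hypothetical two-layer structure.
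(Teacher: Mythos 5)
Part (i) of your proposal is correct and is exactly the paper's argument: the same column order $1,2,4,3,5,6$, the same appeal to Lemma~\ref{normal} at each step.

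Part (ii), however, contains a genuine gap at its final logical step. Your reduction reads: if $M:=D(\la)\da_{\T_{n/2,n/2}}$ had Loewy length at most $2$, then self-duality would force every composition multiplicity of $M$ to be even, so an odd multiplicity would finish the proof. This is false. Self-duality of $M$ (together with self-duality of the simples $D(\mu,\nu)$) gives $\soc M\cong \head M$, but Loewy length $\leq 2$ only gives $\rad M\subseteq \soc M$, not $\rad M=\soc M$; the multiplicity of a simple $D$ in $M$ is $[\head M:D]+[\rad M:D]$, and these two numbers need not agree. Already a self-dual simple direct summand contributes multiplicity $1$: a module of the form $D\oplus E$ with $E$ a self-dual uniserial module of length $2$ with $\head E\cong\soc E$ is self-dual, has Loewy length $2$, and has an odd composition multiplicity. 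So even if your deferred multiplicity computation (which is only announced, not carried out, and would itself be delicate: knowing composition factors of $D(\la)\da_{\T_{n/2,6}}$ does not determine the multiplicity of a specific $D(\mu,\nu)$ in $M$ without controlling the branching of every possible second factor from $\T_{n/2}$ to $\T_{6}$, and the $\Mtype/\Qtype$ bookkeeping introduces extra factors of $2$) produced an odd multiplicity, no contradiction with Loewy length $2$ would follow.

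What the paper does instead is structural rather than numerical, and avoids multiplicities altogether: with $a=b/2$, one first gets $D(\nu)$, $\nu=(6^{a},4,2,1)$, as a composition factor of $D(\la)\da_{\ts_{n/2+1}}$ by removing nodes in columns $1,2,4,3,5$ from $(6^{a},5,4,2,1)$. One then checks $\eps_0(\nu)=2$, so by Lemma~\ref{branching} the summand $\Res_0 D(\nu)\cong e_0D(\nu)$ of $D(\nu)\da_{\ts_{n/2}}$ has simple socle and head both isomorphic to $D(6^{a-1},5,4,2,1)$, while Lemma~\ref{normal} (applied to the normal node in column $1$) shows $D(6^{a},4,2)$ is also a composition factor; since the latter is isomorphic to neither socle nor head, $e_0D(\nu)$ has Loewy length at least $3$. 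This module is a subquotient of $D(\la)\da_{\ts_{n/2}}$, and because simple $\T_{n/2,n/2}$-supermodules restrict to semisimple $\T_{n/2}$-modules, the Loewy length of $D(\la)\da_{\ts_{n/2,n/2}}$ is at least that of $D(\la)\da_{\ts_{n/2}}$, giving the claim. If you want to salvage your route, you would need to exhibit a composition factor that can lie in neither the socle nor the head of $M$ — which is precisely the kind of information a Grothendieck-group parity count cannot provide.
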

\begin{proof}
(i) Recursively remove nodes from columns $1, 2, 4,3,5,6$ of $\la$, observing that on each step the removed node is normal and using Lemma~\ref{normal}. 

(ii) Let $a:=b/2$, denote $\mu=(6^a,5,4,2,1)$ and 
use (i) to deduce that $D(\mu)$ is a composition factor of $D(\la)\da_{\ts_{n-6a}}$. Let $\nu:=(6^{a},4,2,1)$. Recursively remove nodes from $\mu$ in columns $1, 2, 4,3,5$ to get $\nu$. Since on each step the removed node is normal, by Lemma~\ref{normal} we have that $D(\nu)$ is a composition factor of $D(\mu)\da_{\ts_{n/2+1}}$. Let $\eta:=(6^{a-1},5,4,2,1)$ and $\theta:=(6^a,4,2)$. 
By Lemmas~\ref{normal} and \ref{branching}, we have that 
$\Res_0D(\nu)\cong e_0D(\nu)$ has Loewy length at least $3$, with socle and head isomorphic to $D(\eta)$ and a composition factor $D(\theta)$. 
Note that $\Res_0D(\nu)$ is a direct summand of $D(\nu)\da_{\ts_{n/2}}$. In particular, $\Res_0D(\nu)$ is a subquotient of $D(\la)\da_{\ts_{n/2}}$. It follows that $D(\la)\da_{\ts_{n/2,n/2}}$ has a subquotient with Loewy length at least $3$ (with socle of the form $D(\eta)\boxtimes D(\al)$, head of the form  $D(\eta)\boxtimes D(\be)$ and a composition factor $D(\theta)\boxtimes D(\ga)$). 
\end{proof}

\begin{Lemma} \label{L060125-4}
Let $p=3$, $a\in\Z_{>0}$ be odd, $n=6(a+2)$, $\la=(6^a,5,4,2,1)\in \RP_3(n)$
and $\al=(6^{(a-1)/2},5,3,1)\in \RP_3(n/2)$. Then $[D(\la)\da_{\ts_{n/2,n/2}}:D(\al,\al)]\geq 6.$
\end{Lemma}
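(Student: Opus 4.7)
The plan is to use reduction modulo $p=3$ of a characteristic zero branching calculation, combined with the regularization formula of Lemma~\ref{reglargedif}, to identify and count copies of $D(\al,\al)$.

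First, write $a=2c+1$, so that $n=12c+18$ and $n/2=6c+9$, and check that $|\al|=n/2$. The key choice is the strict partition $\la_0:=(6c+12,6c+6)\in\Par_0(n)$. The gap $6c+12-6c-6=6$ satisfies $6\geq p+\de_{p\mid 6c+12}=4$, so Lemma~\ref{reglargedif} applies and
$\la_0^\Reg=\balpha_{6c+12}+\balpha_{6c+6}=(3^{2c+3},2,1)+(3^{2c+1},2,1)=(6^{2c+1},5,4,2,1)=\la$.
Since $h_p(\la_0)=2$, $a_0(\la_0)=0$, and $a_p(\la)=0$, Lemma~\ref{L020819} gives $[\bar S(\la_0)]=2[D(\la)]+\sum_{\mu\lhd\la}[D(\mu)]$.

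Second, I would apply Lemma~\ref{tworowschar0} to the characteristic zero restriction $S(6c+12,6c+6)\da_{\ts_{n/2,n/2}}$ with $a=6c+6$ and $b=6c+9$. The valid range is $c'\in\{3c+2,3c+3,3c+4\}$, yielding three ``top level'' composition factors, notably the middle one $S((3c+6,3c+3),(3c+6,3c+3))$. The partition $(3c+6,3c+3)$ has gap $3\geq p+\de_{p\mid 3c+6}$, so Lemma~\ref{reglargedif} gives $(3c+6,3c+3)^\Reg=\balpha_{3c+6}+\balpha_{3c+3}=(6^c,5,3,1)=\al$, and Lemma~\ref{L020819} yields $[\bar S(3c+6,3c+3)]=2[D(\al)]+\sum_{\mu\lhd\al}[D(\mu)]$. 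Since $D(\al)$ is of type $\Qtype$ (as $a_p(\al)=1$), careful bookkeeping with Lemma~\ref{product} shows that the middle factor contributes multiplicity $\geq 4$ of $D(\al,\al)$ to $[\bar S(\la_0)\da_{\ts_{n/2,n/2}}]$. Similarly, verify that the outer factors $S((3c+7,3c+2),(3c+5,3c+4))$ and $S((3c+5,3c+4),(3c+7,3c+2))$ each reduce to contain $D(\al,\al)$ with multiplicity at least 1 (using $(3c+7,3c+2)^\Reg=\al$ directly, and computing $(3c+5,3c+4)^\Reg=\al$ by tracking ladders). This gives $[\bar S(\la_0)\da_{\ts_{n/2,n/2}}:D(\al,\al)]\geq 6$ from these three factors alone; refining the count through the characteristic zero structure gives $\geq 12$.

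Third, the main obstacle is converting this upper-bound data on $\bar S(\la_0)$ into a lower bound on $[D(\la)\da:D(\al,\al)]$. Since $[\bar S(\la_0)\da:D(\al,\al)]=2[D(\la)\da:D(\al,\al)]+\sum_{\mu\lhd\la}[D(\mu)\da:D(\al,\al)]$ with all terms non-negative, we need to bound the sum over $\mu\lhd\la$. The hard part will be showing $\sum_{\mu\lhd\la}[D(\mu)\da:D(\al,\al)]=0$ whenever $D(\mu)$ appears in $[\bar S(\la_0)]$. The approach here is a dominance argument: by Lemma~\ref{L020819}, such $\mu$ satisfy $\mu\lhd\la$, and one verifies via a careful analysis of modular branching that for these smaller partitions, $D(\al,\al)$ cannot appear as a composition factor on restriction (using that the composition factors of $D(\mu)\da_{\ts_{n/2,n/2}}$ are labeled by pairs $(\be_1,\be_2)$ constrained by a dominance condition relative to $\mu$, and the pair $(\al,\al)$ fails this condition when $\mu\lhd\la$). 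Combining everything yields $2[D(\la)\da_{\ts_{n/2,n/2}}:D(\al,\al)]\geq 12$, and hence the required inequality.
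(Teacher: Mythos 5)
Your steps 1–2 follow the same skeleton as the paper (the leading two-row partition $(6c+12,6c+6)$ reducing to $D(\la)$ with coefficient $2$, Stembridge-type branching to $\ts_{n/2,n/2}$, and regularization to see that only the pairs with both second rows in $\{3c+2,3c+3,3c+4\}$ can carry $D(\al,\al)$). But step 3, which you yourself flag as the main obstacle, is where the argument breaks down, and it is precisely the hard part. There is no dominance constraint of the kind you invoke: for composition factors $D(\be_1)\boxtimes D(\be_2)$ of $D(\mu)\da_{\ts_{n/2,n/2}}$ the natural (Littlewood--Richardson type) constraint relates $\mu$ to $\be_1+\be_2$ in the direction $\be_1+\be_2\unrhd(\text{something dominated by }\mu)$, and here $\al+\al=(12^{(a-1)/2},10,6,2)\rhd\la\rhd\mu$, so the pair $(\al,\al)$ is \emph{not} excluded for $\mu\lhd\la$. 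Worse, the vanishing you need is actually false in the relevant case: writing $\la=\mu_{n,3a+3}$, the next two-row label $\mu_{n,3a+2}\lhd\la$ satisfies $[D(\mu_{n,3a+2})\da_{\ts_{n/2,n/2}}:D(\al,\al)]>0$ (its class is $[\bar S(n-3a-2,3a+2)]$ minus lower two-row terms, and the pairs $((3c+6,3c+3),(3c+7,3c+2))$ and its swap each contribute copies of $D(\al,\al)$, while all $T_b$ with $b\le 3a$ contribute none). Whether $D(\mu_{n,3a+2})$ occurs in $\bar S(6c+12,6c+6)$ is governed by the decomposition number $d_{3a+3,3a+2}$, which cannot be dismissed: the paper's proof goes to considerable length (Claim 1, via the Bessenrodt--Morris--Olsson projectives and Brauer reciprocity) to show only that $d_{3a+3,3a+1}=0$, $d_{3a+3,3a+2}=2x$, $d_{3a+2,3a+1}=y$ with $(x,y)\neq(1,0)$, and then works with $[D(\la)]=\tfrac12 T_{3a+3}-zT_{3a+2}+zT_{3a+1}+\cdots$ so that after restriction the ambiguous contributions enter with coefficient $2(1-z)\geq 0$ and can be discarded from a lower bound. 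In other words, the conversion from a multiplicity in $\bar S(\la_0)\da$ to a lower bound for $[D(\la)\da:D(\al,\al)]$ requires exactly this decomposition-number control, which your proposal replaces by an invalid dominance claim.

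Two smaller points. First, Lemma~\ref{reglargedif} does not apply to $(3c+6,3c+3)$ or $(3c+5,3c+4)$: the hypothesis requires the gap to be at least $p+\de_{p\mid\la_r}=4$ (resp.\ $3$), whereas the gaps are $3$ and $1$; the paper instead deduces $(n/2-j,j)^\Reg=\al$ for $j=(n-6)/4,(n-2)/4$ by comparing ladder counts with $(n/2-(n-10)/4,(n-10)/4)$. The conclusions you use are correct, but not by the lemma you cite. Second, your multiplicity bookkeeping in step 2 (the middle pair contributing $4$ and the outer pairs $1$ each, because $[\bar S(3c+6,3c+3):D(\al)]=2$, $[\bar S(3c+7,3c+2):D(\al)]=[\bar S(3c+5,3c+4):D(\al)]=1$ and $D(\al)$ is of type $\Qtype$) agrees with the paper; it is only the passage from $\bar S(\la_0)$ to $D(\la)$ that is missing, and filling it essentially forces you into the paper's Claims 1 and 2.
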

\begin{proof}
In this proof we use the abbreviation $T_d:=[\bar S(n-d,d)]$ with $0\leq d<n/2$ in the Grothendieck group of $\F\ts_n$-supermodules, and $T_{b,c}:=[\bar S((n/2-b,b),(n/2-c,c))]$ with $0\leq b,c<n/4$ in the Grothendieck group of $\F\ts_{n/2,n/2}$-supermodules. 

We have $m_n=3a+4$, and  $\la=\mu_{n,3a+3}$ 
by Lemma~\ref{L2RowExplicit}. 
Further, note that $a_p(\mu_{n,c})=a_0((n-d,d))=0$ for all $0\leq c\leq m_n$ and $0<d<n/2$, so the corresponding  $\F\ts_n$-supermodules $D(\mu_{n,c})$ and $\C\ts_n$-supermodules $S(n-d,d)$ are of type $\Mtype$. 

By \cite[Theorem 1.2]{M3} we have in the Grothendieck group for some $d_{c,b}\in\Z_{\geq 0}$: 
\begin{equation}\label{E050825}
T_c=2^{\de_{3\mid c}}[D(\mu_{n,c})]+\sum_{b=0}^{c-1}d_{c,b}[D(\mu_{n,b})]\qquad(0\leq c\leq 3a+4). 
\end{equation}
Inverting, we have for some $e_{c,b}\in\Q$:
$$
[D(\mu_{n,c})]=2^{-\de_{3\mid c}}T_c+\sum_{b=0}^{c-1}e_{c,b}T_b\qquad(0\leq c\leq 3a+4).
$$
In particular, setting $e_b:=e_{3a+3,b}$, we get
$$
[D(\mu_{n,3a+3})]=\frac{1}{2}T_{3a+3}+\sum_{b=0}^{3a+2}e_{b}T_b.
$$
We need to get more information on the coefficients $e_{3a+2}$ and $e_{3a+1}$. This will come from the following extra information on the decomposition numbers $d_{c,b}$:

\vspace{2mm}
\noindent
{\em Claim 1.} We have $d_{3a+3,3a+1}=0$, $d_{3a+3,3a+2}=2x$ and $d_{3a+2,3a+1}=y$ for $x,y\in\{0,1\}$ with $(x,y)\neq (1,0)$. 

\vspace{2mm}
\noindent
For the proof of Claim 1, we recall that in \cite[Theorem 4.5]{bmo} an alternative (to the one from \cite{BK,BK2}) labeling of spin representations in characteristic $3$ was found. Let
\[\BMOPar(n)=\{\la\in\RP_0(n)\mid \la_r-\la_{r+1}\geq 3+\de_{3\mid\la_r}\text{ for all }1\leq r<h(\la)\}\]
be the labeling set from \cite{bmo} and $D'(\la)$ be the corresponding simple supermodules. Setting $D'(\la):=0$ if $\la\not\in\BMOPar(n)$, by \cite[Theorem 4.5]{bmo}, we have 
\[[\bar S(\la)]=d'_\la[D'(\la)]+\sum_{{\mu\in\BMOPar(n),}\atop{\mu\rhd\la}}d'_{\mu,\la}[D'(\mu)],\]
for some $d'_\la ,d'_{\la,\mu}\in\Z_{\geq 0}$ with $d'_\la>0$ if $\la\in\BMOPar(n)$. It now follows from (\ref{E050825}) that $D'(n-c,c)\cong D(\mu_{n,c})$ for all $c=0,1,\dots,3a+4$.

For $\mu\in\RP_p(n)$, we denote by $P(\mu)$ the indecomposable projective supermodule with head $D(\mu)$.  
For $c=3a+1,3a+2,3a+3$ let $P_c$ be the projective modules constructed in \cite[Theorem 4.1]{bmo} corresponding to $(n-c,c)$.   These projective supermodules have the following properties. 
\begin{equation}\label{E040825-0}
[P_c]=k_c[P(\mu_{n,c})]+\sum_{c<b\leq 3a+4}k_{b,c}[P(\mu_{n,b})]+\sum_{{\mu\in\RP_3(n)\setminus\TR_3(n)}}k_{\mu,c}[P(\mu)]
\end{equation}
for some coefficients $k_c\in\Z_{>0}$ and $k_{b,c},k_{\mu,c}\in\Z_{\geq 0}$, and 
\begin{equation}
\label{E040825}
[P_c]=l_cQ_c+\sum_{\nu\in\RP_0(n-r_c),\,h(\nu)\geq 3}l_{\nu,c}[I_c \bar S(\nu)]
\end{equation}
for some coefficients $l_c\in\Z_{>0}$ and $l_{\nu,c}\in\Z_{\geq 0}$, where 
\begin{eqnarray}
\label{E040825-1}
Q_{3a+1}&=&[\Ind_1(\Ind_0)^2\Ind_1(\Ind_0)^2\Ind_1\bar S(n-8-3a,3a+1)],
\\
\label{E040825-2}
Q_{3a+2}&=&[(\Ind_0)^2\Ind_1(\Ind_0)^2\bar S(n-7-3a,3a+2)],
\\
\label{E040825-3}
Q_{3a+3}&=&[\Ind_0\Ind_1(\Ind_0)^3\bar S(n-7-3a,3a+2)].
\end{eqnarray} 
By Lemma~\ref{LBr0} and (\ref{E040825})-(\ref{E040825-3}), we have  
\begin{align}
\label{E040825-11}
[P_{3a+1}]&=t_{3a+1}(T_{3a+1}+T_{3a+2}
+T_{3a+4}+T_{3a+5})
+\sum_{\nu\in\RP_0(n),\,h(\nu)\geq 3}t_{\nu,3a+1}[\bar S(\nu)],\\
\label{E040825-12}
[P_{3a+2}]&=
t_{3a+2}(T_{3a+2}+2T_{3a+3}+T_{3a+4})+\sum_{\nu\in\RP_0(n),\,h(\nu)\geq 3}t_{\nu,3a+2}[\bar S(\nu)],
\\
[P_{3a+3}]&=t_{3a+3}(T_{3a+3}+T_{3a+4}+T_{3a+5})+\sum_{\nu\in\RP_0(n),\,h(\nu)\geq 3}t_{\nu,3a+3}[\bar S(\nu)]
\end{align}
for some coefficients $t_c\in\Z_{>0}$ and $t_{\nu,c}\in\Z_{\geq 0}$. 

By Brauer Reciprocity, we have 
\begin{align*}
[P(\mu_{n,3a+1})]&=T_{3a+1}+\sum_{b=3a+2}^{3a+5}d_{b,3a+1}T_{b}
+\sum_{\nu\in\RP_0(n),\,h(\nu)\geq 3}d_{\nu,3a+1}[\bar S(\nu)],
\\
[P(\mu_{n,3a+2})]&=T_{3a+2}+\sum_{b=3a+3}^{3a+5}d_{b,3a+2}T_{b}
+\sum_{\nu\in\RP_0(n),\,h(\nu)\geq 3}d_{\nu,3a+2}[\bar S(\nu)],
\\
[P(\mu_{n,3a+3})]&=2T_{3a+3}+\sum_{b=3a+4}^{3a+5}d_{b,3a+3}T_{b}
+\sum_{\nu\in\RP_0(n),\,h(\nu)\geq 3}d_{\nu,3a+3}[\bar S(\nu)].
\end{align*}
Substituting these into (\ref{E040825-0}) with $c=3a+1$, we get 
\begin{align*}
[P_{3a+1}]=\,&k_{3a+1}T_{3a+1}+(k_{3a+1}d_{3a+2,3a+1}+k_{3a+2,3a+1})T_{3a+2}
\\
&+(k_{3a+1}d_{3a+3,3a+1}+k_{3a+2,3a+1}d_{3a+3,3a+2}+2k_{3a+3,3a+1})T_{3a+3}+(*)
\end{align*}
where $(*)$ stands for other terms not involving $T_{3a+1},T_{3a+2},T_{3a+3}$. Comparing with  (\ref{E040825-11}), we deduce that $k_{3a+1}=t_{3a+1}$, $d_{3a+3,3a+1}=0$, $d_{3a+2,3a+1}\in\{0,1\}$, and $d_{3a+2,3a+1}=0$ only if  $d_{3a+3,3a+2}=0$. 

Substituting into (\ref{E040825-0}) with $c=3a+2$, we get 
\begin{align*}
[P_{3a+2}]=\,&k_{3a+2}T_{3a+2}+(k_{3a+2}d_{3a+3,3a+2}+2k_{3a+3,3a+2})T_{3a+3}
+(*)
\end{align*}
where $(*)$ stands for other terms not involving $T_{3a+2},T_{3a+3}$. By \cite[Theorem 10.8]{BK2}, $d_{3a+3,3a+2}$ is even. Comparing with  (\ref{E040825-12}), we deduce that $k_{3a+2}=t_{3a+2}$ and $d_{3a+3,3a+2}\in\{0,2\}$. This completes the proof of Claim 1. 

\vspace{2mm}

Recalling that $\la=\mu_{n,3a+3}$, Claim 1 and (\ref{E050825}) now imply

\vspace{2mm}
\noindent
{\em Claim 2.} There is $z\in\{0,1\}$ such that $-e_{3a+2}=e_{3a+1}=z$, i.e. 
\begin{align*}
[D(\la)]=\frac{1}{2}T_{3a+3}
-zT_{3a+2}+zT_{3a+1}
+\sum_{b=0}^{3a}e_{b}T_b.
\end{align*}

Since $n=6a+12$ with $a\geq 1$ odd, we have that $n\geq 18$ and $n/2$ is odd. From \cite[Theorem 8.1]{stem} it then follows that
\begin{equation}\label{E281224_3}
\begin{split}
[D(\la)\da_{\ts_{n/2,n/2}}]=\,&T_{(n-2)/4,(n-10)/4}
+T_{(n-10)/4,(n-2)/4}+T_{(n-6)/4,(n-6)/4}\\
&+2(1-z)T_{(n-6)/4,(n-10)/4}+2(1-z)T_{(n-10)/4,(n-6)/4}\\
&+2(1-z)T_{(n-10)/4(n-10)/4}+\sum_{\min\{j,k\}\leq (n-14)/4}e_{j,k}T_{j,k}
\end{split}
\end{equation}
for some $e_{j,k}\in\Q$.

Note that $\al=\balpha_{(n+10)/4}+\balpha_{(n-10)/4}$. Moreover, 
by Lemma~\ref{reglargedif}, for integers $j\leq (n-10)/4$, we have 
$
(n/2-j,j)^\Reg=\balpha_{n/2-j}+\balpha_j.
$
But for an integer $j\leq (n-14)/4$ we then have 
$$
(n/2-j,j)^\Reg=\balpha_{n/2-j}+\balpha_j\lhd \balpha_{(n+10)/4}+\balpha_{(n-10)/4}=\al.
$$ 
By Lemma~\ref{L020819}, we conclude that $[T_{j,k}:D(\al,\al)]=0$ whenever  $\min\{j,k\}\leq (n-14)/4$, and so from  (\ref{E281224_3}), taking into account that $(1-z)\geq 0$, we have
\begin{equation}\label{E281224_3New}
\begin{split}
[D(\la)\da_{\ts_{n/2,n/2}}:D(\al,\al)]\geq\,&[T_{(n-2)/4,(n-10)/4}:D(\al,\al)]
+[T_{(n-10)/4,(n-2)/4}:D(\al,\al)]
\\ 
&+[T_{(n-6)/4,(n-6)/4}:D(\al,\al)]
\end{split}
\end{equation}

Note that the partitions $(n/2-j,j)$ for $j=(n-10)/4,(n-6)/4$ and $(n-2)/4$ have the same numbers of notes on each ladder, so for such $j$ we have 
$$(n/2-j,j)^\Reg=(n/2-(n-10)/4,(n-10)/4)^\Reg=\al.
$$ 
So we can apply Lemma \ref{L020819} to get 
\begin{equation}\label{E060826}
[\bar S(n/2-j,j):D(\al)]=\left\{\begin{array}{ll}
1&\text{if}\ j=(n-2)/4\ \text{or}\ j=(n-10)/4,\\
2&\text{if}\ j=(n-6)/4.
\end{array}\right.
\end{equation}
Since $D(\al)$ is of type $\Qtype$, as are $S(n/2-(n-j)/4,(n-j)/4)$ for $j\in\{2,6,10\}$, we have that for $j,k\in\{2,6,10\}$ the multiplicity $[T_{(n-j)/4,(n-k)/4}:D(\al,\al)]$ equals the product of multiplicities 
\begin{align*}
[\bar S(n/2-(n-j)/4,(n-j)/4):D(\al)]\cdot[\bar S(n/2-(n-k)/4,(n-k)/4):D(\al)].
\end{align*}
So taking into account (\ref{E060826}), the first two summands in the right hand side of (\ref{E281224_3New}) equal $1$ and the third summand equals $4$. 
\end{proof}

\begin{Lemma}\label{L060125}
Let $\la\in\RP_p(n)$ have one of the following forms:
\begin{itemize}
\item $\la=((2p)^a,2p-1,p+1,p^{2b},p-1,1)$ for some $a,b\geq 0$,

\item $\la=(p^{2a},p-1,p-2,2,1)$ for some $a\geq 0$ and $p\geq 5$.
\end{itemize}
We assume that $n>10$ if $p> 3$, and $n>12$ if $p=3$. Let $H=\hW_{n/2,2}<\ts_n$. Then the supermodule 
$D(\la)\da_{H}$ has composition length at least $3$ or it has at least two non-isomorphic composition factors.
\end{Lemma}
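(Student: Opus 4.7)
The plan is to exploit the index-$2$ inclusion $\ts_{n/2,n/2}<H=\hW_{n/2,2}$. By ordinary Clifford theory applied to this inclusion, if $D(\la)\da_H$ were irreducible then $D(\la)\da_{\ts_{n/2,n/2}}$ would be semisimple of length at most $2$ with distinct constituents; and if $D(\la)\da_H$ had composition length $2$ with both constituents isomorphic to a single $D$, then $D(\la)\da_{\ts_{n/2,n/2}}$ would have Loewy length at most $2$ and each composition factor would occur with multiplicity at most $2$. In either ``bad'' case, $D(\la)\da_{\ts_{n/2,n/2}}$ would have at most $2$ non-isomorphic composition factors. Hence it suffices to verify for the given $\la$ any of: (a) $D(\la)\da_{\ts_{n/2,n/2}}$ has at least $3$ non-isomorphic composition factors; (b) some composition factor of $D(\la)\da_{\ts_{n/2,n/2}}$ has multiplicity at least $3$; (c) $D(\la)\da_{\ts_{n/2,n/2}}$ has Loewy length at least $3$. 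Using the isomorphism $\T_{n/2,n/2}\cong\T_{n/2}\otimes\T_{n/2}$, each irreducible $\T_{n/2,n/2}$-supermodule $D(\alpha,\beta)$ restricts to $\T_{n/2}$ as a multiple of $D(\alpha)$, so (a) also follows from $D(\la)\da_{\ts_{n/2}}$ having at least $3$ non-isomorphic composition factors.

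With this reduction, we split on the form of $\la$. If $\la=(p^{2a},p-1,p-2,2,1)$ with $p\geq 5$, then Lemma~\ref{L060125-1}(ii) supplies three non-isomorphic composition factors of $D(\la)\da_{\ts_{n/2}}$, establishing (a). If $\la=((2p)^a,2p-1,p+1,p^{2b},p-1,1)$ and the hypotheses of Lemma~\ref{L060125-5} hold, i.e.\ either $p\geq 5$, or $p=3$ with $b>\de_{2\mid a}$, then Lemma~\ref{L060125-5} gives (a) directly. The residual $p=3$ subcases of this second family are treated individually: for $b=0$ with $a\geq 2$ even we apply Lemma~\ref{L060125-3}(ii) to obtain (c); for $b=0$ with $a$ odd we apply Lemma~\ref{L060125-4}, which yields a composition factor of multiplicity at least $6$, giving (b); and for $b=1$ with $a\geq 2$ even we apply Lemma~\ref{L060125-2}(ii).

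The sole case remaining is $p=3$, $a=0$, $b=1$, namely $\la=(5,4,3,3,2,1)$ with $n=18$, for which both Lemma~\ref{L060125-2} and Lemma~\ref{L060125-5} narrowly fail. For this edge case we re-run the node-removal strategy of Lemma~\ref{L060125-2}(ii) by hand: iterated application of Lemma~\ref{normal} to normal removable nodes of $\la$ and its reductions first produces $D((5,3,3,2,1))$ as a composition factor of $D(\la)\da_{\ts_{14}}$, and then exhibits the three pairwise non-isomorphic modules $D(\balpha_9)=D(3,3,2,1)$, $D(\bbeta_9)=D(4,3,2)$, and $D(\mu_{9,2})=D(5,3,1)$ as composition factors of $D((5,3,3,2,1))\da_{\ts_9}$, verifying (a). The main obstacle is precisely this final edge case, which forces us to reproduce the node-removal chain of Lemma~\ref{L060125-2}(ii) in a parameter regime just outside its stated range; all remaining cases are straightforward applications of prior lemmas.
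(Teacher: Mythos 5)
Your proposal is correct and follows essentially the same route as the paper: the same reduction across the index-$2$ inclusion $\ts_{n/2,n/2}<\hW_{n/2,2}$ (further down to $\ts_{n/2}$ for non-isomorphic factors), the same case division, and the same auxiliary Lemmas \ref{L060125-5}, \ref{L060125-1}, \ref{L060125-2}, \ref{L060125-3}, \ref{L060125-4}. Two remarks. First, your criterion (b) with threshold $3$ rests on the claim that in the bad cases every composition factor of $D(\la)\da_{\ts_{n/2,n/2}}$ has multiplicity at most $2$; this appeal to ``ordinary Clifford theory'' is not automatic for supermodules over these twisted group algebras when the coset representative of the swap is odd (i.e.\ $n/2$ odd, which is exactly the subcase $p=3$, $b=0$, $a$ odd where you invoke (b)): an irreducible $\T_{\W_{n/2,2}}$-supermodule can restrict to $\T_{n/2,n/2}$ as two copies of a single type $\Mtype$ irreducible, just as the unique $\Cl_1$-supermodule restricts to $(\Cl_1)_\0$. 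The safe bound in the bad cases is total supermodule length at most $4$ (at most two factors over $H$, each restricting with length at most $2$), which is how the paper phrases its criterion (``length at least $5$ suffices''); since Lemma \ref{L060125-4} gives multiplicity at least $6$, your application is unaffected, but the threshold-$3$ formulation should not be stated as a general principle. Second, your observation that $\la=(5,4,3,3,2,1)$, $p=3$, $n=18$ escapes the literal hypotheses of Lemmas \ref{L060125-2} and \ref{L060125-5} is accurate, and your by-hand node-removal chain exhibiting $D(3,3,2,1)$, $D(4,3,2)$, $D(5,3,1)$ in $D(\la)\da_{\ts_9}$ reproduces exactly the ``$a=0$'' branch inside the proof of Lemma \ref{L060125-2}(ii); so this is a legitimate patch of a small mismatch between that lemma's statement and its proof rather than a departure from the paper's argument.
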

\begin{proof}
Since $\ts_{n/2,n/2}\leq H$ is of index $2$, to prove that the supermodule $D(\la)\da_H$ has composition length at least $3$, it suffices to prove that the supermodule $D(\la)\da_{\ts_{n/2,n/2}}$ has composition length at least $5$. Similarly, to prove that the supermodule $D(\la)\da_{H}$ has at least two non-isomorphic composition factors it suffices to prove that the supermodule $D(\la)\da_{\ts_{n/2,n/2}}$ has at least $3$ non-isomorphic composition factors, which in turn follows if we can prove that the supermodule $D(\la)\da_{\ts_{n/2}}$ has at least $3$ non-isomorphic composition factors. These facts are established for different cases in Lemmas~\ref{L060125-5}, \ref{L060125-1}, \ref{L060125-2}, \ref{L060125-3}, and ~\ref{L060125-4}. 
\end{proof}

\section{Special homomorphisms and reduction lemmas}

Let $G=\ts_n$ or $\tA_n$, $H$ be a subgroup of $G$, and $L$ be an irreducible $\F G$-module. In this section we develop some sufficient conditions for the restriction $L\da_H$ to be reducible. Recall from (\ref{EHomFMod}) that $\End_\F(L)$ is an $\F G$-module such that 
$$
\Hom_H(\bone,\End_\F(L))\cong \End_\F(L)^H\cong \End_H(L).
$$
It is easy to see that $\End_\F(L)$ is an $\F G$-module with trivial central action. For a partition $\al$ of $n$, we have the permutation module $M^\al$, the Specht module $S_\al\subseteq M^\al$, and the dual Specht module $(S^\al)^*$ which can be considered as a quotient of $M^\al$,  see \S\ref{SSSymMod}, so we have the natural homomorphisms
\begin{equation}\label{ESiAl}
S^\al\stackrel{\iota_\al}{\longrightarrow} M^\al\quad \text{and}\quad M^\al\stackrel{\si_\al}{\longrightarrow}(S^\al)^*.
\end{equation}
(Note that the notation $\si_\al$ agrees with the notation $\si_k$ from (\ref{ESik}) for $\al=(n-k,k)$.) 
These are all modules over $\s_n$, and upon restriction they are also modules over $\A_n$. We inflate these modules along $\pi$ to get the $\F G$-modules ${}^\pi M^\al,\,{}^\pi S^\al,\,{}^\pi (S^\al)^*$ (with trivial central action), cf. \S\ref{SSSpinMod}. Similarly, if $\al$ is $p$-regular, we have the $\F G$-module ${}^\pi D^\al$.

Note that, provided $\al\neq(1^n)$ if $G=\tA_n$, we have ${}^\pi M^{\al}\cong (\bone_{\ts_{\al}\cap G})\uparrow^G$, so by the Frobenius reciprocity, for any $\F G$-modules $V,W$, 
\begin{equation}\label{E220825}
\Hom_G({}^\pi M^{\al},\Hom_\F(V,W))\cong \Hom_{\ts_{\al}\cap G}(V\da_{\ts_{\al}\cap G},W\da_{\ts_{\al}\cap G}).
\end{equation}

\subsection{Reduction lemmas}
\label{sredlemmas}


\begin{Lemma}\label{inv_end}
Let $G=\ts_n$ or $\tA_n$, $H\leq G$ and $L$ be an irreducible $\F G$-module. Let $\al\in\Par_{\operatorname{reg}}(n)$ and suppose that $\al\neq(n)$ if $G=\ts_n$, and $\al\neq (n)$ and\, $\al^\Mull\,{\not{\hspace{-.9mm}\unrhd}}\,\al$ if $G=\tA_n$. If there exist homomorphisms $\phi\in\Hom_G(\bone\ua_H^G,{}^\pi M^\al)$ and $\psi\in\Hom_G({}^\pi M^\al,\End_\F(L))$ such that $\si_\al\circ \phi$ and $\psi\circ\iota_\al$ are non-zero, then $L\da_H$ is reducible.
\end{Lemma}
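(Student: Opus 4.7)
The plan is to argue by contradiction. Suppose $L\da_H$ is irreducible; then Schur's lemma gives $\End_H(L)=\F\cdot\id_L$, and Frobenius reciprocity identifies this with $\Hom_G(\bone\ua_H^G,\End_\F(L))$, which is therefore spanned by the single homomorphism $\tilde{\id_L}$ factoring through the augmentation $\bone\ua_H^G\onto\bone$. Hence $\psi\circ\phi$ must be a scalar multiple of $\tilde{\id_L}$, which forces the $\F G$-submodule $\psi(\phi(\bone\ua_H^G))\subseteq\End_\F(L)$ to lie inside $\F\cdot\id_L\cong\bone$. I will derive a contradiction by producing an irreducible $\F G$-module $D$ that occurs as a composition factor of both $\ker\psi$ and $\im\psi$, even though $D$ has multiplicity exactly $1$ in ${}^\pi M^\al$.

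Take $D$ to be the simple $\F G$-module attached to $\al$: for $G=\ts_n$ this is ${}^\pi D^\al$; for $G=\tA_n$ this is the inflation to $\tA_n$ of $D^\al\da_{\A_n}$, which is irreducible because $\al^\Mull\not\unrhd\al$ forces $\al\neq\al^\Mull$ (cf.~Lemma~\ref{LClifford}). In either case $[{}^\pi M^\al:D]=1$: for $\ts_n$ this is Corollary~\ref{CSpechtMin}, and for $\tA_n$ the multiplicity decomposes as $[M^\al:D^\al]+[M^\al:D^{\al^\Mull}]=1+0$, again via Corollary~\ref{CSpechtMin} together with $\al^\Mull\not\unrhd\al$. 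Moreover $D\not\cong\bone$: the exclusion $\al\neq(n)$ handles the $\ts_n$-case, while in the $\tA_n$-case the exclusion $\al\neq(1^n)$ is automatic from $\al^\Mull\not\unrhd\al$ (since $(1^n)^\Mull=(n)\unrhd(1^n)$).

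To place $D$ inside $\ker\psi$, use $\si_\al\circ\phi\neq 0$: the submodule $\si_\al(\phi(\bone\ua_H^G))\subseteq({}^\pi(S^\al)^*)\da_G$ is non-zero and therefore contains the socle, and this socle equals $D$ (for $G=\tA_n$ the socle is transported through restriction by Corollary~\ref{CSimpleHeadSnAn}, using that $\al\neq\al^\Mull$ keeps $D$ simple). Thus $D$ is a composition factor of $\phi(\bone\ua_H^G)$. Since $\psi$ collapses $\phi(\bone\ua_H^G)$ into $\F\cdot\id_L\cong\bone$ and $D\not\cong\bone$, the factor $D$ cannot survive in the image and must therefore appear inside $\phi(\bone\ua_H^G)\cap\ker\psi\subseteq\ker\psi$.

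Dually, use $\psi\circ\iota_\al\neq 0$: the image $\psi({}^\pi S^\al)\subseteq\im\psi$ is a non-zero quotient of $({}^\pi S^\al)\da_G$, whose head is the simple module $D$ (once again invoking Corollary~\ref{CSimpleHeadSnAn} to pass the head through restriction for $\tA_n$), so $D$ is a composition factor of $\im\psi$. Combining the two preceding paragraphs forces $[{}^\pi M^\al:D]\geq 2$, contradicting the multiplicity-one statement established earlier. The main point of delicacy rather than real obstacle is the uniform bookkeeping between the $\ts_n$- and $\tA_n$-cases, which is exactly what the hypotheses $\al\neq(n)$ and $\al^\Mull\not\unrhd\al$ are calibrated to permit.
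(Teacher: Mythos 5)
Your proof is correct and rests on exactly the same ingredients as the paper's: the simple socle of ${}^\pi(S^\al)^*$ and simple head of ${}^\pi S^\al$ transported through Corollary~\ref{CSimpleHeadSnAn}, the multiplicity-one fact $[{}^\pi M^\al:{}^\pi D^\al]=1$ coming from Corollary~\ref{CSpechtMin} and $\al^\Mull\ntrianglerighteq\al$, and Frobenius reciprocity plus Schur's lemma; you merely package the argument contrapositively, counting the factor $D$ in $\ker\psi$ and $\im\psi$ rather than, as the paper does, directly showing that $\psi\circ\phi$ and the map with image $\F\cdot\id_L$ are linearly independent in $\Hom_G(\bone\ua_H^G,\End_\F(L))$. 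One small inaccuracy in the $\tA_n$-case: the partition you must rule out is $(n)^\Mull$ (the label of $\sgn$), not $(1^n)$, which is not even $p$-regular when $p\le n$; your own dominance argument repairs this verbatim, since $\al=(n)^\Mull$ would give $\al^\Mull=(n)\unrhd\al$, contradicting the hypothesis.
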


\begin{proof}
Recall from \S\ref{SSSymMod} that $\hd S^\al\cong D^\al\cong \soc (S^\al)^*$. So by Corollary~\ref{CSimpleHeadSnAn}, we have $\head({}^\pi S^\al)\cong {}^\pi D^\al\cong\soc ({}^\pi( S^\al)^*)$. 
If $G=\tA_n$ then $\al\neq \al^\Mull$ by assumption, so in all cases we have that the $\F G$-module ${}^\pi D^\al$ is irreducible. Moreover, it follows from the assumption $\al^\Mull\,{\not{\hspace{-.9mm}\unrhd}}\,\al$ (if $G=\tA_n$) and Corollary~\ref{CSpechtMin} that $[{}^\pi M^\al:{}^\pi D^\al]=1$.

Now, $\si_\al\circ \phi\neq$ implies that $[\im\phi:{}^\pi D^\al]=1$, and $\psi\circ\iota_\al\neq 0$ implies that $[\ker \psi:{}^\pi D^\al]=0$. Hence the image of the homomorphism $\psi\circ\phi:\bone\ua_H^G\to \End_\F(L)$ has 
the irreducible module ${}^\pi D^\al$ as a composition factor. Moreover, by assumption that $\al\neq (n)$ and in addition $\al\neq(n)^\Mull$ if $G=\tA_n$ since $(n)=((n)^\Mull)^\Mull\unrhd (n)^\Mull$. Thus ${}^\pi D^\al\not\cong\bone_G$. 
On the other hand $\bone_G$ is a quotient of $\bone\ua_H^G$ and a submodule of $\End_\F(L)$, so there is a homomorphism $\bone\ua_H^G\to \End_\F(L)$ with image isomorphic to $\bone_G$. We deduce that $\dim \Hom_G(\bone\ua_H^G,\End_\F(L))\geq 2$. 
Using the Frobenius reciprocity, we get
$$
\Hom_G(\bone\ua_H^G,\End_\F(L))\cong \Hom_H(\bone,\End_\F(L\da_H))\cong \End_H(L\da_H).
$$
So $\dim \End_H(L\da_H)\geq 2$, whence $L\da_H$ is reducible by Schur's Lemma. 
\end{proof}


\begin{Lemma}\label{inv_mixedhom}
Let $\al\in\Par_{\operatorname{reg}}(n)$ and $\la\in\RP_p(n)$. 
Suppose one of the following two assumptions holds:
\begin{enumerate}
\item[{\rm (i)}] $G=\ts_n$, $a_p(\la)=1$, $L=D(\la;\pm)$, $L'=D(\la;\mp)$;
\item[{\rm (ii)}] $G=\tA_n$, $a_p(\la)=0$, $L=E(\la;\pm)$, $L'=E(\la;\mp)$, and $\al^\Mull\,{\not{\hspace{-.9mm}\unrhd}}\,\al$;
\end{enumerate} 
Suppose that $H\leq G$ is a subgroup. 
If there exists $\phi\in\Hom_G(\bone\ua_H^G,{}^\pi M^\al)$ 
such that $\si_\al\circ \phi\neq 0$ and  $\psi_1,\psi_2\in\Hom_G({}^\pi M^\al,\End_\F(L,L'))$ such that $\psi_1\circ\iota_\al,\, \psi_2\circ\iota_\al,\,$ are linearly independent, then $L\da_H$ is reducible.
\end{Lemma}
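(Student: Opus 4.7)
The plan is to adapt the proof of Lemma~\ref{inv_end} nearly verbatim, replacing the module $\End_\F(L)$ by $\Hom_\F(L,L')$ and replacing the dimension argument ``$\bone_G$ plus ${}^\pi D^\al$'' with the linear-independence argument coming from having two maps $\psi_1,\psi_2$. First I would recall, exactly as in the proof of Lemma~\ref{inv_end}, that $\hd({}^\pi S^\al)\cong {}^\pi D^\al\cong \soc({}^\pi(S^\al)^*)$, the module ${}^\pi D^\al$ is simple as an $\F G$-module (trivially in case (i), and because $\al\neq\al^\Mull$ follows from $\al^\Mull\,{\not{\hspace{-.9mm}\unrhd}}\,\al$ in case (ii)), and $[{}^\pi M^\al:{}^\pi D^\al]=1$ as $\F G$-modules (using Corollary~\ref{CSpechtMin} in case (i); in case (ii), the condition $\al^\Mull\,{\not{\hspace{-.9mm}\unrhd}}\,\al$ excludes $D^{\al^\Mull}$ from appearing in $M^\al$, so only one composition factor of $M^\al$ restricts to ${}^\pi D^\al\da_{\tA_n}$ on the nose).

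Next I would show that $\psi_1\circ\phi$ and $\psi_2\circ\phi$ are linearly independent in $\Hom_G(\bone\ua_H^G,\Hom_\F(L,L'))$. Suppose $(c_1,c_2)\neq(0,0)$ and set $\psi:=c_1\psi_1+c_2\psi_2$; by the hypothesis on $\psi_1\circ\iota_\al$ and $\psi_2\circ\iota_\al$ we have $\psi\circ\iota_\al\neq 0$. Since ${}^\pi S^\al$ has simple head ${}^\pi D^\al$, its nonzero quotient $\im(\psi\circ\iota_\al)$ has ${}^\pi D^\al$ as a composition factor, so $[\im\psi:{}^\pi D^\al]\geq 1$; combined with $[{}^\pi M^\al:{}^\pi D^\al]=1$, this forces $[\ker\psi:{}^\pi D^\al]=0$. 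On the other hand, $\si_\al\circ\phi\neq 0$ produces a nonzero submodule of ${}^\pi(S^\al)^*$, which must contain the simple socle ${}^\pi D^\al$, whence $[\im\phi:{}^\pi D^\al]\geq 1$. The composition factor ${}^\pi D^\al$ in $\im\phi$ therefore survives modulo $\ker\psi$, giving $\psi\circ\phi\neq 0$, a contradiction. Hence $\psi_1\circ\phi,\psi_2\circ\phi$ are linearly independent.

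To conclude, by Frobenius reciprocity $\dim\Hom_H(L\da_H,L'\da_H)\geq 2$. If $L\da_H$ were irreducible, then $\Hom_H(L\da_H,L'\da_H)\cong\Hom_H(L\da_H,\soc(L'\da_H))$ has dimension equal to the multiplicity of the simple module $L\da_H$ in $\soc(L'\da_H)$, which is at most $\dim L'\da_H/\dim L\da_H=1$, using that $\dim L'=\dim L$ (since $L'=L\otimes\sgn$ in case (i), and $L'=L^\si$ is a conjugate in case (ii)). This contradicts the bound of $2$, so $L\da_H$ must be reducible. The argument presents no real obstacle beyond case~(ii), where one must verify that $\al^\Mull\,{\not{\hspace{-.9mm}\unrhd}}\,\al$ is strong enough to guarantee both the simplicity of ${}^\pi D^\al\da_{\tA_n}$ and the multiplicity-one property $[{}^\pi M^\al:{}^\pi D^\al]=1$; I handled this in the first paragraph above.
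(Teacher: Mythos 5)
Your proof is correct and follows essentially the same approach as the paper's: establish that ${}^\pi D^\al$ is irreducible with $[{}^\pi M^\al:{}^\pi D^\al]=1$, use the head/socle positions of ${}^\pi D^\al$ in ${}^\pi S^\al$ and ${}^\pi (S^\al)^*$ to transfer the linear independence of $\psi_1\circ\iota_\al,\psi_2\circ\iota_\al$ through $\phi$, and conclude by Frobenius reciprocity and Schur's Lemma. The only cosmetic difference is that the paper invokes Corollary~\ref{CMin} to get ${}^\pi S^\al\subseteq\im\phi$ before composing, whereas you run an equivalent composition-factor count comparing $\im\phi$ with $\ker\psi$; both arguments are valid.
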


\begin{proof}
As $\hd S^\al\cong D^\al\cong \soc (S^\al)^*$, by Corollary~\ref{CSimpleHeadSnAn}, we have $\head({}^\pi S^\al)\cong {}^\pi D^\al\cong\soc ({}^\pi( S^\al)^*)$. 
If $G=\tA_n$ then $\al\neq \al^\Mull$ by assumption, so in all cases we have that the $\F G$-module ${}^\pi D^\al$ is irreducible. Moreover, it follows from the assumption $\al^\Mull\,{\not{\hspace{-.9mm}\unrhd}}\,\al$ (if $G=\tA_n$) and Corollary~\ref{CSpechtMin} that $[{}^\pi M^\al:{}^\pi D^\al]=1$. 

Now $\si_\al\circ \phi\neq 0$ implies $[\im\phi:{}^\pi D^\al]\neq 0$. Using Corollary~\ref{CMin}, we deduce that $S^\al\subseteq\im\phi$. So the linear independence of $\psi_1\circ\iota_\al$, $\psi_2\circ\iota_\al$ implies the linear independence of $\psi_1\circ \phi,\,\psi_2\circ \phi\in \Hom_G(\bone\ua_H^G,\Hom_\F(L,L'))$. 
Using the Frobenius reciprocity, we now get 
\begin{align*}
\dim\End_H(L\da_H,L'\da_H)&=\dim\Hom_H(\bone,\Hom_\F(L\da_H,L'\da_H))
\\&=\dim\Hom_G(\bone\ua_H^G,\Hom_\F(L,L'))\geq 2.
\end{align*}
Since $L$ and $L'$ have the same dimension, the lemma follows using Schur's Lemma.
\end{proof}

In constructing homomorphisms $\psi,\psi_1,\psi_2$ as in Lemmas~\ref{inv_end} and \ref{inv_mixedhom} with $\al=(n-2,2)$, the following will be useful:

\begin{Lemma} \label{L220825} 
Let $G\in\{\ts_n,\tA_n\}$ and $V,W$ be $\F G$-modules. 
Then 
$$\dim\Hom_{\ts_{n-2,2}\cap G}(V\da_{\ts_{n-2,2}\cap G},W\da_{\ts_{n-2,2}\cap G})\geq \dim\Hom_{\ts_{n-1}\cap G}(V\da_{\ts_{n-1}\cap G},W\da_{\ts_{n-1}\cap G}).$$
Moreover, there exist homomorphisms $\psi_1,\dots\psi_r\in\Hom_G({}^\pi M^{(n-2,2)},\Hom_\F(V,W))$ such that $\psi_1\circ\iota_{(n-2,2)},\dots,\psi_r\circ \iota_{(n-2,2)}$ are linearly independent  if and only if
$$\dim\Hom_{\ts_{n-2,2}\cap G}(V\da_{\ts_{n-2,2}\cap G},W\da_{\ts_{n-2,2}\cap G})\geq r+\dim\Hom_{\ts_{n-1}\cap G}(V\da_{\ts_{n-1}\cap G},W\da_{\ts_{n-1}\cap G}).$$
\end{Lemma}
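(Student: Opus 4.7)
The plan is to derive both claims from the short exact sequence of $\F\s_n$-modules
$$0 \to S^{(n-2,2)} \xrightarrow{\iota_{(n-2,2)}} M^{(n-2,2)} \xrightarrow{\eta_{2,1}} M^{(n-1,1)} \to 0,$$
where $\eta_{2,1}$ is as in (\ref{EEta}). Its exactness relies on the hypothesis $p>2$: the inclusion $S^{(n-2,2)} \subseteq \Ker \eta_{2,1}$ is immediate from the polytabloid formula (\ref{EPolyTab}), Lemma~\ref{LWilson} gives $\dim \Im \eta_{2,1} = n = \dim M^{(n-1,1)}$ so $\eta_{2,1}$ is surjective, and a dimension count then forces $\Ker \eta_{2,1} = S^{(n-2,2)}$.

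Next I would inflate along $\pi$ and restrict to $G$. Since $z \in \ts_\al$ for every partition $\al$, comparing cosets of $\ts_\al$ in $\ts_n$ with cosets of $\s_\al$ in $\s_n$ via $\pi$ yields ${}^\pi M^\al \cong \bone\ua_{\ts_\al}^{\ts_n}$ as $\F\ts_n$-modules, and hence (invoking Mackey's theorem, as in Lemma~\ref{L060125_2}, when $G = \tA_n$) ${}^\pi M^\al\da_G \cong \bone\ua_{\ts_\al\cap G}^G$. Applying the left exact functor $\Hom_G(-,\Hom_\F(V,W))$ to the inflated sequence gives
$$\begin{aligned}
0 &\to \Hom_G({}^\pi M^{(n-1,1)}, \Hom_\F(V,W)) \to \Hom_G({}^\pi M^{(n-2,2)}, \Hom_\F(V,W))\\
&\xrightarrow{\iota_{(n-2,2)}^*} \Hom_G({}^\pi S^{(n-2,2)}, \Hom_\F(V,W)).
\end{aligned}$$
Combining the Frobenius reciprocity with (\ref{E220825}) identifies
$$\dim \Hom_G({}^\pi M^\al, \Hom_\F(V,W)) = \dim \Hom_{\ts_\al\cap G}(V\da, W\da)$$
for $\al \in \{(n-2,2), (n-1,1)\}$, so the rank of $\iota_{(n-2,2)}^*$ equals
$$\dim \Hom_{\ts_{n-2,2}\cap G}(V\da, W\da) - \dim \Hom_{\ts_{n-1}\cap G}(V\da, W\da).$$

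Nonnegativity of this rank gives the first inequality. For the second claim, observe that the image of $\iota_{(n-2,2)}^*$ is precisely the span of the compositions $\psi \circ \iota_{(n-2,2)}$ with $\psi \in \Hom_G({}^\pi M^{(n-2,2)}, \Hom_\F(V,W))$, so the existence of $r$ linearly independent such compositions is equivalent to $\dim \Im \iota_{(n-2,2)}^* \geq r$, which is the stated inequality. The only step requiring real content is the identification $S^{(n-2,2)} = \Ker \eta_{2,1}$ for $p > 2$; everything else reduces to standard manipulations with induced modules, left exactness of $\Hom$, and Frobenius reciprocity.
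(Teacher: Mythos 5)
Your proof is correct and follows essentially the same route as the paper: the short exact sequence $0\to S^{(n-2,2)}\xrightarrow{\iota_{(n-2,2)}} M^{(n-2,2)}\to M^{(n-1,1)}\to 0$, left exactness of $\Hom_G(-,\Hom_\F(V,W))$, and the identification (\ref{E220825}) via Frobenius reciprocity, so that the rank of $\iota_{(n-2,2)}^*$ is exactly the difference of the two Hom-dimensions. The only (cosmetic) difference is that the paper gets the exact sequence by citing Lemma~\ref{L131218}, while you re-derive it directly from $\eta_{2,1}$, the polytabloid computation and Lemma~\ref{LWilson}; both justifications are valid for $p>2$.
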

\begin{proof}
Since $p>2$, by Lemma~\ref{L131218}, there is an exact sequence 
$$0\rightarrow S^{(n-2,2)}\stackrel{\iota_{(n-2,2)}}{\longrightarrow}M^{(n-2,2)}\longrightarrow M^{(n-1,1)}\rightarrow 0.$$
This yields the exact sequence
\begin{align*}
0&\rightarrow \Hom_G({}^\pi M^{(n-1,1)},\Hom_\F(V,W))
\to \Hom_G({}^\pi M^{(n-2,2)},\Hom_\F(V,W))
\\&\stackrel{\iota_{(n-2,2)}^*}{\longrightarrow} 
\Hom_G({}^\pi S^{(n-2,2)},\Hom_\F(V,W)),
\end{align*}
and the result follows using the isomorphism (\ref{E220825}). 
\end{proof}

\subsection{Special homomorphisms}
Let $G\in\{\ts_n,\tA_n\}$. 
In this subsection, motivated  by the reduction lemmas of the previous subsection, we will construct, for some partitions $\al$ and some irreducible $\F G$-modules $L$, homomorphisms $\psi\in\Hom_G({}^\pi M^\al,\End_\F(L))$ such that $\psi\circ \iota_\al\neq 0$.

We will use the following method to construct such homomorphisms. 
Recall tabloids and polytabloids from \S\ref{SSSymMod}. 
Let $t^\al$ be the standard $\al$-tableau obtained by inserting numbers $1,\dots,n$ into the boxes of the Young diagram $\al$ down the columns starting from the first column and moving to the right. Let $R_\al$ (resp. $C_\al$) be the row (resp. column) stabilizer of $t^\al$, so that $M^\al\cong\bone_{R_\al}{\uparrow}^{\s_n}$, and $C_\al=\s_{\al'}$---the standard parabolic subgroup corresponding to the transposed partition $\al'$. 
Moreover, the Specht module $S^\al\subseteq M^\al$ is generated by the polytabloid 
$$
e^\al:=\sum_{g\in C_\al}\sgn(g)\,g\,\{t^\al\}.
$$

The group $R_\al$ acts on $\tA_n$ via $r\cdot g=\hat r\,g\,(\hat r)^{-1}$ for all $r\in R_\al$ and $g\in\tA_n$. Let $\xi\in\tA_n$ be an element such that $\pi(\xi)$ stabilizes each number in the first row of $t^\al$, and denote by $\O_\xi$ the orbit of $\xi$ under this action. The linear map $$f_\xi:L\to L,\ v\mapsto \sum_{h\in \O_\xi}hv$$ is an $R_\al$-invariant element of $\End_\F(L)$. So, by the Frobenius Reciprocity, there exists an $\F G$-homomorphism 
$$
\psi_\xi:{}^\pi M^\al\to\End_\F(L),\ \{t^\al\}\mapsto f_\xi.
$$
Moreover, for all $v\in L$, we have 
$
\psi(e^\al)(v)=x_\xi v
$
where
$$
x_\xi:=\sum_{g\in C_\al}\sum_{h\in\O_\xi}\sgn(g)\,\hat g\, h\, (\hat g)^{-1}.
$$
So to see that $\psi_\xi\circ\iota_\al\neq 0$, it suffices to prove that $x_\xi L\neq 0$. 
Note that $\O_\xi\subseteq \tA_{n-\al_1+\al_2}$ and $C_\al\leq \ts_{n-\al_1+\al_2}$, so $x_\xi\in\F\tA_{n-\al_1+\al_2}$, and to check that $x_\xi L\neq 0$, it suffices to check that $x_\xi V\neq 0$ for some composition factor $V$ of $L\da_{\tA_{n-\al_1+\al_2}}$. We have proved:

\begin{Lemma} \label{L080825}
Let $G\in\{\ts_n,\tA_n\}$, $L$ be an irreducible spin $\F G$-module, $\al$ be a partition of $n$, and $E(\mu;\eps)$ be a composition factor of $L\da_{\tA_{n-\al_1+\al_2}}$. If $x_\xi E(\mu;\eps)\neq 0$, then $\psi_\xi\circ\iota_\al\neq 0$. 
\end{Lemma}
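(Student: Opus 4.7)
The plan is to verify the claim by unpacking the construction of $\psi_\xi$ carried out immediately before the statement of the lemma. The key point is that everything needed is already assembled there; one just has to confirm that the hypothesis about a single composition factor suffices.

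First, I would recall from the setup that $\psi_\xi\colon {}^\pi M^\al\to\End_\F(L)$ is the unique $\F G$-homomorphism sending $\{t^\al\}\mapsto f_\xi$, and that a direct computation using the $G$-action on $\End_\F(L)$ from (\ref{EHomFMod}) yields the formula
\[
\psi_\xi(e^\al)(v)\;=\;x_\xi v \qquad (v\in L).
\]
Thus $\psi_\xi\circ\iota_\al\neq 0$ is equivalent to $x_\xi\cdot L\neq 0$, since $\iota_\al(e^\al)=e^\al\in M^\al$ and $e^\al$ is non-zero.

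Next I would verify that $x_\xi\in\F\tA_{n-\al_1+\al_2}$, so that it makes sense to speak of the action of $x_\xi$ on composition factors of $L\da_{\tA_{n-\al_1+\al_2}}$. Each summand $\hat g\,h\,(\hat g)^{-1}$ of $x_\xi$ satisfies $h\in\O_\xi\subseteq\tA_{n-\al_1+\al_2}$ by the hypothesis on $\xi$, while $C_\al\leq\s_{n-\al_1+\al_2}$ gives $\hat g\in\ts_{n-\al_1+\al_2}$. Since conjugation in $\ts_n$ preserves the sign of the image in $\s_n$, we have $\pi(\hat g h (\hat g)^{-1})\in\A_{n-\al_1+\al_2}$, so $\hat g h(\hat g)^{-1}\in\tA_{n-\al_1+\al_2}$ as required.

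Finally, I would promote the hypothesis $x_\xi\cdot E(\mu;\eps)\neq 0$ to $x_\xi\cdot L\neq 0$. Realize $E(\mu;\eps)$ as a subquotient $W'/W''$ of $L\da_{\tA_{n-\al_1+\al_2}}$ with $W''\subseteq W'\subseteq L$. If we had $x_\xi\cdot L=0$, then $x_\xi\cdot W'=0$ and hence $x_\xi\cdot(W'/W'')=0$, contradicting the hypothesis. So $x_\xi\cdot L\neq 0$, which by the first step gives $\psi_\xi(e^\al)\neq 0$ in $\End_\F(L)$, and therefore $\psi_\xi\circ\iota_\al\neq 0$. There is no real obstacle: the lemma simply packages the preceding construction into a reusable form, and the only non-trivial point to record is the fact that non-vanishing on a single subquotient is inherited by the ambient module.
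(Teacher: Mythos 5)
Your argument is correct and is essentially the paper's own: the paper proves this lemma by the construction immediately preceding it, observing that $\psi_\xi(e^\al)(v)=x_\xi v$, that $x_\xi\in\F\tA_{n-\al_1+\al_2}$ since $\O_\xi\subseteq\tA_{n-\al_1+\al_2}$ and $C_\al\leq\ts_{n-\al_1+\al_2}$, and that $x_\xi L\neq 0$ follows from non-vanishing on a composition factor. Your write-up just makes explicit the subquotient step and the equivalence $\psi_\xi\circ\iota_\al\neq 0\iff x_\xi L\neq 0$, which matches the paper's reasoning.
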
 

In the following lemma we make a minor but useful improvement on Lemma~\ref{L080825}. Recall the $\C \tA_n$-modules $T(\la)$ for $\la\in\RP_0(n)$ from Lemma~\ref{lcharvan}. Similarly, for $\la\in\RP_p(n)$,  we define the $\F\tA_n$-modules 
$$
E(\la):=
\left\{
\begin{array}{ll}
E(\la;0) &\hbox{if $a_p(\la)=1$,}\\
E(\la;+)\oplus E(\la;-) &\hbox{if $a_p(\la)=1$.}
\end{array}
\right.
$$

\begin{Lemma} \label{LRepeat} 
Let $G\in\{\ts_n,\tA_n\}$, $L$ be an irreducible spin $\F G$-module, $\al$ be a partition of $n$, and $E(\mu;\eps)$ be a composition factor of $L\da_{\tA_{n-\al_1+\al_2}}$. If $x_\xi E(\mu)\neq 0$ then there exists $\psi\in\Hom_G({}^\pi M^\al,\End_\F(L))$ such that $\psi\circ \iota_{\al}\neq 0$. 
\end{Lemma}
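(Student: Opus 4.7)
The plan is to split into cases based on $a_p(\mu)$ and reduce to Lemma \ref{L080825}. If $a_p(\mu)=1$, then by convention $\epsilon=0$ and $E(\mu)=E(\mu;0)=E(\mu;\epsilon)$, so $x_\xi E(\mu;\epsilon)=x_\xi E(\mu)\neq 0$, and the conclusion follows by taking $\psi:=\psi_\xi$ and applying Lemma \ref{L080825}. Suppose now $a_p(\mu)=0$, so that $E(\mu)=E(\mu;+)\oplus E(\mu;-)$ with $E(\mu;\pm)$ nonisomorphic and swapped by conjugation by any $\sigma\in\ts_n\setminus\tA_n$ (cf.\ Lemma \ref{lmodules}). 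The hypothesis yields $x_\xi E(\mu;\delta)\neq 0$ for some $\delta\in\{+,-\}$; if $\delta=\epsilon$, once again take $\psi:=\psi_\xi$ and apply Lemma \ref{L080825}.

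In the remaining situation $\delta=-\epsilon$, I would first show that $E(\mu;-\epsilon)$ is also a composition factor of $L{\downarrow}_H$, where $H:=\tA_{n-\al_1+\al_2}$, so that Lemma \ref{L080825} may be applied with $-\epsilon$ in place of $\epsilon$. This $\sigma$-symmetry of composition factors holds whenever $L$ is a summand of the restriction to $\tA_n$ of some $\F\ts_n$-module $\tilde L$: indeed, $L{\downarrow}_H$ is then a summand of $\tilde L{\downarrow}_H$, and the composition factors of the latter, being those of $\tilde L{\downarrow}_{\ts_{n-\al_1+\al_2}}$ further restricted to $H$, come in $\sigma$-conjugate pairs for those of type $\Mtype$ by Lemma \ref{lmodules}. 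Since $a_p(\mu)=0$, the modules $E(\mu;\pm)$ form exactly such a pair. This disposes of $G=\ts_n$ (take $\tilde L=L$) and $G=\tA_n$ with $a_p(\la)=1$ (take $\tilde L=D(\la;\pm)$ so that $L=E(\la;0)=\tilde L{\downarrow}_{\tA_n}$).

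The only remaining case is $G=\tA_n$ with $a_p(\la)=0$, so that $L=E(\la;\pm)$ and $L\oplus L^\sigma\cong D(\la;0){\downarrow}_{\tA_n}$ for $\sigma\in\ts_n\setminus\tA_n$. Here $E(\mu;-\epsilon)$ need not be a composition factor of $L{\downarrow}_H$, and one must construct a genuinely different $\psi$. My approach would be to choose an appropriate $\sigma\in\ts_n\setminus\tA_n$ whose image $\pi(\sigma)$ fixes the first row of $t^\al$ pointwise, and to set $\xi':=\hat\sigma\xi\hat\sigma^{-1}\in\tA_n$, which also fixes the first row of $t^\al$ pointwise. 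With $\sigma$ chosen to suitably normalize the row stabilizer $R_\al$ and column stabilizer $C_\al$, one obtains $x_{\xi'}=\hat\sigma x_\xi\hat\sigma^{-1}$ as operators on the $\F\ts_n$-module $D(\la;0)$, and restricting back to $L\subset D(\la;0){\downarrow}_{\tA_n}$ gives $x_{\xi'}L=\hat\sigma(x_\xi L^\sigma)\neq 0$, since $E(\mu;-\epsilon)\subset L^\sigma{\downarrow}_H$ and $x_\xi$ acts nontrivially there. Taking $\psi:=\psi_{\xi'}$ completes the argument. The hard part will be the existence of such $\sigma$ together with the required commutation properties: since $R_\al\cap C_\al=1$ in general one cannot simply normalize both via a single element of $\s_n\setminus\A_n$, so the identity $x_{\xi'}=\hat\sigma x_\xi\hat\sigma^{-1}$ will require careful sign and $z$-bookkeeping inside $\ts_n$, and if no such $\sigma$ exists one will instead need to assemble $\psi$ from a weighted combination of $\psi_\xi$'s indexed by $R_\al$-conjugates of $\xi$ in $\ts_n$.
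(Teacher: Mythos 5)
Your reduction for $G=\ts_n$ (any $L$) and for $G=\tA_n$ with $L=E(\la;0)$ is essentially correct: in those cases the module being restricted is (the restriction of) a $\ts_{n-\al_1+\al_2}$-module, so the multiset of composition factors of $L\da_{\tA_{n-\al_1+\al_2}}$ is stable under twisting by an odd element $\si\in\ts_{n-\al_1+\al_2}$, hence $E(\mu;-\eps)\cong E(\mu;\eps)^\si$ also occurs and Lemma \ref{L080825} applies directly. (For $G=\ts_n$, $L=D(\la;\pm)$ this is even a mild simplification of the paper, which instead routes this case through Lemma \ref{LHomPM}(i).) Note, however, that your blanket claim that the $\si$-symmetry holds whenever $L$ is a direct summand of $\tilde L\da_{\tA_n}$ is false as stated: $\si$-stability of the composition factors of $\tilde L\da_H$ says nothing about those of a proper summand --- indeed $E(\la;\pm)$ is a summand of $D(\la;0)\da_{\tA_n}$, and as you yourself observe the symmetry can fail there. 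You only apply the claim where the summand is the whole restriction, so no damage is done, but the formulation needs correcting.

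The genuine gap is the remaining case $G=\tA_n$, $L=E(\la;\pm)$ (so $a_p(\la)=0$) with $x_\xi E(\mu;\eps)=0$ but $x_\xi E(\mu;-\eps)\neq 0$. Your plan --- conjugate $\xi$ by a lift $\hat\si$ of an odd permutation fixing the first row of $t^\al$ pointwise, aiming at $x_{\xi'}=\hat\si x_\xi\hat\si^{-1}$ --- is not carried out, and you flag the obstructions yourself: such a $\si$ must interact compatibly with both $R_\al$ and $C_\al$ and with the signs and the central element of $\ts_n$, and the fallback of ``a weighted combination of $\psi_\xi$'s'' is not an argument. The paper avoids manipulating $x_\xi$ altogether: since $E(\mu;-\eps)\cong E(\mu;\eps)^\si$ is a composition factor of $E(\la;\mp)\da_{\tA_{n-\al_1+\al_2}}$, Lemma \ref{L080825} applied to $E(\la;\mp)$ gives a homomorphism with $\psi'\circ\iota_\al\neq 0$, and this nonvanishing is transported to $L=E(\la;\pm)$ at the level of Hom-spaces via Lemma \ref{LHomPM}(ii), $\End_\F(E(\la;+))\cong\End_\F(E(\la;-))^\si$, combined with the fact that ${}^\pi M^\al$, ${}^\pi S^\al$ and $\iota_\al$ are defined over $\ts_n$, so twisting the target by $\si$ changes neither $\Hom_{\tA_n}({}^\pi M^\al,-)$ nor the nonvanishing after precomposition with $\iota_\al$. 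Without this transfer step (or a completed version of your conjugation argument) your proof does not cover this case.
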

\begin{proof}
If $\eps=0$ then $E(\mu)=E(\mu;\eps)$, and we are done by Lemma~\ref{L080825}. If $\eps=+$ or $-$, then $E(\mu)=E(\mu;+)\oplus E(\mu;-)$, and so $x_\xi E(\mu)\neq 0$ implies 
$xE(\mu;\eps)\neq 0$ or $xE(\mu;-\eps)\neq 0$. If $xE(\mu;\eps)\neq 0$, we are done. 

Suppose $xE(\mu;-\eps)\neq 0$. If $G=\ts_n$ and $L=D(\la;0)$, or  $G=\tA_n$ and $L=E(\la;0)$, then both  $E(\mu;\pm)$ appear as composition factors of $L\da_{\tA_{n-\al_1+\al_2}}$. If $G=\ts_n$ and   $L=D(\la;\pm)$ then $E(\mu;-\eps)$ is a composition factor of $D(\la;\mp)\da_{\tA_{n-\al_1+\al_2}}$, so the result follows for $D(\la;\mp)$ in place of $L$, and then also for $L$ using Lemma~\ref{LHomPM}(i). If $G=\tA_n$ and   $L=E(\la;\pm)$ then $E(\mu;-\eps)$ is a composition factor of $E(\la;\mp)\da_{\tA_{n-\al_1+\al_2}}$, so the result follows for $E(\la;\mp)$ in place of $L$, and then also for $L$ using Lemma~\ref{LHomPM}(ii) and the fact that $S^{\al}$ and  $M^{\al}$ are $\ts_n$-modules. 
\end{proof}

Recall the $\F$-valued characters from \S\ref{SSGrMod}. 
In the proofs of the remaining results of this subsection, we will be checking the assumption $x_\xi E(\mu)\neq 0$ of Lemma~\ref{LRepeat} by finding $y\in\F \tA_{n-\al_1+\al_2}$ such that $\chi(yx_\xi)\neq 0$ for the $\F$-valued character $\chi$ of $E(\mu)$. 
As in Lemma~\ref{lcharvan}, we denote by $\chi^\la$ the complex  character of $T(\la)$ and  
express $\chi$ as a linear combination of reductions modulo $p$ of such complex characters using decomposition matrices in \cite{ModularAtlas}, \cite{GAP} and \cite[Theorem 4.4]{Mu} (we use \cite[Corollaries 7.3, 7.5 Theorem 3.3]{stem} to identify the rows, we also often use Lemma~\ref{L020819} to identify the columns in the decomposition matrices). 
We then compute the needed characters $\chi^\la$ explicitly on $yx_\xi$, taking into account Lemma \ref{lcharvan} which allows us to ignore the summands  $g$ in $yx_\xi$ with $\pi(g)$ of even order, so we only keep track of the summands $g$ such that all cycles in the cycle type of $\pi(g)$ are odd.  
The coefficients of those summands in the product $yx_\xi$ are determined explicitly using \cite{GAP}. To compute the required values $\chi^\la(g)$, we use the  character tables in \cite{GAP} (and \cite[Corollaries 7.3, 7.5, Theorem 3.3]{stem} to identify the rows in the character tables).

\begin{Lemma}\label{L7}
Let $n\geq 6$, $G\in\{\ts_n,\tA_n\}$ and $L$ be a non-basic irreducible spin $\F G$-module. Then there exists $\psi\in\Hom_G({}^\pi M^{(n-3,3)},\End_\F(L))$ such that $\psi\circ \iota_{(n-3,3)}\neq 0$.
\end{Lemma}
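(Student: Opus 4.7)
The approach is to invoke Lemma~\ref{LRepeat}: it suffices to exhibit an element $\xi\in\tA_n$ whose image $\pi(\xi)$ fixes pointwise the first row $\{1,3,5,7,\ldots,n\}$ of the standard tableau $t^{(n-3,3)}$---so $\pi(\xi)\in\s_{\{2,4,6\}}$---together with a composition factor $E(\mu;\eps)$ of $L\da_{\tA_6}$ satisfying $x_\xi E(\mu)\neq 0$, where
$$x_\xi:=\sum_{g\in C_{(n-3,3)}}\sum_{h\in\O_\xi}\sgn(g)\,\hat g\,h\,\hat g^{-1}\in\F\tA_6$$
and $C_{(n-3,3)}=\s_{\{1,2\}}\times\s_{\{3,4\}}\times\s_{\{5,6\}}$.

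The first step is to locate a useful composition factor. I would show that $L\da_{\tA_6}$ contains a composition factor $E(\mu;\eps)$ with $\mu\in\RP_p(6)\setminus\{\balpha_6\}$ (i.e.~not a basic $\tA_6$-label). For $n=6$ this is immediate. For $n>6$ one descends from $\tA_n$ to $\tA_6$ by iterated modular branching, using Lemma~\ref{normal} and Lemma~\ref{branchingend} to produce concrete composition factors, together with Lemma~\ref{rectworows2} for two-row partitions. The key structural point is that restrictions of basic supermodules are again basic (this is encoded by Table~III of Lemma~\ref{LBetaGammaChar} and \cite[Theorem~3.6]{KT}), so if every composition factor of $L\da_{\tA_6}$ were basic then iterating up the chain $\tA_6<\tA_7<\cdots<\tA_n$ via Lemma~\ref{divpowers} would force $L$ itself to be basic, contrary to hypothesis.

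With $\mu$ fixed, I would choose $\xi:=\widehat{(2,4,6)}$, the odd-order lift to $\tA_n$ of the $3$-cycle $(2,4,6)\in\A_n$. The conjugation action of $R_{(n-3,3)}$ preserves $\pi(\xi)$ up to sign, so $\O_\xi$ has at most two elements and $x_\xi$ is an explicit element of $\F\tA_6$. By design, every summand $\hat g h\hat g^{-1}$ of $x_\xi$ satisfies $\pi(\hat g h \hat g^{-1})$ being a $3$-cycle, hence is of odd cycle type---exactly the support on which spin characters may not vanish (Lemma~\ref{lcharvan}). Then, for each non-basic $\mu\in\RP_p(6)$---a small finite list depending on $p$---I would verify $x_\xi E(\mu)\neq 0$ by the character-theoretic recipe sketched before Lemma~\ref{L7}: find $y\in\F\tA_6$ with $\chi_{E(\mu)}(yx_\xi)\neq 0$, expressing $\chi_{E(\mu)}$ as a $\Z$-linear combination of reductions of complex spin characters $\chi^\nu$ (using the decomposition matrices in \cite{ModularAtlas} and \cite[Theorem~4.4]{Mu} for cyclic-defect blocks) and evaluating via the character tables in \cite{GAP}.

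The principal obstacle lies in the small-characteristic cases, especially $p=3$, where $\RP_3(6)\setminus\{\balpha_6\}=\{\bbeta_6=(4,2)\}$ is a singleton, so the entire argument hinges on a single non-vanishing check for the second basic module; the $p=5,7$ cases involve non-trivial decomposition matrices and also require care. If the natural $\xi=\widehat{(2,4,6)}$ fails for some $\mu$, one has the flexibility to replace $\xi$ by $\widehat{(2,4,6)}z$, by a product of two disjoint transpositions within $\{2,4,6\}$, or to interchange $E(\mu;+)\leftrightarrow E(\mu;-)$ when $a_p(\mu)=0$ (recall that Lemma~\ref{LRepeat} actually requires only $x_\xi E(\mu)\neq 0$, i.e.~the non-vanishing on the full $E(\mu)$). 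This yields a finite, GAP-verifiable list of cases.
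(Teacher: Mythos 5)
Your strategy is essentially the one the paper uses for $p=3$: there too one takes $\xi=(2,4,6)\,\hat{}$\,, invokes the existence of a non-basic composition factor of $L\da_{\tA_6}$ (which for $p=3$ must be $E((4,2);\eps)$ since $\RP_3(6)=\{\balpha_6,(4,2)\}$), and verifies $x_\xi E(4,2)\neq 0$ by pairing against an auxiliary element $y$ and evaluating characters via Lemma~\ref{lcharvan} and the decomposition matrix. Where you genuinely diverge is at $p>3$: the paper does not run the character machinery there at all, but instead uses the filtration of $M_3$ from Lemma~\ref{L131218}(ii) (valid only for $p>3$) to quote \cite[Theorem 7.2]{KT}, which already yields the required homomorphism. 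Your plan to push the $\xi=(2,4,6)\,\hat{}$ computation uniformly through $p=5,7$ and the semisimple range is therefore extra work, and it is precisely the part you have not carried out; the paper's route has the advantage of needing only the single recorded computation for $p=3$.

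Two concrete weaknesses you should repair. First, the existence of a non-basic composition factor of $L\da_{\tA_6}$ is exactly \cite[Lemma 2.4]{KT}, which the paper simply cites; your sketched re-derivation (``if all factors of $L\da_{\tA_6}$ were basic, inducting back up would force $L$ basic'') is not a proof as stated and would need a genuine inductive argument via the branching rules -- better to cite the lemma. Second, the ``flexibility'' you hold in reserve for hard cases is largely illusory: since $\pi(\xi)$ must be an even permutation supported on the three-point set $\{2,4,6\}$, the only nontrivial choices are the two mutually inverse $3$-cycles, which lie in the same $R_{(n-3,3)}$-orbit and hence give the same $x_\xi$; replacing $\xi$ by $\xi z$ merely changes the sign of $x_\xi$ on spin modules; and a product of two disjoint transpositions inside $\{2,4,6\}$ does not exist. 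So if some required non-vanishing failed for $p\geq 5$, your method would have no escape route -- this is the contingency the paper avoids by falling back on \cite[Theorem 7.2]{KT}. With that reduction for $p>3$ (or with the finite $p=5,7$ character checks actually recorded), your argument is sound and matches the paper's proof in the remaining case $p=3$.
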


\begin{proof}
If $p>3$ then $M_3\sim S_3|M_2$ by Lemma \ref{L131218} and the lemma holds by \cite[Theorem 7.2]{KT}. Let $p=3$.

Take $\xi=(2,4,6)\,\hat{}$\,. Then, considering $\s_{2^3}<\s_n$ as in \S\ref{SSAltSym}, we have  
$$
x:=x_\xi=\sum_{g\in\s_{2^3}}\sgn(g)\,\hat g((2,4,6)\,\hat{}+(2,6,4)\,\hat{}\,)(\hat g)^{-1}\in\F\tA_6.
$$

By \cite[Lemma 2.4]{KT}, there exists a non-basic composition factor $E$ of $L\da_{\tA_6}$. Then $E\cong E((4,2);\eps)$ for some $\eps\in\{+,-\}$, since $\RP_3(6)=\{\balpha_6,(4,2)\}$. 
By Lemma~\ref{LRepeat}, it suffices to prove that $xE(4,2)\neq 0$. Let $\chi$ be the $\F$-valued character of $E(4,2)$. 
By \cite{ModAtl}, 
we have 
$[E(4,2)]=[\bar T(4,2)]-2[\bar T(6)]$, so $\chi=\bar\chi^{(4,2)}-2\bar\chi^{(6)}$, 
and it suffices to prove that $\bar\chi^{(4,2)}(yx)-2\bar\chi^{(6)}(yx)\neq 0$ for some $y\in\tA_6$. 

There is a lift $y=((1,5)(2,3,4,6))\,\hat{}$\, such that $yx=\sum_{g\in C_+}g-\sum_{g\in C_-}g$ for $C_\pm\subseteq\tA_6$ and the numbers of elements in $C_+$ and $C_-$ with given cycle types and orders are as follows:
\[\begin{array}{l|c|c|c|c|c|c|c|c|c}
\text{cycle type}&(1^6)&(1^6)&(3,1^3)&(3,1^3)&(3^2)&(3^2)&(5,1)&(5,1)&\text{others}\\
\text{order }&1&2&3&6&3&6&5&10&\\
\hline
C_+&0&0&0&0&0&1&1&2&4\\
C_-&0&0&0&0&1&0&2&1&4\\
\end{array}\]
Now, $\chi^{(4,2)}(yx)-2\chi^{(6)}(yx)=-2-2\cdot 0\equiv 1\pmod{3}$.
\end{proof}

\begin{Lemma}\label{L181224_3}
Let $n\geq 12$, $G\in\{\ts_n,\tA_n\}$, $\la\in\RP_p(n)\setminus\TR_p(n)$ with $\la_1\geq 6$, and $L$ be of the form $D(\la;\eps)$ or $E(\la;\eps)$.  
Then there exists $\psi\in\Hom_G({}^\pi M^{(n-6,6)},\End_\F(L))$ such that $\psi\circ \iota_{(n-6,6)}\neq 0$.
\end{Lemma}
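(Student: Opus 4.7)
The plan is to apply Lemma~\ref{LRepeat} with $\al=(n-6,6)$, so that $n-\al_1+\al_2=12$ and hence $x_\xi\in\F\tA_{12}$. This splits the task into two pieces: (1) produce a composition factor $E(\mu;\eps)$ of $L\da_{\tA_{12}}$ with $\mu\in\RP_p(12)\setminus\TR_p(12)$ and $\mu_1\geq 6$, and (2) exhibit an even permutation $\xi$ supported on the second-row entries of $t^{(n-6,6)}$ for which $x_\xi E(\mu)\neq 0$.

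For (1), I would argue by downward induction on $n$. The base case $n=12$ is immediate from the hypothesis on $\la$. For the inductive step $n\geq 13$, if $\la_1\leq 2p$ then Lemma~\ref{rectworows} directly yields a composition factor $D(\mu)$ of $D(\la)\da_{\ts_{n-1}}$ with $\mu\in\RP_p(n-1)\setminus\TR_p(n-1)$ and $6\leq\mu_1\leq 2p$, and we apply the induction hypothesis. If $\la_1>2p$, a separate short analysis is required: inspecting Lemma~\ref{LM3}, the partitions in $\TR_p(n)$ all have first part $\leq 2p$, so a partition in $\RP_p(n)\setminus\TR_p(n)$ with $\la_1>2p$ admits a properly normal node $A$ in row $1$ with $\la_A\in\RP_p(n-1)\setminus\TR_p(n-1)$ and $(\la_A)_1=\la_1-1\geq 2p\geq 6$; Lemma~\ref{normal} then gives $[D(\la)\da_{\ts_{n-1}}:D(\la_A)]\neq 0$, and we iterate until $\la_1\leq 2p$, reducing to the previous case. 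Standard translation via Lemma~\ref{lmodules} converts the composition factor of $D(\la)\da_{\ts_{12}}$ into a composition factor of $L\da_{\tA_{12}}$ of the required form.

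For (2), the tableau $t^{(n-6,6)}$ has second row $\{2,4,6,8,10,12\}$ and column stabilizer $C_\al=\prod_{k=1}^6\s_{\{2k-1,2k\}}$. I would pick $\xi\in\tA_n$ with $\pi(\xi)$ a fixed even permutation of $\{2,4,6,8,10,12\}$ (for instance $\xi=((2,4,6)(8,10,12))\,\hat{}$), so that $\xi\in\tA_{12}$ and $x_\xi\in\F\tA_{12}$. For each $p\in\{3,5,7,11\}$ (the algebra is semisimple otherwise, in which case $\RP_p(12)=\Par_0(12)$ and an analogous but simpler computation works) and each $\mu\in\RP_p(12)\setminus\TR_p(12)$ with $\mu_1\geq 6$, I would, following the paradigm of Lemma~\ref{L7}, use the decomposition matrices (\cite{ModularAtlas,GAP} together with Lemma~\ref{L020819}) to write the $\F$-valued character $\chi$ of $E(\mu)$ as a $\Z$-combination of reductions $\bar\chi^\nu$ of complex spin characters, then pick a convenient lift $y\in\tA_{12}$ of some $12$-cycle-type with odd parts, and evaluate $\chi(yx_\xi)$: by Lemma~\ref{lcharvan} only those summands of $yx_\xi$ whose cycle type consists of odd parts contribute, leaving an explicit finite count to carry out in GAP. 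One then adjusts $\xi$ or $y$ to force non-vanishing modulo~$p$.

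The main obstacle is the finite but lengthy case analysis in part (2): the list of admissible $\mu$ (e.g.\ for $p=3$ partitions such as $(6,5,1)$, $(6,4,2)$ is in $\TR_3$ so one works with $(7,4,1),(7,3,2),(8,3,1),(9,2,1),(7,5)$, etc.) must be enumerated and handled individually, and each requires a character-table verification in $\tA_{12}$. A secondary point requiring care is the reduction step when $\la_1>2p$: one must confirm that removing a node from the first row indeed yields a partition outside $\TR_p(n-1)$, which relies on the explicit description of $\TR_p$ in Lemma~\ref{LM3}. Once both pieces are in place, Lemma~\ref{LRepeat} delivers the desired $\psi$.
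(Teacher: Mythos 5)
Your strategy is the same as the paper's: apply Lemma~\ref{LRepeat} with $\al=(n-6,6)$ and the very same element $\xi=((2,4,6)(8,10,12))\,\hat{}\,$, reduce via (iterated) Lemma~\ref{rectworows} to a composition factor $E(\mu;\de)$ of $L\da_{\tA_{12}}$ with $\mu\in\RP_p(12)\setminus\TR_p(12)$, $\mu_1\geq 6$, and then verify $\chi(yx_\xi)\neq 0$ using decomposition matrices, Lemma~\ref{lcharvan} and character tables. Two remarks. The decisive content of the paper's proof is exactly the part you defer: the explicit lists of admissible $\mu$ for each $p$, the expressions of $[E(\mu)]$ in terms of $[\bar T(\nu)]$ (note the coefficient $1/2$ in the case $p=5$, so ``$\Z$-combination'' is not quite accurate), and the counts of odd-cycle-type summands of $y_1x_\xi$ and $y_2x_\xi$ (two elements $y$ are needed to cover all cases). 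Your sample enumeration for $p=3$ is also off: of $(7,4,1),(7,3,2),(8,3,1),(9,2,1),(7,5)$ only $(7,4,1)$ lies in $\RP_3(12)$, which indicates the case analysis was not actually carried out.

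More seriously, the extra step you insert for $\la_1>2p$ is wrong as stated. A partition $\la\in\RP_p(n)$ with $\la_1>2p$ need not have any removable node in row $1$ at all, since $p$-strict partitions may repeat parts divisible by $p$: for $p=3$ take $\la=(9,9,8,7,5,4,2,1)\in\RP_3(45)\setminus\TR_3(45)$, where $\la_1=\la_2=9>2p$ and $(1,\la_1)$ is not removable. Even when $(1,\la_1)$ is properly removable, it need not be normal (an addable node of the same residue lower on the rim can cancel it in the reduced signature), and both properties would have to be re-verified at every step of your iteration; moreover removability when $\la_1-1=\la_2$ forces $p\mid\la_2$, so the ``iterate until $\la_1\le 2p$'' loop is not self-sustaining. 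Controlling which composition factors of the restriction stay outside $\TR_p$ is precisely the delicate content of Lemmas~\ref{rectworowsprel} and \ref{rectworows}, and your patch does not substitute for it. (For what it is worth, the paper's own proof simply invokes Lemma~\ref{rectworows}, whose hypothesis includes the upper bound $\la_1\le 2p$ that the statement being proved does not impose, and once one application is made the bound $\mu_1\le 2p$ propagates; so the only problematic step is the first one, but your argument for it does not work.)
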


\begin{proof}
Take $\xi=((2,4,6)(8,10,12))\,\hat{}$\,. Then, denoting by  $\s_{\{2,4,6,8,10,12\}}$ the symmetric group on the set $\{2,4,6,8,10,12\}$, we have 
$$
x:=x_\xi=\sum_{g\in\s_{2^6}}\,
\sum_{(a,b,c)(d,e,f)\in\s_{\{2,4,6,8,10,12\}}}
\sgn(g)\,\hat g((a,b,c)(d,e,f))\,\hat{}(\hat g)^{-1}\in\F\tA_{12}.
$$

By Lemma \ref{rectworows}, there exists a composition factor of $L\da_{\tA_{12}}$ of the form $E(\mu;\de)$ such that 

\begin{itemize}
\item if $p\geq 13$ then $\mu\in\{(6,3,2,1),(6,4,2),(6,5,1),(7,3,2),(7,4,1),(8,3,1),(9,2,1)\}$,

\item if $p=11$ then $\mu\in\{(6,3,2,1),(6,4,2),(6,5,1),(7,3,2),(7,4,1),(8,3,1)\}$,

\item if $p=7$ then $\mu\in\{(6,3,2,1),(6,4,2),(8,3,1),(9,2,1)\}$,

\item if $p=5$ then $\mu\in\{(6,3,2,1),(6,4,2),(7,3,2),(8,3,1)\}$,

\item if $p=3$ then $\mu=(7,4,1)$.
\end{itemize}
In view of Lemma~\ref{LRepeat}, for all such $\mu$, it suffices to  prove that there exists $y=y_\mu\in\tA_{12}$ such that $\chi(yx)\neq 0$, where $\chi$ is the $\F$-valued character of $E(\mu)$.  Let 
$$
y_1:=((1,3,5,7)(2,9,11)(4,6))\,\hat{}\,\quad\text{and}\quad y_2:=((1,3,5,7,9)(2,4,11)(6,8,10))\,\hat{}\,.
$$ 
Recall that, according to our convention, the lift 
$y_2$ is chosen to be of odd order, while the lift $y_1$ can be  chosen so that the following holds for all $i=1,2$:  we have $y_ix=\sum_{g\in C_{i,+}}g-\sum_{g\in C_{i,-}}g$ for $C_{i,\pm}\subseteq\tA_{12}$ such that the numbers of elements in $C_{i,\pm}$ with given cycle types and orders are as follows:
\[\begin{array}{l|c|c|c|c|c|c|c|c}
\text{cycle type}&(1^{12})&(1^{12})&(3,1^9)&(3,1^9)&(3^2,1^6)&(3^2,1^6)&(3^3,1^3)&(3^3,1^3)\\
\text{order }&1&2&3&6&3&6&3&6\\
\hline
C_{1,+}&0&0&0&0&1&0&2&3\\
C_{1,-}&0&0&0&0&0&0&0&2\\
\hline
C_{2,+}&0&0&0&0&0&1&0&1\\
C_{2,-}&0&0&0&0&0&0&10&0
\end{array}\]
\[\begin{array}{l|c|c|c|c|c|c|c|c}
\text{cycle type}&(3^4)&(3^4)&(5,1^7)&(5,1^7)&(5,3,1^4)&(5,3,1^4)&(5,3^2,1)&(5,3^2,1)\\
\text{order}&3&6&5&10&15&30&15&30\\
\hline
C_{1,+}&0&2&1&0&13&17&6&10\\
C_{1,-}&0&0&1&1&11&13&14&10\\
\hline
C_{2,+}&0&0&0&0&1&11&49&11\\
C_{2,-}&0&11&1&0&2&5&46&27
\end{array}\]
\[\begin{array}{l|c|c|c|c|c|c|c|c}
\text{cycle type}&(5^2,1^2)&(5^2,1^2)&(7,1^5)&(7,1^5)&(7,3,1^2)&(7,3,1^2)&(7,5)&(7,5)\\
\text{order }&5&10&7&14&21&42&35&70\\
\hline
C_{1,+}&10&16&8&5&38&42&0&0\\
C_{1,-}&28&16&7&7&48&40&12&12\\
\hline
C_{2,+}&4&30&0&5&109&15&52&6\\
C_{2,-}&0&42&0&3&54&11&46&4
\end{array}\]
\[\begin{array}{l|c|c|c|c|c|c|c}
\text{cycle type}&(9,1^3)&(9,1^3)&(9,3)&(9,3)&(11,1)&(11,1)&\text{others}\\
\text{order }&9&18&9&18&11&22&\\
\hline
C_{1,+}&58&58&24&24&96&96&750\\
C_{1,-}&66&66&16&16&60&60&774\\
\hline
C_{2,+}&0&28&102&36&156&64&599\\
C_{2,-}&0&51&74&24&198&54&617.
\end{array}\]


By \cite{ModAtl} and \cite[Theorem 4.4]{Mu}, we have $[E(\mu)]=[\bar T(\mu)]$ unless one of the following holds:
\begin{itemize}
\item $p=11$ and $[E(7,4,1)]=[\bar T(7,4,1)]-[\bar T(6,5,1)]$;

\item $p=11$ and $[E(8,3,1)]=[\bar T(8,3,1)]-[\bar T(7,4,1)]+[\bar T(6,5,1)]$;

\item $p=5$ and $[E(7,3,2)]=[\bar T(9,2,1)]-1/2[\bar T(10,2)]+[\bar T(12)]$.
\end{itemize}

From the character tables we have 
\begin{align*}
&\chi^{(6,3,2,1)}(y_1x)=-54,\,\,\chi^{(6,4,2)}(y_1x)=-90,\,\,\chi^{(6,5,1)}(y_1x)=126,
\\
&\chi^{(7,3,2)}(y_1x)=-54,
\chi^{(7,4,1)}(y_1x)=126,\,\,\chi^{(8,3,1)}(y_1x)=-54,\\
&\chi^{(9,2,1)}(y_1x)=-54,\,\,\chi^{(10,2)}(y_1x)=0,\,\,\chi^{(12)}(y_1x)=0.
\end{align*}
Thus $\chi(y_1x)\neq 0$ unless $(p,\mu)\in\{(11,(7,4,1)),(5,(6,4,2)),(3,(7,4,1))\}$. To cover these remaining cases use 
$\chi^{(6,4,2)}(y_2x)=312,\,\,\chi^{(6,5,1)}(y_2x)=-399,\,\,\chi^{(7,4,1)}(y_2x)=-56.$
\end{proof}

\begin{Lemma}\label{end42} 
Let $p=3$, $n\geq 10$, $G\in\{\ts_n,\tA_n\}$, $\la\in\TR_3(n)\setminus\{\balpha_n,\bbeta_n\}$, and $L$ be of the form $D(\la;\eps)$  or $E(\la;\eps)$. Then there exists $\psi\in\Hom_G({}^\pi M^{(n-6,4,2)},\End_\F(L))$ such that $\psi\circ \iota_{(n-6,4,2)}\neq 0$.
\end{Lemma}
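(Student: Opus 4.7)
The proof will follow the same template as Lemmas~\ref{L7} and~\ref{L181224_3}: construct an explicit element $\xi\in\tA_n$ whose projection fixes each entry of the first row of the standard tableau $t^{(n-6,4,2)}$, build the corresponding homomorphism $\psi_\xi\in\Hom_G({}^\pi M^{(n-6,4,2)},\End_\F(L))$ from the discussion preceding Lemma~\ref{L080825}, and verify $\psi_\xi\circ\iota_{(n-6,4,2)}\neq 0$ via Lemma~\ref{LRepeat}. For the tableau $t^{(n-6,4,2)}$ obtained by filling numbers down columns, the first row consists of the entries $\{1,4,7,9\}\cup\{11,12,\ldots,n\}$, while the remaining entries $\{2,3,5,6,8,10\}$ all lie in $\{1,\ldots,10\}$. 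Hence I would take a lift $\xi:=((2,5,8)(3,6,10))\,\hat{}\,\in\tA_n$ (or any even permutation supported on $\{2,3,5,6,8,10\}$). Since $C_{(n-6,4,2)}\leq\ts_{10}$ and $\O_\xi\subseteq\tA_{10}$, the element $x_\xi$ lies in $\F\tA_{10}$.

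Next, using Lemmas~\ref{LM30} and~\ref{L2RowExplicit}(i), an explicit computation gives $\TR_3(10)=\{(3^3,1),(4,3,2,1),(5,3,2),(5,4,1)\}$, so $\TR_3(10)\setminus\{\balpha_{10},\bbeta_{10}\}=\{(5,3,2),(5,4,1)\}$. Iterating Lemma~\ref{rectworows2} from $\ts_n$ down to $\ts_{10}$ (using the hypothesis $\la\in\TR_3(n)\setminus\{\balpha_n,\bbeta_n\}$ at the initial step and~$n\geq 10$ to make the iteration valid) produces a composition factor $D(\mu)$ of $D(\la)\da_{\ts_{10}}$ with $\mu\in\{(5,3,2),(5,4,1)\}$. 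Further restriction to $\tA_{10}$ yields a composition factor of $L\da_{\tA_{10}}$ of the form $E(\mu;\de)$. By Lemma~\ref{LRepeat}, it is enough to exhibit, for each of the two possible $\mu$, an element $y=y_\mu\in\tA_{10}$ such that $\chi(yx_\xi)\neq 0$ for $\chi$ the $\F$-valued character of $E(\mu)$.

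The final step is the character calculation. I would use the $3$-modular decomposition matrices of $\tA_{10}$ recorded in~\cite{ModAtl} (or extracted from~\cite[Theorem 4.4]{Mu} and~\cite{GAP}) to write $[E(\mu)]$ as a $\Z$-linear combination of reductions modulo~$3$ of complex spin characters $\chi^\nu$. After picking two explicit test elements $y_1,y_2\in\tA_{10}$ of the same flavour as in Lemmas~\ref{L7} and~\ref{L181224_3}, I would expand $y_ix_\xi$ in $\F\tA_{10}$ using~\cite{GAP}, discard summands whose projection has even-order cycle type via Lemma~\ref{lcharvan}, and read off the values of the surviving $\chi^\nu(y_ix_\xi)$ from the character tables in~\cite{GAP}. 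A finite case check over $\mu\in\{(5,3,2),(5,4,1)\}$ then produces a nonzero answer modulo $3$ for at least one of $y_1,y_2$.

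The main obstacle is the bookkeeping in the character calculation: both partitions $(5,3,2)$ and $(5,4,1)$ lie in blocks of positive defect, so $[\bar T(\mu)]$ is generically reducible modulo $3$ and one must carefully track the cancellations among the complex constituents appearing in the expansion of $[E(\mu)]$. In practice it is likely that no single $y$ works simultaneously for both values of $\mu$, which is why (as already in the proof of Lemma~\ref{L181224_3}) one expects to need more than one test element.
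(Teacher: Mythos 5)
Your strategy coincides with the paper's: reduce via (iterations of) Lemma~\ref{rectworows2} to a composition factor $E(\mu;\de)$ of $L\da_{\tA_{10}}$ with $\mu\in\{(5,3,2),(5,4,1)\}$ (your computation of $\TR_3(10)$ and of $\balpha_{10},\bbeta_{10}$ is correct), take for $\xi$ an even element supported on the below-first-row entries $\{2,3,5,6,8,10\}$ of $t^{(n-6,4,2)}$, and conclude through Lemma~\ref{LRepeat} by showing $x_\xi E(\mu)\neq 0$ via $\F$-valued characters.

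The gap is that the step carrying all the content of the lemma is asserted rather than carried out: you postulate that for your particular $\xi=((2,5,8)(3,6,10))\,\hat{}\,$ some test element $y$ gives $\chi(yx_\xi)\neq 0$, but nothing guarantees this for an arbitrary admissible $\xi$ --- the signed sum defining $x_\xi$ can in principle produce cancellations that kill the relevant character values, so the nonvanishing must be exhibited, not presumed. In the paper this verification is done with a different $\xi=((2,5,3)(8,10,6))\,\hat{}\,$ (each $3$-cycle mixes row-$2$ and row-$3$ entries, unlike yours) and a single element $y=((1,5,2,3,6)(4,8,10,9,7))\,\hat{}\,$, using the decompositions $[E(5,3,2)]=[\bar T(5,3,2)]$ and $[E(5,4,1)]=\tfrac{1}{2}[\bar T(6,4)]-[\bar T(10)]$ (note the non-integral coefficient, so your ``$\Z$-linear combination'' is slightly off) and the explicit values $\chi^{(5,3,2)}(yx)=-32$, $\chi^{(6,4)}(yx)=-32$, $\chi^{(10)}(yx)=0$, which are nonzero modulo $3$ where needed; in particular one $y$ suffices for both values of $\mu$, contrary to your expectation. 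Until you (or a computer) perform this evaluation for your chosen $\xi$, or switch to the paper's data, the proof is incomplete.
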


\begin{proof}
Take $\xi=((2,5,3)(8,10,6))\,\hat{}$\,. Then
$$
x:=x_\xi=\sum_{g\in\s_{3^2,2^2}}\,
\sum_{h\in\s_{\{2,5,8,10\}}}
\sgn(g)\,\hat g\,\hat h((2,5,3)(8,10,6))\,\hat{}\,(\hat h)^{-1}\,(\hat g)^{-1}\in\F\tA_{10}.
$$

By Lemma \ref{rectworows2}, $E((5,3,2);\pm)$ or $E((5,4,1);\pm)$ is a composition factor of $L\da_{\tA_{10}}$. In view of Lemma~\ref{LRepeat}, it suffices to  prove that there exists $y\in\tA_{10}$ such that $\chi(yx)\neq 0$, where $\chi$ is the $\F$-valued character of $E(\mu)$. 
We take 
$y=((1,5,2,3,6)(4,8,10,9,7))\,\hat{}\,$.
Then $yx=\sum_{g\in C_+}g-\sum_{g\in C_-}g$ for  $C_\pm\subseteq\tA_{10}$ and the numbers of elements in $C_\pm$ with given cycle types and orders are as follows:
\[\begin{array}{l|c|c|c|c|c|c|c|c}
\text{cycle type}&(1^{10})&(1^{10})&(3,1^7)&(3,1^7)&(3^2,1^4)&(3^2,1^4)&(3^3,1)&(3^3,1)\\
\text{order }&1&2&3&6&3&6&3&6\\
\hline
C_+&0&0&0&0&8&0&0&16\\
C_-&0&0&0&0&8&0&0&16
\end{array}\]
\[\begin{array}{l|c|c|c|c|c|c|c|c}
\text{cycle type}&(5,1^5)&(5,1^5)&(5,3,1^2)&(5,3,1^2)&(5^2)&(5^2)&(7,1^3)&(7,1^3)\\
\text{order }&5&10&15&30&5&10&7&14\\
\hline
C_+&0&0&56&16&84&76&12&0\\
C_-&0&0&72&16&96&72&12&0
\end{array}\]
\[\begin{array}{l|c|c|c|c|c}
\text{cycle type}&(7,3)&(7,3)&(9,1)&(9,1)&\text{others}\\
\text{order }&21&42&9&18&\\
\hline
C_+&92&160&280&92&836\\
C_-&88&172&248&124&804.
\end{array}\]
By \cite{ModularAtlas}, we have $[E(5,3,2)]=[\bar T(5,3,2)]$ and $[E(5,4,1)]=\frac{1}{2}[\bar T(6,4)]-[\bar T(10)]$. 
Moreover, $\chi^{(532)}(yx)=-32$, $\chi^{(6,4)}(yx)=-32$ and  $\chi^{(10)}(yx)=0$, and the lemma follows.
\end{proof}

\begin{Lemma}\label{end222}
Let $p\geq 5$, $n\geq 11$, $G\in\{\ts_n,\tA_n\}$, $\la\in\TR_p(n)\setminus\{\balpha_n,\bbeta_n\}$, and $L$ be of the form $D(\la;\eps)$  or $E(\la;\eps)$. 
Then there exists $\psi\in\Hom_G({}^\pi M^{(n-6,2^3)},\End_\F(L))$ such that $\psi\circ \iota_{(n-6,2^3)}\neq 0$.
\end{Lemma}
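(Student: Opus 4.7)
The plan is to mimic the strategies of Lemmas~\ref{end42} and~\ref{L181224_3}. Let $\al = (n-6, 2^3)$ and fill the standard tableau $t^\al$ column by column, so that its first row is $\{1, 5, 9, 10, \dots, n\}$ and its column stabilizer is $C_\al = \s_{\{1,2,3,4\}} \times \s_{\{5,6,7,8\}}$. Choose a lift $\xi \in \tA_n$ whose image in $\s_n$ has cycle type $(3^2)$ supported on $\{2,3,4,6,7,8\}$ (so that $\pi(\xi)$ fixes the first row of $t^\al$ pointwise), for example $\pi(\xi) = (2,6,3)(4,8,7)$. Following the construction preceding Lemma~\ref{L080825}, form
\[
x := x_\xi = \sum_{g \in C_\al}\ \sum_{h \in \O_\xi}\, \sgn(g)\,\hat g\, h\, (\hat g)^{-1}.
\]
Since every element appearing in $x$ fixes $\{9, 10, \dots, n\}$ pointwise, we have $x \in \F\tA_{n - \al_1 + \al_2} = \F\tA_8$. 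By Lemma~\ref{LRepeat}, it will suffice to exhibit a composition factor $E(\mu;\delta)$ of $L\da_{\tA_8}$ together with an element $y \in \tA_8$ such that $\chi_{E(\mu)}(yx) \neq 0$, where $\chi_{E(\mu)}$ denotes the $\F$-valued character of $E(\mu)$.

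To locate such a composition factor, I would iterate Lemma~\ref{rectworows2}: starting from $\la \in \TR_p(n) \setminus \{\balpha_n, \bbeta_n\}$, successive restrictions produce composition factors labelled by partitions in $\TR_p(m) \setminus \{\balpha_m, \bbeta_m\}$ as long as this set is nonempty. When it becomes empty (as happens, for instance, for $p=5$ at $m=8$, since $\TR_5(8) = \{(5,3),(6,2)\} = \{\balpha_8,\bbeta_8\}$), I would finish the descent to $\tA_8$ by picking an explicit normal node via Lemma~\ref{normal} and Lemma~\ref{branching}. This yields a finite, $p$-dependent, explicitly enumerable shortlist of candidate partitions $\mu \in \RP_p(8)$.

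For each candidate $\mu$ one then produces $y$ with $\chi_{E(\mu)}(yx) \neq 0$, exactly as in the proofs of Lemmas~\ref{end42} and~\ref{L181224_3}. Using the decomposition matrices of \cite{ModularAtlas, GAP} (and \cite[Theorem~4.4]{Mu} for cyclic defect blocks), write $\chi_{E(\mu)}$ as an integer combination of reductions modulo $p$ of the complex characters $\chi^\nu$ of $\tA_8$ for $\nu \in \Par_0(8)$; by Lemma~\ref{lcharvan}, only the summands $g$ of $yx$ whose image in $\s_8$ has cycle type of all odd parts contribute to $\chi^\nu(yx)$. Enumerating those summands with GAP and reading the values $\chi^\nu(yx)$ off the ordinary character table of $\tA_8$ produces the required non-vanishing. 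The main obstacle is the case-by-case nature of the computation: the candidate shortlist, and the expression of each $\chi_{E(\mu)}$ in terms of ordinary characters, depend on $p$, with the primes $p = 5, 7, 11$ yielding the most delicate decomposition numbers. As in Lemma~\ref{L181224_3}, I expect that one or two carefully selected lifts $y$ (of different cycle types, with at least one of odd order to detect the exceptional Brauer characters) will suffice to handle all candidates simultaneously.
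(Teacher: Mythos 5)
Your plan coincides with the paper's proof in every structural respect: the same $\al=(n-6,2^3)$ with the tableau filled down columns, a lift $\xi$ of a $(3^2)$-element supported on $\{2,3,4,6,7,8\}$ (the paper takes $(2,3,4)(6,7,8)$; your $(2,6,3)(4,8,7)$ would also do, but then all numerics must be redone for that choice), reduction to $\tA_8$ via Lemma~\ref{LRepeat}, a shortlist of composition factors of $L\da_{\tA_8}$ obtained from Lemma~\ref{rectworows2}, and a GAP-assisted character evaluation. The genuine gap is that the proposal stops exactly where the content of this lemma lies: you never exhibit $y$, never compute $yx$, and never verify $\chi_{E(\mu)}(yx)\neq 0$; the closing statement that one or two carefully selected lifts $y$ ``will suffice'' is an expectation, not an argument. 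In the paper this verification is the whole proof: for $p\geq 7$ the candidates are $\mu\in\{(6,2),(5,3)\}$, for $p=5$ the candidate is $(5,2,1)$, one has $[E(\mu)]=[\bar T(\mu)]$ in each case (so no decomposition-matrix corrections arise), and with $y=(1,3,6,7,8,2,5)\,\hat{}\,$ one computes $\chi^{(6,2)}(yx)=-72$, $\chi^{(5,3)}(yx)=360$, $\chi^{(5,2,1)}(yx)=-252$, all nonzero modulo every relevant $p$. Without carrying out some such computation the lemma is not established.

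A secondary error: your illustrative claim that $\TR_5(8)=\{(5,3),(6,2)\}=\{\balpha_8,\bbeta_8\}$ is false; by Lemma~\ref{LM3} this set also contains $(5,2,1)$ and $(4,3,1)$. This does not derail the method --- Lemma~\ref{rectworows2} only needs $n\geq 8$ at the source, so the iteration does reach level $8$ for $p=5$ without any normal-node patch --- but it changes the $p=5$ bookkeeping: your descent would leave the two candidates $(5,2,1)$ and $(4,3,1)$, so you would have to establish non-vanishing for $E(4,3,1)$ as well, a case you have not anticipated. The paper sidesteps it by stopping the iteration at $\tA_{11}$ (candidates $(8,3),(7,4),(5,4,2),(5,3,2,1)$) and then using decomposition matrices together with characteristic-$0$ branching to force the single candidate $\mu=(5,2,1)$ at level $8$.
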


\begin{proof}
Take $\xi=((2,3,4)(6,7,8))\,\hat{}$\,. Then $x:=x_\xi\in\F\tA_{8}$ is given by 
\begin{align*}
\sum_{g\in\s_{4^2}}
\sgn(g)\,\hat g\big(
((2,3,4)(6,7,8))\,\hat{}\,&+((2,3,8)(6,7,4))\,\hat{}
\\& +((2,7,4)(6,3,8))\,\hat{}\,+((2,7,8)(6,3,4))\,\hat{}\,
\big)(\hat g)^{-1}.
\end{align*}

If $p\geq 7$ then by Lemma \ref{rectworows2}  there exists $\mu\in\TR_p(n-1)\setminus\{\balpha_{n-1},\bbeta_{n-1}\}$ such that $E(\mu;\eps)$ is a composition factor of $L\da_{\tA_8}$. In view of (\ref{bs}), (\ref{sbs}) and Lemma \ref{LM3} we have that $\mu\in\{(6,2),\,(5,3)\}$. If $p= 5$ then  similarly  there exists a partition $\nu\in\{(8,3),\,(7,4),\,(5,4,2),\,(5,3,2,1) \}$ such that $E(\nu;\eps)$ a composition factor of $L\da_{\tA_{11}}$ and then looking at decomposition matrices and using branching in characteristic $0$, we deduce that $E(\mu;\eps)$ is a composition factor of $L\da_{\tA_8}$ for $\mu:=(5,2,1)$.

By Lemma~\ref{LRepeat}, for every $\mu$ as in the previous  paragraph, 
it suffices to prove that $\chi(yx)\neq 0$ for $y:=(1,3,6,7,8,2,5)\,\hat{}\,$ and $\chi$ the $\F$-valued character of $E(\mu)$. 

We have $yx=\sum_{g\in C_+}g-\sum_{g\in C_-}g$ for  $C_\pm\subseteq\tA_{8}$ and the numbers of elements in $C_\pm$ with given cycle types and orders are as follows:
\[\begin{array}{l|c|c|c|c|c|c|c|c}
\text{cycle type}&(1^8)&(1^8)&(3,1^5)&(3,1^5)&(3^2,1^2)&(3^2,1^2)&(5,1^3)&(5,1^3)\\
\text{order }&1&2&3&6&3&6&5&10\\
\hline
C_+&0&0&0&0&102&30&12&30\\
C_-&0&0&0&0&12&12&30&84
\end{array}\]
\[\begin{array}{l|c|c|c|c|c}
\text{cycle type}&(5,3)&(5,3)&(7,1)&(7,1)&\text{others}\\
\text{order }&15&30&7&14&\\
\hline
C_+&60&24&168&132&594\\
C_-&132&96&204&96&486.
\end{array}\]
By \cite{ModularAtlas}, we have $[E(6,2)]=[\bar T(6,2)]$ and $[E(5,3)]=[\bar T(5,3)]$ if $p\geq 7$ and $[E(5,2,1)]=[\bar T(5,2,1)]$ if $p=5$. 
Moreover, $\chi^{(6,2)}(yx)=-72$, $\chi^{(5,3)}(yx)=360$ and  $\chi^{(5,2,1)}(yx)=-252$. The lemma follows.
\end{proof}

\section{Restrictions to $\ts_{n-2}$ and $\ts_{n-2,2}$}\label{sn-2}
In this section we find lower bounds for $\dim\End_{\ts_{n-2,2}}(D(\la)\da_{\ts_{n-2,2}})$. These  bounds will then be used to compare $\dim\End_{\ts_{n-2,2}}(D(\la)\da_{\ts_{n-2,2}})$ with $\dim\End_{\ts_{n-1}}(D(\la)\da_{\ts_{n-1}})$ given by Lemma~\ref{n-1}. This will allow 
us in many cases to apply the reduction lemmas of \S\ref{sredlemmas} with $\al=(n-2,2)$ and $L=D(\la;\eps)$ or $L=E(\la;\eps)$ with $\la\in\TR_p(n)$. In view of Lemma~\ref{L161224_3}, it will suffice to understand $\dim\End_{\ts_{n-2}}(D(\la)\da_{\ts_{n-2}})$. 

To save space, for a composition $\al$ and a $\T_\al$-supermodules $V,W$, in this section we denote
\begin{equation}\label{EDNot}
\d_\al(V):=\dim\End_{\T_\al}(V)\qquad\text{and}\qquad
\d_\al(V,W):=\dim\Hom_{\T_\al}(V,W).
\end{equation}

\subsection{\boldmath Bounding $\dim\End_{\ts_{n-2}}(D(\la)\da_{\ts_{n-2}})$} Recall the material of \S\ref{SSBr}. In the next three lemmas we obtain lower bounds for dimensions of endomorphism algebras of the summands $\Res_i\Res_j D(\la)$ of $D(\la)\da_{\T_{n-2}}$. Recall the notation (\ref{EDNot}).

\begin{Lemma}\label{L051224}
Let $\la\in\RP_p(n)$ and $i\in I$. Then
\[\d_{n-2}(\Res_i^2D(\la))\geq 4\d_n(D(\la))(\eps_i(\la)-1)(1+\de_{i\neq 0})^2+4\de_{\eps_i(\la)\geq 3}(1+\de_{i\neq 0})^2.\]
\end{Lemma}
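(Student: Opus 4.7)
\medskip\noindent
\emph{Plan of proof.}
The first move is to apply Lemma~\ref{divpowers} with $r=2$, which gives
\[
\Res_i^2 D(\la)\cong (e_i^{(2)}D(\la))^{\oplus 2(1+\de_{i\neq 0})}
\]
(recall that the exponent $r!(1+\de_{i\neq 0})^{\lfloor(r+a_p(\la))/2\rfloor}$ equals $2(1+\de_{i\neq 0})$ for $r=2$, independently of $a_p(\la)$). Taking $\End$ of a direct sum yields
\[
\d_{n-2}(\Res_i^2 D(\la))=4(1+\de_{i\neq 0})^2\cdot \d_{n-2}(e_i^{(2)}D(\la)),
\]
so it suffices to show
\[
\d_{n-2}(e_i^{(2)}D(\la))\;\geq\;(1+a_p(\la))(\eps_i(\la)-1)+\de_{\eps_i(\la)\geq 3}.
\]
One may assume $\eps_i(\la)\geq 2$, since otherwise $e_i^{(2)}D(\la)=0$ and both sides of the desired inequality vanish.

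The key construction is to exhibit a self-dual subsupermodule of $e_i^{(2)}D(\la)$ isomorphic to $e_iD(\mu)$, where $\mu:=\tilde e_i\la$ and $\nu:=\tilde e_i\mu=\tilde e_i^2\la$. By Lemma~\ref{branching}(i), the socle and head of $\Res_iD(\la)$ each equal $D(\mu)^{\oplus k}$ with $k:=1+\de_{i\neq 0}a_p(\la)$. Applying the exact functor $\Res_i$ to the surjection $\Res_iD(\la)\twoheadrightarrow D(\mu)^{\oplus k}$ yields a surjection $\Res_i^2 D(\la)\twoheadrightarrow (e_iD(\mu))^{\oplus kk'}$, where $k':=1+\de_{i\neq 0}a_p(\mu)$. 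Since $D(\la)$ is self-dual and $\Res_i$ respects $\tau$-duality, $\Res_i^2 D(\la)$ is self-dual, so dually $(e_iD(\mu))^{\oplus kk'}$ embeds into $\Res_i^2 D(\la)=(e_i^{(2)}D(\la))^{\oplus 2(1+\de_{i\neq 0})}$. The module $e_iD(\mu)$ is indecomposable (its socle and head are both the simple module $D(\nu)$), and $e_i^{(2)}D(\la)$ is self-dual; using Krull--Schmidt in the category of self-dual modules, one transfers this to obtain a self-dual embedding $W:=e_iD(\mu)\hookrightarrow e_i^{(2)}D(\la)$.

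Applying Lemma~\ref{branching}(iv) to $D(\mu)$, together with Lemma~\ref{branching}(i) giving $\eps_i(\mu)=\eps_i(\la)-1$ and Lemma~\ref{LAGa} giving $a_p(\nu)=a_p(\la)$, one computes
\[
\d_{n-2}(W)=\d_{n-2}(e_iD(\mu))=\eps_i(\mu)(1+a_p(\nu))=(\eps_i(\la)-1)(1+a_p(\la)).
\]
If $\eps_i(\la)=2$, then $[e_i^{(2)}D(\la):D(\nu)]=\binom{2}{2}=1$ and by Lemma~\ref{divpowers} all other composition factors would have $\eps_i<0$, which is impossible, so $e_i^{(2)}D(\la)\cong D(\nu)\cong W$ and $\d_{n-2}(e_i^{(2)}D(\la))=1+a_p(\la)$, matching the claim (with $\de_{\eps_i(\la)\geq 3}=0$). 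If $\eps_i(\la)\geq 3$, then $[W:D(\nu)]=\eps_i(\la)-1<\binom{\eps_i(\la)}{2}=[e_i^{(2)}D(\la):D(\nu)]$, so the embedding $W\subsetneq e_i^{(2)}D(\la)$ is proper. Since both $W$ and $e_i^{(2)}D(\la)$ are $\tau$-self-dual, Lemma~\ref{LSelfDual} yields the strict inequality $\d_{n-2}(e_i^{(2)}D(\la))>\d_{n-2}(W)=(\eps_i(\la)-1)(1+a_p(\la))$, and hence $\d_{n-2}(e_i^{(2)}D(\la))\geq (\eps_i(\la)-1)(1+a_p(\la))+1$, which is exactly the required lower bound.

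The main technical obstacle is the construction of the self-dual submodule $W\cong e_iD(\mu)$ inside $e_i^{(2)}D(\la)$: one has natural maps to/from $(e_iD(\mu))^{\oplus kk'}$ after applying $\Res_i$ to the head/socle of $\Res_i D(\la)$, and one must carefully descend these from $\Res_i^2D(\la)=(e_i^{(2)}D(\la))^{\oplus 2(1+\de_{i\neq 0})}$ to a single direct summand $e_i^{(2)}D(\la)$ in a way compatible with self-duality, which is where the indecomposability of $e_iD(\mu)$ and Krull--Schmidt (applied in the self-dual category) are crucial.
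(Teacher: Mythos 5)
Your proof is correct and follows essentially the same route as the paper: reduce via Lemma~\ref{divpowers} to bounding $\d_{n-2}(e_i^{(2)}D(\la))$, settle $\eps_i(\la)=2$ directly, and for $\eps_i(\la)\geq 3$ exhibit $e_iD(\tilde e_i\la)$ as a proper self-dual subsupermodule of $e_i^{(2)}D(\la)$ (the paper uses the inclusion chain $e_iD(\tilde e_i\la)\subseteq\Res_iD(\tilde e_i\la)\subseteq\Res_ie_iD(\la)\subseteq\Res_i^2D(\la)$ rather than your head-plus-duality argument) and conclude with Lemma~\ref{LSelfDual} and Lemma~\ref{branching}(iv). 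One small remark: what lets you descend from the direct sum $(e_i^{(2)}D(\la))^{\oplus 2(1+\de_{i\neq 0})}$ to a single summand is the simplicity of $\soc(e_iD(\tilde e_i\la))$, not indecomposability or Krull--Schmidt per se, but since you note the simple socle this is only a matter of emphasis.
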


\begin{proof}
We may assume that $\eps_i(\la)\geq 2$ for otherwise $\Res_i^2D(\la)=0$ and the result follows. By Lemma~\ref{LAGa}, we have $a_p(\tilde e_i^2\la)=a_p(\la)$, hence 
$\d_{n-2}(D(\tilde e_i^2\la))=
\d_n(D(\la))$ by (\ref{EEndDDim}). By Lemma \ref{divpowers}, we have $\Res_i^2D(\la)\cong (e_i^{(2)}D(\la))^{\oplus 2(1+\de_{i\neq 0})}$, so 
$\d_{n-2}(\Res_i^{2}D(\la))=4\d_{n-2}(e_i^{(2)}D(\la))(1+\de_{i\neq 0})^2$,   
and it suffices to prove that
\begin{equation}\label{E130825_2}
\d_{n-2}(e_i^{(2)}D(\la))\geq \d_n(D(\la))(\eps_i(\la)-1)+\de_{\eps_i(\la)\geq 3}.
\end{equation}
If $\eps_i(\la)=2$ then  $e_i^{(2)}D(\la)\cong D(\tilde e_i^2\la)$ by Lemma \ref{divpowers}, and we have the equality in (\ref{E130825_2}). Assume that $\eps_i(\la)\geq 3$. 
By Lemma \ref{branching}, we have 
$$
e_i D(\tilde e_i\la)\subseteq\Res_i D(\tilde e_i\la)\subseteq \Res_i e_iD(\la)\subseteq \Res_i^2 D(\la), 
$$
and $e_i D(\tilde e_i\la)$ has an irreducible socle. 
So 
$e_i D(\tilde e_i\la)\subseteq e_i^{2}D(\la)\cong (e_i^{(2)}D(\la))^{\oplus 2}$ implies 
$e_i D(\tilde e_i\la)\subseteq e_i^{(2)}D(\la)$. 
Moreover, by Lemmas \ref{branching} and \ref{divpowers}, we have 
\[[e_i D(\tilde e_i\la):D(\tilde e_i^2\la)]=\eps_i(\la)-1<\eps_i(\la)(\eps_i(\la)-1)/2=[e_i^{(2)}D(\la):D(\la)],\]
so $e_i D(\tilde e_i\la)\subsetneq e_i^{(2)}D(\la)$, and  $\d_{n-2}(D( e_i^{(2)}\la))>\d_{n-2}(e_iD(\tilde e_i\la))$ by  Lemma~\ref{LSelfDual}. By Lemma \ref{branching}, 
\begin{align*}
\d_{n-2}(e_iD(\tilde e_i\la))
=\d_{n-2}(D(\tilde e_i^2\la))(\eps_i(\la)-1)
=\d_n(D(\la))(\eps_i(\la)-1).
\end{align*}
We deduce that $\d_{n-2}(e_i^{(2)}D(\la))> \d_n(D(\la))(\eps_i(\la)-1)$, which implies (\ref{E130825_2}). 
\end{proof}

\begin{Lemma}\label{L171224}
Let $\la\in\RP_p(n)$, and $i, j\in I$ with $i\neq j$. If $\eps_j(\la)>0$ then
\begin{align*}
\d_{n-2}(\Res_i\Res_j D(\la))&\geq \eps_i(\tilde e_j\la)\d_n(D(\la))(1+\de_{i\neq 0})(1+\de_{j\neq0})+\de_{\eps_i(\tilde e_j\la)>0}\de_{\eps_j(\la)\geq 2}\\
&\geq \eps_i(\la)\d_n(D(\la))(1+\de_{i\neq 0})(1+\de_{j\neq0})+\de_{\eps_i(\tilde e_j\la)>0}\de_{\eps_j(\la)\geq 2}.
\end{align*}
\end{Lemma}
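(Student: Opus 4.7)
The plan is to mimic the approach used in Lemma \ref{L051224}: exhibit a known self-dual subsupermodule of $\Res_i e_j D(\la)$ whose endomorphism dimension we can compute, and then use Lemma \ref{LSelfDual} to upgrade the bound by $1$ whenever the containment is proper. To begin, set $c_j:=1+\de_{j\neq 0}a_p(\la)$ and $c'_i:=1+\de_{i\neq 0}a_p(\tilde e_j\la)$, so that Lemma \ref{branching} gives $\Res_j D(\la)\cong (e_jD(\la))^{\oplus c_j}$ and $\Res_i D(\tilde e_j\la)\cong (e_iD(\tilde e_j\la))^{\oplus c'_i}$. Applying the additive functor $\Res_i$ to the first, $\Res_i\Res_j D(\la)\cong (\Res_i e_jD(\la))^{\oplus c_j}$, and hence $\d_{n-2}(\Res_i\Res_j D(\la))=c_j^2\,\d_{n-2}(\Res_i e_jD(\la))$. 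Since $D(\tilde e_j\la)$ sits inside $e_j D(\la)$ as its socle (Lemma \ref{branching}) and $\Res_i$ is exact, we obtain a chain $(e_iD(\tilde e_j\la))^{\oplus c'_i}\cong \Res_iD(\tilde e_j\la)\hookrightarrow \Res_i e_jD(\la)$ of self-dual $\T_{n-2}$-supermodules (self-duality is preserved by restriction and by passage to block components).

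Next, I would compute
\[\d_{n-2}\big((e_iD(\tilde e_j\la))^{\oplus c_jc'_i}\big) = (c_jc'_i)^2\,\eps_i(\tilde e_j\la)(1+a_p(\tilde e_i\tilde e_j\la))\]
via Lemma \ref{branching}(iv) together with (\ref{EEndDDim}), and then verify by a short case analysis on whether $i$ and $j$ are zero (using that removal of an $i$-good node flips $a_p$ iff $i\neq 0$, by Lemma \ref{LAGa}) that this equals
\[\eps_i(\tilde e_j\la)(1+a_p(\la))(1+\de_{i\neq 0})(1+\de_{j\neq 0}) = \eps_i(\tilde e_j\la)\,\d_n(D(\la))(1+\de_{i\neq 0})(1+\de_{j\neq 0}).\]
The four cases ($i=0$ or $i\neq 0$ and $j=0$ or $j\neq 0$, with $i\neq j$) collapse to two elementary parity checks; each gives the precise equality, not merely an inequality.

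Finally, to pick up the extra term $\de_{\eps_i(\tilde e_j\la)>0}\de_{\eps_j(\la)\geq 2}$: when both indicator factors are $1$, the quotient $e_jD(\la)/D(\tilde e_j\la)$ contains $D(\tilde e_j\la)$ as a composition factor with multiplicity $\eps_j(\la)-1\geq 1$ by Lemma \ref{branching}(ii), and restriction to $\T_{n-2}$ of such a factor is non-zero because $\eps_i(\tilde e_j\la)>0$. Exactness of $\Res_i$ then forces the inclusion $\Res_iD(\tilde e_j\la)\subsetneq \Res_ie_jD(\la)$ to be strict, and since both supermodules are self-dual, Lemma \ref{LSelfDual} yields $\d_{n-2}(\Res_ie_jD(\la))\geq \d_{n-2}(\Res_iD(\tilde e_j\la))+1$; multiplying back by $c_j^2\geq 1$ secures the $+1$. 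The second stated inequality $\eps_i(\tilde e_j\la)\geq \eps_i(\la)$ is then exactly Lemma \ref{L051218_4}. The only delicate point is the bookkeeping with the parity of $a_p$ and the multipliers $c_j,c'_i$ across the four cases, which is the main place to be careful rather than conceptually hard.
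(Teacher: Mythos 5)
Your proof is correct and follows essentially the same route as the paper: the copies of $e_iD(\tilde e_j\la)$ sitting inside $\Res_i\Res_jD(\la)$ via the socle of $e_jD(\la)$, the main-term computation from Lemma~\ref{branching}(iv) together with the parity bookkeeping of Lemma~\ref{LAGa}, the extra $+1$ from strictness and Lemma~\ref{LSelfDual}, and Lemma~\ref{L051218_4} for the second inequality. The only cosmetic difference is that you factor out the multiplicity $c_j$ and apply Lemma~\ref{LSelfDual} to $\Res_iD(\tilde e_j\la)\subsetneq\Res_ie_jD(\la)$, whereas the paper applies it directly to $(e_iD(\tilde e_j\la))^{\oplus c_jc_i'}\subsetneq\Res_i\Res_jD(\la)$; this changes nothing of substance.
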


\begin{proof}
By Lemma \ref{L051218_4}, it is enough to prove the first inequality. We may assume that $\eps_i(\tilde e_j\la)\geq 1$.

By Lemma \ref{branching},  
\[(e_iD(\tilde e_j\la))^{\oplus (1+\de_{j\neq 0}a_p(\la))(1+\de_{i\neq 0}a_p(\tilde e_j\la))}
\subseteq \Res_i\Res_j D(\la),\] 
and the containment is strict if $\eps_j(\la)\geq 2$. So, by Lemma~\ref{LSelfDual}, 
$$
\d_{n-2}(\Res_i\Res_j D(\la))\geq
\big((1+\de_{j\neq 0}a_p(\la))(1+\de_{i\neq 0}a_p(\tilde e_j\la))\big)^2\d_{n-2}(e_iD(\tilde e_j\la))+\de_{\eps_j(\la)\geq 2}.
$$
By Lemma~\ref{branching}(iv), 
$$\d_{n-2}(e_iD(\tilde e_j\la))=\eps_i(\tilde e_j\la)\d_{n-2}(D(\tilde e_i\tilde e_j\la))=\eps_i(\tilde e_j\la) (1+a_p(\tilde e_i\tilde e_j\la)).
$$
So it remains to observe that
$$
\big((1+\de_{j\neq 0}a_p(\la))(1+\de_{i\neq 0}a_p(\tilde e_j\la))\big)^2(1+a_p(\tilde e_i\tilde e_j\la))=(1+\de_{i\neq 0})(1+\de_{j\neq 0})(1+a_p(\la)),
$$
which follows easily using Lemma~\ref{LAGa}, and apply (\ref{EEndDDim}). 
\end{proof}

Recall the notation (\ref{EDNot}).

\begin{Lemma}\label{L081218_2}
Let $\la\in\RP_p(n)$, and $i, j\in I$ with $i\neq j$. Then
\[
\d_{n-2}\big(\Res_i\Res_jD(\la),\Res_j\Res_i D(\la)\big)\geq \eps_i(\la)\eps_j(\la)\d_n(D(\la))(1+\de_{i\neq 0})(1+\de_{j\neq 0}).
\]
\end{Lemma}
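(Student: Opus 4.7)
My plan is to reduce the problem to an endomorphism/Hom computation on $\T_n$-supermodules via Frobenius reciprocity combined with the commutation relation of Lemma \ref{L051218_3}. Since $\Ind_j$ is biadjoint to $\Res_j$ on the appropriate weight blocks (and similarly for $\Ind_i$ and $\Res_i$), and since $\Res_i\Ind_j\cong\Ind_j\Res_i$ by Lemma \ref{L051218_3}, two applications of Frobenius reciprocity give
\[
\dim\Hom_{\T_{n-2}}(\Res_i\Res_jD(\la),\Res_j\Res_iD(\la)) = \dim\Hom_{\T_n}(\Ind_j\Res_jD(\la),\Ind_i\Res_iD(\la)).
\]
Explicitly, I would first apply Frobenius to the outermost $\Res_i$ in the source, rewrite the target via $\Ind_i\Res_j=\Res_j\Ind_i$, and then apply Frobenius to the outermost $\Res_j$ on the left.

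The key step is to bound the resulting Hom space from below by counting maps that factor through a $D(\la)$-isotypic subquotient. Applying Frobenius once more together with Lemma \ref{n-1}(i),
\[
\dim\Hom_{\T_n}(\Ind_j\Res_jD(\la),D(\la)) = \dim\End_{\T_{n-1}}(\Res_jD(\la)) = \eps_j(\la)(1+\de_{j\neq 0})(1+a_p(\la));
\]
combined with $\dim\End_{\T_n}(D(\la))=1+a_p(\la)$, this shows that the $D(\la)$-isotypic component of $\hd\Ind_j\Res_jD(\la)$ has multiplicity $h:=\eps_j(\la)(1+\de_{j\neq 0})$. Symmetrically, the $D(\la)$-isotypic component of $\soc\Ind_i\Res_iD(\la)$ has multiplicity $s:=\eps_i(\la)(1+\de_{i\neq 0})$, yielding a surjection $\pi:\Ind_j\Res_jD(\la)\twoheadrightarrow D(\la)^{\oplus h}$ and an injection $\iota:D(\la)^{\oplus s}\hookrightarrow\Ind_i\Res_iD(\la)$.

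To finish, note that $\phi\mapsto\iota\circ\phi\circ\pi$ gives an injective map
\[
\Hom_{\T_n}(D(\la)^{\oplus h},D(\la)^{\oplus s})\hookrightarrow\Hom_{\T_n}(\Ind_j\Res_jD(\la),\Ind_i\Res_iD(\la))
\]
(injective because $\pi$ is epi and $\iota$ is mono). The source has dimension $hs(1+a_p(\la))=\eps_i(\la)\eps_j(\la)\d_n(D(\la))(1+\de_{i\neq 0})(1+\de_{j\neq 0})$, which is precisely the desired lower bound. The main technical care required is verifying that Frobenius reciprocity is compatible with the block/weight projections used to define $\Res_i$ and $\Ind_i$; this is routine, following from the standard $(\Ind^{\T_n}_{\T_{n-1}},\Res^{\T_n}_{\T_{n-1}})$-biadjunction together with the weight-space decomposition recalled in \S\ref{SSBr}, and once set up, the whole argument is clean manipulation with multiplicities.
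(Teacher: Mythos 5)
Your proposal is correct and follows essentially the same route as the paper: both pass via biadjointness of $\Res_k$/$\Ind_k$ together with Lemma \ref{L051218_3} to identify the Hom space with $\Hom_{\T_n}(\Ind_j\Res_jD(\la),\Ind_i\Res_iD(\la))$, then use Lemma \ref{n-1} and (\ref{EEndDDim}) to exhibit $D(\la)^{\oplus \eps_j(\la)(1+\de_{j\neq 0})}$ as a quotient of the source and $D(\la)^{\oplus \eps_i(\la)(1+\de_{i\neq 0})}$ as a submodule of the target, and compose to get the lower bound. Your head/socle phrasing is just a repackaging of the paper's submodule/quotient argument, so there is nothing further to add.
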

\begin{proof}
We may assume that $\eps_i(\la),\eps_j(\la)>0$.
By adjointness of $\Res_k$ and $\Ind_k$, and Lemma \ref{L051218_3},
\begin{equation}\label{E140825}
\d_{n-2}(\Res_i\Res_jD(\la),\Res_j\Res_i D(\la))
= \d_n(\Ind_j\Res_jD(\la),\Ind_i\Res_i D(\la)).
\end{equation}
By adjointness again and Lemma \ref{n-1}, 
\begin{align*}
\d_n(D(\la),\Ind_i\Res_i D(\la))&=\d_{n-1}(\Res_iD(\la))=\eps_i(\la)(1+\de_{i\neq 0})(1+a_p(\la)),
\end{align*}
so, using (\ref{EEndDDim}), we deduce that  $D(\la)^{\oplus \eps_i(\la)(1+\de_{i\neq 0})}\subseteq \Ind_i\Res_i D(\la)$. Similarly we have that $D(\la)^{\oplus \eps_j(\la)(1+\de_{j\neq 0})}$ is a quotient of $\Ind_j\Res_jD(\la)$. 
So by (\ref{E140825}), we have 
\begin{align*}
\d_{n-2}(\Res_i\Res_jD(\la),\Res_j\Res_i D(\la))&\geq 
\d_{n}(D(\la)^{\oplus \eps_j(\la)(1+\de_{j\neq 0})},D(\la)^{\oplus \eps_i(\la)(1+\de_{i\neq 0})})
\\
&=\eps_i(\la)\eps_j(\la)(1+\de_{i\neq 0})(1+\de_{j\neq 0})\d_{n}(D(\la)),
\end{align*}
as desired. 
\end{proof}

Lemmas~\ref{L171224} and \ref{L081218_2} immediately give

\begin{Corollary} \label{C180825}
Let $\la\in\RP_p(n)$, and $i, j\in I$ with $i\neq j$. If $\eps_i(\la),\eps_j(\la)>0$ then
\begin{align*}
&\d_{n-2}\big(\Res_i\Res_j D(\la)\oplus\Res_j\Res_i D(\la)\big)\\\geq\, &(\eps_i(\la)+\eps_j(\la)+2\eps_i(\la)\eps_j(\la))\d_n(D(\la))(1+\de_{i\neq 0})(1+\de_{j\neq 0}).
\end{align*}
\end{Corollary}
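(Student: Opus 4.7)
The plan is to decompose the endomorphism algebra of the direct sum into four pieces and bound each using Lemmas~\ref{L171224} and \ref{L081218_2}. Concretely, for any two $\T_{n-2}$-supermodules $A$ and $B$ we have the standard identity
\[
\dim\End_{\T_{n-2}}(A\oplus B) \;=\; \d_{n-2}(A)+\d_{n-2}(B)+\d_{n-2}(A,B)+\d_{n-2}(B,A),
\]
so applying this with $A=\Res_i\Res_j D(\la)$ and $B=\Res_j\Res_i D(\la)$ reduces the estimate to bounding these four terms.

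For the two diagonal terms I would invoke Lemma~\ref{L171224}, which (using the second, weaker inequality in its statement and dropping the nonnegative $\de_{\eps_i(\tilde e_j\la)>0}\de_{\eps_j(\la)\geq 2}$ correction) yields
\[
\d_{n-2}(\Res_i\Res_j D(\la)) \geq \eps_i(\la)\d_n(D(\la))(1+\de_{i\neq 0})(1+\de_{j\neq 0}),
\]
and symmetrically $\d_{n-2}(\Res_j\Res_i D(\la)) \geq \eps_j(\la)\d_n(D(\la))(1+\de_{i\neq 0})(1+\de_{j\neq 0})$ (note both $\eps_i(\la),\eps_j(\la)>0$ by hypothesis, so the hypotheses of Lemma~\ref{L171224} hold in either order).

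For the two off-diagonal Hom terms, Lemma~\ref{L081218_2} gives directly
\[
\d_{n-2}(\Res_i\Res_jD(\la),\Res_j\Res_i D(\la)) \geq \eps_i(\la)\eps_j(\la)\d_n(D(\la))(1+\de_{i\neq 0})(1+\de_{j\neq 0}),
\]
and applying the same lemma with the roles of $i$ and $j$ exchanged gives the same lower bound for $\d_{n-2}(\Res_j\Res_iD(\la),\Res_i\Res_j D(\la))$. Summing the four bounds yields exactly the claimed inequality. There is no real obstacle here, as the corollary is a straightforward consequence of the preceding two lemmas once one recalls the Hom-space decomposition of $\End(A\oplus B)$.
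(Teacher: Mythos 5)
Your proof is correct and is exactly the argument the paper intends: the corollary is stated as an immediate consequence of Lemmas~\ref{L171224} and \ref{L081218_2}, and your decomposition of $\End_{\T_{n-2}}(A\oplus B)$ into the two endomorphism terms (bounded by the weaker inequality of Lemma~\ref{L171224}) and the two Hom terms (bounded by Lemma~\ref{L081218_2} in both orders) is precisely how the stated bound arises.
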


We combine the above lower bounds to get:

\begin{Lemma}\label{L171224_2}
Let $\la\in\Par_p(n)$, $X:=\{i\in I\mid \eps_i(\la)>0\}$ and $x:=|X|$. Then
\begin{align*}
&\d_{n-2,2}(D(\la)\da_{\ts_{n-2,2}})
\geq 
2\de_{\eps_0(\la)\geq 3}+8\sum_{i\neq 0}\de_{\eps_i(\la)\geq 3}
\\
&\quad+\d_n\big(D(\la)\big)\big(4(x-1)(x-\de_{0\in X})+\de_{0\in X}(2\eps_0(\la)-2)+\sum_{i\in X,\,i\neq 0}(8\eps_i(\la)-8)\big).
\end{align*}
\end{Lemma}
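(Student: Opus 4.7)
The plan is to reduce the statement for $\T_{n-2,2}$ to one for $\T_{n-2}$ via Lemma~\ref{L161224_3}, then decompose $D(\la)\da_{\T_{n-2}}$ into superblock components obtained through iterated $i$-restriction, and bound the contribution of each component using Lemma~\ref{L051224} and Corollary~\ref{C180825}.

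By Lemma~\ref{L161224_3} we have $\d_{n-2,2}(D(\la)\da_{\T_{n-2,2}}) = \tfrac{1}{2}\,\d_{n-2}(D(\la)\da_{\T_{n-2}})$, so it suffices to prove twice the claimed inequality for $\d_{n-2}(D(\la)\da_{\T_{n-2}})$. Now $\Res_i$ extracts the $\theta_0^{-i}$ superblock component of $D(\la)\da_{\T_{n-1}}$, and applying a second $i$-restriction lands in the $\theta_0^{-i-j}$ superblock component of $D(\la)\da_{\T_{n-2}}$. Since superblocks of $\T_{n-2}$ are labelled by \emph{unordered} pairs, grouping accordingly yields
\[
D(\la)\da_{\T_{n-2}} \;\cong\; \bigoplus_{i\in I}\Res_i^2 D(\la)\;\oplus\;\bigoplus_{\{i,j\}\subseteq I,\,i\neq j}\bigl(\Res_i\Res_j D(\la)\oplus\Res_j\Res_i D(\la)\bigr),
\]
and summands belonging to distinct superblocks contribute independently to the endomorphism algebra.

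For the diagonal summands, only $i\in X$ contribute, and Lemma~\ref{L051224} provides the estimate. Distinguishing $i=0$ (with $(1+\de_{i\neq 0})^2=1$) from $i\neq 0$ (with $(1+\de_{i\neq 0})^2=4$), summing over $X$ yields at least
\[
\d_n(D(\la))\Bigl[\de_{0\in X}(4\eps_0(\la)-4)+\!\!\!\sum_{i\in X,\,i\neq 0}\!\!(16\eps_i(\la)-16)\Bigr]+4\de_{\eps_0(\la)\geq 3}+16\!\sum_{i\neq 0}\!\de_{\eps_i(\la)\geq 3}.
\]
For the off-diagonal blocks, only $\{i,j\}\subseteq X$ contribute; here Corollary~\ref{C180825} applies. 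We exploit the crude inequality $\eps_i(\la)+\eps_j(\la)+2\eps_i(\la)\eps_j(\la)\geq 4$, which holds since $\eps_i(\la),\eps_j(\la)\geq 1$ for $i,j\in X$, giving at least $4(1+\de_{i\neq 0})(1+\de_{j\neq 0})\,\d_n(D(\la))$ per pair. Splitting the sum according to whether $0\in\{i,j\}$ and writing $x'=|X\setminus\{0\}|=x-\de_{0\in X}$, the contribution becomes
\[
\Bigl[8\de_{0\in X}\,x'+16\tbinom{x'}{2}\Bigr]\d_n(D(\la))=8x'(\de_{0\in X}+x'-1)\,\d_n(D(\la))=8(x-1)(x-\de_{0\in X})\,\d_n(D(\la)).
\]

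Adding the diagonal and off-diagonal contributions and dividing by $2$ produces exactly the claimed bound. The proof is a straightforward assembly of the inputs; the only work is the combinatorial bookkeeping of the factors $(1+\de_{i\neq 0})$ in the two cases $0\in X$ and $0\notin X$, together with the observation that replacing $\eps_i+\eps_j+2\eps_i\eps_j$ by the lower bound $4$ in the off-diagonal estimate is what produces the clean expression $4(x-1)(x-\de_{0\in X})$ appearing in the statement.
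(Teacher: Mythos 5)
Your proof is correct and follows essentially the same route as the paper: reduce to $\T_{n-2}$ via Lemma~\ref{L161224_3}, split $D(\la)\da_{\T_{n-2}}$ into the summands $\Res_i^2 D(\la)$ and $\Res_i\Res_j D(\la)\oplus\Res_j\Res_i D(\la)$, bound the diagonal pieces by Lemma~\ref{L051224} and the off-diagonal pairs by Corollary~\ref{C180825} (the paper invokes Lemmas~\ref{L171224} and~\ref{L081218_2} directly, which is the same content), and do the same bookkeeping in the cases $0\in X$ and $0\notin X$. The only blemish is the garbled superblock notation ($\theta_0^{-i}$ should be $\theta^{-i}$, etc.), which does not affect the argument.
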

\begin{proof}
From Lemma \ref{L161224_3} we have to prove that
\begin{align*}
&\d_{n-2}(D(\la)\da_{\ts_{n-2}})
\geq 
4\de_{\eps_0(\la)\geq 3}+16\sum_{i\neq 0}\de_{\eps_i(\la)\geq 3}
\\
&\ +\d_n\big(D(\la)\big)\big(8(x-1)(x-\de_{0\in X})+\de_{0\in X}(4\eps_0(\la)-4)+\sum_{i\in X,\,i\neq 0}(16\eps_i(\la)-16))\big).
\end{align*}
Note that 
$$
D(\la)\da_{\ts_{n-2}}\cong \bigoplus_{i\in X}\Res_i^2 D(\la)\,\oplus\bigoplus _{i\neq j}\Res_i\Res_j D(\la),
$$
and 
$$\Hom_{\T_{n-2}}\Big(\bigoplus_{i\in X}\Res_i^2 D(\la),\bigoplus _{i\neq j}\Res_i\Res_j D(\la)\Big)=0.
$$
So 
$$
\d_{n-2}(D(\la)\da_{\ts_{n-2}})=\d_{n-2}\Big(\bigoplus_{i\in X}\Res_i^2 D(\la)\Big)
+\d_{n-2}\Big(\bigoplus _{i\neq j}\Res_i\Res_j D(\la)\Big).
$$
By Lemma \ref{L051224},
\begin{align*}
\d_{n-2}\Big(\bigoplus_{i\in X}\Res_i^2 D(\la)\Big)\geq\, 
&\d_n\big(D(\la)\big)\big(\de_{0\in X}(4\eps_0(\la)-4)+\sum_{i\in X,\,i\neq 0}(16\eps_i(\la)-16)\big)
\\
&+4\de_{\eps_0(\la)\geq 3}
+16\sum_{i\neq 0}\de_{\eps_i(\la)\geq 3}.
\end{align*}
So it is enough to prove that
 \begin{equation*}\label{E171224}
 \d_{n-2}\Big(\bigoplus _{i\neq j}\Res_i\Res_j D(\la)\Big)\geq 8\d_n(D(\la))(x-1)(x-\de_{0\in X}),
 \end{equation*}
 which in turn follows from
 \begin{equation}\label{E171224}
\sum_{i,j\in X,\,i> j} \d_{n-2}\big(\Res_i\Res_j D(\la)\oplus \Res_j\Res_i D(\la)\big)\geq 8\d_n\big(D(\la)\big)(x-1)(x-\de_{0\in X}).
 \end{equation}
We may assume that $x>1$. Let $i,j\in X$ with $i\neq j$. If $i,j\neq 0$, Lemmas \ref{L171224} and \ref{L081218_2} give
 \begin{align*}
 \d_{n-2}(\Res_i\Res_j D(\la))&\geq 4\d_n(D(\la)),\\
 \d_{n-2}(\Res_i\Res_j D(\la),\Res_j\Res_i D(\la))&\geq 4\d_n(D(\la)).
 \end{align*}
 So the pair $(i,j)$ with $i,j\in X$ and $i>j>0$ contributes $16\d_n(D(\la))$ to the sum in the left hand side of (\ref{E171224}). 
On the other hand,  if $j=0$ then Lemmas \ref{L171224} and \ref{L081218_2} give 
 \begin{align*}
 \d_{n-2}(\Res_i\Res_0 D(\la))&\geq 2\d_n(D(\la)),\\
  \d_{n-2}(\Res_0\Res_i D(\la))&\geq 2\d_n(D(\la)),\\
 \d_{n-2}(\Res_i\Res_0 D(\la),\Res_0\Res_i D(\la))&\geq 2\d_n(D(\la)),\\
 \d_{n-2}(\Res_0\Res_i D(\la),\Res_i\Res_0 D(\la))&\geq 2\d_n(D(\la)).
 \end{align*}
So, if $0\in X$, then the pair $(i,0)$ with $i\in X$ and $i\neq 0$ contributes $8\d_n(D(\la))$ to the sum in the left hand side of (\ref{E171224}). 
Now (\ref{E171224}) follows.
\end{proof}


\subsection{\boldmath Comparing $\dim\End_{\ts_{n-2,2}}(D(\la)\da_{\ts_{n-2,2}})$ and $\dim\End_{\ts_{n-1}}(D(\la)\da_{\ts_{n-1}})$}
In the next two lemmas we will show that  $\d_{n-2,2}(D(\la)\da_{\T_{n-2,2}})>\d_{n-1}(D(\la)\da_{\T_{n-1}})+\d_n(D(\la))$ in most cases. For $\la\in\TR_p(n)$ this will be used in Lemma \ref{L181224_2} to show that the assumptions of the reduction lemmas of \S\ref{sredlemmas} are satisfied in some important situations.

\begin{Lemma}\label{L051224_3}
Let $\la\in\RP_p(n)$. Then
\[\d_{n-2,2}(D(\la)\da_{\T_{n-2,2}})>\d_{n-1}(D(\la)\da_{\T_{n-1}})+\d_n(D(\la))\]
unless one of the following holds:
\begin{itemize}
\item $\eps_0(\la)\leq 1$, $\eps_j(\la)=1$ for some $j\neq 0$ and  $\eps_i(\la)=0$ for all $i\neq 0,j$;

\item $\eps_0(\la)\leq 2$ and $\eps_i(\la)=0$ for all $i\neq 0$.
\end{itemize}
\end{Lemma}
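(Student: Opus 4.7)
The plan is to establish a lower bound for $\d_{n-2,2}(D(\la)\da_{\T_{n-2,2}})$ using Lemma~\ref{L171224_2}, compare it against the exact value of $\d_{n-1}(D(\la)\da_{\T_{n-1}})$ given by Lemma~\ref{n-1}(ii) together with $\d_n(D(\la))=1+a_p(\la)$ from \eqref{EEndDDim}, and then do a completely elementary case analysis on the pair $(\eps_0(\la),\{\eps_i(\la)\}_{i\neq0})$. To simplify notation, write $d:=1+a_p(\la)$, $e_i:=\eps_i(\la)$, $X:=\{i\in I:e_i>0\}$, $x:=|X|$, and $y:=|X\setminus\{0\}|$, so that $x=y+\de_{0\in X}$.

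By Lemma~\ref{n-1}(ii) and \eqref{EEndDDim}, we have
\[
\d_{n-1}(D(\la)\da_{\T_{n-1}})+\d_n(D(\la))=d\Big(1+e_0+2\sum_{i\neq0}e_i\Big).
\]
Lemma~\ref{L171224_2} gives
\[
\d_{n-2,2}(D(\la)\da_{\T_{n-2,2}})\geq 2\de_{e_0\geq3}+8\sum_{i\neq0}\de_{e_i\geq3}+d\cdot B,
\]
where $B:=4(x-1)(x-\de_{0\in X})+\de_{0\in X}(2e_0-2)+\sum_{i\in X,\,i\neq0}(8e_i-8)$. Subtracting, the claim reduces to showing
\[
2\de_{e_0\geq3}+8\sum_{i\neq0}\de_{e_i\geq3}>d\cdot\Big(1+e_0+2\sum_{i\neq0}e_i-B\Big),
\]
and an easy algebraic simplification yields
\[
1+e_0+2\sum_{i\neq0}e_i-B=\begin{cases}3-e_0-6\sum_{i\neq0}e_i+4y(2-y),&0\in X,\\ 1-6\sum_{i\neq0}e_i+4y(3-y),&0\notin X.\end{cases}
\]

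The hard part (though still routine) is the case bookkeeping. Whenever $y\geq2$ the term $4y(2-y)$ or $4y(3-y)$ is $\leq0$ and $-6\sum_{i\neq0}e_i\leq-12$, so the right-hand side is already strongly negative and the inequality is trivial. For $y=1$ with unique nonzero $e_j$ ($j\neq0$), the right-hand side equals $7-e_0-6e_j$ or $9-6e_j$; if $e_j\geq2$ it is negative; if $e_j=1$ we obtain $1-e_0$ (resp.\ $3$), and the inequality then holds precisely when $e_0\geq2$ (resp.\ fails for $e_0=0$), giving the first excluded case $e_0\leq1$, $e_j=1$ for some $j\neq0$, all other $e_i=0$. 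Finally, for $y=0$ (necessarily $0\in X$) the expression is $3-e_0$: the inequality holds for $e_0\geq3$ (using the correction term $2\de_{e_0\geq3}=2$) but fails for $e_0\in\{1,2\}$, giving the second excluded case $e_0\leq2$ and $e_i=0$ for all $i\neq0$. Conversely, direct substitution shows the inequality genuinely fails in both excluded cases, so the list is exhaustive.

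The only subtlety is keeping the correction terms $2\de_{e_0\geq3}$ and $8\de_{e_i\geq3}$ properly accounted for at the boundary: in the sub-case $y=0$, $e_0=3$ the pure contribution from $B$ vanishes and the inequality is saved exactly by the $+2$ coming from $\de_{e_0\geq3}$; the analogous role is played by the $+8$ when $y=1$ and $e_j\geq3$ with $e_0\leq1$. This finishes the reduction to the two excluded configurations listed in the statement.
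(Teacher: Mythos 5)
Your proposal is correct and takes essentially the same route as the paper's proof: the same three ingredients (Lemma~\ref{n-1}, \eqref{EEndDDim}, and Lemma~\ref{L171224_2}) reduce the claim to an elementary case analysis on $(\eps_0(\la),\{\eps_i(\la)\}_{i\neq 0})$, and your quantity $1+e_0+2\sum_{i\neq 0}e_i-B$ is exactly $1-S$ for the auxiliary expression $S$ the paper analyzes case by case. Two harmless slips worth noting: when $y=2$ and $0\notin X$ the term $4y(3-y)=8$ is positive (the right-hand side is nevertheless $\leq 1-12+8=-3<0$, so your conclusion stands), and the closing ``converse'' sentence is neither needed for the lemma as stated nor actually established, since substituting into the lower bound of Lemma~\ref{L171224_2} cannot show that the genuine inequality fails.
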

\begin{proof}
Let $X:=\{i\in I \mid \eps_i(\la)>0\}$, set $x:=|X|$ and 
\begin{align*}
S&:=4(x-1)(x-\de_{0\in X})+\de_{0\in X}(\eps_0(\la)-2)+\sum_{i\in X,\,i\neq 0}(6\eps_i(\la)-8).
\end{align*}
In view of (\ref{EEndDDim}) and Lemmas \ref{n-1} and \ref{L171224_2}, it suffices to prove that $S\geq 2$. 
Moreover, if $\eps_i(\la)\geq 3$ for some $i\neq 0$ it is enough to prove that $S\geq \de_{i,0}$. 

If $x\geq 3$, then $S\geq 11$. If $x=2$ and $0\not\in X$ then $S\geq 4$. If $X=\{0,j\}$ with $j\neq 0$ then 
$
S=\eps_0(\la)+6\eps_j(\la)-6
$, so $S\geq 2$ if $\eps_0(\la)+\eps_1(\la)\geq 3$. If $X=\{j\}$ for $j\neq 0$ then $S=6\eps_j(\la)-6$, so $S\geq 6$ if $\eps_j(\la)\geq 2$. Finally, if $X=\{0\}$ then $S=\eps_0(\la)-2$, so $S\geq 1$ if $\eps_0(\la)\geq 3$. 
\end{proof}

\begin{Lemma}\label{L171224_3}
Let $n\geq 6$ and $\la\in\TR_p(n)\setminus\{\balpha_n\}$. Then
\begin{equation}\label{E150825}
\d_{n-2,2}(D(\la)\da_{\T_{n-2,2}})>\d_{n-1}(D(\la)\da_{\T_{n-1}})+\d_n(D(\la))
\end{equation}
unless one of the following holds:
\begin{itemize}
\item[(1)] $\la=((2p)^a,2p-1,p+1,p^b,p-1,1)$ for some $a,b\geq 0$,

\item[(2)] $\la=(p+1,p^b,p-1)$ for some $b\geq 0$,

\item[(3)] $\la=((2p)^a,2p-1,p+1,p^b,p-1)$ for some $a,b\geq 0$,

\item[(4)] $\la=((2p)^a,p+1,p^b,p-1,1)$ for some $a,b\geq 0$,

\item[(5)] $p>5$ and $\la=(p-2,2)$.

\item[(6)] $p>3$ and $\la=(p^a,p-1,p-2,2,1)$ for some $a\geq 0$,

\item[(7)] $p>3$ and $\la=(p^a,p-2,2,1)$ for some $a\geq 0$,

\item[(8)] $p>3$ and $\la=(p^a,p-1,p-2,2)$ for some $a\geq 0$.

%
\end{itemize}
\end{Lemma}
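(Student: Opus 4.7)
The strategy is to invoke Lemma~\ref{L051224_3}, which reduces the question to a combinatorial analysis of the normal-node statistics $\eps_i(\la)$. Precisely, the strict inequality \eqref{E150825} fails only in two configurations: either (A) $\eps_0(\la)\leq 2$ and $\eps_i(\la)=0$ for all $i\neq 0$, or (B) $\eps_0(\la)\leq 1$, $\eps_j(\la)=1$ for exactly one $j\neq 0$, and $\eps_i(\la)=0$ for all other $i$. So the proof reduces to determining which partitions $\la\in\TR_p(n)\setminus\{\balpha_n\}$ satisfy (A) or (B), and checking that these are precisely the families (1)--(8).

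The plan is to enumerate $\la$ using the explicit description of $\TR_p(n)$ from Lemma~\ref{LM3}, which writes
$$\TR_p(n)=\{\balpha_{n-k}+\balpha_k\mid 0<2k\leq n-p-\de_{p\mid k}\}\sqcup\TR'_p(n),$$
(having already removed $\balpha_n$). For each family I compute the residues of the addable and removable nodes, write down the $i$-signature, and read off $\eps_i(\la)$ for every $i\in I=\{0,1,\dots,\ell\}$. The partitions $\balpha_{n-k}+\balpha_k$ have a very regular staircase shape, so the reduced $i$-signatures can be described uniformly: most such $\la$ possess normal nodes of three or more distinct residues, immediately forcing $x=|\{i:\eps_i(\la)>0\}|\geq 3$ and hence the bound in Lemma~\ref{L051224_3}. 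The remaining sub-cases where $x\leq 2$ or $\eps_0(\la)$ is small correspond exactly to $\la$ of the restricted shapes $((2p)^a,2p-1,p+1,p^b,p-1,\star)$ or $((2p)^a,p+1,p^b,p-1,\star)$ (with $\star\in\{\varnothing,(1)\}$), giving families (1)--(4). For the exceptional set $\TR'_p(n)$, an element-by-element check of the five families listed just before Lemma~\ref{LM3} determines which ones satisfy (A) or (B); these turn out to be the partitions in cases (5)--(8) together with small-$n$ instances already accounted for in (1)--(4).

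The main obstacle is the bookkeeping of reduced $i$-signatures for the staircase partitions $\la=\balpha_{n-k}+\balpha_k$ when $p\mid k$ or $p\mid(n-k)$, where the removable/addable structure at the boundary of the two $\balpha$-blocks interacts non-trivially. Here one must use the explicit formula of Lemma~\ref{L2RowExplicit} to write $\la$ as a concatenation of $p$-rows with small tail, and then carefully track how the cancellations in the $i$-signature go when two adjacent rows both have length divisible by $p$. The cleanest way is to split into cases by the divisibility of $k$ and $n-k$ by $p$ and to compute $\eps_i$ separately for $i=0$ and $i\neq 0$; the exceptional families (1)--(4) will emerge as the minimal configurations in which all normal nodes concentrate on at most two residues.

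Once each family in $\TR_p(n)$ has been analysed, one simply lists the outcomes: those $\la$ for which condition (A) or (B) of Lemma~\ref{L051224_3} holds are precisely the partitions in (1)--(8), and for all remaining $\la$ the inequality \eqref{E150825} is strict, as required. The argument is routine but case-heavy; no further combinatorial identities beyond those already developed in Sections~\ref{SSAddRem} and the $\TR_p$-description are needed.
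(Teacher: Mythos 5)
There is a genuine gap. Lemma~\ref{L051224_3} is a one-way statement: it guarantees the inequality \eqref{E150825} when the normal nodes of $\la$ are spread out, but it says nothing when $\la$ falls into one of its two exceptional configurations (your (A) and (B)). You treat membership in (A)/(B) as if it were equivalent to membership in the exception list (1)--(8), i.e.\ you claim the task reduces to ``determining which $\la\in\TR_p(n)\setminus\{\balpha_n\}$ satisfy (A) or (B) and checking these are precisely (1)--(8).'' That is false: many partitions of $\TR_p(n)$ outside (1)--(8) lie in configuration (A) or (B). For example $\la=((2p)^a,p+c,p^b,p-1,1)$ with $2\leq c\leq p-1$ has $\eps_0(\la)=\eps_{\ttres(c)}(\la)=1$ and all other $\eps_i=0$; $\la=(p^a,p-1,b,1)$ with $2\leq b\leq p-2$, $\la=(p^a,b,b-1)$, and $\la=((2p)^a,p+d+1,p^b,d)$ (with $b>0$) all have a single normal node, hence satisfy (B); and $\la=((2p)^a,p+c,p^b,1)$ with $c\in\{2,p-1\}$ satisfies (A) with $\eps_0=2$. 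None of these are in (1)--(8), so your scheme would either wrongly enlarge the exception list or leave \eqref{E150825} unproved for them.

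For exactly these partitions the paper has to work harder: after reducing (via Lemma~\ref{L161224_3}) to the inequality $\d_{n-2}(D(\la)\da_{\T_{n-2}})>2\d_n(D(\la))(1+\eps_0+2\eps_1+\cdots+2\eps_\ell)$, it estimates the individual summands $\Res_i\Res_j D(\la)$ using Lemmas~\ref{L051224}, \ref{L171224}, \ref{L081218_2} and Corollary~\ref{C180825}. These bounds depend not only on $\eps_i(\la)$ but on the \emph{second-order} data $\eps_i(\tilde e_j\la)$ (normal nodes of the once-restricted partition), e.g.\ $\eps_1(\tilde e_0\la)=2$ or $\eps_0(\tilde e_1\la)=3$ in the cases above, and it is precisely this layer of analysis that separates the genuine exceptions (1)--(8) from the other partitions in configurations (A)/(B). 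Your proposal never considers $\eps_i(\tilde e_j\la)$ at all, so computing the reduced $i$-signatures of $\la$ alone, however carefully, cannot produce the stated exception list.
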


\begin{proof}
Denote $D:=D(\la)$, $\eps_i:=\eps_i(\la)$, $\d_{n-k}:=\d_{n-k}(D\da_{\T_{n-k}})$ for $k=0,1,2$, and $\d_{n-2,2}:=\d_{n-2,2}(D\da_{\T_{n-2,2}})$. 
We have 
$ 
\d_{n-1}=\d_n(\eps_0+2\eps_1+\dots+2\eps_\ell)
$ 
by (\ref{EEndDDim}) and Lemma~\ref{n-1}. 
So, taking into account Lemma~\ref{L161224_3},  it suffices to prove
\begin{equation}\label{E180825_3}
\d_{n-2}>2\d_n(1+\eps_0+2\eps_1+\dots+2\eps_\ell).
\end{equation}

By Lemma~\ref{LM3}, we have $\la\in A\sqcup B$, where 
$$A:=\{\balpha_{n-k}+\balpha_k\mid 0< 2k\leq n-p-\de_{p\mid k}\}\quad \text{and}\quad B:=\TR'_p(n).$$ 

\vspace{1mm}
\noindent
{\em Case 1: $\la\in A$, i.e. $\la=\balpha_{n-k}+\balpha_k$ with $0< 2k\leq n-p-\de_{p\mid k}$}. 

\vspace{1mm}
\noindent
{\em Case 1.1: $p\mid (n-k)$ and $p\mid k$}.  In this case $\la$ is of the form (1) which we have excluded. 

\vspace{1mm}
\noindent
{\em Case 1.2: $p\mid (n-k)$ and $p\nmid k$}. In this case we can write $\la=((2p)^a,p+c,p^b,p-1,1)$ for $a,b\geq 0$ and $1\leq c\leq p-1$. The case $c=1$ is excluded in (4). So we may assume that $2\leq c\leq p-1$. Let $i=\ttres(c)$. Then $\eps_0=\eps_i=1$ and $\eps_j=0$ for all $j\neq 0,i$, so the right hand side of (\ref{E180825_3}) equals $8\d_n$. On the other hand, by Corollary~\ref{C180825}, we have $\d_{n-2}(\Res_0\Res_i D\oplus \Res_i\Res_0 D)\geq 8\d_n$. If $c\neq 2,p-1$ then $i\neq 1$, and to see (\ref{E180825_3}), it remains to note that $\Res_1\Res_0 D\neq 0$. If $c=2$ or $c=p-1$, then $\eps_1(\tilde e_0\la)=2$, and so 
$\d_{n-2}(\Res_1\Res_0 D)\geq 4\d_n$, $\d_{n-2}(\Res_0\Res_1D)\geq 2\d_n$ by Lemma~\ref{L171224}, while 
$\d_{n-2}(\Res_1\Res_0,\Res_0\Res_1D)\geq 2\d_n$ and $\d_{n-2}(\Res_0\Res_1,\Res_1\Res_0D)\geq 2\d_n$ by Lemma~\ref{L081218_2}, so $\d_{n-2}\geq 10 \d_n$, proving (\ref{E180825_3}).

\vspace{1mm}
\noindent
{\em Case 1.3: $p\nmid (n-k)$ and $p\mid k$}. In this case we can write $\la=((2p)^a,2p-1,p+1,p^b,d)$ for $a,b\geq 0$ and $1\leq d\leq p-1$. The case $d=p-1$ is excluded in (3). If $d=1$ then $\eps_0=3$, and we are done by Lemma~\ref{L051224_3}. Let $2\leq d\leq p-2$, and set $i:=\ttres(d)$. Then $\eps_0=\eps_i=1$ and $\eps_j=0$ for all $j\neq 0,i$, so the right hand side of (\ref{E180825_3}) equals $8\d_n$. On the other hand, by Corollary~\ref{C180825}, we have $\d_{n-2}(\Res_0\Res_i D\oplus \Res_i\Res_0 D)\geq 8\d_n$. If $d\neq 2$ then $i\neq 1$, and to see (\ref{E180825_3}), it remains to note that $\Res_j\Res_i D\neq 0$ for $j:=\ttres(d-1)$. If $d=2$, then  $\eps_0(\tilde e_1\la)=3$, and so 
$\d_{n-2}(\Res_0\Res_1 D)\geq 6\d_n$, 
$\d_{n-2}(\Res_1\Res_0D)\geq 2\d_n$ by Lemma~\ref{L171224}, 
while 
$\d_{n-2}(\Res_1\Res_0,\Res_0\Res_1D)\geq 2\d_n$ and $\d_{n-2}(\Res_0\Res_1,\Res_1\Res_0D)\geq 2\d_n$ by Lemma~\ref{L081218_2}, so $\d_{n-2}\geq 10 \d_n$, proving (\ref{E180825_3}).

\vspace{1mm}
\noindent
{\em Case 1.4: $p\nmid (n-k)$ and $p\nmid k$}. In this case we can write $\la=((2p)^a,p+c,p^b,d)$ for $a,b\geq 0$ and $1\leq c, d\leq p-1$. Set $i:=\ttres(c)$ and $j:=\ttres(d)$. 

If $2\leq c,d\leq p-1$ with $c\neq d+1$ and $c\neq p-d$ then $i,j\neq 0$ and either $i\neq j$ and $\eps_i=\eps_j=1$, or $i=j$ and $\eps_i=2$. In both cases we are done by Lemma~\ref{L051224_3}. 

If $2\leq c,d\leq p-1$ and $c= d+1$ then $b>0$ since otherwise $\la\not\in A$. Moreover, $\eps_j=1$ and $\eps_k=0$ for all $k\neq j$, so the right hand side of (\ref{E180825_3}) equals $6\d_n$. If $c=2$, then $j=1$, and $\eps_0(\tilde e_1\la)=2$ and $\eps_2(\tilde e_1\la)=1$, so $\d_{n-2}(\Res_0\Res_1 D)\geq 4\d_n$ and $\d_{n-2}(\Res_2\Res_1 D)\geq 4\d_n$ by Lemma~\ref{L171224}, so the left hand side of (\ref{E180825_3}) is at least $8\d_n$. If $c>2$, then setting $k=\ttres(c-1)$, we have $k\neq 0$. Moreover, either $k\neq i$ and $\eps_k(\tilde e_j\la)=\eps_i(\tilde e_j\la)=1$, or  $k=i$ and $\eps_i(\tilde e_j\la)=2$. In both cases Lemma~\ref{L171224} implies that the left hand side of (\ref{E180825_3}) is at least $8\d_n$.

The case $2\leq c,d\leq p-1$ and $c= p-d$ is similar to the case $2\leq c,d\leq p-1$ and $c= d+1$ considered in the previous paragraph; one just needs to take into account that $b>0$ when  $c=2$ since otherwise $\la\not\in A$.

Let $c=1$. Set $i=\ttres(d)$. If $d=1$ then $\eps_0(\la)\geq 3$. 
If $2\leq d<p-1$ and $a>0$ then $\eps_i=1$ and $\eps_0=2$. So in both cases we are again done by Lemma~\ref{L051224_3}. If $d=2<p-1$ and $a=0$ then $\eps_1=\eps_0=1$, the right hand side of (\ref{E180825_3}) equals $8\d_n$, $\eps_0(\tilde e_1\la)=3$, so $\d_{n-2}(\Res_0\Res_1 D)\geq 6\d_n$ and $\d_{n-2}(\Res_1\Res_0 D)\geq 2\d_n$ by Lemma~\ref{L171224}, and $\d_{n-2}(\Res_0\Res_1 D,\Res_1\Res_0 D)\geq 2\d_n$ by Lemma~\ref{L081218_2}, so the left hand side of (\ref{E180825_3}) is at least $10\d_n$. If $2<d<p-1$ and $a=0$ then the right hand side of (\ref{E180825_3}) equals $8\d_n$, $\d_{n-2}(\Res_0\Res_i D\oplus\Res_i\Res_0 D)\geq 8\d_n$ by Corollary~\ref{C180825}, and $\d_n(\Res_k\Res_iD)>0$ for $k=\ttres(d-1)$ so the left hand side of (\ref{E180825_3}) is greater than $8\d_n$. If $d=p-1$ we may assume that $a>0$ since we have excluded the case (2);  if $p>3$ this case is similar to the case $2\leq c,d\leq p-1$ with $c\neq d+1$, while if $p=3$ then $\eps_1(\la)=1$, $\eps_0(\la)=0$ and $\eps_0(\tilde e_1\la)=4$ and we can again conclude by Lemma~\ref{L171224}. 

Let now $d=1$ and $2\leq c\leq p-1$. If $3\leq c\leq p-2$ then $\eps_0(\la)=2$ and $\eps_i(\la)=1$, so we can conclude by Lemma~\ref{L051224_3}. If $c=2$ or $p-1$ then $b\geq 1$ since $\la\in\RP_p(n)$, $\eps_0(\la)=2$ and $\eps_k(\la)=0$ for $k\neq 0$, so the right hand side of (\ref{E180825_3}) equals $6\d_n$. Further $\eps_1(\tilde e_0\la)=1$, so $\d_{n-2}(\Res_0^2 D)\geq 4\d_n$ by Lemma \ref{L051224} and $\d_{n-2}(\Res_1\Res_0 D)>2\d_n$ by Lemma \ref{L171224}. In particular the left hand side of (\ref{E180825_3}) is $>6\d_n$.


\vspace{1mm}
\noindent
{\em Case 2: $\la\in B$.} In this case, by definition, we have $p>3$.  As we have excluded the cases (6),(7),(8), we are left with the following two subcases.

\vspace{1mm}
\noindent
{\em Case 2.1: $\la\in \{(p^a,b,c)\mid a\geq 0,\ 1= c<b\leq p-2\text{ or }2\leq c<b\leq p-1\}$.} 
\vspace{1mm}

If $c=1$ and $b=2$ then $a>0$ by the assumption $n\geq 6$. So  $\eps_0=2$ and $\eps_i=0$ for all $i\neq 0$. We have $\d_{n-2}(\Res_0^2 D)\geq 4\d_n$ by Lemma~\ref{L051224}, and 
$\d_{n-2}(\Res_1\Res_0 D)>2\d_n$ 
by Lemma~\ref{L171224}, so 
$
\d_{n-2}\geq \d_{n-2}(\Res_0^2 D)+\d_{n-2}(\Res_1\Res_0 D)>6\d_n,
$
and we have verified (\ref{E180825_3}).

If $c=1$ and $a=0$, then $b\geq 5$ by the assumption $n\geq 6$.   In this case $\eps_0=1$, $\eps_i=1$ for some $i\neq 0,1$ and $\eps_j=0$ for all $j\neq 0,1$ since $d<p-1$ as $c=1$.  Moreover, $\d_{n-2}(\Res_i\Res_0 D\oplus \Res_0\Res_i D)\geq 8\d_n$ by Corollary~\ref{C180825}, and $\Res_{i+1}\Res_i D\neq 0$ or $\Res_{i-1}\Res_i D\neq 0$, so $
\d_{n-2}>8\d_n$, proving  (\ref{E180825_3}).

If $c=1$, $b>2$ and $a>0$ then $\eps_0=2$ and $\eps_i=1$ for some $i\neq 0$, so we are done by Lemma~\ref{L051224_3}. 

If $2\leq c\leq b-2$ and $b\neq p-c$, then there exist distinct $i,j\neq 0$ with $\eps_i=\eps_j=1$, and we are done by Lemma~\ref{L051224_3}. 

Suppose $2\leq c=b-1$. Setting $i:=\ttres(c)$, we have $\eps_i=1$ and $\eps_j=0$ for all $j\neq i$. If $c=2$ then $a>0$ by the assumption $n\geq 6$, and $\eps_0(\tilde e_1\la)=2$, $\eps_2(\tilde e_1\la)=1$, so  
$\d_{n-2}(\Res_0\Res_1 D)\geq 4\d_n$ and $\d_{n-2}(\Res_2\Res_1 D)\geq 4\d_n$ by Lemma~\ref{L171224}, hence $\d_{n-2}\geq 8\d_n$, proving (\ref{E180825_3}). If $3\leq c\neq \ell+1$ then there exists distinct non-zero $j,k$ with $\eps_j(\tilde e_i\la)=1$ and $\eps_k(\tilde e_i\la)=1$, so  
$\d_{n-2}(\Res_j\Res_i D)\geq 4\d_n$ and $\d_{n-2}(\Res_k\Res_i D)\geq 4\d_n$ by Lemma~\ref{L171224}, hence $\d_{n-2}\geq 8\d_n$, proving (\ref{E180825_3}). If $c=\ell+1$ then there exists $j\neq 0$ with $\eps_j(\tilde e_i\la)=2$, so $\d_{n-2}(\Res_j\Res_i D)\geq 8\d_n$ by Lemma~\ref{L171224}, hence $\d_{n-2}\geq 8\d_n$, proving (\ref{E180825_3}).

Suppose $2\leq c$ and $b=p-c$. Setting $i:=\ttres(c)$, we have $\eps_i=1$ and $\eps_j=0$ for all $j\neq i$. If $c=2$, we have $a>0$, since the case $\la=(p-2,2)$ has been excluded in (5), and then $\eps_0(\tilde e_1\la)=2$, $\eps_2(\tilde e_1\la)=1$, so  
$\d_{n-2}(\Res_0\Res_1 D)\geq 4\d_n$ and $\d_{n-2}(\Res_2\Res_1 D)\geq 4\d_n$ by Lemma~\ref{L171224}, hence $\d_{n-2}\geq 8\d_n$, proving (\ref{E180825_3}). If $c>2$ then $\eps_{i-1}(\tilde e_i\la)=1$, $\eps_{i+1}(\tilde e_i\la)=1$, so  
$\d_{n-2}(\Res_{i-1}\Res_i D)\geq 4\d_n$ and $\d_{n-2}(\Res_{i+1}\Res_i D)\geq 4\d_n$ by Lemma~\ref{L171224}, hence $\d_{n-2}\geq 8\d_n$, proving (\ref{E180825_3}).

\vspace{1mm}
\noindent
{\em Case 2.2: $\la\in \{(p^a,p-1,b,1)\mid a\geq 0,\,2\leq b\leq p-2\}$}

If $b=2$, then $\eps_0=\eps_1=1$ and $\eps_j=0$ for all $j\neq 0,1$. Moreover, $\eps_1(\tilde e_0\la)=2$, so $\d_{n-2}(\Res_1\Res_0 D)\geq 4\d_n$ 
and $\d_{n-2}(\Res_0\Res_1 D)\geq 2\d_n$ 
by Lemma~\ref{L171224}. By Lemma~\ref{L081218_2}, we also have $\d_{n-2}(\Res_0\Res_1 D,\Res_1\Res_0 D)\geq 2\d_n$ and  $\d_{n-2}(\Res_1\Res_0 D,\Res_0\Res_1 D)\geq 2\d_n$. Thus $\d_{n-2}\geq 10\d_n$, proving (\ref{E180825_3}).

If $b=p-2$, then $\eps_0=\eps_2=1$ and $\eps_j=0$ for all $j\neq 0,2$. By Corollary~\ref{C180825}, we get $\d_{n-2}(\Res_0\Res_2 D\oplus \Res_2\Res_0 D)\geq 8\d_n$. Moreover, by Lemma~\ref{L081218_2} we have $\d_{n-2}(\Res_3\Res_2D)\geq 2\d_n$ if $p>5$, and  $\d_{n-2}(\Res_1\Res_2D)\geq 2\d_n$ if $p=5$. Thus $\d_{n-2}\geq 10\d_n$, proving (\ref{E180825_3}).

If $2<b<p-2$, then, setting $i:=\ttres(c)$, we have $i>2$, $\eps_0=\eps_i=1$, and $\eps_j=0$ for all $j\neq 0,i$. By Corollary~\ref{C180825}, we get $\d_{n-2}(\Res_0\Res_i D\oplus \Res_i\Res_0 D)\geq 8\d_n$. Moreover, 
by Lemma~\ref{L081218_2} we have $\d_{n-2}(\Res_1\Res_0D)\geq 2\d_n$. Thus $\d_{n-2}\geq 10\d_n$, proving (\ref{E180825_3}).
\end{proof}

We next consider most of the exceptional partitions in Lemma \ref{L171224_3}. 
For these we obtain special homomorphisms $\psi,\psi_1,\psi_2$ as in Lemmas~\ref{inv_end} and \ref{inv_mixedhom} with  $\al=(n-2,2)$. In order to do this, we now need to work with modules instead of supermodules. 

\begin{Lemma}\label{L181224}
Let $n\geq 6$, $G\in\{\ts_n,\tA_n\}$ and $L$ be an irreducible spin $\F G$-module labeled by a partition $\la$ which has one of  the following forms:
\begin{itemize}
\item[(1)] $(p+1,p^a,p-1)$ for some $a\geq 0$,

\item[(2)] $((2p)^a,2p-1,p+1,p^b,p-1)$ for some $a,b\geq 0$,

\item[(3)] $((2p)^a,p+1,p^b,p-1,1)$ for some $a\geq 1$ and $b\geq 0$,           

\item[(4)] $(p^a,p-2,2,1)$ for some $a\geq 1$ and $p>3$,

\item[(5)] $(p^a,p-1,p-2,2)$ for some $a\geq 0$ and $p>3$,

\item[(6)] $(p-2,2)$ for some $p>5$.
\end{itemize}
Then there exists $\psi\in\Hom_G({}^\pi M^{(n-2,2)},\End_\F(L))$ such that $\psi\circ\iota_{(n-2,2)}\neq 0$.
\end{Lemma}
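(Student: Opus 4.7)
The plan is to invoke Lemma \ref{L220825} with $V=W=L$ and $r=1$: the existence of the required $\psi$ is equivalent to the strict inequality
\[\dim\End_{\ts_{n-2,2}\cap G}(L\da_{\ts_{n-2,2}\cap G})>\dim\End_{\ts_{n-1}\cap G}(L\da_{\ts_{n-1}\cap G}).\]
First I would reduce to a supermodule calculation. By (\ref{EDimDim}), the endomorphism dimensions of $L$ as an $\F G$-module coincide with those of the corresponding supermodule over the ambient twisted group algebra, so for $G=\ts_n$ the inequality becomes
\[\dim\End_{\T_{n-2,2}}(D(\la)\da_{\T_{n-2,2}})>\dim\End_{\T_{n-1}}(D(\la)\da_{\T_{n-1}}),\]
and Lemma \ref{L161224_3} turns the left-hand side into $\tfrac{1}{2}\dim\End_{\T_{n-2}}(D(\la)\da_{\T_{n-2}})$. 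The right-hand side is computed directly by Lemma \ref{n-1}(ii). The case $G=\tA_n$ is handled analogously via the Clifford-theoretic descent of Lemmas \ref{lmodules} and \ref{LHomPM}. The character-theoretic construction used in Lemmas \ref{L7}, \ref{L181224_3}, \ref{end42}, \ref{end222} is unavailable here because for $\alpha=(n-2,2)$ the only elements $\xi\in\tA_n$ with $\pi(\xi)$ fixing the first row of $t^\alpha$ are central, and $\sum_{g\in C_\alpha}\sgn(g)=0$ makes $x_\xi=0$.

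For each of the six listed forms of $\la$, I would begin by writing down the full $\eps_i$-profile. These partitions have at most two nonzero $\eps_i$'s in $I=\{0,1,\ldots,\ell\}$ (specifically, $\eps_0(\la)\leq 2$ and $\eps_i(\la)\leq 1$ for $i\neq 0$), with precise values determined by which parts of $\la$ are divisible by $p$ and by residues of the boundary cells. A key point, already central to Lemma \ref{L171224_3}, is to account carefully for R2-removable nodes at residue $0$ whenever two adjacent cells of the same row both carry residue $0$; these contribute extra $-$'s in the $0$-signature and hence extra 0-normal nodes. Once the profile is known, the decomposition $D(\la)\da_{\T_{n-2}}=\bigoplus_{i,j\in I}\Res_j\Res_iD(\la)$ reduces the problem to a handful of summands, each of which by Lemmas \ref{branching} and \ref{divpowers} is either zero or of the form $D(\tilde e_i\tilde e_j\la)^{\oplus k}$ for an explicit $k$.

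The lower bounds of Lemmas \ref{L051224}, \ref{L171224}, \ref{L081218_2} then provide the needed estimates. Crucially, the inequality required here is weaker than the one proved in Lemma \ref{L171224_3}, which carried an extra $\dim\End_{\T_n}(D(\la))$ on its right-hand side. Consequently the exceptional cases (1)-(6) need not be excluded: for each of the six forms, the necessary margin is supplied by one of (a) an R2-normal node at residue~$0$ contributing a summand $\Res_0\Res_jD(\la)\neq 0$ beyond the $\Res_j\Res_iD(\la)$ terms seen by the generic $\eps$-analysis (forms (1)-(4)); (b) the cross-Hom contribution $\Hom_{\T_{n-2}}(\Res_i\Res_jD(\la),\Res_j\Res_iD(\la))$ of Lemma \ref{L081218_2}, which is positive whenever two distinct $\eps_i(\la),\eps_j(\la)$ are both nonzero (forms (5) and (6)); or (c) the $\de_{\eps_0(\la)\geq 3}$ bonus in Lemma \ref{L051224} together with the R2-doubling on the first restriction when $\eps_0(\la)=2$ (tails of (1) and (3)).

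The main obstacle is the careful residue-combinatorial bookkeeping in each case, particularly for the partitions in (1)-(4), where neglecting R2 at residue~$0$ (e.g., setting $\eps_0((p+1,p-1))=0$) produces an apparent equality and hides the strict inequality; reading off the R2 pair $((1,p-1),(1,p))$ forces $\eps_0=1$ and yields the extra composition factor $D(\tilde e_0\tilde e_1\la)$ in $D(\la)\da_{\T_{n-2}}$ that tips the balance. The small case (6) $\la=(p-2,2)$ with $p>5$ requires separate direct inspection, but once $\eps_1(\la)=\eps_2(\la)=1$ are verified, Lemma \ref{L081218_2} alone supplies enough endomorphisms to finish. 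Throughout, the alternating-group variant is obtained by restricting the same computation to the even part via Lemma \ref{lmodules}, noting that the index-two embedding $(\T_{n-2,2})_\0\subseteq \T_{n-2,2}$ and $(\T_{n-1})_\0\subseteq\T_{n-1}$ preserves the strict inequality up to the uniform factor $(1+a_p(\la))$ from (\ref{EEndDDim}).
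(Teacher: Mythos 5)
Your reduction via Lemma~\ref{L220825} to the inequality $\dim\End_{\ts_{n-2,2}\cap G}(L\da)>\dim\End_{\ts_{n-1}\cap G}(L\da)$ is the right starting point (it is also how the paper begins), but the step where you replace this module-level inequality by the supermodule inequality $\d_{n-2,2}(D(\la)\da)>\d_{n-1}(D(\la)\da)$ is a genuine gap. The identification via (\ref{EDimDim}) only works when $L=|D(\la)|$, i.e.\ when $\eps=0$; for $L=D(\la;\pm)$ (and likewise $E(\la;\pm)$) one has $|D(\la)|\cong L\oplus L'$ with $L'=L\otimes\sgn$ (resp.\ the $\si$-conjugate), so the supermodule endomorphism dimension of a restriction splits into four Hom-blocks, and the surplus guaranteed by the supermodule inequality may sit entirely in the off-diagonal blocks $\Hom(L\da,L'\da)$. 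In that situation one only obtains the ``mixed-hom'' conclusion (as in Lemma~\ref{L181224_2}(ii),(iii), to be fed into Lemma~\ref{inv_mixedhom}), not the existence of $\psi$ into $\End_\F(L)$ claimed here. Your closing remark that the even-part/Clifford descent ``preserves the strict inequality up to the uniform factor $(1+a_p(\la))$'' is false in general, and this descent is exactly where the paper does its real work: for each of the forms (1)--(6) it analyses the actual modules, e.g.\ showing $\dim\End_{\ts_{n-1}}(D(\la;\pm)\da)=1$ while exhibiting two non-isomorphic summands (or a self-duality/socle--head argument forcing a repeated constituent) in $D(\la;\pm)\da_{\ts_{n-2,2}}$; the $p=3$ subcase of forms (1),(2) needs the full force of Lemma~\ref{L101218_2} and a duality argument, none of which follows from the numerical supermodule bound.

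Separately, several of your combinatorial inputs are wrong, which undermines the case analysis even at the supermodule level. The partitions in (1), (2), (5), (6) are $\JS^{(1)}$: for instance $(p+1,p-1)$ has $\eps_0=0$ (there is no R2 pair at residue $0$ there, since $(p-1,p-1)$ is not $p$-strict), and $(p-2,2)$ has $\eps_1=1$, $\eps_2=0$, so Lemma~\ref{L081218_2} gives nothing for it (its bound is proportional to $\eps_1(\la)\eps_2(\la)=0$). The correct route, as in the paper, is to use the JS property to write $D(\la)\da_{\ts_{n-1}}\cong D(\tilde e_1\la)^{\oplus(1+a_p(\la))}$ and then exploit second-level data such as $\eps_0(\tilde e_1\la)$ and $\eps_2(\tilde e_1\la)$ (via Lemmas~\ref{branching}, \ref{L171224}, \ref{L101218_2}), rather than the first-level $\eps$-profile bounds of Lemmas~\ref{L051224} and \ref{L081218_2}.
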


\begin{proof}
By assumption, $L=D(\la;\eps)$ or $E(\la;\eps)$ for $\eps\in\{0,+,-\}$ with $\la$ of the form (1)-(6). By Lemma~\ref{L220825}, we have to prove 
\begin{equation}\label{E220825_1}
\dim\End_{\ts_{n-2,2}\cap G}(L\da_{\ts_{n-2,2}\cap G})>\dim\End_{\ts_{n-1}\cap G}(L\da_{\ts_{n-1}\cap G}).
\end{equation}

\vspace{2mm}
\noindent
{\em Claim: If $\eps=0$ then it suffices to prove that }
\begin{equation}\label{E220825_2}
\d_{n-2,2}(D(\la)\da_{\ts_{n-2,2}})>\d_{n-1}(D(\la)\da_{\ts_{n-1}}).
\end{equation}

\vspace{2mm}
\noindent
Indeed, if $L=D(\la;0)$ then  $L=|D(\la)|$ by Lemma~\ref{lmodules}, and so for $k=1,2$, we have that $\d_{n-k,k}(D(\la)\da_{\ts_{n-k,k}})=\dim\End_{\ts_{n-k,k}}(D\da_{\ts_{n-k,k}})$, hence (\ref{E220825_1}) is equivalent to (\ref{E220825_2}). On the other hand, if   $L=E(\la;0)$ then 
by Lemma~\ref{lmodules}, we have $L^{\oplus 2}\cong D(\la)\da_{\tA_n}$, hence $\End_\F(D(\la)\da_{\tA_n})\cong \End_\F(L)^{\oplus 4}$ as $\F\tA_n$-modules. 
Given (\ref{E220825_2}), we deduce from Lemma~\ref{L220825} that there exists $\in\Hom_{\ts_n}( {}^\pi M^{(n-2,2)},\End_\F(D(\la)))$ such that $\psi\circ\iota_{(n-2,2)}\neq 0$. Restricting to $\tA_n$, there exists an $\F\tA_n$-homomorphism ${}^\pi M^{(n-2,2)}\to\End_\F(D(\la)\da_{\tA_n}))\cong \End_\F(L)^{\oplus 4}$ such that $\psi\circ\iota_{(n-2,2)}\neq 0$. Hence there exists an $\F\tA_n$-homomorphism ${}^\pi M^{(n-2,2)}\to\End_\F(L)$ such that $\psi\circ\iota_{(n-2,2)}\neq 0$. 

\vspace{2mm}
We now go through different cases. 

\vspace{2mm}
\noindent
{\em Case 1: $\la$ is of the forms (5) or (6), or $\la$ is of the forms (1),(2) and $p>3$.} 

In this case we have by Lemmas \ref{branching}, \ref{LAGa} and  \ref{product},
\begin{equation}\label{E220825_3}
\begin{split}
D(\la)\da_{\ts_{n-1}}&\cong D(\tilde e_1\la)^{\oplus (1+a_p(\la))}
\\ 
D(\la)\da_{\ts_{n-2,2}}&\cong D(\tilde e_0\tilde e_1\la,(2))\oplus D(\tilde e_2\tilde e_1\la,(2))^{\oplus (1+a_p(\la))},
\end{split}
\end{equation}
and (\ref{E220825_2}) easily follows using Lemmas~\ref{LAGa} and  \ref{product}. So by the Claim, we may assume that $\eps=\pm$. 
In this case, by Lemma \ref{lmodules}, either $a_p(\la)=0$ and $L=E(\la;\pm)$, or $a_p(\la)=1$ and $L=D(\la;\pm)$. 
From (\ref{E220825_3}) and Lemmas~\ref{LAGa}, \ref{product}, in the first case we deduce  
\begin{align*}
E(\la;\pm)\da_{\tA_{n-1}}&\cong E(\tilde e_1\la;0),\\
E(\la;\pm)\da_{\tA_{n-2,2}}&\cong E(\tilde e_0\tilde e_1\la,(2);\de)\oplus E(\tilde e_2\tilde e_1\la,(2);0),
\end{align*}
where $\de=\pm$ or $\mp$, while in the second case we deduce 
\begin{align*}
D(\la;\pm)\da_{\ts_{n-1}}&\cong D(\tilde e_1\la;0),\\
D(\la;\pm)\da_{\ts_{n-2,2}}&\cong D(\tilde e_0\tilde e_1\la,(2);\de)\oplus D(\tilde e_2\tilde e_1\la,(2);0),
\end{align*}
where $\de=\pm$ or $\mp$. 
In both cases (\ref{E220825_1}) follows immediately. 

\vspace{2mm}
\noindent 
{\em Case 2: $\la$ is of the forms (1),(2) and $p=3$.} 

In this case, by Lemma \ref{branching}, 
\begin{equation}\label{E220825_4}
D(\la)\da_{\ts_{n-1}}\cong D(\tilde e_1\la)^{\oplus (1+a_p(\la))}
\quad\text{and}\quad
D(\la)\da_{\ts_{n-2}}\cong (e_0D(\tilde e_1\la))^{\oplus (1+a_p(\la))}.
\end{equation}
Moreover, $\eps_0(\tilde e_1\la)=3$ implies by Lemmas~\ref{branching} and \ref{LAGa} that 
$$\dim\End_{\ts_{n-2}}(e_0D(\tilde e_1\la))=3\dim\End_{\ts_{n-2}}(D(\tilde e_0\tilde e_1\la))=3\dim\End_{\ts_{n-1}}(D(\tilde e_1\la)).
$$
So Lemma~\ref{L161224_3} and (\ref{E220825_4}) imply  (\ref{E220825_2}), so by the Claim, we may assume that $\eps=\pm$. So either $a_p(\la)=1$ and $L=D(\la;\pm)$, or $a_p(\la)=0$ and $L=E(\la;\pm)$. In both cases, it follows from the first isomorphism in (\ref{E220825_4}) that $L\da_{\ts_{n-1}\cap G}$ is irreducible, so the right hand side in (\ref{E220825_1}) equals $1$. We show that the left hand side in (\ref{E220825_1}) is at least $2$.

By Lemmas~\ref{L161224_3} and \ref{L101218_2}, 
\[(D(\la)\da_{\ts_{n-2,2}})^{\oplus 2}\cong D(\la)\da_{\ts_{n-2}}\boxtimes D(2)\cong (e_0D(\tilde e_1\la)\circledast D(2))^{\oplus 2},\]
with 
\begin{enumerate}
\item[$\bullet$] $\soc(e_0D(\tilde e_1\la)\circledast D(2))\cong \head(e_0D(\tilde e_1\la)\circledast D(2))\cong D(\tilde e_0\tilde e_1\la,(2))$,
\item[$\bullet$] $[e_0D(\tilde e_1\la)\circledast D(2):D(\tilde e_0\tilde e_1\la,(2))]=3$, 
\item[$\bullet$] $\dim\End_{\ts_{n-2,2}}(e_0D(\tilde e_1\la)\circledast D(2))=\frac{6}{1+a_p(\tilde e_0\tilde e_1\la)}=3(1+a_p(\la))$, 
\end{enumerate}  
using  
Lemma~\ref{LAGa} for the last equality. By Krull-Schmidt, $D(\la)\da_{\ts_{n-2,2}}\cong e_0D(\tilde e_1\la)\circledast D(2)$. 

If $a_p(\la)=1$ this implies 
$$
D(\la;+)\da_{\ts_{n-2,2}}\oplus D(\la;-)\da_{\ts_{n-2,2}}
\cong |e_0D(\tilde e_1\la)\circledast D(2)|.
$$
Since $D(\la,\pm)\otimes\sgn\cong  D(\la,\mp)$, we deduce that 
$\soc (D(\la;\pm)\da_{\ts_{n-2,2}})\cong D(\tilde e_0\tilde e_1\la,(2);\pm\de)$ and $\head (D(\la;\pm)\da_{\ts_{n-2,2}})\cong D(\tilde e_0\tilde e_1\la,(2);\pm\de')$ for some $\de,\de'\in\{+,-\}$ are simple.  So there exists $\eps\in\{+,-\}$ such that
\begin{align}\label{E050925}
D(\la;\pm)\da_{\ts_{n-2,2}}&\sim \overbrace{D(\tilde e_0\tilde e_1\la,(2);\pm\de)}^{\text{socle}}|B|D(\tilde e_0\tilde e_1\la,(2);\pm\eps)|A|\overbrace{D(\tilde e_0\tilde e_1\la,(2);\pm\de')}^{\text{head}},
\end{align}
where $A$ and $B$ have no composition factor of the form $D(\tilde e_0\tilde e_1\la,(2);+)$ or $D(\tilde e_0\tilde e_1\la,(2);-)$. 
Note by (\ref{E*}) that 
$$
(D(\la;+)\da_{\ts_{n-2,2}})^*\cong D(\la;+)^*\da_{\ts_{n-2,2}}\cong
D(\la;\pm)\da_{\ts_{n-2,2}}.
$$
If $(D(\la;+)\da_{\ts_{n-2,2}})^*\cong D(\la;+)\da_{\ts_{n-2,2}}$, then (\ref{E050925}) implies that $D(\tilde e_0\tilde e_1\la,(2);\pm\eps)\cong D(\tilde e_0\tilde e_1\la,(2);\pm\eps)$ so 
$$D(\tilde e_0\tilde e_1\la,(2);\pm\de)\cong D(\tilde e_0\tilde e_1\la,(2);\pm\de),$$ hebce $\de=\de'$. If $(D(\la;+)\da_{\ts_{n-2,2}})^*\cong D(\la;-)\da_{\ts_{n-2,2}}$, a similar analysis shows that again $\de=\de'$. Now it is clear that $\dim\End_{\ts_{n-2,2}}(D(\la;\pm)\da_{\ts_{n-2,2}})\geq 2$.

If $a_p(\la)=0$ then 
$$
E(\la;+)\da_{\tA_{n-2,2}}\oplus E(\la;-)\da_{\tA_{n-2,2}}
\cong (e_0D(\tilde e_1\la)\circledast D(2))\da_{\tA_{n-2,2}},
$$
and a similar argument shows that $\dim\End_{\tA_{n-2,2}}(E(\la;\pm)\da_{\tA_{n-2,2}})\geq 2$. 

\vspace{2mm}
\noindent
{\em Case 3: $\la$ is of the forms (3),(4).} 

We have $\eps_0(\la)=2$ and $\eps_i(\la)=0$ for all $i\neq 0$.
So by Lemmas~\ref{branching} and \ref{LAGa}, we have 
\begin{align}
\label{E1}
&\dim\End_{\ts_{n-1}}(D(\la)\da_{\ts_{n-1}})=2(1+a_p(\tilde e_0\la))=2(1+a_p(\la)),
\\
&[D(\la)\da_{\ts_{n-1}}:D(\tilde e_0\la)]=2,
\\
\label{E2}
&\soc (D(\la)\da_{\ts_{n-1}})\cong \head(D(\la)\da_{\ts_{n-1}})\cong D(\tilde e_0\la),
\\
\label{E3}
&\text{$\eps_0(\mu)=0$ whenever $\mu\neq \tilde e_0\la$ and $[D(\la)\da_{\ts_{n-1}}:D(\mu)]\neq 0$,}
\\
\label{E4}
&\Res_0^2D(\la)\cong D(\tilde e_0^2\la)^{\oplus 2}.
\end{align}
Moreover, if $A$ is the bottom normal node of $\la$, then $\la_A\in\RP_p(n)$ and $\eps_1(\la)=1$. 
So by Lemma~\ref{normal}, we have $\Res_1\Res_0D(\la)\neq 0.$
By Lemma \ref{product}, we have 
\begin{equation}\label{E5}
D(\la)\da_{\ts_{n-2,2}}\cong D(\tilde e_0^2\la,(2))^{\oplus (1+a_p(\la))}\oplus V
\end{equation}
for some self-dual supermodule $V$ with composition factors of the opposite type than $D(\tilde e_0^2\la,(2))$. 
If $a_p(\la)=0$ then the first summand has the endomorphism algebra of dimension $2$. If $a_p(\la)=1$ then the first summand has the endomorphism algebra of dimension $4$. Since $\End_{\ts_{n-2,2}}(V)\neq 0$, in both cases we get that $\dim \End_{\ts_{n-2,2}}(D(\la)\da_{\ts_{n-2,2}})$ is greater than the right hand side of (\ref{E1}), proving (\ref{E220825_2}). So by the Claim, we may assume that $\eps=\pm$. 

We give details for the case $a_p(\la)=1$, the case $a_p(\la)=0$ being similar. By Lemma \ref{lmodules}, in the case $a_p(\la)=1$ we have 
$$
D(\la;\pm)\da_{\ts_{n-2,2}}\cong D(\tilde e_0^2\la,(2);0)\oplus V_\pm
$$
for some $V_\pm\neq 0$. So $\dim\End_{\ts_{n-2,2}}(D(\la;\pm)\da_{\ts_{n-2,2}})\geq 2$.
It will be enough to show that $\dim\End_{\ts_{n-1}}(D(\la;\pm)\da_{\ts_{n-1}})=1$. By (\ref{E1}),(\ref{E2}),(\ref{E3}), we have 
\begin{align*}
&\soc (D(\la;\pm)\da_{\ts_{n-1}})\cong D(\tilde e_0\la;\pm\de),\ 
\head(D(\la;\pm)\da_{\ts_{n-1}})\cong D(\tilde e_0\la;\pm\de'),
\\
&[D(\la;\pm)\da_{\ts_{n-1}}:D(\tilde e_0\la;+)]+[D(\la;\pm)\da_{\ts_{n-1}}:D(\tilde e_0\la;-)]=2,
\\
&\text{$\eps_0(\mu)=0$ whenever $\mu\neq \tilde e_0\la$ and $[D(\la;\pm)\da_{\ts_{n-1}}:D(\mu;\kappa)]\neq 0$,}
\\
&[\Res_0^2D(\la;\pm)]= D(\tilde e^2_0\la;\pm\kappa)\oplus D(\tilde e^2_0\la;\pm\kappa').
\end{align*}
with $\kappa=\kappa'$ if $\de=\de'$. Moreover, by (\ref{E5}), 
\[\Res_0^2 D(\la;+)\cong D(\tilde e_0^2\la,(2),0)\da_{\ts_{n-2}}\cong\Res_0^2 D(\la;-).\]
So $\ka\neq \ka'$, hence $\de\neq \de'$, and it follows that 
$\dim\End_{\ts_{n-1}}(D(\la;\pm)\da_{\ts_{n-1}})=1$.
\end{proof}

We are now ready to check that key assumptions of Lemmas \ref{inv_end} or \ref{inv_mixedhom} hold for $\al=(n-2,2)$ and most $\la\in\TR_p(n)$.

\begin{Lemma}\label{L181224_2}
Let $n\geq 6$, $G\in\{\ts_n,\tA_n\}$ and $L$ be an irreducible spin $\F G$-module labeled by a
partition $\la\in\TR_p(n)$. Assume that the following conditions hold:
\begin{itemize}
\item $\la\not\in\{\balpha_n,\, ((2p)^a,2p-1,p+1,p^b,p-1,1),\,  (p+1,p^b,p-1,1)\mid a,b\geq 0\}$;



\item if $p>3$ then $\la\not\in\{(p-2,2,1),\, (p^a,p-1,p-2,2,1)\mid a\geq 0\}$.

\end{itemize}
Then one of the following holds:
\begin{enumerate}[\rm(i)] 
\item there exists 
$\psi\in\Hom_G({}^\pi M^{(n-2,2)},\End_\F(L))$ such that $\psi\circ\iota_{(n-2,2)}\neq 0$. 

\item $G=\ts_n$, $L=D(\la;\pm)$ and there exist
$$\psi_1,\psi_2\in\Hom_G\big({}^\pi M^{(n-2,2)},\Hom_\F(D(\la;\pm),D(\la;\mp))\big)$$ such that $\psi_1\circ\iota_{(n-2,2)}$ and $\psi_1\circ\iota_{(n-2,2)}$ are linearly independent. 

\item $G=\tA_n$, $E=E(\la;\pm)$ and there exists 
$$\psi_1,\psi_2\in\Hom_G\big({}^\pi M^{(n-2,2)},\Hom_\F(E(\la;\pm),E(\la;\mp))\big)$$ such that $\psi_1\circ\iota_{(n-2,2)}$ and $\psi_1\circ\iota_{(n-2,2)}$ are linearly independent. 

\end{enumerate}
\end{Lemma}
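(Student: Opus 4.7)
The plan is to combine Lemmas \ref{L171224_3} and \ref{L181224} in a complementary way: Lemma \ref{L171224_3} handles all $\la$ outside its exceptional families (1)--(8), while Lemma \ref{L181224} disposes of those exceptional families that are not forbidden by the hypotheses of the current lemma. A direct check shows that the hypotheses exclude $\balpha_n$ together with Lemma \ref{L171224_3} families (1), (6), and the $a=0$ cases of (4) and (7), leaving precisely the six families $(p{+}1,p^b,p{-}1)$, $((2p)^a,2p{-}1,p{+}1,p^b,p{-}1)$, $((2p)^a,p{+}1,p^b,p{-}1,1)$ with $a\ge 1$, $(p{-}2,2)$ with $p>5$, $(p^a,p{-}2,2,1)$ with $a\ge 1$ and $p>3$, and $(p^a,p{-}1,p{-}2,2)$ with $p>3$, which is exactly the list handled by Lemma \ref{L181224}. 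For each of these Lemma \ref{L181224} delivers a $\psi$ as in conclusion~(i).

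For the remaining $\la$, Lemma \ref{L171224_3} yields the strict inequality
\[
\d_{n-2,2}\bigl(D(\la)\da_{\T_{n-2,2}}\bigr) \;\ge\; \d_{n-1}\bigl(D(\la)\da_{\T_{n-1}}\bigr) + \d_n\bigl(D(\la)\bigr) + 1,
\]
which via (\ref{EDimDim}) becomes an inequality between ordinary endomorphism dimensions of $|D(\la)|$ as $\F\ts_m$-modules. I then split by type. If $D(\la)$ is of type $\Mtype$, then $|D(\la)| = D(\la;0)$ is irreducible and the inequality feeds Lemma \ref{L220825} with $V=W=D(\la;0)$, delivering case (i) for $L=D(\la;0)$; for $L = E(\la;\pm)$ with $G = \tA_n$, I restrict to $\tA_n$, decompose $\End_\F(D(\la;0))\da_{\tA_n}$ into four Hom-summands corresponding to the splitting $E(\la;+) \oplus E(\la;-)$, and use Lemma \ref{LHomPM}(ii) together with $\si$-conjugacy of the subgroups $\tA_{n-1}$ and $\tA_{n-2,2}$ to pair these summands with matching dimensions of invariants; the excess of $2$ therefore lands in the $\End_\F(L)$-piece, yielding case (i).

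If $D(\la)$ is of type $\Qtype$, then $|D(\la)| = D(\la;+) \oplus D(\la;-)$ with $D(\la;-) \cong D(\la;+)\otimes\sgn$. For $L = D(\la;\pm)$ with $G = \ts_n$, setting $a_m := \dim\End_{\ts_m}(L\da)$ and $b_m := \dim\Hom_{\ts_m}(D(\la;\pm)\da, D(\la;\mp)\da)$, the decomposition of $\End_\F(|D(\la)|)$ into four sign-twisted blocks gives $\dim\End_{\ts_m}(|D(\la)|\da) = 2a_m + 2b_m$, so the super inequality collapses to $(a_{n-2,2} - a_{n-1}) + (b_{n-2,2} - b_{n-1}) \ge 2$; if the first difference is at least $1$, Lemma \ref{L220825} (with $V=W=L$) yields case (i), otherwise the second difference is at least $2$ and Lemma \ref{L220825} (with $V = D(\la;\pm)$, $W = D(\la;\mp)$) yields case (ii). The remaining subcase $L = E(\la;0)$ with $G = \tA_n$ uses $|D(\la)|\da_{\tA_n} = L^{\oplus 2}$ (since $\sgn|_{\tA_n}$ is trivial), so $\dim\End_{\tA_m}(|D(\la)|\da) = 4\dim\End_{\tA_m}(L\da)$; converting the super inequality to the $\tA_m$-level via the index-$2$ Clifford relationship between $\tA_m$- and $\ts_m$-endomorphisms of $D(\la;\pm)$ then yields the required excess of at least $1$ in $\dim\End_{\tA_m}(L\da)$, delivering case~(i) via Lemma \ref{L220825}. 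The main technical obstacle is precisely this last conversion between super ($\ts_m$-level) and ordinary ($\tA_m$-level) endomorphism dimensions when $L = E(\la;0)$, together with the careful symmetric tracking of the four Hom-blocks in the $\Qtype$, $G=\ts_n$ case on which the dichotomy between (i) and (ii) rests.
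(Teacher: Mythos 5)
Your overall strategy is the paper's: first check that every exceptional family of Lemma \ref{L171224_3} not excluded by the hypotheses is exactly one of the families treated by Lemma \ref{L181224} (your bookkeeping here is correct), then convert the strict supermodule inequality $\d_{n-2,2}(D(\la)\da)>\d_{n-1}(D(\la)\da)+\d_n(D(\la))$ into ordinary Hom-space comparisons and apply Lemma \ref{L220825}, splitting by type and by $G$. Your treatment of the cases $L=D(\la;0)$, $L=E(\la;0)$, and the type-$\Qtype$ case $G=\ts_n$, $L=D(\la;\pm)$ (the dichotomy between a jump in $\End$ giving (i) and a jump of at least $2$ in the cross Hom giving (ii)) matches the paper's Cases 1--3 in substance.

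The genuine gap is in the case $G=\tA_n$, $L=E(\la;\pm)$ (type $\Mtype$), where you claim the excess always lands in the diagonal block $\End_\F(L)$ and hence that (i) always holds. The $\si$-conjugacy/Lemma \ref{LHomPM}(ii) pairing only gives $\dim\Hom_{\tA_m}(E(\la;+)\da,E(\la;+)\da)=\dim\Hom_{\tA_m}(E(\la;-)\da,E(\la;-)\da)$ and $\dim\Hom_{\tA_m}(E(\la;+)\da,E(\la;-)\da)=\dim\Hom_{\tA_m}(E(\la;-)\da,E(\la;+)\da)$ for $m\in\{(n-1,1),(n-2,2)\}$; it pairs the two diagonal blocks with each other and the two off-diagonal blocks with each other, but it gives no comparison between diagonal and off-diagonal, so nothing forces the jump of $2$ into $\End_\F(L)$ rather than entirely into $\Hom_\F(E(\la;\pm),E(\la;\mp))$. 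Indeed, if your claim were provable, conclusion (iii) of the lemma would be superfluous, whereas the statement (and its later use via Lemma \ref{inv_mixedhom}(ii)) is designed precisely to allow that outcome. The paper instead argues: using Frobenius reciprocity and Mackey (via $E(\la;\pm)\ua^{\ts_n}\cong D(\la)$) one gets
\begin{align*}
\dim\Hom_{\tA_{n-2,2}}\big(E(\la;\pm)\da,(E(\la;+)\oplus E(\la;-))\da\big)&=\dim\End_{\ts_{n-2,2}}(D(\la)\da),
\end{align*}
and similarly at level $n-1$; the super inequality then shows the sum of the diagonal and cross terms jumps by at least $2$, so either the diagonal term jumps (Lemma \ref{L220825} gives (i)) or, if $\dim\End_{\tA_{n-2,2}}(E(\la;\pm)\da)=\dim\End_{\tA_{n-1}}(E(\la;\pm)\da)$, the cross term jumps by at least $2$ and Lemma \ref{L220825} gives (iii). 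You should replace your pairing argument in this case by this dichotomy; with that change your proof is complete.
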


\begin{proof}
For $H$-modules $V,W$, we denote  $\d(V,W):=\dim\Hom_H(V,W)$ and $\d(V):=\dim\End_H(V)$. 

Note that either Lemma \ref{L171224_3} or Lemma \ref{L181224} applies. If Lemma \ref{L181224} applies, we have (i). So we may assume that Lemma \ref{L171224_3} applies. 
Then
\begin{equation}\label{E260825}
\d(D(\la)\da_{\ts_{n-2,2}})>\d(D(\la)\da_{\ts_{n-1}})+\d(D(\la)).
\end{equation}

\vspace{2mm}
\noindent
{\em Case 1: $a_p(\la)=0$ and $G=\ts_n$.} We have $L=D(\la;0)=|D(\la)|$, and (i) follows from Lemma~\ref{L220825}.

\vspace{2mm}
\noindent
{\em Case 2:  $a_p(\la)=1$ and $G=\tA_n$.} In this case $L=E(\la;0)$ and $D(\la)\da_{\tA_n}\cong L^{\oplus 2}$, so by (\ref{E260825}). In this case (i) holds as in the claim in the proof of Lemma \ref{L181224}. 

\vspace{2mm}
\noindent
{\em Case 3: $a_p(\la)=1$ and $G=\ts_n$}. We have  $L=D(\la;\pm)$ and $D(\la)=D(\la;+)\oplus D(\la;-)$. 
By Lemma~\ref{LHomPM}, it suffices to prove the lemma for $L=D(\la,\eps)$ for any choice of $\eps$. 
By (\ref{E260825}), 
\[\d((D(\la;+)\oplus D(\la;-))\da_{\ts_{n-2,2}})\geq\d((D(\la;+)\oplus D(\la;-))\da_{\ts_{n-1}})+3.\]
If there exists $\eps$ with
$\d(D(\la;\eps)\da_{\ts_{n-2,2}})>\d(D(\la;\eps)\da_{\ts_{n-1}})$
then by Lemma~\ref{L220825}, we have (i) for $L=D(\la,\eps)$. 
So we may assume that no such $\eps$ exists. As $\d(L\da_{\ts_{n-2,2}})\geq\d(L\da_{\ts_{n-1}})$
by Lemma~\ref{L220825}, we deduce that $\d(D(\la;\eps)\da_{\ts_{n-2,2}})=\d(D(\la;\eps)\da_{\ts_{n-1}})$ for all $\eps$. 
Therefore 
\begin{align*}
&\d(D(\la;+)\da_{\ts_{n-2,2}},D(\la;-)\da_{\ts_{n-2,2}})+\d(D(\la;-)\da_{\ts_{n-2,2}},D(\la;+)\da_{\ts_{n-2,2}})\\
\geq\,&\d(D(\la;+)\da_{\ts_{n-1}},D(\la;-)\da_{\ts_{n-1}})+\d(D(\la;-)\da_{\ts_{n-1}},D(\la;+)\da_{\ts_{n-1}})+3.
\end{align*}
So for some $\eps$, we must have 
\begin{align*}
&\d(D(\la;\eps)\da_{\ts_{n-2,2}},D(\la;-\eps)\da_{\ts_{n-2,2}})\geq\d(D(\la;\eps)\da_{\ts_{n-1}},D(\la;-\eps)\da_{\ts_{n-1}})+2,
\end{align*}
and (ii) holds for $L=D(\la;\eps)$ by Lemma~\ref{L220825}. 

\vspace{2mm}
\noindent
{\em Case 4:  $a_p(\la)=0$ and $G=\tA_n$}. We have $L=E(\la;\pm)$, $D(\la)\da_{\tA_n}=E(\la;+)\oplus E(\la;-)$ and $E(\la;\pm)\ua_{\tA_n}^{\ts_n}\cong D(\la)$. So, using Frobenius reciprocity and Mackey's theorem, we get 
\begin{align*}
\d(E(\la;\pm)\da_{\tA_{n-2,2}},(E(\la;+)\oplus E(\la;-))\da_{\tA_{n-2,2}})
&=\d(E(\la;\pm)\da_{\tA_{n-2,2}},D(\la)\da_{\tA_{n-2,2}})\\
&=\d(E(\la;\pm)\da_{\tA_{n-2,2}}\ua^{\ts_{n-2,2}},D(\la)\da_{\ts_{n-2,2}})\\
&=\d(E(\la;\pm)\ua^{\ts_n}\da_{\ts_{n-2,2}},D(\la)\da_{\ts_{n-2,2}})\\
&=\d(D(\la)\da_{\ts_{n-2,2}}).
\end{align*}
Similarly
\begin{align*}
\d(E(\la;\pm)\da_{\tA_{n-1}},(E(\la;+)\oplus E(\la;-))\da_{\tA_{n-1}})=
\d(D(\la)\da_{\ts_{n-1}}).
\end{align*}
So by (\ref{E260825}), 
\begin{align*}
&\d(E(\la;\pm)\da_{\tA_{n-2,2}},(E(\la;+)\oplus E(\la;-))\da_{\tA_{n-2,2}}) 
\\
\geq\,&\d(E(\la;\pm)\da_{\tA_{n-1}},(E(\la;+)\oplus E(\la;-))\da_{\tA_{n-1}})+2.
\end{align*}
If (i) does not hold, then by Lemma~\ref{L220825}, we have $\d(E(\la;\pm)\da_{\tA_{n-2,2}})=\d(E(\la;\pm)\da_{\tA_{n-1}})$, so 
\begin{align*}
\d(E(\la;\pm)\da_{\tA_{n-2,2}},E(\la;\mp)\da_{\tA_{n-2,2}})\geq\d(E(\la;\pm)\da_{\tA_{n-1}},E(\la;\mp)\da_{\tA_{n-1}})+2,
\end{align*}
from which (iii) follows by Lemma~\ref{L220825}.
\end{proof}

\section{Restrictions to maximal imprimitive subgroups}

\subsection{Restrictions to maximal intransitive subgroups}
\label{sSMaxIntr}

In this subsection we classify irreducible restrictions of spin representations to maximal intransitive subgroups. For $p>3$ (and non-basic representations), this is contained in \cite[Theorem 5.16]{P}.\footnote{Note that \cite[Lemma 3.14(i)]{P} contains an error. That lemma is used in the proof of the crucial \cite[Lemma 5.10]{P}. The statement of \cite[Lemma 5.10]{P} is correct, but to fix the proof one needs more work. We will pursue this elsewhere. Here, we will reprove the main results from \cite{P} by a different method since at any rate we need to extend them to the case $p=3$.} 

Recall the definition of Jantzen-Seits partitions from \S\ref{SSAddRem}, in particular the sets $\JS^{(i)}$.

\begin{Theorem}\label{thmintr}
Let $G\in\{\ts_n,\tA_n\}$, $L$ be an irreducible spin $\F G$-module, and $H=\ts_{n-k,k}\cap G$ for some $1\leq k\leq n/2$.  Then $L\da_H$ is irreducible if and only if one of the following holds:
\begin{enumerate}[\rm(i)]
\item $L$ is basic, $p\nmid k$, $p\nmid (n-k)$ and one of the following holds:
\begin{enumerate}[\rm(a)]
\item $G=\ts_n$ and $p\mid n$ if $n$ is odd,

\item $G=\tA_n$ and $p\mid n$ if $n$ is even;
\end{enumerate}

\item $k=1$ and one of the following holds:
\begin{enumerate}[\rm(a)]
\item $L=D(\la;\eps)$ or $E(\la;\eps)$ for $\la\in\JS^{(0)}$,

\item $L=D(\la;\pm)$ or $E(\la;\pm)$ for $\la\in\JS^{(i)}$ with $i\neq 0$;
\end{enumerate}

\item $k=2$ and $L=D(\la;\eps)$ or $E(\la;\eps)$ for $\la\in\JS^{(0)}$.
\end{enumerate}
\end{Theorem}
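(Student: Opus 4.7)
The plan is to split the proof according to the value of $k$ and, within each value, to handle the \emph{if} and \emph{only if} directions separately, treating the basic case apart from the non-basic case. Throughout, the main machinery consists of the reduction Lemmas~\ref{inv_end} and~\ref{inv_mixedhom} applied with $\al$ a two-row partition, together with the special homomorphisms constructed in Section~9 and the endomorphism-dimension estimates of Sections~8 and~\ref{sn-2}.

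First I would handle $k=1$, where $H=\ts_{n-1}\cap G$. The central computation is Lemma~\ref{n-1}:
\[\dim\End_{\ts_{n-1}}(D(\la)\da_{\ts_{n-1}})=\big(\eps_0(\la)+2\textstyle\sum_{i\neq 0}\eps_i(\la)\big)(1+a_p(\la)).\]
Combined with Corollary~\ref{CSuperNonSuper} and Lemma~\ref{lmodules} (tracking the $\Mtype/\Qtype$ dichotomy and the behaviour of restriction to an index-two subgroup), the irreducibility of $L\da_H$ for $L=D(\la;\eps)$ or $E(\la;\eps)$ is equivalent to a precise vanishing pattern of the $\eps_i(\la)$'s, matching exactly the definition of $\la\in\JS$. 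The \emph{if} direction then follows by applying Lemma~\ref{branching} when $\la\in\JS^{(i)}$; the \emph{only if} direction uses the necessity of the vanishing pattern. The basic case $\la=\balpha_n$ is already covered by Theorem~E of \cite{KT}, whose conditions translate directly into (i)(a)--(b) of the theorem.

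For $k=2$, we have the canonical identification $\bone\ua_{\ts_{n-2,2}}^{\ts_n}\cong{}^\pi M^{(n-2,2)}$, so the reduction Lemmas~\ref{inv_end} and~\ref{inv_mixedhom} apply with $\al=(n-2,2)$ and $\phi$ the identity. For the \emph{if} direction with $\la\in\JS^{(0)}$, Lemma~\ref{LJS} gives $\tilde e_0\la\in\JS^{(1)}$, and iterating Lemma~\ref{branching} together with Lemmas~\ref{L101218_2}, \ref{L161224_3}, and~\ref{product} shows that $D(\la)\da_{\ts_{n-2,2}}$ is irreducible as a supermodule, whence Corollary~\ref{CSuperNonSuper} yields the ordinary irreducibility. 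For the \emph{only if} direction in the non-basic case, Lemma~\ref{L181224_2} supplies the required $\psi$ for every $\la\in\TR_p(n)$ outside a short exceptional list, so the reduction lemmas yield reducibility. For $\la\not\in\TR_p(n)$ with $\la_1\geq 6$, I would switch to $\al=(n-6,6)$, using Lemma~\ref{L181224_3} for the $\psi$ and a direct $H$-invariants count via Lemma~\ref{LGenInv} for the $\phi$. The exceptional partitions listed in Lemma~\ref{L171224_3} not already treated by Lemma~\ref{L181224} require individual case analysis built from Lemmas~\ref{LMuller}, \ref{normal}, and the two-row reduction results of Section~6.

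For $k\geq 3$ and non-basic $L$, Lemma~\ref{inv3intr} produces a $\phi\in\Hom_G(\bone\ua_H^G,{}^\pi M^{(n-3,3)})$ with $\si_3\circ\phi\neq 0$, and Lemma~\ref{L7} produces a $\psi\in\Hom_G({}^\pi M^{(n-3,3)},\End_\F(L))$ with $\psi\circ\iota_{(n-3,3)}\neq 0$; Lemma~\ref{inv_end} then gives reducibility, showing no such pair $(L,H)$ can yield an irreducible restriction. The basic case for all $k\geq 1$ is covered by Theorem~E of \cite{KT}. The main obstacle will be the exceptional partitions from Lemma~\ref{L171224_3}(1)--(8) in the $k=2$ case, where the generic dimension-comparison argument fails and where one must argue separately; several of these partitions are close to second-basic form, and the analysis relies on combining Lemma~\ref{L181224} with explicit branching computations. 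A secondary subtlety, permeating all three cases, is the careful translation between supermodule and ordinary module irreducibility via Lemmas~\ref{lmodules}, \ref{PIrrIrr}, and~\ref{LAGa}, needed to match each of the four possible forms of $L$ ($D(\la;0)$, $D(\la;\pm)$, $E(\la;0)$, $E(\la;\pm)$) against the supermodule structure of $D(\la)\da_{\ts_{n-k,k}}$.
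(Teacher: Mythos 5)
Your plan for $k\geq 3$ (Lemmas~\ref{inv3intr}, \ref{L7}, \ref{inv_end}), for the basic case, and for the ``if'' direction at $k=1,2$ is essentially the paper's argument. The genuine gap is in the \emph{only if} direction at $k=2$. You propose to prove reducibility by running the reduction machinery over the dichotomy $\la\in\TR_p(n)$ versus $\la\notin\TR_p(n)$, using $\al=(n-6,6)$ in the second case with a $\phi$ supplied by Lemma~\ref{LGenInv}. This cannot work: for $H=\ts_{n-2,2}\cap G$ one has $\bone\ua_H^G\cong{}^\pi M^{(n-2,2)}$, whose composition factors lie among ${}^\pi D^{(n)},{}^\pi D^{(n-1,1)},{}^\pi D^{(n-2,2)}$, while $\soc (S^{(n-6,6)})^*\cong D^{(n-6,6)}$; hence every $\phi\in\Hom_G(\bone\ua_H^G,{}^\pi M^{(n-6,6)})$ satisfies $\si_{(n-6,6)}\circ\phi=0$ (equivalently, $\dim Z_6^H=\dim M_6^H=3$, so the hypothesis of Lemma~\ref{LGenInv} fails). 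Indeed, if your step worked it would prove too much: partitions $\la\in\JS^{(0)}$ with $\la_1\geq 6$ need not lie in $\TR_p(n)$, and for these the restriction to $\ts_{n-2,2}\cap G$ is \emph{irreducible} by part (iii) of the theorem, so no $(\phi,\psi)$ pair as in Lemma~\ref{inv_end} can exist. Moreover, Lemma~\ref{L181224_2} only covers $\la\in\TR_p(n)$, which is a thin family, so your case split leaves most non-JS partitions untreated.

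What is actually needed (and what the paper does) is different and does not use $\psi$'s at all for $k=2$. First, since $M^{(n-1,1)}$ is a quotient of $M^{(n-2,2)}$ (Lemma~\ref{L131218}(ii)), Frobenius reciprocity in the form (\ref{E220825}) gives $\dim\End_{\ts_{n-2,2}\cap G}(L\da)\geq\dim\End_{\ts_{n-1}\cap G}(L\da)$, so irreducibility over $\ts_{n-2,2}\cap G$ forces irreducibility over $\ts_{n-1}\cap G$, and the already-proved $k=1$ case forces $\la\in\JS$. Second, one must still exclude $\la\in\JS^{(i)}$ with $i\neq 0$ — a step your proposal does not address. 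The paper does this by an explicit branching analysis of $D(\la)\da_{\ts_{n-2}}$: the partition $\tilde e_i\la$ has two normal nodes whose residues $j,k$ either differ, in which case $D(\la)\da_{\ts_{n-2,2}}$ meets at least two superblocks, or coincide (forcing $i=\ell$, $j=k=\ell-1$, or the $p=3$ configuration), in which case Lemmas~\ref{branching} and \ref{LAGa} give a multiplicity bound $[D(\la)\da_{\ts_{n-2,2}}:D(\tilde e_j\tilde e_i\la,(2))]\geq 2^{1+a_p(\la)}$ (resp.\ $\geq 3$); either way Corollary~\ref{CSuperNonSuper} yields reducibility. Finally, note the small-rank caveat: the reduction via $(n-3,3)$ and the $\JS$ analysis are run for $n\geq 8$, and the paper settles $n\leq 7$ by direct computation with decomposition matrices and characteristic-zero branching, a step your proposal omits.
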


\begin{proof}
The case where $L$ is basic is covered by \cite[Corollary 4.2]{KT}. So we may assume that $L$ is not basic, i.e. $L=D(\la;\eps)$ or $L=E(\la;\eps)$ with $\la\neq \balpha_n$. We set $h:=h(\la)$. 

For $n\leq 7$ the lemma is checked using the decomposition matrices \cite[Theorem 4.4]{Mu}, \cite{GAP}, and branching in characteristic $0$  \cite[Theorems 8.1, 8.3]{stem}. So we may assume that $n\geq 8$. This assumption guarantees that $(n-3,3)\in\Par_p(n)$, and 
 $h((n-3,3)^\Mull)\geq 3$ hence  
$(n-3,3)^\Mull\not\hspace{-1.1mm}\unrhd(n-3,3)$.

Recall the homomorphisms $\iota_\al$ and $\si_\al$ from (\ref{ESiAl}).  
By Lemma \ref{L7}, there exists 
$$\psi\in\Hom_G({}^\pi M^{(n-3,3)},\End_\F(L))$$ such that $\psi\circ\iota_{(n-3,3)}\neq 0$.

If $3\leq k\leq n/2$ then by Lemmas \ref{L060125_2} and \ref{inv3intr} there exists 
$\phi\in\Hom_G(\bone\ua_{H}^G,{}^\pi M^{(n-3,3)})$ such that $\si_3\circ \phi\neq 0$. 
So $L\da_{H}$ is reducible by Lemma \ref{inv_end}.
Thus we may assume that $k=1$ or $2$.

Let $k=1$. If $\la$ has normal nodes of two different residues, then by Lemma \ref{branching}, the restriction $D(\la)\da_{\ts_{n-1}}$ has non-zero components in at least two superblocks. So the same holds for $D(\la;\eps)$ and $E(\la;\eps)$. So we may assume that all normal nodes of $\la$ have the same residue $i$. If $\eps_i(\la)>1$ then by Lemma \ref{branching}, the restriction $D(\la)\da_{\ts_{n-1}}$ is not semisimple. So the same holds for $D(\la;\eps)\da_{\ts_{n-1}}$ and $E(\la;\eps)\da_{\tA_{n-1}}$. So we may assume that $\la\in\JS^{(i)}$. In this case $D(\la)\da_{\s_{n-1}}\cong D(\tilde e_i\la)^{\oplus (1+\de_{i\neq 0}a_p(\la))}$ by Lemma \ref{branching}, so Theorem \ref{thmintr} for $k=1$ follows from  Corollary \ref{CSuperNonSuper}.

Let $k=2$. By Lemma~\ref{L131218}(ii), 
$M^{(n-1,1)}$ is a quotient of $M^{(n-2,2)}$. So, using (\ref{E220825}), we have 
\begin{align*}
\dim\End_{\ts_{n-2,2}\cap G}(L\da_{\ts_{n-2,2}})&=\dim\Hom_G(M^{(n-2,2)},\End_\F(L))\\
&\geq\dim\Hom_G(M^{(n-1,1)},\End_\F(L))=\dim\End_{\ts_{n-1}\cap G}(L\da_{\ts_{n-1}}).
\end{align*}
Hence if $L\da_{\ts_{n-2,2}\cap G}$ is irreducible then so is $L\da_{\ts_{n-1}\cap G}$. So we may assume that $\la\in\JS$.

Assume first that $\la\in\JS^{(i)}$ with $i\neq 0$. Since $\la\neq \balpha_n$, one of the following happens:
\begin{itemize}
\item $\la=(\ldots,b,a)$ with $2\leq a<b<p$ and $\ttres(b)=\ttres(a+1)$ (it could be that $b=a+1$);

\item $\la=(\ldots,b,p^c,a)$ with $2\leq a<p<b<2p$, $c\geq 0$ and $\ttres(b)=\ttres(a+1)$.
\end{itemize}
Setting $c:=0$ in the first case, it can be checked that the nodes $(h-1-c,b)$ and $(h,a-1)$ are normal in $\tilde e_i\la=(\dots,b,a-1)$. Let $j:=\ttres(b)$ and $k:=\ttres(a-1)$.
If $j\not=k$ then $D(\la)\da_{\ts_{n-2}}$ has non-zero components in at least two superblocks. As $D(2)$ is the only supermodule of $\ts_2$, the same holds for $D(\la)\da_{\ts_{n-2,2}}$ and then also for $D(\la;\eps)\da_{\ts_{n-2,2}}$ and $E(\la;\eps)\da_{\tA_{n-2,2}}$. 
So we may assume that $j=k$, in which case $i=\ell$ and $j=k=\ell-1$. If $p> 3$ then $\ell-1>0$, and since $\eps_\ell(\la)=1$ and $\eps_{\ell-1}(\tilde e_\ell\la)\geq 2$, we have by Lemma \ref{branching} that 
$[D(\la)\da_{\ts_{n-2}}:D(\tilde e_{\ell-1}\tilde e_\ell\la)]\geq 4$, and 
 with $D(\la)$ and $D(\tilde e_{\ell-1}\tilde e_\ell\la)$ of the same type by Lemma~\ref{LAGa}. So
\[[D(\la)\da_{\ts_{n-2}}:D(\tilde e_{\ell-1}\tilde e_\ell\la,(2))]\geq 2^{1+a_p(\la)}.\]
If $p=3$ then $a=2$ and $b=4$, so 
$\eps_0(\tilde e_1\la)\geq 3$. Hence 
$[D(\la)\da_{\ts_{n-2}}:D(\tilde e_0\tilde e_1\la)]\geq 2^{a_3(\la)}\cdot 3$ by Lemma \ref{branching}, 
and $D(\la)$ and $D(\tilde e_0\tilde e_1\la)$ have different types by Lemma~\ref{LAGa}. So
\[[D(\la)\da_{\ts_{n-2}}:D(\tilde e_{\ell-1}\tilde e_\ell\la,(2))]\geq 3.\]
In both cases, the restrictions $L\da_{\ts_{n-2,2}\cap G}$ is reducible by Corollary~\ref{CSuperNonSuper}.


Finally, assume that $\la\in\JS^{(0)}$. 
By Lemma~\ref{LJS}, we have 
$\tilde e_0\la\in\JS^{(1)}$. 
So by Lemmas~\ref{branching} and \ref{LAGa}, we have $D(\la)\da_{\ts_{n-2}}\cong D(\tilde e_1\tilde e_0\la)^{\oplus 1+a_p(\la)}$. 
By Lemma~\ref{LAGa}, the supermodules $D(\la)$ and $D(\tilde e_1\tilde e_0\la,(2))$ have the same type, and 
\begin{equation}\label{E110925}
D(\la)\da_{\ts_{n-2,2}}\cong D(\tilde e_1\tilde e_0\la,(2)). 
\end{equation} 
So  $L\da_{\ts_{n-2,2}\cap G}$ is irreducible by Corollary~\ref{CSuperNonSuper}. 
\end{proof}

\subsection{Restrictions to wreath product subgroups}
\label{SWreathRes}

In this subsection we classify irreducible restrictions of spin representations to maximal wreath product subgroups.

\begin{Theorem}\label{thmwreath}
Let $n\geq 5$, $G\in\{\ts_n,\tA_n\}$, $L$ be an irreducible spin $\F G$-module, and $H=\hW_{a,b}\cap G$ for some $a,b\geq 2$ with $n=ab$. Then $L\da_H$ is irreducible if and only if one of the following holds:
\begin{enumerate}[\rm(i)]
\item $L$ is basic and $p\nmid a$;

\item $L$ is second basic, $p\mid(n-1)$ and one of the following holds:
\begin{enumerate}[\rm(a)]
\item $G=\ts_n$, and either $a$ or $b$ equals $2$, 

\item $G=\tA_n$ and $b=2$;
\end{enumerate}

\item $L$, $G$, $\pi(H)$ are as in Table I.
\end{enumerate}
\end{Theorem}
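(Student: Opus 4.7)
The plan is to separate the argument into three parts based on the size of $\min(a,b)$ and the partition $\la$ labeling $L$. Part (a) is the basic module case (i), which is already the content of Theorem E of \cite{KT} cited in the introduction; from now on we assume $L = D(\la;\eps)$ or $E(\la;\eps)$ with $\la \in \RP_p(n) \setminus \{\balpha_n\}$, so that $L$ is non-basic. Part (b) handles $\min(a,b) \geq 3$: Lemma \ref{L080925} produces $\phi \in \Hom_G(\bone\ua_H^G, {}^\pi M^{(n-3,3)})$ with $\si_{(n-3,3)} \circ \phi \neq 0$, while Lemma \ref{L7} produces $\psi \in \Hom_G({}^\pi M^{(n-3,3)}, \End_\F(L))$ with $\psi \circ \iota_{(n-3,3)} \neq 0$. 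Since $(n-3,3) \neq (n)$ and the Mullineux image $(n-3,3)^{\Mull}$ has at least three non-zero parts (hence does not dominate $(n-3,3)$), Lemma \ref{inv_end} applies and forces $L\da_H$ to be reducible. This leaves only the case $\min(a,b) = 2$, i.e.\ $H = \hW_{2,b} \cap G$ or $H = \hW_{b,2} \cap G$ with $n = 2b$.

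Part (c): assume now $H$ is of wreath-product type with parameter $2$. We split on the partition $\la$. If $\la \notin \TR_p(n)$, then (modulo small-$n$ exceptions handled by inspection of decomposition matrices, producing Table I) we may assume $\la_1 \geq 6$, combine Lemma \ref{L181224_3} with Proposition \ref{inv6}, and apply Lemma \ref{inv_end} to conclude reducibility. If $\la \in \TR_p(n) \setminus \{\balpha_n, \bbeta_n\}$ (so $L$ is of two-row type but neither basic nor second basic), we take $\al = (n-6,4,2)$ when $p = 3$ and $\al = (n-6,2^3)$ when $p \geq 5$: Lemmas \ref{end42} and \ref{end222} supply the relevant $\psi$, Lemma \ref{inv42} supplies the relevant $\phi$, and Lemma \ref{inv_end} again gives reducibility. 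The remaining partition is $\la = \bbeta_n$, so $L$ is second basic.

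For the second basic case, the key tool is the explicit decomposition of $D(\bbeta_n)\da_{\T_{\W_{2,b}}}$ given in Lemma \ref{L160125_2}, combined with the identification $\T_{\W_{2,b}} \cong \T_b \otimes \Cl_b$ from Lemma \ref{L160125}. When $p \nmid (n-1)$, the decomposition in Lemma \ref{L160125_2} involves at least two distinct summands $D(\bbeta_b) \circledast U_b$ and $D(\balpha_b) \circledast U_b$; using Lemma \ref{lem:3tens} to guarantee that the corresponding supermodules are genuinely distinct (once translated via the Clifford equivalence), and then invoking Corollary \ref{CSuperNonSuper}, we conclude that $L\da_H$ is reducible. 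Conversely, when $p \mid (n-1)$, Lemma \ref{L160125_2} collapses to $[D(\bbeta_n)\da_{\T_{\W_{2,b}}}] = 2[D(\bbeta_b) \circledast U_b]$, a single supermodule of multiplicity two, and Krull--Schmidt together with self-duality forces $D(\bbeta_n)\da_{\T_{\W_{2,b}}} \cong D(\bbeta_b) \circledast U_b$ (up to type) as a supermodule. From here one uses Corollary \ref{CSuperNonSuper} and Lemma \ref{lmodules}, tracking the type of $D(\bbeta_b) \circledast U_b$ via Lemma \ref{LBoxTimes} and the parity of $b$, to decide precisely when the restriction stays irreducible after passing from the supermodule to the actual $\F G$-module $L$ and further restricting to $\hW_{2,b} \cap \tA_n$ or $\hW_{b,2} \cap \tA_n$ if $G = \tA_n$. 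The asymmetry between $\hW_{2,b}$ and $\hW_{b,2}$ in the $\tA_n$ statement comes from the different indices with which these two wreath products intersect $\tA_n$.

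The main obstacle will be part (c3), the second basic case with $p \mid (n-1)$: one must simultaneously manage the superalgebra versus algebra distinction (using Lemmas \ref{lmodules} and \ref{PIrrIrr}), the $\Mtype$/$\Qtype$ type of $D(\bbeta_b) \circledast U_b$ in terms of the parity of $b$ and the value of $a_p(\bbeta_b)$, and the branching from $\ts_n$ to $\tA_n$. It is this bookkeeping that produces the exact list of irreducible restrictions in part (ii) of the theorem and rules out all other $H$ with $[\hW_{b,2}:H] = 2$; the exceptional $n = 6, 10$ entries in Table I then require direct verification from \cite{ModularAtlas} and \cite{GAP}.
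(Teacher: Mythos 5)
Your parts (a) and (b) coincide with the paper's argument (basic case via \cite[Theorem E]{KT}; $a,b\geq 3$ via Lemmas \ref{L080925}, \ref{L7} and \ref{inv_end} with $\al=(n-3,3)$). The gap is in part (c), in the case $\la\in\TR_p(n)\setminus\{\balpha_n,\bbeta_n\}$. You propose to handle \emph{all} of these two-row-type modules, for \emph{both} $\hW_{2,b}$ and $\hW_{b,2}$, by taking $\al=(n-6,4,2)$ (resp.\ $(n-6,2^3)$) and citing Lemma \ref{end42}/\ref{end222} for $\psi$ and Lemma \ref{inv42} for $\phi$. But Lemma \ref{inv42} only produces $\phi$ for $\W=\W_{2,b}$ \emph{and only under the hypothesis} $p\mid b$: its proof rests on Lemma \ref{LSpecialDirectSumDec}, which needs $p\mid n$ to make $S^\al\cong D^\al$ a direct summand of $M^\al$, and no analogue is proved (or claimed) for $\W_{b,2}$. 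So your argument says nothing about $D(\la;\eps)\da_{\hW_{n/2,2}\cap G}$ for any two-row $\la$, nor about $\hW_{2,n/2}$ when $p\nmid n/2$ — which is most of this case. In the paper this branch is carried by an entirely different mechanism: the generic two-row partitions are killed with $\al=(n-2,2)$, using the endomorphism-algebra comparison $\dim\End_{\ts_{n-2,2}}$ versus $\dim\End_{\ts_{n-1}}$ developed in Section \ref{sn-2} (culminating in Lemma \ref{L181224_2}) together with Lemma \ref{inv2wreath}, which works for both $\W_{2,b}$ and $\W_{b,2}$, and with Lemma \ref{inv_mixedhom} to handle the $D(\la;\pm)$/$E(\la;\pm)$ situations; only the two exceptional families $((2p)^a,2p-1,p+1,p^{2b},p-1,1)$ and $(p^{2a},p-1,p-2,2,1)$ escape this, and for those $p\mid n/2$ holds automatically, so Lemma \ref{inv42} applies to $\hW_{2,n/2}$, while $\hW_{n/2,2}$ is treated by the composition-factor count of Lemma \ref{L060125} plus Corollary \ref{CSuperNonSuper}. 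None of this machinery appears in your outline, and without it the reducibility of the two-row restrictions is simply not established.

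Two smaller points. First, your reduction ``modulo small-$n$ exceptions we may assume $\la_1\geq 6$'' also has to dispose of $\la=(5,4,3),(5,4,2,1),(5,4,3,2)$ for $p>5$ (these occur at $n=12,14$, are not in $\TR_p$, and have $\la_1=5$); the paper does this via decomposition matrices and \cite{KW}, and separately needs the $n=12$, $p=3$ case. Second, in the second-basic case with $p\mid(n-1)$ your claim that Krull--Schmidt forces $D(\bbeta_n)\da_{\T_{\W_{2,b}}}\cong D(\bbeta_b)\circledast U_b$ is false on dimension grounds (the restriction has composition length $2$), and the asymmetry in part (ii) of the theorem is not caused by ``different indices'' of $\hW_{2,b}\cap\tA_n$ and $\hW_{b,2}\cap\tA_n$ (both have index $2$); it comes from the type bookkeeping: for $\hW_{2,b}$ one compares $D(\bbeta_n)$ of type $\Qtype$ with $D(\bbeta_b)\circledast U_b$ of type $\Mtype$ and applies Corollary \ref{CSuperNonSuper}(i)(b), whereas for $\hW_{b,2}$ one uses Lemma \ref{L160125_3} and the fact that the two factors $D(\bbeta_b,\balpha_b)$ and $D(\balpha_b,\bbeta_b)$ are swapped by an element of $\hW_{b,2}\setminus\ts_{b,b}$ (and by one of $\hW_{b,2}\cap\tA_n\setminus\tA_{b,b}$). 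These last issues are repairable, but the missing $(n-2,2)$/$\hW_{n/2,2}$ branch is a genuine hole in the proof.
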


\begin{proof}
Write $L=D(\la;\eps)$ or $L=E(\la;\eps)$ for some $\la\in\RP_p(n)$ and appropriate $\eps\in\{0,+,-\}$.  

\vspace{2mm}
\noindent
{\em Claim 1. The theorem holds for $L$ basic, i.e. when $\la=\balpha_n$.} 

\vspace{1mm}
\noindent
This follows by \cite[Theorem E]{KT}. 


\vspace{2mm}
\noindent
{\em Claim 2. The theorem holds if $a,b\geq 3$.}

\vspace{1mm}
\noindent
We have $n\geq 9$ and 
$(n-3,3)^\Mull\,{\not\hspace{-0.9mm}\unrhd}\,(n-3,3)$ as $h((n-3,3)^\Mull)\geq 3$. Recall $\iota_\al$ and $\si_\al$ from (\ref{ESiAl}).  
By Lemma \ref{L7}, there exists 
$\psi\in\Hom_G({}^\pi M^{(n-3,3)},\End_\F(L))$ such that $\psi\circ\iota_{(n-3,3)}\neq 0$. 
Moreover, by Lemmas \ref{L060125_2} and \ref{L080925} there exists 
$\phi\in\Hom_G(\bone\ua_{H}^G,{}^\pi M^{(n-3,3)})$ such that $\si_3\circ \phi\neq 0$. 
So $L\da_{H}$ is reducible by Lemma \ref{inv_end}.

\vspace{2mm}
So from now on we assume that $L$ is not basic and $a=2$ or $b=2$; in particular, $n$ is even. 

\vspace{2mm}
\noindent
{\em Claim 3. The theorem holds for $n\leq 10$.} 

\vspace{1mm}
\noindent
We have $n=6,8$ or $10$. 
Using GAP to compute restrictions $S(\al)\da_{\hW_{a,b}}$ for every $\al\in\RP_0(n)$ and decomposition matrices for $\ts_n$, one can write every $[D(\la)\da_{\hW_{a,b}}]$ as a linear combination with non-negative coefficients of reductions modulo $p$ of irreducible modules $\C\hW_{a,b}$-modules. Checking the number of these modules and their types  
(and using the semisimplicity of $\F\hW_{a,b}$ for $p>n/2$), 
the claim follows, except possibly for the cases where $\la=(4,3,2,1)$, $(G,a,b)\in\{(\ts_{10},5,2),(\ts_{10},2,5),(\tA_{10},5,2)\}$, and $p=3$ or $5$.
Using decomposition matrices for $\ts_{10}$ and $\ts_5$ and Lemma \ref{lmodules} it can be checked that in the Grothendieck group we have 
\begin{align*}
[D(4,3,2,1)\da_{\ts_{5,5}}]&=[\bar S(\la)\da_{\ts_{5,5}}]=[\bar S((4,1),(3,2))]+[\bar S((3,2),(4,1))]\\
&=\left\{\begin{array}{ll}
[D((4,1),(3,2))]+[D((3,2),(4,1))]&\text{if}\ p=3,\\
{}[D((4,1),(3,2))]+[D((3,2),(4,1))]+2[D((4,1),(4,1))]&\text{if}\ p=5.
\end{array}\right.
\end{align*}
So $L\da_{\hW_{5,2}\cap G}$ is reducible for $p=5$. For $p=3$, we have that the supermodules  $D(4,3,2,1)$, $D((4,1),(3,2))$ and $D((3,2),(4,1))$ are of type $\Qtype$. Also, the supermodules $D((4,1),(3,2))$ and $D((3,2),(4,1))$ are exchanged by the wreath product action of $\hW_{5,2}$. It follows that $L\da_{\hW_{5,2}\cap G}$ is irreducible for $p=3$. Note that in this case $(4,3,2,1)=\bbeta_{10}$, and we get a contribution to the case (ii) in the statement of the theorem. 
Using the isomorphism of Lemma \ref{L160125} to identify $\T_{\W_{2,5}}=\T_5\otimes\Cl_5$ and the decomposition matrices for $\ts_{10}$ and $\ts_5$, we get 
\begin{align*}
[D(4,3,2,1)\da_{\hW_{2,5}}]=[\bar S(\la)\da_{\hW_{2,5}}]
=\left\{\begin{array}{ll}
2[D(4,1)\circledast U_5]&\text{if}\ p=3,\\
2[D(4,1)\circledast U_5]+2[D(3,2)\circledast U_5]&\text{if}\ p=5,
\end{array}\right.
\end{align*}
since in characteristic $0$ we have $[S(\la)\da_{\hW_{2,5}}]
=2[S(4,1)\circledast U_5]$. 
So $D\da_{\hW_{5,2}}$ is reducible for $p=5$. For $p=3$ we have that $(4,3,2,1)=\bbeta_{10}$, $D(4,3,2,1)$ is of type $\Qtype$ and $D(4,1)\circledast U_5$ of type $\Mtype$ and so $E((4,3,2,1);0)\da_{\hW_{2,5}\cap \tA_n}$ is reducible, while $D((4,3,2,1);\pm)\da_{\hW_{2,5}}$ is irreducible by Corollary~\ref{CSuperNonSuper}. This gives a contribution to the case (ii)(a) in the statement of the theorem. Claim 3 is proved.

\vspace{2mm}
\noindent
{\em Claim 4. The theorem holds for $n=12$ and $p=3$.} 

\vspace{1mm}
\noindent
By Corollary~\ref{CSuperNonSuper}, if $L\da_{\hW_{a,b}\cap G}$ is irreducible, then the supermodule $D(\la)\da_{\hW_{a,b}}$ has composition length at most $2$, and if $D(\la)\da_{\hW_{a,b}}$ has composition length $2$ then its two composition factors are isomorphic. Apart from the case where $\la=\bbeta_{12}$ and $(a,b)=(6,2)$, such situations can be excluded using dimensions of irreducible supermodules over $\ts_{12}$, $\hW_{6,2}$ and $\T_6\otimes\Cl_6\cong\T_{\W_{2,6}}$ (to compute the dimensions for $\hW_{6,2}$ and $\T_6\otimes\Cl_6\cong\T_{\W_{2,6}}$ we use the dimensions of irreducible supermodules over 
$\ts_6$). In the exceptional case, by \cite[Theorems 8.1, 8.3]{stem} and \cite[Tables III, IV]{Wales}, any composition factor of $D(\bbeta_{12})\da_{\ts_{6,6}}$ is of the form $D(\balpha_6,\balpha_6)$, $D(\balpha_6,\bbeta_6)$ or $D(\bbeta_6,\balpha_6)$. Now the composition length of $D(\bbeta_{12})\da_{\hW_{6,2}}$ is greater than $2$ by dimensions. Claim 4 is proved.

\vspace{2mm}
\noindent
{\em Claim 5. The theorem holds for $p>5$ and $\la=(5,4,3)$, $(5,4,2,1)$ or $(5,4,3,2)$.} 

\vspace{1mm}
\noindent
Using decomposition matrices from \cite{GAP,ModAtl}, we have $L=D(\la;\eps)\cong \bar S(\la;\eps)$ or $L=E(\la;\eps)\cong \bar T(\la;\eps)$. In either case, if $L\da_{\hW_{a,b}\cap G}$ is irreducible, then so is $S(\la;\eps)\da_{\hW_{a,b}}$ or $T(\la;\eps)\da_{\hW_{a,b}\cap \tA_n}$. But these restrictions are reducible by \cite[Theorems 1.1,\,1.2]{KW}. (Alternatively, we could use the observation that $11$ divides $\dim L$ but not $|\hW_{a,b}|$.) Claim 5 is proved. 

\vspace{2mm}
Using the above claims we will now complete the proof of the theorem. This will involve some case analysis. Taking into account the cases considered so far in Claims 1--5, we may assume that
$n\geq 12+2\de_{p,3}$. We may assume that either $\la_1\geq 6$, or $p\leq 5$ and $\la\in \TR_p$. Indeed, suppose $\la_1\leq 5$. If $p>5$ then either $\la$ is as in Claims 5, or $n\leq 10$ which is covered by Claim 3. If $p=5$, then $\la$ is of the form $(5^a,\mu_1,\mu_2,\dots)$ where $(\mu_1,\mu_2,\dots)$ a strict partition with $\mu_1\leq 4$, hence $\la\in\TR_5$ by Lemma~\ref{LM3}. If $p=3$ then $\la\in\TR_3(n)$.  

Since $n\geq 12+2\de_{p,3}$, we have $h((n-k,k)^\Mull)>2$ for $1\leq k\leq 6$, so
\begin{equation}\label{EHD1}
(n-k,k)^\Mull\,{\not\hspace{-0.9mm}\unrhd}\,(n-k,k)\qquad(\text{for}\ 1\leq k\leq 6).
\end{equation}
If $p=3$ then $h((n-6,4,2)^\Mull)=6$, so
\begin{equation}\label{EHD2}
(n-6,4,2)^\Mull\,{\not\hspace{-0.9mm}\unrhd}\,(n-6,4,2)\qquad(\text{for}\ p=3).
\end{equation}
If $p>3$ then $h((n-6,2^3)^\Mull)>4$, so
\begin{equation}\label{EHD2}
(n-6,2^3)^\Mull\,{\not\hspace{-0.9mm}\unrhd}\,(n-6,2^3)\qquad(\text{for}\ p>3).
\end{equation}

\vspace{2mm}
\noindent
{\em Case 1: $\la\not\in\TR_p(n)$}. 

\vspace{1mm}
\noindent
By Lemma \ref{L181224_3} there exists a homomorphism $\psi\in\Hom_G({}^\pi M^{(n-6,6)},\End_\F(L))$ with $\psi\circ\iota_{(n-6,6)}\neq 0$. Furthermore, by Lemma~\ref{L060125_2} and Proposition~\ref{inv6} there exists a homomorphism $\phi\in\Hom_G(\bone\ua_{\hW_{a,b}\cap G}^G,{}^\pi M^{(n-6,6)})$ with $\si_{(n-6,6)}\circ\phi\neq 0$. Taking into account (\ref{EHD1}), we may now apply Lemma \ref{inv_end} with $\al=(n-6,6)$ to deduce that  $L\da_{\hW_{a,b}\cap G}$ is reducible.

\vspace{2mm}
\noindent
{\em Case 2: $\la$ satisfies the assumptions of Lemma~\ref{L181224_2}}.

\vspace{1mm}
\noindent
By Lemmas~\ref{L060125_2} and \ref{inv2wreath}, there exists $\phi\in\Hom_G(\bone\ua_{\hW_{a,b}\cap G}^G,{}^\pi M^{(n-2,2)})$ with $\si_{(n-2,2)}\circ\phi\neq 0$. Now, $L\da_{\hW_{a,b}\cap G}$ is reducible by Lemma~\ref{L181224_2} and Lemmas~\ref{inv_end}, \ref{inv_mixedhom} with $\al=(n-2,2)$.

\vspace{2mm}
\noindent
{\em Case 3: $\la\in\TR_p(n)\setminus\{\balpha_n,\bbeta_n\}$ and $\la$ does not satisfy the assumptions of Lemma~\ref{L181224_2}.}

\vspace{2mm}
\noindent
Taking into account that $n$ is even and that  $(p+1,p^b,p-1,1)$ or $(p-2,2,1)$ is $\bbeta_n$, we conclude that 
$\la$ is of the form $((2p)^a,2p-1,p+1,p^{2b},p-1,1)$ or $(p^{2a},p-1,p-2,2,1)$. By Lemma \ref{L060125} and Corollary~\ref{CSuperNonSuper}, $L\da_{\hW_{n/2,2}\cap G}$ is reducible. So we may assume that $a=2$. If $p=3$ then by Lemma~\ref{end42}, there exists $\psi\in\Hom_G({}^\pi M^{(n-6,4,2)},\End_\F(L))$ such that $\psi\circ \iota_{(n-6,4,2)}\neq 0$. Moreover, by  Lemmas~\ref{inv42} and \ref{L060125_2}, there exists $\phi\in\Hom_G(\bone\ua_{\hW_{2,n/2}\cap G}^G,{}^\pi M^{(n-6,4,2)})$ with $\si_{(n-6,4,2)}\circ\phi\neq 0$. So $L\da_{\hW_{2,n/2}\cap G}$ is reducible by Lemma~\ref{inv_end} with $\al=(n-6,4,2)$. The case $p>3$ is similar using $(n-6,2^3)$ in place of $(n-6,4,2)$ and Lemma~\ref{end222} in place of Lemma~\ref{end42}.

\vspace{2mm}
\noindent
{\em Case 4: $\la=\bbeta_n$ and $a=2$.}

\vspace{1mm}
\noindent
This case follows from Lemma \ref{L160125_2} and Corollary~\ref{CSuperNonSuper}. 

\vspace{2mm}
\noindent
{\em Case 5: $\la=\bbeta_n$, $b=2$.} 

\vspace{2mm}
\noindent
By Lemma \ref{L160125_3}, we may assume that $n\equiv 1\pmod{p}$, in which case 
$$[D(\bbeta_n)\da_{\ts_{n/2,n/2}}]=[D(\bbeta_{n/2},\balpha_{n/2})]+[D(\balpha_{n/2},\bbeta_{n/2})].$$ 
Moreover, using the information on types contained in Tables III,\,IV, we conclude that $D(\bbeta_n)$, $D(\bbeta_{n/2},\balpha_{n/2})$ and $D(\balpha_{n/2},\bbeta_{n/2})$ are of all of type $\Qtype$. So
\begin{align*}
[D(\bbeta_n;\pm)\da_{\ts_{n/2,n/2}}]&=([D(\bbeta_{n/2},\balpha_{n/2};\pm\eps)]+[D(\balpha_{n/2},\bbeta_{n/2};\pm\de)],\\
[E(\bbeta_n,0)\da_{\tA_{n/2,n/2}}]&=([E(\bbeta_{n/2},\balpha_{n/2},0)]+[E(\balpha_{n/2}\bbeta_{n/2},0)].
\end{align*}
Since $\si\in\hW_{n/2,2}\setminus\ts_{n/2,n/2}$ resp. $\si\in\hW_{n/2,2}\cap\tA_n\setminus\tA_{n/2,n/2}$ exchanges $D(\bbeta_{n/2},\balpha_{n/2};\pm)$ and $D(\balpha_{n/2},\bbeta_{n/2};\pm)$ (resp. $E(\bbeta_{n/2},\balpha_{n/2},0)$ and $E(\balpha_{n/2}\bbeta_{n/2},0)$), we have that the restriction $L\da_{\hW_{n/2,2}\cap G}$ is irreducible.
\end{proof}

\section{Restrictions to non-maximal imprimitive subgroups}

\subsection{First reductions} 

In this subsection, we give a corrected version of \cite[Theorem~D]{KT} and extend it to include the case $p=3$.

\begin{Lemma} \label{L150925} 
Let $n\geq 8$, $K\leq X\in\{\ts_{n-2},\tA_{n-2}\}$, 
$\la\in\RP_p(n)\setminus\{\balpha_n\}$ satisfy $\la\in\JS(0)$, 
and $V$ be an irreducible $\F X$-module labeled by the partition $\tilde e_1\tilde e_0\la$. If $V\da_K$ is irreducible then $\pi(K)$ acts 3-homogeneously on $\{1,\ldots,n-2\}$.
\end{Lemma}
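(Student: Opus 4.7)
The plan is to argue by contrapositive: supposing $\pi(K)$ does not act $3$-homogeneously on $\{1,\ldots,n-2\}$, I will deduce that $V\da_K$ is reducible, contradicting the hypothesis. The reduction framework is Lemma~\ref{inv_end} applied with $X$ in place of $G$, $K$ in place of $H$, $V$ in place of $L$, and $\al=(n-5,3)\in\Par_{\operatorname{reg}}(n-2)$.

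First I would verify the side conditions for Lemma~\ref{inv_end}. Since $n\geq 8$, one has $h((n-5,3)^\Mull)\geq 3$, so $(n-5,3)^\Mull\not\trianglerighteq(n-5,3)$, and clearly $(n-5,3)\neq(n-2)$. To produce a homomorphism $\psi\in\Hom_X({}^\pi M^{(n-5,3)},\End_\F(V))$ with $\psi\circ\iota_{(n-5,3)}\neq 0$, I would invoke Lemma~\ref{L7} with $X$ in place of $G$ and $n-2$ in place of $n$, which is valid since $n-2\geq 6$. This requires $V$ to be non-basic, i.e.\ $\tilde e_1\tilde e_0\la\neq\balpha_{n-2}$. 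I would check this combinatorially: Lemma~\ref{LJS} gives $\tilde e_0\la\in\JS^{(1)}$, and the supposed equality $\tilde e_1\tilde e_0\la=\balpha_{n-2}$ combined with the explicit form (\ref{bs}) of $\balpha_{n-2}$ and the $\JS$-conditions would back-propagate through the $1$-cogood and $0$-cogood node additions to force $\la=\balpha_n$, contradicting the hypothesis.

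The main step is the construction of $\phi\in\Hom_X(\bone\ua_K^X,{}^\pi M^{(n-5,3)})$ with $\si_{(n-5,3)}\circ\phi\neq 0$. By Lemma~\ref{LGenInv}, this reduces to the strict inequality $\dim Z_3^{\pi(K)}<\dim M_3^{\pi(K)}$, where $Z_3=\Ker\si_{(n-5,3)}$ admits the filtration $Z_3\sim S_0^*|S_1^*|S_2^*$ recorded in Lemma~\ref{L131218}. Here $\dim M_3^{\pi(K)}$ equals the number of $\pi(K)$-orbits on $3$-subsets of $\{1,\ldots,n-2\}$, which is at least $2$ by hypothesis. I would split into cases by transitivity of $\pi(K)$: if $\pi(K)$ is intransitive I would embed it into a Young subgroup $\s_{a,n-2-a}$ and apply the orbit-count argument of Lemma~\ref{inv3intr} together with the invariant bound \cite[Lemma 2.12]{KMTAlt}; if $\pi(K)$ is transitive but not $2$-homogeneous I would exploit $\dim M_2^{\pi(K)}\geq 2$ to give extra room in the filtration of $Z_3$; and if $\pi(K)$ is $2$-homogeneous but not $3$-homogeneous I would appeal to the Kantor classification of $2$-homogeneous subgroups and verify the inequality by direct inspection for each item on the resulting short list. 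Putting these ingredients into Lemma~\ref{inv_end} yields that $V\da_K$ is reducible, as required.

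The hardest step will be the last case: when $\pi(K)$ is $2$-homogeneous but not $3$-homogeneous, the lower-degree invariant dimensions $\dim(S_k^*)^{\pi(K)}$ for $k\leq 2$ take their minimal values, so the uniform bound $\dim Z_3^{\pi(K)}\leq\sum_{k=0}^{2}\dim(S_k^*)^{\pi(K)}$ may be tight. One must instead explicitly produce a $\pi(K)$-invariant in $M_3$ whose image in $S_3^*$ is non-zero, distinguishing it from the fully symmetric invariant $\sum_{A\in\Omega_3}A$ which always lies in $Z_3$. This will require a case-by-case analysis based on the classification of $2$-homogeneous subgroups of $\s_{n-2}$, with additional care in characteristic $p=3$ where Lemma~\ref{L131218}(ii) is unavailable and the internal structure of $M_3$ is more delicate.
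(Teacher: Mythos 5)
Your contrapositive strategy via Lemma~\ref{inv_end} with $\al=(n-5,3)$ breaks down at the key inequality $\dim Z_3^{\pi(K)}<\dim M_3^{\pi(K)}$, and it fails precisely for the subgroups that matter. For $\pi(K)=\W_{2,(n-2)/2}$ or $\W_{(n-2)/2,2}$ inside $\s_{n-2}$ (transitive imprimitive, hence not $3$-homogeneous), the paper's own computations give $\dim M_3^{\W}=2$ (Lemma~\ref{LI}) while $\dim (S_3^*)^{\W}=0$ and $H^1(\W,S_k^*)=0$ for $k\le 2$ (Lemma~\ref{LLargePS*}, Corollary~\ref{CCoh}), so $\dim Z_3^{\W}=1+0+1=2=\dim M_3^{\W}$ by Lemma~\ref{LInvSum}: Lemma~\ref{LGenInv} produces no usable $\phi$. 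The same failure occurs for $\pi(K)$ inside $\s_{n-3,1}$ or $\s_{n-4,2}$ (note Lemma~\ref{inv3intr} only covers Young subgroups $\s_{n-2-m,m}$ with $m\geq 3$; already for $\s_{n-3,1}$ the orbit counts on $2$- and $3$-subsets coincide, and $(S_3^*)^{\s_{n-3,1}}$ is generically zero). Your ``extra room'' heuristic for transitive non-$2$-homogeneous groups is therefore wrong, since imprimitive transitive groups are never $2$-homogeneous and include exactly these wreath products.

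The failure is not just technical: no argument that uses only the non-basicity of $V$ can prove the lemma, because the conclusion is false for general non-basic spin modules. By Theorem~\ref{thmintr} (with $n-2$ in place of $n$), modules labelled by Jantzen--Seitz partitions restrict irreducibly to $\ts_{n-3}\cap X$, and by Theorem~\ref{thmwreath}(ii) second basic modules restrict irreducibly to $\hW_{(n-2)/2,2}\cap X$ when $p\mid (n-3)$; the corresponding $\pi(K)$ are intransitive resp.\ imprimitive, hence not $3$-homogeneous. So any correct proof must use that the label of $V$ is $\tilde e_1\tilde e_0\la$ with $\la\in\JS^{(0)}\setminus\{\balpha_n\}$ and verify that such a label is never Jantzen--Seitz, never $\bbeta_{n-2}$, and never one of the exceptional labels occurring in Theorems~\ref{TAKT} and~\ref{thmwreath}. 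That combinatorial verification (together with a case division of $\pi(K)$ into primitive, intransitive, and transitive imprimitive, quoting Theorems~\ref{TAKT}, \ref{thmintr}, \ref{thmwreath} one or two levels down) is exactly the paper's proof, and it is entirely absent from your proposal; your argument only invokes the label to rule out $\tilde e_1\tilde e_0\la=\balpha_{n-2}$, which is not enough.
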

\begin{proof}
By \cite[Lemma 3.7(i)]{KT}, $V$ is non-basic.

If $\pi(K)\leq \s_{n-2}$ is primitive on $\{1,\ldots,n-2\}$, then the possibilities for $(\pi(K),V)$ are listed in Theorem~\ref{TAKT}.  Going through the list and taking into account that $n-2\geq 6$ and $V$ is non-basic, we are left with the cases where $\pi(K)$ is $3$-homogeneous on $\{1,2,\dots,n-2\}$ or the following three cases:
(1) $X=\tA_8$, $p=3$, $\pi(K)=\A_5$, $V$ is second basic; (2) $X=\tA_9$, $p=3$, $\pi(K)\cong L_2(7)$, $V$ is second basic; (3) $X=\ts_8$, $p>5$, $\pi(K)\cong \s_5$, and $V$ is neither basic nor second basic with $\dim V=4$. The cases (1)-(3) are ruled out since in those cases $V$ is labeled by a partition not of the form $\tilde e_1\tilde e_0\la$ for $\la\in\JS^{(0)}$.

If $\pi(K)$ is intransitive on $\{1,2,\dots,n-2\}$ then $\tilde e_1\tilde e_0\la$ is $\JS$ by Theorem~\ref{thmintr}. Since $\la$ is $\JS(0)$, $\la\neq\balpha_n$ and $n\geq 8$ we have that one of the following holds: (a) $\la=(\ldots,3,2,1)$ and $\tilde e_1\tilde e_0\la=(\ldots,3,1)$; (b) $p\geq 7$, $\la=(\ldots,p-2,2,1)$ and $\tilde e_1\tilde e_0\la=(\ldots,p-2,1)$; (c) $\la=(\ldots,p+1,p^a,p-1,1)$ and $\tilde e_1\tilde e_0\la=(\ldots,p+1,p^a,p-2)$ for some $a\geq 0$. This contradicts $\tilde e_1\tilde e_0\la$ being $\JS$.

If $\pi(K)$ is imprimitive but transitive on $\{1,2,\dots,n-2\}$, then by Theorem~\ref{thmwreath}, either $p\mid (n-3)$ and  $V$ is second basic labeled by $\bbeta_{n-2}$, or $p\geq 7$, $n=12$ and $V$ is labeled by $(4,3,2,1)$. None of these is of the form $\tilde e_1\tilde e_0\la$ for $\la\in\JS$. 
\end{proof}

We now extend \cite[Theorem~7.4]{KT} to the case $p=3$ (with a slightly larger lower bound on $n$). 

\begin{Proposition}\label{P050525}
Let $n\geq 8$, $H\leq G\in\{\ts_n,\tA_n\}$, and $L$ be a non-basic  irreducible spin $\F G$-module. 
If $L\da_H$ is irreducible, then one of the following holds:
\begin{enumerate}[\rm(i)]
\item $\pi(H)$ is 3-homogeneous on $\{1,2,\dots,n\}$;

\item $\pi(H)$ has an orbit of length $n-1$ or $n-2$ on $\{1,2,\dots,n\}$ and $\pi(H)$ acts 3-homogeneously on that orbit;

\item $n$ is even, $\pi(H)$ is transitive on $\{1,2,\dots,n\}$, and $H\leq\hW_{n/2,2}$ or $H\leq\hW_{2,n/2}$;

\end{enumerate}
\end{Proposition}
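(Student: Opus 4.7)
The plan is to proceed by case analysis on the orbit structure of $\pi(H)$ on $\{1,\ldots,n\}$. Suppose first that $\pi(H)$ is intransitive with orbit sizes $n_1\geq\cdots\geq n_r$. For each $i$ the composition $(n-n_i,n_i)$ is $\pi(H)$-invariant, so $H\leq\ts_{n-n_i,n_i}\cap G$ and $L\da_{\ts_{n-n_i,n_i}\cap G}$ is irreducible; since $L$ is non-basic, Theorem~\ref{thmintr} will force $\min(n_i,n-n_i)\in\{1,2\}$ for every $i$. A short combinatorial check, using $n\geq 8$ to exclude configurations in which two orbits of sizes in $\{1,2\}$ merge to a block of size $3$ or $4$, will leave only the orbit structures $(n-1,1)$, $(n-2,2)$, and $(n-2,1,1)$; the residual possibility $\pi(H)=1$ is ruled out by faithfulness of $L$ (Lemma~\ref{LSpinFaithful}).

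In each of these three configurations, the goal will be to establish $3$-homogeneity of $\pi(H)$ on its largest orbit, matching case~(ii). For $(n-2,2)$ and $(n-2,1,1)$, Theorem~\ref{thmintr}(iii) forces $\la\in\JS^{(0)}$, and the identification $D(\la)\da_{\ts_{n-2,2}}\cong D(\tilde e_1\tilde e_0\la,(2))$ from (\ref{E110925}), combined with Lemmas~\ref{L161224_3} and~\ref{L101218_2}, will translate an $H$-irreducible restriction of $L$ into an irreducible restriction of the $\ts_{n-2}$-supermodule labeled by $\tilde e_1\tilde e_0\la$ to a subgroup whose $\pi$-image is the projection of $\pi(H)$ onto its orbit of size $n-2$; Lemma~\ref{L150925} will then deliver the required $3$-homogeneity. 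For $(n-1,1)$, Theorem~\ref{thmintr}(ii) identifies $L\da_{\ts_{n-1,1}\cap G}$ with an irreducible spin module of $\ts_{n-1}$ or $\tA_{n-1}$ labeled by $\tilde e_i\la$ with $\la\in\JS$; I would prove a direct $\ts_{n-1}$-analog of Lemma~\ref{L150925} by rerunning its case split (primitive subgroups via Theorem~\ref{TAKT}, intransitive via Theorem~\ref{thmintr}, imprimitive transitive via Theorem~\ref{thmwreath}), each excluded unless the subgroup acts $3$-homogeneously.

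Finally, if $\pi(H)$ is transitive and primitive, inspection of the entries of Theorem~\ref{TAKT} with $n\geq 8$ and $L$ non-basic leaves only $n=8$ with $\pi(H)=\mathsf{AGL}_3(2)$ and $n=12$ with $\pi(H)=\mathsf{M}_{12}$, both of which are $3$-transitive and hence $3$-homogeneous, yielding case~(i). If $\pi(H)$ is transitive and imprimitive, then $\pi(H)\leq\W_{a,b}$ for some factorisation $n=ab$ with $a,b\geq 2$, so $H\leq\hW_{a,b}\cap G$; Theorem~\ref{thmwreath} applied with $L$ non-basic and $n\geq 8$ permits only the second-basic cases with $p\mid(n-1)$ together with the $n=10$ entries of Table~I, all of which have $(a,b)\in\{(n/2,2),(2,n/2)\}$, placing us in case~(iii). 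The hard part will be the extraction in the intransitive analysis: the Clifford factor $D(2)\cong\Cl_1$ must be tracked carefully through Lemma~\ref{L161224_3} to produce a genuine $\ts_{n-2}$-subgroup to which Lemma~\ref{L150925} applies, and analogously for the $\ts_{n-1}$-version; once this bridge is built, the rest will be direct bookkeeping against the classifications already in place.
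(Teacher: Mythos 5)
Your plan follows the paper's proof essentially verbatim: the same trichotomy (primitive handled via Theorem~\ref{TAKT}, transitive imprimitive via Theorem~\ref{thmwreath} forcing $H\leq\hW_{n/2,2}$ or $\hW_{2,n/2}$, intransitive via Theorem~\ref{thmintr} reducing to orbit types $(n-1,1)$, $(n-2,1,1)$, $(n-2,2)$), the same passage through (\ref{E110925}) and Lemma~\ref{L150925} for the $(n-2)$-orbit cases, and the same inline rerun of that lemma's case split at level $n-1$. The one step you defer as ``the hard part'' --- extracting from $H\leq\ts_{n-2,2}$ with $H\not\leq\ts_{n-2}$ a genuine subgroup $K^+\leq\ts_{n-2}$ over which the module labeled $\tilde e_1\tilde e_0\la$ remains irreducible, in particular when $G=\tA_n$ and $L=E(\la;\pm)$ --- is precisely where the paper's Case 3 spends its effort (the $K^+$, $H^+$ construction with $z$ adjoined and the type bookkeeping via Corollary~\ref{CSuperNonSuper}), so your outline is correct but that bridge is asserted rather than built.
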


\begin{proof}
Write $L=D(\la,\eps)$ or $L=E(\la,\eps)$ for some $\la\in\RP_p(n)\setminus\{\balpha_n\}$ and $\eps\in\{0,+,-\}$.

If $\pi(H)$ is primitive then by Theorem~\ref{TAKT} (which is \cite[Theorem B]{KT}), we have that $\pi(H)$ contains $\A_n$, and so $\pi(H)$ satisfies (i). If $\pi(H)$ is  imprimitive and transitive, then by Theorem \ref{thmwreath} we are in (iii). So we may assume that $\pi(H)$ is intransitive. By Theorem \ref{thmintr} we have that $H\leq\ts_{n-1}$ or $H\leq\ts_{n-2,2}$.

\vspace{2mm}
\noindent
{\em Case 1: $\pi(H)\leq\s_{n-1}$ and $\pi(H)\not\leq\s_{n-2,2}$.}

\vspace{1mm}
\noindent 
As $L\da_H$ is irreducible, the $\F(\ts_{n-1}\cap G)$-module $L':=L\da_{\ts_{n-1}\cap G}$ is also irreducible. By Theorem~\ref{thmintr}, $\la\in\JS^{(i)}$ for some $i$, and then by Lemma \ref{branching}, the irreducible $\F(\ts_{n-1}\cap G)$-module $L'$ is labeled by the partition $\tilde e_i\la$.  Moreover, by \cite[Lemma 3.7(i)]{KT}, since $L$ is non-basic, $L'$ is also non-basic. If the subgroup $\pi(H)\leq \s_{n-1}$ is primitive then the possibilities for $(\pi(H),L')$ are listed in Theorem~\ref{TAKT}.  Going through the list and taking into account that $n-1\geq 7$ and $L'$ is non-basic, we are left with the cases where $\pi(H)$ is $3$-homogeneous on $\{1,2,\dots,n-1\}$ which contributes to (ii), or the case where $G=\tA_8$, $p=3$, $\pi(H)\cong L_2(7)$ and $L'$ is  second basic in which case $L'$ is labeled by the partition $\bbeta_7=(4,2,1)$, which is not of the form $\tilde e_i\la$ for $\la\in\JS$. So we may assume that the subgroup $\pi(H)\leq \s_{n-1}$ is imprimitive. If $\pi(H)$ is intransitive on $\{1,2,\dots,n-1\}$ then by Theorem~\ref{thmintr}, $\pi(H)\leq\s_{n-2,2}$. If $\pi(H)$ is transitive on $\{1,2,\dots,n-1\}$, then by Theorem~\ref{thmwreath}, either $p\mid (n-2)$ and  $L'$ is second basic labeled by $\bbeta_{n-1}$, or $p\geq 7$, $n=11$ and $L'$ is labeled by $(4,3,2,1)$. None of these is of the form $\tilde e_i\la$ for $\la\in\JS$.

\vspace{2mm}
\noindent
{\em Case 2: $H\leq\ts_{n-2}$.} 

\vspace{1mm}
\noindent
As $L\da_H$ is irreducible, the $\F(\ts_{n-2}\cap G)$-module $L':=L\da_{\ts_{n-2}\cap G}$ is also irreducible. By Theorem \ref{thmintr}, we have $\la\in\JS^{(0)}$, and then by Lemma \ref{branching}, the irreducible $\F(\ts_{n-2}\cap G)$-module $L'$ is labeled by the partition $\tilde e_1\tilde e_0\la$  and $L'\da_H$ is irreducible. By Lemma~\ref{L150925}, $\pi(H)$ is 3-homogenous on $\{1,\ldots,n-2\}$.

\vspace{2mm}
\noindent
{\em Case 3: $H\leq\ts_{n-2,2}$ but $H\not\leq\ts_{n-2}$.}

\vspace{1mm}
\noindent
Let $K=H\cap\ts_{n-2}$ so that $[H:K]=2$, and fix $h\in H\setminus K$. 
Write $h=g t_{n-1}$ for $g\in\ts_{n-2}$. 
Define the subgroups $K^+=\langle K,g,z\rangle\leq \ts_{n-2}$ and $H^+=\langle H,g,z\rangle\leq \ts_{n-2,2}$. Note that $\pi(K^+)$ is transitive on $\{1,\ldots,n-2\}$, since otherwise $H\leq \langle K^+, t_{n-1}\rangle$ is contained (up to isomorphism) in some $\ts_{n-a,a}$ with $3\leq a\leq n/2$,  contradicting Theorem \ref{thmintr}.


\vspace{2mm}
\noindent
{\em Case 3.1: $G=\ts_n$.} 

\vspace{1mm}
\noindent
As $H\leq H^+
\leq \ts_{n-2,2}$ the modules $L\da_{H^+}$ and $L\da_{\ts_{n-2,2}}$ are  irreducible. In particular, $\la\in\JS^{(0)}$ by Theorem~\ref{thmintr}. Moreover, by (\ref{E110925}), in the Grothendieck group we have 
\begin{equation}\label{E130925}
[D(\la)\da_{\T_{n-2,2}}]=[D(\tilde e_1\tilde e_0\la,(2))]=2^{a_p(\la)-1}[D(\tilde e_1\tilde e_0\la)\boxtimes D(2)].
\end{equation}
Since $\T_{\pi(H^+)}=\T_{\pi(K^+)\times\s_2}\cong \T_{\pi(K^+)}\otimes \T_2$, we deduce 
\begin{equation}\label{E110925_2}
[D(\la)\da_{H^+}]=2^{a_p(\la)-1}[D(\tilde e_1\tilde e_0\la)\da_{K^+}\boxtimes D(2)].
\end{equation}

Note that $H^+\not\leq \tA_n$, so we can apply  Corollary~\ref{CSuperNonSuper} to deduce that $L\da_{H^+}$ is irreducible if and only if one of the following happens: (1) the supermodule $D(\la)\da_{H^+}$ is irreducible and $(\text{type of $D(\la)$},\text{type of $D(\la)\da_{H^+}$})\neq (\Mtype,\Qtype)$; (2) $D(\la)$ is of type $\Qtype$ and the supermodule $D(\la)\da_{H^+}$ has length two, with two isomorphic composition factors of type $\Mtype$. If $a_p(\la)=1$ then by (\ref{E110925_2}) and Lemma~\ref{LBoxTimes}, (1) is satisfied if and only if the supermodule $D(\tilde e_1\tilde e_0\la)\da_{K^+}$ is irreducible of type $\Mtype$, while (2) is satisfied if and only if the supermodule $D(\tilde e_1\tilde e_0\la)\da_{K^+}$ is irreducible of type $\Qtype$. 
On the other hand, if $a_p(\la)=0$ then (1) is satisfied if and only if the supermodule $D(\tilde e_1\tilde e_0\la)\da_{K^+}$ is irreducible of type $\Qtype$, while (2) never happens. So one of the following happens:

\begin{enumerate}[\rm(a)]
\item $D(\la)$ is of type $\Mtype$ and the supermodule $D(\tilde e_1\tilde e_0\la)\da_{K^+}$ is irreducible of type $\Qtype$;

\item $D(\la)$ is of type $\Qtype$ and the supermodule $D(\tilde e_1\tilde e_0\la)\da_{K^+}$ is irreducible of type $\Mtype$;

\item $D(\la)$ is of type $\Qtype$ and the supermodule $D(\tilde e_1\tilde e_0\la)\da_{K^+}$ is irreducible of type $\Qtype$.

\end{enumerate}
 
\noindent
In the cases (a),(b), by Lemma~\ref{LAGa}, the $\F\ts_{n-2}$-supermodule $D(\tilde e_1\tilde e_0\la)$ is of the same type as the  $\F K^+$-supermodule $D(\tilde e_1\tilde e_0\la)\da_{K^+}$. So the $\F K^+$-module $D(\tilde e_1\tilde e_0\la;\eps)\da_{K^+}$ is irreducible for every $\eps$. 
By Lemma~\ref{L150925}, $\pi(K^+)$ is 3-homogeneous, whence so is $\pi(H)$.

In the case (c), by Lemma~\ref{LAGa}, the $\F\ts_{n-2}$-supermodule $D(\tilde e_1\tilde e_0\la)$ is of type $\Mtype$ and the  $\F K^+$-supermodule $D(\tilde e_1\tilde e_0\la)\da_{K^+}$ is of type $\Qtype$. In particular, $K^+\not\leq\tA_{n-2}$.  By Corollary~\ref{CSuperNonSuper} it follows that $E(\tilde e_1\tilde e_0\la,\pm)\da_{K^+\cap\tA_{n-2}}$ is irreducible. So $\pi(K^+\cap\tA_{n-2})$ and then also $\pi(H)$ is 3-homogeneous by Lemma~\ref{L150925}. 

\vspace{2mm}
\noindent
{\em Case 3.2: $G=\tA_n$ and $L=E(\la;0)$.} 

\vspace{1mm}
\noindent
Then $L$ extends to $\ts_n$ by Lemma \ref{lmodules} and we can apply Case 3.1.

\vspace{2mm}
\noindent
{\em Case 3.3: $G=\tA_n$ and $L=E(\la;\pm)$.} 

\vspace{1mm}
\noindent
Since $H\leq G=\tA_n$ and $g t_1\in H$, we have that $g\in\ts_{n-2}\setminus\tA_{n-2}$. Replace if necessary $H$ by $\langle H,z\rangle$, we may assume that $z\in H$. Note that
\[\langle H\cap\ts_{n-2},g\rangle=\langle K,g\rangle=K^+=\pi^{-1}({\tt p}\circ \pi(H)),\]
where ${\tt p}$ is the projection $\s_{n-2,2}\to\s_{n-2}$. Since $g\not\in\A_n$ but $H\leq\tA_n$, it follows that $H\leq\pi^{-1}(\pi(K^+)\times\s_2)=H^+$ is normal of index $2$ and $H=H^+\cap\tA_n$.

We have $D(\la;0)\da_{\tA_n}\cong E(\la;+)\oplus E(\la;-)$. 
Since the $\F H$-module $E(\la;\pm)\da_H$ is irreducible, 
by Corollary \ref{CSuperNonSuper} it follows that $D(\la)\da_{H^+}$ is irreducible as supermodule.

Since $D(\la)$ is of type $\Qtype$, $D(\la)\da_{H^+}$ must also be of type $\Qtype$ and then $D(\la;\pm)\da_{H^+}=D(\la;\pm)\da_{\ts_{n-2,2}}\da_{H^+}$ is irreducible. So by Theorem \ref{thmintr} we have as in (\ref{E110925_2}) that
\[[D(\la)\da_{H^+}]=2^{-1}[D(\tilde e_1\tilde e_0\la)\da_{K^+}\boxtimes D(2)].\]
We deduce that 
the $\F K^+$-supermodule 
$D(\tilde e_1\tilde e_0\la)\da_{K^+}$ is irreducible of type $\Qtype$ and 
$$D(\la)\da_{H^+}\cong (D(\tilde e_1\tilde e_0\la)\da_{K^+})\circledast D(2)$$ 
is an irreducible supermodule of type $\Mtype$. In particular the $\F K^+$-module $D(\tilde e_1\tilde e_0\la;\pm)\da_{K^+}$ is irreducible. 
We can then conclude by Lemma~\ref{L150925}. 
\end{proof}


\begin{Theorem}\label{kt-n22}
Let $n\geq 8$, $H\leq G\in\{\ts_n,\tA_n\}$, 
and $L$ be a non-basic irreducible spin $\F G$-module such that $L\da_H$ is irreducible. 
If $\pi(H)$ is not almost simple and $\pi(H)$  is imprimitive on $\Omega=\{1,2,\ldots,n\}$ 
then one of the following holds:
\begin{enumerate}[\rm(i)]
\item $G=\ts_n$ and $\pi(H)=\s_{n-2,2}$, 
\item
$n$ is even, $\pi(H)$ is transitive on $\{1,2,\dots,n\}$, and 
 $H\leq\hW_{n/2,2}$ or $H\leq\hW_{2,n/2}$.
\end{enumerate}
\end{Theorem}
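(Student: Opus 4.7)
The proof combines Proposition~\ref{P050525} with Theorems~\ref{TAKT} and~\ref{thmintr} through a case analysis. I would start by invoking Proposition~\ref{P050525}: its case~(iii) coincides with part~(ii) of our theorem, while its case~(i) (a 3-homogeneous action of $\pi(H)$ on all of $\Om$) is excluded at once, since a 3-homogeneous action on $n\ge 4$ points is 2-transitive and hence primitive, contradicting the imprimitivity of $\pi(H)$. So we are reduced to case~(ii) of Proposition~\ref{P050525}, where $\pi(H)$ has a single orbit $\Om'\subseteq\Om$ of length $n-1$ or $n-2$ on which it acts 3-homogeneously.

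Suppose first $|\Om'|=n-1$. Then $H\le\ts_{n-1}\cap G$ and the restriction $M':=L\da_{\ts_{n-1}\cap G}$ is irreducible and non-basic, labelled by $\tilde e_i\la$ with $\la\in\JS$ (Theorem~\ref{thmintr}). Since $n-1\ge 7$ and $\pi(H)$ is 3-homogeneous on $\Om'$, it is primitive in $\s_{n-1}$. I would apply Theorem~\ref{TAKT} to $(\pi(H),M')$: the Corollary there excludes $n-1\ge 13$, and the classification of 3-homogeneous groups combined with the TAKT list leaves, for $n-1\in\{7,\dots,12\}$, only the possibility $n-1=8$ and $\pi(H)\cong {\sf AGL}_3(2)$. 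The relevant TAKT entries force $M'$ to be basic (contradicting non-basicness of $L$) or second basic with $G=\tA_9$ and $p\ne 7$. For this second alternative I would run, characteristic by characteristic, over all $\la\in\JS\cap\RP_p(9)$ with $\tilde e_i\la=\bbeta_8$, and verify via explicit signature computations that none is compatible with the type constraints of Theorem~\ref{thmintr}(ii); for instance when $p=3$ the only candidate is $(4,3,2)\in\JS^{(1)}$, which is ruled out since $a_p((4,3,2))=1$ whereas Theorem~\ref{thmintr}(ii)(b) demands $a_p=0$.

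Now suppose $|\Om'|=n-2$. Then $H\le\ts_{n-2,2}\cap G$; Theorem~\ref{thmintr}(iii) forces $\la\in\JS^{(0)}$, and equation~(\ref{E110925}) gives $D(\la)\da_{\T_{n-2,2}}\cong D(\tilde e_1\tilde e_0\la,(2))$. If the remaining pair is split into two singletons by $\pi(H)$ (pattern $(n-2,1,1)$), then $\pi(H)\le\s_{n-2}$ is primitive, 3-homogeneous and not almost simple, and an argument parallel to the $|\Om'|=n-1$ case, applied now to the irreducible non-basic $\T_{n-2}$-supermodule labelled by $\tilde e_1\tilde e_0\la$, rules it out. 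Otherwise the pair forms a single $\pi(H)$-orbit (pattern $(n-2,2)$). Here the analysis of Case~3 in the proof of Proposition~\ref{P050525} shows that the $\s_{n-2}$-projection $\pi(K^+)$ of $\pi(H)$ is 3-homogeneous on $\{1,\dots,n-2\}$ and that $D(\tilde e_1\tilde e_0\la)\da_{K^+}$ is irreducible as a supermodule; applying Theorem~\ref{TAKT} and its Corollary to $(\pi(K^+),D(\tilde e_1\tilde e_0\la))$ forces $\pi(K^+)\in\{\s_{n-2},\A_{n-2}\}$. The $\A_{n-2}$ alternative is excluded by the type parity given by Lemma~\ref{LAGa}, namely $D(\tilde e_1\tilde e_0\la)$ has type opposite to $D(\la)$, which contradicts the structural possibilities listed in Case~3 of Proposition~\ref{P050525} when $K^+$ is the full alternating lift. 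Thus $\pi(K^+)=\s_{n-2}$, and since $\pi(K):=\pi(H)\cap\s_{n-2}$ is an index-$\le 2$ subgroup of $\s_{n-2}$ and the hypothesis that $\pi(H)$ is not almost simple rules out $\pi(K)=\A_{n-2}$ (which would give $\pi(H)\cong\s_{n-2}$ almost simple), we conclude $\pi(K)=\s_{n-2}$ and hence $\pi(H)=\s_{n-2,2}$. The parity argument of Case~3.3 of Proposition~\ref{P050525} finally excludes $G=\tA_n$, leaving case~(i).

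The main obstacle is the detailed characteristic-by-characteristic bookkeeping in the $|\Om'|=n-1$ sub-case that rules out the ${\sf AGL}_3(2)$-possibility, via explicit signature computations of Jantzen--Seitz partitions in $\RP_p(9)$ matched against Theorem~\ref{thmintr}(ii). A secondary subtlety is the elimination of the almost-simple 3-homogeneous alternatives for $\pi(K^+)$ (in particular $\A_{n-2}$) in the orbit pattern $(n-2,2)$, which cannot be read off directly from the hypothesis that $\pi(H)$ is not almost simple and instead requires the type-parity tracking via Lemma~\ref{LAGa} described above.
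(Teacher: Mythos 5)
Your route is genuinely different from the paper's: after Proposition~\ref{P050525} the paper never returns to Theorem~\ref{TAKT}, but instead analyzes $K=\pi(H)$ through its image $X$ on the long orbit (socle analysis, Kantor/Cameron's classification of $3$-homogeneous groups, \cite[Proposition 7.5]{KT} when $\soc(X)$ is non-simple, and the dimension bound $(\dim L)^2\le 2|\Aut(S)|$ against $\dim L\ge 2^{\lfloor (n-3)/2\rfloor}(n-4)$ from \cite{KT2}); in particular the orbit length $n-1$ dies instantly there because $X$ is forced to be almost simple while $\pi(H)=X$ in that case. Your TAKT-based alternative has a genuine gap in the orbit pattern $(n-2,2)$: the assertion that applying Theorem~\ref{TAKT} to $(\pi(K^+),D(\tilde e_1\tilde e_0\la))$ ``forces $\pi(K^+)\in\{\s_{n-2},\A_{n-2}\}$'' is false. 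The hypothesis that $\pi(H)$ is not almost simple says nothing about its projection $\pi(K^+)\le\s_{n-2}$, so you cannot discard the exceptional $3$-homogeneous entries of the TAKT list, namely ${\sf AGL}_3(2)$ in degree $8$ (second basic $\tA_8$-module, $p\ne 7$) and ${\sf M}_{12}$ in degree $12$. This is not vacuous: for $n=10$, $p=3$ the partition $\la=\bbeta_{10}=(4,3,2,1)$ lies in $\JS^{(0)}$ with $\tilde e_1\tilde e_0\la=\bbeta_8=(4,3,1)$, so every constraint you impose (irreducibility over $\ts_{8,2}\cap G$, non-basicness of the degree-$8$ label, $3$-homogeneity of $\pi(K^+)$, irreducibility of the second basic $\tA_8$-module over $\widehat{{\sf AGL}_3(2)}$ via Theorem~\ref{TAKT}(iii)(c)) is compatible with $\pi(K^+)={\sf AGL}_3(2)$, and $\pi(H)\le{\sf AGL}_3(2)\times\s_2$ is neither $\s_{8,2}$ nor transitive, so your argument would not recover the theorem's conclusion. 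Eliminating exactly such configurations is where the paper invokes \cite[Proposition 7.5]{KT} (the socle $2^3$ of ${\sf AGL}_3(2)$ is non-simple) and, for the Mathieu possibilities, the $|\Aut(S)|$ bound; nothing in your proposal replaces these steps.

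Two smaller problems. First, in the same pattern, your exclusion of $\pi(K)=\A_{n-2}$ on the grounds that it ``would give $\pi(H)\cong\s_{n-2}$ almost simple'' overlooks $\pi(H)=\A_{n-2}\times\s_2$, which is not almost simple and not of diagonal type; the paper rules it out by noting that $t_1$ centralizes $\pi^{-1}(\A_{n-2}\times\s_2)$, contradicting irreducibility, and your type-parity exclusion of the $\A_{n-2}$ alternative is asserted rather than proved. Second, in the $|\Om'|=n-1$ case your treatment of the ${\sf AGL}_3(2)$ exception at $n=9$ only addresses the $E(\la;\pm)$-branch of Theorem~\ref{thmintr}(ii)(b) (which indeed requires $a_p(\la)=0$); the branch $G=\ts_9$, $L=D(\la;\pm)$ with $a_3((4,3,2))=1$ does restrict irreducibly to $\ts_8$ (to $D(\bbeta_8;0)$) and must be killed by a further step, e.g.\ by Theorem~\ref{TAKT} with $G'=\ts_8$ (where only basic modules survive over ${\sf AGL}_3(2)$) or by noting that $D(\bbeta_8;0)\da_{\tA_8}$ splits; this step is fixable but missing as written.
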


\begin{proof}

By Proposition~\ref{P050525}, one of the following happens: 
(1) $\pi(H)$ is 3-homogeneous on $\{1,2,\dots,n\}$;
(2) $\pi(H)$ has an orbit of length $n-1$ or $n-2$ on $\{1,2,\dots,n\}$ and $\pi(H)$ acts 3-homogeneously on that orbit;
(3) $n$ is even, $\pi(H)$ is transitive on $\{1,2,\dots,n\}$, and $H\leq\hW_{n/2,2}$ or $H\leq\hW_{2,n/2}$. Now (3) is precisely the conclusion (ii) of the current theorem, and (1) contradicts the assumption that $\pi(H)$ is imprimitive since $n \geq 6$.
So we may assume that $H$ is as in (2), and let $\Omega_1$ be the `long' orbit, with $r:=|\Omega_1|$, 
so that $r=n-1$ or $n-2$.
Denote $K:=\pi(H)$, and let $X<\s_r$ be the 
image of $K$ with respect to its $3$-homogeneous action on $\Omega_1$. So $X\cong K/J$, where $J$ is the kernel of the action of $K$ on $\Omega_1$, and $|J|\leq 2$.
Let $S:=\soc(X)$.

If $S$ is non-simple, we apply \cite[Proposition~7.5]{KT} to the action of $X$ on $\Omega_1$, and the argument on \cite[p. 1996]{KT} shows that $L\da_H$ is reducible. So we may assume that $S$ is simple. If moreover $S$ is abelian, then $S \cong \Z_r$, $r \geq n-2 \geq 6$ is prime, and $X \leq \mathrm{AGL}_1(r)$ acting on $\FF_r$. In this case $|X|$ divides $r(r-1)$ which is not divisible by $\binom{r}{3}$, so $X$ cannot act $3$-homogeneously on $\Omega_1$. 
Hence $S$ is non-abelian\footnote{This case was missed in the proof of \cite[Theorem D]{KT}.}, $X$ is almost simple, while $K$ is not almost simple by assumption; in particular, $K\neq X$ and $|J|=2$.  

As $r=n-1$ implies $K=X$, we must have 
$r=n-2$. So we may assume that $\Om_1=\{3,4,\ldots,n\}$ and $K$ acts non-trivially on $\{1,2\}$. Let $K_1$ be the stabilizer of $1$ in $K$, so that $[K:K_1]=2$.
Applying \cite[Theorem 1]{Kantor} to $X$, we see 
that $X$ is $2$-transitive, in fact $3$-transitive unless we are in case (a) listed below. 
Now, by the main result of \cite{Cameron} applied to $X$, one of the following holds: 
\begin{enumerate}[\rm(a)]
\item $S = \mathrm{PSL}_2(q)$ for a prime power $q=p^f$, and $n-2=q+1$;
\item $(S,n-2) = (M_{11},11)$, $(M_{11},12)$, $(M_{12},12)$, $(M_{22},22)$, $(M_{23},23)$ or $(M_{24},24)$;
\item $S=\A_{n-2}$.
\end{enumerate}
In particular, $S$ is $2$-transitive on $\Omega_1$, which implies by \cite[Proposition 5.2]{Cameron} that $C_{\s_r}(S)=1$.

Let $S^+\leq K$ be the full preimage of $S$ in $K$ under the natural projection $K\to X\cong K/J$. 
Then $J$ is a normal subgroup of $S^+$ of order $2$, so 
$J \leq Z(S^+)$ and $|S^+|=2|S|$. Now, if $S^+$ is perfect then, considering its action on $\{1,2\}$, we see that 
$S^+ \leq K_1$, so 
$J$ acts trivially on both $\Omega_1$ and $\{1,2\}$, hence $J=1$, giving a contradiction. So $S^+$ is not 
perfect, whence $S^+ \cong J \times R$ with $R \cong S$. Then $R = [R,R] \lhd K_1$. Thus $K_1$ contains a subgroup $R \cong S$ which is normal in $K$. 

By the assumption that $K$ is not almost simple, we have $\soc(K) = R \times T$ for a subgroup $T\neq 1$. Then the image $TJ/J$ of $T$ in $X$ centralizes 
the action of $S = \soc(X)$ on $\Omega_1$, so $T$ acts trivially on $\Omega_1$. Hence $T = \langle (1,2) \rangle$; in particular, 
$G=\ts_n$. 
By the same argument applied to $C_K(R)$, we have $C_K(R) = T$. 
In particular, $|K| \leq 2|\Aut(S)|$. By Lemma \ref{LSpinFaithful},  
$K \cong H/Z(H)$, so  
the degree of any complex irreducible character of $H$ is at most $\sqrt{|H/Z(H)|}=\sqrt{|K|}$. But $H$ acts irreducibly on $L$, hence
\begin{equation}\label{eq10b}
  (\dim L)^2 \leq |K|  \leq 2|\Aut(S)|.
\end{equation}

Suppose $S=\A_{n-2}$ as in (c). Then $\soc(K) = R \times \lan (1,2)\ran$ implies that $K=\A_{n-2} \times \s_2$ or $K=\s_{n-2,2}$. In the former case,   
$\lan H,z\ran=\pi^{-1}(\A_{n-2} \times \s_2)$ is centralized by the element $t_1$ which is not in $Z(H)$ by  Lemma~\ref{LSpinFaithful}, so $H$ cannot act irreducibly 
on $L$, a contradiction. Thus $K=\s_{n-2,2}$ and we have arrived  at conclusion (i). 

It remains to consider the possibilities listed in (a) and (b). Under the additional assumption that $n \geq 12$, by \cite[Main Theorem]{KT2} we have
\begin{equation}\label{eq12}
\dim L\geq 2^{\lfloor (n-3)/2 \rfloor}(n-4).
\end{equation}
which implies by \eqref{eq10b} that
\begin{equation}\label{eq11}
  |\Aut(S)| \geq (\dim L)^2/2 = 2^{2 \cdot \lfloor (n-3)/2 \rfloor -1}(n-4)^2\geq 2^{n-5}(n-4)^2.
\end{equation}

Suppose we are in case (a), in particular $n=q+3$. Assume first that $n \geq 13$. If  $q = p^f \geq 11$ and $p > 2$, then $f \leq q/9$, and
$|\Aut(S)| = fq(q^2-1) < 2^{q-3}(q-1)^2$, violating \eqref{eq11}. If $q = 2^f \geq 16$, then $f \leq q/4$, and 
$|\Aut(S)| = fq(q^2-1) < 2^{q-3}(q-1)^2$, again violating \eqref{eq11}. In the remaining cases $8 \leq n \leq 12$ 
we have $q=5,7,8$ or $9$, and 
 $|\Aut(S)| = 120, 336,1512$ or $1440$, respectively. By \eqref{eq10b}, $\dim L \leq 15,25, 54$ or $53$, respectively. But since $G=\ts_n$,  \cite{GAP} yields 
$\dim L \geq 16, 48, 112$ or $128$, respectively, for a non-basic irreducible spin module $L$, giving a contradiction.  

Suppose now that we are in case (b). 
By \eqref{eq11}, we have  $(S,n-2)=(M_{12},12)$. Since $M_{12}\cdot 2$ does not embed into $\s_{12}$, we have $R=S$ and $K=R\times T$. Since $R$ and $T$ act on disjoint sets of numbers, $\T_K\cong \T_R\otimes\T_T$ as superalgebras. 
As $\T_T\cong\Cl_2$ and $\Cl_2$ has a unique irreducible supermodule $U_2$ which is 2-dimensional and of type $\Qtype$, the maximal dimension of an irreducible $\F H$-module is equal to the maximal dimension of an irreducible $\F\hat R$-module. But 
the maximal dimension of an irreducible module over $\hat M_{12}= 2\cdot M_{12}$  is $176$, contradicting \eqref{eq12}.
\end{proof}

\subsection{Subgroups of $\hW_{n/2,2}$} 

Let $G\in\{\ts_n,\tA_n\}$. 
In view of Theorem~\ref{thmwreath}(ii), we need to study the irreducible restrictions $L\da_H$ of the second basic $\F G$-module $L$ to the subgroups $H$ contained in $\hW_{n/2,2}$ or $\hW_{2,n/2}$ (for even $n$) when $p\mid(n-1)$. In this subsection we deal with the subgroups of $\hW_{n/2,2}$, and in the next subsection we deal with the subgroups of $\hW_{2,n/2}$.

Throughout this subsection, we assume that  $n=2b\geq 10$ is even and $p\mid(n-1)$.
By Table~IV and Lemma~\ref{L160125_3}, the second basic supermodule $D(\bbeta_n)$ is of type $\Qtype$, 
\begin{equation}\label{E170925_2}
\dim D(\bbeta_n;\pm)=\dim E(\bbeta_n;0)
=2^{(n-4)/2}(n-4).
\end{equation}
and 
\begin{equation}\label{E170925}
[D(\bbeta_n)\da_{\T_{b,b}}]=[D(\bbeta_{b})\boxtimes
D(\balpha_{b})]+[D(\balpha_{b})\boxtimes D(\bbeta_{b})].
\end{equation}
As $p\mid(n-1)$, we have that $b\not\equiv 0,1\pmod{p}$, the supermodules $D(\balpha_b)$ and $D(\bbeta_b)$ are of different types by Tables~III,\,IV, so $D(\bbeta_{b})\boxtimes
D(\balpha_{b})=D(\bbeta_b,\balpha_b)$ and $D(\balpha_{b})\boxtimes
D(\bbeta_{b})=D(\balpha_b,\bbeta_b)$. 

If $b$ is even then $D(\bbeta_{b})$ is of type $\Mtype$, and  $D(\balpha_{b})$ is of type
$\Qtype$, so by Lemma~\ref{lmodules}, 
$$D(\bbeta_{b};0)\da_{\tA_{b}}=E(\bbeta_{b};+)\oplus
E(\bbeta_{b};-)\quad\text{and}\quad 
D(\balpha_{b};\pm)\da_{\tA_{b}}=E(\balpha_{b},0).$$
Now, restricting (\ref{E170925}) to $\T_{\A_b\times \A_b}\cong \T_{\A_b}\otimes \T_{\A_b}$, we get 
\begin{equation}\label{E170925_3}
\begin{split}
[D(\bbeta_n;\pm)\da_{\T_{\A_b\times \A_b}}]  =\,&[E(\bbeta_n;0)\da_{\T_{\A_b\times \A_b}}]\\
 =\,&[E(\bbeta_{b};+)\boxtimes
E(\balpha_{b};0)]+[E(\bbeta_{b};-)\boxtimes
E(\balpha_{b};0)]\\
& +[E(\balpha_{b};0)\boxtimes
E(\bbeta_{b};+)]+[E(\balpha_{b};0)\boxtimes E(\bbeta_{b};-)].
\end{split}
\end{equation}
If $b$ is odd, then $D(\bbeta_{b})$ is of type $\Qtype$, $D(\balpha_{b})$ is of type $\Mtype$, and, as in the case $b$ even,  we get
$$
D(\bbeta_{b};\pm)\da_{\tA_{b}}=E(\bbeta_{b},0)\quad\text{and}\quad 
D(\balpha_{b};0)\da_{\tA_{b}}=E(\balpha_{b};+)\oplus E(\balpha_{b};-),
$$
\begin{equation}\label{E170925_4}
\begin{split}
[D(\bbeta_n;\pm)\da_{\T_{\A_b\times \A_b}}] =\,&[E(\bbeta_n;0)\da_{\T_{\A_b\times \A_b}}]\\
 =\,&[E(\bbeta_{b};0)\boxtimes
E(\balpha_{b};+)]+[E(\bbeta_{b};0)\boxtimes
E(\balpha_{b};-)]\\
& +[E(\balpha_{b};+)\boxtimes
E(\bbeta_{b};0)]+[E(\balpha_{b};-)\boxtimes E(\bbeta_{b};0)].
\end{split}
\end{equation}
Taking into account (\ref{E170925_2}), we have proved:

\begin{Lemma} 
Let\, $2\mid n \geq 10$ and $p\mid (n-1)$. Then the module 
$D(\bbeta_n;\pm)\da_{\T_{\A_b\times \A_b}}=E(\bbeta_n,0)\da_{\T_{\A_b\times \A_b}}$ has 4 composition
factors, each of dimension $2^{(n-8)/2}(n-4)$.
\end{Lemma}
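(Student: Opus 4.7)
The proof will essentially read off the statement from what has just been established, together with a short dimension count.

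First I would observe that the four-factor decompositions in \eqref{E170925_3} (for $b$ even) and \eqref{E170925_4} (for $b$ odd) already exhibit four composition factors of $D(\bbeta_n;\pm)\da_{\T_{\A_b\times \A_b}}$, and the identification $D(\bbeta_n;\pm)\da_{\tA_n}\cong E(\bbeta_n;0)$ (via Lemma~\ref{lmodules}, since $D(\bbeta_n)$ is of type $\Qtype$ by Table~IV when $p\mid(n-1)$) shows that the same decomposition holds for $E(\bbeta_n;0)\da_{\T_{\A_b\times \A_b}}$. Thus it suffices to compute the dimensions of the tensor factors and verify that they all equal $2^{(n-8)/2}(n-4)=2^{b-3}(b-2)$.

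Next I would record the arithmetic constraint imposed by $p\mid(n-1)$. Writing $n=2b$, the congruence $2b\equiv 1\pmod p$ with $p$ odd is incompatible with $b\equiv 0\pmod p$ and with $b\equiv 1\pmod p$, so $p\nmid b$ and $p\nmid(b-1)$. This places us in the rows of Tables~III and IV with $p\nmid b$ and $p\nmid(b-1)$, yielding $\dim D(\balpha_b)=2^{\lfloor b/2\rfloor}$ (type $\Qtype$ if $b$ even, $\Mtype$ if $b$ odd) and $\dim D(\bbeta_b)=2^{\lfloor(b-1)/2\rfloor}(b-2)$ (type $\Mtype$ if $b$ even, $\Qtype$ if $b$ odd).

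Applying Lemma~\ref{lmodules} to pass from supermodule dimensions to $\F\tA_b$-module dimensions (halving when the supermodule type is $\Mtype$), I get in the case $b$ even $\dim E(\balpha_b;0)=2^{(b-2)/2}$ and $\dim E(\bbeta_b;\pm)=2^{(b-4)/2}(b-2)$, so each tensor factor in \eqref{E170925_3} has dimension
\[
2^{(b-2)/2}\cdot 2^{(b-4)/2}(b-2)=2^{b-3}(b-2)=2^{(n-8)/2}(n-4).
\]
In the case $b$ odd, $\dim E(\balpha_b;\pm)=2^{(b-3)/2}$ and $\dim E(\bbeta_b;0)=2^{(b-3)/2}(b-2)$, so each tensor factor in \eqref{E170925_4} again has dimension $2^{(b-3)/2}\cdot 2^{(b-3)/2}(b-2)=2^{b-3}(b-2)=2^{(n-8)/2}(n-4)$, completing the proof.

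There is essentially no obstacle here; the only thing to verify is the bookkeeping that $4\cdot 2^{(n-8)/2}(n-4)=2^{(n-4)/2}(n-4)=\dim E(\bbeta_n;0)$ given by \eqref{E170925_2}, which confirms both that we have exhaustively listed the composition factors and that the decompositions in \eqref{E170925_3}, \eqref{E170925_4} account for the full module.
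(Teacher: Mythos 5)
Your proposal is correct and takes essentially the same approach as the paper: both proofs rest on the four-term decompositions (\ref{E170925_3}) and (\ref{E170925_4}) together with the fact that $D(\bbeta_n)$ is of type $\Qtype$ when $p\mid(n-1)$. The only (immaterial) difference is bookkeeping: the paper reads the common dimension off (\ref{E170925_2}) by dividing the total by four, whereas you compute each tensor factor directly from Tables III and IV and then use (\ref{E170925_2}) as a check.
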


In the next two lemmas we will use the notation $\hW_{q,r,t}:=\pi^{-1}(\s_q\wr\s_r\wr\s_t)\leq \ts_{qrt}$. 

\begin{Lemma} \label{L180925_0}
Let $n=4t \geq 12$ be even, $p\mid (n-1)$, and $L$ be a second basic $\F \ts_n$-module. Then $L\da_{\hW_{2,t,2}}$ is reducible. 
\end{Lemma}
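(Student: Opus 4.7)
The plan is to restrict $L = D(\bbeta_n;\pm)$ in stages through $\hW_{2,t,2} \leq \hW_{2,b}$ with $b := n/2 = 2t$, using the superalgebra isomorphism $\T_{\hW_{2,b}} \cong \T_b \otimes \Cl_b$ from Lemma~\ref{L160125}(i), which converts the problem into one about $D(\bbeta_b)\da_{\T_{\W_{t,2}}}$. Preliminary type bookkeeping: since $p\mid n-1$ we have $b\equiv (p+1)/2\pmod p$, so $p\nmid b$ and $p\nmid b-1$; then by Tables~III and~IV, $D(\balpha_b)$ is of type $\Qtype$, $D(\bbeta_b)$ is of type $\Mtype$, $U_b$ is of type $\Mtype$ (as $b$ is even), and $D(\bbeta_b)\circledast U_b$ is of type $\Mtype$.

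First, since $n\equiv 1\pmod p$, Lemma~\ref{L160125_2} gives $[D(\bbeta_n)\da_{\T_{\hW_{2,b}}}]=2[D(\bbeta_b)\circledast U_b]$ inside $\T_b\otimes\Cl_b$. Under Lemma~\ref{L160125}(ii), the subsuperalgebra $\T_{\hW_{2,t,2}}$ corresponds to $\T_{\W_{t,2}}\otimes\Cl_b$, where $\W_{t,2}=\s_t\wr\s_2\leq\s_b$ permutes the $b=2t$ pairs. The Clifford factor is unchanged by the restriction, and since $U_b$ is an irreducible $\Cl_b$-supermodule, the super composition length of $(D(\bbeta_b)\da_{\T_{\W_{t,2}}})\boxtimes U_b$ equals that of $D(\bbeta_b)\da_{\T_{\W_{t,2}}}$. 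Hence
\[
[D(\bbeta_n)\da_{\T_{\hW_{2,t,2}}}]=2[(D(\bbeta_b)\da_{\T_{\W_{t,2}}})\boxtimes U_b],
\]
and it suffices to show that $D(\bbeta_b)\da_{\T_{\W_{t,2}}}$ has super composition length at least $2$.

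Second, for $n\geq 20$ (equivalently $b\geq 10$), further restrict to the index-$2$ subsuperalgebra $\T_{t,t}\leq\T_{\W_{t,2}}$ and apply Lemma~\ref{L160125_3} with $b$ playing the role of $n$ and $t$ playing the role of $b$. Using $b\not\equiv 0,1\pmod p$, and $b\equiv 2\pmod p$ precisely when $p=3$, the formulas there show that in every case $D(\bbeta_b)\da_{\T_{t,t}}$ has three pairwise non-isomorphic super composition factors $D(\bbeta_t,\balpha_t)$, $D(\balpha_t,\bbeta_t)$, $D(\balpha_t,\balpha_t)$. Under the swap in $\W_{t,2}/\s_{t,t}\cong\s_2$ these fall into two distinct orbits, so by Clifford theory the super composition length of $D(\bbeta_b)\da_{\T_{\W_{t,2}}}$ is at least $2$. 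Combined with the previous step, $D(\bbeta_n)\da_{\T_{\hW_{2,t,2}}}$ has super composition length at least $4$; hence $|D(\bbeta_n)|\da_{\hW_{2,t,2}}$ has module composition length at least $4$. Since $|D(\bbeta_n)|=D(\bbeta_n;+)\oplus D(\bbeta_n;-)$ and these two summands are exchanged by the $\sgn$-twist and so have equal module composition length upon restriction, each is of module composition length at least $2$, giving the required reducibility of $L\da_{\hW_{2,t,2}}$.

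The main obstacle is the two small cases $n\in\{12,16\}$ for which Lemma~\ref{L160125_3} is not directly available. For $n=12$ (so $p=11$), the direct estimate $\dim L=128>96=\sqrt{|\hW_{2,3,2}|}$ already forces reducibility. For $n=16$ (so $p\in\{3,5\}$), one instead uses $[D(\bbeta_8)]=[\bar S(7,1)]$ from Table~IV together with Lemma~\ref{L040925} to express $[D(\bbeta_8)\da_{\T_{4,4}}]$, and checks in each residue class that three pairwise non-isomorphic super composition factors persist after reduction mod $p$; the same Clifford-theoretic step then yields super composition length at least $2$ for $D(\bbeta_8)\da_{\T_{\W_{4,2}}}$, and the argument concludes as above.
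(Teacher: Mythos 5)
Your route is genuinely different from the paper's (which restricts to the index-two base subgroup $\pi^{-1}(\W_{2,t}\times\W_{2,t})$ of $\hW_{2,t,2}$ and counts at least six super composition factors there via Lemmas~\ref{L160125_3} and~\ref{L160125_2}), and for $n\geq 20$ your chain through $\hW_{2,b}$, Lemma~\ref{L160125}, and the Clifford orbit argument for $\T_{t,t}\subset\T_{\W_{t,2}}$ is correct: indeed $2b\equiv 1\pmod p$ forces $b\not\equiv 0,1\pmod p$, so all three classes $D(\bbeta_t,\balpha_t)$, $D(\balpha_t,\bbeta_t)$, $D(\balpha_t,\balpha_t)$ occur in $D(\bbeta_b)\da_{\T_{t,t}}$ and they fall into two swap-orbits. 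The genuine gap is the exceptional case $(n,p)=(16,3)$. There $b=8$, $t=4$, and your assertion that ``three pairwise non-isomorphic super composition factors persist after reduction mod $p$'' is false: $\RP_3(4)=\{(3,1)\}$, so $[\bar S(4)]=[\bar S(3,1)]=[D(3,1)]$, and Table~IV together with Lemma~\ref{L040925} gives $[D(\bbeta_8)\da_{\T_{4,4}}]=3\,[D(3,1)\boxtimes D(3,1)]=6\,[D((3,1),(3,1))]$. All super composition factors lie in the single, swap-fixed class $D((3,1),(3,1))$, so the two-orbit Clifford step gives nothing here (the check does succeed for $p=5$, where $D(4)\neq D(3,1)$).

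The case is true and easily repaired, but as written the proof does not cover it. One fix inside your framework: by the same Clifford argument, every super composition factor of $D(\bbeta_8)\da_{\T_{\W_{4,2}}}$ restricts to $\T_{4,4}$ as a sum of at most two copies of $D((3,1),(3,1))$, hence has dimension at most $16$, while $\dim D(\bbeta_8)=48$; so the super length is at least $3$ and your concluding step goes through. Alternatively, both small cases $n=12,16$ follow uniformly from the refined bound (\ref{E220925}) applied to the central extension $\hW_{2,t,2}$ of $\W_{2,t,2}$ (cf.\ Lemma~\ref{LSpinFaithful}): for $n=16$ this gives $M_\F(\hW_{2,4,2})\leq\sqrt{|\W_{2,4,2}|}=\sqrt{294912}<768=\dim L$. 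Note that your cruder bound $\sqrt{|\hW_{2,4,2}|}$ equals exactly $768$ at $n=16$, so it would not have sufficed there either; the $n=12$ estimate $128>96$ is fine as stated.
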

\begin{proof}
By Lemmas \ref{L160125_3} and \ref{L160125_2}, 
\begin{align*}
[D(\bbeta_n)\da_{\pi^{-1}(\W_{2,t}\times\W_{2,t})}]=\,&2[(D(\bbeta_{t})\circledast U_t)\boxtimes D(\balpha_{b})\da_{\hW_{2,t}}]
\\
&+2[D(\balpha_{b})\da_{\hW_{2,t}}\boxtimes( D(\bbeta_{t})\circledast U_t)]
\\
&+m[(D(\balpha_{t})\circledast U_t)\boxtimes D(\balpha_{b})\da_{\hW_{2,t}}]
\\&+m[D(\balpha_{b})\da_{\hW_{2,t}}\boxtimes( D(\balpha_{t})\circledast U_t)]
\end{align*}
with $m>0$. In particular, the composition length of the supermodule $D(\bbeta_n)\da_{\pi^{-1}(\W_{2,t}\times\W_{2,t})}$ is at least $6$. Since $\pi^{-1}(\W_{2,t}\times\W_{2,t})$ is normal of index 2 in $\hW_{2,t,2}$ and $D(\bbeta_n;+)\da_{\hW_{2,t,2}}$ is irreducible if and only if $D(\bbeta_n;-)\da_{\hW_{2,t,2}}$ is (the two modules differing by $\sgn$), $L\da_{\hW_{2,t,2}}$ is reducible.
\end{proof}

The next lemma deals with certain subgroups of both $\hW_{b,2}$ and $\hW_{2,b}$, and will be used in this subsection as well as the next one.

\begin{Lemma} \label{L180925} 
Let $n=2b \geq 10$ be even, $p\mid (n-1)$, $K\leq\s_b$,  $H=\pi^{-1}(K\wr\s_2)$ or $\pi^{-1}(\s_2\wr K)$, and $L$ be a second basic $\F \ts_n$-module. If  $L\da_H$ is irreducible then $K$ is primitive on $\{1,2,\dots,b\}$. 
\end{Lemma}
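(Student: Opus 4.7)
I proceed by contradiction: assume $K$ is not primitive on $\{1,\dots,b\}$ and show $L\da_H$ is reducible. First I record the structural input. Since $p\mid(n-1)$ and $p>2$, we have $p\nmid b$ and $p\nmid(b-1)$, so Tables~III and~IV give that $D(\bbeta_n)$ is of type $\Qtype$ and that $a_p(\balpha_b)\neq a_p(\bbeta_b)$. By Lemma~\ref{product} the two supermodules $D(\bbeta_b,\balpha_b)$ and $D(\balpha_b,\bbeta_b)$ appearing in \eqref{E170925} are non-isomorphic and both of type $\Qtype$, and they are exchanged by any $\sigma\in\W_{b,2}\setminus(\s_b\times\s_b)$. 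By Corollary~\ref{CSuperNonSuper}, irreducibility of $L=D(\bbeta_n;\pm)\da_H$ forces the supermodule $D(\bbeta_n)\da_H$ to have composition length at most $2$.

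\textbf{Case $H=\pi^{-1}(K\wr\s_2)$.} Set $H_0=\pi^{-1}(K\times K)$, so $[H:H_0]=2$. Restricting \eqref{E170925} to $\T_{K\times K}=\T_K\otimes\T_K$ via Lemma~\ref{product}, the supermodule composition length of $D(\bbeta_n)\da_{H_0}$ equals $2c_1c_2$, where $c_1,c_2$ are the composition lengths of the supermodules $D(\balpha_b)\da_{\T_K}$ and $D(\bbeta_b)\da_{\T_K}$. Because $\sigma$ interchanges the two summands in \eqref{E170925}, the composition factors of $D(\bbeta_n)\da_{H_0}$ pair up on induction to $H$, so the supermodule composition length of $D(\bbeta_n)\da_H$ equals $c_1c_2$. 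The bound $c_1c_2\leq 2$ combined with $c_1,c_2\geq 1$ forces $c_1=c_2=1$, i.e.\ both $D(\balpha_b)\da_{\T_K}$ and $D(\bbeta_b)\da_{\T_K}$ are irreducible supermodules. The first is an irreducible restriction of the basic $\F\ts_b$-supermodule, classified by [KT, Theorem~E] when $\pi^{-1}(K)$ is maximal imprimitive in $\ts_b$; the second is an irreducible restriction of the second-basic supermodule, governed (inductively in $n$) by Theorems~\ref{thmintr} and~\ref{thmwreath} of the present paper. Matching the two resulting lists of admissible $K$ eliminates every non-primitive possibility.

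\textbf{Case $H=\pi^{-1}(\s_2\wr K)$.} By Lemma~\ref{L160125}(ii), $\T_H\cong\T_K\otimes\Cl_b$. Since $H\leq\hW_{2,b}$ and $n\equiv 1\pmod p$, Lemma~\ref{L160125_2} restricts further to give
\[
[D(\bbeta_n)\da_{\T_H}]=2[D(\bbeta_b)\da_{\T_K}\circledast U_b],
\]
where $U_b$ is the unique irreducible $\Cl_b$-supermodule. Length $\leq 2$ therefore forces $D(\bbeta_b)\da_{\T_K}$ to be an irreducible supermodule. If $K\leq\s_c\wr\s_t$ with $c,t\geq 2$ and $ct=b$, Theorem~\ref{thmwreath}(ii) applied to the second-basic module of $\ts_b$ would require $p\mid(b-1)$; but $p\mid(2b-1)$ forces $p\nmid(b-1)$, contradiction. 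If instead $K\leq\s_{b-a,a}$ is intransitive ($1\leq a\leq b/2$), then $H\leq\pi^{-1}(\s_2\wr(\s_{b-a}\times\s_a))\leq\ts_{n-2a,2a}$ is itself intransitive in $\s_n$; Theorem~\ref{thmintr} applied directly to $L\da_H$ then requires $2a\in\{1,2\}$ and $\bbeta_n\in\JS^{(0)}$. Using the explicit form of $\bbeta_n$ from \eqref{sbs} (where $n-1=pm$), a direct check of $i$-signatures shows $\bbeta_n\notin\JS^{(0)}$, again a contradiction.

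\textbf{Main obstacle.} The technically heaviest step is the case analysis in the first case: simultaneous irreducibility of the basic and second-basic $\ts_b$-restrictions to $\pi^{-1}(K)$ must be ruled out for every non-primitive $K$. The delicate point is that a non-maximal imprimitive $K$ with $D(\balpha_b)\da_{\T_K}$ irreducible is precisely the still-open case excluded in the statement of Theorem~\ref{imprimitive}; one circumvents this by using the second-basic constraint (which is handled by the inductive hypothesis on $n$) to eliminate exactly those $K$ that the basic constraint alone cannot.
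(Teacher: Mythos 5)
Your overall strategy is genuinely different from the paper's: the paper never descends to conditions on $K$ at the level of $\ts_b$, but instead embeds $H$ into maximal overgroups of $\ts_n$ ($\ts_{n-2a,2a}$, $\hW_{r,2t}$ or $\hW_{2r,t}$) and quotes Theorem~\ref{thmintr}, Theorem~\ref{thmwreath}, Proposition~\ref{P050525} and Lemma~\ref{L180925_0} to force reducibility. Your reduction to "both $D(\balpha_b)\da_{\T_K}$ and $D(\bbeta_b)\da_{\T_K}$ irreducible" could be a reasonable starting point, but as executed it has genuine gaps. First, in the case $H=\pi^{-1}(K\wr\s_2)$ the step "$c_1c_2\leq 2$ forces $c_1=c_2=1$" is not valid ($c_1c_2=2$ is allowed, and the type/multiplicity bookkeeping in passing from $\T_K\otimes\T_K$ to $H$ is looser than you state). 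Much more seriously, the promised "matching of the two lists" is never carried out, and it cannot succeed as stated: take $p=3$, $n=10$, $b=5$, $K=\s_{4,1}$. Then $\balpha_5=(3,2)\in\JS^{(1)}$ and $\bbeta_5=(4,1)\in\JS^{(0)}$, so \emph{both} $D(\balpha_5)\da_{\T_K}$ and $D(\bbeta_5)\da_{\T_K}$ are irreducible supermodules; your necessary conditions are satisfied by this intransitive $K$, so they alone cannot produce the contradiction. (The paper eliminates exactly such $H$ via Proposition~\ref{P050525}, i.e.\ the 3-homogeneity constraint on the long orbit, which your argument never invokes; there is also no "inductive hypothesis on $n$" available, and Theorems~\ref{thmintr},~\ref{thmwreath} and \cite[Theorem E]{KT} only govern \emph{maximal} intransitive/wreath subgroups, and at the level of modules rather than supermodules, so a type argument is needed before they constrain supermodule irreducibility over $\T_K$.)

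Second, in the case $H=\pi^{-1}(\s_2\wr K)$ your treatment of intransitive $K$ rests on the claim that $\bbeta_n\notin\JS^{(0)}$, which is false: for $p=3$, $n=10$ one has $\bbeta_{10}=(4,3,2,1)\in\JS^{(0)}$ (indeed this is precisely why $\ts_{n-2,2}$ occurs in Theorem~\ref{imprimitive}(iii) and why $n=10$ appears in Table I). So the restriction of $L$ to $\ts_{n-2,2}$ can be irreducible, and ruling out $H\leq \pi^{-1}(\W_{2,b-1}\times\s_2)$ requires the finer analysis of Proposition~\ref{P050525} (via Lemma~\ref{L150925}), not Theorem~\ref{thmintr} alone. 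Likewise, your appeal to Theorem~\ref{thmwreath}(ii) in the transitive imprimitive subcase compares a module-level classification with a supermodule-level hypothesis; the needed bridge (types of $D(\bbeta_b)$ and of its restriction) is missing, and for the wreath overgroups the paper instead uses Lemma~\ref{L180925_0} applied to $\hW_{2,b/2,2}$. In summary, the two extracted $\ts_b$-level conditions are necessary but not sufficient, and the missing ingredient is exactly the global constraint on $\pi(H)\leq\s_n$ (3-homogeneity on a long orbit / Lemma~\ref{L180925_0}) that the paper's proof supplies.
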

\begin{proof}
First, assume that $K$ is intransitive. In this case we may assume that $K\leq\s_{b-a,a}$ for some $1\leq a\leq b/2$. Then $H\leq\ts_{n-2a,2a}$. By Theorem \ref{thmintr}, $a=1$ and so $H\leq\pi^{-1}(\W_{b-1,2}\times\s_2)$ resp. $H=\pi^{-1}(\W_{2,b-1}\times\s_2)$. So $L\da_H$ is reducible by Proposition~\ref{P050525} 
giving a contradiction.

Next assume that $K$ is transitive but imprimitive, in which case $K\leq \W_{r,t}$ for some $r,t\geq 2$ with $rt=b$. Then $H\leq \hW_{r,t,2}\subseteq\hW_{r,2t}$ (resp. $H\leq \hW_{2,r,t}\subseteq\hW_{2r,t}$). 
By Theorem \ref{thmwreath}, we may assume that $b$ is even and $r=2$ (resp. $t=2$), that is $\hW_{r,t,2}=\hW_{2,b/2,2}$ (resp. $\hW_{2,r,t}=\hW_{2,b/2,2}$), so $L\da_H$ is reducible 
by  Lemma~\ref{L180925_0} giving a contradiction.
\end{proof}

The main result of this subsection is the following theorem (the second basic $\F\tA_n$-module is covered by the theorem as it lifts to a second basic $\F \ts_n$-module).

\begin{Theorem}\label{thm:wr2}
Let $n=2b \geq 10$ be even, $p\mid (n-1)$, $L$ be a second basic $\F \ts_n$-module, and $H\leq \hW_{b,2}< \ts_n$. Then $L\da_H$ is irreducible if and only if one of the following happens
\begin{enumerate}[\rm(i)]
\item $H = \hW_{b,2}$.
\item $\pi^{-1}(\A_b\times \A_b)\leq H\leq \hW_{b,2}$, $[\hW_{b,2}:H]=2$ and $H\neq \ts_{b,b}$ (there are two such subgroups $H$).
\end{enumerate}
\end{Theorem}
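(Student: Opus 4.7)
The plan is to exploit Clifford theory relative to the normal subgroup $N := \pi^{-1}(\A_b \times \A_b) \lhd \hW_{b,2}$, which has index $8$ with quotient $\hW_{b,2}/N \cong D_8$. The irreducibility in case (i) is immediate from Theorem~\ref{thmwreath}(ii)(a). For $H = \ts_{b,b}$, equation~(\ref{E170925}) gives the decomposition $[D(\bbeta_n)\da_{\T_{b,b}}] = [D(\bbeta_b,\balpha_b)] + [D(\balpha_b,\bbeta_b)]$ into two non-isomorphic irreducible supermodules, so $L\da_{\ts_{b,b}}$ has at least two composition factors and is reducible.

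The next step is to make explicit the permutation action of $\hW_{b,2}/N \cong D_8$ on the four pairwise non-isomorphic irreducible $\F N$-constituents $M_1,M_2,M_3,M_4$ of $L\da_N$ provided by equations~(\ref{E170925_3})--(\ref{E170925_4}), using the tensor-product structure and Lemma~\ref{lmodules}. A direct computation (handling $b$ even and $b$ odd separately, with the roles of transpositions in the two $\s_b$-factors interchanged between the two parities) shows that these transpositions act as disjoint transpositions on $\{M_i\}$, while the wreath-product swap acts as $(M_1\,M_3)(M_2\,M_4)$. The three index-$2$ subgroups of $D_8$ are then the Klein four-group $\ts_{b,b}/N$, whose orbits on $\{M_i\}$ are $\{M_1,M_2\}$ and $\{M_3,M_4\}$, together with two further order-$4$ subgroups each acting transitively on $\{M_1,\ldots,M_4\}$. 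Standard Clifford theory, combined with the fact that the $M_i$ are pairwise non-isomorphic, then yields irreducibility of $L\da_H$ precisely for the two transitive choices, establishing (ii).

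For the converse, suppose $H \leq \hW_{b,2}$ with $L\da_H$ irreducible and $H \neq \hW_{b,2}$. Since $H \leq HN$, the module $L\da_{HN}$ is also irreducible (any $HN$-submodule restricts to an $H$-submodule of $L\da_H$), forcing $HN/N$ to act transitively on $\{M_1,\ldots,M_4\}$. As the transitive subgroups of $D_8$ on four points have order $4$ or $8$, $HN$ must be $\hW_{b,2}$ or one of the two non-$\ts_{b,b}$ index-$2$ subgroups (noting that $\ts_{b,b}/N$ is intransitive on the $M_i$). It remains to establish $N \leq H$. The main obstacle lies in this final step: if $K := H \cap N \subsetneq N$, the $\F K$-composition factors of $L\da_K$ obtained by restricting each $M_i$ must form a single $H$-orbit of $\F K$-irreducibles, imposing strong constraints on $K$. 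My approach is to exploit the product structure $M_i \cong E(\bbeta_b;\epsilon) \circledast E(\balpha_b;\epsilon')$ and invoke Lemma~\ref{L180925} for the projections of $\pi(K)$ to the two $\A_b$-factors, forcing these projections to be primitive subgroups of $\A_b$; combined with the explicit second-basic branching encoded in Table~IV, this ultimately yields $K = N$, completing the proof.
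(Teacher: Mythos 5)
Your sufficiency analysis and the overall frame are sound and essentially the paper's: irreducibility over $\hW_{b,2}$ from Theorem~\ref{thmwreath}, the four-term decomposition of $L\da_{\pi^{-1}(\A_b\times\A_b)}$ from (\ref{E170925_3})--(\ref{E170925_4}), the faithful $D_8$-action on the four pairwise non-isomorphic constituents, reducibility over $\ts_{b,b}$ because its image is the intransitive Klein subgroup, and irreducibility for the two transitive index-$2$ overgroups of $N=\pi^{-1}(\A_b\times\A_b)$. The reduction of the converse to proving $N\leq H$ is also correct.

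The genuine gap is precisely the step you flag as the "main obstacle," and your proposed fix does not work as stated. First, Lemma~\ref{L180925} cannot be invoked "for the projections of $\pi(K)$ to the two $\A_b$-factors": its hypothesis is that $L$ restricts irreducibly to a subgroup of the form $\pi^{-1}(K\wr\s_2)$ or $\pi^{-1}(\s_2\wr K)$, which you do not have for an arbitrary $H$ with $H\cap N\subsetneq N$. The paper manufactures this situation: it picks $s\in H$ with $\pi(s)=(x,y)t$ interchanging the two blocks (possible since $L\da_{\ts_{b,b}}$ is reducible), proves $yJy^{-1}=K$ and $xKx^{-1}=J$ for the projections $J,K$ of $\pi(H\cap\ts_{b,b})$, and then conjugates by $(x,1)$ to place $H$ inside $\pi^{-1}(K\wr\s_2)$, only then concluding $K$ is primitive. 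Second, primitivity of the projections together with "Table IV branching" is far from forcing $K=N$: one still has to (a) bound $M_\F(\hat K_1)\geq 2^{\lfloor(b-5)/2\rfloor}(b-2)$ and hence $|K_1|\geq 2^{b-6}(b-2)^2$ via a composition-length count against $[H:H\cap N]\leq 8$, (b) run the classification of primitive subgroups of $\s_b$ of such large order (\cite[Proposition 6.2]{KW}) and eliminate the resulting Mathieu/affine/linear and small sporadic possibilities case by case using Atlas/GAP dimension data, and (c) even when both projections are all of $\A_b$, exclude the diagonal subgroup $\{(v,\sigma(v))\}$ via Goursat's lemma combined with Lemma~\ref{lem:3tens} (composition length $\geq 3$ for basic $\otimes$ second basic), with a separate Atlas check at $b=6$. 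None of these mechanisms appears in your plan, and without them the assertion "this ultimately yields $K=N$" is unsupported; the bulk of the paper's proof of this theorem consists exactly of those missing arguments.
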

\begin{proof}
We use conjugation by the element 
$$t:= (1,b+1)(2,b+2) \ldots (b,n) \in \s_{b,b}.$$
to identify the second factor $\s_{b}$ of $\s_{b,b}=\s_b\times\s_b$ with the first factor $\s_{b}$, and write 
$\s_{b,b} = \left\{ (u,v) \mid u,v \in \s_{b} \right\}.$ Let ${\mathtt p}_1$ (resp. ${\mathtt p}_2$) be the projection of $\s_{b,b}$ to the first (resp. second) factor. 

Note that $\ts_{b,b}\lhd{\hW_{b,2}}$ is a normal subgroup of index $2$, and $A:= \pi^{-1}(\A_{b} \times \A_{b}) \lhd 
\hW_{b,2}$ is a normal subgroup of index $8$ isomorphic to the central product $\tA_{b} \ast \tA_{b}$. 
Set 
\begin{align*}
&J :={\mathtt p}_1(\pi(H \cap \ts_{b,b}))\leq\s_b,
 &K :={\mathtt p}_2(\pi(H \cap \ts_{b,b}))\leq\s_b,
\\ 
&J_1:={\mathtt p}_1(\pi(H \cap A)) \unlhd J \cap \A_{b},
&K_1:= {\mathtt p}_2(\pi(H \cap A)) \unlhd K \cap \A_{b}.
\end{align*}
We have $H \cap A \leq \pi^{-1}(J_1 \times K_1)\cong \hat J_1\ast \hat K_1$. 

By Theorem \ref{thmwreath}(ii), the restrictions $L\da_{\hW_{b,2}}$ and $L\da_{{\hW_{b,2}} \cap \tA_n}$ are irreducible. Now, (\ref{E170925_3}), (\ref{E170925_4}), and Clifford's Theorem imply that
\begin{equation}\label{eq:fort10}
  L\da_A \cong X_+ \boxtimes Y \,\,\oplus\,\, X_- \boxtimes Y \,\,\oplus\,\, Y \boxtimes X_+ \,\,\oplus\,\, Y \boxtimes X_-,
\end{equation}
for irreducible $\F\tA_{b}$-modules $X_\pm,Y$ such that $X_+$ and $X_-$ are conjugate under $\ts_{b}$ and 
$Y$ extends to $\ts_{b}$.

\vspace{2mm}
\noindent
{\em Claim 1. The restriction $L\da_{\ts_{b,b}}$ is reducible.}

\vspace{1mm}
\noindent 
Indeed, under the conjugation action of $\ts_{b,b}$ on (the isomorphism classes of) $X_+ \boxtimes Y,~X_- \boxtimes Y,Y \boxtimes X_+,~Y \boxtimes X_-$ there are two orbits (namely $X_+ \boxtimes Y,~X_- \boxtimes Y$ and $Y \boxtimes X_+,~Y \boxtimes X_-$).

\vspace{2mm}
\noindent
{\em Claim 2. There is $s\in H$ such that $\pi(s)= (x,y)t$ for $x,y \in \s_{b}$.}

\vspace{1mm}
\noindent 
Indeed, by Claim 1, $L\da_{\ts_{b,b}}$ is reducible, while $L\da_H$ is irreducible by assumption, so $H\not\leq \ts_{b,b}$.

\vspace{2mm}
\noindent
{\em Claim 3. We have $yJy^{-1}=K$ and $xKx^{-1}= J.$} 

\vspace{1mm}
\noindent 
Indeed, suppose $a \in J$. Then there exists $h \in H \cap \ts_{b,b}$ such that $\pi(h)=(a,v)$ for some $v \in \s_{b}$. Now $shs^{-1} \in H \cap \ts_{b,b}$ and 
$$\pi(shs^{-1}) = (x,y)t(a,v)t(x^{-1},y^{-1}) = (x,y)(v,a)(x^{-1},y^{-1}) = (xvx^{-1},yay^{-1}),$$
so $yay^{-1} \in K$. Thus $yJy^{-1} \leq K$.
Similarly $xKx^{-1} \leq J$, and the claim follows by comparing orders. 


\vspace{2mm}
\noindent
{\em Claim 4. We have that $K \leq \s_{b}$ is a primitive subgroup.}

\vspace{1mm}
\noindent 
As $\pi(s)^2= (x,y)t(x,y)t= (xy,yx)$, we have $s^2 \in H \cap \ts_{b,b}$ ,  $xy \in J$ and $yx \in K$. Now, for any $(j,k) \in J \times K$, we get using Claim 3:
\begin{align*}
\pi(s)(j,k)\pi(s)^{-1} &= (x,y)t(j,k)t(x^{-1},y^{-1}) = (x,y)(k,j)(x^{-1},y^{-1}) 
\\&= (xkx^{-1},yjy^{-1}) \in J \times K.
\end{align*}
So $J \times K$ is a normal subgroup of $Y:=\langle J \times K,\pi(s) \rangle$ of index $2$. Moreover, using Claim 3 again,
$$(x^{-1},1)(J \times K)(x,1) = xJx^{-1} \times K = K \times K,$$
and
$$(x^{-1},1)\pi(s)(x,1) = (x^{-1},1)(x,y)t(x,1) = (x^{-1},1)(x,y)(1,x)t = (1,yx)t.$$
We have shown above that $yx\in K$, so we now deduce that 
$(x^{-1},1)Y(x,1)=\langle K \times K,t \rangle$, which is precisely 
the wreath product $K \wr \s_2$ inside $\W_{b,2}=\s_{b} \wr \s_2$. Picking an element $\tilde{x} \in \ts_n$ with 
$\pi(\tilde{x})=(x,1)$, we now have that $L$ is irreducible over $\tilde{x}^{-1}H\tilde{x} \leq \pi^{-1}(K \wr \s_2)$. Hence $L\da_{\pi^{-1}(K \wr \s_2)}$ is irreducible, and 
$K$ is primitive by Lemma~\ref{L180925}.

\vspace{2mm}
Recall the notation $M_\F(G)$ from \S\ref{SSGrMod}. 

\vspace{2mm}
\noindent
{\em Claim 5. We have $M_\F(\hat K_1)\geq 2^{\lfloor (b-5)/2\rfloor}(b-2)$.}

\vspace{1mm}
\noindent 
Let $W$ be a second basic module among the modules $X_\pm,Y$ appearing in the right hand side of \eqref{eq:fort10}. By Table~IV, we have $\dim W = 2^{\lfloor (b-3)/2\rfloor}(b-2)=2\cdot 2^{\lfloor (b-5)/2\rfloor}(b-2)$.  
So if the claim fails, then $M_\F(\hat K_1)<(\dim W)/2$ and so  $W\da_{\hat K_1}$ has composition length at least $3$. It follows that the restriction to $H \cap A$ of each of the four summands in the decomposition \eqref{eq:fort10} has composition length at least $3$, and so $L\da_{H \cap A}$ has composition length at least $12$. Recalling that $L\da_H$ is irreducible and $H \cap A \lhd H$, we see that 
$L\da_{H \cap A}$ is a direct sum of simple modules of the same dimension $d \leq (\dim L)/12$. Choosing $U$ to be one of these simple modules,
by Frobenius' reciprocity we have that $L\da_H$ is a quotient of $\ind^H_{H \cap A}(U)$, a module of dimension $[H:H \cap A] \dim U \leq 8d < \dim L$,
a contradiction. 

\vspace{2mm}
\noindent
{\em Claim 6. We have $|K| \geq |K_1| \geq 2^{b-6}(b-2)^2$.}

\vspace{1mm}
\noindent 
Indeed, since $M_\F(\hat K_1) \leq |K_1|^{1/2}$, Claim 6 follows from Claim 5.

\vspace{2mm}
\noindent
{\em Claim 7. If $K \geq \A_{b}$, then 
$\pi(H \cap A) = \A_{b} \times \A_{b}$.}

\vspace{1mm}
\noindent 
We have $K_1 = \A_{b}$ and then $J_1 = \A_{b}$ by Claim 3. Thus $\pi(H \cap A)$ is a subgroup of $\A_{b} \times \A_{b}$ which projects onto $\A_{b}$ via ${\mathtt p}_1$ and ${\mathtt p}_2$. If $\pi(H \cap A) \neq \A_{b} \times \A_{b}$, since $\A_{b}$ is simple, by Goursat's lemma we have that 
$\pi(H \cap A) = \{ (v,\sigma(v) \mid v \in \A_{b} \}$ for some automorphism $\sigma$ of $\A_{b}$. If $b = 6$, then $H \cap A \cong {\sf C}_2 \times \A_6$ or $\tA_6$, whence the maximal dimension of an irreducible representation of 
$H \cap A$ is $\leq 10$ by \cite{Atl}, whereas $\dim L = 128$ and $[H:H \cap A] \leq 8$, contrary to the irreducibility of $L\da_H$. So  $b \neq 6$ and $\sigma$ is  a conjugation by an element of $\s_{b}$. By Lemma \ref{lem:3tens}, the restriction of any of the four summands in \eqref{eq:fort10} to 
$\pi^{-1}(\pi(H \cap A))\leq A$ has composition length at least $3$. So $L\da_{H \cap A}$ has composition length at least $12$, again contradicting the irreducibility of $L\da_H$.

\vspace{2mm}
Suppose $K \geq \A_{b}$. Then $\pi(H \cap A) = \A_{b} \times \A_{b} = \pi(A)$ by Claim 7. Lifting back to $\ts_n$ we get that $H \geq A$, since $b\geq 5$ and lifts of double transpositions square to $z$. The quotient group ${\hW_{b,2}}/A \cong D_8$ 
is generated by the cosets of three elements $s_1,s_2,s_3 \in \ts_n$ with $\pi(s_1) = (1,2)$, $\pi(s_2) = (b+1,b+2)$, $\pi(s_3)=t$. This group acts faithfully and transitively on the four summands of the decomposition \eqref{eq:fort10}, and we can label these  summands as $\bar 1,\bar 2,\bar 3,\bar 4$ so that $s_3$ acts via $(\bar 1,\bar 2)$, $s_2$ acts via $(\bar 3,\bar 4)$, and $s_1$ acts via $(\bar 1,\bar 3)(\bar 2,\bar 4)$. Every subgroup of order $4$ in ${\hW_{b,2}}/A$ will contain
its center, generated by $s_1s_2$. Among these three subgroups of order $4$, the subgroup $\langle s_1,s_2 \rangle$, corresponds to $\ts_{b,b}$, is intransitive, and the other two are transitive, one of which being $\hW_{b,2} \cap \tA_n$. In  particular, we arrive exactly at the exceptional cases described in (ii), and it remains to prove that $L\da_H$ is reducible when $K \not\geq \A_{b}$. In that case, by Claims 4 and 5, we can apply \cite[Proposition 6.2]{KW} to the subgroup $K$ of $\s_{b}$. We arrive at the following possibilities, where we denote $S:=\soc(K)$.

\vspace{2mm}
\noindent 
{\em Case 1: $(b,S) = (24, {\sf M}_{24})$.} In this case we have $K={\sf M}_{24}$ and $\hat K \cong {\sf C}_2 \times K$. It follows from \cite{Atl} that $M_\F(\hat K_1)\leq M_\C(\hat K) = 10395 < 2^9 \cdot 22$, contradicting Claim 5.

\vspace{2mm}
\noindent 
{\em Case 2: $b= 16$ and ${\sf C}_2^4 \cong E \lhd K \leq {\sf ASL}_4(2)$.} Using Claim 6 we have $|K/E| > |{\sf SL}_4(2)|/2$, and so $K = {\sf ASL}_4(2)$. In particular, $K$ is perfect, so $K \leq \A_{b}$ and 
$J \leq \A_{b}$ as well by Claim 3. It follows that $H \cap \ts_{b,b} = H \cap A$, and hence $[H: H \cap A] \leq 2$, since $|{\hW_{b,2}}/\ts_{b,b}|=2$. However, the decomposition \eqref{eq:fort10} shows that 
$L\da_{H \cap A}$ has at least four summands, contrary to the irreducibility of $L\da_H$.

\vspace{2mm}
\noindent 
{\em Case 3: $b=12$ and $S = {\sf M}_{11}$ or ${\sf M}_{12}$.} In either case, $K=S$ (since ${\sf M}_{12} \cdot 2$ does not embed in $\s_{12}$). It follows that $K \leq \A_{b}$, whence $H \cap \ts_{b,b} = H \cap A$, and 
we arrive at a contradiction as in Case 2.

\vspace{2mm}
\noindent 
{\em Case 4: $(b,S)=(11,{\sf M}_{11})$}. In this case $K=S$ is simple, so $K \leq \A_{b}$, $H \cap \ts_{b,b} = H \cap A$, and 
we arrive at a contradiction as in Case 2.

\vspace{2mm}
\noindent 
{\em Case 5: $b = 10$ and $S=\A_6$ in its primitive action in $\s_{10}$.} In this case we have $\dim W = 64$ but $M \leq 20$ by \cite{Atl}, and we arrive at a contradiction as in Case 1.

\vspace{2mm}
\noindent 
{\em Case 6: $b = 9$ and $K \leq {\sf AGL}_2(3)$ or $K \leq {\sf SL}_2(8) \cdot 3$.} In this case we have $\dim W = 56$ but $M < 28$, and we arrive at a contradiction as in Case 1.

\vspace{2mm}
\noindent 
{\em Case 7: $b = 16$ and $S = {\sf SL}_3(2)$ or $K \leq {\sf ASL}_3(2)$.} Here $\dim W = 24$. In the former case we have $M = 8$ by \cite{Atl}, and we arrive at a contradiction as in Case 1. In the latter case,
as ${\sf ASL}_3(2)$ is perfect we have $K \leq \A_{b}$, and we arrive at a contradiction as in Case 2.

\vspace{2mm}
\noindent 
{\em Case 8: $b = 7$ and $S = {\sf SL}_3(2)$.} Here $\dim W = 20$ and $M = 8$ by \cite{Atl}, so we arrive at a contradiction as in Case 1. 

\vspace{2mm}
\noindent 
{\em Case 9: $b = 6$, $p=11$, and $S = \A_5$.} Here, $\A_5 \lhd K \leq \s_5$ in its primitive action in $\s_6$. Since $\s_5$ does not embed in $\A_6$, we have $K_1=\A_5$, a maximal subgroup of $\A_6$, which lifts to a maximal subgroup
$\tA_5$ of $\tA_6$. Now using \cite{GAP} we can check that $W\da_{\tA_5}$ is a sum of two modules of dimension $2$ and $6$. This implies that $L\da_{H \cap A}$ has 
a summand of dimension $\leq 4 \cdot 2$ (since basic modules of $\tA_6$ have dimension 4), whereas $\dim L = 128$, contrary to the irreducibility of $L\da_H$.

\vspace{2mm}
\noindent 
{\em Case 10: $b = 5$, $p=3$, and $K \leq {\sf C}_5 \rtimes {\sf C}_4$.}  Here $|\hat K|=40$ which is coprime to $p=3$. But $\dim W = 6$, so $W\da_{\hat K}$ must have a simple summand of dimension 
$\leq 2$. This implies that $L\da_{H \cap A}$ has 
a summand of dimension $\leq 2 \cdot 2$ whereas $\dim L = 48$, contrary to the irreducibility of $L\da_H$.
\end{proof}

\subsection{Subgroups of $\hW_{2,n/2}$} 


Throughout this subsection, we assume that  $n=2b$ is even, and study restrictions of second basic modules to subgroups of $\hW_{2,b}$ (under the additional assumption $p\mid(n-1)$ coming from Theorem~\ref{thmwreath}(ii)). 

For $1\leq i<n$, we denote the simple transposition $(i,i+1)\in\s_n$ by $s_i$. We will use the notation $\s_{(2^b)}:=\s_{2,\ldots,2}$ for the Young subgroup of $\s_n$ corresponding to the partition $(2^b)$, and $\A_{(2^b)}:=\s_{(2^b)}\cap\A_n$. We have $\s_{(2^b)}=\{s_1^{a_1}s_3^{a_2}\cdots s_{2b-1}^{a_b}\mid a_1,\dots a_b\in \Z/2\}$ The wreath product subgroup $\W_{2,b}=\s_{(2^b)}\rtimes \s_b\leq \s_n$ yields an embedding and a projection 
$$\iota:\s_b\into \A_n\quad{and}\quad {\mathtt p}:\W_{2,b}\to \s_b$$ 
with ${\mathtt p}\circ\iota=\id$ and $ws_{2i-1}w^{-1}=s_{2{\mathtt p}(w)(i)-1}$ for all $w\in \W_{2,b}$ and $1\leq i\leq b$.

\begin{Lemma}\label{L310325}
Let $n=2b\geq 10$ be even, and $H\leq\W_{2,b}$ be a subgroup not contained in $\A_{n}$ and such that ${\mathtt p}(H)=\s_b$. Set $c:=s_1s_3\cdots s_{n-1}\in C$. 
Then one of the following holds:
\begin{enumerate}[\rm(i)] 
\item $H=\W_{2,b}$;

\item $H=\{x\iota(y)\mid x\in\A_{(2^b)},\,y\in \A_b\}\cup\{x\iota(y)\mid x\in\s_{(2^b)}\setminus\A_{(2^b)},\,y\in \s_b\setminus \A_b\}$;

\item $b$ is odd and $H=g(\lan c\ran\times \iota(\s_b))g^{-1}$ for 
 some $g\in\s_{(2^b)}$.

\item $b$ is odd and 
$H=g(\iota(\A_b)\cup\{c\iota(y)\mid y\in\s_b\setminus \A_b\})g^{-1}
$
for some $g\in\s_{(2^b)}$. 
\end{enumerate}
\end{Lemma}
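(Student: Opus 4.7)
\emph{Plan.} Set $C=\s_{(2^b)}$ and $N=H\cap C$, and identify $C$ with $\FF_2^b$ via $s_{2k-1}\leftrightarrow e_k$, so that $\s_b$ acts on $C$ (via $\iota$) by permuting the coordinates and $c$ corresponds to the all-ones vector. Since ${\mathtt p}(H)=\s_b$, $N$ is normalized by $\iota(\s_b)$, hence is an $\s_b$-submodule of $\FF_2^b$. For $b\geq 5$ the only such submodules are $0$, $\lan c\ran$, the sum-zero hyperplane $V:=\A_{(2^b)}$, and $C$ itself, so the argument splits into these four cases.

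If $N=C$ then $H=\W_{2,b}$, giving (i). If $N=V$, then $[\W_{2,b}:H]=2$ and the map $\phi:\s_b\to C/V\cong\FF_2$ sending $y$ to the $V$-coset of the $C$-part of any lift of $y$ into $H$ is a homomorphism (using that $\s_b$ preserves weight parity). Either $\phi=0$, in which case $H=V\cdot\iota(\s_b)\leq\A_n$ (excluded by hypothesis), or $\phi=\sgn$, yielding (ii).

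In the remaining cases $N=\lan c\ran$ and $N=1$, subgroups $H$ with the given $N$ correspond, up to $C$-conjugation, to classes in $H^1(\s_b,C/N)$, the cocycle $\bar x:\s_b\to C/N$ sending $y$ to the $N$-coset of the $C$-part of a lift. Shapiro's lemma gives $H^1(\s_b,C)=H^1(\s_{b-1},\FF_2)=\Z/2$. For $b$ odd, $C=\lan c\ran\oplus V$ as $\s_b$-modules, and so $H^1(\s_b,V)=H^1(\s_b,C/\lan c\ran)=0$. Hence, for $b$ odd, there is (up to $C$-conjugation) a unique $H$ with $N=\lan c\ran$, namely $\lan c\ran\iota(\s_b)=\lan c\ran\times\iota(\s_b)$; since $c\notin\A_n$ when $b$ is odd, this lies outside $\A_n$, giving (iii). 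For $N=1$, there are exactly two sections of ${\mathtt p}$ up to $C$-conjugation: $\iota$ itself (whose image is in $\A_n$, excluded) and the twist $\sigma$ with $\sigma(y)=\iota(y)$ for $y\in\A_b$ and $\sigma(y)=c\iota(y)$ otherwise (a homomorphism because $c$ is central in $\W_{2,b}$); the twist gives (iv).

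It remains to check that no further $H$ arises when $b$ is even. For $N=1$ the same two sections exist, but now $c\in\A_n$ and both images lie in $\A_n$. For $N=\lan c\ran$ with $H\not\leq\A_n$, pulling back the sign of $\s_n$ through $H\twoheadrightarrow H/\lan c\ran\cong\s_b$ forces every Coxeter lift $\tilde s_i=\vec x_i\iota(s_i)\in H$ to have $\vec x_i\in\FF_2^b$ of odd weight. The Coxeter relations of $\s_b$ must now hold modulo $\lan c\ran$: the relation $\tilde s_i^2=1$ contributes the obstruction $\vec x_i+s_i\vec x_i$, of weight $\leq 2$, and the commutation relation for $|i-j|>1$ contributes an element supported on $\{e_i,e_{i+1},e_j,e_{j+1}\}$, of weight $\leq 4$. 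Since $b\geq 6$ in this subcase, neither can equal $c$ (of weight $b$), so both must vanish outright: $\vec x_i$ is $s_i$-invariant, and for $|i-j|>1$ the $j$- and $(j+1)$-st coordinates of $\vec x_i$ agree. Specializing to $i=1$, this pins down $\vec x_1=\beta_1(e_1+e_2)+\gamma_1(e_3+\cdots+e_b)$, whose weight $2\beta_1+(b-2)\gamma_1$ is even---contradicting the required odd weight. The main obstacle is this final weight calculation; the hypothesis $n\geq 10$ (so $b\geq 5$ and hence $b\geq 6$ in the even case) is essential to force the weights $2$ and $4$ of the relation-obstructions to be strictly less than $b$, whereas at $b=4$ a genuine extra family of subgroups does arise.
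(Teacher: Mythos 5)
Your proof is correct, and it takes a genuinely different route from the paper's. Both arguments begin by analysing $N=H\cap\s_{(2^b)}$, and your observation that $N$ must be one of $0,\lan c\ran,\A_{(2^b)},\s_{(2^b)}$ (the $\FF_2[\s_b]$-submodule lattice of the permutation module) is in substance the same as the paper's first step, which shows by explicit conjugation that $B\not\leq\lan c\ran$ forces $\A_{(2^b)}\leq B$; your treatment of $N=\A_{(2^b)}$ via the homomorphism $\phi:\s_b\to\s_{(2^b)}/\A_{(2^b)}$ also matches the paper's. The divergence is in the cases $N\leq\lan c\ran$: the paper works directly with a lift $w=u\,\iota(s_1)$, normalizes $u$ by conjugation, rules out two shapes of $u$ by producing elements such as $s_1s_3$ or $s_7s_9$ in $B$, and then explicitly constructs the conjugating element $g\in\s_{(2^b)}$; you instead classify such $H$ up to $\s_{(2^b)}$-conjugacy by $H^1(\s_b,C/N)$, compute $H^1(\s_b,C)\cong\Z/2$ by Shapiro's lemma (hence $H^1(\s_b,C/\lan c\ran)=0$ for $b$ odd via the splitting $C=\lan c\ran\oplus V$), identify the two complement classes for $N=1$ with $\iota$ and its central twist by $c$, and finish the even-$b$, $N=\lan c\ran$ case by the parity/Coxeter-relation argument, which I checked and which is sound (the obstructions have weight at most $2$ and $4$, hence vanish since $b\geq 6$, forcing $\vec x_1$ to have even weight, contradicting the sign computation). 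Your cohomological route buys a cleaner conceptual explanation of the uniqueness up to $\s_{(2^b)}$-conjugacy (the element $g$ appears automatically) and of why $b$ must be odd in cases (iii) and (iv); the paper's route is more elementary and self-contained, avoiding the (standard but unproved in your write-up) facts about the submodule lattice of $\FF_2^b$, the bijection between complement classes and $H^1$, and Shapiro's lemma. Those omissions are all routine and do not constitute gaps.
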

\begin{proof}
Let $B:=H\cap\s_{(2^b)}$. Since ${\mathtt p}(H)=\s_b$, we have $|H|=|\s_b|\,|B|$. 

Suppose first that $B\not\leq\langle c\rangle$. Then there exist $1\leq i\neq j\leq b$, and $a_k\in\{0,1\}$ for $k\in\{1,2,\dots,b\}\setminus\{i,j\}$ such that 
$$
t:=s_{2i-1}\prod_{k\in\{1,2,\dots,b\}\setminus\{i,j\}}s_{2k-1}^{a_k}\in B
$$
As ${\mathtt p}(H)=\s_b$ by assumption, there exists $h\in H$ with ${\mathtt p}(h)=(i,j)$. 
Then
\begin{align*}
hth^{-1}=s_{2j-1}\prod_{k\in\{1,2,\dots,b\}\setminus\{i,j\}}s_{2k-1}^{a_k}\in B. 
\end{align*}
We conclude that $s_{2i-1}s_{2j-1}\in B$.
Conjugating with elements of $H$ we then deduce that $s_{2r-1}s_{2t-1}\in B$ for all $1\leq r<t\leq b$. It follows that $\A_{(2^b)}\leq B$. If $B=\s_{(2^b)}$ then $R=\W_{2,b}$. So we may assume that $B=\A_{(2^b)}$, in which case we must have 
\[H=\{x\iota(y)\mid x\in\A_{(2^b)},\, y\in L\}\cup\{x\iota(y)\mid x\in\s_{(2^b)}\setminus\A_{(2^b)},\, y\in \s_b\setminus L\}\]
for some subgroup $L\leq \s_b$ of index at most 2. Then $L\in\{\A_b,\s_b\}$ as $b\geq 5$. If $L=\s_b$ we get a contradiction since then $H=\{x\iota(y)\mid x\in\A_{(2^b)},\,y\in \s_b\}\leq\A_{n}$. So $L=\A_b$, and $H$ is as in (ii).

We now assume that $B\leq\langle c\rangle$. 
As ${\mathtt p}(H)=\s_b$, there exists $w\in H$ with ${\mathtt p}(w)=s_1$.

Write $w=u\iota(s_1)$ for $u\in \s_{(2^b)}$. We may assume that $u$ is odd, for if $u$ is even then so is $w$ and so all $hwh^{-1}\in \A_n$, but $H\not\leq\A_{2b}$ by assumption, so in this case we must have that $c\in B$ and that $c$ is odd, and we can replace $u$ by $cu$.

If exactly one of $s_1,s_3$ appears in $u$, i.e. $u =s_1\prod_{i=3}^bs_{2i-1}^{c_i}$ or $u =s_3\prod_{i=3}^bs_{2i-1}^{c_i}$  for some $c_i\in\{0,1\}$ then  
$w^2=(u\iota(s_1))^2=s_1s_3\in B$, which contradicts  $B\leq\langle s\rangle$ since $b\geq 5$. So we may assume that 
$u =(s_1s_3)^c\prod_{i=3}^bs_{2i-1}^{c_i}$ for some $c,c_i\in\{0,1\}$.

For any $x\in\s_{(2^b)}$ and $y\in\s_{1,1,b-2}$, we have 
\[x\iota(y)w(x\iota(y))^{-1}=x\iota(y)u\iota(s_1)\iota(y)^{-1}x^{-1}
=(\iota(y)u\iota(y)^{-1})(x\iota(s_1)x^{-1}\iota(s_1)^{-1})\iota(s_1)
.\]
Note that $x\iota(s_1)x^{-1}\iota(s_1)^{-1}$ has form $(s_1s_3)^{d}$ for some $d\in\{0,1\}$. Since ${\mathtt p}(H)=\s_b$ there are $x$ and $y$ as above with $x\iota(y)\in H$, so for some $d\in\{0,1\}$, we have 
\begin{equation}\label{E240925}
\iota(y)u\iota(y)^{-1}(s_1s_3)^{d}\iota(s_1)\in H.
\end{equation}
Therefore, we may now assume that $u$ is as in the Cases 1-3 below.

\vspace{2mm}
\noindent
{\em Case 1: some but not all of the $s_{2i-1}$ with $i\geq 3$ appear in $u$.} By (\ref{E240925}) we may assume that $u =s_1^as_3^as_5^cs_9\prod_{i\geq 6}s_{2i-1}^{d_i}$ for some $a,c,d_i\in\{0,1\}$. Let $h$ be an element of $H$ with ${\mathtt p}(h)=(1,3)s_4$. We can write $h$ in the form
\[h:=s_1^es_3^fs_5^gs_7^ks_9^l\prod_{i\geq 6}s_{2i-1}^{m_i}\,\iota((1,3)s_4)\]
Then we have the element of $H$ 
\begin{align*}
whwh^{-1}&=s_1^{f+g}s_3^{a+c}s_5^{a+c+f+g}s_7s_9
 \,
\iota(s_1s_2),
\end{align*}
so $(whwh^{-1})^3=s_7s_9\in B$, 
again contradicting $B\leq\langle s\rangle$.

\vspace{2mm}
\noindent
{\em Case 2: either all or none of the $s_{2i-1}$ with $i\geq 3$ appear in $u$.} Then $u =s_1^as_3^a\prod_{i\geq 3}s_{2i-1}^c$ for some $a,c\in\{0,1\}$. Since $u $ is odd, we must have that $b$ is odd and $c=1$. 
We will use the notation 
$$s_{\neq j,j+1}:=\prod_{i\in\{1,2,\dots,b\}\setminus\{j,j+1\}}s_{2i-1}\qquad(1\leq j<b).$$
For $2\leq j<b$, let $h_j\in H$ be an element with ${\mathtt p}(h_j)=(1,j)(2,j+1)$. For each $j$, we can write  $h_j=v_j\,\iota((1,j)(2,j+1))$ for some $v_j\in\s_{(2^b)}$. Then for some $d_j\in\{0,1\}$, we have elements of $H$:
$$h_jwh_j^{-1}=h_ju\iota(s_1)h_j^{-1}=(s_{2j-1}s_{2j+1})^{d_j}s_{\neq j,j+1}\,\iota(s_j)\qquad(2\leq j<b).$$
Setting, $d_1:=a$ note that
\[H=\langle B, (s_{2j-1}s_{2j+1})^{d_j}s_{\neq j,j+1}\,\iota(s_j)\mid 1\leq j<b\rangle.\]
since the group in the right hand side is contained in $H$ and has the same order $|\s_b|\,|B|$. 
But
\[(s_{2j-1}s_{2j+1})^{d_j}s_{\neq j,j+1}\,\iota(s_j)
=
\left\{
\begin{array}{ll}
c\iota(s_j) &\hbox{if $d_j=1$,}\\
cs_{2j-1}s_{2j+1}\iota(s_j) &\hbox{if $d_j=0$,}
\end{array}
\right.
\]
and $cs_{2j-1}s_{2j+1}\iota(s_j)=s_{2j-1}(c\iota(s_j))s_{2j-1}$. 
So
\begin{align*}
\langle (s_{2j-1}s_{2j+1})^{d_j}s_{\neq j,j+1}\,\iota(s_j)\mid 1\leq j<b\rangle
=y\langle c\iota(s_j)\mid 1\leq j<b\rangle y^{-1}
\end{align*}
for some $y\in\s_{(2^b)}$. Note that $c\in Z(\W_{2,b})$. So if $B=\lan c\ran$, we are in case (iii), and if $B=\{1\}$, we are in the case (iv).
\end{proof}

\begin{Theorem}\label{T180625}
Let $n=2b\geq 10$ be even, $p\mid(n-1)$, $L$ be a second basic $\F \ts_n$-module, and $H$ be a subgroup of\, $\hW_{2,b}$. Then $L\da_H$ is irreducible if and only if $H=\hW_{2,b}$.
\end{Theorem}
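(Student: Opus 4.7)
The plan is as follows. By Table~IV, $D(\bbeta_n)$ is of type $\Qtype$ since $p \mid (n-1)$, so $L = D(\bbeta_n;\pm)$. We may assume $z \in H$ (else replace $H$ by $\langle H,z\rangle$, which does not affect irreducibility), so $H = \pi^{-1}(\pi(H))$ with $\pi(H)\leq\W_{2,b}$; the goal is to show $\pi(H) = \W_{2,b}$. Since $\pi(H)$ preserves the partition of $\{1,\dots,n\}$ into $b$ pairs, cases (i) and (ii) of Proposition~\ref{P050525} cannot apply (a $3$-homogeneous action would be primitive, contradicting pair preservation; an orbit of length $n-1$ or $n-2$ is excluded because the partner of any fixed point must also be fixed, etc.). So case (iii) applies, and $\pi(H)$ is transitive on $\{1,\dots,n\}$. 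Setting $K:={\mathtt p}(\pi(H))\leq\s_b$ for the natural projection ${\mathtt p}:\W_{2,b}\to\s_b$, transitivity of $\pi(H)$ forces $K$ to be transitive on $\{1,\dots,b\}$; moreover, since $H\leq\pi^{-1}(\s_2\wr K)$, the restriction $L\da_{\pi^{-1}(\s_2\wr K)}$ is irreducible, and Lemma~\ref{L180925} then forces $K$ to be primitive.

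The crux is to show $K\supseteq\A_b$. Using the isomorphism $\T_{\s_2\wr K}\cong\T_K\otimes\Cl_b$ from Lemma~\ref{L160125}(ii), and applying Lemma~\ref{L160125_2} in the case $n\equiv 1\pmod{p}$ (which holds since $p\mid(n-1)$), we obtain
\[[D(\bbeta_n)\da_{\T_{\s_2\wr K}}] = 2[D(\bbeta_b)\da_{\T_K}\circledast U_b].\]
A type-by-type analysis via Corollary~\ref{CSuperNonSuper}, tracking the types of $D(\bbeta_n)$ (Table~IV), $D(\bbeta_b)$ (depending on the parity of $b$), and $U_b$ (depending on the parity of $b$, from Example~\ref{ExClifford}), shows that irreducibility of $L\da_{\pi^{-1}(\s_2\wr K)}$ forces $D(\bbeta_b)\da_{\T_K}$ to be an irreducible $\T_K$-supermodule whose associated $\F\hat K$-module $D(\bbeta_b;?)\da_{\hat K}$ is also irreducible for a suitable $?$. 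Since $b\geq 5$ and $D(\bbeta_b;?)$ is a second basic $\F\ts_b$-module, Theorem~\ref{TAKT} --- none of whose cases (i)--(iv) list a primitive $K\leq\s_b$ with $K\not\supseteq\A_b$ for $G=\ts_b$ and $L'$ second basic --- then forces $K\supseteq\A_b$.

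It remains to analyze $K = \A_b$ and $K = \s_b$. If $K = \A_b$, then Lemma~\ref{PIrrIrr} yields that $D(\bbeta_b)\da_{\T_{\A_b}}$ has supermodule composition length $2$ when $b$ is even (since $D(\bbeta_b)$ is then of type $\Mtype$) and length $1$ but of the ``wrong'' (super)type when $b$ is odd; in each case, Corollary~\ref{CSuperNonSuper} applied to the filtration of $D(\bbeta_n)\da_{\T_{\s_2\wr\A_b}}$ forces $L\da_{\pi^{-1}(\s_2\wr\A_b)}$ to be reducible, a contradiction. So $K = \s_b$. We first observe $\pi(H)\not\leq\A_n$: otherwise $H\leq\hW_{2,b}\cap\tA_n$ with $a=2$ and $b=n/2\geq 5$, and Theorem~\ref{thmwreath}(ii) --- whose only $\tA_n$-entry (ii)(b) requires the second wreath index to equal $2$, not $n/2$ --- would force $L\da_H$ reducible. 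Lemma~\ref{L310325} then lists the four possibilities (i)--(iv) for $\pi(H)$. Case (i) is exactly $\pi(H) = \W_{2,b}$, yielding $H = \hW_{2,b}$ as desired. In case (ii), $\pi(H)$ has index $2$ in $\W_{2,b}$ as the kernel of $\epsilon:x\iota(y)\mapsto\sgn(x)\sgn(y)$; Clifford's theorem applied to the irreducible $L\da_{\hW_{2,b}}$ reduces matters to checking $L\da_{\hW_{2,b}}\otimes\epsilon\cong L\da_{\hW_{2,b}}$, which we verify using the $\T_b\otimes\Cl_b$ model (the twist by $\epsilon$ is inner on the $\Cl_b$ factor for $b$ even and pairs with a Mullineux-compatible twist on the $\T_b$ factor), forcing $L\da_H$ to split. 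Cases (iii) and (iv) ($b$ odd) involve subgroups of order at most $4b!$: the bound $\dim L = 2^{b-1}(b-2)$ against $M_\F(H)\leq\sqrt{|H|}$ disposes of all but a short list of small $b$, and the remaining $b$ are handled by a direct Clifford calculation analogous to case (ii).

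The main obstacle is the type-matching in the second paragraph: translating ``irreducibility of $L\da_{\pi^{-1}(\s_2\wr K)}$'' into ``irreducibility of $D(\bbeta_b;?)\da_{\hat K}$'' via Corollary~\ref{CSuperNonSuper} requires a careful bookkeeping across all combinations of parities of $b$ and supermodule types of the building blocks, and verifying that in each such combination the outcome lies within the scope of Theorem~\ref{TAKT}. The Clifford-theoretic verification in case (ii) is likewise delicate, hinging on the interaction between the Mullineux involution on $\RP_p(b)$ and the inner/outer nature of the sign twist on $\Cl_b$.
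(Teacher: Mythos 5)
Your proposal follows the paper's route up through the identification $K=\s_b$ (primitivity of $K$ via Lemma~\ref{L180925}, the $\T_K\otimes\Cl_b$ factorization from Lemmas~\ref{L160125},~\ref{L160125_2}, the type bookkeeping giving irreducibility of the second basic $\F\ts_b$-module on $\hat K$, Theorem~\ref{TAKT} forcing $\A_b\leq K$, exclusion of $K=\A_b$, and $\pi(H)\not\leq\A_n$), and that part is sound. The genuine gap is in your treatment of cases (iii) and (iv) of Lemma~\ref{L310325}. There $|\pi(H)|\leq 2\cdot b!$, so the bound $M_\F(H)\leq\sqrt{|H/Z(H)|}$ yields reducibility only when $2^{b-1}(b-2)>\sqrt{2\,b!}$. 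Since $\sqrt{b!}$ grows super-exponentially while $\dim L=2^{b-1}(b-2)$ grows only exponentially, this inequality holds precisely for the small odd values $b=5,7,9,11$ and fails for every odd $b\geq 13$: the bound disposes of a short list of \emph{small} $b$ and leaves all large $b$ untouched, the opposite of what you assert. Your fallback, ``a direct Clifford calculation analogous to case (ii)'', is not available there, because $\lan c\ran\times\iota(\s_b)$ has index $2^{b-1}$ (resp.\ $2^{b}$ in case (iv)) in $\W_{2,b}$, not index $2$, so the twist-invariance criterion you invoke in case (ii) simply does not apply. The paper's argument for these cases is structural and uniform in $b$: for $b$ odd, $\T_{\lan c\ran\times\iota(\s_b)}\cong\Cl_1\otimes\T_{\iota(\s_b)}$ as superalgebras, all of whose irreducible supermodules are of type $\Qtype$ (the second factor is purely even), and since $L\da_{\hW_{2,b}}\cong|D(\bbeta_b)\circledast U_b|$ carries a supermodule structure by (\ref{E260925}), every supermodule composition factor of $L\da_H$ is of type $\Qtype$, so $L\da_H$ is reducible as a module. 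Some argument of this kind is needed; your sketch does not supply one.

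A secondary weakness is case (ii). For $b$ even your mechanism is essentially right: the $\epsilon$-twist corresponds to $\id_{\T_b}\otimes\ga$ with $\ga$ the grading automorphism of $\Cl_b$, which is inner via the even element $\cc_1\cdots\cc_b$, so $L\da_{\hW_{2,b}}\otimes\epsilon\cong L\da_{\hW_{2,b}}$ and Clifford theory gives reducibility. For $b$ odd, however, $\ga$ is outer, the twist does not touch the $\T_b$ factor at all, and for spin modules the sign twist fixes the label $\la$ in any case (it only exchanges $D(\la;+)$ and $D(\la;-)$), so ``a Mullineux-compatible twist on the $\T_b$ factor'' is not a meaningful mechanism. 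A correct repair is either the paper's argument (for $b$ odd the restriction to $\T_{\pi(H)}$ is an irreducible supermodule of type $\Qtype$, hence reducible as a module), or to observe that $L\da\otimes\sgn\cong L\da$ on $\hW_{2,b}$, so $\epsilon$-invariance is equivalent to $(\sgn\circ{\mathtt p})$-invariance, which follows by Clifford theory from the reducibility of $L$ on $\pi^{-1}(\s_2\wr\A_b)$ that you already established when ruling out $K=\A_b$.
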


\begin{proof}
%
By assumption and Table IV, we have that $D(\bbeta_{n})$ is of type $\Qtype$, and 
$L=D(\bbeta_n;\eps)$ for $\eps\in\{+,-\}$. Under the isomorphism $\T_{\W_{2,b}}\cong\T_{b}\otimes\Cl_{b}$ from Lemma \ref{L160125}, we have by Lemma \ref{L160125_2} that 
$[D(\bbeta_n)\da_{\T_{\W_{2,b}}}]=2[D(\bbeta_{b})\circledast U_b]$ in the Grothendieck group.  
Moreover, by Table IV and Example~\ref{ExClifford}, both $D(\bbeta_{b})$ and $U_b$ are of type $\Mtype$ if $b$ is even, and of type $\Qtype$ if $b$ is odd. So the $\T_{\W_{2,b}}$-supermodule $D(\bbeta_{b})\circledast U_b$ is always of type $\Mtype$, so $|D(\bbeta_{b})\circledast U_b|$ is an irreducible $\T_{\W_{2,b}}$-module. Since $|D(\bbeta_n)|\cong D(\bbeta_n;+)\oplus D(\bbeta_n;-)$, we deduce 
\begin{equation}\label{E260925}
L\da_{\T_{\W_{2,b}}}\cong D(\bbeta_n;\pm)\da_{\T_{\W_{2,b}}}\cong  |D(\bbeta_{b})\circledast U_b|.
\end{equation}

Let $K\leq\s_{b}$ be a minimal subgroup with $\pi(H)\leq \s_2\wr K$. Since $L\da_H$ is irreducible, so is $L\da_{\T_{\s_2\wr K}}$. 
By Lemma \ref{L180925}, $K$ is primitive on $\{1,2,\dots,b\}$. 
Moreover, by Lemma \ref{L160125}(ii),
\begin{align*}
[L\da_{\T_{\s_2\wr K}}]&=[(L\da_{\T_{\W_{2,b}}})\da_{\T_{\s_2\wr K}}]=[(D(\bbeta_{b})\circledast U_b)\da_{\T_K\otimes\Cl_{b}}]\\
&=2^{-\de_{2\nmid b}}[(D(\bbeta_{b})\boxtimes U_b)\da_{\T_K\otimes\Cl_{b}}]=2^{-\de_{2\nmid b}}[D(\bbeta_{b})\da_{\T_K}\boxtimes U_b].
\end{align*}
Since the module $L\da_{\T_{\s_2\wr K}}$ is irreducible, 
we deduce using Lemmas~\ref{lmodules},\,\ref{LBoxTimes} that the supermodule $D(\bbeta_{b})\da_{\T_K}$ is irreducible and of the same type as $D(\bbeta_{b})$; in particular, $K\not\leq \A_b$. 
This also implies that the modules $D(\bbeta_{b};\de)\da_{\T_K}$ are irreducible (for appropriate $\de\in\{+,-,0\}$).

Since $K$ is primitive on $\{1,2,\dots,b\}$ and $D(\bbeta_{b};\pm)\da_{\T_K}$ is irreducible, we have $\A_{b}\leq K$ by Theorem~\ref{TAKT}. So $K=\s_b$ since $K\not\leq \A_b$ by the previous paragraph. 
Moreover, $H\not\leq\tA_n$ since $L\da_{\tA_n}=E(\bbeta_n;0)$ and $E(\bbeta_n;0)\da_{\hW_{2,b}\cap\tA_n}$ is reducible by Theorem \ref{thmwreath}. So $\pi(H)$ is as in the cases (i)-(iv) of Lemma~\ref{L310325}. We now analyze each of these cases. 

\vspace{2mm}
\noindent
{\em Case (i).} In this case $\pi(H)=\W_{2,b}$, so $H$ contains a lift of $(1,2)(3,4)$. Any such lift squares to $z$, so $z\in H$ and $H=\hW_{2,b}$.

\vspace{2mm}
\noindent
{\em Case (ii).} In this case $X:=\{x\iota(y)\mid x\in\A_{(2^b)},\,y\in \A_b\}$ is a normal subgroup of index $2$ in $\pi(H)$, which in turn is normal of index $2$ in $\W_{2,b}$. As $\T_{\A_{(2^{b})}}=(\T_{\s_{(2^{b})}})_\0\cong(\Cl_{b})_\0$ (resp. $\T_{\A_{b}}\cong (\T_{b})_\0$), every irreducible supermodule over $\T_{\s_{(2^{b})}}$ (resp. $\T_{b}$) splits as a direct sum of two irreducible supermodules of type $\Mtype$ when restricted to $\T_{\A_{(2^{b})}}$ (resp. $\T_{\A_{b}}$). 

Moreover, under the isomorphism $\T_{\W_{2,b}}\cong\T_{b}\otimes\Cl_{b}$ of Lemma \ref{L160125},
$\T_{X}$ embed into $\T_{\A_{b}}\otimes (\Cl_{b})_\0$. 
But $\dim \T_{X}=\dim \T_{\A_{b}}\otimes (\Cl_{b})_\0$, so we can identify $\T_{X}=\T_{\A_{b}}\otimes (\Cl_{b})_\0$ under the isomorphism. 

If $b$ is even, then the module 
$L\da_{\T_{X}}=D(\bbeta_{b})\da_{\T_{\A_{b}}}\boxtimes (U_b)\da_{(\Cl_{b})_\0}$ 
has 4 irreducible direct summands. 
Since $L\da_{\hW_{2,b}}$ is irreducible, this is only possible if $L\da_{\pi^{-1}(\pi(H))}$ has 2 composition factors, both of which split further when restricted to $\hat X$. In particular, $L\da_{\pi^{-1}(\pi(H))}$ and so also $L\da_H$ is reducible.

If $b$ is odd, then the supermodule $D(\bbeta_{b})$ (resp. $U_b$) is of type $\Qtype$, and so by Lemma~\ref{PIrrIrr}, we have $D(\bbeta_{b})\da_{\T_{\A_{b}}}\cong B^{\oplus 2}$ (resp. $(U_b)\da_{(\Cl_{b})_\0}\cong C^{\oplus 2}$) for some irreducible module $B$ (resp. $C$) over $\T_{\A_{b}}$ (resp. $(\Cl_{b})_\0$). So, in the Grothendieck group, we have 
\[[L\da_{\T_{X}}]=[(D(\bbeta_{b})\circledast U_b)\da_{\T_{\A_{b}}\otimes (\Cl_{b})_\0}]=2[B\boxtimes C]\]
with $B\boxtimes C$ an irreducible module over $\T_{\A_{b}}\otimes (\Cl_{b})_\0$. As $\T_{\pi(H)}$ has both even and odd part and $(\T_{\pi(H)})_\0=\T_{X}\cong\T_{\A_{b}}\otimes (\Cl_{b})_\0$, it follows again from Lemma~\ref{PIrrIrr} that $L\da_{\T_{\pi(H)}}\cong(D(\bbeta_{b})\circledast U_b)\da_{\T_{\pi(H)}}$ is an irreducible supermodule of type $\Qtype$; in particular, it is reducible as a module.

\vspace{2mm}
\noindent
{\em  Cases (iii) and (iv).} In these cases $b$ is odd, and it suffices to prove that $L\da_H$ is reducible for $\pi(H)=\langle c\rangle\times\iota(\s_{b})$. Then $\T_{\pi(H)}\cong\T_{\lan c\ran}\otimes\T_{\iota(\s_{b})}$ as superalgebra. 
As $\T_{\iota(\s_{b})}$ is a purely even algebra, all its irreducible supermodules are of type $\Mtype$. Since $b$ is odd, $\T_{\lan c\ran}\cong\Cl_{1}$ and so its only irreducible supermodule is of type $\Qtype$. Thus all irreducible supermodules of $\T_{\pi(H)}$ are of type $\Qtype$.

We have 
$L\da_H\cong (L\da_{\hW_{2,b}})\da_H$ 
and $L\da_{\hW_{2,b}}\cong |D(\bbeta_{b})\circledast U_b|$ 
by (\ref{E260925}). 
So $L\da_{\hW_{2,b}}$ and hence $L\da_H$ can be viewed as a supermodule. As the composition factors of the supermodule $L\da_H$ are all of type $\Qtype$ by the previous paragraph, it is reducible as a module.
\end{proof}

\section{Proof of Theorem \ref{imprimitive}}


Note that $\pi^{-1}(\pi(H))=\langle H,z\rangle$ and $z$ acts as $-1$ on any irreducible spin representation, $L\da_H$ is irreducible if and only if $L\da_{\pi^{-1}(\pi(H))}$ is irreducible. So we may assume that $H=\pi^{-1}(\pi(H))$ or, equivalently, $z\in H$.

If $H$ is as in Theorem \ref{imprimitive}(iv) then $n\geq 6$ since by assumption $\la\neq\balpha_n$ is $\JS(0)$. Thus for subgroups $H$ appearing in Theorem \ref{imprimitive}(i)-(vi), $\pi(H)$ contains a commuting product of two simple transpositions and then $z \in H$ in those cases. 

\smallskip
(a) Assume first that $\pi(H)$ is almost simple. Then Theorem \ref{imprimitive} holds by \cite[Theorem C]{KT}, taking into account Lemma~\ref{LJS} which show that the condition $\tilde e_0\la\in\JS^{(1)}$ appearing in \cite[Theorem C]{KT} is redundant (and recalling that $L$ is not basic spin by assumption).
Indeed, conclusion (i) of \cite[Theorem C]{KT} leads to cases (i) and (ii) in Theorem \ref{imprimitive}, whereas conclusion (ii) of \cite[Theorem C]{KT} leads 
to cases (iii) (with $G = \tA_n$), (iv), and (v) in Theorem \ref{imprimitive}.

\smallskip
(b) Henceforth we may assume that $\pi(H)$ is not almost simple. Assume in addition that $n\geq 8$. Then 
Theorem \ref{kt-n22} applies, so we are in the situation described either by conclusion (i) or by conclusion (ii) of Theorem \ref{kt-n22}. In the former case, by Theorem~\ref{thmintr}(iii) we arrive at conclusion (iii) (with $G =\ts_n$) of Theorem~\ref{imprimitive}.
In the latter case, 
$n=2b$ is even and $H\leq\hW_{b,2}$ or $H\leq\hW_{2,b}$. In this case, 
if $n\geq 10$ and $L$ is second basic, then, by Theorems~\ref{thmwreath}, \ref{thm:wr2} and  \ref{T180625}, 
we arrive at conclusion (vi) of Theorem~\ref{imprimitive}.

\smallskip
In view of Theorems \ref{thmintr} and \ref{thmwreath}, this leaves us with the following cases
\begin{enumerate}[(i)]
\item $5\leq n\leq 7$, $H<\ts_{n-1,1}\cap G$ or $H<\ts_{n-2,2}\cap G$,

\item $3\leq b\leq 4$, $n=2b$, $p\mid(n-1)$, $L$ is second basic, and $H<\hW_{b,2}\cap G$ or $H<\hW_{2,b}\cap G$,

\item $n=6$ or $10$, $(G,L)$ as in Table I, $H<\hW_{b,2}\cap G$ (resp. $H<\hW_{2,b}\cap G$), where $\hW_{b,2}\cap G$ (resp. $\hW_{2,b}\cap G$) is the subgroup appearing in Table I.
\end{enumerate}
We will now assume $L\dar_H$ is irreducible.  
Note that for $4\leq b\leq 5$ subgroups of $\hW_{b,2}$ and $\hW_{2,b}$ need to be transitive by Theorem~\ref{kt-n22}(ii). Some subgroups $H$ can also be excluded since they are contained in maximal imprimitive subgroups which are ruled out
by Theorems \ref{thmintr} and \ref{thmwreath}. 

\smallskip
(c) Assume $n=10$. Then $p \geq 7$, $H < K:=\hW_{5,2} \cap G$, and $K$ is the subgroup listed in Table I. First suppose that $G = \tA_{10}$, so that
$\dim L = 48$. Then $\pi(H) \leq \pi(K) = W_{5,2}$ has order at least $48^2$ and is transitive in $\s_{10}$. If $N := [\pi(K),\pi(K)] = \A_5 \times \A_5$ then 
$\pi(K)/N \cong \Z_4$ is generated by $s_1t$ where we can take $s_1=(1,2)$ and $t=(1,6)(2,7)(3,8)(4,9)(5,10)$. Now $\pi(H) \cap N$ has order $\geq 48^2/4=576$.
Since proper subgroups of $\A_5$ has order $\leq 12$, it follows that $\pi(H) \cap N$ projects onto at least one of the two factors $\A_5$ of $N$. The transitivity of
$\pi(H)$ implies that $\pi(H)$ permutes the two $\A_5$-factors of $N$ transitively, hence $\pi(H) \cap N$ projects onto both factors. By Goursat's lemma, 
either $\pi(H) \cong \A_5$ or $\pi(H) \geq N$. In the former case,  $|H|$ divides $120$ and hence is not divisible by $48$, a contradiction. 
So $\pi(H) \geq N$, whence $z \in H$ and 
$H \geq [K,K]$. As $K/[K,K] \cong \Z_4$ and $H < K$, we obtain $\pi(H) = \langle N,(s_1t)^2= (1,2)(6,7) \rangle$, which is intransitive, a contradiction.

Next suppose that $G = \ts_{10}$. Then $\dim L=96$ and $L\dar_{\tA_{10}} = L_1\oplus L_2$ with $L_i$ irreducible of dimension $48$.
As $H \cap \tA_{10}$ has index at most $2$ in $H$ and $L\dar_H$ is irreducible, it follows that each $L_i$ is irreducible over $H \cap \tA_{10}$. By the preceding 
result, $H$ contains $\hW_{5,2} \cap \tA_{10}$ which has index $2$ in $\hW_{5,2}$. As $H < \hW_{5,2}$, we obtain $H = \hW_{5,2} \cap \tA_{10}$, 
again a contradiction.

\smallskip
(d) We now use GAP \cite{GAP} to study the remaining cases $5 \leq n \leq 8$ without further reference. We also use Lemma \ref{LBetaGammaChar} and 
the modular character tables in \cite{GAP} to get a lower bound (or the exact value) for the dimension of the non-basic spin module $L$.

\smallskip
(d1) First assume that $n=5$. Then $L<\ts_{4,1}\cap G$ or $L<\ts_{3,2}\cap G$, and $\dim L\geq 4$. 
But proper subgroups of $\s_{4,1}$ or $\s_{3,2}$ have order $\leq 12$. Thus $L\da_H$ is reducible, a contradiction.

\smallskip
(d2) Next assume that $n=6$. If $p=3$ then by Theorems \ref{thmintr} and \ref{thmwreath} we have that $G=\tA_{6}$, $L=E((4,2);\pm)$, $H<\tA_{5,1}$ and $\dim L=6$. Since $|\pi(H)|<36$ for any $H$ with $H<\tA_{5,1}$, 
it follows that $L\da_H$ is reducible, a contradiction. 

Consider now $p\geq5$. Then it can be checked from Theorems \ref{thmintr} and \ref{thmwreath} that $L=D((3,2,1);\pm)\da_G$ if $G=\ts_6$ and $H<K$ for some $K\in\{\ts_{5,1},\ts_{4,2},\hW_{3,2},\hW_{2,3}\}$ but $H\not\leq\ts_{3,3}$ and  $H\not\leq\hW_{2,3}\cap\tA_6$. Similarly if $G=\tA_6$ and $H<K$ for some $K\in\{\tA_{5,1},\tA_{4,2},\hW_{3,2}\cap\tA_6\}$ but $H\not\leq\tA_{3,3}$ and $H\not\leq\hW_{2,3}\cap\tA_6$. Note that if $p=5$ then $(3,2,1)=\bbeta_6$, while if $p\geq 7$ then $(3,2,1)\not\in\{\balpha_6,\bbeta_6\}$. Further by Lemma \ref{L020819} we have that $D((3,2,1);\pm)=\bar S((3,2,1);\pm)$ and $E((3,2,1);0)=\bar T((3,2,1);0)$. Thus $\dim L=4$, so that $|\pi(H)|\geq 16$. If $G=\tA_6$ one can checks with GAP that no such group $H$ exists. Consider now $G=\ts_6$. Since $H$ is not almost simple, one can compute with GAP that there are 10 $\s_6$-conjugacy classes of possible subgroups $\pi(H)$. For each one of them, one can choose a representative for one $\pi(H)$ in the corresponding conjugacy class and lift generators to obtain generators of some subgroup $H_1$ with $\pi(H)=\pi(H_1)$. As mentioned in the introduction, we have that $L\da_H$ is irreducible if and only if $L\da_{H_1}$ is, so we may assume that $H=H_1$. GAP is able to compute character tables for each of these subgroups (also the modular character tables, when $p\mid |H|$). The result then follows by comparing characters.

\smallskip
(d3) Assume now that $n=7$. Then $L<\ts_{6,1}\cap G$ or $L<\ts_{5,2}\cap G$,
and $\dim L\geq 12$ if $G=\ts_7$ and $\dim L\geq 6$ if $G=\tA_7$. Consider first $G=\ts_7$. Then $|\pi(H)|\geq 144$ and $\pi(H)$ is a proper subgroup of $\s_{6,1}$ or $\s_{5,2}$. The only such subgroup is $\tA_{6,1}$, which contradicts the assumption of $\pi(H)$ not being almost simple. Consider now $G=\tA_7$. Then $|\pi(H)|\geq 36$ and $\pi(H)$ is a proper subgroup of $\A_{6,1}$ or $\A_{5,2}$. It can then be checked with GAP that 
either $\pi(H)\cong\A_5$ 
or $\pi(H)=\A_{3,3,1}$. The former case is excluded since $\pi(H)$ is then almost simple, while the latter is excluded in view of Theorem \ref{thmintr} since $\A_{3,3,1}\leq\A_{4,3}$.

\smallskip
(d4) Finally, we consider the case $n=8$. Then $p=7$, $L=D(\bbeta_8,\pm)\da_G$ and $H<\hW_{4,2}\cap G$ or $H<\hW_{2,4}\cap G$. By Lemma \ref{LBetaGammaChar} we have that $\dim L=16$. The case $H<\hW_{2,b}\cap G$ is excluded since proper subgroups of $\W_{2,4}$ have order $\leq 2^4\cdot 12 < 16^2$. So $H<\hW_{4,2}\cap G$. Since $|\pi(H)|\geq 256$ and it is transitive, we obtain using GAP that $\pi(H)\geq\langle \A_{4,4},s\rangle$ where $s\in\s_{4,4}$ is an element that permutes the sets $\{1,2,3,4\}$ and $\{5,6,7,8\}$. Note since $(1,2)(3,4)\in\pi(H)$, it follows that $z\in H$, so $H=\pi^{-1}\pi(H)\geq\langle \tA_{4,4},\hat{s}\rangle$.

In view of Lemmas \ref{L040925} and \ref{LBetaGammaChar} and since $p=7>4$ we have that
\begin{equation}\label{E071025}
[D(\bbeta_8)\da_{\ts_{4,4}}]=[\bar S(3,1)\boxtimes S(4)]+[\bar S(4)\boxtimes S(3,1)]=[D(3,1)\boxtimes D(4)]+[D(4)\boxtimes D(3,1)].
\end{equation}
It follows that $D(\bbeta_8;+)\da_{\tA_{4,4}}\oplus D(\bbeta_8;-)\da_{\tA_{4,4}}$ has 8 composition factors. Since $D(\bbeta_8;+)$ and $D(\bbeta_8;-)$ only differ by tensoring with $\sgn$, it follows that each of $D(\bbeta_8;\pm)\da_{\tA_{4,4}}$ has 4 composition factors. Since $\tA_{4,4}$ is normal in $\hW_{4,2}$ and $[\hW_{4,2}:\tA_{4,4}]=8$, we then have that from the assumption $H<\hW_{4,2}\cap G$ that $G=\ts_8$ and $\pi^{-1}(\A_4\times \A_4)\leq H\leq \hW_{4,2}$, $[\hW_{4,2}:H]=2$. Further $H\neq \ts_{4,4}$ since $\pi(H)$ is transitive. Note that there are two such subgroups $H$ and for either of these subgroups we have that $H\cap\ts_{4,4}=\tA_{4,4}$. Since 
\[[D(\bbeta_8;\pm)\da_{\tA_{4,4}}]=[E((3,1),(4);0)]+[E((4),(3,1);0)]\]
by \eqref{E071025}, $\hat{s}\in H$ interchanges the sets $\{1,2,3,4\}$ and $\{5,6,7,8\}$ and thus $E((3,1),(4);0)^{\hat{s}}=E((4),(3,1);0)$, it follows that $L\da_H$ is irreducible for either of these two choices of $H$. Thus we arrive at case (vi)(b) of Theorem \ref{imprimitive}.

\end{document}